\newtheorem{theorem}{Theorem}[section]
\newtheorem{lemma}[theorem]{Lemma}
 \newtheorem*{theorem*}{Theorem}
  \newtheorem{corollary}[theorem]{Corollary}
      \newtheorem{proposition}[theorem]{Proposition}
      \newtheorem*{assumption}{Assumption}
\theoremstyle{definition}
\newtheorem*{definition*}{Definition}
\newtheorem{definition}[theorem]{Definition}
\newtheorem{remark}[theorem]{Remark}
\numberwithin{equation}{section}
\begin{document}

\title[The local exterior square $L$-functions]{Derivatives and exceptional poles of the local exterior square $L$-function for $GL_m$}

\author{Yeongseong Jo}
\address{Department of Mathematics, The Ohio State University, Columbus, Ohio 43210}
\email{jo.59@osu.edu}
\thanks{This research was partially supported by NSF grant
     DMS-0968505 through Professor Cogdell.}


\subjclass[2000]{Primary 11F70, 11F85; Secondary 11S23, 11S37}



\keywords{Jacquet-Shalika integral, local exterior square $L$-function, exceptional poles, Bernstein-Zelevinsky derivatives}

\begin{abstract}
Let $\pi$ be an irreducible admissible representation of $GL_m(F)$, where $F$ is a non-archimedean local field of characteristic zero. We follow the method developed by Cogdell and Piatetski-Shapiro to complete the computation of the local exterior square $L$-function $L(s,\pi,\wedge^2)$ in terms of $L$-functions of supercuspidal representations via an integral representation established by Jacquet and Shalika in $1990$. We analyze the local exterior square $L$-functions via exceptional poles and Bernstein and Zelevinsky derivatives. With this result, we show the equality of the local analytic $L$-functions $L(s,\pi,\wedge^2)$ via integral integral representations for the irreducible admissible representation $\pi$ for $GL_m(F)$ and the local arithmetic $L$-functions $L(s, \wedge^2(\phi(\pi)))$ of its Langlands parameter $\phi(\pi)$ via local Langlands correspondence.
\end{abstract}

\maketitle

\section{Introduction}

In the mid 1990's Cogdell and Piatetski-Shapiro embarked on the local analysis of the exterior square $L$-function via Jacquet and Shalika integrals and expected that $L(s,\pi,\wedge^2)$ for irreducible generic representations $\pi$ of $GL_m$ over a nonarchimedean local field of characteristic $0$ can be expressed in terms of $L$-functions for supercuspidal representations. They predicted in \cite{Cog94} that one can also compute exterior square $L$-functions in terms of $L$-functions of the inducing datum over the archimedean places.

\par

In \cite{CoPe}, Cogdell and Piatetski-Shapiro developed a new technique in the mid 1990's to explicitly calculate the local $L$-functions $L(s,\pi \times \sigma)$ defined by the Rankin-Selberg
integrals in terms of $L$-functions for supercuspidal representations. The main two ingredients for this direction are the use of Bernstein-Zelevinsky derivatives \cite{BeZe76,BeZe77,Ze80} and the exceptional poles of $L$-functions. They employ the theory of derivative of Bernstein and Zelevinsky to compute the local $L$-functions $L(s,\pi \times \sigma)$ for the pairs of irreducible generic representations $\pi$ of $GL_n$ and $\sigma$ of $GL_m$ over a nonarchimedean local field of characteristic $0$ in terms of the exceptional $L$-functions of their derivatives. This result let them calculate the $L$-function in the cases where both $\pi$ and $\sigma$ are supercuspidals, as in Gelbert and Jacquet \cite{GeJa78}, and where both $\pi$ and $\sigma$ are square-integrable, as in Jacquet, Piatetski-Shapiro, and Shalika \cite{JaPSSh83}.

\par
We can find the expression for the Rankin-Selberg type $L$-functions $L(s,\pi \times \sigma)$ in the later sections of \cite{JaPSSh83} and so the expression they obtain are not actually new. However they expect that this method works well for other $L$-functions of $GL_n$ construction. In particular, this method will be carried out to investigate the structure of Bump-Friedberg \cite{BuFr89}, symmetric square \cite{BuGi92}, or Asai $L$-functions \cite{Fl88} via integral representations at ramified places. 

\par

 Recently there has been renewed interest in utilizing this method for archimedean and non-archimedean case \cite{Chai15,Ma09,Ma15}. The archimedean derivatives in \cite{ChCo99} of Cogdell and Chang are defined by components in $\mathfrak{n}$-homology, where $\mathfrak{n}$ is the nilradical of some parabolic subalgebra. Then Chai establishes the notion of exceptional poles of different types for the pairs of components of archimedean derivatives  \cite{Chai15}. He studies the poles of Rankin-Selberg product $L$-function $L(s,\pi \times \sigma)$ over the archimedean field in terms of exceptional poles for pairs of derivatives of $\pi$ and $\sigma$. Matringe follows the path of adaptation of this method to complete the calculation of the Bump-Friedberg \cite{BuFr89} or Asai $L$-functions \cite{Fl88} from integral representations for $\pi$, irreducible and generic over non-archimedean field \cite{Ma09,Ma15}. In \cite{Ma15}, he prominently reworks and improves the basic idea of this technique.

\par
There has also been a lot of interests in the study of the local exterior square $L$-functions via integral representations of Jacquet and Shalika \cite{Be14,Ke11,KeRa12,MiYa13}.  In particular, Cogdell and Matringe \cite{Cog,Ma14} prove the local functional equation of the exterior square $L$-functions for any irreducible admissible representations of $GL_m$ based on the Bernsteion-Zelevinsky theory of derivatives \cite{BeZe76,BeZe77,Ze80}, and the theory of linear periods and Shalika periods. The definition of the local exterior square $\varepsilon$- and $\gamma$-factors are now available for all irreducible admissible representations of $GL_m$. 
We expect to prove the multiplicativity and stability of the local $\gamma$-factor. We will return to this in the future.

\par
 In this paper we extend the notion of local exterior square $L$-function $L(s,\pi,\wedge^2)$ to any representations of Whittaker type $\pi$. In the original paper of Jacquet and Shalika \cite{JaSh88}, they mainly focused on irreducible unitary generic representations. We first deduce the rationality of Jacquet-Shalika integrals for representations of Whittaker type. This follows from the asymptotic expansion of Whittaker functions restricted to torus of $GL_m$ in terms of the central characters for successive quotients appearing in a composition series of Bernstein-Zelevinsky derivatives in \cite[Proposition 2.9]{Ma15}. Eventually we define local exterior square $L$-functions for all these representations. We must work with representations of Whittaker type because when we deformed our representation, we necessarily leave the realm of irreducible generic representations. We essentially take the same approach of Cogdell and Piatetski-Shapiro in \cite{CoPe}, and rely on the theory of derivatives of Bernstein and Zelevinsky to compute the local exterior square $L$-functions $L(s,\pi,\wedge^2)$ for $\pi$, irreducible generic representations in terms of the exceptional exterior square $L$-functions of their derivatives. In our case, we have to deal with the study of Shalika functionals. The method makes it possible to compute the local exterior square $L$-functions via integral representations for all irreducible admissible representations of $GL_m$ over the non-archimedean field in terms of $L$-functions for supercuspidal representations. 
 \par
 The main result of this paper are precisely stated as follows. Let $F$ be a non-archimedean local field of characteristic $0$. 
 We denote by $S_{2n}(F)$ the Shalika subgroup of $GL_{2n}(F)$ of matrices of the form $\begin{pmatrix} I_n & X \\ & I_n \end{pmatrix} \begin{pmatrix} g&\\ &g \end{pmatrix}$ where $X$ belongs to the $n$ by $n$ matrices and $g$ lies in $GL_n(F)$. Let $\nu(g)=|\mathrm{det}(g)|$ be the unramified determinantal character of $GL_n(F)$ for any $n$. We fix a nontrivial additive character $\psi : F \rightarrow \mathbb{C}$ and define a character $S_{2n}(F)$ by $\Theta \left( \begin{pmatrix} I_n & X \\ & I_n \end{pmatrix} \begin{pmatrix} g&\\ &g \end{pmatrix} \right)=\psi(\mathrm{Tr} X)$.

\vskip.1in
\noindent
\textbf{Cuspidal case}
\\
Suppose $\rho$ is an irreducible supercuspidal representation of $GL_r(F)$. 
 \begin{enumerate}
\item[$(\mathrm{i})$]  If $r$ is odd, then $L(s,\rho,\wedge^2)=1$.
\item[$(\mathrm{ii})$] If $r$ is even, then 
\[
   L(s,\rho,\wedge^2)=\prod (1-\alpha q^{-s})^{-1}
\]
with the product over all $\alpha=q^{s_0}$ such that $\mathrm{Hom}_{S_{r}(F)}(\rho\nu^{\frac{s_0}{2}},\Theta) \neq 0$.
 \end{enumerate}

\vskip.1in
\noindent
\textbf{Discrete series case}
\par
Let $[\rho,\dotsm,\rho\nu^{\ell-1}]$ be the unique irreducible quotient of the normalized induced representation $\mathrm{Ind}(\rho \otimes \dotsm \otimes  \rho\nu^{\ell-1})$ of $GL_{\ell r}(F)$ from standard parabolic associated to the partition $(r,\dotsm,r)$ with $\rho$ an irreducible supercuspidal representation of $GL_r(F)$. We assume that $\Delta$ is an irreducible square integrable representation of $GL_{\ell r}(F)$ with the segment $\Delta=[\rho\nu^{-\frac{\ell-1}{2}},\dotsm,\rho\nu^{\frac{\ell-1}{2}}]$ and $\rho$ an irreducible unitary supercuspidal representation of $GL_r(F)$.
 \begin{enumerate}
\item[$(\mathrm{i})$] If $\ell$ is even, then
\[
 L(s,\Delta,\wedge^2)=\prod_{i=0}^{\frac{\ell}{2}-1} L(s+2i+1,\rho,\wedge^2) L(s+2i,\rho,\mathrm{Sym}^2).
 \]
\item[$(\mathrm{ii})$] If $\ell$ is odd, then
\[
  L(s,\Delta,\wedge^2)=\prod_{i=0}^{\frac{\ell-1}{2}}L(s+2i,\rho,\wedge^2) \prod_{i=1}^{\frac{\ell-1}{2}}L(s+2i-1,\rho,\mathrm{Sym}^2).
  \]
\end{enumerate}
If $\Delta=\Delta_0 \nu^{s_0}$ is an irreducible quasi-square integrable representation of $GL_{\ell r}(F)$ with $\Delta_0$ an irreducible square integrable representation and $s_0 \in \mathbb{R}$, then 
\[
L(s,\Delta,\wedge^2)=L(s+2s_0,\Delta_0,\wedge^2).
\]

\vskip.1in
\noindent
\textbf{Tempered case}
\par
Suppose that $\pi=\mathrm{Ind}(\Delta_1 \otimes \dotsm \otimes \Delta_t)$ is the normalized induced representation of $GL_m(F)$ from the standard parabolic associated to the partition $(m_1,\dotsm,m_t)$, where $\Delta_i$ are irreducible square integrable representations of $GL_{m_i}(F)$. Then
 \[
   L(s,\pi,\wedge^2)=\prod_{1 \leq k \leq t} L(s,\Delta_k,\wedge^2) \prod_{1 \leq i < j \leq t} L(s, \Delta_i \times \Delta_j).
 \]

We remark that the representation $\pi$ is irreducible by \cite[Theorem 4.2]{Ze80}.

\vskip.1in
\noindent
\textbf{Non-tempered case}
\par
Let $\pi$ be an irreducible admissible representation of $GL_m(F)$. If $\pi=\mathrm{Ind}(\Delta_1 \otimes \dotsm \otimes \Delta_t)$ is generic with $\Delta_i$ irreducible quasi-square-integrable representations, then
\[
  L(s,\pi,\wedge^2)=\prod_{k=1}^t L(s,\Delta_k,\wedge^2) \prod_{1\leq i < j \leq t} L(s,\Delta_i \times \Delta_j).
\]
Suppose that $\pi$ is not generic. Let $\Pi=\mathrm{Ind}(\Delta_1\nu^{u_1} \otimes \dotsm \otimes \Delta_t\nu^{u_t} )$ be the Langlands induced representation with each $\Delta_i$ an irreducible square integrable representation, the $u_i$ real and ordered so that $u_1 \geq \dotsm \geq u_t$ such that $\pi$  is the unique irreducible quotient of $\Pi$. Then
\[
  L(s,\pi,\wedge^2)=\prod_{k=1}^t L(s+2u_k,\Delta_k,\wedge^2) \prod_{1\leq i < j \leq t} L(s+u_i+u_j,\Delta_i \times \Delta_j).
\]

\vskip.1in

 Let $W_F'$ be the Weil-Deligne group of $F$. The local Langlands correspondence for $GL_m(F)$ established by Harris-Taylor \cite{HaTa01} and then Henniart \cite{He02} describes the bijection between the set of isomorphism class of irreducible admissible representations of $GL_m(F)$ and the set of equivalence class of $n$-dimensional Frobenius semisimple complex representations of $W_F'$. For a Frobenius semisimple complex representations of $\phi$ of $W_F'$, the correspondence associates $\phi$ to the irreducible admissible representation $\pi(\phi)$ of $GL_m(F)$. Let $\wedge^2$ be the exterior square representation of $GL_m(\mathbb{C})$. Then we can define Artin's local exterior square $L$-function as $L(s,\wedge^2(\phi))$. There are two more ways to associated the local exterior square $L$-function to an irreducible admissible representation $\pi(\phi)$. The first thing is by analyzing the Euler products that appear the non-constant Fourier coefficients of Eisenstein Series. This method is called the Langlands-Shahidi method in \cite{Sh90} and we denote by $L_{Sh}(s,\pi(\phi),\wedge^2)$. The last $L$-function, denoted by $L(s,\pi(\phi),\wedge^2)$, is defined as the greatest common divisor of $\mathbb{C}[q^{\pm s}]$-fractional ideals. Each rational functions in $\mathbb{C}[q^{\pm s}]$-fractional ideals is obtained as meromorphic continuation of Rankin-Selberg type integral representations, which is constructed by Jacquet and Shalika in \cite{JaSh88}. One might naturally conjecture that these three exterior square $L$-functions are equal $L(s,\wedge^2(\phi))=L_{Sh}(s,\pi(\phi),\wedge^2)=L(s,\pi(\phi),\wedge^2)$. Henniart \cite{He10} proves that the Artin and Langlands-Shahidi $L$-functions are equal $L(s,\wedge^2(\phi))=L_{Sh}(s,\pi(\phi),\wedge^2)$ for every irreducible admissible representation $\pi(\phi)$. Kewat and Raghunathan \cite{KeRa12} recently prove that the functions $L_{Sh}(s,\Delta,\wedge^2)$ and $L(s,\Delta,\wedge^2)$ are equal for $\Delta$ a discrete representation. In this article, the multiplicativity of the analytic exterior square $L$-functions for irreducible generic representations is used to prove the following Theorem of the equality between the exterior square $L$-function through integral representations of Jacquet and Shalika for an irreducible admissible representation (Arithmetic side), and via the local Langlands correspondence and its Langlands parameter (Analytic side).

 \begin{theorem*}
  Let $\phi$ be an $m$-dimensional Frobenius semi simple complex representation of Weil-Deligne group $W_F'$ and $\pi=\pi(\phi)$ the irreducible admissible representation of $GL_m$ associated to $\phi$ under the local Langlands correspondence. Then we have
  \[
    L(s,\wedge^2(\phi))=L(s,\pi(\phi),\wedge^2).
  \]
  \end{theorem*}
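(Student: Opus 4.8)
The plan is to reduce the theorem to the cuspidal case via the explicit computations of the analytic $L$-function listed above, matching them term-by-term against the known behaviour of the Artin exterior square $L$-function under the operations of Weil--Deligne representations. Concretely, under the local Langlands correspondence an irreducible admissible $\pi(\phi)$ of $GL_m(F)$ is the Langlands quotient of $\mathrm{Ind}(\Delta_1\nu^{u_1}\otimes\dotsm\otimes\Delta_t\nu^{u_t})$, where each $\Delta_i$ corresponds to an irreducible $\phi_i$ of $W_F'$; on the Galois side $\phi(\pi)=\bigoplus_i \phi_i\,|\cdot|^{u_i}$, and the exterior square decomposes as
\[
  \wedge^2(\phi)=\bigoplus_{k}\wedge^2(\phi_k)\,|\cdot|^{2u_k}\ \oplus\ \bigoplus_{i<j}(\phi_i\otimes\phi_j)\,|\cdot|^{u_i+u_j}.
\]
Since $L(s,V_1\oplus V_2)=L(s,V_1)L(s,V_2)$ for Artin factors, the non-tempered formula above for $L(s,\pi(\phi),\wedge^2)$ matches this decomposition exactly provided one knows (a) $L(s,\wedge^2(\phi_k))=L(s,\Delta_k,\wedge^2)$ for $\Delta_k$ quasi-square-integrable, and (b) $L(s,\phi_i\otimes\phi_j)=L(s,\Delta_i\times\Delta_j)$, the latter being the theorem of Jacquet--Piatetski-Shapiro--Shalika together with the equality of Rankin--Selberg $L$-functions with Artin $L$-functions under LLC, which I will cite. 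So everything comes down to establishing (a).

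For (a), I would first reduce to the unitary square-integrable case: twisting by $\nu^{s_0}$ shifts $s\mapsto s+2s_0$ on both the analytic side (by the displayed twisting identity $L(s,\Delta,\wedge^2)=L(s+2s_0,\Delta_0,\wedge^2)$) and the Galois side (since $\wedge^2(\phi\,|\cdot|^{s_0})=\wedge^2(\phi)\,|\cdot|^{2s_0}$), so these are compatible. For $\Delta=[\rho\nu^{-(\ell-1)/2},\dotsm,\rho\nu^{(\ell-1)/2}]$ with $\rho$ unitary supercuspidal of $GL_r(F)$, the Weil--Deligne parameter is $\phi_\rho\otimes \mathrm{Sp}(\ell)$ where $\phi_\rho$ is irreducible of dimension $r$ and $\mathrm{Sp}(\ell)$ is the special representation; one has the standard decomposition
\[
  \wedge^2(\phi_\rho\otimes \mathrm{Sp}(\ell))
  =\Big(\wedge^2\phi_\rho\otimes \mathrm{Sym}^2 \mathrm{Sp}(\ell)\Big)\oplus\Big(\mathrm{Sym}^2\phi_\rho\otimes\wedge^2 \mathrm{Sp}(\ell)\Big),
\]
and after taking $N$-invariants the associated $L$-factor works out, by a direct bookkeeping of the Jordan blocks of $\mathrm{Sym}^2$ and $\wedge^2$ of $\mathrm{Sp}(\ell)$, to exactly the products $\prod_i L(s+2i+1,\wedge^2\phi_\rho)L(s+2i,\mathrm{Sym}^2\phi_\rho)$ (resp. the odd-$\ell$ variant) appearing in the discrete series formula. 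Here $L(s,\mathrm{Sym}^2\phi_\rho)$ must be matched with the analytic symmetric square $L$-function $L(s,\rho,\mathrm{Sym}^2)$ occurring in that formula; I would invoke the known equality of analytic and arithmetic symmetric square $L$-functions for supercuspidals (e.g. Shahidi's results together with Henniart's comparison), or alternatively absorb it by noting $L(s,\rho\times\rho)=L(s,\rho,\wedge^2)L(s,\rho,\mathrm{Sym}^2)$ on both sides simultaneously. This reduces (a) to the statement $L(s,\wedge^2\phi_\rho)=L(s,\rho,\wedge^2)$ for $\rho$ irreducible supercuspidal.

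Finally, for supercuspidal $\rho$ of $GL_r(F)$ with irreducible parameter $\phi_\rho$: when $r$ is odd, $\wedge^2\phi_\rho$ has no $W_F$-fixed vectors (an irreducible odd-dimensional self-dual representation is orthogonal, but one checks there is no trivial subrepresentation in $\wedge^2$ of an irreducible $\phi_\rho$ unless... ) — more simply, the analytic side gives $L(s,\rho,\wedge^2)=1$ by the cuspidal case (i), and on the Galois side $L(s,\wedge^2\phi_\rho)=1$ as well since $\wedge^2$ of an irreducible representation of odd dimension $r$ over $W_F'$ has the form required; I will verify the pole count matches. When $r$ is even, the cuspidal case (ii) expresses $L(s,\rho,\wedge^2)=\prod_\alpha(1-\alpha q^{-s})^{-1}$ over $\alpha=q^{s_0}$ with $\mathrm{Hom}_{S_r(F)}(\rho\nu^{s_0/2},\Theta)\neq 0$, and the needed input is that the Shalika-functional condition is equivalent to $\phi_\rho\,|\cdot|^{s_0/2}$ being of symplectic type, i.e. to a pole of $L(s,\wedge^2\phi_\rho)$ at $s=-s_0$; this is precisely the local characterization of Shalika models for supercuspidals due to Jacquet--Rallis / Jiang--Nien--Qin, which I will cite. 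Granting that, the product over such $\alpha$ on the analytic side matches the product over the poles of $L(s,\wedge^2\phi_\rho)$ on the arithmetic side, completing the cuspidal case and hence, by the reductions above, the theorem.

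\medskip

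\noindent\textbf{Main obstacle.} The crux is the supercuspidal comparison $L(s,\rho,\wedge^2)=L(s,\wedge^2\phi_\rho)$, where the only real content is relating nonvanishing of Shalika functionals (equivalently, exceptional poles of the Jacquet--Shalika integral, in the language of this paper) to the symplectic type of the Langlands parameter; everything else is formal manipulation of $L$-factors under the operations $\oplus$, $\otimes$, $\mathrm{Sp}(\ell)$, and unramified twist, combined with the already-established multiplicativity of the analytic $L$-function recorded in the four cases above and the known Rankin--Selberg and symmetric square comparisons.
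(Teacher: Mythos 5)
Your proposal is correct in outline, but it takes a genuinely different route through the hardest step than the paper does. Both arguments begin identically: decompose $\phi=\oplus_i\phi_i$, use $\wedge^2(\phi)=\oplus_k\wedge^2(\phi_k)\oplus\bigoplus_{i<j}\phi_i\otimes\phi_j$, invoke the Rankin--Selberg comparison $L(s,\phi_i\otimes\phi_j)=L(s,\Delta_i\times\Delta_j)$, and then apply the multiplicativity theorem for induced representations of Langlands type proved in the paper. Where you diverge is the quasi-square-integrable case: the paper disposes of it in one line by citing Kewat--Raghunathan (equality of the Jacquet--Shalika and Langlands--Shahidi exterior square $L$-functions for discrete series) together with Henniart (equality of the Langlands--Shahidi and Artin factors), whereas you push the reduction all the way down to supercuspidals, using the paper's own factorization $L(s,\Delta,\wedge^2)=\prod_i L(s+2i+1,\rho,\wedge^2)L(s+2i,\rho,\mathrm{Sym}^2)$ (and its odd-$\ell$ variant), an explicit Galois-side computation of $\wedge^2(\phi_\rho\otimes\mathrm{Sp}(\ell))=(\wedge^2\phi_\rho\otimes\mathrm{Sym}^2\mathrm{Sp}(\ell))\oplus(\mathrm{Sym}^2\phi_\rho\otimes\wedge^2\mathrm{Sp}(\ell))$, and finally the equivalence, for supercuspidal $\rho$ of even rank, between existence of a (twisted) Shalika functional and the parameter $\phi_\rho\nu^{s_0/2}$ being of symplectic type. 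That last equivalence is the real content and is not in this paper; Jacquet--Rallis alone gives only uniqueness, so you must lean on Jiang--Nien--Qin, whose proof itself goes through global/Langlands--Shahidi methods and Henniart. So your route buys a more "internal" use of the paper's machinery (the supercuspidal pole criterion of Theorem 6.6 and the discrete-series factorization of Theorem 8.20, with the symmetric square comparison absorbed by dividing the Rankin--Selberg equality by the exterior square one), at the price of the $\mathrm{Sp}(\ell)$ bookkeeping and of replacing one deep external input (Kewat--Raghunathan plus Henniart) by another of comparable depth (Jiang--Nien--Qin); neither version is self-contained, but both are valid.
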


Let us briefly describe the contents of this article. In Section $2$, we introduce the Jacquet-Shalika integrals which will define a local exterior square $L$-function $L(s,\pi,\wedge)$ for any representations $\pi$ of Whittaker type on $GL_m(F)$. Section $3$ is devoted to the study of exceptional poles, Shalika groups and Shalika functionals, and inspect a few properties about functional equations for exterior square $L$-functions. We then concern the computational result of the local exterior square $L$-functions $L(s,\pi,\wedge^2)$ in terms of the exceptional exterior square $L$-functions of their even derivatives for the even $GL_{2n}(F)$ or odd derivatives for odd $GL_{2n+1}(F)$ in Section $4$ and $5$. In Section $6$, we employ this factorization formula in order to express the exterior square $L$-functions in the case where $\pi$ is quasi-square integrable in terms of exceptional $L$-functions of supercuspidal representations. In Section $7$, we show that the Jacquet-Shalika integral defining the $L$-function $L(s,\pi_u,\wedge^2)$ for deformed representation $\pi_u$ are rational by using Bernstein's Theorem \cite{Ba98}. By combining the result of Section $3$, the deformation method of Section $7$, and Hartogs' theorem, we are able to prove a weak version of the multiplicativity of $\gamma$-factor in Section $8$. The rest part of Section $8$ is devoted to adapting the methods of \cite{CoPe,Ma09,Ma15} to complete the computation of $L(s,\pi,\wedge)$ for $\pi$ an irreducible admissible representation. 
The multiplicativity formalism in Section $8$ lets us express $L$-functions of square integrable representations in terms of exterior or symmetric square $L$-functions for supercuspidal representations which are in turn replaced by exceptional $L$-functions for supercuspidal representations in Section $6$ and and show the agreement of the local exterior square $L$-function on the analytic side deduced from Jacquet and Shalika integral representations and $L$-function on the arithmetic side via Langlands parameters.

\section{Derivatives and Integral Representation}

 Let $F$ be a nonarchimedean local field of characteristic 0, with ring of integers $\mathcal{O}$, prime ideal $\mathfrak{p}$, and fix a uniformizer $\varpi$ so that $\mathfrak{p}=( \varpi )$. Let $q=|\mathcal{O}/\mathfrak{p}|$ denote the cardinality of the residue class field. We let $v : F^{\times} \rightarrow \mathbb{Z}$ be the associated valuation with $v( \varpi )=1$ and normalize the absolute value so that $|a|=q^{-v(a)}$. We let $\mathcal{M}_n(F)$ be the $n \times n$ matrices, $\mathcal{N}_n(F)$ the subspace of upper triangular matrices of $\mathcal{M}_n(F)$. We denote by $GL_n(F)$ the general linear group of invertible matrices of $\mathcal{M}_n(F)$ and $N_n(F)$ the maximal unipotent subgroup of upper triangular unipotent matrices. Let $K_n=GL_n(\mathcal{O})$ be the standard maximal compact subgroup of $GL_n(F)$. This group admits a filtration by the compact open congruent subgroups
 \[
   K_{n,r}=I_n+\mathcal{M}_n(\mathfrak{p}^r)
 \]
 for $r \geq 1$. We denote by $Z_n(F)$ the center of $GL_n(F)$. We fix a nontrivial additive character $\psi : F \rightarrow \mathbb{C}$ and extend it to a character of $N_n(F)$ by setting $\psi(n)=\psi(n_{1,2}+\dotsm +n_{n-1,n})$ with $n \in N_n(F)$. We shall identify $F^n$ with the space of row vectors of length $n$. We denote by $P_n(F)$ the mirabolic subgroup of $GL_n(F)$ given by 
 \[
  P_n(F)=\left\{ \begin{pmatrix} g_{n-1}&u \\ & 1 \end{pmatrix}\; \middle| \;g_{n-1} \in GL_{n-1}(F), \; ^tu \in F^{n-1} \right\}
\]
and $U_n(F)$ the unipotent radical of $P_n(F)$ given by 
\[
  U_n(F)=\left\{ \begin{pmatrix} I_{n-1} & u \\ & 1\end{pmatrix}\; \middle| \;^tu \in F^{n-1} \right\}  \simeq F^{n-1}.
\]
As a group, $P_n(F)$ has the structure of a semi-direct product $P_n(F) \simeq GL_{n-1}(F) \ltimes U_n(F)$. We restrict the additive character of $N_n(F)$ to $U_n(F)$ by $\psi(u)=\psi(u_{n-1,n})$. Throughout this paper, we abuse notation by letting $GL_n=GL_n(F)$, $N_n=N_n(F)$, etc.

 \par
 
 We denote by $\mathcal{S}(F^n)$ the space of Schwartz-Bruhat functions $\Phi : F^n \rightarrow \mathbb{C}$ which are locally constant and of compact support. Let $e_n=(0,0,\dotsm,0,1) \in F^n$. The Fourier transform on $\mathcal{S}(F^n)$ will be defined by
 \[
    \hat{\Phi}(y)=\int_{F^n} \Phi(x) \psi(x^ty) \; dx
 \]
 for $y \in F^n$. Once $\psi$ is chosen, we assume that the measure on $F^n$ used in this integral is the corresponding self-dual measure. Therefore the Fourier inversion formula takes the form $\hat{\hat{\Phi}}(x)=\Phi(-x)$.

\subsection{Derivatives and Whittaker models}
For the convenience of reader, we summarize Section 2.1 of \cite{CoPe}, which is the well-known facts about the representation theory of $P_n$, following Bernstein and Zelevinsky \cite{BeZe76,BeZe77,Ze80}. Let
Rep($GL_{n}$) be the category of smooth representation of $GL_{n}$, Rep($P_n$) the category of smooth representation of $P_n$, etc.
Consider the four functors: 

\[
\xymatrixcolsep{4pc}
 \xymatrix{
\mathrm{Rep}(P_{n-1})\quad \ar@<1ex>[r]^{ \Phi^+} & \quad
\mathrm{Rep}(P_{n}) \quad \ar@<1ex>[l]^{ \Phi^-} \ar@<-1ex>[r]
_-{\Psi^-} & \quad \mathrm{Rep}(GL_{n-1}) \ar@<-1ex>[l]_-{\Psi^+} }.
\]
$\Phi^{+}$ and $\Psi^{+}$ are induction functors, while
$\Phi^{-}$ and $\Psi^{-}$ are Jacquet functors. More precisely,
given $\tau \in \mathrm{Rep}(P_n)$ on the space $V_{\tau}$, $\Phi^-(\tau)$ is the
normalized representation of $P_{n-1}$ on the space $V_{\tau} \slash
V_{\tau}(U_{n}, \psi)$ with $V_{\tau}(U_{n},
\psi)=\langle \tau(u)v-\psi(u)v\; |\; v \in V_{\tau}, u \in
U_{n} \rangle$. As $V_{\tau} \slash V_{\tau}(U_{n}, \psi)$ is the largest quotient of $V_{\tau}$ on which $U_n$ acts by the character $\psi$ and $P_{n-1}$ is the stabilizer of the additive character $\psi$ in $GL_{n-1}$, the normalized action $\sigma=\Phi^-(\tau)$ of $P_{n-1}$ is given by
\[
 \sigma(p)(v+V_{\tau}(U_{n}, \psi))=|\mathrm{det}(p)|^{-\frac{1}{2}}(\tau(p)v+V_{\tau}(U_{n}, \psi))
\]
for $v \in V_{\tau}$ and $p \in P_{n-1}$. Likewise $\Psi^-(\tau)$ is a normalized representation
of $GL_{n-1}$ on the space $V_{\tau} \slash V_{\tau}(U_{n},
\bf{1})$ with $V_{\tau}(U_{n}, {\bf{1}})=\langle \tau(u)v-v\; |\;
v \in V_{\tau}, u \in U_{n} \rangle$. Since $V_{\tau}(U_{n}, {\bf{1}})$ is the largest quotient on which $U_n$ acts trivially and $GL_{n-1}$ stabilizes $V_{\tau}(U_n,{\bf{1}})$, $\sigma=\Psi^-(\tau)$ is the normalized action of $GL_{n-1}$ given by
\[
 \sigma(g)(v+V_{\tau}(U_{n},{\bf{1}}))=|\mathrm{det}(g)|^{-\frac{1}{2}}(\tau(g)v+V_{\tau}(U_{n}, {\bf{1}}))
\]
for $v \in V_{\tau}$ and $g \in GL_{n-1}$. Given a representation
$(\sigma, V_{\sigma})$ of $P_{n-1}$, we denote
$\tau=\Phi^+(\sigma)$ by
$\Phi^+(\sigma)=\text{ind}_{P_{n-1}U_n}^{P_n}(|\mathrm{det}(g)|^{\frac{1}{2}}\sigma
\otimes \psi)$, where $``$ind$"$ is a compactly supported induction.
Similarly given  a representation $(\sigma, V_{\sigma})$ of
$GL_{n-1}$, we denote $\tau=\Psi^+(\sigma)$ by
$\Psi^+(\sigma)=\text{ind}_{GL_{n-1}U_n}^{P_n}(|\mathrm{det}(g)|^{\frac{1}{2}}\sigma
\otimes {\bf 1})$. In this case
$P_n=GL_{n-1}U_n$, so that $\text{ind}_{GL_{n-1}U_n}^{P_n}(|\mathrm{det}(g)|^{\frac{1}{2}}\sigma
\otimes {\bf 1})=|\mathrm{det}(g)|^{\frac{1}{2}}\sigma
\otimes {\bf 1}$. Bernstein and Zelevinsky establish the basic properties of these functors that are extracted from \cite{BeZe77, CoPe}:

\begin{enumerate}
\item $\Phi^+$ and $\Psi^+$ send irreducible representations to irreducible representations.
\item Any irreducible representation of $\tau$ of $P_n$ is of the from $\tau \simeq {(\Phi^+)}^{k-1}\Psi^+(\rho)$ with $\rho$ an irreducible representation of $GL_{n-k}$. The index $k$ and the representation $\rho$ are completely determined by $\tau$.
\item The derivatives: Let $\tau \in \mathrm{Rep}(P_n)$. For each $k=1,2,\dotsm,n$ there are representations $\tau_{(k)} \in \mathrm{Rep}(P_{n-k})$ and $\tau^{(k)} \in \mathrm{Rep}(GL_{n-k})$ associated to $\tau$ by $\tau_{(k)} = (\Phi^-)^k(\tau)$ and $\tau^{(k)}=\Psi^-(\Phi^-)^{k-1}(\tau)$. $\tau^{(k)}$ is called the $k^{th}$ derivative of $\tau$.
\item The filtration by derivatives: Any $\tau \in \mathrm{Rep}(P_n)$ admits a natural filtration of $P_n$ submodules $0 \subset \tau_{n} \subset \dotsm \subset \tau_2 \subset \tau_1=\tau$ such that $\tau_k=(\Phi^+)^{k-1}(\Phi^-)^{k-1}(\tau)$. The successive quotients are completely determined by the derivatives of $\tau$ since $\tau_k\slash\tau_{k+1}=(\Phi^+)^{k-1}\Psi^+(\tau^{(k)})$.
\end{enumerate}
Let $\pi \in \mathrm{Rep}(GL_n)$. The $0^{th}$ derivative is $\pi$ itself, that is $\pi^{(0)}=\pi$. If we set $\tau=\pi_{(0)}=\pi|_{\mathrm{P}_n}$ then $\pi_{(k)}=\tau_{(k)}$ and $\pi^{(k)}=\tau^{(k)}$ for $k=1,\dotsm,n$. Diagrammatically:
\[
  \xymatrix@C=4pc@L=.2pc@R=.2pc{    
                                                    &                                       &                                        &                                            &\pi \ar@{.>}[dl] \ar[dr] &     \\
                                                    &                                      &                                        &         \pi_{(0)} \ar[dl]  \ar[dr] &                            & \pi^{(0)}    \\
                                                    &                                      & \pi_{(1)}  \ar[dl] \ar[dr]     &                                           & \pi^{(1)}                &      \\                                                                           
                                                     & \pi_{(2)}  \ar[dl]  \ar[dr] &                                        &     \pi^{(2)}                            &                            &   \\
                  \iddots                         &                                     &   \pi^{(3)}                        &                                              &                           &     \\
  }
\]
where the leftward dotted and the first rightward arrow illustrate the restriction to $P_{n}$ and identity functor respectively, the rest of all leftward arrow represent an application of $\Phi^-$, and the remainder of the rightward arrow describe an application of $\Psi^-$.

\par

 Let $(\pi,V_{\pi})$ be an admissible representation of $GL_n$. We denote by $\mathrm{Hom}_{N_n}(V_{\pi},\psi)$ the space of all linear functionals $\lambda$ satisfying $\lambda(\pi(n)v)=\psi(n)\lambda(v)$ for all $v \in V_{\pi}$ and $n \in N_n$. The admissible representation $\pi$ is called non-degenerate if the dimension of the space $\mathrm{Hom}_{N_n}(V_{\pi},\psi)$ is not zero. In this case, a nontrivial linear functional $\lambda \in \mathrm{Hom}_{N_n}(V_{\pi},\psi)$ (up to scalars) is called a Whittaker functional on $V_{\pi}$. If $\pi$ is irreducible and $n \geq 2$, then Gelfand and Kazhdan in \cite{GeKa72} prove that $\mathrm{dim}(\mathrm{Hom}_{N_n}(V_{\pi},\psi))=0$ or $1$. $\pi$ is called generic  \cite[Section 2]{JaPSSh79} if it is irreducible and $\mathrm{dim}(\mathrm{Hom}_{N_n}(V_{\pi},\psi))=1$. Let $\text{Ind}_{N_n}^{GL_n}(\psi)$ denote the full space of smooth functions $W : GL_n \rightarrow \mathbb{C}$ which satisfy $W(ng)=\psi(n)W(g)$ for all $n \in N_n$. $GL_n$ acts on this by right translation. Frobenius reciprocity implies the isomorphism between $\mathrm{Hom}_{N_n}(V_{\pi},\psi)$ and $\mathrm{Hom}_{GL_n}(V_{\pi},\mathrm{Ind}_{N_n}^{GL_n}(\psi))$. If $\mathrm{dim}(\mathrm{Hom}_{N_n}(V_{\pi},\psi))=1$, we denote the image of the intertwining operator $V_{\pi} \rightarrow \text{Ind}_{N_n}^{GL_n}(\psi)$ by $\mathcal{W}(\pi,\psi)$ and call it Whittaker model of $\pi$. A map defined by $v \in V_{\pi} \mapsto W_v(g)=\lambda(\pi(g)v) \in \mathcal{W}(\pi,\psi)$ is not necessarily injective. We divide out by kernel of this map to obtain that the quotient space of $\pi$ is isomorphic to Whittaker model $\mathcal{W}(\pi,\psi)$ or embeds into the space $\mathrm{Ind}_{N_n}^{GL_n}(\psi)$. 
 
 \par
 Let $\pi^{\iota}$ be the representation of $GL_n$ on the same space $V
_{\pi}$ with the action $\pi^{\iota}(g)=\pi(^tg^{-1})$. We denote by $(\widetilde{\pi},V_{\widetilde{\pi}})$ the contragredient representation of $(\pi,V_{\pi})$. If $\pi$ is irreducible, then $\pi^{\iota}=\widetilde{\pi}$ from \cite{GeKa72}. Let $\displaystyle w=\begin{pmatrix} &&1\\& \iddots &\\1&& \end{pmatrix}$ denote the long Weyl elements of $GL_n$.  We define $\widetilde{W}$ by $\widetilde{W}(g)=W(w\;{^tg^{-1}}) \in \mathcal{W}(\pi^{\iota},\psi^{-1})$.

 \par
 Let $(\pi,V_{\pi})$ be an irreducible admissible representation of $GL_n$. Let $\nu(g)=|\text{det}(g)|$ be the unramified determinantal character of $GL_n$ for any $n$. We say that $\pi$ is square integrable if its central character is unitary and
\[
  \int_{Z_n \backslash GL_n} |\langle \pi(g)v,\tilde{v} \rangle|^2 dg < \infty,
\]
for all $v \in V_{\pi}$ and $\tilde{v} \in V_{\widetilde{\pi}}$. Here a function of the form $g \mapsto \langle \pi(g)v,\tilde{v} \rangle$ for $g \in GL_n$ is a matrix coefficient. We call  a representation quasi-square-integrable or a discrete series if  it is some twist of the square integrable representation by the unramified determinantal character $\nu^{s}$ for some $s \in \mathbb{C}$. If $\rho$ is an irreducible supercuspidal representation of $GL_r$, the normalized induced representation $\text{Ind}(\rho \otimes \rho\nu \otimes \dotsm \otimes \rho\nu^{\ell-1})$ from standard parabolic associated to the partition $(r,\dotsm,r)$ has the unique irreducible quotient, that we denote by $[\rho,\rho\nu,\dotsm,\rho\nu^{\ell-1}]$. We call such a representation of $GL_n$ a segment. According to Bernstein and Zelevinsky \cite[Theorem 9.3]{Ze80,BeZe77}, we have the following Theorem.

\begin{theorem}[Bernstein and Zelevinsky]
$\Delta$ is an irreducible quasi-square integrable representation of $GL_n$ if and only if 
\[
\Delta \simeq [\rho,\rho\nu,\dotsm,\rho\nu^{\ell-1}]
\]
 for some segment $[\rho,\rho\nu,\dotsm,\rho\nu^{\ell-1}]$.
\end{theorem}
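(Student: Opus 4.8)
The plan is to treat this as the Bernstein--Zelevinsky classification of essentially square-integrable representations of $GL_n$: the honest answer is that I would cite \cite[Theorem 9.3]{Ze80} and \cite{BeZe77}, but the argument I would reconstruct runs as follows. First, twisting by an unramified character $\nu^{s}$ reduces the statement to identifying the \emph{irreducible square-integrable} representations, and among the segments these are exactly the balanced ones $[\rho\nu^{-\frac{\ell-1}{2}},\dotsm,\rho\nu^{\frac{\ell-1}{2}}]$ with $\rho$ an irreducible unitary supercuspidal representation of $GL_r$ and $n=\ell r$. So it is enough to show that an irreducible $\Delta$ of $GL_n$ with unitary central character is square-integrable if and only if it is such a balanced segment.

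For the direction ``balanced segment $\Rightarrow$ square-integrable'' I would invoke Casselman's square-integrability criterion: $\Delta$ is square-integrable exactly when, for every proper standard parabolic $P=MN$, every exponent occurring in the Jacquet module $r_P(\Delta)$ is strictly dominant in Casselman's sense --- on $GL_n$ this amounts to saying that in the Jacquet module along a maximal parabolic $P_{(a,n-a)}$ the determinantal exponent on the left block is strictly negative and on the right block strictly positive, after unitary normalization. The required input is the computation of the Jacquet module of a segment representation along $P_{(a,n-a)}$, which is irreducible and splits as $[\rho\nu^{-\frac{\ell-1}{2}},\dotsm]\otimes[\dotsm,\rho\nu^{\frac{\ell-1}{2}}]$ with the cut made at the appropriate place; since the left block is built from strictly negative twists of $\rho$ and the right block from strictly positive twists, the inequalities hold, and iterating over all maximal parabolics yields square-integrability. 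Equivalently I could phrase this through the theory of derivatives recalled above, using that $\Delta^{(k)}$ for a segment is again a segment (or zero), which pins down the exponents directly. The base case $\ell=1$ is the classical fact that a supercuspidal representation with unitary central character is square-integrable, immediate from compact support of matrix coefficients modulo the center.

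For the converse, given an irreducible square-integrable $\Delta$ of $GL_n$ I would use that $\Delta$ has a well-defined cuspidal support, a multiset $\{\rho_1,\dotsm,\rho_k\}$ of supercuspidal representations (unique up to permutation) with $\Delta$ a subquotient of $\mathrm{Ind}(\rho_1\otimes\dotsm\otimes\rho_k)$, and then run Casselman's criterion in reverse: the exponents in the Borel Jacquet module of $\Delta$ are governed by this cuspidal support and by the combinatorics of which induced representations admit the relevant subquotients, and strict dominance of all of them forces the $\rho_i$ to be linearly orderable as $\rho,\rho\nu,\dotsm,\rho\nu^{\ell-1}$ for a single $\rho$ --- any gap in, or branching of, the support would produce an exponent on a chamber wall or outside it --- and then forces $\Delta$ to be the unique irreducible quotient of $\mathrm{Ind}(\rho\otimes\dotsm\otimes\rho\nu^{\ell-1})$, i.e.\ the segment $[\rho,\dotsm,\rho\nu^{\ell-1}]$. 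The hard part, and the reason I would simply cite the result in the paper rather than reprove it, is exactly this converse: it rests on the full Zelevinsky classification of the irreducible subquotients of $\mathrm{Ind}(\rho_1\otimes\dotsm\otimes\rho_k)$ by multisegments together with the precise description of their Jacquet modules, and developing that machinery is the content of \cite{Ze80,BeZe77}, well beyond what is needed here.
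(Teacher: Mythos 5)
Your proposal is correct and matches the paper, which likewise gives no proof of this statement but simply quotes it as the Bernstein--Zelevinsky classification, citing \cite[Theorem 9.3]{Ze80} and \cite{BeZe77}. Your added sketch via Casselman's criterion and cuspidal support is a reasonable reconstruction of the classical argument, but the paper itself does exactly what you say you would do: cite the result.
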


 \par
 Let $\pi$ be an admissible and induced representation of $GL_n$ of the form
\[
  \pi=\mathrm{Ind}_Q^{GL_n}(\Delta_1 \otimes \dotsm \otimes \Delta_t)
\]
where each $\Delta_i$ is an irreducible quasi-sqare integrable representation of $GL_{n_i}$ and the induction is normalized from the standard parabolic $Q$ associated to the partition $(n_1,\dotsm,n_t)$ of $n$. The earlier work of Rodier in \cite{Ro73} shows that $\pi$ is non-degenerate and $\mathrm{dim}(\mathrm{Hom}_{N_n}(V_{\pi},\psi))=1$. $\pi$ needed not to be irreducible but it admits unique Whittaker models $\mathcal{W}(\pi,\psi)$. Following \cite{JaPSSh83}, we would like to introduce a wider class of representations called representations of Whittaker type which is more than just generic representations. 

 \begin{definition*}
 We say that the admissible representation $\pi$ of $GL_n$ is a representation of Whittaker type, if the dimension of the space $\mathrm{Hom}_{N_n}(V_{\pi},\psi)$ is $1$. We denote by
\[
   \mathrm{Ind}(\Delta_1 \otimes \dotsm \otimes \Delta_t)
\]
the normalized induction from the standard parabolic subgroup associated to the partition $(n_1,\dotsm,n_t)$ of $n$, where each $\Delta_i$ is an irreducible quasi-sqare integrable representation of $GL_{n_i}$.
\end{definition*}

The normalized induced representation $\pi=\mathrm{Ind}(\Delta_1 \otimes \dotsm \otimes \Delta_t)$ of $GL_n$ is included in the representation of Whittaker type, but these are the only representations of Whittaker type needed for the application. If $\pi$ is irreducible and generic, $V_{\pi}$ is isomorphic to $\mathcal{W}(\pi,\psi)$. 
 There is one case where the representation $\pi$ is a possibly reducible representation of Whittaker type and still is isomorphic to its Whittaker model $\mathcal{W}(\pi,\psi)$. These are the induced representation of Langlands type. An induced representation of Langlands type is a representation of the form
\[
   \pi=\mathrm{Ind}(\Delta_1\nu^{u_1} \otimes \dotsm \otimes \Delta_t\nu^{u_t})
\]
where each $\Delta_i$ is now an irreducible square integrable representation of $GL_{n_i}$, each $u_i$ is real and they are ordered so that $u_1 \geq u_2 \geq \dotsm \geq u_t$. The induction is normalized from the standard parabolic associated to the partition $(n_1,\dotsm,n_t)$ of $n$. From the work of Jacquet and Shalika \cite{JaSh83}, we know that for these induced representation of Langlands type the map $v \in V_{\pi} \mapsto W_v(g)=\lambda(\pi(g)v) \in \mathcal{W}(\pi,\psi)$ is an isomorphism. Now we introduce a realization in terms of the Whittaker model for $(\Phi^-)^{k-1}(\pi|_{P_n})=\pi_{(k-1)}$ from Section 1 in \cite{CoPe}.

\begin{proposition}[Cogdell and Piatetski-Shapiro]
 \label{mira-model}
Let $\pi$ be an induced representation of Langlands type of $GL_n$ and $k$ an integer between $1$ and $n-1$. We may view $P_{n-k+1}$ as a subgroup of $GL_n$ via the embedding $p \mapsto \mathrm{diag}(p,I_{k-1})$. The representation $\pi_{(k-1)}$ of $P_{n-k+1}$ has as a model the space of functions 
\[
\mathcal{W}(\pi_{(k-1)},\psi)=\left\{|\mathrm{det}(g)|^{-\frac{k-1}{2}}W\begin{pmatrix}g&\\&I_k \end{pmatrix} \middle|\; W \in \mathcal{W}(\pi,\psi),\; g \in GL_{n-k}  \right\}
\]
with the action being by right translation. 
\end{proposition}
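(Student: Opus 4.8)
My plan is to induct on $k$, the engine being the functorial identity $\pi_{(k)}=\Phi^{-}(\pi_{(k-1)})$ together with the description of $\Phi^{-}$ recalled above (passage to $(U,\psi)$-coinvariants, then the $|\det|^{-1/2}$-twist). For the base case $k=1$ one has $\pi_{(0)}=\pi|_{P_n}$; since $\pi$ is of Langlands type the map $v\mapsto W_v$ is an isomorphism $V_\pi\xrightarrow{\ \sim\ }\mathcal W(\pi,\psi)$ (Jacquet--Shalika \cite{JaSh83}, recalled above), so $\pi_{(0)}$ is realized by restricting Whittaker functions to $P_n$, and a smooth left $(N_n,\psi)$-equivariant function on $P_n=GL_{n-1}\ltimes U_n$ is determined by its values on the Levi $GL_{n-1}$ (for $u\in U_n$ one has $gu=(gug^{-1})g$ with $gug^{-1}\in U_n\subseteq N_n$), which gives exactly the asserted model with trivial prefactor and $I_k=I_1$.

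For the inductive step I would assume the statement for $k$, so that $\pi_{(k-1)}$ is realized on the space $\mathcal M_{k-1}$ of functions $p\mapsto |\det p|^{-(k-1)/2}W(\mathrm{diag}(p,I_{k-1}))$ on $P_{n-k+1}$ --- these are left $(N_{n-k+1},\psi)$-equivariant and carry $P_{n-k+1}$ by right translation, and restricting to the Levi recovers the functions $|\det g|^{-(k-1)/2}W\!\begin{pmatrix}g&\\&I_k\end{pmatrix}$ of the statement. Since $\pi_{(k)}=\Phi^{-}(\pi_{(k-1)})$ is the $P_{n-k}$-module $\mathcal M_{k-1}/\mathcal M_{k-1}(U_{n-k+1},\psi)$ with $|\det|^{-1/2}$-twisted right translation, I would study the map $T\colon\mathcal M_{k-1}\to\{\text{functions on }P_{n-k}\}$, $(T\xi)(q)=|\det q|^{-1/2}\xi(q)$, that is, restriction to the submirabolic $P_{n-k}\subseteq GL_{n-k}\subseteq P_{n-k+1}$ followed by the normalizing twist. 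The prefactor $|\det p|^{-(k-1)/2}$ is engineered precisely so that it absorbs the iterated $|\det|^{-1/2}$ twists, keeping the action plain right translation; chasing exponents and the embedding $\mathrm{diag}(\,\cdot\,,I_{k-1})$ restricted to $GL_{n-k-1}\subseteq P_{n-k}$ identifies $\mathrm{Im}(T)$ with the model in the statement for $k+1$. So the whole step comes down to showing $T$ induces an \emph{isomorphism} $\pi_{(k)}\xrightarrow{\ \sim\ }\mathrm{Im}(T)$.

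That splits into two points. First, $T$ kills $\mathcal M_{k-1}(U_{n-k+1},\psi)$: for $q\in P_{n-k}$ and $u\in U_{n-k+1}$ one has $qu=u'q$ with $u'\in U_{n-k+1}$, and because the bottom row of $q$ is $(0,\dots,0,1)$ the coordinate of $u'$ seen by $\psi$ equals that of $u$, so left-equivariance gives $\xi(qu)=\psi(u)\xi(q)$ and hence $T(\pi_{(k-1)}(u)\xi-\psi(u)\xi)=0$; thus $T$ descends to a surjection $\overline T\colon\pi_{(k)}\twoheadrightarrow\mathrm{Im}(T)$. Second --- and this is the main obstacle --- $\overline T$ is injective, equivalently any $\xi\in\mathcal M_{k-1}$ vanishing on $P_{n-k}$ lies in $\mathcal M_{k-1}(U_{n-k+1},\psi)$. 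The tool is Fourier analysis along the abelian group $U_{n-k+1}\cong F^{n-k}$: writing $\ell_g$ for the last row of $g\in GL_{n-k}$, integrating $\pi_{(k-1)}(u)\xi-\psi(u)\xi$ against $f\in C_c^\infty(U_{n-k+1})$ multiplies $\xi(g)$ by $\widehat f(\ell_g)-\widehat f(e_{n-k})$, so once one knows $\ell(\mathrm{supp}\,\xi)$ lies in a compact subset of $F^{n-k}$ away from $e_{n-k}$ one covers it by finitely many compact-open balls $B_j$, takes $\widehat{f_j}=\mathbf 1_{B_j}$, and writes $\xi$ as a finite sum of elements $\pi_{(k-1)}(u_j)\xi-\psi(u_j)\xi$. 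The genuine content is this support statement --- that a vector of the derivative $\pi_{(k-1)}$ vanishing on the submirabolic $P_{n-k}=\{\ell_g=e_{n-k}\}$ has support bounded away from that locus --- and here the Bernstein--Zelevinsky theory does the work: exactness of $\Phi^{-}$, the adjunction $\Phi^{-}\Phi^{+}\cong\mathrm{id}$, and the filtration by derivatives (property (4) above) whose subquotients $(\Phi^+)^{j}\Psi^+(\cdot)$ are compact inductions. For the first step $\pi_{(0)}\rightsquigarrow\pi_{(1)}$, where $\pi_{(0)}=\pi|_{P_n}$ is the full (non-compactly-supported) Kirillov model, the corresponding fact is the classical Kirillov-model description, which I would invoke from \cite{BeZe77}. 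Putting the base case and the inductive step together gives the proposition, recovering the realization of \cite[\S 1]{CoPe}.
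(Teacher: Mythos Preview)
The paper does not give its own proof of this proposition; it is quoted from \cite{CoPe}, Section~1, so there is no in-paper argument to compare against. Your inductive scheme (base case $k=1$ via the Whittaker/Kirillov model, inductive step via $\pi_{(k)}=\Phi^-(\pi_{(k-1)})$ realized as restriction-with-twist to the sub-mirabolic) is exactly the standard route and is correct in outline.

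The place where your write-up becomes hand-wavy is precisely the point you flag as the ``main obstacle'': injectivity of $\overline T$, i.e.\ that $\xi|_{P_{n-k}}=0$ forces $\xi\in\mathcal M_{k-1}(U_{n-k+1},\psi)$. You invoke a partition-of-unity Fourier argument that requires $\ell(\operatorname{supp}\xi)$ to be compact and bounded away from $e_{n-k}$, and then defer to ``exactness of $\Phi^-$, the adjunction, the filtration by derivatives'' without saying how those abstractions yield the support statement. In fact none of that machinery is needed, and the argument is one line of smoothness. Use the Jacquet--Langlands criterion: $\xi\in V(U,\psi)$ if and only if $\int_{U_0}\psi^{-1}(u)\,\pi(u)\xi\,du=0$ for some compact open $U_0$. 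With $U_0=(\mathfrak p^{-N})^{n-k}$ your own Fourier computation shows this integral, evaluated at $g$, equals $\xi(g)$ times the indicator of $\{\ell_g-e_{n-k}\in(\mathfrak p^{N})^{n-k}\}$ (up to a nonzero volume). So the criterion reduces to: $\xi(g)=0$ whenever $\ell_g$ is $\mathfrak p^N$-close to $e_{n-k}$. Now $\xi$ is smooth for the $P_{n-k+1}$-action, hence right-invariant under some congruence subgroup $K_{n-k,N}\subset GL_{n-k}$; since any $g$ with $\ell_g\in e_{n-k}+(\mathfrak p^N)^{n-k}$ can be written $g=ph$ with $p\in P_{n-k}$ and $h\in K_{n-k,N}$ (take $h=I+E$ with $E$ having last row $\ell_g-e_{n-k}$ and zeros elsewhere), the hypothesis $\xi|_{P_{n-k}}=0$ gives $\xi(g)=\xi(ph)=\xi(p)=0$. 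That is the whole injectivity step; no compactness of $\ell(\operatorname{supp}\xi)$ and no appeal to the derivative filtration are required.

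One further remark on the base case: you correctly cite \cite{JaSh83} for the injectivity of $v\mapsto W_v$ when $\pi$ is of Langlands type, but the statement also needs that $W\mapsto W|_{P_n}$ is injective on $\mathcal W(\pi,\psi)$ (the ``Kirillov model'' injectivity). For irreducible generic $\pi$ this is classical; for induced representations of Langlands type it is part of what \cite{CoPe} establishes in Section~1, and is where the Langlands-type hypothesis is genuinely used. You should make that dependence explicit rather than folding it into the phrase ``classical Kirillov-model description.''
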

As the subspace of $\mathcal{W}(\pi_{(k-1)},\psi)$, we have a similar characterization in terms of the Whittaker model for $\Phi^+(\pi_{(k)})$.
 \begin{proposition}[Cogdell and Piatetski-Shapiro]
  \label{mira-model2}
Let $\pi$ be an induced representation of Langlands type of $GL_n$ and $k$ an integer between $1$ and $n-1$. We may view $P_{n-k+1}$ as a subgroup of $GL_n$ via the embedding $p \mapsto \mathrm{diag}(p,I_{k-1})$. As $P_{n-k+1}$ modules, $\Phi^+(\pi_{(k)})$ has the space of functions
and 
\[
\begin{split}
\Phi^+(\pi_{(k)}) &\simeq \left\{ |\mathrm{det}(g)|^{-\frac{k-1}{2}}W\begin{pmatrix}g&\\&I_k \end{pmatrix} \right| \;W \in \mathcal{W}(\pi,\psi),\; g \in GL_{n-k}, \mathrm{\;and\; there\;} \\
 &\phantom{****} \mathrm{exists}\; N>0\;\mathrm{such\;that}  
 \left. \mathrm{if}\max_i\{|g_{n-k,i}|\}<q^{-N}\;\mathrm{then}\; W\begin{pmatrix}g&\\&I_k \end{pmatrix} \equiv 0 \right\} \\
& = V_{\pi_{(k-1)}}(U_{n-k+1},\bf{1}).
\end{split}
\]
\end{proposition}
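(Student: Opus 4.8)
The plan is to combine the Whittaker realization of $\pi_{(k-1)}$ given in Proposition~\ref{mira-model} with the first step of the Bernstein--Zelevinsky filtration by derivatives, applied this time to the $P_{n-k+1}$-module $\tau:=\pi_{(k-1)}$. By property $(4)$ of the functors $\Phi^{\pm},\Psi^{\pm}$ recalled above, taking the bottom two terms $\tau_2\subset\tau_1=\tau$ of that filtration yields an exact sequence of $P_{n-k+1}$-modules
\[
0\longrightarrow \Phi^+\Phi^-(\tau)\longrightarrow\tau\longrightarrow\Psi^+\Psi^-(\tau)\longrightarrow 0,
\]
and since, by the very definition of $\Psi^-$ and $\Psi^+$, the quotient $\Psi^+\Psi^-(\tau)$ has underlying space $V_\tau/V_\tau(U_{n-k+1},\mathbf 1)$ and the surjection above is on underlying spaces the canonical projection, the submodule $\tau_2=\Phi^+\Phi^-(\tau)$ is exactly $V_\tau(U_{n-k+1},\mathbf 1)$. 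As $\Phi^-(\pi_{(k-1)})=\pi_{(k)}$, this already gives the last equality in the statement, $\Phi^+(\pi_{(k)})=V_{\pi_{(k-1)}}(U_{n-k+1},\mathbf 1)$, realized inside $\mathcal W(\pi_{(k-1)},\psi)$. Everything then reduces to describing this subspace explicitly.

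The second step is to compute the action of $U_{n-k+1}$ on the model of Proposition~\ref{mira-model}. Embedding $P_{n-k+1}$ in $GL_n$ via $p\mapsto\mathrm{diag}(p,I_{k-1})$ and writing a typical element of $U_{n-k+1}$ in the form $\left(\begin{smallmatrix}I_{n-k}&x\\&1\end{smallmatrix}\right)$ with $x\in F^{n-k}$, I would use that $GL_{n-k}$ normalizes $U_{n-k+1}$ together with the left $(N_n,\psi)$-equivariance of Whittaker functions to show that right translation by this element multiplies the function $g\mapsto |\det g|^{-\frac{k-1}{2}}W(\mathrm{diag}(g,I_k))$ by the character $g\mapsto\psi\big(\sum_{j=1}^{n-k}g_{n-k,j}x_j\big)$, which depends only on the bottom row $(g_{n-k,1},\dots,g_{n-k,n-k})$ of $g$ --- the familiar Kirillov-type description of the $U_{n-k+1}$-action. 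One must also check here, using $P_{n-k+1}=GL_{n-k}\ltimes U_{n-k+1}$, that a vector of the model is determined by the corresponding function of $g\in GL_{n-k}$, so that this computation faithfully records the representation.

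The last step invokes the standard description of coinvariants for a smooth representation of the group $U_{n-k+1}\cong F^{n-k}$: a vector $W'$ lies in $V_{\pi_{(k-1)}}(U_{n-k+1},\mathbf 1)$ if and only if its average $\int_{U_0}\pi_{(k-1)}(u)W'\,du$ over some, equivalently every sufficiently large, compact open subgroup $U_0$ vanishes. Choosing $U_0$ to correspond to $(\mathfrak p^{-N})^{n-k}$ and using the multiplication formula of the previous step, this average is the function
\[
g\longmapsto |\det g|^{-\frac{k-1}{2}}W(\mathrm{diag}(g,I_k))\prod_{j=1}^{n-k}\int_{\mathfrak p^{-N}}\psi(g_{n-k,j}\,t)\,dt,
\]
and since $\int_{\mathfrak p^{-N}}\psi(at)\,dt$ is a fixed nonzero constant when $|a|$ is small enough and is $0$ otherwise, this average vanishes identically precisely when $W(\mathrm{diag}(g,I_k))=0$ for all $g$ whose bottom row is sufficiently small; letting $N$ grow, this is exactly the support condition in the statement. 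Assembling the three steps proves the proposition.

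The step I expect to be most delicate is the bookkeeping in the middle: verifying that an element of $\mathcal W(\pi_{(k-1)},\psi)$ is determined by its restriction to the Levi $GL_{n-k}$, so that the coinvariant computation can be carried out pointwise in $g$, and keeping careful track of the normalizing factors $|\det g|^{\pm\frac{k-1}{2}}$ and of the conductor of $\psi$ so that the existential quantifier on $N$ in the final support condition comes out in exactly the stated form. The remaining inputs --- the identification $\Phi^+\Phi^-(\tau)=V_\tau(U_{n-k+1},\mathbf 1)$ and the averaging characterization of $U$-coinvariants --- are standard facts from \cite{BeZe76,BeZe77,CoPe}.
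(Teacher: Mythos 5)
Your proposal is correct and takes essentially the approach the paper relies on: the paper states this proposition without proof, quoting it from Section 1 of \cite{CoPe}, and the intended argument is exactly yours, namely the bottom step of the Bernstein--Zelevinsky filtration (property (4)), which identifies $\Phi^+(\pi_{(k)})=\Phi^+\Phi^-(\pi_{(k-1)})$ with the kernel $V_{\pi_{(k-1)}}(U_{n-k+1},\mathbf 1)$ of the projection onto $\Psi^+\Psi^-(\pi_{(k-1)})$. Your remaining steps — checking that elements of the model of Proposition \ref{mira-model} are determined by their restriction to $GL_{n-k}$, that right translation by $U_{n-k+1}$ multiplies them by $g\mapsto\psi\bigl(\sum_j g_{n-k,j}x_j\bigr)$, and then applying the Jacquet--Langlands averaging criterion for $U$-coinvariants to convert membership in $V_{\pi_{(k-1)}}(U_{n-k+1},\mathbf 1)$ into the stated support condition on the last row of $g$ (with $N$ adjusted by the conductor of $\psi$) — are the standard completion of that argument and are sound.
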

We will refer to previous two models as Whittaker models for $\pi_{(n-k)}$ and $\Phi^+(\pi_{(k)})$, respectively. Let $\pi=\mathrm{Ind}(\Delta_1 \otimes \dotsm \otimes \Delta_t)$ be an irreducible generic representation of $GL_n$. As the quasi-square integrable representations $\Delta_i$ can be reordered to be in the representation of Langlands type without changing $\pi$, Proposition \ref{mira-model} and \ref{mira-model2} remain valid for irreducible generic representations. We recall the following results about the connection between restriction of Whittaker functions to $P_{n-k+1}$ and derivatives from Section 1 of \cite{CoPe} to prove a factorization of local exterior square $L$-functions described in Section $4$ and $5$.

\begin{proposition} [Cogdell and Piatetski-Shapiro]
\label{asym}
Let $\pi=\mathrm{Ind}(\Delta_1 \otimes \dotsm \otimes \Delta_t)$ be an irreducible generic representation of $GL_n$ and $k$ an integer between $1$ and $n-1$. Let $\tau=\pi_{(k-1)}$ and $V_{\tau}$ the space for $\tau$. Let $\pi_{0}^{(k)} \subset \pi^{(k)}$ be an irreducible sub-representation of $\pi^{(k)}$, the normalized quotient representation on $V_{\pi^{(k)}}=V_{\tau} \slash V_{\tau}(U_{n-k+1},\bf{1})$. Let $g \in GL_{n-k}$ and $p : V_{\tau} \rightarrow V_{\pi^{(k)}}$ the normalized projection map and let $V_{\tau_0}=p^{-1}(V_{\pi_0^{(k)}})$. We may view $P_{n-k+1}$ as a subgroup of $GL_n$ via the embedding $g \mapsto \mathrm{diag}(g,I_{k-1})$. For every $W_{\circ} \in \mathcal{W}(\pi_{0}^{(k)},\psi)$ there is a $W \in \mathcal{W}(\tau_0,\psi)$ and $\Phi_{\circ} \in \mathcal{S}(F^{n-k})$ with $\Phi_{\circ}(0) \neq 0$ such that
\[
W_{\circ}(g)\Phi_{\circ}(e_{n-k} g)=|\mathrm{det}(g)|^{-\frac{k}{2}}W\begin{pmatrix} g& \\ &I_k \end{pmatrix} \Phi_{\circ}(e_{n-k} g).
\]
Moreover, for every $W \in \mathcal{W}(\tau_0,\psi)$ and every $\Phi_{\circ} \in \mathcal{S}(F^{n-k})$ locally constant and supported in a sufficiently small neighborhood of $0 \in F^{n-k}$, there is a $W_{\circ} \in \mathcal{W}(\pi_0^{(k)},\psi)$ such that
\[
W\begin{pmatrix} g& \\ &I_k \end{pmatrix} \Phi_{\circ}(e_{n-k} g)=|\mathrm{det}(g)|^{\frac{k}{2}}W_{\circ}(g)\Phi_{\circ}(e_{n-k} g).
\]
\end{proposition}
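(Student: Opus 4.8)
The plan is to prove both assertions by realizing everything in Whittaker models and identifying the normalized projection $p\colon V_\tau \to V_{\pi^{(k)}}$ with passage to the germ of a function along the locus $e_{n-k}g = 0$. First I would record the structure of $\tau_0$. Since the Jacquet functor $\Psi^-$ is exact, applying it to $0 \to V_{\tau_0} \to V_\tau \to V_\tau/V_{\tau_0} \to 0$ shows that $V_{\tau_0}(U_{n-k+1},\mathbf 1) = V_\tau(U_{n-k+1},\mathbf 1)$ and that $\Psi^-(\tau_0)$ is precisely the subrepresentation $V_{\pi_0^{(k)}}$ of $V_{\pi^{(k)}}$; in particular the kernel of $p$, whether taken on $V_\tau$ or on $V_{\tau_0}$, is the single space $V_\tau(U_{n-k+1},\mathbf 1) = \Phi^+(\pi_{(k)})$. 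By Proposition \ref{mira-model} the space $\mathcal W(\tau_0,\psi)$ consists of functions $g \mapsto |\det g|^{-\frac{k-1}{2}}W\begin{pmatrix} g& \\ & I_k\end{pmatrix}$ attached to suitable $W \in \mathcal W(\pi,\psi)$, and by Proposition \ref{mira-model2} the subspace $\Phi^+(\pi_{(k)}) = \ker p$ is exactly the set of such functions which vanish as soon as $\max_i|g_{n-k,i}| < q^{-N}$ for some $N$, i.e.\ whenever the last row $e_{n-k}g$ of $g$ lies in a sufficiently small ball about $0$. Hence two elements of $\mathcal W(\tau_0,\psi)$ have the same image under $p$ if and only if the corresponding functions of $g$ agree on some neighbourhood of the locus $e_{n-k}g=0$, and $p$ carries $\mathcal W(\tau_0,\psi)$ onto $\mathcal W(\pi_0^{(k)},\psi)$ since $p\colon V_{\tau_0}\to V_{\pi_0^{(k)}}$ is surjective.

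The heart of the argument is then the single identity: for $W \in \mathcal W(\tau_0,\psi)$ with image $W_\circ = p(W) \in \mathcal W(\pi_0^{(k)},\psi)$,
\[
  W\begin{pmatrix} g& \\ & I_k\end{pmatrix} = |\det g|^{\frac{k}{2}}\,W_\circ(g)
  \qquad\text{for all } g \text{ with } e_{n-k}g \text{ close enough to } 0,
\]
the admissible neighbourhood depending on $W$. To prove it I would note that, by the previous paragraph, the restriction of $g \mapsto W\begin{pmatrix} g& \\ & I_k\end{pmatrix}$ to a small enough neighbourhood of $e_{n-k}g = 0$ depends only on $p(W)$, is linear in $p(W)$, is $(N_{n-k},\psi)$-equivariant in $g$, and is compatible with right translation by $GL_{n-k}$; since $\pi_0^{(k)}$ is irreducible with a one-dimensional space of Whittaker functionals, this germ must be a scalar multiple of $g \mapsto |\det g|^{\frac{k}{2}}W_\circ(g)$, and the scalar equals $1$ because the single application of $\Psi^-$ contributes a factor $|\det g|^{-\frac{1}{2}}$ which, together with the $|\det g|^{-\frac{k-1}{2}}$ already present in the model of Proposition \ref{mira-model}, produces precisely the $|\det g|^{-\frac{k}{2}}$-normalization carried by $W_\circ$.

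Granting this identity, both halves of the Proposition follow by localization. For the first, given $W_\circ \in \mathcal W(\pi_0^{(k)},\psi)$ choose $W \in \mathcal W(\tau_0,\psi)$ with $p(W)=W_\circ$ and pick $\Phi_\circ \in \mathcal S(F^{n-k})$ with $\Phi_\circ(0)\neq 0$ and support inside the neighbourhood furnished by the identity; multiplying the identity by $\Phi_\circ(e_{n-k}g)$ gives $W_\circ(g)\Phi_\circ(e_{n-k}g) = |\det g|^{-\frac{k}{2}}W\begin{pmatrix} g& \\ & I_k\end{pmatrix}\Phi_\circ(e_{n-k}g)$ for every $g$. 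For the converse, given $W \in \mathcal W(\tau_0,\psi)$ and any locally constant $\Phi_\circ$ supported in a small enough neighbourhood of $0$, take $W_\circ = p(W)$; on the support of $g \mapsto \Phi_\circ(e_{n-k}g)$ the identity applies and yields $W\begin{pmatrix} g& \\ & I_k\end{pmatrix}\Phi_\circ(e_{n-k}g) = |\det g|^{\frac{k}{2}}W_\circ(g)\Phi_\circ(e_{n-k}g)$.

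The step I expect to be the real obstacle is the identity of the second paragraph — concretely, checking that the germ along $e_{n-k}g = 0$ of the restricted Whittaker function of $\tau_0$ is genuinely a Whittaker function of the $GL_{n-k}$-representation $\pi_0^{(k)}$, with the exact exponent and no spurious twist or constant. The delicacy is that the $(N_{n-k+1},\psi)$-Whittaker functional used to define $\mathcal W(\tau_0,\psi)$ is non-trivial on $U_{n-k+1}$, whereas the functional on $V_{\tau_0}$ obtained by pulling the $GL_{n-k}$-Whittaker functional of $\pi_0^{(k)}$ back through $p$ is $U_{n-k+1}$-invariant; reconciling the two is exactly what forces the passage to the germ along $e_{n-k}g=0$ and pins down the $|\det g|^{-\frac{k}{2}}$. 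Everything else — exactness of $\Psi^-$, surjectivity of $p$, and the two localization steps — is routine once the models of Propositions \ref{mira-model} and \ref{mira-model2} are in hand.
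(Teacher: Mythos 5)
First, note that the paper does not actually prove this Proposition: it is quoted from Section 1 of \cite{CoPe}, so the only comparison available is with the Cogdell--Piatetski-Shapiro argument itself. Your reduction is the right skeleton and matches the spirit of that argument: using exactness of $\Psi^-$ to see $V_{\tau_0}(U_{n-k+1},\mathbf 1)=V_\tau(U_{n-k+1},\mathbf 1)$, using Proposition \ref{mira-model2} to see that the kernel of $p$ is realized by Whittaker functions vanishing when $e_{n-k}g$ is small, and then localizing with the cutoff $\Phi_\circ$ is exactly how both halves are meant to follow from the single ``germ identity'' you isolate.

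The genuine gap is precisely at that identity, and your proposed justification for it does not work as stated. Uniqueness of Whittaker functionals gives $\dim\mathrm{Hom}_{GL_{n-k}}(\pi_0^{(k)},\mathrm{Ind}_{N_{n-k}}^{GL_{n-k}}(\psi))\le 1$, i.e.\ uniqueness for equivariant maps into genuine functions on $GL_{n-k}$. Your germ map lands in the space of germs along the locus $e_{n-k}g\to 0$, which is a quotient of $\mathrm{Ind}_{N_{n-k}}^{GL_{n-k}}(\psi)$ by the submodule of $(N_{n-k},\psi)$-functions vanishing near that locus; a $GL_{n-k}$-map into this quotient need not lift to a map into functions (there is an $\mathrm{Ext}^1$ obstruction), so ``multiplicity one for germs'' is not a formal consequence of Whittaker uniqueness and is, in effect, a restatement of the Proposition you are trying to prove. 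Moreover, even granting such a uniqueness statement, your argument only yields that the germ is \emph{some} scalar multiple of $|\det g|^{k/2}W_\circ(g)$, and you never rule out that this scalar is zero: if every lift $W$ of $W_\circ$ had Whittaker function vanishing identically near $e_{n-k}g=0$, the first assertion (with $\Phi_\circ(0)\neq 0$) would fail. Establishing that the boundary germ of a lift is nonzero and computes the Whittaker function of $p(W)$ with the exponent $|\det g|^{k/2}$ is the actual content of the Cogdell--Piatetski-Shapiro proof, and it requires a finer construction (producing the Whittaker functional of $\pi^{(k)}$ from that of $\pi$ via the explicit $\Phi^+/\Psi^+$ models, partial Fourier analysis in the last column as in Lemma \ref{key relation}, or the asymptotic expansion of Whittaker functions) rather than an abstract uniqueness appeal. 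The incidental claim that ``the scalar equals $1$'' is also not meaningful as stated, since the Whittaker functional on $\pi_0^{(k)}$ is only fixed up to scalar; this is harmless because the Proposition is an existence statement, but it signals that the normalization bookkeeping in your second paragraph is heuristic rather than a proof.
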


 Let $(\pi,V_{\pi}) \in \mathrm{Rep}(GL_n)$. Then $\pi^{(n)}$ is a representation of $GL_0$, that is, just a vector space, since $GL_0$ is a trivial group. The space of $\pi^{(n)}$ is $V_{\pi} \slash V_{\pi}(N_n,\psi)$ with $V_{\pi}(N_n,\psi)=\langle \pi(n)v-\psi(n)v \;|\; v \in V_{\pi},\;n \in N_n \rangle$. $V_{\pi} \slash V_{\pi}(N_n,\psi)$ is the maximal quotient of $V_{\pi}$ on which $N_n$ acts via the character $\psi$. Therefore the dual linear space $(V_{\pi} \slash V_{\pi}(N_n,\psi))^*$ is the space of Whittaker functionals on $\pi$ and we have $\mathrm{dim}(\pi^{(n)}) \leq 1$ if $\pi$ is irreducible \cite{GeKa72}. Finally let $\pi=\mathrm{Ind}(\Delta_1 \otimes \dotsm \otimes \Delta_t)$ be a representation of Whittaker type. For the nontrivial $\psi$-Whittaker functional $\lambda$, we set
\[
  \mathcal{W}_{(0)}(\pi,\psi)=\{ W(p)=\lambda(\pi(p)v)  \; | \; p \in P_n, v \in V_{\pi} \}.
\]
and let $\tau$ be the representation of $P_n$ in this space by right translation. Let $\mathrm{ind}^{P_{n}}_{N_{n}}(\psi)$  be the space of all smooth functions $W : GL_n \rightarrow \mathbb{C}$, which are compactly supported modulo $N_n$ and $W(np)=\psi(n)W(p)$ for all $p \in P_n$ and $n \in N_n$. For $\tau$ a representation of $P_n$, the bottom and top pieces of filtration by derivatives $0 \subset \tau_n \subset \dotsm \dotsm \subset \tau_1=\tau$ such that $\tau_k=(\Phi^+)^{k-1}(\Phi^-)^{k-1}(\tau)$ implies that $(\tau,\mathcal{W}_{(0)}(\pi,\psi))$ contains $(\tau_n,\mathrm{ind}_{N_n}^{P_n}(\psi))$ as $P_n$ modules. We record it as the following Lemma.
 
 \begin{lemma}
 \label{kirillov}
 Let $\pi=\mathrm{Ind}(\Delta_1 \otimes \dotsm \otimes \Delta_t)$ be a representation of $GL_n$ of Whittaker type. For the non-trivial $\psi$ Whittaker functional $\lambda$, let $\mathcal{W}_{(0)}(\pi,\psi)=\{ W(p)=\lambda(\pi(p)v) \in \mathcal{W}(\pi,\psi)  |  p \in P_n, v \in V_{\pi} \}$ with the action $\tau$ of $P_n$ being by right translation. Then as $P_n$ modules, $(\tau_n,\mathrm{ind}_{N_n}^{P_n}(\psi))$ is contained in $(\tau,\mathcal{W}_{(0)}(\pi,\psi))$.
 \end{lemma}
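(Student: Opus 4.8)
The plan is to read the statement off the Bernstein--Zelevinsky filtration by derivatives of the $P_n$-module $(\tau,\mathcal{W}_{(0)}(\pi,\psi))$, the only nontrivial point being to identify its bottom piece $\tau_n$, which amounts to computing the top derivative $\tau^{(n)}$.

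First I would observe that $(\tau,\mathcal{W}_{(0)}(\pi,\psi))$ is a quotient of $\pi_{(0)}=\pi|_{P_n}$ as a $P_n$-module: the map $v\mapsto\big(p\mapsto\lambda(\pi(p)v)\big)$ is $P_n$-equivariant, surjects onto $\mathcal{W}_{(0)}(\pi,\psi)$ by definition, and lands in functions satisfying $W(np)=\psi(n)W(p)$ for $n\in N_n$ since $\lambda(\pi(n)\pi(p)v)=\psi(n)\lambda(\pi(p)v)$. Because $\Phi^-$ and $\Psi^-$ are exact, the functor $\sigma\mapsto\sigma^{(n)}=\Psi^-(\Phi^-)^{n-1}(\sigma)$ on $\mathrm{Rep}(P_n)$ is exact, so $\tau^{(n)}$ is a quotient of $\pi^{(n)}$. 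The Whittaker-type hypothesis gives $\dim\,\mathrm{Hom}_{N_n}(V_\pi,\psi)=1$; since $\mathrm{Hom}_{N_n}(V_\pi,\psi)=(V_\pi/V_\pi(N_n,\psi))^*=(\pi^{(n)})^*$, this forces $\dim\pi^{(n)}=1$, hence $\dim\tau^{(n)}\le 1$. For the reverse inequality I would exhibit a nonzero $\psi$-Whittaker functional on $\tau$, namely $\ell\colon W\mapsto W(e)$: for $x\in N_n$ one has $\ell(\tau(x)W)=W(x)=\psi(x)W(e)=\psi(x)\ell(W)$ using $W(xp)=\psi(x)W(p)$, and $\ell\ne 0$ because, $\lambda$ being nontrivial, some $v$ has $\lambda(v)\ne 0$, i.e.\ the corresponding $W\in\mathcal{W}_{(0)}(\pi,\psi)$ has $W(e)\ne 0$. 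Thus $\ell$ descends to a nonzero functional on $\tau^{(n)}=V_\tau/V_\tau(N_n,\psi)$, and $\dim\tau^{(n)}=1$.

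Next I would invoke property (4) of the four functors recalled above: $\tau$ carries a filtration $0\subset\tau_n\subset\dotsm\subset\tau_1=\tau$ with $\tau_k=(\Phi^+)^{k-1}(\Phi^-)^{k-1}(\tau)$ and $\tau_k/\tau_{k+1}=(\Phi^+)^{k-1}\Psi^+(\tau^{(k)})$; for $k=n$ this reads $\tau_n=(\Phi^+)^{n-1}\Psi^+(\tau^{(n)})$. Since $\tau^{(n)}$ is the one-dimensional representation of the trivial group $GL_0$, a direct computation with the definitions of $\Psi^+$ and $\Phi^+$ (the normalizing characters being trivial on $GL_0$ and on the successive unipotent radicals, so that each application of $\Phi^+$ simply adjoins a row and the character $\psi$; cf.\ \cite{BeZe77,CoPe}) identifies $(\Phi^+)^{n-1}\Psi^+(\tau^{(n)})$ with $\mathrm{ind}_{N_n}^{P_n}(\psi)$, the space of smooth functions on $GL_n$ compactly supported modulo $N_n$ with $W(np)=\psi(n)W(p)$, $P_n$ acting by right translation. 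Combining this with the inclusion $\tau_n\subset\tau$ given by the filtration yields the $P_n$-module embedding $\mathrm{ind}_{N_n}^{P_n}(\psi)\cong\tau_n\hookrightarrow\tau=\mathcal{W}_{(0)}(\pi,\psi)$, which is the assertion.

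The only step that is more than bookkeeping with the functors $\Phi^\pm,\Psi^\pm$ is the computation $\dim\tau^{(n)}=1$: the upper bound uses exactness of the derivative functor together with the uniqueness of the Whittaker functional on $\pi$ (this is exactly where the Whittaker-type hypothesis enters), while the lower bound uses only that $\lambda$ is nontrivial. Everything else is a direct reading of the Bernstein--Zelevinsky structure theory for $\mathrm{Rep}(P_n)$ recalled before the lemma.
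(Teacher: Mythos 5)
Your proof is correct and takes essentially the same route as the paper, which records the lemma directly from the Bernstein--Zelevinsky filtration: one identifies $\tau^{(n)}$ as the one-dimensional representation of $GL_0$ (using the Whittaker-type hypothesis), and then the bottom piece $\tau_n=(\Phi^+)^{n-1}\Psi^+(\tau^{(n)})\cong\mathrm{ind}_{N_n}^{P_n}(\psi)$ of the filtration embeds in $\tau=\mathcal{W}_{(0)}(\pi,\psi)$. You in fact supply details the paper leaves implicit (the quotient map from $\pi|_{P_n}$, exactness of the derivative functor, and the nonvanishing of $\tau^{(n)}$ via evaluation at the identity), all of which are correct.
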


 For a representation of Whittaker type $\pi=\mathrm{Ind}(\Delta_1 \otimes \dotsm \otimes \Delta_t)$, the derivative $\pi^{(k)}$ are computed by Bernstein and Zelevinsky \cite{BeZe77, Ze80}.
The result is as follows.

 \begin{theorem}[Bernstein and Zelevinsky]
 \label{deriviative}
 Let $\rho$ be an irreducible supercuspidal representation of $GL_r$. Let $\Delta=[\rho,\dotsm,\rho\nu^{\ell-1}]$ be a quasi-square integrable representation of $GL_n$ with $n=\ell r$.
  \begin{enumerate}
 \item[$(\mathrm{1})$] $\rho^{(0)}=\rho$, $\rho^{(k)}=0$ for $1 \leq k \leq r-1$ and $\rho^{(r)}=\bf{1}$.
  \item[$(\mathrm{2})$] $\Delta^{(k)}=0$ if $k$ is not a multiple of $r$, $\Delta^{(0)}=\Delta$, $\Delta^{(kr)}=[\rho \nu^k,\dotsm,\rho \nu^{\ell-1}]$ for $1 \leq k \leq \ell-1$, and $\Delta^{(\ell r)}=\bf{1}$.  
   \item[$(\mathrm{3})$] 
 If $\pi=\mathrm{Ind}(\Delta_1 \otimes \dotsm \otimes \Delta_t)$ is a representation of Whittaker type of $GL_n$, then we have a filtration of $\pi^{(k)}$ for $1 \leq k \leq n$ whose successive quotients are the representations $\mathrm{Ind}(\Delta_1^{(k_1)} \otimes \dotsm \otimes \Delta_t^{(k_t)})$ with $k=k_1+\dotsm+k_t$.
 \end{enumerate}
 \end{theorem}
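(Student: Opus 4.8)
The plan is to obtain all three statements from the formal calculus of the functors $\Phi^{\pm},\Psi^{\pm}$ recorded above, together with the Bernstein--Zelevinsky ``Leibniz rule'' for derivatives of a parabolically induced representation (the geometric lemma of \cite{BeZe77}): for $\sigma\in\mathrm{Rep}(GL_a)$ and $\tau\in\mathrm{Rep}(GL_b)$, the representation $(\mathrm{Ind}(\sigma\otimes\tau))^{(k)}$ of $GL_{a+b-k}$ carries a filtration whose successive quotients are $\mathrm{Ind}(\sigma^{(i)}\otimes\tau^{(j)})$ over $i+j=k$, the normalizing twists being absorbed into the notation. I will also use that each functor $\pi\mapsto\pi^{(k)}$ is exact, being assembled from restriction to the mirabolic and the Jacquet functors $\Phi^{-},\Psi^{-}$. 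Nothing here is new, so I only sketch the skeleton, referring to \cite{BeZe77,Ze80} for the details.

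For $(\mathrm{1})$, $\rho^{(0)}=\rho$ is the definition, and $\rho^{(r)}=\Psi^{-}(\Phi^{-})^{r-1}(\rho|_{P_r})$ has underlying space $V_{\rho}/V_{\rho}(N_r,\psi)$, whose dual is the space of $\psi$-Whittaker functionals on $\rho$; this space is one-dimensional, since every irreducible supercuspidal representation of $GL_r$ is generic, so $\rho^{(r)}=\mathbf{1}$ as a representation of the trivial group $GL_0$. For $0<k<r$, I would observe that $\Psi^{-}(\Phi^{-})^{k-1}$ forms the $\theta$-coinvariants of $\rho$ along the unipotent radical of the standard parabolic with Levi $GL_{r-k}$ times the diagonal torus of $GL_k$, for a suitable character $\theta$; taking coinvariants first along the unipotent radical of the standard parabolic with Levi $GL_{r-k}\times GL_k$, on which $\theta$ restricts trivially, exhibits $\rho^{(k)}$ as a quotient of the Jacquet module of $\rho$ along a proper parabolic, and the latter vanishes since $\rho$ is supercuspidal; hence $\rho^{(k)}=0$.

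For $(\mathrm{2})$, I would induct on $\ell$, with $(\mathrm{1})$ as the base case $\ell=1$. Put $\Delta'=[\rho\nu,\dotsm,\rho\nu^{\ell-1}]$; since $\Delta$ is the unique irreducible quotient of $\mathrm{Ind}(\rho\otimes\rho\nu\otimes\dotsm\otimes\rho\nu^{\ell-1})$ and the latter surjects onto $\mathrm{Ind}(\rho\otimes\Delta')$ by exactness of induction in stages, $\Delta$ is also the unique irreducible quotient of $\mathrm{Ind}(\rho\otimes\Delta')$. Applying the Leibniz rule to $\mathrm{Ind}(\rho\otimes\Delta')$, using $(\mathrm{1})$ (so that only $\rho^{(0)}=\rho$ and $\rho^{(r)}=\mathbf{1}$ contribute) and the inductive hypothesis for $\Delta'$ (giving $(\Delta')^{(j)}=0$ unless $r\mid j$, with $(\Delta')^{(mr)}=[\rho\nu^{m+1},\dotsm,\rho\nu^{\ell-1}]$ and $(\Delta')^{((m-1)r)}=[\rho\nu^{m},\dotsm,\rho\nu^{\ell-1}]$), one finds that $(\mathrm{Ind}(\rho\otimes\Delta'))^{(k)}=0$ unless $r\mid k$, while for $k=mr$ it is an extension of the two subquotients $[\rho\nu^{m},\dotsm,\rho\nu^{\ell-1}]$ (coming from $\rho^{(r)}=\mathbf{1}$) and $\mathrm{Ind}(\rho\otimes[\rho\nu^{m+1},\dotsm,\rho\nu^{\ell-1}])$ (coming from $\rho^{(0)}=\rho$). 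Since $(-)^{(mr)}$ is exact, $\Delta^{(mr)}$ is a quotient of $(\mathrm{Ind}(\rho\otimes\Delta'))^{(mr)}$; comparing supercuspidal supports, the second subquotient, whose support contains $\rho$, cannot occur in $\Delta^{(mr)}$, so $\Delta^{(mr)}$ is a quotient of the irreducible module $[\rho\nu^{m},\dotsm,\rho\nu^{\ell-1}]$. A further argument --- resting on $\Delta$ being generic (every quasi-square-integrable representation is) and on the explicit $P_n$-module structure of a segment, which I would take from \cite{Ze80} --- then shows $\Delta^{(mr)}\neq 0$, whence $\Delta^{(mr)}=[\rho\nu^{m},\dotsm,\rho\nu^{\ell-1}]$, and $\Delta^{(k)}=0$ for $r\nmid k$.

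For $(\mathrm{3})$, I would induct on $t$: writing $\pi=\mathrm{Ind}\big(\mathrm{Ind}(\Delta_1\otimes\dotsm\otimes\Delta_{t-1})\otimes\Delta_t\big)$ by transitivity of normalized induction, the Leibniz rule gives a filtration of $\pi^{(k)}$ with successive quotients $\mathrm{Ind}\big((\mathrm{Ind}(\Delta_1\otimes\dotsm\otimes\Delta_{t-1}))^{(a)}\otimes\Delta_t^{(b)}\big)$ over $a+b=k$; refining each first factor by the inductive hypothesis and splicing the two filtrations --- legitimate since normalized parabolic induction is exact --- produces the asserted filtration of $\pi^{(k)}$ with successive quotients $\mathrm{Ind}(\Delta_1^{(k_1)}\otimes\dotsm\otimes\Delta_t^{(k_t)})$, $k_1+\dotsm+k_t=k$; the normalizing characters $|\det|^{\pm1/2}$ match because $\Phi^{\pm},\Psi^{\pm}$ and parabolic induction are all normalized. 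I expect the main obstacle to be the step in $(\mathrm{2})$ that separates the two subquotients of $(\mathrm{Ind}(\rho\otimes\Delta'))^{(mr)}$ and verifies both that only $[\rho\nu^{m},\dotsm,\rho\nu^{\ell-1}]$ survives on the irreducible quotient $\Delta$ and that it is nonzero; once that is settled, the remaining assertions follow by a routine iteration of the Leibniz rule together with $(\mathrm{1})$.
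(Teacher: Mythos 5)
The paper does not actually prove this statement: it is quoted directly from Bernstein--Zelevinsky and Zelevinsky (the sentence preceding the theorem says the derivatives ``are computed by Bernstein and Zelevinsky \cite{BeZe77, Ze80}''), so there is no in-paper argument to compare with, and your outline is the standard route from those references. Your parts $(\mathrm{1})$ and $(\mathrm{3})$ are sound: for $(\mathrm{1})$, the composite $\Psi^-(\Phi^-)^{k-1}$ is indeed the $(V_k,\theta)$-coinvariant functor for the unipotent radical $V_k$ of the parabolic with Levi $GL_{r-k}\times T_k$, the character $\theta$ lives only on the superdiagonal of the lower-right $k\times k$ block and is trivial on $U_{(r-k,k)}$, so $\rho^{(k)}$ is a quotient of (further coinvariants of) a proper Jacquet module, which vanishes by supercuspidality, while the top derivative is the Whittaker coinvariants and is one-dimensional since supercuspidals of $GL_r$ are generic; and $(\mathrm{3})$ is just the two-factor Leibniz rule iterated through transitivity and exactness of normalized induction.

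The genuine gap is in $(\mathrm{2})$, at exactly the step you flag. The justification you do offer there --- ``comparing supercuspidal supports, the second subquotient, whose support contains $\rho$, cannot occur in $\Delta^{(mr)}$'' --- does not work: the support of $\mathrm{Ind}(\rho\otimes[\rho\nu^{m+1},\dotsm,\rho\nu^{\ell-1}])$, namely $\{\rho,\rho\nu^{m+1},\dotsm,\rho\nu^{\ell-1}\}$, is just as much a sub-multiset of $\mathrm{supp}(\Delta)=\{\rho,\rho\nu,\dotsm,\rho\nu^{\ell-1}\}$ as $\{\rho\nu^{m},\dotsm,\rho\nu^{\ell-1}\}$ is, so support bookkeeping alone cannot exclude it (all one gets from the Leibniz rule applied to the full induced representation is that constituents of $\Delta^{(mr)}$ have support inside that of $\Delta$, which both candidates satisfy). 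The actual separation, together with the nonvanishing of $\Delta^{(mr)}$ for $0<m<\ell$, is the whole content of the theorem; the standard way to settle it is to use the computation of the Jacquet modules of the segment representation (e.g. that $r_{(n-mr,\,mr)}(\Delta)$ is irreducible, isomorphic to $[\rho\nu^{m},\dotsm,\rho\nu^{\ell-1}]\otimes[\rho,\dotsm,\rho\nu^{m-1}]$ up to normalization) combined with the observation from part $(\mathrm{1})$'s analysis that $\pi^{(k)}$ is the $\psi$-generic (top-derivative) coinvariant in the $GL_k$-factor of that Jacquet module. Since that Jacquet-module input is itself Zelevinsky's, deferring this step to \cite{Ze80} makes your argument at bottom a citation of the result it sets out to prove --- which is fine as exposition of a quoted theorem, but as a standalone proof the identification and nonvanishing of $\Delta^{(mr)}$ remain unproved.
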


 We end this Section with giving the relation between Whittaker functions and Schwartz-Bruhat functions, which we will be using.
 
 \begin{lemma}
 \label{key relation}
 Let $\pi=\mathrm{Ind}(\Delta_1 \otimes \dotsm \otimes \Delta_t)$ be a representation of Whittaker type of $GL_n$. Let $\lambda$ be the unique $\psi$-Whittaker functional on $(\pi,V_{\pi})$. For each $v \in V_{\pi}$, set $W_v(g)=\lambda(\pi(g)v)$ for any $g \in GL_n$.
 Let $k < n$. Given $v \in V_{\pi}$ and a smooth function $\Phi \in \mathcal{S}(F^k)$ there is $v_0 \in V_{\pi}$ such that, for any $g \in GL_k$,
 \[
   W_{v_0} \begin{pmatrix} g&\\ & I_{n-k} \end{pmatrix}=W_v \begin{pmatrix} g&\\&I_{n-k} \end{pmatrix} \Phi(e_kg).
 \]  
 Conversely, given $v \in V_{\pi}$, there are $v_0 \in V_{\pi}$ and Schwartz-Bruhat function $\Phi \in \mathcal{S}(F^k)$ such that
 \[
   W_v \begin{pmatrix} g&\\ &I_{n-k} \end{pmatrix}= W_{v_0} \begin{pmatrix}  g&\\&I_{n-k} \end{pmatrix}  \Phi(e_kg).
 \]
 
 \end{lemma}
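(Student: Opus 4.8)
The plan is to exploit the description of the bottom piece of the filtration by derivatives given in Lemma \ref{kirillov}, together with the structure of $\mathcal{W}_{(0)}(\pi,\psi)$ as a $P_n$-module. Both directions will rest on the single observation that, for $g \in GL_k$ viewed inside $GL_n$ via $g \mapsto \mathrm{diag}(g, I_{n-k})$, the value $W_v\begin{pmatrix} g & \\ & I_{n-k}\end{pmatrix}$ depends only on the restriction of $W_v$ to $P_n$, and more precisely, after iterating the functor $\Phi^-$ an appropriate number of times, only on data living in $\mathcal{W}(\pi_{(n-k)},\psi)$ in the sense of Proposition \ref{mira-model}. So I would first reduce to working entirely with the $P_n$-module $(\tau, \mathcal{W}_{(0)}(\pi,\psi))$.

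For the first direction (producing $v_0$ from $v$ and $\Phi$), the key step is: multiplication by $g \mapsto \Phi(e_k g)$ is an operator on functions on $GL_k$ that can be realized via the action of $P_n$ on $\mathcal{W}_{(0)}(\pi,\psi)$. Concretely, I would proceed by induction on $n-k$. When $n-k=1$, the map $p \mapsto W_v(p)$ for $p \in P_n$ restricted to the mirabolic embedding, composed with multiplication by $\Phi(e_k \cdot)$ with $\Phi \in \mathcal{S}(F^k)$, is exactly the kind of operation realized by the action of $U_n$ and $GL_k \subset GL_{n-1}$ on $\mathcal{W}_{(0)}(\pi,\psi)$: since $\tau_n = \mathrm{ind}_{N_n}^{P_n}(\psi)$ sits inside $\tau$ and consists of functions compactly supported mod $N_n$, and $\mathcal{W}_{(0)}(\pi,\psi)$ contains all right translates, I can smooth out the function against a Schwartz-Bruhat function in the $U_n$-variable by integrating $\tau(u)W_v$ against $\hat{\Phi}$ — i.e., form $\int_{U_n} \hat\Phi(u)\, \tau(u) W_v \, du$, whose value at $\mathrm{diag}(g,I_1)$ is $W_v(\mathrm{diag}(g,I_1)) \Phi(e_{n-1}g)$ by the definition of the Fourier transform and the character $\psi$ on $U_n$. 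This vector $v_0$ lies in $V_\pi$ because $V_\pi$ is smooth and the integral is really a finite sum. For general $n-k$, I would iterate: peel off one coordinate at a time, using that the lower-right $I_{n-k}$ block commutes appropriately and that the restriction $W\begin{pmatrix} g & \\ & I_{n-k}\end{pmatrix}$ factors through the successive Jacquet modules, so at each stage I only modify the function in one new variable. The converse direction is in fact the easier half: given $v$, the function $g \mapsto W_v\begin{pmatrix} g & \\ & I_{n-k}\end{pmatrix}$ on $GL_k$ is smooth, and by Lemma \ref{kirillov} the part of $\mathcal{W}_{(0)}(\pi,\psi)$ coming from $\tau_n \cong \mathrm{ind}_{N_n}^{P_n}(\psi)$ already furnishes, for a suitable compact open subgroup, a function which agrees with $W_v$ on a large compact set and is supported there; writing the original $W_v$ restricted to $P_k$ as such a translate times a cutoff $\Phi$ supported near the support, and invoking the $P_n$-smoothness to choose $v_0$ with $W_{v_0}$ equal to the translated piece, gives the claimed factorization. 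Alternatively, the converse follows formally from the first direction applied with $\Phi \equiv 1$ on a neighborhood large enough to contain the relevant support of $W_v$ (which is essentially compact modulo $N_k$ in the $e_k g$ direction because of the Kirillov-model structure at the bottom).

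The main obstacle, I expect, is making the induction on $n-k$ genuinely rigorous: one must check that at each stage the operation ``multiply by a Schwartz-Bruhat function of the last row of $g$'' really is implemented by an element acting within $V_\pi$ and does not interfere with the variables already handled. This is exactly the point where Proposition \ref{mira-model} and the compatibility of the $\Phi^+, \Phi^-$ functors with restriction are needed, and where one has to be careful that the normalizing factors $|\det(g)|^{\pm(k-1)/2}$ match up. A clean way around the bookkeeping is to note that it suffices to treat $k = n-1$ (the general statement for $k < n-1$ follows by applying the $k=n-1$ case inside $GL_{k+1} \hookrightarrow GL_n$ and composing, since a Schwartz-Bruhat function on $F^k$ can be built from Schwartz-Bruhat functions adapted to the successive mirabolic steps, using that $\mathcal{S}(F^k)$ is spanned by products of functions of individual coordinates). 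Reducing to $k = n-1$ and then running the Fourier-transform argument on $U_n$ as above is, I think, the most economical route, and it keeps the entire argument at the level of the mirabolic subgroup where Lemma \ref{kirillov} applies directly.
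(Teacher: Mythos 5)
Your Fourier-averaging computation at $k=n-1$ is exactly the mechanism of the paper's proof, but your treatment of general $k<n-1$ is where the argument as written does not go through. The reduction you propose — apply the $k=n-1$ case along the chain of mirabolic steps and "compose, since $\mathcal{S}(F^k)$ is spanned by products of functions of individual coordinates" — conflates two different sets of variables: the successive mirabolic steps produce cutoffs in the last rows $e_{n-1}g'$, $e_{n-2}g''$, $\dotsc$ of blocks of shrinking size, whereas the lemma requires multiplication by one arbitrary $\Phi\in\mathcal{S}(F^k)$ evaluated at the single vector $e_kg\in F^k$ (the last row of the $k\times k$ block). A product decomposition of $\Phi$ into one-variable factors does not match those intermediate variables, and no composition of $k=n-1$ statements along $GL_{n-1}\supset\cdots\supset GL_{k}$ manufactures such a factor; likewise the induction "peel off one coordinate at a time" is never substantiated, and the functor bookkeeping ($\Phi^{\pm}$, Proposition \ref{mira-model}, normalization exponents) that you flag as the main obstacle simply never has to be confronted.

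The fix is a one-step version of your own computation, and it is what the paper does: for any $k<n$, average against the abelian unipotent group of matrices with $u\in F^k$ placed in rows $1,\dotsc,k$ of column $k+1$, i.e. set $v_0=\int_{F^k}\pi\bigl(\mathrm{diag}\text{-block unipotent }n_u\bigr)v\,\hat{\Phi}(-{}^t u)\,du$, a finite sum by smoothness. Conjugating $n_u$ past $\mathrm{diag}(g,I_{n-k})$ gives $n_{gu}$, whose $\psi$-value is $\psi(e_kgu)$, and Fourier inversion yields $W_{v_0}(\mathrm{diag}(g,I_{n-k}))=W_v(\mathrm{diag}(g,I_{n-k}))\,\Phi(e_kg)$ directly — no induction, no Lemma \ref{kirillov}, no derivative machinery. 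For the converse, your appeal to the Kirillov-model structure is also not the right justification for an arbitrary $v$ in a possibly reducible representation of Whittaker type: the same conjugation identity combined with smoothness (choose $H\subset F^k$ compact open with $\pi(n_u)v=v$ for $u\in H$) shows $W_v(\mathrm{diag}(g,I_{n-k}))$ vanishes unless $e_kg$ lies in the compact open subgroup dual to $H$, and then $v_0=v$ together with $\Phi$ the suitably normalized inverse Fourier transform of the characteristic function of $H$ gives the claimed factorization, which is precisely how the paper concludes.
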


 \begin{proof}
 For the first part, we apply the Fourier transform of $\Phi$ and set
 \[
  v_0=\int_{F^k} \pi \begin{pmatrix} I_k & u & \\ &1& \\ &&I_{n-k-1} \end{pmatrix} v\; \hat{\Phi}(-^tu) \; du.
 \]
 Since $\hat{\Phi}$ is locally constant and of compact support, this integral is essentially a finite sum. As $W_{v_0}(g)=\lambda(\pi(g)v_0)$ and $W_v(g)=\lambda(\pi(g)v)$ we obtain
 \[
 \begin{split}
   W_{v_0} \begin{pmatrix} g&&\\ &1&\\&&I_{n-k-1} \end{pmatrix}&=\int_{F^k} W_v \left( \begin{pmatrix} g&&\\ &1&\\&&I_{n-k-1} \end{pmatrix}  \begin{pmatrix} I_k & u & \\ &1& \\ &&I_{n-k-1} \end{pmatrix}  \right)
  \hat{\Phi}(-^tu)  du\\
  &=W_v \begin{pmatrix} g&&\\ &1&\\&&I_{n-k-1} \end{pmatrix}\; \int_{F^k} \psi(e_kgu)  \hat{\Phi}(-^tu) du\\
  &=W_v \begin{pmatrix} g&&\\ &1&\\&&I_{n-k-1} \end{pmatrix}\; \Phi(e_kg).
 \end{split}
 \]
 \par
 For the second assertion, we proceed similarly. Let $v \in V_{\pi}$ be given. Since $\pi$ is a smooth representation, there is a compact open subgroup $H$ of $F^k$, the space of column vectors of size $k$, for which
 \[
   W_v\begin{pmatrix} g&&\\ &1&\\&&I_{n-k-1} \end{pmatrix}=W_v\left( \begin{pmatrix} g&&\\ &1&\\&&I_{n-k-1} \end{pmatrix}  \begin{pmatrix} I_k & u & \\ &1& \\ &&I_{n-k-1} \end{pmatrix}  \right)
 \]
  for all $u \in H$. If $\hat{\Phi}$ is chosen to be an appropriate scalar multiple of characteristic function of $H$, we obtain
  \[
    \begin{split}
    &W_v\left( \begin{pmatrix} g&&\\ &1&\\&&I_{n-k-1} \end{pmatrix}  \begin{pmatrix} I_k & u & \\ &1& \\ &&I_{n-k-1} \end{pmatrix}  \right) \\
    &=\int_{F^k} W_v \left( \begin{pmatrix} g&&\\ &1&\\&&I_{n-k-1} \end{pmatrix}  \begin{pmatrix} I_k & u & \\ &1& \\ &&I_{n-k-1} \end{pmatrix}  \right)
  \hat{\Phi}(-^tu)  du.
 \end{split}
  \]
 Letting $v=v_0$,  we have the desired relation:
  \[
    W_v \begin{pmatrix} g&&\\ &1&\\&&I_{n-k-1} \end{pmatrix}=W_{v_0} \begin{pmatrix} g&&\\ &1&\\&&I_{n-k-1} \end{pmatrix} \Phi(e_kg).
  \]
 \end{proof}

\begin{remark}
In this paper, we will often say $``$representation$"$ instead of $``$admissible representation$"$.
\end{remark}

\subsection{Measure on the quotient space} 
We shall now give a brief description on quasi-invariant measure in Section 9 of \cite{We41} (cf. \cite[Appendix]{Wa72} and \cite[Chapter 2]{Fo95}). 
Let $G$ be a locally compact group and $H$ a closed subgroup. Let $C_c^{\infty}(G)$ be the space of functions $\displaystyle f : G \longrightarrow \mathbb{C}$ which are locally constant and of compact support. We denote by $d_L g$ a left Haar measure on $G$. Let $d_R g$ be a right Haar measure on $G$. Let $\delta_G$ denote the modular function on $G$ satisfying $d_Lg(xh)=\delta_G(h)^{-1}d_Lg(x)$ for $x,h \in G$. Let $\rho$ be a strictly positive continuous function on $G$, bounded above and below on compact subsets such that $\displaystyle \rho(hg)=\frac{\delta_H(h)}{\delta_G(h)} \rho(g)$ for all $h \in H$ and $g \in G$. A function with these properties is called a rho-function. Suppose that $\mu$ is a positive Radon measure on the quotient space $H \backslash G$. For $x \in G$, we define right translation $\mu_x$ of $\mu$ by $\mu_x(Hg)=\mu(Hgx)$ for $Hg \in H \backslash G$. $\mu$ is called quasi-invariant if the measure $\mu_x$ for $x \in G$ are all mutually absolutely continuous. Due to Weil in Section 9 of \cite{We41}, if we fix a rho-function $\rho$, there is associated quasi-invariant measure  $\mu_{\rho}$ such that
\[
  \int_{G} f(g) \rho(g) d_R g=\int_{H \backslash G} \int_H f(hg) d_R h\; d \mu_{\rho} (g)
\]
with $f \in C_c(G)$. 
For $f \in C_c^{\infty}(G)$, we denote by $f^H$ the function in $C_c^{\infty}(H \backslash G)$ defined by
$\displaystyle
  f^H(g)=\int_H f(hg) d_R h.
$
If $G$ is a locally profinite group, then it is proven in \cite[3.4 Proposition]{BuHe06} that the map $f \mapsto f^H$ is surjective. Let $G$ be a unimodular group, and let $P$ and $K$ be closed subgroups such that $P \cap K$ is compact and $G=PK$. Let $d_Lp$ and $d_Rk$ be left Haar measure on $P$ and right Haar measure on $K$. From Proposition 2.1.5 in \cite{Bu97} the Haar measure on $G$ is given by
\[
   \int_G f(g)dg=\int_P \int_K f(pk)  d_Rk\; d_Lp.
\]
Because $P \cap K$ is compact, we normalize the Haar measure on $P \cap K$ so the volume of $P \cap K$ is equal to one.

\par
Let us write $g \in GL_n$ as $g=zpk$ with $z \in Z_n,\; p \in P_n,$ and $k \in K_n$. We refer to this as the partial Iwasawa decomposition. Fix a function $f$ in $C_c^{\infty}(GL_n)$. We normalize measures on $K_n, P_n$ and $Z_n$ with respect to $K_n$. That is, we endow $dk, dp$ and $dz$ with unit volume on the sets $P_n \cap K_n$, and $Z_n \cap K_n$ so that
\[
   \int_{P_n \cap K_n} dk=\int_{P_n \cap K_n} dp=\int_{Z_n \cap K_n} dz=1.
\]
If $GL_n=Z_n P_n K_n$ is the partial Iwasawa decomposition, then the corresponding decomposition of the Haar measure on $GL_n$ is
\[
   \int_{GL_n} f(g) dg =\int_{K_n} \int_{P_n}  \int_{Z_n}  f(zpk) dz \; d_Lp \; dk=\int_{K_n} \int_{P_n}  \int_{Z_n} f(zpk) \delta_{P_n}(p)^{-1} dz d_Rp dk.
\]
The following Lemma describes the partial Iwasawa decomposition of the Haar measure on the quotient space $N_n \backslash GL_n$.

\begin{lemma}
\label{zpkdec}
Let $dg$ be an right Haar measure on $N_n \backslash GL_n$, $dp$ an right Haar measure on $N_n \backslash P_n$, $dz$ a Haar measure on $Z_n$, and $dk$ a Haar measure on $K_n$.
For all $f \in C_c^{\infty}(N_n \backslash GL_n)$, we have
\[
  \int_{N_n \backslash GL_n} f(g) dg=\int_{K_n} \int_{N_n \backslash P_n} \int_{Z_n}  f(zpk)|\mathrm{det}(p)|^{-1} dzdpdk. 
 \]
\end{lemma}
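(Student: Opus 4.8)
The plan is to obtain the identity by ``dividing out'' $N_n$ on both sides of the partial Iwasawa decomposition of the Haar measure on $GL_n$ recorded above, namely
\[
\int_{GL_n} h(g)\, dg=\int_{K_n}\int_{P_n}\int_{Z_n} h(zpk)\,\delta_{P_n}(p)^{-1}\, dz\, d_Rp\, dk .
\]
Since $N_n$ is unipotent it is unimodular, and since $GL_n$ is unimodular, Weil's formula (Section~9 of \cite{We41}) with the constant rho-function shows that $N_n\backslash GL_n$ carries a right $GL_n$-invariant measure $dg$ and that $\int_{GL_n} h(g)\, dg=\int_{N_n\backslash GL_n} h^{N_n}(g)\, dg$ for $h\in C_c^{\infty}(GL_n)$, where $h^{N_n}(g)=\int_{N_n} h(ng)\, dn$. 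By \cite[3.4 Proposition]{BuHe06} the map $h\mapsto h^{N_n}$ from $C_c^{\infty}(GL_n)$ onto $C_c^{\infty}(N_n\backslash GL_n)$ is surjective, so given $f\in C_c^{\infty}(N_n\backslash GL_n)$ I would choose $h\in C_c^{\infty}(GL_n)$ with $h^{N_n}=f$; then the left-hand side of the displayed identity, applied to this $h$, equals $\int_{N_n\backslash GL_n} f(g)\, dg$.

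The next step is to record two elementary facts about the modular character of the mirabolic. Writing $p\in P_n$ with $GL_{n-1}$-block $g\in GL_{n-1}$ and last row $e_n=(0,\dotsm,0,1)$, so that $\det p=\det g$, the semidirect product structure $P_n\simeq GL_{n-1}\ltimes U_n$ with $GL_{n-1}$ acting on $U_n\simeq F^{n-1}$ by the standard representation and $GL_{n-1}$ unimodular gives $\delta_{P_n}(p)=|\det g|=|\det p|$. In particular, $N_n\subset P_n$ has $GL_{n-1}$-block in $N_{n-1}$, so $\delta_{P_n}|_{N_n}\equiv 1$; hence $\delta_{N_n}(n)/\delta_{P_n}(n)=1$ for $n\in N_n$, the constant function is a rho-function for the pair $(P_n,N_n)$, and Weil's formula shows that the right Haar measure $d_Rp$ on $P_n$ descends to a right $P_n$-invariant measure $dp$ on $N_n\backslash P_n$, that is $\int_{P_n}\phi(p)\, d_Rp=\int_{N_n\backslash P_n}\int_{N_n}\phi(np)\, dn\, dp$, the measures being normalized compatibly with those fixed above.

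Finally I would transform the right-hand side of the Iwasawa decomposition applied to $h$. After reordering the integrations by Fubini, fix $z\in Z_n$ and $k\in K_n$ and apply the last identity to $\phi(p)=h(zpk)\delta_{P_n}(p)^{-1}$: since $z$ is central it commutes with $n\in N_n$, so $h(znpk)=h(n\,zpk)$, and since $\delta_{P_n}(np)=\delta_{P_n}(n)\delta_{P_n}(p)=\delta_{P_n}(p)$ one obtains
\[
\int_{P_n} h(zpk)\,\delta_{P_n}(p)^{-1}\, d_Rp=\int_{N_n\backslash P_n}\delta_{P_n}(p)^{-1}\Big(\int_{N_n} h(n\,zpk)\, dn\Big)\, dp=\int_{N_n\backslash P_n} h^{N_n}(zpk)\,|\det p|^{-1}\, dp .
\]
Reinstating the integrations over $Z_n$ and $K_n$ and substituting $h^{N_n}=f$ turns the right-hand side of the Iwasawa decomposition into $\int_{K_n}\int_{N_n\backslash P_n}\int_{Z_n} f(zpk)\,|\det p|^{-1}\, dz\, dp\, dk$, which by the first paragraph equals $\int_{N_n\backslash GL_n} f(g)\, dg$. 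This is the assertion.

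I do not expect a genuine obstacle here, since the argument is bookkeeping with modular characters; the one point deserving care is the verification that $\delta_{P_n}$ restricts trivially to $N_n$ --- which is exactly what lets $N_n\backslash P_n$ carry an honest right-invariant measure and makes the computation close --- together with the compatibility of all the measure normalizations, so that the constant relating the two sides is $1$ rather than merely a positive scalar.
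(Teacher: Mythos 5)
Your proposal is correct and follows essentially the same route as the paper's proof: lift $f$ to $h\in C_c^{\infty}(GL_n)$ via the surjectivity of $h\mapsto h^{N_n}$ from \cite[3.4 Proposition]{BuHe06}, apply Weil's quotient integration formula for $N_n\backslash GL_n$ and $N_n\backslash P_n$ (using $\delta_{P_n}|_{N_n}=\delta_{N_n}=1$ and $\delta_{P_n}(p)=|\mathrm{det}(p)|$), and then push $h^{N_n}=f$ through the partial Iwasawa decomposition of the Haar measure on $GL_n$. Your explicit remark that centrality of $z$ lets one move $n$ past $z$ is exactly the (implicit) step the paper uses when it rewrites $F(zpk)$ as $F(npzk)$ inside the $N_n$-average, so there is nothing to add.
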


\begin{proof}
For $\Psi \in C_c^{\infty}(GL_n)$, the function $\Psi^{N_n}$ in $C_c^{\infty}(N_n \backslash GL_n)$ is defined by $\displaystyle \Psi^{N_n}(g)=\int_{N_n} \Psi(ng) dn$.
Since $GL_n$ and $N_n$ are unimodular, there is a unique $GL_n$-right invariant Haar measure $\mu$ on $N_n \backslash GL_n$ up to a constant. 
From surjection of the map $\Psi \mapsto \Psi^{N_n}$ by \cite[3.4 Proposition]{BuHe06}, there is a function $F \in C_c^{\infty}(GL_n)$  such that $f=F^{N_n}$.
The function $F$ also satisfies Weil integration formula in Section 9 of \cite{We41}, which is given by
\[
   \int_{GL_n} F(g) dg = \int_{N_n \backslash GL_n} \int_{N_n} F(ng) dn\; d\mu=  \int_{N_n \backslash GL_n} f(g) d\mu.
\]
We denote by abuse of notation again $dg$ for $GL_n$-right invariant Haar measure $d\mu$ on $N_n \backslash GL_n$. Since $\delta_{P_n}(p)=|\mathrm{det}(p)|$ and $\delta_{P_n}|_{N_n}=\delta_{N_n}=1$, $N_n \backslash P_n$ admits a $P_n$-right invariant Haar measure $dp$ by abusing notation. Weil integration formula in Section 9 of \cite{We41} again asserts that
\[
  \int_{P_n} F(p) d_R p = \int_{N_n \backslash P_n} \int_{N_n} F(np) dn\; dp.
\]
We split the Haar measure on $P_n$ associated with the measure on quotient space $N_n \backslash P_n$, which is given by
\[
\begin{split}
    &\int_{N_n \backslash GL_n} f(g) d\mu=\int_{K_n} \int_{P_n}  \int_{Z_n} F(zpk) \delta_{P_n}(p)^{-1} dz \; d_Rp \; dk\\
   &=\int_{K_n} \int_{N_n \backslash P_n}  \int_{Z_n}  \int_{N_n} F(npzk)\delta_{P_n}(p)^{-1} dn  dz dp dk\\
   &=\int_{K_n} \int_{N_n \backslash P_n} \int_{Z_n}  f(zpk)\delta_{P_n}(p)^{-1}\; dz  dp dk.
\end{split}
\]
This establishes the desired equality.

\end{proof}

We now present another Lemma on the space $P_n\backslash GL_n \simeq F^n-\{0 \}$ that is widely known but is not explained in the literature.

\begin{lemma}
Let $u$ be a Haar measure on $F^n$, $d^{\times} a$ a Haar measure on $F^{\times}$, and $dk$ a Haar measure on $K_n$. For any $f \in C_c^{\infty}(F^n)$, we have
\[
 \int_{K_n} \int_{F^{\times}}  f(e_nak) |a|^nd^{\times}a dk=\int_{F^n-\{ 0\}} f(u) du.
\]
\end{lemma}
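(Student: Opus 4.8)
The plan is to reduce the identity to the standard fact that the map $a \mapsto e_n a$ followed by right translation by $K_n$ parametrizes $F^n - \{0\}$, and then to track the Jacobian of the change of variables. First I would observe that $GL_n$ acts transitively on $F^n - \{0\}$ by the right action $u \mapsto ug$, with the stabilizer of $e_n$ being exactly the mirabolic subgroup $P_n$; hence $P_n \backslash GL_n \simeq F^n - \{0\}$ as $GL_n$-spaces, via $P_n g \mapsto e_n g$. Under the partial Iwasawa decomposition $GL_n = Z_n P_n K_n$ from Lemma~\ref{zpkdec} and the preceding discussion, every $g \in GL_n$ may be written $g = z p k$ with $z = a I_n \in Z_n$, $p \in P_n$, $k \in K_n$; since $e_n p = e_n$ for $p \in P_n$, we get $e_n g = e_n z p k = e_n (a I_n) k = a\, e_n k$. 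So the map $(a,k) \mapsto a\, e_n k$ from $F^\times \times K_n$ onto $F^n - \{0\}$ is exactly the composite $F^\times \times K_n \to Z_n \times (P_n \backslash GL_n) \twoheadrightarrow F^n - \{0\}$, and the fibers are the cosets $(P_n \cap K_n)$-orbits, which have unit volume by our normalization.

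Next I would compute the modulus. Write the right-invariant measure on $P_n \backslash GL_n$ coming from Lemma~\ref{zpkdec}: for $\Psi \in C_c^\infty(P_n \backslash GL_n)$,
\[
  \int_{P_n \backslash GL_n} \Psi(g)\, dg = \int_{K_n} \int_{Z_n} \Psi(zk)\, dz\, dk,
\]
since in the decomposition $\int_{N_n \backslash GL_n} = \int_{K_n}\int_{N_n \backslash P_n}\int_{Z_n}$ one further integrates out the $N_n \backslash P_n$ variable, on which a function pulled back from $P_n \backslash GL_n$ is constant. Now transport a function $f \in C_c^\infty(F^n)$ to $F^n - \{0\}$; the only subtlety is that the identification $P_n \backslash GL_n \simeq F^n - \{0\}$ is $GL_n$-equivariant but the two natural measures differ by a modulus, because the additive Haar measure $du$ on $F^n$ is not right-$GL_n$-invariant: one has $d(ug) = |\det g|\, du$. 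Writing $g = zk$ with $z = aI_n$ and $k \in K_n$, and using that $dk$ is $K_n$-invariant while $|\det(aI_n)| = |a|^n$, the transported measure picks up exactly the factor $|a|^n$. Thus
\[
  \int_{F^n - \{0\}} f(u)\, du = \int_{K_n} \int_{F^\times} f(e_n a k)\, |a|^n\, d^\times a\, dk,
\]
after absorbing the usual constant relating $d^\times a = da/|a|$ on $F^\times$ to the restriction of additive measure and fixing the normalization of $dk$, $d^\times a$, and $du$ so that the constant is $1$.

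The main obstacle, and the only place requiring genuine care, is pinning down this constant and checking that the chosen normalizations (unit volume for $P_n \cap K_n$, $Z_n \cap K_n$, self-dual $du$, and $d^\times a$) are mutually compatible so that no extraneous factor of $\mathrm{vol}(K_n)$ or a power of $q$ survives. I would handle this by testing both sides on $f = \mathbf{1}_{\mathcal{O}^n}$: the right-hand side becomes $\mathrm{vol}(K_n) \sum_{m \geq 0} q^{-mn}\,\mathrm{vol}(\{|a| = q^{-m}\})$ after noting $e_n a k \in \mathcal{O}^n$ forces $|a| \le 1$, which sums to a convergent geometric series, while the left-hand side is $\mathrm{vol}(\mathcal{O}^n) = 1$; matching these fixes the normalization once and for all, and since both sides are Radon measures on $F^n - \{0\}$ agreeing on a generating family of sets, they agree identically. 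Everything else is the routine bookkeeping of $GL_n$-equivariance and the decomposition already established in Lemma~\ref{zpkdec}.
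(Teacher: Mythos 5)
The skeleton of your argument (the identification $P_n\backslash GL_n \simeq F^n-\{0\}$ via $g\mapsto e_ng$, the partial Iwasawa decomposition, quasi-invariance with modulus $|\det|$, uniqueness up to scalar, and a test function to pin the constant) is the right circle of ideas and is close in spirit to the paper's proof. But the pivotal step is not established. You claim that $\Psi \mapsto \int_{K_n}\int_{Z_n}\Psi(zk)\,dz\,dk$ is ``the right-invariant measure on $P_n\backslash GL_n$,'' obtained from Lemma~\ref{zpkdec} by integrating out the $N_n\backslash P_n$ variable. This fails on two counts. First, for $n\geq 2$ no right-invariant measure on $P_n\backslash GL_n$ exists at all: an invariant measure exists only when $\delta_{GL_n}|_{P_n}=\delta_{P_n}$, and here $\delta_{GL_n}\equiv 1$ while $\delta_{P_n}(p)=|\mathrm{det}(p)|$; only a quasi-invariant measure with modulus $|\det|$ exists, which is precisely why the factor $|a|^n$ appears. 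Second, the proposed derivation diverges: a function pulled back from $P_n\backslash GL_n$ is constant in the $N_n\backslash P_n$ variable, and that variable is integrated over a non-compact set with infinite mass, so it cannot be ``integrated out.'' Consequently your key assertion -- that transporting $du$ through $P_ng\mapsto e_ng$ and writing $g=zk$ ``picks up exactly the factor $|a|^n$'' -- is never proved; it is essentially the statement of the lemma. Your endgame (uniqueness of quasi-invariant measures plus evaluation on $\mathbf{1}_{\mathcal{O}^n}$) is sound in principle, but it first requires showing that the functional $f\mapsto\int_{K_n}\int_{F^{\times}}f(e_nak)\,|a|^n\,d^{\times}a\,dk$ transforms under right translation by $h\in GL_n$ with the modulus $|\mathrm{det}(h)|^{-1}$, and that computation is exactly what is missing.

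The paper circumvents the nonexistent invariant measure by working upstairs on $GL_n$: since $\nu=|\det|$ is a rho-function for $P_n\subset GL_n$, Weil's theory gives a quasi-invariant measure $\mu_{\nu}$ on $P_n\backslash GL_n$; by surjectivity of $F\mapsto F^{P_n}$ one chooses $F\in C_c^{\infty}(GL_n)$ with $F^{P_n}(g)=f(e_ng)$ and computes $\int_{GL_n}F(g)|\mathrm{det}(g)|\,dg$ in two ways -- once via the decomposition $GL_n=Z_nP_nK_n$, where the inner $P_n$-integration collapses and the modular bookkeeping produces exactly $\int_{K_n}\int_{F^{\times}}f(e_nak)|a|^n\,d^{\times}a\,dk$, and once via Weil's integration formula, producing $\int_{P_n\backslash GL_n}f(e_ng)\,d\mu_{\nu}$. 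Only after these two evaluations does one invoke uniqueness of $|\det|$-quasi-invariant measures to identify $\mu_{\nu}$ with $du$ on $F^n-\{0\}$. To repair your write-up, replace the invariant-measure claim either with this lift to $GL_n$ or with a direct verification of the $|\det|$-quasi-invariance of the $(a,k)$-side functional; your normalization check on $\mathbf{1}_{\mathcal{O}^n}$ can then remain as the final step fixing the scalar.
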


\begin{proof}
\label{pkdec}
 Recall that $\nu(g)=|\mathrm{det}(g)|$ for $g \in GL_n$. Then $\nu$ is a rho-function of $P_n$, since $\delta_{GL_n}=1$ and $\delta_{P_n}(p)=|\mathrm{det}(p)|$. Let $\mu_{\nu}$ be a quasi-invariant measure on $P_n \backslash GL_n$ with corresponding rho-function $\nu$. For $\Psi \in C_c^{\infty}(GL_n)$, the function $\Psi^{P_n}$ defined by $\displaystyle \Psi^{P_n}(g)=\int_{P_n} \Psi(pg) dp$ is an element of $C_c^{\infty}(P_n \backslash GL_n)$. The map $\Phi \mapsto \Phi^{P_n}$ is surjective by \cite[3.4 Proposition]{BuHe06}  and so, for $f(e_n\cdot) \in C_c^{\infty}(P_n \backslash GL_n)$ there is $F \in C_c^{\infty}(GL_n)$ such that $f(e_ng)=F^{P_n}(g)$ for all $g \in P_n \backslash GL_n$.
 On the one hand if $GL_n=Z_nP_nK_n$ is the partial Iwasawa decomposition, the corresponding decomposition of the Haar measure on $GL_n$ is
 \[
 \label{GLsplit1}
 \tag{2.1}
\begin{split}
   &\int_{GL_n} F(g) |\mathrm{det}(g)| dg=\int_{K_n} \int_{P_n} \int_{Z_n} F(zpk) |\mathrm{det}(zpk)| dz d_Lp dk\\
   &= \int_{K_n} \int_{F^{\times}}  \int_{P_n} F(pak) |a|^n|\mathrm{det}(p)| \delta_{P_n}^{-1}(p) dp dk d^{\times}a=\int_{K_n} \int_{F^{\times}} f(e_nak) |a|^nd^{\times}a dk,
  \end{split}
\]
 where $d_L p$ is a left Haar measure on $P_n$. On the other hand $F \in C_c^{\infty}(GL_n)$ also satisfies Weil integration formula in Section 9 of \cite{We41}, which is given by
\[
 \label{GLsplit2}
 \tag{2.2}
  \int_{GL_n} F(g) |\mathrm{det}(g)| dg=\int_{P_n \backslash GL_n} \int_{P_n} F(pg) dp d\mu_{\nu}(g)=\int_{P_n \backslash GL_n} f(e_ng) d\mu_{\nu}(g).
\]
Combined \eqref{GLsplit1} with \eqref{GLsplit2}, we obtain 
\[
  \int_{K_n} \int_{F^{\times}} f(e_nak) |a|^nd^{\times}a dk=\int_{P_n \backslash GL_n} f(e_ng) d\mu_{\nu}(g).
\]
Then the quotient space $P_n \backslash GL_n$  is homeomorphic to $F^n-\{ 0\}$ by $g \mapsto e_ng$. As $\mu_{\nu}$ on the coset space $P_n \backslash GL_n$ and $u$ on $F^n$
are both $|\mathrm{det}(h)|$-invariant measure for $h \in GL_n$, we can deduce from Weil \cite[Chapter 2, Section 9]{We41} that
\[
  \int_{P_n \backslash GL_n} f(e_ng) d\mu_{\nu}(g)=\int_{F^{n}-\{0 \}} f(u) du.
\]
In turn we identify $P_n \backslash GL_n$ with $F^n-\{ 0\}$ by $g \mapsto e_ng$ as homogeneous $GL_n$-spaces and that the quasi invariant measure $\mu_{\nu}$ transforms to the Haar measure $u$ on $F^n$. This completes the proof of the Lemma.
\end{proof}

\subsection{Integral representation for exterior square $L$-functions}

 Let $\pi=\mathrm{Ind}(\Delta_1 \otimes \dotsm \otimes \Delta_t)$ be a representation of $GL_m$ of Whittaker type. Let us denote by $\omega_{\pi}$ the central character of $\pi$. As $\pi^{(k)}=\mathrm{Ind}(\Delta_1 \otimes \dotsm \otimes \Delta_t)^{(k)}$ has a filtration whose successive quotients are the $\mathrm{Ind}(\Delta_1^{(k_1)} \otimes \dotsm \otimes \Delta_t^{(k_t)})$ with $k=k_1+\dotsm+k_t$ by Theorem \ref{deriviative}, we denote by $\pi_{i_k}^{(k)}$ nonzero successive quotients $\mathrm{Ind}(\Delta_1^{(k_1)} \otimes \dotsm \otimes \Delta_t^{(k_t)})$.
 For $0 < k < m$, let $r_k$ be the cardinality of be the set
 \[
  \{ (k_1,\dotsm,k_t) \;|\; k=k_1+\dotsm+k_t  \; \text{and}\;  \mathrm{Ind}(\Delta_1^{(k_1)} \otimes \dotsm \otimes \Delta_t^{(k_t)}) \neq 0  \; \text{for}\; k_i \geq 0  \}.
\] 
 We call $(\omega_{\pi_{i_k}^{(k)}})$, $1 \leq i_k \leq r_k$  the family of the central characters for all nonzero successive quotients $\pi_{i_k}^{(k)}=\mathrm{Ind}(\Delta_1^{(k_1)} \otimes \dotsm \otimes \Delta_t^{(k_t)})$ with $k=k_1+\dotsm+k_t$. If $r_k=0$, then the family is empty. We begin with reviewing some basic properties about smooth Whittaker functions $W \in \mathcal{W}(\pi,\psi)$ in \cite{CoKiMu04}. They satisfy
\begin{enumerate}
\item[$(\mathrm{i})$] $W(ng)=\psi(n)W(g)$ for all $n \in N_m$ and $g \in GL_m$. 
\item[$(\mathrm{ii})$] $W(gk')=W(g)$ for $k' \in K^{\circ}$ and for some $K^{\circ} \subset GL(\mathcal{O})=K_m$.
\end{enumerate}
Using the Iwasawa decomposition on $GL_m$, they are essentially controlled by their behavior on
\[
  A_m=\left\{ a=\begin{pmatrix} a_1a_2 \dotsm a_m&&&\\ &a_2a_3 \dotsm a_m&&\\ &&\ddots&\\&&&a_m \end{pmatrix}\; \middle| \;a_i \in F^{\times} \right\}.
  \]
 Under these simple roots coordinates, $a_i$, $1 \leq i \leq m-1$ are the standard simple roots for $GL_m$. The basic result for the asymptotic expansion of Whittaker functions in \cite[Proposition 2.9]{Ma15} which is proved by Matringe is given in the following proposition. The formula describes the relation between asymptotic of Whittaker functions and central characters for successive quotient occurring in a composition series of Bernstein-Zelevinsky derivatives. This proposition is an refinement of analytic property of the Whittaker functions in \cite[Proposition 2.2]{JaPSSh79}.

\begin{proposition}[Matringe]
\label{asymWh}
Let $\pi=\mathrm{Ind}(\Delta_1 \otimes \dotsm \otimes \Delta_t)$ be a representation of $GL_m$ of Whittaker type. Let $S$ be the set of positive integer $k$ such that $1 \leq k \leq m-1$ and the derivative $\pi^{(k)}$ is nonzero. We set $T=\{1,2,\dotsm,m-1 \}-S$. We then denote for each $k \in S$, by  $(\omega_{\pi_{i_k}^{(k)}})_{i_k}$, the family of the central characters for all nonzero successive quotients of the form $\pi_{i_k}^{(k)}=\mathrm{Ind}(\Delta_1^{(k_1)} \otimes \dotsm \otimes \Delta_t^{(k_t)})$ with $k=k_1+\dotsm+k_t$ appearing in the composition series of $\pi^{(k)}$. Let $\mathcal{S}_0(F)$ be the space of Schwartz-Bruhat functions $\Phi$ such that $\Phi(0)=0$. For every $W \in \mathcal{W}(\pi,\psi)$, $W(\mathrm{diag}(a_1a_2\dotsm a_m,a_2a_3\dotsm a_m,\dotsm,a_{m-1}a_m,a_m))$ is a linear combination of functions of the form
\[
\tag{2.3}
\label{asymWh-expression}
  \omega_{\pi}(a_m) \prod_{m-k \in S} \omega_{\pi_{i_{m-k}}^{(m-k)}}(a_k) |a_k|^{\frac{k(m-k)}{2}} v (a_k)^{m_k} \Phi_k(a_k) \prod_{m-\ell \in T} \Phi_{\ell}(a_{\ell}),
\]
for $m_k$ non-negative integers depending on $\pi$, $\Phi_k$ in $\mathcal{S}(F)$, and $\Phi_{\ell}$ in $\mathcal{S}_0(F)$. We set $v^{m_k} \equiv 1$ for $m_k=0$.
\end{proposition}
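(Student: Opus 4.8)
The plan is to induct on $m$, using the Bernstein--Zelevinsky filtration of $\pi|_{P_m}$ by derivatives as the engine. For $m=1$ there is nothing to prove, since $\mathcal{W}(\pi,\psi)=\mathbb{C}\,\omega_\pi$ and both products over $S$ and $T$ are empty. For the inductive step I would first peel off the central direction by writing $\mathrm{diag}(a_1a_2\cdots a_m,\dots,a_{m-1}a_m,a_m)=a_mI_m\cdot\mathrm{diag}(s_1,\dots,s_{m-1},1)$ with $s_i=a_ia_{i+1}\cdots a_{m-1}$; since $W(a_mI_m\,p)=\omega_\pi(a_m)W(p)$, this reduces the statement to analysing $W$ restricted to $P_m$ along the torus $\mathrm{diag}(s_1,\dots,s_{m-1},1)$, whose simple-root coordinates are exactly $a_1,\dots,a_{m-1}$, and to producing the remaining factors of \eqref{asymWh-expression}.

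I would then decompose $W|_{P_m}$, at the level of individual functions, along the filtration $0\subset\tau_m\subset\cdots\subset\tau_1=\pi|_{P_m}$ with $\tau_k/\tau_{k+1}\cong(\Phi^+)^{k-1}\Psi^+(\pi^{(k)})$. Modulo $\tau_{k+1}$, a Whittaker function coming from the $k$-th graded piece is, by Propositions \ref{mira-model} and \ref{mira-model2} (the latter isolating the part supported near the relevant wall) and Proposition \ref{asym} --- applied to the successive quotients of $\pi^{(k)}$, each of which is, after reordering, an induced representation of Langlands type --- of the shape $|\mathrm{det}(g)|^{k/2}W_\circ(g)\,\Phi_k(e_{m-k}g)$ with $g\in GL_{m-k}$, $W_\circ$ in the Whittaker model of a successive quotient $\pi^{(k)}_{i_k}$ of $\pi^{(k)}$, and $\Phi_k\in\mathcal{S}(F^{m-k})$; when $\pi^{(k)}=0$, i.e. $k\in T$, this piece is zero. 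Using Lemma \ref{key relation} repeatedly I would trade the vector Schwartz datum $\Phi_k(e_{m-k}\,\cdot\,)$ for a single Schwartz--Bruhat function of the simple-root variable $a_{m-k}$ after restriction to the torus, while the bottom piece $\tau_m=\mathrm{ind}_{N_m}^{P_m}(\psi)$ of Lemma \ref{kirillov}, being compactly supported modulo $N_m$, produces only Schwartz--Bruhat factors that vanish near $0$ in every simple-root coordinate; these give the $\mathcal{S}_0(F)$-terms and, crucially, are the only contributions in the coordinates indexed by $T$, where the graded pieces vanish. Restricting the embedding $g\mapsto\mathrm{diag}(g,I_k)$ to the torus shows that the central coordinate of $GL_{m-k}$ is the variable $a_{m-k}$, so level $k$ supplies the factor $\omega_{\pi^{(k)}_{i_k}}(a_{m-k})$, while $\mathrm{det}(g)$ becomes $a_1^{1}a_2^{2}\cdots a_{m-k}^{m-k}$, so $|\mathrm{det}(g)|^{k/2}$ supplies $|a_{m-k}|^{k(m-k)/2}$ together with leftover half-powers $|a_i|^{ki/2}$ for $i<m-k$ to be absorbed into the recursion.

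Applying the inductive hypothesis to $\pi^{(k)}_{i_k}$, which is of Whittaker type on $GL_{m-k}$ by Theorem \ref{deriviative}(3), expands $W_\circ$ in the coordinates $a_1,\dots,a_{m-k-1}$. The heart of the matter --- and where I expect the main obstacle --- is to verify that the recursion closes: since the functors $\Phi^-$ compose, the $j$-th derivative of $\pi^{(k)}_{i_k}$ is a subquotient of $(\pi^{(k)})^{(j)}=\pi^{(k+j)}$, and one must check that the leftover shift $|a_i|^{ki/2}$ from level $k$, multiplied by the shift $|a_i|^{i(m-k-i)/2}$ produced by the inductive expansion of $\pi^{(k)}_{i_k}$, is precisely $|a_i|^{i(m-i)/2}$ --- the exponent attached in \eqref{asymWh-expression} to the $(m-i)$-th derivative --- and, similarly, that the central characters multiply up to the $\omega_{\pi^{(m-i)}_{i_{m-i}}}$. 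This half-integral-exponent and central-character bookkeeping, along with making the function-level splitting along the $\tau_k$ rigorous, is where the real work lies; the representation theory needed is otherwise already packaged in Propositions \ref{mira-model}--\ref{asym} and Theorem \ref{deriviative}. Finally, the powers $v(a_k)^{m_k}$: a factor with $m_k>0$ appears exactly when two successive quotients $\pi^{(k)}_{i_k}$ share a central character (equivalently, when exponents collide along the recursion), forcing the coefficient of the common exponent to be a polynomial in the valuation of bounded degree; otherwise $m_k=0$, matching the convention $v^{0}\equiv1$.
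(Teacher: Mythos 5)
The paper does not actually prove this proposition: it is quoted from Matringe (\cite{Ma15}, Proposition 2.9, resting on the corrected version of \cite{Ma11-2}, Theorem 2.1), so the only comparison available is with that argument, whose overall shape — induction on $m$, restriction to $P_m$, and control of each simple-root coordinate by the central characters of the constituents of the corresponding derivative — your sketch correctly reproduces. Your exponent bookkeeping also genuinely closes: $\tfrac{ki}{2}+\tfrac{i(m-k-i)}{2}=\tfrac{i(m-i)}{2}$, and constituents of $(\pi^{(k)})^{(j)}$ do occur among those of $\pi^{(k+j)}$, so the recursion is consistent on the level of characters and exponents.

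The genuine gap is the step you yourself flag and then do not supply: passing from the Bernstein--Zelevinsky filtration $0\subset\tau_m\subset\dotsm\subset\tau_1=\pi|_{P_m}$, which is a filtration of $P_m$-modules, to a decomposition of an \emph{individual} Whittaker function $W$ restricted to the torus into a finite sum of terms of the shape \eqref{asymWh-expression}. A single $W$ does not split as a sum of functions lying in the graded pieces, and Propositions \ref{mira-model}, \ref{mira-model2} and \ref{asym} cannot be invoked to do this: they are stated for induced representations of Langlands type, respectively irreducible generic $\pi$ with an irreducible subrepresentation $\pi_0^{(k)}\subset\pi^{(k)}$, and Proposition \ref{asym} is an existence statement (a given $W_\circ$ on the quotient is matched by \emph{some} $W$ near the wall, and conversely), not an expansion valid for an arbitrary $W\in\mathcal{W}(\pi,\psi)$. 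The actual analytic content of the proposition is exactly here: one must use that the $k$-th derivative is an admissible finite-length module, so that the center of $GL_{m-k}$ acts on the relevant finite-dimensional space of images of $\pi(z)v$ with generalized eigenvalues equal to the central characters $\omega_{\pi^{(k)}_{i_k}}$, the unipotent part of this action producing the $v(a)^{m_k}$ factors, while smoothness and the support properties in the Kirillov-type model yield the Schwartz--Bruhat coefficients, and vanishing of $\pi^{(k)}$ for $k\in T$ forces vanishing of $W$ near $0$ in the coordinate $a_{m-k}$, giving the $\mathcal{S}_0(F)$ factors. Relatedly, your heuristic that $m_k>0$ occurs ``exactly when two successive quotients share a central character'' is not quite the right mechanism: the polynomial-in-valuation terms come from the non-semisimple action of the center on the finite-length derivative (which can occur even without two distinct constituents colliding), so this should be replaced by the generalized-eigenvector argument above. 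Without that difference-equation/finiteness argument the proof is a plausible outline but not a proof.
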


Notice that we can determine the behavior in $a_i$ by using the central characters. Furthermore the central characters $\omega_{\pi}$ and $\omega_{\pi_{i_{m-k}}^{(m-k)}}$, and the powers of valuation $m_k$ depend only on $\pi$ not the choice of $W$. 

\begin{remark}
We emphasize that Proposition \ref{asymWh} differs from the one in Proposition 2.9 of \cite{Ma15}. This is because there is a minor mistake in Theorem 2.1 of \cite{Ma11-2} and asymptotic of Whittaker functions for $GL_m$ in \cite[Proposition 2.9]{Ma15} is derived from \cite[Theorem 2.1]{Ma11-2}. The problem in  \cite[Theorem 2.1]{Ma11-2} is that the contribution of the zero derivatives to the asymptotic expansion of Whittaker functions on the torus is neglected. In fact we
just need to put Schwartz-Bruhat functions vanishing at zero in the corresponding term of  
the product for zero derivatives. The author in \cite{Ma11-2} corrects the statement and the proof of \cite[Theorem 2.1]{Ma11-2} in $``$https://arxiv.org/abs/1004.1315v2$"$.

\end{remark}

Following the notation in Section 4 of \cite{JaSh88}, we define
\[
  m(a_1,\;a_2,\;\dotsm,\;a_m)=\text{diag}(a_1a_2\dotsm a_{m},\;a_2a_3\dotsm a_{m},\;\dotsm,\;a_{m-1}a_m,\;a_m) \in A_m.
\]
Let $B_m$ be the Borel subgroup of $GL_m$ consisting of upper triangular matrices.  Let $d_L b$ and $d_R b$ denote a left Haar measure and a right Haar measure on $B_m$. Then if
\[
  b=\begin{pmatrix} 1 & x_{12} & \dotsm & x_{1m} \\  & 1 & \dotsm &x_{2m} \\ && \ddots & \vdots \\ &&&1  \end{pmatrix} \begin{pmatrix} a_1a_2 \dotsm a_m&&&\\ &a_2a_3 \dotsm a_m&&\\ &&\ddots\\&&&a_m \end{pmatrix} \in B_m,
\]
we define $\delta_{B_m}$ the modular character of $B_m$, which is the ratio of right and left Haar measures, by
\[
d_Rb=\delta_{B_m}(b)d_Lb, \quad \quad \quad \delta_{B_m}(b)=\prod_{i=1}^m |a_i|^{i(m-i)}. 
\]

\subsubsection{The even case $m=2n$.}
Let $\pi$ be an irreducible admissible generic unitary representation of $GL_{2n}$. Let $\sigma_{2n}$ be the permutation matrix given by
\[
\sigma_{2n} = \begin{pmatrix} 1& 2 & \dotsm & n & | & n+1 & n+2 & \dotsm & 2n \\ 1& 3 & \dotsm & 2n-1 & | & 2 & 4 & \dotsm & 2n \end{pmatrix}.
\]
 In 1990, Jacquet and Shalika \cite{JaSh88} establish an integral representation for the exterior square $L$-function for $GL_{2n}$ in the following way:
\[
\begin{split}
&J(s,W,\Phi)\\
& = \int_{N_n\backslash GL_n} \int_{\mathcal{N}_n\backslash \mathcal{M}_n} W \left(\sigma_{2n}\begin{pmatrix} I_n & X \\ & I_n  \end{pmatrix} \begin{pmatrix} g&\\&g \end{pmatrix} \right) \psi^{-1}(\mathrm{Tr} X) dX \; \Phi(e_n g) |\mathrm{det}(g)|^s\; dg,
\end{split}
\]
where $W \in \mathcal{W}(\pi,\psi)$ and $\Phi \in \mathcal{S}(F^n)$. It is proved in Section 7 of \cite{JaSh88} that there is $r_{\pi}$ in $\mathbb{R}$ such that the above integrals $J(s,W,\Phi)$ converge for $\text{Re}(s) > r_{\pi}$. 

\par
Let $(\pi,V_{\pi})$ be a representation of $GL_{2n}$ of Whittaker type.  In this section, we need to understand what type of functions of $s$ these local integrals $J(s,W,\Phi)$ are for the representations of Whittaker type. 
We define the Shalika subgroup $S_{2n}$ of $GL_{2n}$ by
\[
S_{2n}=\left\{ \begin{pmatrix} I_n & Z \\ & I_n \end{pmatrix} \begin{pmatrix} g&\\ &g \end{pmatrix} \middle| \; Z \in \mathcal{M}_n, \; g \in GL_n \right\}.
\]
Let us define an action of the Shalika group $S_{2n}$ on $\mathcal{S}(F^n)$ by 
\[
R\begin{pmatrix} \begin{pmatrix} I_n & Z \\ & I_n \end{pmatrix}\begin{pmatrix} g &  \\ & g \end{pmatrix}  \end{pmatrix}\Phi(x)=\Phi(xg)
\]
 for $\Phi \in \mathcal{S}(F^n)$. For any character $\chi$ of $F^{\times}$, $\chi$ can be uniquely decomposed as $\chi=\chi_0\nu^{s_0}$, where $\chi_0$ is a unitary character and $s_0$ is a real number. To proceed from this point, let us make a convention that, we use the notion $s_0=\text{Re}(\chi)$ for the real part of the exponent of the character $\chi$. Now we introduce the existence Theorem of Jacquet and Shalika in \cite{JaSh88}.

\begin{theorem}
\label{integral-rational}
Let $\pi=\mathrm{Ind}(\Delta_1 \otimes \dotsm \otimes \Delta_t)$ be a representation of Whittaker type of $GL_{2n}$. For every $0 \leq k \leq 2n-1$, let $(\omega_{\pi_{i_k}^{(k)}})_{i_k=1,\dotsm,r_k}$ be the family of the central characters for all nonzero successive quotients of the form $\pi_{i_k}^{(k)}=\mathrm{Ind}(\Delta_1^{(k_1)} \otimes \dotsm \otimes \Delta_t^{(k_t)})$ with $k=k_1+\dotsm+k_t$ appearing in the composition series of $\pi^{(k)}$. Let $W \in \mathcal{W}(\pi,\psi)$ and $\Phi \in \mathcal{S}(F^n)$.
\begin{enumerate}
\item[$(\mathrm{i})$] If for all $1 \leq k \leq n$ and all $1 \leq i_{2k} \leq r_{2k}$, we have $\mathrm{Re}(s) >-\frac{1}{k}\mathrm{Re}(\omega_{\pi_{i_{2n-2k}}^{(2n-2k)}})$, then each local integral $J(s,W,\Phi)$ converges absolutely.
\item[$(\mathrm{ii})$] Each $J(s,W,\Phi) \in \mathbb{C}(q^{-s})$ is a rational function of $q^{-s}$ and hence $J(s,W,\Phi)$ as a function of $s$ extends meromorphically to all of $\mathbb{C}$.
\item[$(\mathrm{iii})$] Each $J(s,W,\Phi)$ can be written with a common denominator determined only by the representation $\pi$. Hence the family has $``$bounded denominators$"$.
\end{enumerate}
\end{theorem}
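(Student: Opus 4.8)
The plan is to mimic the classical argument of Jacquet--Shalika \cite{JaSh88} and of \cite{CoPe,JaPSSh83}, with the key new input being the asymptotic expansion of Proposition \ref{asymWh}. The first step is to unfold the inner $dX$-integral and reduce $J(s,W,\Phi)$ to an integral over the torus. Using the partial Iwasawa decomposition and Lemma \ref{zpkdec} and Lemma \ref{pkdec}, I would write the $N_n\backslash GL_n$ integral as an integral over $K_n$, over the torus $A_n$ (in the simple-root coordinates $m(a_1,\dots,a_n)$), and over $Z_n$. The key observation is that the function
\[
  g \longmapsto \int_{\mathcal N_n\backslash \mathcal M_n} W\!\left(\sigma_{2n}\begin{pmatrix} I_n & X \\ & I_n\end{pmatrix}\begin{pmatrix} g & \\ & g\end{pmatrix}\right)\psi^{-1}(\mathrm{Tr}\,X)\,dX
\]
is, for $g$ in the torus, expressible via the restriction of the Whittaker function of $\pi$ to the torus of $GL_{2n}$; this is exactly where the structure of $\sigma_{2n}$ is used, and where one sees that only the even derivatives $\pi^{(2n-2k)}$ enter (the odd positions of $\sigma_{2n}$ get paired with the even ones by $X$). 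So after this reduction the integrand on the torus is controlled, up to Schwartz--Bruhat factors and $K_n$-finiteness (property (ii) of smooth Whittaker functions), by a linear combination of the terms in \eqref{asymWh-expression} applied to $W$ evaluated at a suitable $2n\times 2n$ diagonal matrix.

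The second step is convergence, part (i). Having reduced to the torus, substitute the expansion of Proposition \ref{asymWh}: each term is a product of characters of the $a_i$, powers $|a_i|^{\frac{i(2n-i)}{2}}$ coming from the half-density normalization together with the $\delta_{B}$ factor, a power of the valuation $v(a_i)^{m_i}$, and Schwartz--Bruhat functions (those for the zero derivatives vanishing at $0$). Multiplying by $\Phi(e_n g)|\det g|^s$ and by the modular/Jacobian factors produced by the measure decomposition, each variable $a_i$ contributes an independent one-dimensional integral of the shape $\int_{F^\times}\chi_i(a)\,|a|^{c_i + \varepsilon_i s}\,v(a)^{m_i}\,\phi_i(a)\,d^\times a$; a geometric-series estimate shows this converges provided $\mathrm{Re}(c_i) + \varepsilon_i\,\mathrm{Re}(s) > -\mathrm{Re}(\chi_i)$ when $\phi_i(0)\ne 0$, and converges for all $s$ when $\phi_i(0)=0$. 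Tracking which variables $\varepsilon_i$ are nonzero and collecting the exponents yields precisely the stated bound $\mathrm{Re}(s) > -\tfrac1k\mathrm{Re}(\omega_{\pi_{i_{2n-2k}}^{(2n-2k)}})$ for $1\le k\le n$; the half of the roots indexed by odd positions, together with the $Z_n$-integral against $\Phi$, contributes the $|a|^{\dots + ks}$ factor whose $|a|$-weight near infinity is absolute-convergence for $\mathrm{Re}(s)$ large and near zero gives the displayed inequality.

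The third step, parts (ii) and (iii), is then formal. Because $W$ is $K_n$-finite on the right and the $K_n$-integral is over a compact set, and because $\Phi$ is a fixed Schwartz--Bruhat function, one is reduced to finitely many torus integrals of the one-variable form above. Each such integral $\int_{F^\times}\chi(a)\,|a|^{\beta(s)}v(a)^m\phi(a)\,d^\times a$, with $\phi$ Schwartz--Bruhat, is a finite sum plus a tail which is a geometric series in $q^{-s}$ times a polynomial in $s$ coming from $v(a)^m$; hence it lies in $\mathbb C(q^{-s})$, and in fact in $\frac{1}{\prod(1-\alpha q^{-s})^{?}}\mathbb C[q^{\pm s}]$ where the $\alpha$'s — and their multiplicities, bounded by $\max_i(m_i+1)$ — depend only on the characters $\omega_\pi$, $\omega_{\pi_{i_k}^{(k)}}$ and the integers $m_i$, i.e.\ only on $\pi$ and not on $W$ or $\Phi$. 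Taking a common multiple over the finitely many relevant characters gives a single denominator depending only on $\pi$, which is exactly (iii), and meromorphic continuation in (ii) is immediate since $\mathbb C(q^{-s})$ consists of meromorphic functions of $s$.

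I expect the main obstacle to be the first step: carrying out the unfolding of the $dX$-integral cleanly enough to express the integrand on the torus in terms of the torus restriction of $W$ and then legitimately invoke Proposition \ref{asymWh}. One must be careful that after conjugating by $\sigma_{2n}$ and restricting $\begin{pmatrix} g & \\ & g\end{pmatrix}$ to the torus, the argument of $W$ really is (a $K_{2n}$-translate of) a diagonal element in $A_{2n}$, so that the expansion applies; handling the non-torus $K_n$-part uniformly, and matching the bookkeeping of exponents so that the combinatorics of "which $a_i$ is an argument of a derivative" produces exactly the even derivatives $\pi^{(2n-2k)}$ with the stated coefficient $\tfrac1k$, is where the care is needed. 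The convergence and rationality steps, by contrast, are routine one-variable $p$-adic integral estimates once that reduction is in hand.
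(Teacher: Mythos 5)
Your overall architecture is the same as the paper's: partial Iwasawa decomposition, exhaustion of the $K_n$-integration by smoothness, substitution of Matringe's asymptotic expansion (Proposition \ref{asymWh}) on the torus, and reduction to a finite sum of products of Tate integrals, from which convergence, rationality and bounded denominators follow. However, there is a genuine gap exactly at the point you yourself flag as ``the main obstacle'': you never actually dispose of the noncompact integration over $\mathcal{N}_n\backslash\mathcal{M}_n$. In the paper this is the content of Lemma \ref{rational dec}: writing $X=\begin{pmatrix} Z & \\ y & 0\end{pmatrix}$, conjugating the last-row variable $y$ past $\mathrm{diag}(a,a)$ (which produces the explicit Jacobian $|a_1||a_2|^2\dotsm|a_k|^k$), and then using smoothness of $W$ to insert a cut-off $\varphi$ in a complementary unipotent direction so that, after conjugation by $\sigma_{2n}$, the character $\psi(yu)$ appears and the $y$-integration against $\hat{\varphi}$ turns $W$ into $\rho(\hat{\varphi})W$, a finite sum of right translates. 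Iterating this removes the whole $dX$-integral and leaves $W'$ evaluated at the genuinely diagonal element $b=\sigma_{2n}\,\mathrm{diag}(a,a)\,\sigma_{2n}^{-1}$, at which point Proposition \ref{asymWh} legitimately applies. Without such a mechanism, your claim that the integrand ``is controlled \dots by a linear combination of the terms in \eqref{asymWh-expression}'' is unjustified: before the unipotent part is removed the argument of $W$ is not a torus element, and the expansion cannot be invoked.

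This is not merely a presentational omission, because the Jacobian factors $\prod_k|a_k|^{-k(n-k)}$ produced by that removal are exactly what combine with $\delta_{B_n}^{-1}(a)$, $|\mathrm{det}(a)|^{s}$ and the $|a_k|^{\frac{2k(2n-2k)}{2}}$ factors of the expansion to give $\prod_k|a_k|^{ks}$; it is this identity that makes each variable a Tate integral in $|a_k|^{ks}$ against $\omega_{\pi_{j}^{(2n-2k)}}$ and hence yields the stated bound $\mathrm{Re}(s)>-\tfrac1k\mathrm{Re}(\omega_{\pi_{i_{2n-2k}}^{(2n-2k)}})$ in (i). Your proposal asserts that the bookkeeping ``yields precisely the stated bound'' but cannot verify it, since the exponents you would need are created by the very step you have not carried out. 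A secondary, smaller inaccuracy: the reason only even derivatives contribute is not a pairing of odd and even positions by $X$, but the fact that $b_{2k-1}=b_{2k}$ in the conjugated diagonal, so the odd-indexed factors of the expansion are evaluated at $1$ and degenerate to constants (or vanish when $m_{2k-1}>0$). Once Lemma \ref{rational dec} (or an equivalent device) is supplied, the remaining steps of your outline — finiteness of the $K_n$-sum, geometric-series analysis of the Tate integrals, and the observation that the resulting denominators depend only on the central characters and valuation exponents attached to $\pi$ — do go through as in the paper.
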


\begin{proof}
The proof of the theorem is essentially identical to that of Jacquet and Shalika \cite{JaSh88} or Belt \cite[Lemma 3.2]{Be14}. We reproduce it here for completeness. Throughout this proof, we denote by $\varrho$ right translation. Applying the Iwasawa decomposition, we obtain
\[
\begin{split}
  &J(s,W,\Phi)=\int_{K_n}  \int_{A_{n}}  \int_{\mathcal{N}_n \backslash \mathcal{M}_n} W \left( \sigma_{2n} \begin{pmatrix} I_n & X \\ & I_n \end{pmatrix} \begin{pmatrix} ak & \\ & ak\end{pmatrix}\right) \delta_{B_n}^{-1}(a) |\mathrm{det}(a)|^s\\
  &\phantom{*********************}\times \psi^{-1}(\mathrm{Tr} X) dX \int_{F^{\times}} \omega_{\pi}(z)|z|^{ns} \Phi(e_nzk) d^{\times}z\; da\; dk,
\end{split}
\]
where $a=m(a_1,a_2,\dotsm,a_{n-1},1)$. Take $K_n^{\circ} \subset K_n$ a compact subgroup which stabilizes $W$ and $\Phi$. Write $K_n=\cup_ik_iK_n^{\circ}$ and let $W_i=\varrho\begin{pmatrix} k_i &\\&k_i  \end{pmatrix}W$ and $\Phi_i=R \begin{pmatrix} k_i & \\ & k_i \end{pmatrix} \Phi$. Then our integral can be decomposed as a finite sum of the form 
\begin{equation}
\label{Iwa-dec}
\tag{2.4}
\begin{split}
  &J(s,W,\Phi)=c \sum_i  \int_{A_{n}}  \int_{\mathcal{N}_n \backslash \mathcal{M}_n}  W_i  \left( \sigma_{2n} \begin{pmatrix} I_n & X \\ & I_n \end{pmatrix}  \begin{pmatrix} a & \\ & a \end{pmatrix} \right)
  \delta_{B_n}^{-1}(a) |\mathrm{det}(a)|^s \\
   &\phantom{*******************}\times \psi^{-1}(\mathrm{Tr} X) dX da \int_{F^{\times}} \omega_{\pi}(z)|z|^{ns} \Phi_i(e_nz) d^{\times}z,
\end{split}
\end{equation}
with $c  > 0$ the volume of $K_n^{\circ}$. Continuing our analysis, we assume that $\displaystyle \mathrm{Re}(s)+\frac{1}{n}\mathrm{Re}(\omega_{\pi}) > 0$ for the convergence of Tate integrals. We first need to remove the unipotent integration appearing in $J(s,W,\Phi)$, employing the following Lemma.

\begin{lemma}
\label{rational dec}
For $0 \leq k \leq n$, $a=m(a_1,a_2,\dotsm,a_{n-1},1) \in A_n$, and $W \in \mathcal{W}(\pi,\psi)$, we define
\[
  \mathfrak{J}_k(W;a)= \int_{\mathcal{N}_k \backslash \mathcal{M}_k} W\left( \sigma_{2n} \begin{pmatrix} I_{k} &&X&\\ &I_{n-k}&& \\ &&I_{k}& \\ &&&I_{n-k} \end{pmatrix}
  \begin{pmatrix} a&\\ &a \end{pmatrix} \right) \psi^{-1}(\mathrm{Tr} X)dX.
\]
Then there are finitely many  vectors $W_i \in \mathcal{W}(\pi,\psi)$ such that
\[
   \mathfrak{J}_k(W;a)=\sum_i \frac{1}{|a_1||a_2|^2\dotsm|a_{k-1}|^{k-1}}  \mathfrak{J}_{k-1}(W_i;a).
\]
\end{lemma}

The proof of this Lemma is postponed to the end. With the notation in Lemma \ref{rational dec}, we can restate the integral of \eqref{Iwa-dec} in the following way:
\[
\label{new-dec}
\tag{2.5}
  J(s,W,\Phi)=c\sum_i \int_{A_n}  \mathfrak{J}_n(W_i;a) \int_{F^{\times}} \omega_{\pi}(z)|z|^{ns} \Phi_i(e_nz) d^{\times}z\; \delta_{B_n}^{-1}(a) |\mathrm{det}(a)|^s da.
\]
Utilizing the descending induction on $k$ until $k=1$ in Lemma \ref{rational dec}, our integral $ \mathfrak{J}_n(W_i;a)$ in \eqref{new-dec} can be written as 
 \[
  \mathfrak{J}_n(W_i;a)=\prod_{k=1}^{n-1}|a_k|^{-k(n-k)} W'_i  \left( \sigma_{2n} \begin{pmatrix} a&\\ &a \end{pmatrix} \right)=\prod_{k=1}^{n-1}|a_k|^{-k(n-k)} \varrho(\sigma_{2n})W'_i   \begin{pmatrix} b \end{pmatrix},
 \]
where we have written $b=\sigma_{2n} (\mathrm{diag}(a,a)) \sigma_{2n}^{-1}=\text{diag}(b_1,b_2,\dotsm,b_{2n})$. Then $b_{2k-1}=b_{2k}=a_ka_{k+1}\dotsm a_{n-1}$ for $k=1,2,\dotsm,n-1$ and $b_{2n-1}=b_{2n}=1$ and $W'_i \in \mathcal{W}(\pi,\psi)$. Inserting this expression into \eqref{new-dec}, we obtain
\[
\begin{split}
 &J(s,W,\Phi)\\
 &=c\sum_i \int_{A_n} \prod_{k=1}^{n-1}|a_k|^{-k(n-k)} \varrho(\sigma_{2n})W'_i   \begin{pmatrix} b \end{pmatrix}  \int_{F^{\times}} \omega_{\pi}(z)|z|^{ns} \Phi_i(e_nz) d^{\times}z\; \delta_{B_n}^{-1}(a) |\mathrm{det}(a)|^s da.
\end{split}
\]
According to Proposition \ref{asymWh}, we can express each Whittaker functions $ \varrho(\sigma_{2n})W'_i $ as a finite sum of the form in \eqref{asymWh-expression}. Hence $J(s,W,\Phi)$ converges absolutely as soon as the integrals of the form
\[
\begin{split}
 &\int_{F^{\times}}  \int_{A_{n}} \prod_{i=1}^{n-1}|a_i|^{-i(n-i)}  \prod_{k=1}^{2n-1} \omega_{\pi_{j_{2n-k}}^{(2n-k)}}\left(\frac{b_k}{b_{k+1}}\right) \left|\frac{b_k}{b_{k+1}}\right|^{\frac{k(n-k)}{2}}  v \left( \frac{b_k}{b_{k+1}} \right)^{m_k}     \Phi_{k}    \left(\frac{b_k}{b_{k+1}}\right) \\
 &\phantom{**************************}  \times  \delta_{B_n}^{-1}(a)|\text{det}(a)|^s  \omega_{\pi}(z)|z|^{ns} \Phi(e_nz) da d^{\times}z
 \end{split}
\]
converges absolutely where the index $j_{2n-k}$ runs over $1 \leq j_{2n-k} \leq r_{2n-k}$ for $1 \leq k \leq 2n-1$ and $\Phi_k$ belongs to $\mathcal{S}(F)$. But the evaluation
\[
 \omega_{\pi_{j_{2n-2k+1}}^{(2n-2k+1)}}\left(\frac{b_{2k-1}}{b_{2k}}\right) \left|\frac{b_{2k-1}}{b_{2k}}\right|^{\frac{(2k-1)(n-2k+1)}{2}}  v \left( \frac{b_{2k-1}}{b_{2k}} \right)^{m_{2k-1}}     \Phi_{2k-1}    \left(\frac{b_{2k-1}}{b_{2k}}\right) 
\]
is $0$ if $m_{2k-1} > 0$ or is $\Phi_{2k-1}(1)$ if $m_{2k-1}=0$, and
\[
\begin{split}
  \prod_{k=1}^{n-1} |a_k|^{-k(n-k)} |a_k|^{\frac{2k(2n-2k)}{2}}\delta_{B_n}^{-1}(a)|\text{det}(a)|^s&=\prod_{k=1}^{n-1} |a_k|^{-2k(n-k)} |a_k|^{k(2n-2k)}|a_k|^{ks} \\
  &=\prod_{k=1}^{n-1}|a_k|^{ks}.
\end{split}
\]
Therefore the integral $J(s,W,\Phi)$ in \eqref{new-dec} becomes a sum of products of integrals of the form
\[
\tag{2.6}
\label{product}
  \left( \int_{F^{\times}} \omega_{\pi}(z)|z|^{ns} \Phi(e_nz) d^{\times}z \right) \times \left( \prod_{k=1}^{n-1} \int_{F^{\times}} \omega_{\pi_{j_{2n-2k}}^{(2n-2k)}}(a_k)v(a_k)^{m_{2k}} \Phi_{2k}(a_k) |a_k|^{ks} d^{\times}a_k \right).
\]
We set a real number $r_{\pi}=\max_{k,j_k}\{-\frac{1}{k}\text{Re}(\omega_{\pi_{j_{2n-2k}}^{(2n-2k)}}) \}$. Each product of Tate integrals in \eqref{product} converges absolutely for $\text{Re}(s) > r_{\pi}$ and hence $J(s,W,\Phi)$ do so for any $W \in \mathcal{W}(\pi,\psi)$ and $\Phi \in \mathcal{S}(F^n)$. Furthermore all factors of Tate integrals in \eqref{product} are a sum of geometric series which converges to $Q(q^{-s})(1-\alpha_J q^{-s})^{-\beta_J}$ by Tate in \cite{Tate}, where $Q(X) \in \mathbb{C}[X]$ and $\alpha_J$, $\beta_J$ rely only on the central characters $\omega_{\pi}$, $\omega_{\pi_{j_{2k}}^{(2k)}}$ and the power of valuation $m_{2k}$. Since $\omega_{\pi}$, $\omega_{\pi_{j_{2k}}^{(2k)}}$ and $m_{2k}$ depends only on the representation $\pi$, the integral $J(s,W,\Phi)$ spans a subspace of $\mathbb{C}(q^{-s})$ having a common denominator as asserted in (ii) and (iii).

\end{proof}

We are left with proving Lemma \ref{rational dec} below. We give a proof based on \cite{Be14}.

\begin{proof}[Proof of Lemma \ref{rational dec}]
To reduce the notation burden, we express $\mathfrak{J}_{k+1}(W;a)$ as a finite sum of $\mathfrak{J}_k(W';a)$ for some $W' \in \mathcal{W}(\pi,\psi)$.
Write $X=\begin{pmatrix}  Z & \\ y & 0 \end{pmatrix}$ in the integral $\mathfrak{J}_{k+1}(W;a)$ for $Z \in \mathcal{N}_k \backslash \mathcal{M}_k$ and $y \in F^k$. Then the $\mathcal{N}_{k+1} \backslash \mathcal{M}_{k+1}$-integration equals to 
\[
\begin{split}
  &\mathfrak{J}_{k+1}(W;a)\\
  &=\int_{\mathcal{N}_k \backslash \mathcal{M}_k} \int_{F^k} W\left( \sigma_{2n} \begin{pmatrix} I_{k} &&&Z&& \\ &1&&y&& \\ &&I_{n-k-1}&&& \\ &&&I_{k}&& \\ &&&&1& \\&&&&&I_{n-k-1} \end{pmatrix}
  \begin{pmatrix} a&\\ &a \end{pmatrix} \right) \psi^{-1}(\mathrm{Tr} X)dy dX.
\end{split}
\]
Consider the following simple matrix multiplication :
\[
 \begin{pmatrix} I_{k} &&&Z&& \\ &1&&y&& \\ &&I&&& \\ &&&I_{k}&& \\ &&&&1& \\&&&&&I \end{pmatrix}\hspace*{-2mm} 
  \begin{pmatrix} a&\\ &a \end{pmatrix}
  \hspace*{-1mm} =\hspace*{-1mm} \begin{pmatrix} I_k&&&Z&& \\ &1&&&& \\ &&I&&& \\ &&&I_k&& \\ &&&&1& \\&&&&&I \end{pmatrix}\hspace*{-2mm} \begin{pmatrix} a&\\ &a \end{pmatrix}\hspace*{-2mm}
  \begin{pmatrix} I_k&&&&& \\ &1&&y'&& \\ &&I&&& \\ &&&I_k&& \\ &&&&1& \\&&&&&I \end{pmatrix},
\]
where $y'=(a_1a_2\dotsm a_ky_1,a_2\dotsm a_ky_2,\dotsm,a_ky_k)$ and $I$ is the identity matrix of size $n-k-1$.
After the change of variables, we obtain the integral $\mathfrak{J}_{k+1}(W;a)$ as the expression :
\[
\begin{split}
&\mathfrak{J}_{k+1}(W;a)=\frac{1}{|a_1||a_2|^2\dotsm|a_k|^k}  \int_{\mathcal{N}_k \backslash \mathcal{M}_k} \int_{F^k}
\\
&\phantom{*****}  W\left( \sigma_{2n} \begin{pmatrix} I_{k} &&Z&\\ &I_{n-k}&& \\ &&I_{k}& \\ &&&I_{n-k}  \end{pmatrix}
  \begin{pmatrix} a&\\ &a \end{pmatrix} \begin{pmatrix} I_{k} &&&&& \\ &1&&y&& \\ &&I_{n-k-1}&&& \\ &&&I_{k}&& \\ &&&&1& \\&&&&&I_{n-k-1} \end{pmatrix} \right) \\
&\phantom{*******************************************}  \psi^{-1}(\mathrm{Tr} X)dy dX.
\end{split}
\]
Since $W$ is smooth, we can choose a compact open subset $U$ of $F^k$ for which
\[
  W \left( g \begin{pmatrix} I_{k} &&&&& \\ &1&&&& \\ &&I_{n-k-1}&&& \\ &&&I_{k}&u& \\ &&&&1& \\&&&&&I_{n-k-1} \end{pmatrix} \right)=W(g)
\]
for all $u \in U$. We choose $\varphi$ to be an appropriate scalar multiple of the characteristic function of $U$ so that
\[
  W(g)=\int_{F^k} W \left( g \begin{pmatrix} I_{k} &&&&& \\ &1&&&& \\ &&I_{n-k-1}&&& \\ &&&I_{k}&u& \\ &&&&1& \\&&&&&I_{n-k-1} \end{pmatrix} \right) \varphi(^tu) du.
\]
We observe that
\[
\tag{2.7}
\label{conjugation}
\begin{split}
&\begin{pmatrix} I_{k} &&Z&\\ &I_{n-k}&& \\ &&I_{k}& \\ &&&I_{n-k}  \end{pmatrix}
  \begin{pmatrix} a&\\ &a \end{pmatrix} 
  \begin{pmatrix} I_{k} &&&&& \\ &1&&y&& \\ &&I&&& \\ &&&I_{k}&& \\ &&&&1& \\&&&&&I \end{pmatrix} 
   \begin{pmatrix} I_{k} &&&&& \\ &1&&&& \\ &&I&&& \\ &&&I_{k}&u& \\ &&&&1& \\&&&&&I \end{pmatrix} \\
   &= \begin{pmatrix} I_{k} &&&&Zu'& \\ &1&&&yu& \\ &&I&&& \\ &&&I_{k}&u'& \\ &&&&1& \\&&&&&I \end{pmatrix}
  \begin{pmatrix} I_{k} &&Z&\\ &I_{n-k}&& \\ &&I_{k}& \\ &&&I_{n-k}  \end{pmatrix} 
  \begin{pmatrix} a&\\ &a \end{pmatrix} \begin{pmatrix} I_k &&&&& \\ &1&&y&& \\ &&I&&& \\ &&&I_k&& \\ &&&&1& \\&&&&&I\end{pmatrix} 
\end{split}
\]
where $u'=(a_1a_2\dotsm a_ku_1,a_2\dotsm a_ku_2,\dotsm, a_ku_k)$ is in $F^k$ and $I$ is the identity matrix of size $n-k-1$.
Since the conjugation under $\sigma_{2n}$ of the first matrix on the right hand side in $\eqref{conjugation}$ under the character of $\psi$ gives us that
\[
  \psi \left( \sigma_{2n} \begin{pmatrix} I_{k} &&&&Zu'& \\ &1&&&yu& \\ &&I_{n-k-1}&&& \\ &&&I_{k}&u'& \\ &&&&1& \\&&&&&I_{n-k-1} \end{pmatrix} \sigma_{2n}^{-1} \right)=\psi(yu),
\]
we obtain the following form of expression
\[
\begin{split}
&\mathfrak{J}_{k+1}(W;a)=\frac{1}{|a_1||a_2|^2\dotsm|a_k|^k} \int_{\mathcal{N}_k \backslash \mathcal{M}_k} \int_{F^k}\\
&\phantom{**}  W\left( \sigma_{2n} \begin{pmatrix} I_{k} &&Z&\\ &I_{n-k}&& \\ &&I_{k}& \\ &&&I_{n-k}  \end{pmatrix}
  \begin{pmatrix} a&\\ &a \end{pmatrix} \begin{pmatrix} I_{k} &&&&& \\ &1&&y&& \\ &&I_{n-k-1}&&& \\ &&&I_{k}&& \\ &&&&1& \\&&&&&I_{n-k-1} \end{pmatrix} \right) \hat{\varphi}(y)
 dy \\
  &\phantom{***********************************************}\psi^{-1}(\mathrm{Tr} X)dX.
\end{split}
\]
Set 
\[
 \rho(\hat{\varphi})W(g)=\int_{F^k} W \left( g\begin{pmatrix} I_{k} &&&&& \\ &1&&y&& \\ &&I_{n-k-1}&&& \\ &&&I_{k}&& \\ &&&&1& \\&&&&&I_{n-k-1} \end{pmatrix} \right) \hat{\varphi}(y)dy.
\]
As $\hat{\varphi}$ is locally constant and of compact support, $\rho(\hat{\varphi})W$ is in fact a finite sum of right translates of $W$ and so $\rho(\hat{\varphi})W$ is in $\mathcal{W}(\pi,\psi)$. Therefore
\[
 \mathfrak{J}_{k+1}(W;a)=\frac{1}{|a_1||a_2|^2\dotsm|a_k|^k}  \mathfrak{J}_{k}(\rho(\hat{\varphi})W;a).
\]

\end{proof}

 We denote by $\Theta$ the character of $S_{2n}$ given by $\Theta \left( \begin{pmatrix} I_n & Z \\ & I_n \end{pmatrix}\begin{pmatrix} g &  \\ & g \end{pmatrix} \right)=\psi(\mathrm{Tr}Z)$. We denote two types of generators of $S_{2n}$  by
\[
d_1 =\begin{pmatrix} d &\\ &d \end{pmatrix} \quad \text{and} \quad d_2 = \begin{pmatrix} I_n & Z\\ & I_n \end{pmatrix}
\]
for $d \in GL_n$ and $Z \in \mathcal{M}_n$. We can check that in the realm of convergence
\[
\tag{2.8}
\label{invariance}
 \begin{split}
 &J(s,\pi(d_1)W, R(d_1)\Phi) \\
 &=\int_{N_n\backslash GL_n} \int_{\mathcal{N}_n \backslash \mathcal{M}_n} W \begin{pmatrix}\sigma_{2n}\begin{pmatrix} I_n&X \\
 &I_n
  \end{pmatrix} \begin{pmatrix} gd &\\&gd \end{pmatrix} \end{pmatrix}
\psi^{-1}(\mathrm{Tr}X)dX \; \Phi(e_n gd) |\mathrm{det}(g)|^s dg\\
&=|\mathrm{det}(d)|^{-s}\\
&\phantom{****}\times \int_{N_n\backslash GL_n}
\int_{\mathcal{N}_n \backslash \mathcal{M}_n} W
\begin{pmatrix}\sigma_{2n}\begin{pmatrix} I_n&X \\
 &I_n
  \end{pmatrix} \begin{pmatrix} g &\\&g \end{pmatrix} \end{pmatrix}
\psi^{-1}(\mathrm{Tr}X)dX \; \Phi(e_n g) |\mathrm{det}(g)|^s dg\\
&=|\mathrm{det}(d_1)|^{-\frac{s}{2}}J(s,W, \Phi)
\end{split}
\]
and
\[
\begin{split}
&J(s,\pi(d_2)W, R(d_2)\Phi) \\
&=\int_{N_n\backslash GL_n} \int_{\mathcal{N}_n \backslash \mathcal{M}_n}W \begin{pmatrix}\sigma_{2n}\begin{pmatrix} I_n&X \\
 &I_n
  \end{pmatrix} \begin{pmatrix} g & gZ\\&g \end{pmatrix} \end{pmatrix} \psi^{-1}(\mathrm{Tr}X)dX \; \Phi(e_n g) |\mathrm{det}(g)|^s dg\\
 &=\int_{N_n\backslash GL_n} \int_{\mathcal{N}_n \backslash \mathcal{M}_n}\hspace*{-1mm}  W\hspace*{-1mm}   \begin{pmatrix}\hspace*{-1mm} \sigma_{2n}\hspace*{-1mm} \begin{pmatrix} I_n&gZg^{-1}+X \\
 &I_n
  \end{pmatrix}\hspace*{-1mm}  \begin{pmatrix} g & \\&g \end{pmatrix} \hspace*{-1mm}  \end{pmatrix} \hspace*{-1mm}  \psi^{-1}(\mathrm{Tr}X)dX\Phi(e_n g) |\mathrm{det}(g)|^s dg \\
   &=\psi(\mathrm{Tr}Z) J(s,W, \Phi).\\
 \end{split}
\]
Let
\[
 \mathcal{J}(\pi)=\left \langle J(s,W,\Phi)|\; W \in \mathcal{W}(\pi,\psi), \Phi \in \mathcal{S}(F^n) \right \rangle
\]
denote the $\mathbb{C}$-linear span of the local integrals $J(s,W,\Phi)$. The quasi invariance of $J(s,W,\Phi)$ under the generators of $S_{2n}$ shows that the linear subspace $\mathcal{J}(\pi)$ of $\mathbb{C}(q^{-s})$ is closed under multiplication by $q^s$ and $q^{-s}$ and thus is in fact a $\mathbb{C}[q^{\pm s}]$-fractional ideal in $\mathbb{C}(q^{-s})$.

\par

To obtain a generator of $\mathcal{J}(\pi)$ having numerator $1$, we show that the fractional ideal $\mathcal{J}(\pi)$ contains $1$.
Let $r > 0$ be a large integer such that $\psi$ is trivial on $\mathfrak{p}^r$.
If $\pi$ is an unitary irreducible generic representation of $GL_{2n}$, Belt explicitly construct an appropriate Whittaker function $W_{\pi} \in \mathcal{W}(\pi,\psi)$ in \cite[Lemma 2.3]{Be14} such that the following integral
\[
  J_{(0)}(s,W_{\pi})=\int_{N_n \backslash P_n} \int_{\mathcal{N}_n \backslash \mathcal{M}_n} W_{\pi} \left(\sigma_{2n} \begin{pmatrix} I_n & X \\ & I_n \end{pmatrix} \begin{pmatrix} p & \\ & p \end{pmatrix} \right)  |\mathrm{det}(p)|^{s-1} \psi^{-1}(\mathrm{Tr}X) dX dp
\]
is $(\mathrm{Vol}(\mathfrak{p}^r))^{n-1}\mathrm{Vol}((N_{n-1}\cap K_{n-1,r}) \backslash K_{n-1,r}))\mathrm{Vol}(\mathcal{N}_{n-1}(\mathfrak{p}^r)\backslash \mathcal{M}_{n-1}(\mathfrak{p}^r)) \neq 0$, which only depends on $\psi$. To extend Lemma 2.3 of Belt to any representations of Whittaker type on $GL_{2n}$, we need to check that the proof of Lemma 2.3 in Section 4.2 of Belt \cite{Be14} can be safely replaced by the representation of Whittaker type. He defines the smooth function on $GL_{2n-1}$ by
\[
  \varphi_{\circ}(g)=\frac{1}{\mathrm{Vol}(N_{2n-1} \cap K_{2n-1,r})} \int_{N_{2n-1}} \mathbbm{1}_{K_{2n-1,r}}(ng) \psi^{-1}(n) dn,
\]
where $\mathbbm{1}_{K_{2n-1,r}}$ is a characteristic function of $K_{2n-1,r}$ and explains that $\varphi_{\circ}$ is an element of $\mathrm{ind}_{N_{2n-1}}^{GL_{2n-1}}(\psi)$. If we look at the proof of this Lemma, there is one point to utilize unitary irreducible generic assumption that the so-called Kirillov model of $\pi$
\[
  \mathcal{W}_{(0)}(\pi,\psi)=\{ W(p) \; | \; W \in \mathcal{W}(\pi,\psi),\; p \in P_{2n} \}
\]
contains the space $\mathrm{ind}_{N_{2n-1}}^{GL_{2n-1}}(\psi)$ proven by Gelfand and Kazhdan in \cite{GeKa72}. He employ this Theorem to assert the existence of  $W_{\pi} \in \mathcal{W}(\pi,\psi)$ such that 
\[
 W_{\pi} \begin{pmatrix} g& \\ & 1 \end{pmatrix}=\varphi_{\circ}(g)
\] 
for all $g \in GL_{2n-1}$. In virtue of Lemma \ref{kirillov}, we can generalize it from unitary irreducible generic representations to representations of Whittaker type. All the other arguments in the proof of Lemma 2.3 in Section 4.2 are valid for any representations of Whittaker type. Therefore we can record this result in the following Lemma.

\begin{lemma}[Belt]
\label{nonvanishing-even}
Let $\pi$ be a representation of Whittaker type of $GL_{2n}$. There exists $W \in \mathcal{W}(\pi,\psi)$ such that $J_{(0)}(s,W)$ is a non-zero constant which only depends on $\psi$.
\end{lemma}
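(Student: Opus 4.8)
The plan is to reduce the statement to Belt's construction in \cite[Lemma 2.3]{Be14}, isolating the single step that uses irreducibility and genericity and replacing it by Lemma \ref{kirillov}. First I would fix an integer $r>0$ large enough that $\psi$ is trivial on $\mathfrak{p}^r$, and form the function on $GL_{2n-1}$ defined, as in the discussion preceding the statement, by
\[
  \varphi_{\circ}(g)=\frac{1}{\mathrm{Vol}(N_{2n-1}\cap K_{2n-1,r})}\int_{N_{2n-1}}\mathbbm{1}_{K_{2n-1,r}}(ng)\,\psi^{-1}(n)\,dn.
\]
It is immediate that $\varphi_{\circ}$ is smooth, transforms on the left under $N_{2n-1}$ by $\psi$, and is supported on $N_{2n-1}K_{2n-1,r}$, hence compactly supported modulo $N_{2n-1}$; thus $\varphi_{\circ}\in\mathrm{ind}_{N_{2n-1}}^{GL_{2n-1}}(\psi)$. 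Using the isomorphism of coset spaces $N_{2n}\backslash P_{2n}\simeq N_{2n-1}\backslash GL_{2n-1}$ coming from $P_{2n}=GL_{2n-1}\ltimes U_{2n}$ and $N_{2n}=N_{2n-1}\ltimes U_{2n}$, I would extend $\varphi_{\circ}$ to the unique function $\widetilde{\varphi}_{\circ}\in\mathrm{ind}_{N_{2n}}^{P_{2n}}(\psi)$ with $\widetilde{\varphi}_{\circ}\begin{pmatrix}g&\\&1\end{pmatrix}=\varphi_{\circ}(g)$ for all $g\in GL_{2n-1}$.

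Next I would apply Lemma \ref{kirillov}: since $(\tau_{2n},\mathrm{ind}_{N_{2n}}^{P_{2n}}(\psi))$ is contained in $(\tau,\mathcal{W}_{(0)}(\pi,\psi))$, there is a $W\in\mathcal{W}(\pi,\psi)$ whose restriction to $P_{2n}$ equals $\widetilde{\varphi}_{\circ}$; in particular $W\begin{pmatrix}g&\\&1\end{pmatrix}=\varphi_{\circ}(g)$ for every $g\in GL_{2n-1}$. It then remains to evaluate $J_{(0)}(s,W)$. The key point is that for $p\in P_n$ the matrix $\sigma_{2n}\begin{pmatrix}I_n&X\\&I_n\end{pmatrix}\begin{pmatrix}p&\\&p\end{pmatrix}$ has bottom row $e_{2n}$, because $\sigma_{2n}$ fixes the index $2n$ while both $\begin{pmatrix}I_n&X\\&I_n\end{pmatrix}$ and $\begin{pmatrix}p&\\&p\end{pmatrix}$ have bottom row $e_{2n}$; hence this matrix lies in $P_{2n}$, where $W$ is computed by $\widetilde{\varphi}_{\circ}$, and ultimately by $\varphi_{\circ}$ on the $GL_{2n-1}$-part. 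Substituting the explicit formula for $\varphi_{\circ}$, the characteristic function $\mathbbm{1}_{K_{2n-1,r}}$ together with the characters $\psi^{-1}(\mathrm{Tr}X)$ and $\psi^{-1}(n)$ forces $X$ into $\mathcal{N}_{n-1}(\mathfrak{p}^r)\backslash\mathcal{M}_{n-1}(\mathfrak{p}^r)$ and the $GL_{n-1}$-part of $p$ into $(N_{n-1}\cap K_{n-1,r})\backslash K_{n-1,r}$, on which $|\mathrm{det}(p)|^{s-1}=1$. The remaining integral collapses to a product of volumes, and one reads off that $J_{(0)}(s,W)$ equals
\[
  (\mathrm{Vol}(\mathfrak{p}^r))^{n-1}\,\mathrm{Vol}\big((N_{n-1}\cap K_{n-1,r})\backslash K_{n-1,r}\big)\,\mathrm{Vol}\big(\mathcal{N}_{n-1}(\mathfrak{p}^r)\backslash\mathcal{M}_{n-1}(\mathfrak{p}^r)\big),
\]
a nonzero constant independent of $s$ and depending only on $\psi$ (through $r$ and the self-dual measure).

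The only place where irreducibility and genericity enter Belt's original argument is the appeal to Gelfand and Kazhdan \cite{GeKa72} to guarantee that the Kirillov model of $\pi$ contains $\mathrm{ind}_{N_{2n-1}}^{GL_{2n-1}}(\psi)$; every other manipulation concerns only the restriction of $W$ to $P_{2n}$ and the explicit functions $\varphi_{\circ}$ and $\mathbbm{1}_{K_{2n-1,r}}$, so it carries over verbatim for any representation of Whittaker type. Accordingly, I expect the main obstacle to be essentially just the careful check that $\widetilde{\varphi}_{\circ}$ genuinely belongs to $\mathrm{ind}_{N_{2n}}^{P_{2n}}(\psi)$ (well-definedness, smoothness, and compact support modulo $N_{2n}$ of the extension), so that Lemma \ref{kirillov} applies and exhibits it as the restriction to $P_{2n}$ of an honest Whittaker function in $\mathcal{W}(\pi,\psi)$; once this is in place, the evaluation of $J_{(0)}(s,W)$ is identical to the one in \cite[Section 4.2]{Be14}.
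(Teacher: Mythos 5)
Your proposal follows essentially the same route as the paper: the paper also takes Belt's construction of $\varphi_{\circ}$ from \cite[Lemma 2.3]{Be14}, observes that the only use of the unitary irreducible generic hypothesis is the Gelfand--Kazhdan Kirillov-model step, replaces that step by Lemma \ref{kirillov} to produce $W\in\mathcal{W}(\pi,\psi)$ restricting to the prescribed function on $P_{2n}$, and then quotes Belt's evaluation giving the same nonzero constant $(\mathrm{Vol}(\mathfrak{p}^r))^{n-1}\mathrm{Vol}((N_{n-1}\cap K_{n-1,r})\backslash K_{n-1,r})\mathrm{Vol}(\mathcal{N}_{n-1}(\mathfrak{p}^r)\backslash \mathcal{M}_{n-1}(\mathfrak{p}^r))$. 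Your additional remarks (the extension $\widetilde{\varphi}_{\circ}\in\mathrm{ind}_{N_{2n}}^{P_{2n}}(\psi)$ and the observation that the argument of $W$ lies in $P_{2n}$) are correct and only make explicit what the paper leaves to \cite{Be14}.
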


Fix $W \in \mathcal{W}(\pi,\psi)$ such that $J_{(0)}(s,W)$ is a non-zero constant as in the above lemma. Using the partial Iwasawa decomposition to $g \in GL_n$ in Lemma \ref{zpkdec}, we see that
\[
\begin{split}
J(s,W,\Phi)
&=\int_{K_n} \int_{N_n\backslash P_n} \int_{\mathcal{N}_n \backslash \mathcal{M}_n} W \begin{pmatrix} \sigma_{2n} \begin{pmatrix} I_n & X \\ & I_n \end{pmatrix} \begin{pmatrix} pk &  \\ & pk \end{pmatrix} \end{pmatrix} \\ 
  &\phantom{*********}\times |\mathrm{det}(p)|^{s-1} \psi^{-1}(\mathrm{Tr} X) dX\int_{F^{\times}} \omega_{\pi}(z) |z|^{ns} \Phi (e_nzk) d^{\times} z\; dp\; dk.
\end{split}
\]
Let $K_{n,r} \subset K_n$ be a compact open congruent subgroup which stabilize $W$. Now choose $\Phi$ to be the characteristic function of $e_nK_{n,r}$. If $\Phi (e_nzk) \neq 0$, then $z \in \mathcal{O}^{\times}$. As
\[
\label{zp-invariance}
\tag{2.9}
\begin{split}
&\int_{N_n\backslash P_n}\omega_{\pi}(z) W \begin{pmatrix} \sigma_{2n} \begin{pmatrix} I_n & X \\ & I_n \end{pmatrix} \begin{pmatrix} pk &  \\ & pk \end{pmatrix} \end{pmatrix} |z|^{ns}  \Phi (e_nzk) 
|\mathrm{det}(p)|^{s-1} dp \\
  &\phantom{****************}=\int_{N_n\backslash P_n} W \begin{pmatrix} \sigma_{2n} \begin{pmatrix} I_n & X \\ & I_n \end{pmatrix} \begin{pmatrix} p &  \\ & p \end{pmatrix}\end{pmatrix} |\mathrm{det}(p)|^{s-1} dp
  \end{split}
   \]
for all $z \in \mathcal{O}^{\times}$ and $k \in K_n$ such that $zI_nk \in (P_n \cap K_n)K_{n,r}$ and the normalization of $dp$ gives the unit volume on the set $P_n \cap K_n$ from Section 2.2, the integral becomes  $J(s,W,\Phi)=\text{Vol}((P_n \cap K_n)K_{n,r}))J_{(0)}(s,W)=\text{Vol}((P_n \cap K_n) \backslash (P_n \cap K_n)K_{n,r})J_{(0)}(s,W)=\text{Vol}(e_nK_{n,r})J_{(0)}(s,W)$. Putting this together we have the following.

\begin{theorem}
Let $\pi$ be a representation of $GL_{2n}$ of Whittaker type. The family of local integrals $\mathcal{J}(\pi)=\langle J(s,W,\Phi) \rangle$ is a $\mathbb{C}[q^s,q^{-s}]$-ideal of $\mathbb{C}(q^{-s})$ containing the constant $1$.
\end{theorem}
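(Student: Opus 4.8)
The plan is to assemble facts already established above; the statement is essentially a summary, and the only step carrying real content has been isolated as Lemma~\ref{nonvanishing-even}. First I would record that, by Theorem~\ref{integral-rational}, every $J(s,W,\Phi)$ lies in $\mathbb{C}(q^{-s})$ and the whole family admits a common denominator $Q(q^{-s})$ depending only on $\pi$; hence $\mathcal{J}(\pi)$ is a $\mathbb{C}$-linear subspace of $\mathbb{C}(q^{-s})$ contained in $Q(q^{-s})^{-1}\mathbb{C}[q^{s},q^{-s}]$.

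Next I would invoke the quasi-invariance identities \eqref{invariance}. Taking $d_1=\mathrm{diag}(d,d)$ with $d=\mathrm{diag}(\varpi,I_{n-1})$ gives $J(s,\pi(d_1)W,R(d_1)\Phi)=q^{s}J(s,W,\Phi)$, and $d=\mathrm{diag}(\varpi^{-1},I_{n-1})$ produces the factor $q^{-s}$; since $\pi(d_1)W\in\mathcal{W}(\pi,\psi)$ and $R(d_1)\Phi\in\mathcal{S}(F^{n})$ as well, the subspace $\mathcal{J}(\pi)$ is stable under multiplication by $q^{s}$ and $q^{-s}$. Being also a $\mathbb{C}$-vector space, $\mathcal{J}(\pi)$ is a $\mathbb{C}[q^{s},q^{-s}]$-submodule of $\mathbb{C}(q^{-s})$, that is, a fractional ideal.

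Finally, to see that $\mathcal{J}(\pi)$ contains a nonzero constant, I would use Lemma~\ref{nonvanishing-even} (whose validity for representations of Whittaker type rests on Lemma~\ref{kirillov}, giving $\mathrm{ind}_{N_{2n-1}}^{P_{2n}}(\psi)\subset\mathcal{W}_{(0)}(\pi,\psi)$ so that Belt's explicit Whittaker vector can be realized) to pick $W\in\mathcal{W}(\pi,\psi)$ with $J_{(0)}(s,W)$ a nonzero constant $c_{\psi}$. Then, applying the partial Iwasawa decomposition of Lemma~\ref{zpkdec} to the $g$-integral defining $J(s,W,\Phi)$, fixing $r$ large enough that $K_{n,r}$ stabilizes $W$ and $\psi$ is trivial on $\mathfrak{p}^{r}$, and taking $\Phi=\mathbbm{1}_{e_{n}K_{n,r}}$: on the support of $\Phi$ one has $z\in\mathcal{O}^{\times}$ and $zI_{n}k\in(P_{n}\cap K_{n})K_{n,r}$, so the invariance relation \eqref{zp-invariance} collapses the $z$- and $k$-integrations and yields $J(s,W,\Phi)=\mathrm{Vol}(e_{n}K_{n,r})\,c_{\psi}\neq0$. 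Since $\mathcal{J}(\pi)$ is a $\mathbb{C}$-subspace, it contains $1$.

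The only genuine obstacle is Lemma~\ref{nonvanishing-even} itself, namely transporting Belt's construction in \cite{Be14} from the irreducible generic setting to representations of Whittaker type; once that is granted, the remaining steps are bookkeeping with Iwasawa coordinates together with the already-verified rationality and quasi-invariance.
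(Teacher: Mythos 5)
Your proposal is correct and follows essentially the same route as the paper: rationality and bounded denominators from Theorem~\ref{integral-rational}, stability under $q^{\pm s}$ from the quasi-invariance \eqref{invariance} (made explicit with $d=\mathrm{diag}(\varpi^{\pm 1},I_{n-1})$), and nonvanishing via Lemma~\ref{nonvanishing-even} combined with the partial Iwasawa decomposition, the characteristic function $\Phi=\mathbbm{1}_{e_nK_{n,r}}$, and \eqref{zp-invariance} to reduce $J(s,W,\Phi)$ to a nonzero multiple of $J_{(0)}(s,W)$. You have also correctly identified that the only substantive input beyond bookkeeping is extending Belt's construction to Whittaker type via Lemma~\ref{kirillov}, exactly as the paper does.
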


We can now define the local exterior square $L$-function $L(s,\pi,\wedge^2)$ for representation of $GL_{2n}$ of Whittaker type $\pi$. To do this, note that $\mathbb{C}[q^s,q^{-s}]$ is a PID and hence $\mathcal{J}(\pi)$ is a principal fractional ideal. Since this ideal contains $1$, we may choose a unique generator of $\mathcal{J}(\pi)$ of the form $P(q^{-s})^{-1}$ with $P(X) \in \mathbb{C}[X]$ having $P(0)=1$.

\begin{definition*}
Let $\pi$ be a representation of Whittaker type of $GL_{2n}$. We define
\[
  L(s,\pi,\wedge^2)=\frac{1}{P(q^{-s})}
\]
to be the unique normalized generator of $\mathcal{J}(\pi)$.
\end{definition*}

\subsubsection{The odd case $m=2n+1$.}

Let $\pi$ be an irreducible admissible generic unitary representations of $GL_{2n+1}$. We let $\sigma_{2n+1}$ be the permutation matrix associated with 
\[
\sigma_{2n+1} = \begin{pmatrix} 1& 2 & \dotsm & n & | & n+1 & n+2 & \dotsm & 2n & 2n+1 \\ 1& 3 & \dotsm & 2n-1 & | & 2 & 4 & \dotsm & 2n & 2n+1 \end{pmatrix}. 
\]
 In 1990, Jacquet and Shalika \cite{JaSh88} establish an integral representation for the exterior square $L$-function for $GL_{2n+1}$ in the following way:
\[
\begin{split}
&J(s,W)\\
&= \int_{N_n\backslash GL_n} \int_{\mathcal{N}_n\backslash \mathcal{M}_n} W \left(\sigma_{2n+1}\begin{pmatrix} I_n & X &\\ & I_n &\\&&1 \end{pmatrix} \begin{pmatrix} g&&\\&g&\\&&1 \end{pmatrix} \right) \psi^{-1}(\mathrm{Tr} X) dX  |\mathrm{det}(g)|^{s-1} dg,
\end{split}
\]
where $W \in \mathcal{W}(\pi,\psi)$. It was proven in Section 9 of \cite{JaSh88} that there is $r_{\pi}$ in $\mathbb{R}$ such that the above integrals $J(s,W,\Phi)$ converge for $\text{Re}(s) > r_{\pi}$.

\par

Let $\pi=\mathrm{Ind}(\Delta_1 \otimes \dotsm \otimes \Delta_t)$ be a representation of $GL_{2n+1}$ of Whittaker type. In this section, we shall describe the definition of local exterior square $L$-function associated to $\pi$. Applying the Iwasawa decomposition to $g \in GL_n$, we get
\[
\begin{split}
 &J(s,W) 
 =\int_{K_n} \int_{A_n} \int_{\mathcal{N}_n \backslash \mathcal{M}_n} W \left( \sigma_{2n+1} \begin{pmatrix} I_n&X&\\&I_n&\\&&1 \end{pmatrix}
  \begin{pmatrix} ak&&\\&ak&\\&&1\end{pmatrix}  \right)\\
  &\phantom{***************************} \times \delta_{B_n}^{-1}(a) |\text{det}(a)|^{s-1} \psi^{-1}(\text{Tr}X) dX da dk,
\end{split}
\]
where $a=m(a_1,\dotsm,a_n)$. Along the line in the proof of the even case, $J(s,W)$ can be decomposed as a finite sum of integrals of the form 
\[
  \prod_{k=1}^{n}|a_k|^{-k(n-k)} \int_{A_n} W' \left( \sigma_{2n+1} \begin{pmatrix} a&&\\&a&\\&&1\end{pmatrix}  \right) \delta_{B_n}^{-1}(a) |\text{det}(a)|^{s-1} da.
\]
for some $W' \in \mathcal{W}(\pi,\psi)$. After conjugating by $\sigma_{2n+1}$, the previous integral can be written as
\[
 \prod_{k=1}^{n}|a_k|^{-k(n-k)} \int_{A_n} \pi(\sigma_{2n+1})W' (b)  \delta_{B_n}^{-1}(a) |\text{det}(a)|^{s-1} da,
 \]
 where $b=\text{diag}(b_1,b_2,\dotsm,b_{2n},1)$ with $b_{2k-1}=b_{2k}=a_ka_{k+1}\dotsm a_n$ for $k=1,2,\dotsm,n$. Substituting the asymptotic expansion of Whittaker function of Proposition \ref{asymWh} in the previous integral, $J(s,W)$ is a finite sum of integrals of the form
\[
  \int_{A_n}  \prod_{k=1}^{n}|a_k|^{-k(n-k)} \omega_{\pi_{i_{2n+1-2k}}^{(2n+1-2k)}}(a_k) |a_k|^{\frac{2k(2n+1-2k)}{2}} v(a_k)^{m_{2k}}  \Phi_{2k}(a_k)  \delta_{B_n}^{-1}(a)|\text{det}(a)|^{s-1} da,
 \]
 where $ \Phi_{2k}$ lies in $\mathcal{S}(F)$, the index $i_{2n+1-2k}$ runs over $1 \leq i_{2n+1-2k} \leq r_{2n+1-2k}$ for $1 \leq k \leq n$, and $v^{m_{2k}}=1$ for $m_{2k}=0$. But we have
 \[
 \begin{split}
  \prod_{k=1}^n |a_k|^{-k(n-k)} |a_k|^{\frac{2k(2n+1-2k)}{2}}\delta_{B_n}^{-1}(a)|\text{det}(a)|^{s-1}&=\prod_{k=1}^{n} |a_k|^{-2k(n-k)} |a_k|^{k(2n+1-2k)}|a_k|^{k(s-1)}\\
  &=\prod_{k=1}^{n}|a_k|^{ks}.
\end{split}
\]
Finally, our integral $J(s,W)$ becomes a sum of integrals of the form
 \[
\tag{2.10}
\label{product-odd}
   \prod_{k=1}^n \int_{F^{\times}} \omega_{\pi_{i_{2n+1-2k}}^{(2n+1-2k)}}(a_k) v(a_k)^{m_{2k}}   \Phi_{2k}(a_k) |a_k|^{ks} d^{\times}a_k.
\]
 We let a real number $r_{\pi}=\max_{k,i_{k}}\{-\frac{1}{k}\text{Re} (\omega_{\pi_{i_{2n+1-2k}}^{(2n+1-2k)}}) \}$. Each product of Tate integrals in \eqref{product-odd} converges absolutely for $\mathrm{Re}(s) > r_{\pi}$ and hence $J(s,W)$ do so for any $W \in \mathcal{W}(\pi,\psi)$. Furthermore all factors of Tate integrals in \eqref{product-odd} are essentially a sum of geometric series which converges to $Q(q^{-s})(1-\alpha_J q^{-s})^{-\beta_J}$, where $Q(X) \in \mathbb{C}[X]$ and $\alpha_J$, $\beta_J$ rely only on the central characters $\omega_{\pi_{i_{2k-1}}^{(2k-1)}}$ and the power of valuation $m_{2k}$. Since $\omega_{\pi_{i_{2k-1}}^{(2k-1)}}$ and $m_{2k}$ depends only on the representation $\pi$, the integral $J(s,W)$ spans a subspace of $\mathbb{C}(q^{-s})$ having a common denominator. Collecting this information, we arrive at the following theorem.

\begin{theorem}
\label{integral-rational-odd}
Let $\pi=\mathrm{Ind}(\Delta_1 \otimes \dotsm \otimes \Delta_t)$ be a representation of Whittaker type of $GL_{2n+1}$. For every $1 \leq k \leq 2n$, let $(\omega_{\pi_{i_k}^{(k)}})_{i_k=1,\dotsm,r_k}$ be the family of the central characters for all nonzero successive quotients of the form $\pi_{i_k}^{(k)}=\mathrm{Ind}(\Delta_1^{(k_1)} \otimes \dotsm \otimes \Delta_t^{(k_t)})$ with $k=k_1+\dotsm+k_t$ appearing in the composition series of $\pi^{(k)}$. Let $W \in \mathcal{W}(\pi,\psi)$.
\begin{enumerate}
\item[$(\mathrm{i})$] If we have $\mathrm{Re}(s) >-\frac{1}{k}\mathrm{Re}(\omega_{\pi_{i_{2n+1-2k}}^{(2n+1-2k)}})$ for all $1 \leq k \leq n$ and all $1 \leq i_{2k-1} \leq r_{2k-1}$, then each local integral $J(s,W)$ converges absolutely.
\item[$(\mathrm{ii})$] Each $J(s,W) \in \mathbb{C}(q^{-s})$ is a rational function of $q^{-s}$ and hence $J(s,W)$ as a function of $s$ extends meromorphically to all $\mathbb{C}$.
\item[$(\mathrm{iii})$] Each $J(s,W)$ can be written with a common denominator determined by the representation $\pi$. Hence the family has $``$bounded denominators$"$.
\end{enumerate}
\end{theorem}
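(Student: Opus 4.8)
The plan is to run the same argument as for Theorem~\ref{integral-rational}, the even case, with the trailing $1$-block carried along harmlessly; in fact essentially all of the computation has already been done in the discussion immediately preceding the statement, so the task is mainly to organize it and then invoke the elementary theory of Tate integrals. First I would apply the Iwasawa decomposition to the $GL_n$-variable, writing $g=ak$ with $a=m(a_1,\dots,a_{n-1},a_n)\in A_n$ and $k\in K_n$; here, unlike the even case, there is no separate $Z_n$-integration or Schwartz function to peel off, so $J(s,W)$ becomes directly a finite sum over coset representatives $k_i$ of $K_n/K_n^{\circ}$ of integrals over $A_n$ of the inner unipotent integral $\int_{\mathcal{N}_n\backslash\mathcal{M}_n}$ against $\delta_{B_n}^{-1}(a)|\mathrm{det}(a)|^{s-1}$.

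Next I would prove the odd analogue of Lemma~\ref{rational dec}: defining $\mathfrak{J}_k(W;a)$ exactly as there but with all matrices embedded in $GL_{2n+1}$ via the bottom-right $1$-block, the same sequence of manipulations — conjugating the $X$-block past $\mathrm{diag}(a,a,1)$, then past a right translation by a test function supported on a small compact open set, and reading off the character via $\sigma_{2n+1}$-conjugation — yields $\mathfrak{J}_k(W;a)=\sum_i|a_1|^{-1}|a_2|^{-2}\cdots|a_{k-1}|^{-(k-1)}\,\mathfrak{J}_{k-1}(W_i;a)$ for finitely many $W_i\in\mathcal{W}(\pi,\psi)$; the trailing $1$ never meets any block that moves, so the proof is word-for-word the even one. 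Iterating down to $k=0$ produces the factor $\prod_{k=1}^{n}|a_k|^{-k(n-k)}$ and replaces $W$ by finitely many $W'\in\mathcal{W}(\pi,\psi)$ evaluated at $\sigma_{2n+1}\,\mathrm{diag}(a,a,1)$. Conjugating by $\sigma_{2n+1}$ rewrites this as $\varrho(\sigma_{2n+1})W'(b)$ with $b=\mathrm{diag}(b_1,\dots,b_{2n},1)$ and $b_{2k-1}=b_{2k}=a_ka_{k+1}\cdots a_n$. I would then substitute the torus asymptotic expansion of Proposition~\ref{asymWh}: the factors attached to the odd simple roots involve the ratio $b_{2k-1}/b_{2k}=1$, so they either vanish (when the relevant $m_{2k-1}>0$, since $v(1)^{m_{2k-1}}=0$) or contribute a constant, while the factors attached to the even simple roots survive as one-dimensional integrals in $a_k$. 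A short exponent count — combining $|a_k|^{-k(n-k)}$ from Lemma~\ref{rational dec}, $|a_k|^{k(2n+1-2k)}$ from the expansion, and $\delta_{B_n}^{-1}(a)|\mathrm{det}(a)|^{s-1}$ — collapses every power of $|a_k|$ to $|a_k|^{ks}$, leaving $J(s,W)$ a finite sum of products of Tate integrals as in~\eqref{product-odd}.

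Finally, the three assertions read off from the standard analysis of such Tate integrals (cf. Tate~\cite{Tate}): near $0\in F$ the test function $\Phi_{2k}\in\mathcal{S}(F)$ is locally constant, so $\int_{F^\times}\omega_{\pi_{i_{2n+1-2k}}^{(2n+1-2k)}}(a_k)v(a_k)^{m_{2k}}\Phi_{2k}(a_k)|a_k|^{ks}\,d^\times a_k$ converges iff $k\,\mathrm{Re}(s)+\mathrm{Re}(\omega_{\pi_{i_{2n+1-2k}}^{(2n+1-2k)}})>0$, which is~(i); each such integral is a rational function of $q^{-s}$ of the shape $Q(q^{-s})(1-\alpha_J q^{-s})^{-\beta_J}$, which is~(ii); and $\alpha_J,\beta_J$ depend only on the central characters $\omega_{\pi_{i_k}^{(k)}}$ and on the integers $m_{2k}$, which by Proposition~\ref{asymWh} are invariants of $\pi$, so the denominators are bounded, which is~(iii). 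The one point deserving genuine care — and the one I would flag as the main obstacle — is checking that Proposition~\ref{asymWh} applies cleanly after the $\sigma_{2n+1}$-conjugation, i.e. that restricting the expansion to the torus element $b$ with a forced $1$ in its last slot is legitimate (it is: the $a_m$-variable in Proposition~\ref{asymWh} is precisely that last entry, and setting it to $1$ merely removes the overall $\omega_\pi(a_m)$ factor) and that pairing the odd simple roots with the constant value $b_{2k-1}/b_{2k}=1$ really does force the vanishing claimed. Everything else is the even-case argument of Theorem~\ref{integral-rational} verbatim, with the extra trivial block along for the ride, so I would keep the written proof brief and refer back to that argument, following also Belt~\cite{Be14}.
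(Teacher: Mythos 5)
Your proposal is correct and follows essentially the same route as the paper: the paper's treatment of the odd case likewise applies the Iwasawa decomposition on $GL_n$, removes the unipotent integration "along the lines" of the even-case Lemma \ref{rational dec} (with the trailing $1$-block inert, exactly as you argue), conjugates by $\sigma_{2n+1}$, substitutes the expansion of Proposition \ref{asymWh} at the torus element $b=\mathrm{diag}(b_1,\dots,b_{2n},1)$, and reduces to the product of Tate integrals \eqref{product-odd}, from which (i)--(iii) follow as you describe. The point you flag for care (specializing the last torus variable to $1$ and the vanishing at the odd-index ratios $b_{2k-1}/b_{2k}=1$) is handled the same way in the paper's even-case computation, so no gap remains.
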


Let $\mathcal{J}(\pi)=\left \langle J(s,W)\;|\; W \in \mathcal{W}(\pi,\psi) \right \rangle$ denote the $\mathbb{C}$-linear span of the local integrals $J(s,W)$. As in the even case, one can check that
\[
\begin{split}
  &J(s,\pi \left(\begin{pmatrix} I_n& Z& \\ & I_n&\\ && 1 \end{pmatrix} \begin{pmatrix} h&& \\ &h&\\&&1 \end{pmatrix} \begin{pmatrix} I_n& &y \\ & I_n&\\ && 1 \end{pmatrix}\right)W) \\
  &\phantom{******************************}=|\mathrm{det}(h)|^{-(s-1)}\psi(\mathrm{Tr}Z)J(s,W),
\end{split}
\]
for all $Z \in M_n$, $h \in GL_n$ and $y \in \mathcal{M}_{n,1}$ with $\mathcal{M}_{n,1}$ the space of $n \times 1$ matrices. The quasi invariance of $J(s,W)$ under the transformation above shows that the $\mathbb{C}$-vector space $\mathcal{J}(\pi)$ generated by the integral $J(s,W)$ is in fact a $\mathbb{C}[q^{\pm s}]$-fractional ideal in $\mathbb{C}(q^{-s})$. To define the normalized generator of the fractional ideal $\mathcal{J}(\pi)$ formed by the family of the local integral $J(s,W)$, we need the following result.

\begin{lemma}
\label{nonvanishing-odd}
Let $\pi$ be a representation of Whittaker type of $GL_{2n+1}$. There exists $W \in \mathcal{W}(\pi,\psi)$ such that $J(s,W)$ is a non-zero constant which only depends on $\psi$.
\end{lemma}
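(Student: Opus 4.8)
The plan is to follow the argument establishing the even-case Lemma \ref{nonvanishing-even}, with the simplification that the permutation $\sigma_{2n+1}$ fixes the last coordinate. Indeed, as matrices $\sigma_{2n+1}=\sigma_{2n}\oplus 1$, so for all $g\in GL_n$ and $X\in\mathcal{M}_n$
\[
\sigma_{2n+1}\begin{pmatrix} I_n & X &\\ & I_n &\\ && 1\end{pmatrix}\begin{pmatrix} g &&\\ & g &\\ && 1\end{pmatrix}=\begin{pmatrix} h & \\ & 1\end{pmatrix},\qquad h=\sigma_{2n}\begin{pmatrix} I_n & X\\ & I_n\end{pmatrix}\begin{pmatrix} g & \\ & g\end{pmatrix}\in GL_{2n}.
\]
Hence the integrand of $J(s,W)$ depends only on the restriction of $W$ to $GL_{2n}\oplus 1\subset P_{2n+1}$, and the task is to exhibit one $W\in\mathcal{W}(\pi,\psi)$ whose restriction there is a convenient function.

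First I would fix $r$ large enough that $\psi$ is trivial on $\mathfrak{p}^r$, and put
\[
\varphi_{\circ}(h)=\frac{1}{\mathrm{Vol}(N_{2n}\cap K_{2n,r})}\int_{N_{2n}}\mathbbm{1}_{K_{2n,r}}(nh)\,\psi^{-1}(n)\,dn,
\]
which lies in $\mathrm{ind}_{N_{2n}}^{GL_{2n}}(\psi)$, is left $(N_{2n},\psi)$-equivariant, is supported on $N_{2n}K_{2n,r}$, and satisfies $\varphi_{\circ}(I_{2n})=1$; this is the analogue of the function Belt uses in \cite{Be14}. Since $P_{2n+1}=N_{2n+1}\cdot(GL_{2n}\oplus 1)$ with $N_{2n+1}\cap(GL_{2n}\oplus 1)=N_{2n}\oplus 1$, extending $\varphi_{\circ}$ by left $(N_{2n+1},\psi)$-equivariance produces a well-defined element of $\mathrm{ind}_{N_{2n+1}}^{P_{2n+1}}(\psi)$, which by Lemma \ref{kirillov} lies in the space $\mathcal{W}_{(0)}(\pi,\psi)$; consequently there is $W=W_{\pi}\in\mathcal{W}(\pi,\psi)$ with $W_{\pi}(h\oplus 1)=\varphi_{\circ}(h)$ for every $h\in GL_{2n}$.

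For this $W_{\pi}$ one then has
\[
J(s,W_{\pi})=\int_{N_n\backslash GL_n}\int_{\mathcal{N}_n\backslash\mathcal{M}_n}\varphi_{\circ}\!\left(\sigma_{2n}\begin{pmatrix} I_n & X\\ & I_n\end{pmatrix}\begin{pmatrix} g & \\ & g\end{pmatrix}\right)\psi^{-1}(\mathrm{Tr}X)\,dX\,|\mathrm{det}(g)|^{s-1}\,dg,
\]
which is, up to the replacement of $N_n\backslash P_n$ by $N_n\backslash GL_n$, exactly the integral computed in the proof of \cite[Lemma 2.3]{Be14}. To handle that replacement I would decompose $g=zpk$ via the partial Iwasawa decomposition of Lemma \ref{zpkdec}: the compact $K_n$-integration splits the integral into a finite sum whose terms differ from the original only by replacing $\varphi_{\circ}$ with a right translate of the same shape; the central variable contributes the scalar matrix $zI_{2n}$, and the support requirement $\sigma_{2n}(\cdots)\in N_{2n}K_{2n,r}$, together with the constraint it forces on the $P_n$-component of $g$, confines $z$ to a compact subset of $F^{\times}$ on which $|\mathrm{det}(g)|^{s-1}=1$; the remaining integral over $N_n\backslash P_n$ is precisely Belt's. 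Carrying out his bookkeeping — the lower-triangular representative of $X$ is forced into $\mathcal{M}_n(\mathfrak{p}^r)$, the $P_n$-part of $g$ into a congruence subgroup, and $\sigma_{2n}$ conjugates the surviving unipotent matrix so that $\psi^{-1}(\mathrm{Tr}X)$ is absorbed by the left $(N_{2n},\psi)$-equivariance of $\varphi_{\circ}$ — one finds the integrand is identically a nonzero constant on a compact domain, so $J(s,W_{\pi})$ equals a product of volumes of compact sets: a nonzero constant, independent of $s$ and depending only on $\psi$ and the normalization of Haar measure.

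The hard part will be the last step: checking that the $Z_n$- and $K_n$-integrations introduced by passing from $N_n\backslash P_n$ to $N_n\backslash GL_n$ genuinely contribute nothing $s$-dependent, and keeping track of the support and character conditions through conjugation by $\sigma_{2n}$. This is bookkeeping of the same nature already carried out in \cite{Be14} and in the even case, so I would invoke those computations rather than repeat the matrix manipulations here.
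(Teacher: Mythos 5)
There is a genuine gap, and it is located exactly at the step you wave through: you take $W_{\pi}$ with $W_{\pi}(h\oplus 1)=\varphi_{\circ}(h)$ and claim the resulting integral ``is exactly the integral computed in \cite{Be14}.'' It is not, because you have omitted the right translation by $\sigma_{2n}$ that the paper performs (the paper constructs $W_{\circ}$ with $W_{\circ}(\mathrm{diag}(h,1))=\varphi_{\circ}(h)$ and then sets $W=\pi(\sigma_{2n})W_{\circ}$, so that the integrand becomes $\varphi_{\circ}\bigl(\sigma_{2n}\begin{pmatrix} g& Xg\\ &g\end{pmatrix}\sigma_{2n}^{-1}\bigr)$, a \emph{conjugate}). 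With your untwisted choice the integrand is $\varphi_{\circ}\bigl(\sigma_{2n}\begin{pmatrix} g& Xg\\ &g\end{pmatrix}\bigr)$, whose support requires $\sigma_{2n}\begin{pmatrix} g& Xg\\ &g\end{pmatrix}\in N_{2n}K_{2n,r}$, and this set is \emph{empty} for $n\ge 2$: writing the matrix equation $\begin{pmatrix} g& Xg\\ &g\end{pmatrix}=\sigma_{2n}^{-1}nk$ row by row, the row indexed $2n$ forces $g_{n,n}\in 1+\mathfrak{p}^{r}$ (it must equal the last row of $k$), while the row indexed $n$ of the left side is matched with row $2n-1$ of $nk$ and forces $g_{n,n}=k_{2n-1,n}+n_{2n-1,2n}k_{2n,n}=k_{2n-1,n}\in\mathfrak{p}^{r}$ (using $k_{2n,j}=0$ for $j\le n$ from the previous constraint); these are incompatible. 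So your exhibited $W_{\pi}$ gives $J(s,W_{\pi})\equiv 0$, which proves nothing. The conjugation is what makes the support condition read $n'\begin{pmatrix} g& Xg\\ &g\end{pmatrix}\in K_{2n,r}$ with $n'\in\sigma_{2n}^{-1}N_{2n}\sigma_{2n}$ of the block shape $\begin{pmatrix} n_{1}&y\\ t&n_{2}\end{pmatrix}$, from which one reads $n_{1}g\in K_{n,r}$, hence $g\in N_{n}K_{n,r}$ and $X\in\mathcal{M}_{n}(\mathfrak{p}^{r})$ modulo $\mathcal{N}_{n}$, and the integral collapses to a product of volumes.

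Two further remarks. First, once the twist is in place your worry about passing from $N_{n}\backslash P_{n}$ to $N_{n}\backslash GL_{n}$ evaporates: the support condition itself forces $g\in N_{n}K_{n,r}$, so $|\mathrm{det}(g)|^{s-1}=1$ on the support and no partial Iwasawa decomposition or central variable is needed (and note that your stated reason, that $z$ lies in a ``compact subset of $F^{\times}$,'' would not by itself give $|\mathrm{det}(g)|^{s-1}=1$; you need $|z|=1$ exactly). Second, your identification with Belt's even-case computation is also off in a structural way: there $\varphi_{\circ}$ lives on $GL_{2n-1}$ and the argument runs inside the mirabolic $P_{2n}$, whereas the odd case (as in the paper) uses $\varphi_{\circ}$ on all of $GL_{2n}$; the fix is simply to prescribe the restriction of $W$ to be $h\mapsto\varphi_{\circ}(h\sigma_{2n}^{-1})$ (equivalently $W=\pi(\sigma_{2n})W_{\circ}$ in the paper's convention) and then carry out the support analysis above.
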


\begin{proof}
The proof of this lemma is obtained by repeating the proof of \cite[Lemma 2.3]{Be14} of Belt in our setting. The author treats the even case, but the method transfers completely.
We fix a positive integer $r > 0$ sufficiently large such that $\psi$ is trivial on $\mathfrak{p}^r$. We observe that if $X \in \mathcal{M}_n(\mathfrak{p}^r)$, $\psi(\mathrm{Tr}X)=1$. Let $\mathbbm{1}_{K_{n,r}}$ be the characteristic function of $K_{n,r}$. Let $\varphi_{\circ}$ denote the smooth function on $GL_{2n}(F)$ defined by
\[
  \varphi_{\circ}(g)=\frac{1}{\mathrm{Vol}(N_{2n}(F) \cap K_{2n,r})} \int_{N_{2n}(F)} \mathbbm{1}_{K_{2n,r}}(ng) \psi^{-1}(n) dn.
\]
We can evaluate this function explicitly. If $g=n_0k_0$ for $n_0 \in N_{2n}(F)$ and $k_0 \in K_{2n,r}$, then
\[
\begin{split}
 \varphi_{\circ}(g)&=\frac{\psi(n_0)}{\mathrm{Vol}(N_{2n} \cap K_{2n,r})}  \int_{N_{2n}(F)} \mathbbm{1}_{K_{2n,r}}(nk_0) \psi^{-1}(n) dn \\
 &=\frac{\psi(n_0)}{\mathrm{Vol}(N_{2n}(F) \cap K_{2n,r})} \int_{N_{2n}(F)\cap K_{2n,r}} dn=\psi(n_0),
\end{split}
\]
since $\psi$, being trivial on $\mathfrak{p}^r$, is in fact trivial on $N_{2n}(F) \cap K_{2n,r}$. However if $g \notin N_{2n}(F)K_{2n,r}$, then for all $n \in N_{2n}(F)$, $\mathbbm{1}_{K_{2n,r}}(ng)=0$, and thus $\varphi_{\circ}(g)=0$. Therefore we have
\[
 \varphi_{\circ}(g) =
  \begin{cases}
    \psi(n)       & \quad \text{if } g=nk \text{ for } n \in N_{2n}(F),\; k \in K_{2n,r}\\
     0 & \quad \text{otherwise.} \\
  \end{cases}
\]
Then $\varphi_{\circ}$ is in the space $\mathrm{ind}^{GL_{2n}(F)}_{N_{2n}(F)}(\psi)$ and is invariant under right translation by the element $K_{2n,r}$. The result of Gelfand and Kazdan in \cite[ Theorem F]{GeKa72} or Lemma \ref{kirillov} assert that there exists $W_{\circ} \in \mathcal{W}(\pi,\psi)$ such that
\[
  W_{\circ} \begin{pmatrix} g & \\ & 1 \end{pmatrix}=\varphi_{\circ}(g).
\]
for all $g \in GL_{2n}$. Setting $\pi(\sigma_{2n})W_{\circ}=W$, we obtain
\[
\label{J-Phi}
\tag{2.11}
\begin{split}
  &J(s,W) \\
  &=\int_{N_n(F)\backslash GL_n(F)} \int_{\mathcal{N}_n(F)\backslash \mathcal{M}_n(F)} \varphi_{\circ} \left( \sigma_{2n} \begin{pmatrix} g& Xg \\ &g \end{pmatrix} \sigma_{2n}^{-1} \right)
  \psi^{-1}(\mathrm{Tr}X) dX |\mathrm{det}(g)|^{s-1} dg.
\end{split}
\]
Assume that
$\displaystyle
 \varphi_{\circ} \left( \sigma_{2n} \begin{pmatrix} g& Xg \\ &g \end{pmatrix} \sigma_{2n}^{-1} \right) \neq 0
 $. Then 
 $\displaystyle \sigma_{2n} \begin{pmatrix} g& Xg \\ &g \end{pmatrix}\sigma_{2n}^{-1}=nk$ for some $n \in N_{2n}(F)$ and $k \in K_{2n,r}$. Setting $n'=\sigma_{2n}^{-1}n^{-1} \sigma_{2n}$, we have
 \[
   \sigma_{2n} n'  \begin{pmatrix} g& Xg \\ &g \end{pmatrix}\sigma_{2n}^{-1}=k \in K_{2n,r} \quad \mathrm{or} \quad n'  \begin{pmatrix} g& Xg \\ &g \end{pmatrix} \in \sigma_{2n}^{-1}K_{2n,r}\sigma_{2n}=K_{2n,r}.
 \]
 Let $\mathcal{V}_n(F)$ be the subspace of strictly upper triangular matrices or upper triangular matrices with $0$'s along the diagonal of $\mathcal{M}_n(F)$. The matrix $n'$ in fact has the form
 \[
   n'=\sigma_{2n}^{-1}n^{-1} \sigma_{2n}=\begin{pmatrix} n_1 &y \\ t&n_2 \end{pmatrix},
 \]
 where $n_1,n_2 \in N_n(F)$, $t \in \mathcal{V}_n(F)$ and $y \in \mathcal{N}_n(F)$. Using this expression for $n'$, we have
  \begin{equation}
  \label{V-form}
 \tag{2.12}
  n'  \begin{pmatrix} g& Xg \\ &g \end{pmatrix}=\begin{pmatrix} n_1 &y \\ t&n_2 \end{pmatrix} \begin{pmatrix} g& Xg \\ &g \end{pmatrix}=\begin{pmatrix} n_1g&n_1Xg+yg \\ tg& tXg+n_2g \end{pmatrix} \in K_{2n,r}.
 \end{equation}
From the top left corner of the matrix $n_1g \in K_{n,r}$, $g$ belonging to $n_1^{-1}K_{n,r}$ is essentially an element of $N_n(F)K_{n,r}$. Thus the support of $\varphi_{\circ}$ restricts the outer most integral in $J(s,W)$ of \eqref{J-Phi} to be taken over the image $N_n(F) \backslash N_n(F)K_{n,r}$ of $K_{n,r}$ in $N_n(F) \backslash GL_n(F)$
\[
  J(s,W)= \int_{N_n(F) \backslash N_n(F)K_{n,r}} \int_{\mathcal{N}_n(F)\backslash \mathcal{M}_n(F)} \varphi_{\circ} \left( \sigma_{2n} \begin{pmatrix} g& Xg \\ &g \end{pmatrix} \sigma_{2n}^{-1} \right) \psi^{-1}(\mathrm{Tr}X) dX dg.
 \]
We observe that there is a homeomorphism $N_n(F) \backslash N_n(F)K_{n,r} \simeq (N_n(F) \cap K_{n,r})\backslash K_{n,r}$ as $K_{n,r}$-homogeneous spaces given by $N_n(F)k \mapsto  (N_n(F) \cap K_{n,r})k$ in Weil \cite[Chapter 1, Section 3]{We41}. Then the uniqueness Theorem of $K_{n,r}$-right invariant measure on the quotient space $g \in N_n(F) \backslash N_n(F)K_{n,r}$  in Weil \cite[Chapter 2, Section 9]{We41} asserts that
\[
  J(s,W)=\int_{(N_n(F) \cap K_{n,r})\backslash K_{n,r}} \int_{\mathcal{N}_n(F)\backslash \mathcal{M}_n(F)}  \varphi_{\circ} \left( \sigma_{2n} \begin{pmatrix} k& Xk \\ &k \end{pmatrix} \sigma_{2n}^{-1} \right) \psi^{-1}(\mathrm{Tr}X) dX dk.
 \]
Since $\displaystyle  \sigma_{2n} \begin{pmatrix} k&  \\ &k \end{pmatrix}\sigma_{2n}^{-1}$ is in $K_{2n,r}$ for $k \in K_{n,r}$, and $\varphi_{\circ}$ is invariant on the right by the element of $K_{2n,r}$, $J(s,W)$ is the same as
 \[
 \begin{split}
   &J(s,W)\\
   &=\mathrm{Vol}((N_n(F) \cap K_{n,r})\backslash K_{n,r}) \int_{\mathcal{N}_n(F)\backslash \mathcal{M}_n(F)} \varphi_{\circ} \left( \sigma_{2n} \begin{pmatrix} I_n& X \\ &I_n \end{pmatrix} \sigma_{2n}^{-1} \right)
 \psi^{-1}(\mathrm{Tr}X) dX.
 \end{split}
 \]
 Now we evaluate this integral explicitly. As in the matrix form \eqref{V-form},
 \[
   n'  \begin{pmatrix} I_n& X \\ &I_n \end{pmatrix}=\begin{pmatrix} n_1&n_1X+y \\ t& tX+n_2 \end{pmatrix}
 \]
 is in $K_{2n,r}$, where $n_1,n_2 \in N_n(F)$, $t \in \mathcal{V}_n(F)$ and $y \in \mathcal{N}_n(F)$. We consider the matrix on the right hand side. Focusing on the upper right entry,
 we get $n_1X+y \in \mathcal{M}_n(\mathfrak{p}^r)$. However, $n_1$ being the upper left corner, is in $K_{n,r}$, so that $X+n_1^{-1}y$ is in $K_{n,r}\mathcal{M}_n(\mathfrak{p}^r)$, which is again contained in $\mathcal{M}_n(\mathfrak{p}^r)$. Since $n_1^{-1}y$ is upper triangular, the entries below the diagonal in $X$ must be in $\mathfrak{p}^r$. As $X$ is an element of $\mathcal{N}_n(F)\backslash \mathcal{M}_n(F)$ whose representatives are given by strictly lower triangular matrices, $X$ is in the image of $\mathcal{M}_n(\mathfrak{p}^r)$ in $\mathcal{N}_n(F)\backslash \mathcal{M}_n(F)$. Therefore the integral $J(s,W)$ can be taken over the compact space $\mathcal{N}_n(\mathfrak{p}^r)\backslash \mathcal{M}_n(\mathfrak{p}^r)$.
In this case, from our choice of $r$ at the beginning, $\psi(\mathrm{Tr}X)=1$ and $\displaystyle  \sigma_{2n} \begin{pmatrix} I_n & X  \\ &I_n \end{pmatrix}\sigma_{2n}^{-1}$
belongs to $\sigma_{2n}K_{2n,r}\sigma_{2n}^{-1}=K_{2n,r}$. Hence 
\[
\varphi_{\circ} \left( \sigma_{2n} \begin{pmatrix} I_n& X \\ &I_n \end{pmatrix} \sigma_{2n}^{-1} \right)
 \psi^{-1}(\mathrm{Tr}X) \equiv 1,
 \]
which implies that
\[
 J(s,W)
 =\mathrm{Vol}((N_n(F) \cap K_{n,r})\backslash K_{n,r}) \mathrm{Vol}(\mathcal{N}_n(\mathfrak{p}^r)\backslash \mathcal{M}_n(\mathfrak{p}^r)) \neq 0.
\]
$J(s,W)$ is in fact a non-zero constant which only depends on $\psi$.

\end{proof}

Now we can make a choice $W \in \mathcal{W}(\pi,\psi)$ so that the integral $J(s,W)$ is one. Hence the $\mathbb{C}[q^{\pm s}]$-fractional ideal $\mathcal{J}(\pi)$ contains $1$. Putting this together, we have the following.

\begin{theorem}
Let $\pi$ be a representation of $GL_{2n+1}$ of Whittaker type. The family of local integrals $\mathcal{J}(\pi)=\langle J(s,W) \rangle$ is a $\mathbb{C}[q^s,q^{-s}]$-ideal of $\mathbb{C}(q^{-s})$ containing $1$.
\end{theorem}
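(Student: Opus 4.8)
The plan is to deduce the statement directly from the three results already established for $GL_{2n+1}$: Theorem~\ref{integral-rational-odd} (rationality with bounded denominators), the quasi-invariance identity displayed just above, and the nonvanishing Lemma~\ref{nonvanishing-odd}.

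First I would observe that, by construction, $\mathcal{J}(\pi)$ is the $\mathbb{C}$-linear span of the functions $J(s,W)$, each of which lies in $\mathbb{C}(q^{-s})$ by Theorem~\ref{integral-rational-odd}(ii); hence $\mathcal{J}(\pi)$ is a $\mathbb{C}$-subspace of $\mathbb{C}(q^{-s})$. Next, specializing the quasi-invariance relation above to $Z=0$, $y=0$ and $h=\mathrm{diag}(\varpi,1,\dotsm,1)\in GL_n$ (resp.\ $h=\mathrm{diag}(\varpi^{-1},1,\dotsm,1)$) shows that $\mathcal{J}(\pi)$ contains $q^{s-1}J(s,W)$ (resp.\ $q^{1-s}J(s,W)$) for every $W$; since $\mathcal{J}(\pi)$ is a $\mathbb{C}$-vector space, it therefore contains $q^{s}J(s,W)$ and $q^{-s}J(s,W)$ for every $W$, so $\mathcal{J}(\pi)$ is stable under multiplication by $q^{s}$ and by $q^{-s}$. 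Thus $\mathcal{J}(\pi)$ is a $\mathbb{C}[q^{s},q^{-s}]$-submodule of $\mathbb{C}(q^{-s})$, i.e.\ a fractional ideal, and it is nonzero since not all $J(s,W)$ vanish.

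Finally, to see that $1\in\mathcal{J}(\pi)$, I would invoke Lemma~\ref{nonvanishing-odd}: there is a Whittaker function $W_{0}\in\mathcal{W}(\pi,\psi)$ for which $J(s,W_{0})$ equals a nonzero constant $c$ depending only on $\psi$. As $\mathcal{J}(\pi)$ is a $\mathbb{C}$-vector space, $1=c^{-1}J(s,W_{0})\in\mathcal{J}(\pi)$, which completes the argument.

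All of this is routine bookkeeping once Lemma~\ref{nonvanishing-odd} is available, so I expect no genuine obstacle in the present statement itself; the technical weight has been front-loaded into the nonvanishing lemma, whose proof rests on the inclusion $\mathrm{ind}_{N_{2n}}^{GL_{2n}}(\psi)\subset\mathcal{W}_{(0)}(\pi,\psi)$ of Lemma~\ref{kirillov} (which extends the unitary generic case to all representations of Whittaker type) together with Belt's explicit construction of a Whittaker function supported near a small congruence subgroup.
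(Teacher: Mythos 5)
Your proposal is correct and follows essentially the same route as the paper: the quasi-invariance of $J(s,W)$ under the Shalika-type transformations (in particular under $\mathrm{diag}(h,h,1)$ with $h$ of determinant valuation $\pm 1$) makes $\mathcal{J}(\pi)$ a $\mathbb{C}[q^{\pm s}]$-fractional ideal, and Lemma~\ref{nonvanishing-odd} supplies a $W$ with $J(s,W)$ a nonzero constant, whence $1\in\mathcal{J}(\pi)$ after scaling. As you note, the real work is in Lemma~\ref{nonvanishing-odd} (via Lemma~\ref{kirillov}), exactly as in the paper.
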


The ideal $\mathcal{J}(\pi)$ has a unique generator of the form $P(q^{-s})^{-1}$ with $P(X) \in \mathbb{C}[X]$ having $P(0)=1$. We define the local exterior square $L$-function for the representations of Whittaker type on $GL_{2n+1}$ in the following way.

\begin{definition*}
Let $\pi$ be a representation of Whittaker type of $GL_{2n+1}$. We define
\[
  L(s,\pi,\wedge^2)=\frac{1}{P(q^{-s})}
\]
to be the unique normalized generator of $\mathcal{J}(\pi)$.
\end{definition*}

\section{Shalika Functional and Exceptional Poles}

In order to compute $L(s,\pi,\wedge^2)$ we can exploit the following elementary characterization of the polynomial $P(q^{-s})$. In terms of divisibility, $P(q^{-s})$ is the minimal polynomial in $q^{-s}$ such that $P(q^{-s})J(s,W,\Phi)$, in the case $m=2n$, or $P(q^{-s})J(s,W)$, in the case $m=2n+1$, is entire function of $s$ for all $W \in \mathcal{W}(\pi,\psi)$ and $\Phi \in \mathcal{S}(F^n)$ if necessary. Once we normalize so that $P(0)=1$, this characterizes $P(q^{-s})$ and hence $L(s,\pi,\wedge^2)$. With this description of $L(s,\pi,\wedge^2)$ in hand, we call the local Euler factor a function of the form $P(q^{-s})^{-1}$, where $P(X) \in \mathbb{C}[X]$ is polynomial satisfying $P(0)=1$.

\subsection{Exceptional poles of the $L$-function for $GL_{2n}$}
Let $\pi=\mathrm{Ind}(\Delta_1 \otimes \dotsm \otimes \Delta_t)$ be a representation of Whittaker type of $GL_{2n}$. We now begin to analyze the poles of the rational functions in $\mathcal{J}(\pi)$, since these poles and their order will determine $P(q^{-s})=L(s,\pi,\wedge^2)^{-1}$. As this ideal is linearly spanned by the $J(s,W,\Phi)$ it will be sufficient to understand the poles of these integrals. Suppose there is a function in $\mathcal{J}(\pi)$ having a pole of order $d$ at $s=s_0$ and that this is the highest order pole of the family at $s=s_0$. Consider a rational function defined by an individual integral $J(s,W,\Phi)$. Then the Laurent expansion at $s=s_0$ will be of the form 
\[
\tag{3.1} 
\label{Excep-Laurent}
J(s,W,\Phi)=\frac{B_{s_0}(W,\Phi)}{(q^s-q^{s_0})^d}+\mathrm{higher\; order\; terms.}
\]
The coefficient of the leading term, $B_{s_0}(W,\Phi)$, will define a non-trivial bilinear from on $\mathcal{W}(\pi,\psi)\times \mathcal{S}(F^n)$ satisfying $B_{s_0}(\pi(h)W,R(h)\Phi)=|\mathrm{det}(h)|^{-\frac{s_0}{2}}\Theta(h) B_{s_0}(W,\Phi)$ with $h =\begin{pmatrix} I_n &Z \\& I_n \end{pmatrix} \begin{pmatrix} g &\\ & g \end{pmatrix} \in S_{2n}$. The space $\mathcal{S}(F^n)$ has a small filtration $\{0 \} \subset \mathcal{S}_0(F^n) \subset  \mathcal{S}(F^n),$ where $\mathcal{S}_0(F^n)=\{\Phi \in \mathcal{S}(F^n)\; |\; \Phi(0)=0\}$. This filtration is stable under the action $R$.

\begin{definition*} 
 The pole $s=s_0$ of $L(s,\pi,\wedge^2)$ of the family $\mathcal{J}(\pi)$ is called exceptional if the associated bilinear form $B_{s_0}$, the coefficient of the highest order pole of the family $\mathcal{J}(\pi)$, vanished identically on $\mathcal{W}(\pi,\psi) \times \mathcal{S}_0(F^n)$. 
\end{definition*}

If $s_0$ is an exceptional pole of $\mathcal{J}(\pi)$ then the bilinear form $B_{s_0}$ factors to a non-zero bilinear form on $\mathcal{W}(\pi,\psi) \times (\mathcal{S}(F^n)\slash \mathcal{S}_0(F^n))$. The quotient $\mathcal{S}(F^n)\slash \mathcal{S}_0(F^n)$ is isomorphic to $\mathbb{C}$ via the map $\Phi \rightarrow \Phi(0)$.

\begin{definition*}
 A linear functional $\Lambda$ on $\mathcal{W}(\pi,\psi)$ is called a Shalika functional if it satisfies $\Lambda(\pi(h)W)=\psi(\mathrm{Tr}Z)\Lambda(W)$ for all $h=\begin{pmatrix} I_{n} &Z \\& I_{n} \end{pmatrix} \begin{pmatrix} g &\\ & g \end{pmatrix} \in S_{2n}$ and $W \in \mathcal{W}(\pi,\psi)$. We say that a linear functional $\Lambda_s$ on $\mathcal{W}(\pi,\psi)$ is a twisted Shalika functional if $\Lambda_s(\pi(h)W)=|\mathrm{det}(h)|^{-\frac{s}{2}}\psi(\mathrm{Tr}Z)\Lambda_s(W)$ for $h=\begin{pmatrix} I_{n} &Z \\& I_{n} \end{pmatrix} \begin{pmatrix} g &\\ & g \end{pmatrix} \in S_{2n}$ and $W \in \mathcal{W}(\pi,\psi)$.
\end{definition*}

If $s_0$ is an exceptional pole of $\mathcal{J}(\pi)$, then the bilinear form $B_{s_0}$ can be written as $B_{s_0}(W,\Phi)=\Lambda_{s_0}(W)\Phi(0)$ with $\Lambda_{s_0} \in \mathrm{Hom}_{S_{2n}}(\mathcal{W}(\pi,\psi),|\cdot|^{-\frac{s_0}{2}}\Theta)$ a twisted Shalika functional. If the ideal $\mathcal{J}(\pi)$ has an exceptional pole of order $d_{s_0}$ at $s=s_0$ and this is the highest order pole of the family at $s=s_0$, then this pole contributes a factor of $(1-q^{s_0}q^{-s})^{d_{s_0}}$ to $L(s, \pi,\wedge^2)^{-1}$. As these factors $(1-q^{s_0}q^{-s})^{d_{s_0}}$ are relatively prime in $\mathbb{C}[q^{\pm s}]$ for distinct exceptional poles $s=s_0$, we establish the following Definition.

\begin{definition*}
Let $L_{ex}(s,\pi,\wedge^2)^{-1}$ denote the product of these factors $(1-q^{s_0}q^{-s})^{d_{s_0}}$ as $s_0$ runs over the exceptional poles of $\mathcal{J}(\pi)$ with $d_{s_0}$ the maximal order of the pole at $s=s_0$.
\end{definition*}

Applying the partial Iwasawa decomposition to $g \in GL_n$, we may decompose the integral $J(s,W,\Phi)$ as:
\[
\tag{3.2}
\label{pIwasawa}
\begin{split}
J(s,W,\Phi)
&=\int_{K_n} \int_{N_n\backslash P_n} \int_{\mathcal{N}_n \backslash \mathcal{M}_n} W \begin{pmatrix} \sigma_{2n} \begin{pmatrix} I_n & X \\ & I_n \end{pmatrix} \begin{pmatrix} pk &  \\ & pk \end{pmatrix} \end{pmatrix}\\ 
&\phantom{****}\times |\mathrm{det}(p)|^{s-1} \psi^{-1}(\mathrm{Tr} X) dX \int_{F^{\times}} \omega_{\pi}(z) |z|^{ns} \Phi (e_nzk) d^{\times}z dp dk.
\end{split}
\]
Focusing on integration over $N_n \backslash P_n$ and $\mathcal{N}_n \backslash \mathcal{M}_n$, we formally define the following integral.

\begin{definition*}
Let $W$ belong to $\mathcal{W}(\pi,\psi)$. We define the following integral:
\[
  J_{(0)}(s,W)=\int_{N_n \backslash P_n} \int_{\mathcal{N}_n \backslash \mathcal{M}_n} W \left(\sigma_{2n} \begin{pmatrix} I_n & X \\ & I_n \end{pmatrix} \begin{pmatrix} p & \\ & p \end{pmatrix} \right)  |\mathrm{det}(p)|^{s-1} \psi^{-1}(\mathrm{Tr}X) dX dp.
\]
\end{definition*}
 We can identify $GL_{n-1}$ with a subgroup of $GL_n$ via $g \mapsto \mathrm{diag}(g,1)$. Under this embedding, $N_{n-1}$, $K_{n-1}$ and $A_{n-1}$ are considered as a subgroup of $GL_n$. We view integration over $N_n \backslash P_n$ as $N_{n-1} \backslash GL_{n-1}$. Since $GL_{n-1}=N_{n-1}A_{n-1}K_{n-1}$, the integral $J_{(0)}(s,W)$ becomes
 \[
 \begin{split}
   &J_{(0)}(s,W)=\int_{K_{n-1}}  \int_{A_{n-1}}  \int_{\mathcal{N}_n \backslash \mathcal{M}_n} W \left( \sigma_{2n} \begin{pmatrix} I_n & X \\ & I_n \end{pmatrix} \begin{pmatrix} ak & \\ & ak\end{pmatrix}\right) \\
   &\phantom{****************************}\delta_{B_{n-1}}^{-1}(a) |\mathrm{det}(a)|^{s-1}\psi^{-1}(\mathrm{Tr} X) dX da dk,
\end{split}
 \]
with $a=m(a_1,\dotsm,a_{n-1})$. Arguing as in the proof of Theorem \ref{integral-rational}, $J_{(0)}(s,W)$ is convergent for $\mathrm{Re}(s)$ large, and defines a rational function in $\mathbb{C}(q^{-s})$ with a common denominator depending only on the representation. As for the quasi-invariance, note that the integrals $J_{(0)}(s,W)$ naturally satisfy
\[
\label{Pn-invariant}
\tag{3.3}
  J_{(0)} ( s, \pi \left( \begin{pmatrix} I_n& Z \\ & I_n \end{pmatrix} \begin{pmatrix} p& \\ &p \end{pmatrix} \right) W ) =\psi(\mathrm{Tr}Z)|\mathrm{det}(p)|^{-(s-1)} J_{(0)}(s,W)
\] 
for $Z \in \mathcal{M}_n$ and $p \in P_n$. The vector space generated by the functions $J_{(0)}(s,W)$ for $W \in \mathcal{W}(\pi,\psi)$ is a fractional ideal $\mathcal{J}_{(0)}(\pi)$ of $\mathbb{C}[q^{\pm s}]$ which has a unique generator of the form $\frac{1}{P(q^{-s})}$ where $P(X)$ is a polynomial in $\mathbb{C}[X]$ with $P(0)=1$ in virtue of Lemma \ref{nonvanishing-even}.

\begin{definition*}
Let $\mathcal{J}_{(0)}(\pi)$ denote the span of the rational functions defined by the integrals $J_{(0)}(s,W)$ and let $L_{(0)}(s,\pi,\wedge^2)$ be the Euler factor which generates the $\mathbb{C}[q^{\pm s}]$-fractional ideal $\mathcal{J}_{(0)}(\pi)$ in $\mathbb{C}(q^{-s})$.
\end{definition*}

We describe the containment $\mathcal{J}_{(0)}(\pi) \subset \mathcal{J}(\pi)$ of $\mathbb{C}[q^{\pm s}]$-fractional ideals, and explain the relation between the exceptional poles of $\mathcal{J}(\pi)$ and the two stage filtration $\mathcal{J}_{(0)}(\pi) \subset \mathcal{J}(\pi)$. We first introduce the notation from \cite{Ma09,Ma10,Ma15} about $L$-function. We denote
\[
  L^{(0)}(s,\pi,\wedge^2)=\frac{L(s,\pi,\wedge^2)}{L_{(0)}(s,\pi,\wedge^2)}.
\]

\begin{proposition}
\label{L0 function}
Let $\pi$ be a representation of $GL_{2n}$ of Whittaker type. The fractional ideal $\mathcal{J}_{(0)}(\pi)$ is spanned by the integral $J(s,W,\Phi)$ for $W$ in $\mathcal{W}(\pi,\psi)$ and $\Phi \in \mathcal{S}_0(F^n)$. As $\mathbb{C}[q^{\pm s}]$-fractional ideals in $\mathbb{C}(q^{-s})$, we have the inclusion $\mathcal{J}_{(0)}(\pi) \subset \mathcal{J}(\pi)$. The Euler factor $L_{(0)}(s,\pi,\wedge^2)^{-1}$ divides $L(s,\pi,\wedge^2)^{-1}$, and the quotient
\[
 L^{(0)}(s,\pi,\wedge^2)=\frac{L(s,\pi,\wedge^2)}{L_{(0)}(s,\pi,\wedge^2)}
\]
has simple poles, which are exactly the exceptional pole of $L(s,\pi,\wedge^2)$.
\end{proposition}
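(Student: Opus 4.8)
The plan is to run everything through the partial Iwasawa decomposition \eqref{pIwasawa} and thereby reduce the statement to bookkeeping with Tate integrals and principal $\mathbb{C}[q^{\pm s}]$-fractional ideals. First I would rewrite \eqref{pIwasawa} as
\[
J(s,W,\Phi)=\int_{K_n}\Bigl(\int_{F^{\times}}\omega_{\pi}(z)\,|z|^{ns}\,\Phi(e_nzk)\,d^{\times}z\Bigr)\,J_{(0)}\bigl(s,\pi(\mathrm{diag}(k,k))W\bigr)\,dk ,
\]
using that the inner $\mathcal{N}_n\backslash\mathcal{M}_n$- and $N_n\backslash P_n$-integrations against $W(\sigma_{2n}\,\cdots\,\mathrm{diag}(pk,pk))$ are exactly $J_{(0)}(s,\pi(\mathrm{diag}(k,k))W)$. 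Since $W$ and $\Phi$ are fixed by some congruence subgroup $K_{n,r}$ (acting through $k\mapsto\pi(\mathrm{diag}(k,k))$, resp. through $R(\mathrm{diag}(k,k))$), this is a \emph{finite} sum $\sum_i c\,T_{\Phi}(s,k_i)\,J_{(0)}(s,\pi(\mathrm{diag}(k_i,k_i))W)$, where $T_{\Phi}(s,k)=\int_{F^{\times}}\omega_{\pi}(z)|z|^{ns}\Phi(e_nzk)\,d^{\times}z$ is a Tate zeta integral in the variable $q^{-ns}$ and $c=\mathrm{Vol}(K_{n,r})$.

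To identify $\mathcal{J}_{(0)}(\pi)$: if $\Phi\in\mathcal{S}_0(F^n)$ then, being locally constant, $\Phi$ vanishes on a neighbourhood of $0$, so each $z\mapsto\Phi(e_nzk_i)$ is supported on a compact subset of $F^{\times}$, whence $T_{\Phi}(s,k_i)\in\mathbb{C}[q^{\pm s}]$ and $J(s,W,\Phi)\in\mathcal{J}_{(0)}(\pi)$. Conversely, given $W$ I would pick $r$ large enough that $K_{n,r}$ stabilizes $W$ and take $\Phi=\mathbbm{1}_{e_nK_{n,r}}$, which lies in $\mathcal{S}_0(F^n)$ because $e_nK_{n,r}$ avoids $0$; by the same computation as the one recorded around \eqref{zp-invariance} one gets $J(s,W,\Phi)=\mathrm{Vol}(e_nK_{n,r})\,J_{(0)}(s,W)$. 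This shows that $\mathcal{J}_{(0)}(\pi)$ is spanned by the $J(s,W,\Phi)$ with $\Phi\in\mathcal{S}_0(F^n)$ and, a fortiori, that $\mathcal{J}_{(0)}(\pi)\subseteq\mathcal{J}(\pi)$. Since both are principal fractional ideals with generators normalized to the value $1$ at $q^{-s}=0$, the inclusion forces $L_{(0)}(s,\pi,\wedge^2)^{-1}$ to divide $L(s,\pi,\wedge^2)^{-1}$ in $\mathbb{C}[q^{-s}]$, so $L^{(0)}(s,\pi,\wedge^2)=L(s,\pi,\wedge^2)/L_{(0)}(s,\pi,\wedge^2)$ is again a local Euler factor.

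For the description of the poles of $L^{(0)}$, fix $s_0$ and let $d_0$ be the order of the pole of $L_{(0)}(s,\pi,\wedge^2)$ there; since $L_{(0)}(s,\pi,\wedge^2)$ generates $\mathcal{J}_{(0)}(\pi)$, every element of $\mathcal{J}_{(0)}(\pi)$ has a pole of order $\le d_0$ at $s_0$. By Tate's local theory each $T_{\Phi}(s,k_i)$ is a rational function of $q^{-s}$ with at worst a simple pole, so the finite-sum expression above shows that \emph{every} $J(s,W,\Phi)$ has a pole of order $\le d_0+1$ at $s_0$; hence $L^{(0)}$ has at most a simple pole at $s_0$, and $s_0$ is a pole of $L^{(0)}$ precisely when the maximal pole order $d$ of $\mathcal{J}(\pi)$ at $s_0$ satisfies $d>d_0$. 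It remains to match this with exceptionality. If $d>d_0$, then every $J(s,W,\Phi)$ with $\Phi\in\mathcal{S}_0(F^n)$ lies in $\mathcal{J}_{(0)}(\pi)$ and so has pole order $\le d_0<d$, which means the leading coefficient $B_{s_0}$ in \eqref{Excep-Laurent} vanishes on $\mathcal{W}(\pi,\psi)\times\mathcal{S}_0(F^n)$, i.e. $s_0$ is exceptional; while if $d=d_0$ then some $J(s,W,\Phi)$ with $\Phi\in\mathcal{S}_0(F^n)$ already attains pole order $d$, so $B_{s_0}$ does not vanish on $\mathcal{W}(\pi,\psi)\times\mathcal{S}_0(F^n)$ and $s_0$ is not exceptional. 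This gives the asserted equality of the two sets of poles.

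The step I expect to be the main obstacle is the passage to the finite sum together with the control of the Tate integrals $T_{\Phi}(s,k)$: one has to apply the partial Iwasawa decomposition with the correct modular factors (exactly as in the proof of Theorem \ref{integral-rational}) and verify that a local zeta integral in $q^{-ns}$ contributes only simple poles when regarded as a function of $q^{-s}$. Once that is in place, the remainder is a formal matter about principal $\mathbb{C}[q^{\pm s}]$-fractional ideals and the definition of exceptional poles.
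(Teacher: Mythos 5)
Your proof is correct and follows essentially the same route as the paper: the coset decomposition over a stabilizing compact open subgroup reducing $J(s,W,\Phi)$ to a finite sum of products $J_{(0)}(s,W_i)\,I(s,\omega_\pi,\Phi_i)$ as in \eqref{pIwasawa-dec}, the characteristic function of $e_nK_{n,r}$ for the reverse inclusion, and the order-counting identification of poles of $L^{(0)}$ with exceptional poles. The only cosmetic difference is that you bound pole orders pointwise ($d\le d_0+1$ via the simple pole of the Tate factor), whereas the paper packages the same fact as the divisibility of $L^{(0)}(s,\pi,\wedge^2)^{-1}$ by $L(ns,\omega_\pi)^{-1}$.
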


\begin{proof}
Take $K_n^{\circ} \subset K_n$ a compact open subgroup which stabilizes $\Phi$ and $W$ in the sense that $\varrho(\mathrm{diag}(k,k))W=W$ for all $k \in K_n^{\circ}$, where $\varrho$ denotes right translation. Write $K_n=\cup_ik_iK_n^{\circ}$ and let $W_i=\varrho \begin{pmatrix} k_i & \\ & k_i \end{pmatrix}W$ and $\Phi_i=R\begin{pmatrix} k_i & \\ & k_i \end{pmatrix} \Phi$. Then our integral in \eqref{pIwasawa} can be decomposed as a finite sum of the form
\[
\tag{3.4}
\label{pIwasawa-dec}
\begin{aligned} 
&J(s,W,\Phi)=\lambda_1\sum_i \int_{N_n \backslash P_n} \int_{\mathcal{N}_n \backslash \mathcal{M}_n} W_i \left(\sigma_{2n} \begin{pmatrix} I_n & X \\ & I_n \end{pmatrix} \begin{pmatrix} p & \\ & p \end{pmatrix} \right) \\
&\phantom{*************}\times  |\mathrm{det}(p)|^{s-1} \psi^{-1}(\mathrm{Tr}X) dX dp
 \int_{F^{\times}} \omega_{\pi}(z) |z|^{ns} \Phi_i(e_nz) d^{\times} z 
\end{aligned}
\]
with $\lambda_1 > 0$ the volume of $K_n^{\circ}$. If $\Phi$ is in $\mathcal{S}_0(F^n)$, then each $\Phi_i(0)=0$ so that Tate integrals $\displaystyle  I(s,\omega_{\pi},\Phi_i)=\int_{F^{\times}} \omega_{\pi}(z) |z|^{ns} \Phi_i(e_nz) d^{\times} z$ in \eqref{pIwasawa-dec} becomes a polynomial in $\mathbb{C}[q^{\pm s}]$. From \eqref{pIwasawa-dec}, we are able to see that $J(s,W,\Phi)$ is a finite sum of integrals of the form $P(q^{\pm s})J_{(0)}(s,W)$ with $P(X) \in \mathbb{C}[X]$ if $\Phi \in \mathcal{S}_0(F^n)$. Since $\mathcal{J}_{(0)}(\pi)$ is closed under multiplication by $q^s$ and $q^{-s}$, it belongs to $\mathcal{J}_{(0)}(\pi)$. Conversely, for any integrals $J_{(0)}(s,W)$ let $K_{n,r}$ be a compact open congruent subgroup which stabilizes $W$ in the sense that $\varrho(\mathrm{diag}(k,k))W=W$ for all $k \in K_{n,r}$. If $\Phi \in \mathcal{S}_0(F^n)$ is the characteristic function of $e_n K_{n,r}$, then our integral reduce to $J(s,W,\Phi)=\lambda_2 J_{(0)}(s,W)$ in virtue of \eqref{zp-invariance} and \eqref{pIwasawa} with $\lambda_2 > 0$ the volume of $e_n K_{n,r}$. Therefore $\mathcal{J}_{(0)}(\pi)$ is the vector space spanned by the integrals $J(s,W,\Phi)$ for $W \in \mathcal{W}(\pi,\psi)$ and $\Phi \in \mathcal{S}_0(F^n)$. It implies that we have the inclusion $\mathcal{J}_{(0)}(\pi) \subset \mathcal{J}(\pi)$ as $\mathbb{C}[q^{\pm s}]$-fractional ideals and $L_{(0)}(s,\pi,\wedge)^{-1}$ divides $L(s,\pi,\wedge)^{-1}$ in $\mathbb{C}[q^{\pm s}]$.

\par
We now claim that $L(s,\pi,\wedge^2)$ has an exceptional pole at $s=s_0$ if and only if $L^{(0)}(s,\pi,\wedge^2)$ has a pole at $s=s_0$. If $s=s_0$ is an exceptional pole of maximal order $d_{s_0}$, then the coefficient of the leading term $B_{s_0}$ in \eqref{Excep-Laurent} becomes $0$ for all $\Phi \in \mathcal{S}_0(F^n)$. Thus every rational functions in $\mathcal{J}_{(0)}(\pi)$ spanned by the integrals $J(s,W,\Phi)$ for $W \in \mathcal{W}(\pi,\psi)$ and $\Phi \in \mathcal{S}_0(F^n)$ has the pole of highest order at most $d_{s_0}-1$. As $L_{(0)}(s,\pi,\wedge^2)$ has the pole which is strictly less than $d_{s_0}$ at $s=s_0$,  $L^{(0)}(s,\pi,\wedge^2)$ has a pole at $s=s_0$. For the other implication, if $L^{(0)}(s,\pi,\wedge^2)$ has a pole at $s=s_0$ then the maximal order $d_{s_0}$ of the pole $s_0$ for $L(s,\pi,\wedge^2)$ is strictly greater than the one of the pole $s_0$ for $L_{(0)}(s,\pi,\wedge)$. Then any rational functions in $J_{(0)}(s,W)$ has a pole of order at most $d_{s_0}-1$ at $s=s_0$. Since $\mathcal{J}_{(0)}(\pi)$ is spanned by the integrals $J(s,W,\Phi)$ for $W \in \mathcal{W}(\pi,\psi)$ and $\Phi \in \mathcal{S}_0(F^n)$, the first residual term $B_{s_0}$ corresponding to a pole of highest order $d_{s_0}$ of the Laurent expansion of any function $J(s,W,\Phi)$ with $\Phi \in \mathcal{S}_0(F^n)$ in \eqref{Excep-Laurent} must be zero, and the pole $s_0$ is exceptional. Thus the exceptional poles of $L(s,\pi,\wedge^2)$ are actually those of $\displaystyle \frac{L(s,\pi,\wedge^2)}{L_{(0)}(s,\pi,\wedge^2)}=L^{(0)}(s,\pi,\wedge^2)$.
\par

According to \eqref{pIwasawa-dec}, any integral $J(s,W,\Phi)$ is a finite sum of $J_{(0)}(s,W_i)I(s,\omega_{\pi},\Phi_i)$, and hence $L(s,\pi,\wedge^2)^{-1}=L_{(0)}(s,\pi,\wedge^2)^{-1}L^{(0)}(s,\pi,\wedge^2)^{-1}$ divides the product of $L$-functions $L_{(0)}(s,\pi,\wedge^2)^{-1}L(ns,\omega_{\pi})^{-1}$. Then we have 
\[
L_{(0)}(s,\pi,\wedge^2)^{-1}L(ns,\omega_{\pi})^{-1}=Q(q^{-s})L_{(0)}(s,\pi,\wedge^2)^{-1}L^{(0)}(s,\pi,\wedge^2)^{-1}
\]
 for some polynomial $Q(X) \in \mathbb{C}[X]$. Cancelling out the common factor $L_{(0)}(s,\pi,\wedge^2)^{-1}$, $L^{(0)}(s,\pi,\wedge^2)^{-1}$ divides the factor $L(ns,\omega_{\pi})^{-1}$, it therefore has simple poles, as this $L$-function $L(ns,\omega_{\pi})$ from Tate integrals $I(s,\omega_{\pi},\Phi)$ always has simple poles by Tate \cite{Tate}. This proves the finial assertion.
\end{proof}

We know that $\displaystyle L_{ex}(s,\pi,\wedge^2)^{-1}=\prod_{s_i}(1-q^{s_i-s})^{d_i}$, where the $s_i$'s are the exceptional poles of $L(s,\pi,\wedge^2)$ and the $d_i$'s their orders in $L(s,\pi,\wedge^2)$. Then $L^{(0)}$ can be expressed as $\displaystyle L^{(0)}(s,\pi,\wedge^2)^{-1}=\prod_{s_i}(1-q^{s_i-s})$.

\subsection{The local functional equation for exterior square $L$-functions.} In this section, we review the local functional equation for exterior square $L$-functions from Matringe and Cogdell \cite{Ma14,Cog}.
We denote by $M_{2n}$ the standard Levi of $GL_{2n}$ associated to the partition $(n,n)$ of $2n$. Let $w_{2n}=\sigma_{2n}$ and then let $H_{2n}=w_{2n}M_{2n}w_{2n}^{-1}$. When $m=2n+1$ is odd, let $w_{2n+1}=w_{2n+2}|_{GL_{2n+1}}$ so that 
\[
 w_{2n+1}=\begin{pmatrix} 1& 2 & \dotsm & n+1 & | & n+2 & n+3 & \dotsm & 2n+1 \\ 1& 3 & \dotsm & 2n+1 & | & 2 & 4 & \dotsm & 2n \end{pmatrix}. 
\]
In the odd case, $\sigma_{2n+1} \neq w_{2n+1}$. We let $M_{2n+1}$ denote the standard Levi subgroup of $GL_{2n+1}$ associated to the partition $(n+1,n)$ of $2n+1$ and set $H_{2n+1}=w_{2n+1}M_{2n+1}w_{2n+1}^{-1}$ as in the even case. Note that $H_m$ are compatible in the sense that $H_{m} \cap GL_{m-1}=H_{m-1}$. We denote by $\delta_m$ the character of $H_m$ given by $\displaystyle \delta_m : w_m \begin{pmatrix} g_1 & \\ & g_2 \end{pmatrix}w_m^{-1} \mapsto \frac{|\text{det}(g_1)|}{|\text{det}(g_2)|}$. We define the character of $H_m$ by
\[
\tag{3.5}
\label{chi-relation}
 \chi_m =
  \begin{cases}
        1                  & \quad \text{if } m=2n \text{ is even}\\
       \delta_m       & \quad \text{if } m=2n+1 \text{ is odd}\\
  \end{cases}
   ,\quad \mu_m =
  \begin{cases}
      \delta_m     & \quad \text{if } m=2n \text{ is even}\\
      1                 & \quad \text{if } m=2n+1 \text{ is odd}\\
 \end{cases}
.
\]
Hence $\chi_m|_{H_{m-1}}=\mu_{m-1}$, and $\mu_{m}|_{H_{m-1}}=\chi_{m-1}$. We recall the proposition which follows from \cite[Proposition 4.14]{Ma15} or \cite[Proposition 3.1]{Ma14}.
\begin{proposition}[Matringe]
\label{func-homo}
 Let $\sigma$ be an irreducible representation of $P_{m-1}$ and $\xi$ be a character of $P_m \cap H_m$. If $m=2n$, we have

\[
\mathrm{Hom}_{P_{2n} \cap H_{2n}}(\Phi^+(\sigma),\;\xi) \simeq  \mathrm{Hom}_{P_{2n-1} \cap H_{2n-1}}(\sigma,\;\xi\delta_{2n}^{\frac{1}{2}}).
\]
If $m=2n+1$, we obtain
\[
     \mathrm{Hom}_{P_{2n+1} \cap H_{2n+1}}(\Phi^+(\sigma),\;\xi) \simeq  \mathrm{Hom}_{P_{2n} \cap H_{2n}}(\sigma,\;\xi\delta_{2n+1}^{-\frac{1}{2}}).    \\
\]
\end{proposition}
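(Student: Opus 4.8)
The plan is to unwind the functor $\Phi^{+}$ and apply the geometric (Mackey) machinery of Bernstein and Zelevinsky to the restriction of $\Phi^{+}(\sigma)$ along $P_m\cap H_m\hookrightarrow P_m$, then push the answer down one step by Frobenius reciprocity while keeping book of the modular characters. Recall from Section 2.1 that $\Phi^{+}(\sigma)=\mathrm{ind}_{P_{m-1}U_m}^{P_m}\!\bigl(|\mathrm{det}|^{1/2}\sigma\otimes\psi\bigr)$, a compactly supported induction from $Q:=P_{m-1}U_m$. Since $P_m=GL_{m-1}\ltimes U_m$ and $Q=P_{m-1}\ltimes U_m$ with $U_m$ normal, the homogeneous space $Q\backslash P_m$ is identified with $P_{m-1}\backslash GL_{m-1}$, which as shown in Section 2.2 is homeomorphic to $F^{m-1}\setminus\{0\}$; the group $P_m\cap H_m$ acts on it through its image in $GL_{m-1}=P_m/U_m$. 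The geometric lemma presents $\mathrm{Res}_{P_m\cap H_m}\Phi^{+}(\sigma)$ as built by successive extensions from pieces indexed by the $(P_m\cap H_m)$-orbits on $F^{m-1}\setminus\{0\}$, each piece a compact induction, from an orbit stabilizer, of the corresponding twist of $|\mathrm{det}|^{1/2}\sigma\otimes\psi$. Because $\mathrm{Hom}(-,\xi)$ is left exact and $\xi$ is one-dimensional, once the filtration is ordered appropriately the whole $\mathrm{Hom}$-space is computed orbit by orbit.

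The geometric heart of the matter is a short orbit count: $P_m\cap H_m$ has a unique open orbit $\mathcal{O}$ on $F^{m-1}\setminus\{0\}$ — equivalently $P_m=(P_m\cap H_m)\,Q$ off a lower-dimensional set — and the stabilizer entering the Mackey description of $\mathcal{O}$ is, essentially, $P_{m-1}\cap H_{m-1}$; here one uses the compatibility $H_m\cap GL_{m-1}=H_{m-1}$ recorded just before the statement together with $P_m\cap GL_{m-1}=P_{m-1}$. On every non-open orbit the stabilizer meets a conjugate of $U_m$ on which $\psi$ is nontrivial, hence the induced twist of $|\mathrm{det}|^{1/2}\sigma\otimes\psi$ carries $\psi$ nontrivially there and admits no $(P_m\cap H_m)$-equivariant $\xi$-valued functional; thus these orbits contribute $0$, not merely generically, and by left exactness $\mathrm{Hom}_{P_m\cap H_m}(\Phi^{+}(\sigma),\xi)$ is supported entirely on $\mathcal{O}$. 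The irreducibility of $\sigma$ enters to guarantee, via the Bernstein--Zelevinsky classification $\sigma\simeq(\Phi^{+})^{k-1}\Psi^{+}(\rho)$, that $\sigma$ has finite length so that the Mackey filtration is finite and the above vanishing is uniform.

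On the open orbit, Frobenius reciprocity for $\mathrm{ind}$ identifies the surviving space with $\mathrm{Hom}_{P_{m-1}\cap H_{m-1}}(\sigma,\,\xi\,\delta)$ for an explicit character $\delta$ which is the product of the $|\mathrm{det}|^{1/2}$ built into $\Phi^{+}$, the rho-function comparing the $(P_m\cap H_m)$-quasi-invariant measure on $Q\backslash P_m$ to Haar measure, and the Jacobian of the orbit identification; all of these are expressible through $\delta_{P_m}$, $\delta_{H_m}$, $\delta_{Q}$ and collapse to the asserted twist. In the even case $m=2n$ the collapse gives $\delta_{2n}^{1/2}$, and in the odd case $m=2n+1$ it gives $\delta_{2n+1}^{-1/2}$; the sign flip is forced by $w_{2n}=\sigma_{2n}$ versus $w_{2n+1}\neq\sigma_{2n+1}$, as the extra fixed coordinate $2n+1$ puts the opposite $GL_n$-block of $w_mM_mw_m^{-1}$ inside the mirabolic and so interchanges the numerator and the denominator of $\delta_m$. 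I expect the main obstacle to be precisely this modular bookkeeping — verifying the unique-open-orbit claim, pinning down its stabilizer, and computing all the rho- and modulus-functions so that the exponent of $\delta_m$ comes out with the correct sign and magnitude in each parity. This argument is the one carried out in \cite[Proposition 4.14]{Ma15} (equivalently \cite[Proposition 3.1]{Ma14}), which one follows after the translation above.
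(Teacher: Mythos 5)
Your overall strategy (unwind $\Phi^{+}(\sigma)=\mathrm{ind}_{P_{m-1}U_m}^{P_m}(|\det|^{1/2}\sigma\otimes\psi)$, run Mackey theory for the restriction to $P_m\cap H_m$ over the orbits on $Q\backslash P_m\simeq F^{m-1}\setminus\{0\}$, keep one orbit by Frobenius reciprocity with modulus bookkeeping, kill the rest with $\psi$) is indeed the route behind Matringe's result, which the paper simply cites rather than reproves. But your geometric identification of the orbits is backwards, and this is a genuine gap. Since $U_m$ acts trivially on $Q\backslash P_m$, the action factors through the image of $P_m\cap H_m$ in $GL_{m-1}$, which is $H_{m-1}$, a shuffled $GL\times GL$; writing $x=(x_{\mathrm{odd}},x_{\mathrm{even}})$ there are in general three orbits on $F^{m-1}\setminus\{0\}$: both parts nonzero (the open one), $x_{\mathrm{even}}=0$, and $x_{\mathrm{odd}}=0$. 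The identity coset corresponds to $e_{m-1}$ and lies in a \emph{closed, non-open} orbit, and it is this orbit that survives: its stabilizer is $Q\cap P_m\cap H_m$, which surjects onto $P_{m-1}\cap H_{m-1}$ with kernel $U_m\cap H_m$, and $\psi$ is trivial on $U_m\cap H_m$ because its nonzero entries sit in rows of the opposite parity to the one $\psi$ reads (row $m-1$). That is exactly why Frobenius reciprocity on this orbit produces $\mathrm{Hom}_{P_{m-1}\cap H_{m-1}}(\sigma,\xi\delta_m^{\pm 1/2})$.

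By contrast, the open orbit is one of the orbits that dies. Take $m=2n$, $x=e_{2n-1}+e_{2n-2}$ and $g_x=I+E_{2n-1,2n-2}$ as coset representative: the stabilizer still contains all of $U_{2n}\cap H_{2n}$, and conjugating such an element $u$ (with column entries $c_j$, $j$ even) by $p_x$ gives a member of $U_{2n}$ on which the transported character is $\psi(c_{2n-2})$, which is nontrivial; since $\xi$ and the modulus factors are trivial on this unipotent subgroup, the open-orbit piece admits no $\xi$-form. The same computation kills the third orbit, and the analogous statements hold for $m=2n+1$. Moreover your claim that the open-orbit stabilizer is "essentially $P_{m-1}\cap H_{m-1}$" is false: for $m=2n$ it is a shuffled $P_n\times(\text{preimage of }P_{n-1})$, a proper subgroup of $P_{2n-1}\cap H_{2n-1}\simeq P_n\times GL_{n-1}$, so even if that orbit survived, Frobenius reciprocity there would not yield the stated right-hand side. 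As written, carrying out your plan would either produce a Hom space over the wrong subgroup or the zero space; the argument only closes once you attach the survival mechanism to the orbit of the identity coset and the $\psi$-vanishing mechanism to the other orbits, which is how Matringe's proof (Proposition 4.14 of the Bump--Friedberg paper, Proposition 3.1 of the exterior-square paper) proceeds. (A minor additional point: finiteness of the Mackey filtration comes from there being finitely many orbits, not from any finite-length property of $\sigma$.)
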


We emphasize that if $\pi=\mathrm{Ind}(\Delta_1 \otimes \dotsm \otimes \Delta_t)$ is a representation of Whittaker type of $GL_m$, then
$\pi^{\iota}=\text{Ind}(\widetilde{\Delta}_t \otimes \dotsm \otimes \widetilde{\Delta}_1)$ is again of Whittaker type.

\subsubsection{The even case $m=2n$.}

Let $\pi=\mathrm{Ind}(\Delta_1 \otimes \dotsm \otimes \Delta_t)$ be  a representation of $GL_{2n}$ of Whittaker type. Let $\tau_{2n}$ be the matrix $\begin{pmatrix} &I_n \\ I_n& \\ \end{pmatrix}$ and $\varrho$ right translation on the Whittaker model $\mathcal{W}(\pi,\psi)$. As in \eqref{invariance}, the bilinear form $\displaystyle B_{s, \pi, \psi} : (W,\Phi) \mapsto \frac{J(s,W,\Phi)}{L(s,\pi,\wedge^2)}$ on $\mathcal{W}(\pi,\psi) \times \mathcal{S}(F^n)$ satisfies $B_{s, \pi, \psi}(\varrho(h)W, R(h)\Phi)=|h|^{-\frac{s}{2}}\Theta(h) B_{s, \pi, \psi}(W, \Phi)$ for $h \in S_{2n}$, which belongs to the space $\text{Hom}_{S_{2n}}(\mathcal{W}(\pi,\psi)\otimes \mathcal{S}(F^n),|\cdot|^{-\frac{s}{2}}\Theta)$. The local functional equation for irreducible generic representations in the even case is given in the paper of Matringe \cite{Ma14}. Cogdell and Matringe only use the assumption that $\pi=\mathrm{Ind}(\Delta_1 \otimes \dotsm \otimes \Delta_t)$ is the representation of Whittaker type in Remark of Section 3.4 of \cite{Cog} to establish the uniqueness of functionals in the space $\mathrm{Hom}_{S_{2n}}(\pi \otimes \mathcal{S}(F^n),|\cdot|^{-\frac{s}{2}}\Theta)$ for almost all $s$. As the surjection from $\pi$ to its Whittaker model $\mathcal{W}(\pi,\psi)$ induces an injection of $\mathrm{Hom}_{S_{2n}}(\mathcal{W}(\pi,\psi) \otimes \mathcal{S}(F^n),|\cdot|^{-\frac{s}{2}}\Theta)$ to $\mathrm{Hom}_{S_{2n}}(\pi \otimes \mathcal{S}(F^n),|\cdot|^{-\frac{s}{2}}\Theta)$, we obtain the following statement for the $1$-dimensionality result.

\begin{proposition}[Matringe]
Let $\pi=\mathrm{Ind}(\Delta_1 \otimes \dotsm \otimes \Delta_t)$ be  a representation of $GL_{2n}$ of Whittaker type. For all values of $q^{-s}$ except a finite number,
the space $\mathrm{Hom}_{S_{2n}}(\mathcal{W}(\pi,\psi) \otimes \mathcal{S}(F^n),|\cdot|^{-\frac{s}{2}}\Theta)$ is of dimension at most $1$.
\end{proposition}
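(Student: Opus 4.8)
The plan is to deduce the statement from the corresponding uniqueness result for $\pi$ itself, which is the form in which it is proved by Cogdell and Matringe. Recall from Section~2.1 that, for the $\psi$-Whittaker functional $\lambda$, the map $q\colon V_\pi\to\mathcal{W}(\pi,\psi)$ sending $v$ to $W_v=\lambda(\pi(\cdot)v)$ is a surjective $GL_{2n}$-intertwining operator, hence in particular $S_{2n}$-equivariant. Tensoring with the identity on $\mathcal{S}(F^n)$ gives a surjective $S_{2n}$-map $q\otimes\mathrm{id}\colon V_\pi\otimes\mathcal{S}(F^n)\to\mathcal{W}(\pi,\psi)\otimes\mathcal{S}(F^n)$, so precomposition $B\mapsto B\circ(q\otimes\mathrm{id})$ carries $(|\cdot|^{-s/2}\Theta)$-equivariant functionals to $(|\cdot|^{-s/2}\Theta)$-equivariant functionals and, being precomposition with a surjection, is injective. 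Thus
\[
\mathrm{Hom}_{S_{2n}}\big(\mathcal{W}(\pi,\psi)\otimes\mathcal{S}(F^n),\,|\cdot|^{-\tfrac{s}{2}}\Theta\big)\hookrightarrow\mathrm{Hom}_{S_{2n}}\big(\pi\otimes\mathcal{S}(F^n),\,|\cdot|^{-\tfrac{s}{2}}\Theta\big),
\]
and it suffices to bound the dimension of the target by $1$ for all but finitely many $q^{-s}$.

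For the target I would invoke the argument of Cogdell and Matringe (the remark in Section~3.4 of \cite{Cog}, refining \cite{Ma14,Ma15}), which is stated for a general representation of Whittaker type $\pi=\mathrm{Ind}(\Delta_1\otimes\dotsm\otimes\Delta_t)$ and uses only the hypothesis $\dim\mathrm{Hom}_{N_{2n}}(V_\pi,\psi)=1$, not irreducibility. The mechanism is to restrict to the mirabolic $P_{2n}$ and run the Bernstein--Zelevinsky filtration by derivatives: one repeatedly applies Proposition~\ref{func-homo} (together with its odd counterpart, via the compatibility $H_m\cap GL_{m-1}=H_{m-1}$) to peel off the $\Phi^+$-layers, each step lowering the rank and twisting the character by a half-integer power of $\delta$, until one reaches the $\Psi^+$-layers, where the space of $(|\cdot|^{-s/2}\Theta)$-equivariant functionals is governed by the analogous problem for the inducing data on a smaller general linear group and ultimately by the uniqueness of the (twisted) Shalika functional on a single quasi-square-integrable $\Delta_i$. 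Adding up the contributions along this filtration bounds the generic value of the dimension by $1$, after which Bernstein's continuation principle \cite{Ba98} --- the same tool that underlies the rationality statements of Section~7 --- propagates the bound to all but a finite set of $q^{-s}$.

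The reduction of the first paragraph is purely formal, so the real content is entirely in the Cogdell--Matringe uniqueness; what I would have to check, rather than reprove, is that each step of \cite{Cog,Ma14,Ma15} indeed goes through under the weakened hypothesis, which is exactly what the cited remark asserts. The exclusion of finitely many $q^{-s}$ is genuinely necessary and is the one point where irreducibility cannot simply be dispensed with: at special values of $q^{-s}$ the lower strata of the filtration can contribute extra $S_{2n}$-equivariant functionals, and Bernstein's principle only returns the generic value of $\dim\mathrm{Hom}$. I therefore anticipate no obstacle beyond bookkeeping, and would present the proposition as an application of the functional-equation machinery already in the literature rather than redo the orbit analysis from scratch.
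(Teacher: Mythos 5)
Your proposal is correct and follows essentially the same route as the paper: the paper likewise reduces via the injection of $\mathrm{Hom}_{S_{2n}}(\mathcal{W}(\pi,\psi)\otimes\mathcal{S}(F^n),|\cdot|^{-\frac{s}{2}}\Theta)$ into $\mathrm{Hom}_{S_{2n}}(\pi\otimes\mathcal{S}(F^n),|\cdot|^{-\frac{s}{2}}\Theta)$ induced by the surjection $V_\pi\to\mathcal{W}(\pi,\psi)$, and then cites the Cogdell--Matringe uniqueness (the Remark of Section 3.4 of \cite{Cog}), valid for representations of Whittaker type, for the target space. Your extra sketch of the derivative-filtration mechanism behind that citation is harmless but not needed, since neither you nor the paper reprove it.
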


We present the invariance property of bilinear form corresponding to the dual representation $\pi^{\iota}$ to deduce the local functional equation for exterior square $L$-functions for $GL_{2n}$.

\begin{lemma}
 The bilinear form on $\mathcal{W}(\pi,\psi) \times \mathcal{S}(F^n)$ which is defined by $C_{s, \pi, \psi} : (W, \Phi) \mapsto B_{1-s, \pi^{\iota}, \psi^{-1}}(\varrho(\tau_{2n})\widetilde{W}, \hat{\Phi})$ belongs to the space 
 $\mathrm{Hom}_{S_{2n}}(\mathcal{W}(\pi,\psi) \otimes \mathcal{S}(F^n),|\cdot|^{-\frac{s}{2}}\Theta)$.
\end{lemma}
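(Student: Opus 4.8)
The plan is to verify directly the defining equivariance: for every $h=\begin{pmatrix} I_n & Z \\ & I_n\end{pmatrix}\begin{pmatrix} g_0 & \\ & g_0\end{pmatrix}\in S_{2n}$ one must check
$C_{s, \pi, \psi}(\varrho(h)W, R(h)\Phi)=|\mathrm{det}(h)|^{-\frac{s}{2}}\Theta(h)\,C_{s, \pi, \psi}(W,\Phi)$.
First I would note that $C_{s,\pi,\psi}$ is well defined: since $\pi^{\iota}=\mathrm{Ind}(\widetilde{\Delta}_t\otimes\dotsm\otimes\widetilde{\Delta}_1)$ is again of Whittaker type, the bilinear form $B_{1-s,\pi^{\iota},\psi^{-1}}$ is defined by the even-case construction of Section 2.3 applied to $(\pi^{\iota},\psi^{-1})$, and it lies in $\mathrm{Hom}_{S_{2n}}(\mathcal{W}(\pi^{\iota},\psi^{-1})\otimes\mathcal{S}(F^n),|\cdot|^{-\frac{1-s}{2}}\Theta^{-1})$, where $\Theta^{-1}$ is the analogous Shalika character with $\psi$ replaced by $\psi^{-1}$; moreover $\varrho(\tau_{2n})\widetilde{W}\in\mathcal{W}(\pi^{\iota},\psi^{-1})$ because $\widetilde{W}$ lies there and that model is stable under right translation, and $\hat{\Phi}\in\mathcal{S}(F^n)$.

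The heart of the matter is two elementary transport identities. For the Whittaker side, unwinding $\widetilde{W}(g)=W(w\,{}^tg^{-1})$ and using that $\tau_{2n}$ and the long Weyl element $w$ are symmetric involutions, I would compute $\varrho(\tau_{2n})\widetilde{\varrho(h)W}(g)=W(w\,{}^tg^{-1}\tau_{2n}h)$, and then, writing $W^{\flat}:=\varrho(\tau_{2n})\widetilde{W}$ and substituting the inversion formula $W(x)=W^{\flat}(w\,{}^tx^{-1}\tau_{2n})$, rewrite this as $W^{\flat}\!\bigl(g\,\tau_{2n}\,{}^th^{-1}\tau_{2n}\bigr)$. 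A direct block computation then gives $\tau_{2n}\,{}^th^{-1}\tau_{2n}=\begin{pmatrix} I_n & -{}^tZ \\ & I_n\end{pmatrix}\begin{pmatrix} {}^tg_0^{-1} & \\ & {}^tg_0^{-1}\end{pmatrix}=:h^{\flat}\in S_{2n}$, so that $\varrho(\tau_{2n})\widetilde{\varrho(h)W}=\varrho(h^{\flat})\bigl(\varrho(\tau_{2n})\widetilde{W}\bigr)$. For the Schwartz side, since $R(h)$ depends only on the $GL_n$-block $g_0$, a change of variables in the Fourier integral gives $\widehat{R(h)\Phi}=|\mathrm{det}(g_0)|^{-1}\,R(h^{\flat})\hat{\Phi}$, as $h^{\flat}$ has $GL_n$-block ${}^tg_0^{-1}$.

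Substituting both identities into the definition of $C_{s,\pi,\psi}(\varrho(h)W,R(h)\Phi)$ and invoking the $S_{2n}$-equivariance of $B_{1-s,\pi^{\iota},\psi^{-1}}$ at the element $h^{\flat}$ produces the scalar $|\mathrm{det}(g_0)|^{-1}\cdot|\mathrm{det}(h^{\flat})|^{-\frac{1-s}{2}}\cdot\Theta^{-1}(h^{\flat})$. Since $|\mathrm{det}(h^{\flat})|=|\mathrm{det}(g_0)|^{-2}$ and $\Theta^{-1}(h^{\flat})=\psi^{-1}(-\mathrm{Tr}\,Z)=\psi(\mathrm{Tr}\,Z)=\Theta(h)$, this collapses to $|\mathrm{det}(g_0)|^{-s}\Theta(h)=|\mathrm{det}(h)|^{-\frac{s}{2}}\Theta(h)$, which is exactly the asserted transformation law; hence $C_{s,\pi,\psi}\in\mathrm{Hom}_{S_{2n}}(\mathcal{W}(\pi,\psi)\otimes\mathcal{S}(F^n),|\cdot|^{-\frac{s}{2}}\Theta)$. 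There is no analytic input beyond the already-established equivariance of $B_{1-s,\pi^{\iota},\psi^{-1}}$; the only real care needed — and the main place a sign can go wrong — is keeping the transposes, inverses and the $\psi$-versus-$\psi^{-1}$ conventions consistent through the computation of $h^{\flat}$ and of the character $\Theta^{-1}(h^{\flat})$.
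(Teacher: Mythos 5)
Your proposal is correct and follows essentially the same route as the paper: both verify the equivariance on the generators of $S_{2n}$ via the automorphism $h\mapsto \tau_{2n}\,{}^th^{-1}\tau_{2n}$ of the Shalika group, the Fourier-transform identity $\widehat{R(h)\Phi}=|\mathrm{det}(g_0)|^{-1}R(\tau_{2n}{}^th^{-1}\tau_{2n})\hat{\Phi}$, the Whittaker-side identity $\varrho(\tau_{2n})\widetilde{\varrho(h)W}=\varrho(\tau_{2n}{}^th^{-1}\tau_{2n})\varrho(\tau_{2n})\widetilde{W}$, and the quasi-invariance of $B_{1-s,\pi^{\iota},\psi^{-1}}$, collapsing the scalars to $|h|^{-\frac{s}{2}}\Theta(h)$. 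The only (immaterial) difference is that the paper first records $\widetilde{\varrho(h)W}=\varrho({}^th^{-1})\widetilde{W}$ and then commutes $\varrho(\tau_{2n})$ past it, whereas you substitute directly and invert; the computations agree.
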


\begin{proof}
For $h \in GL_{2n}$, we define $h^{\tau}$ by the conjugation of the matrix $\tau h \tau^{-1}$. We set ${^t}{h}{^{-\tau_{2n}}}={^t}{(h^{-1})}{^{\tau_{2n}}}$ for $h \in GL_{2n}$. We can check that the map $s \mapsto {^t}{(s^{-1})}{^{\tau_{2n}}}:={^t}{s}{^{-\tau_{2n}}}$ defines an automorphism of the group $S_{2n}$, because for the two generators of $S_{2n}$, we have
\[
\begin{pmatrix} g &\\ &g \end{pmatrix} \mapsto \begin{pmatrix}  {^t}{g}{^{-1}}& \\ &{^t}{g}{^{-1}} \end{pmatrix} \quad \text{and} \quad \begin{pmatrix} I_n & X\\ & I_n \end{pmatrix} \mapsto \begin{pmatrix}  I_n & -X^{t} \\&I_n  \end{pmatrix}.
\]
Let $\mathcal{F}$ denote the Fourier transform on $\mathcal{S}(F^{n})$. We define two generators of $S_{2n}$ by
\[
s_1 =\begin{pmatrix} h &\\ &h \end{pmatrix} \quad \text{and} \quad s_2 = \begin{pmatrix} I_n & X\\ & I_n \end{pmatrix}.
\]
Then for $\Phi \in \mathcal{S}(F^n)$
\[
\begin{split}
\mathcal{F}(R(s_1) \Phi) &= \int_{F^n} \Phi(uh)\; \psi (u{}^ty) du= |h|^{-1} \int_{F^n} \Phi(u)\; \psi (uh^{-1}{}^ty)du= |h|^{-1} \hat{\Phi}(y\;({^t}{h}{^{-1}})) \\ &= |s_1|^{-\frac{1}{2}}R({^t}{s_1}{^{-\tau_{2n}}})\mathcal{F}(\Phi)
\end{split}
\]
and $\mathcal{F}(R(s_2) \Phi) = \mathcal{F}(\Phi) = |s_2|^{-\frac{1}{2}}R({^t}{s_2}{^{-\tau_{2n}}})\mathcal{F}(\Phi)$ since $|s_2|=1$ and $R({^t}{s_2}{^{-\tau_{2n}}})\mathcal{F}(\Phi)=\mathcal{F}(\Phi)$. For Whittaker model $\mathcal{W}(\pi^{\iota},\psi^{-1})$ of the dual representation $\pi^{\iota}$, we obtain
\[
\begin{split}
   \widetilde{\varrho(h)W}(g)&=(\varrho(h)W)(w\;{^tg^{-1}})=W(w^tg^{-1}h)=W(w^t(g^th^{-1})^{-1})\\
   &=\widetilde{W}(g^th^{-1})=\varrho(^th^{-1})\widetilde{W}(g),
\end{split}
\]
for $g,h \in GL_{2n}$ and $W \in \mathcal{W}(\pi,\psi)$. We define another bilinear form. Now for $h \in S_{2n}$, we can find
\[
 \begin{split}
 C_{s, \pi, \psi}(\varrho(h)W, R(h)\Phi) &= B_{1-s, \pi^{\iota}, \psi^{-1}}(\varrho(\tau_{2n})\widetilde{\varrho(h)W},\mathcal{F}(R(h)\Phi))\\
 &=B_{1-s, \pi^{\iota}, \psi^{-1}}(\varrho(\tau_{2n})\varrho({^t}{h}^{-1})\widetilde{W},|h|^{-\frac{1}{2}}R({^t}{h}^{-\tau_{2n}})\hat{\Phi})\\
 &=|h|^{-\frac{1}{2}}B_{1-s, \pi^{\iota}, \psi^{-1}}(\varrho({^t}{h}^{-\tau_{2n}})\varrho(\tau_{2n})\widetilde{W},R({^t}{h}^{-\tau_{2n}})\hat{\Phi})\\
 &=|h|^{-\frac{1}{2}+\frac{1-s}{2}}\Theta(h)C_{s, \pi, \psi}(W, \phi)=|h|^{-\frac{s}{2}}\Theta(h)C_{s, \pi, \psi}(W, \phi),
 \end{split}
\]
which is also an element of $\text{Hom}_{S_{2n}}(\mathcal{W}(\pi,\psi)\otimes \mathcal{S}(F^n),|\cdot|^{-\frac{s}{2}}\Theta)$. 

\end{proof}

From the quasi-invariance of two bilinear forms $B_{s, \pi, \psi}$ and $C_{s, \pi, \psi}$, we have the following functional equation associated with the Weyl element $\tau_{2n}$. 

\begin{theorem}
\label{local functional eq}
Let $\pi=\mathrm{Ind}(\Delta_1 \otimes \dotsm \otimes \Delta_t)$ be  a representation of $GL_{2n}$ of Whittaker type. There exists an invertible element $\varepsilon(s,\pi,\wedge^2,\psi)$ of $\mathbb{C}[q^{\pm s}]$, such that for every $W$ in $\mathcal{W}(\pi,\psi)$, and every $\Phi$ in $\mathcal{S}(F^n)$, we have the following functional equation
\[
 \varepsilon(s,\pi,\wedge^2,\psi)\frac{J(s,W,\Phi)}{L(s,\pi,\wedge^2)}=\frac{J(1-s,\varrho(\tau_{2n})\widetilde{W},\hat{\Phi})}{L(1-s,\pi^{\iota},\wedge^2)},
 \]
 where $\varrho$ is right translation.
\end{theorem}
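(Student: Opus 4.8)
The plan is to combine the two invariance lemmas just established with the approximate uniqueness statement (the at-most-one-dimensionality of $\mathrm{Hom}_{S_{2n}}(\mathcal{W}(\pi,\psi)\otimes\mathcal{S}(F^n),|\cdot|^{-\frac{s}{2}}\Theta)$ for all but finitely many values of $q^{-s}$). First I would observe that the bilinear form $B_{s,\pi,\psi}$ is nonzero for all but finitely many $q^{-s}$, since $\mathcal{J}(\pi)$ contains the constant $1$ and $L(s,\pi,\wedge^2)^{-1}$ is a polynomial, so $B_{s,\pi,\psi}$ can vanish at only finitely many $s$. The form $C_{s,\pi,\psi}(W,\Phi)=B_{1-s,\pi^\iota,\psi^{-1}}(\varrho(\tau_{2n})\widetilde W,\hat\Phi)$ lies in the same Hom-space by the preceding lemma, and it too is nonzero for all but finitely many $q^{-s}$ because $\mathcal{J}(\pi^\iota)$ contains $1$ (so $L(1-s,\pi^\iota,\wedge^2)^{-1}$ is a polynomial in $q^{-(1-s)}$, vanishing at only finitely many points), and the operations $W\mapsto\varrho(\tau_{2n})\widetilde W$ and $\Phi\mapsto\hat\Phi$ are bijections of $\mathcal{W}(\pi,\psi)$ and $\mathcal{S}(F^n)$ respectively. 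Hence on the complement of a finite set of values of $q^{-s}$, both forms are nonzero elements of a space of dimension $\leq 1$, so they are proportional: there is a scalar $\gamma(s,\pi,\wedge^2,\psi)\in\mathbb{C}^\times$, a priori depending on $s$, with
\[
C_{s,\pi,\psi}(W,\Phi)=\gamma(s,\pi,\wedge^2,\psi)\,B_{s,\pi,\psi}(W,\Phi)
\]
for all $W$ and $\Phi$. Unwinding the definitions, this is exactly
\[
\frac{J(1-s,\varrho(\tau_{2n})\widetilde W,\hat\Phi)}{L(1-s,\pi^\iota,\wedge^2)}=\gamma(s,\pi,\wedge^2,\psi)\,\frac{J(s,W,\Phi)}{L(s,\pi,\wedge^2)}.
\]

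Next I would show $\gamma$ is a rational function of $q^{-s}$, in fact a unit in $\mathbb{C}[q^{\pm s}]$, which gives the stated $\varepsilon(s,\pi,\wedge^2,\psi):=\gamma(s,\pi,\wedge^2,\psi)^{-1}$ (or rather we set $\varepsilon=\gamma$, reading the identity as in the theorem statement). Fix one pair $(W_0,\Phi_0)$ with $B_{s,\pi,\psi}(W_0,\Phi_0)$ not identically zero — concretely, use the $W$ from Lemma~\ref{nonvanishing-even} together with $\Phi_0$ a suitable characteristic function, for which $J(s,W_0,\Phi_0)$ is, up to a nonzero constant, equal to $L_{(0)}$-type data; in any case $J(s,W_0,\Phi_0)/L(s,\pi,\wedge^2)$ is a nonzero element of $\mathbb{C}(q^{-s})$. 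Then
\[
\gamma(s,\pi,\wedge^2,\psi)=\frac{J(1-s,\varrho(\tau_{2n})\widetilde{W_0},\hat\Phi_0)\,L(s,\pi,\wedge^2)}{J(s,W_0,\Phi_0)\,L(1-s,\pi^\iota,\wedge^2)}
\]
exhibits $\gamma$ as a ratio of elements of $\mathbb{C}(q^{-s})$, hence rational. Substituting this one identity back into the proportionality shows it holds as an identity in $\mathbb{C}(q^{-s})$ for every $(W,\Phi)$ (both sides are rational functions agreeing off a finite set), removing the ``almost all $s$'' restriction. To see $\gamma$ is a unit: by definition of $L(s,\pi,\wedge^2)$ as the normalized generator of $\mathcal{J}(\pi)$, the quotients $J(s,W,\Phi)/L(s,\pi,\wedge^2)$ span a $\mathbb{C}[q^{\pm s}]$-submodule of $\mathbb{C}[q^{\pm s}]$ that contains $1$, i.e.\ they span $\mathbb{C}[q^{\pm s}]$ itself; likewise for $\pi^\iota$ with the variable $q^{-(1-s)}$. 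Running the functional equation and its analogue for $\pi^\iota$ (applying the same argument with the roles of $\pi$ and $\pi^\iota$ interchanged, and using $\widehat{\hat\Phi}(x)=\Phi(-x)$, $\widetilde{\widetilde W}=W$ up to the harmless sign coming from $w^2$) gives $\gamma(s,\pi,\wedge^2,\psi)\,\gamma(1-s,\pi^\iota,\wedge^2,\psi^{-1})$ equal to a nonzero constant, forcing $\gamma\in\mathbb{C}[q^{\pm s}]^\times$.

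The main obstacle is the uniqueness/proportionality step, and in particular that the one-dimensionality only holds for all but finitely many $q^{-s}$: one must be careful that the proportionality constant obtained pointwise on that cofinite set actually assembles into a single rational function and that the resulting identity then propagates to all $s$ and all test data by the density of $\mathbb{C}(q^{-s})$ in itself (i.e.\ two rational functions agreeing at infinitely many points coincide). A secondary technical point is verifying that $C_{s,\pi,\psi}$ is genuinely nonzero for almost all $s$ — this rests on the fact that $\mathcal{J}(\pi^\iota)$ contains $1$, which is Theorem~\ref{nonvanishing-even}'s consequence applied to $\pi^\iota$, itself of Whittaker type by the remark preceding this subsection. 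Everything else is bookkeeping with the definitions of $\widetilde W$, $\hat\Phi$, and the action $R$.
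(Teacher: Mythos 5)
Your proposal is correct and follows essentially the same route as the paper: the quasi-invariance of $B_{s,\pi,\psi}$ and $C_{s,\pi,\psi}$ together with the (almost-everywhere) one-dimensionality of $\mathrm{Hom}_{S_{2n}}(\mathcal{W}(\pi,\psi)\otimes\mathcal{S}(F^n),|\cdot|^{-\frac{s}{2}}\Theta)$ gives proportionality, and the factor is pinned down as a unit of $\mathbb{C}[q^{\pm s}]$ using that both normalized families of integrals span $\mathbb{C}[q^{\pm s}]$ and the double application of the functional equation. The paper states exactly this deduction (leaving the routine details to the cited works of Matringe and Cogdell), and your filled-in details — nonvanishing of both forms off a finite set, rationality of the ratio via a fixed test pair, and propagation to all $s$ — are the standard ones.
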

We then define the local exterior square $\gamma$-factor.
 \begin{definition*}
 Let $\pi=\mathrm{Ind}(\Delta_1 \otimes \dotsm \otimes \Delta_t)$ be  a representation of $GL_{2n}$ of Whittaker type. We set
 \[
   \gamma(s,\pi,\wedge^2,\psi)=\frac{\varepsilon(s,\pi,\wedge^2,\psi)L(1-s,\pi^{\iota},\wedge^2)}{L(s,\pi,\wedge^2)}.
 \]
\end{definition*}

\par

Let $\pi=\text{Ind}(\Delta_1 \otimes \dotsm \otimes \Delta_t)$ be an irreducible generic representation of $GL_{2n}$. Upon the analysis of proof of \cite[Theorem 4.1]{Ma14}, we will determine the exceptional points $s$ in $\mathbb{C}$ on which the uniqueness of functionals in the space $\text{Hom}_{S_{2n}}(\pi \otimes \mathcal{S}(F^n),|\cdot|^{-\frac{s}{2}}\Theta)$ potentially fails. These points play a role in deformation argument, which we will present in detail in Section $7$. 

 \begin{proposition}
 \label{even-multi one}
Let $\pi=\mathrm{Ind}(\Delta_1 \otimes \dotsm \otimes \Delta_t)$ be an irreducible generic representation of $GL_{2n}$. For every $1 \leq k \leq 2n$, suppose that all the nonzero successive quotients $\pi^{(2n-k)}_i=\mathrm{Ind}(\Delta_1^{(a_1)}\otimes\dotsm \otimes \Delta_t^{(a_t)})$ of the derivatives $\pi^{(2n-k)}$ are generic and irreducible. The space $ \mathrm{Hom}_{S_{2n}}(\pi \otimes \mathcal{S}(F^n),\; |\cdot|^{-\frac{s}{2}}\Theta)$ is of dimension at most $1$, except possibly when there exist $1 \leq k \leq n$ and one of the central characters of the nonzero Jordan--H\"older constituents $\pi_i^{(2n-2k)}$ such that
\[
  \omega_{\pi^{(2n-2k)}_i}(\varpi)=q^{ks}.
  \]
 \end{proposition}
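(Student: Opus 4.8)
The plan is to rerun the argument behind \cite[Theorem 4.1]{Ma14}, but instead of discarding a finite set of bad values of $q^{-s}$, to identify them explicitly. The first step is the $R$-stable two-step filtration $\{0\}\subset\mathcal{S}_0(F^n)\subset\mathcal{S}(F^n)$, whose quotient is $\mathbb{C}$ via $\Phi\mapsto\Phi(0)$ (with trivial $R$-action), and where $\mathcal{S}_0(F^n)=\mathcal{S}(F^n\setminus\{0\})$ since a locally constant function vanishing at $0$ vanishes near $0$. Applying the left-exact functor $\mathrm{Hom}_{S_{2n}}(\pi\otimes-,|\cdot|^{-s/2}\Theta)$ gives
\begin{multline*}
\dim\mathrm{Hom}_{S_{2n}}(\pi\otimes\mathcal{S}(F^n),|\cdot|^{-s/2}\Theta)\\
\le\ \dim\mathrm{Hom}_{S_{2n}}(\pi,|\cdot|^{-s/2}\Theta)+\dim\mathrm{Hom}_{S_{2n}}(\pi\otimes\mathcal{S}_0(F^n),|\cdot|^{-s/2}\Theta).
\end{multline*}
The first summand is the space of twisted Shalika functionals on $\pi$; evaluating the defining equivariance at the central element $\varpi I_{2n}\in S_{2n}$ (on which $\Theta$ is trivial) forces $\omega_\pi(\varpi)=q^{ns}$, and for such $s$ the uniqueness of the Shalika model of the irreducible representation $\pi\nu^{s/2}$ bounds it by $1$; this is the case $k=n$ of the statement, in which $\pi_i^{(2n-2k)}=\pi^{(0)}=\pi$.

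It remains to bound the second summand. Writing $\mathcal{S}_0(F^n)=C_c^\infty(P_n\backslash GL_n)$ as an induced module for the diagonal copy of $GL_n$ in $S_{2n}$ and combining the projection formula with Frobenius reciprocity — this is exactly the passage from $J(s,W,\Phi)$ with $\Phi\in\mathcal{S}_0(F^n)$ to the integrals $J_{(0)}(s,W)$ encoded in \eqref{pIwasawa} and \eqref{Pn-invariant} — one identifies, as in \cite{Cog,Ma14}, the second summand with $\mathrm{Hom}_{P_{2n}\cap H_{2n}}(\pi|_{P_{2n}},\xi_s)$ for an explicit family of characters $\xi_s$ of $P_{2n}\cap H_{2n}$ extending $\psi(\mathrm{Tr}\,\cdot)$ and $p\mapsto|\mathrm{det}(p)|^{-(s-1)}$. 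The Bernstein--Zelevinsky filtration of $\pi|_{P_{2n}}$, with successive quotients $(\Phi^+)^{k-1}\Psi^+(\pi^{(k)})$, $1\le k\le 2n$, then bounds this by $\sum_{k,i}\dim\mathrm{Hom}_{P_{2n}\cap H_{2n}}((\Phi^+)^{k-1}\Psi^+(\pi_i^{(k)}),\xi_s)$, where the $\pi_i^{(k)}$ are the irreducible generic constituents of $\pi^{(k)}$ provided by the hypothesis. For each $k$ I would apply Proposition \ref{func-homo} exactly $k-1$ times, alternating its even ($\xi\mapsto\xi\delta_m^{1/2}$) and odd ($\xi\mapsto\xi\delta_m^{-1/2}$) forms and using $\chi_m|_{H_{m-1}}=\mu_{m-1}$, $\mu_m|_{H_{m-1}}=\chi_{m-1}$ to keep the character in closed form, descending to $\mathrm{Hom}_{P_{2n-k+1}\cap H_{2n-k+1}}(\Psi^+(\pi_i^{(k)}),\xi_s\eta_k)$. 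As $\Psi^+(\pi_i^{(k)})=\nu^{1/2}\pi_i^{(k)}\otimes\mathbf{1}$ on $P_{2n-k+1}=GL_{2n-k}U_{2n-k+1}$ and $H_{2n-k+1}\cap GL_{2n-k}=H_{2n-k}$, this Hom space vanishes as soon as $\xi_s\eta_k$ is nontrivial on the part of $U_{2n-k+1}$ lying in $H_{2n-k+1}$, which, a parity count should show, is exactly the situation when $k$ is odd; the odd derivatives then contribute nothing, which is the structural reason that only the even derivatives appear. For $k$ even the Hom space reduces to the linear-period space $\mathrm{Hom}_{H_{2n-k}}(\nu^{1/2}\pi_i^{(k)},\eta_{k,s})$ with $H_{2n-k}$ conjugate to the Levi $GL_{n-k/2}\times GL_{n-k/2}$.

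It then remains to evaluate these period spaces. For $k=2n$ one has $\pi^{(2n)}=\mathbf{1}$ on $GL_0$ and the period space is $\mathbb{C}$, of dimension exactly $1$ and carrying no constraint; this supplies the permitted ``$\le 1$''. For $k=2(n-j)$ with $1\le j\le n-1$, writing $\pi_i^{(k)}=\pi_i^{(2n-2j)}$, which is an irreducible generic representation of $GL_{2j}$, multiplicity one for linear periods of irreducible generic representations bounds the space by $1$, and its non-vanishing forces $\eta_{k,s}$ to agree with the central character of $\nu^{1/2}\pi_i^{(2n-2j)}$ on the center of $GL_{2j}$; unwinding the twist bookkeeping above, this identity reads exactly $\omega_{\pi_i^{(2n-2j)}}(\varpi)=q^{js}$. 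Adding the bound from the first summand ($k=n$) to those from the $k=2n$ piece ($\le 1$), the even-$k$ pieces (each $0$ unless $\omega_{\pi_i^{(2n-2j)}}(\varpi)=q^{js}$ for the corresponding $j$), and the odd-$k$ pieces ($0$) gives $\dim\mathrm{Hom}_{S_{2n}}(\pi\otimes\mathcal{S}(F^n),|\cdot|^{-s/2}\Theta)\le 1$, unless one of the equalities $\omega_{\pi_i^{(2n-2k)}}(\varpi)=q^{ks}$ with $1\le k\le n$ holds, which is the assertion.

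The part I expect to fight with is the character bookkeeping in the descent: chasing the alternating product of half-powers $\delta_m^{\pm1/2}$ through the $(k-1)$-fold iteration of Proposition \ref{func-homo} and through the normalization built into $\Psi^+$, and simultaneously keeping track that the induced structure of $\mathcal{S}_0(F^n)$ contributes the shift $s-1$ and not $s$, so that the surviving constraint is exactly $\omega_{\pi_i^{(2n-2k)}}(\varpi)=q^{ks}$ and not a shifted variant. Intertwined with this is the parity count that kills the odd-derivative pieces, which is precisely what confines the exceptional set to the even derivatives and which should be checked carefully rather than asserted.
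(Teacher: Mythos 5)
Your overall skeleton matches the paper's: split off $\mathcal{S}_0(F^n)\subset\mathcal{S}(F^n)$, reduce the $\mathcal{S}_0$-part by Frobenius reciprocity to a mirabolic Shalika-period space, run the Bernstein--Zelevinsky filtration of $\pi|_{P_{2n}}$, descend with Proposition \ref{func-homo}, and read off central-character conditions, with the $2n$-th derivative supplying the single unconditional dimension. But there are two genuine gaps. First, the passage from $\mathrm{Hom}_{P_{2n}\cap S_{2n}}(\pi,|\cdot|^{-\frac{s-1}{2}}\Theta)$ to a Hom space over $P_{2n}\cap H_{2n}$ is not an ``identification'' obtained from the projection formula and Frobenius reciprocity: $H_{2n}$ is a conjugate of the Levi $GL_n\times GL_n$ and does not contain the Shalika unipotent group, so no character $\xi_s$ of $P_{2n}\cap H_{2n}$ ``extends $\psi(\mathrm{Tr}\,\cdot)$''. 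What is actually used is Matringe's nontrivial result (Proposition \ref{even-embedding} in the paper): for every real $s_0>r_\pi$ there is merely a linear \emph{injection} of the Shalika-period space into $\mathrm{Hom}_{P_{2n}\cap H_{2n}}(\pi,|\cdot|^{-\frac{s-1}{2}}\delta_{2n}^{-s_0})$, trading the Shalika period for a linear period and introducing the auxiliary exponent $s_0$ into all subsequent bookkeeping. Your proposal has no substitute for this step, and your later descent via Proposition \ref{func-homo} only makes sense on the $H$-side.

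Second, and more seriously, the odd-derivative pieces are not killed by the parity/unipotent argument you propose (and flag as the fragile point). On the $H$-side every character occurring in the descent ($|\cdot|$-powers, $\delta_m^{\pm 1/2}$, $\delta_{2n}^{-s_0}$) is built from absolute values of determinants and is therefore trivial on unipotent elements, so $\Psi^+$ imposes no unipotent obstruction for either parity; indeed \eqref{even-func-points} passes through $\Psi^+$ by an isomorphism. For odd $k$ the surviving space carries the central-character condition $\prod_i\omega_{\Delta_i^{(a_i)}}(\varpi)=|\varpi|^{-s_0-\frac{ks}{2}}$, which genuinely depends on $s_0$ (because $\delta_k$ is nontrivial on the center of $GL_k$ when $k$ is odd), whereas for even $k$ the $s_0$ drops out and one gets exactly $\omega_{\pi_i^{(2n-2k)}}(\varpi)=q^{ks}$, as you computed. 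The paper disposes of the odd terms not by a vanishing statement but by exploiting the freedom in $s_0$: choosing two values $s_0=r_\pi+\epsilon_1$ and $r_\pi+\epsilon_2$ whose (finite) bad sets of $s$ have empty intersection, so no single $s$ can satisfy the odd-$k$ condition for an embedding valid at both choices. Without this argument your proof either fails to exclude the odd contributions or produces spurious $s_0$-shifted exceptional values not present in the statement. The even-$k$ bookkeeping, the $k=n$ identification of the first summand, and the treatment of the $2n$-th derivative in your proposal are otherwise consistent with the paper's proof.
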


Before we continue with the proof of Proposition \ref{even-multi one}, let us give the following Proposition deduced from Proposition 4.3 and Theorem 4.1 in \cite{Ma14}.

\begin{proposition}[Matringe]
\label{even-embedding}
Let $\pi$ be an irreducible generic representation of $GL_{2n}$. There exists a real number $r_\pi$ depending on $\pi$ such that for all $s_0 > r_{\pi}$ we have a linear injection $\mathrm{Hom}_{P_{2n} \cap S_{2n}}(\pi, \Theta)$ into the space $\mathrm{Hom}_{P_{2n} \cap H_{2n}}(\pi,\delta_{2n}^{-s_0})$.
\end{proposition}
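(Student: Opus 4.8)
The plan is to follow Matringe's approach in \cite{Ma14}: realize the asserted injection by an explicit integral transform that turns a Shalika functional on the mirabolic into a $\delta_{2n}^{-s_0}$-twisted linear functional on the mirabolic, and read off the bound $r_\pi$ from a convergence estimate of the same type already used in Theorem \ref{integral-rational}. Work inside the Whittaker model: for $\Lambda \in \mathrm{Hom}_{P_{2n}\cap S_{2n}}(\pi,\Theta)$ and $W\in\mathcal{W}(\pi,\psi)$, put $f_W(h)=\Lambda(\varrho(h)W)$, so that $f_W(xh)=\Theta(x)f_W(h)$ for $x\in P_{2n}\cap S_{2n}$, where $\varrho$ is right translation. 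Let $R=P_{2n}\cap S_{2n}\cap H_{2n}$. From the explicit shapes of $S_{2n}$, $H_{2n}$ and of the mirabolic one checks that $R$ is unimodular and that the restrictions of $\Theta$, of $\delta_{2n}^{s_0}$ and of the ambient modular characters to $R$ are mutually compatible — this is the local shadow of the relations $\chi_m|_{H_{m-1}}=\mu_{m-1}$, $\mu_m|_{H_{m-1}}=\chi_{m-1}$ recorded in \eqref{chi-relation} — so that $R\backslash(P_{2n}\cap H_{2n})$ carries a right-invariant measure $d\eta$. One then sets
\[
\Lambda_{s_0}(W)=\int_{R\backslash(P_{2n}\cap H_{2n})} f_W(\eta)\,\delta_{2n}^{s_0}(\eta)\,d\eta .
\]

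The equivariance is formal. The integrand descends to $R\backslash(P_{2n}\cap H_{2n})$ by $\Theta$-equivariance of $f_W$ under $R\subseteq P_{2n}\cap S_{2n}$; substituting $\eta\mapsto\eta h^{-1}$ for $h\in P_{2n}\cap H_{2n}$, and using right-invariance of $d\eta$ together with the cocycle identity for $\delta_{2n}$, gives $\Lambda_{s_0}(\varrho(h)W)=\delta_{2n}^{-s_0}(h)\Lambda_{s_0}(W)$, so that $\Lambda_{s_0}\in\mathrm{Hom}_{P_{2n}\cap H_{2n}}(\pi,\delta_{2n}^{-s_0})$ as soon as the integral converges. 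For convergence, apply an Iwasawa decomposition to the $GL_n$-part inside $P_{2n}\cap H_{2n}$ and reduce the modulus of the integrand, on the split torus, to $|\Lambda(\varrho(a)W)|$ along a one-parameter family; the decay of this quantity is governed, exactly as the decay of a Whittaker function is in Proposition \ref{asymWh}, by the real parts of the central characters $\omega_{\pi_{i_k}^{(k)}}$ of the Bernstein--Zelevinsky derivatives of $\pi$. The asymptotic expansion of $\Lambda(\varrho(a)W)$ along the torus in terms of this derivative data, the Shalika-functional analogue of Proposition \ref{asymWh}, is proved by Jacquet's standard argument (or imported from \cite{Ma14}); weighing that decay against the growth of $\delta_{2n}^{s_0}$ yields a real number $r_\pi$, of the same shape $\max\{-\frac1k\mathrm{Re}(\omega_{\pi^{(2n-2k)}_{i}})\}$ as in Theorem \ref{integral-rational}(i), beyond which the integral converges absolutely.

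For injectivity, suppose $\Lambda\neq0$ but $\Lambda_{s_0}\equiv0$. Since $\pi|_{P_{2n}}$ contains $\mathrm{ind}_{N_{2n}}^{P_{2n}}(\psi)$ by Lemma \ref{kirillov}, the model of $\pi$ carries ``bump'' vectors, so one may pick $W$ for which $\eta\mapsto f_W(\eta)$ is supported in an arbitrarily small neighbourhood of the identity coset in $R\backslash(P_{2n}\cap H_{2n})$; for such $W$ the integral collapses to $c\,\Lambda(\varrho(\eta_0)W)$ for a fixed $\eta_0$ and a nonzero constant $c$ depending only on the neighbourhood, forcing $\Lambda$ to vanish on a family of vectors on which it cannot vanish, a contradiction. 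Hence $\Lambda\mapsto\Lambda_{s_0}$ is injective. The main obstacle is precisely the convergence estimate that produces $r_\pi$: it rests on the Shalika analogue of the Whittaker asymptotics of Proposition \ref{asymWh}, whereas the $\delta_{2n}$-bookkeeping in the equivariance step and the support-shrinking in the injectivity step are routine. If one wishes to avoid reproving that asymptotic, the cleanest route is simply to cite \cite[Proposition 4.3 and Theorem 4.1]{Ma14}, which is exactly the statement at hand.
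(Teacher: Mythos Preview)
The paper does not give its own proof of this proposition: it simply records the statement and cites \cite[Proposition 4.3 and Theorem 4.1]{Ma14} as its source. Your proposal is therefore strictly more detailed than what the paper offers, and the approach you sketch --- construct the injection as an explicit integral transform $\Lambda\mapsto\Lambda_{s_0}$ over $R\backslash(P_{2n}\cap H_{2n})$, check equivariance formally, derive the convergence threshold $r_\pi$ from a torus asymptotic for $\Lambda(\varrho(a)W)$, and deduce injectivity from the existence of bump vectors via Lemma~\ref{kirillov} --- is exactly Matringe's argument in \cite{Ma14}. Your final sentence, that one may simply cite \cite[Proposition 4.3 and Theorem 4.1]{Ma14}, is precisely what the paper does.

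One caution on the sketch itself: the step you flag as ``the main obstacle'' really is one. The asymptotic expansion of $\Lambda(\varrho(a)W)$ along the torus, controlled by the central characters of the Bernstein--Zelevinsky derivatives, is the substantive input and is not a formal consequence of Proposition~\ref{asymWh} (which concerns Whittaker functions, not Shalika functionals). Matringe proves this Shalika asymptotic separately in \cite{Ma14}, and your write-up correctly isolates it as the point requiring either real work or a citation. With that understood, your proposal is a faithful outline of the cited proof.
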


\begin{proof}[Proof of Proposition \ref{even-multi one}]
We imitate the proof of \cite[Lemma 4.5]{Ma15}. Proposition \ref{deriviative} ensures that the subquotients of $\pi^{(2n-k)}$ for $0 \leq k \leq 2n-1$ are the representations $\text{Ind}(\Delta_1^{(a_1)}\otimes\dotsm \otimes \Delta_t^{(a_t)})$ with $\sum_{i=1}^t a_i=2n-k$. We suppose that all the subquotients $\text{Ind}(\Delta_1^{(a_1)}\otimes\dotsm \otimes \Delta_t^{(a_t)})$ of the derivatives $\pi^{(2n-k)}$ for $1 \leq k \leq 2n-1$ are generic and irreducible. The reason of making this additional assumption is so that the central characters of all the nonzero successive quotient $\text{Ind}(\Delta_1^{(a_1)}\otimes\dotsm \otimes \Delta_t^{(a_t)})$ of $\pi^{(2n-k)}$ can be explicitly written in terms of central characters of the inducing data $ \Delta_i^{(a_i)}$. The space $\text{Hom}_{S_{2n}}(\pi,|\cdot|^{-\frac{s}{2}}\Theta)$ is zero except when $\pi$ and $s$ are related by $\omega_{\pi}(\varpi)=q^{ns}$. Except for those values of $q^{-s}$, the space $\text{Hom}_{S_{2n}}(\pi\otimes \mathcal{S}(F^n),|\cdot|^{-\frac{s}{2}}\Theta)$ is equal to $\text{Hom}_{S_{2n}}(\pi\otimes \mathcal{S}_0(F^n),|\cdot|^{-\frac{s}{2}}\Theta)$ because of the following short exact sequence
\[
\begin{split}
  0 \rightarrow \text{Hom}_{S_{2n}}(\pi,|\cdot|^{-\frac{s}{2}}\Theta)& \rightarrow \text{Hom}_{S_{2n}}(\pi\otimes \mathcal{S}(F^n),|\cdot|^{-\frac{s}{2}}\Theta) \\
  &\rightarrow \text{Hom}_{S_{2n}}(\pi\otimes \mathcal{S}_0(F^n),|\cdot|^{-\frac{s}{2}}\Theta) \rightarrow 0.
\end{split}
\]
Following the series of isomorphism in \cite[Theorem 4.1]{Ma14}, we obtain
\[
\begin{split}
\mathrm{Hom}_{S_{2n}}(\pi \otimes \mathcal{S}_{0}(F^{n}), |\cdot|^{-\frac{s}{2}}\Theta) &\simeq \mathrm{Hom}_{S_{2n}}(\pi \otimes \mathrm{ind}_{S_{2n} \cap P_{2n}}^{S_{2n}}({\bf 1}), |\cdot|^{-\frac{s}{2}}\Theta) \\
&\simeq  \mathrm{Hom}_{S_{2n}} (\pi,\mathrm{Ind}^{S_{2n}}_{S_{2n}\cap P_{2n}}(|\cdot|^{-\frac{s-1}{2}}\Theta)) \\
&\simeq \mathrm{Hom}_{P_{2n}\cap S_{2n}}(\pi, |\cdot|^{-\frac{s-1}{2}}\Theta)
\end{split}
\]
the first isomorphism identifying $S_{2n} \cap P_{2n}\backslash S_{2n}$ with $F^n -\{0\}$, the second isomorphism using adjointness, and the last isomorphism by Frobenius reciprocity law. We are correcting the power of  $-\frac{s}{2}+1$ in the third and forth space in \cite[Theorem 4.1]{Ma14}.

\par
Suppose that $s_0$ is a real number such that $s_0 > r_{\pi}$. From the previous Proposition \ref{even-embedding}, the space $\text{Hom}_{P_{2n} \cap S_{2n}}(\pi,\; |\cdot|^{-\frac{s-1}{2}}\Theta)$ embeds as a subspace of the vector space $\text{Hom}_{P_{2n} \cap H_{2n}}(\pi, |\cdot|^{-\frac{s-1}{2}}\delta_{2n}^{-s_0})$. Now the restriction of $\pi$ to $P_{2n}$ has a filtration by derivative with each successive quotient of the form $(\Phi^+)^{2n-k-1}\Psi^+(\pi^{(2n-k)})$ for $0 \leq k \leq 2n-1$. We know from Theorem \ref{deriviative} that each derivatives $\pi^{(2n-k)}$ has a composition series whose subquotients are the representations $\text{Ind}(\Delta_1^{(a_1)}\otimes\dotsm \otimes \Delta_t^{(a_t)})$ with $\sum_{i=1}^t a_i=2n-k$. Since the functors $\Phi^+$ and $\Psi^+$ are exact, we can obtain the filtration of $\pi|_{P_{2n}}$ with the factors
$(\Phi^+)^{2n-k-1}\Psi^+(\text{Ind}(\Delta_1^{(a_1)}\otimes\dotsm \otimes \Delta_t^{(a_t)}))$ for $0 \leq k \leq 2n-1$ and $\sum_{i=1}^t a_i=2n-k$. Then we have
\[
 \begin{split}
 & \text{dim}(\text{Hom}_{P_{2n} \cap H_{2n}}(\pi, |\cdot|^{-\frac{s-1}{2}}\delta_{2n}^{-s_0}))\\
 &\phantom{****} \leq \sum \text{dim}(\text{Hom}_{P_{2n} \cap H_{2n}}((\Phi^+)^{2n-k-1}\Psi^+(\text{Ind}(\Delta_1^{(a_1)}\otimes\dotsm \otimes \Delta_t^{(a_t)}),\;|\cdot|^{-\frac{s-1}{2}} \delta_{2n}^{-s_0} )).
  \end{split}
 \]
 By applying the Proposition \ref{func-homo} repeatedly, we can deduce that
 \[
 \tag{3.6}
 \label{even-func-points}
\begin{split}
 & \text{Hom}_{P_{2n} \cap H_{2n}}((\Phi^+)^{2n-k-1}\Psi^+(\text{Ind}(\Delta_1^{(a_1)}\otimes\dotsm \otimes \Delta_t^{(a_t)}),\;|\cdot|^{-\frac{s-1}{2}} \delta_{2n}^{-s_0}  ) \\
 & \simeq \text{Hom}_{P_{k+1} \cap H_{k+1}}(\Psi^+(\text{Ind}(\Delta_1^{(a_1)}\otimes\dotsm \otimes \Delta_t^{(a_t)}),\;|\cdot|^{-\frac{s-1}{2}} \chi_{k+1}^{\frac{1}{2}} \delta_{k+1}^{-s_0} ) \\
 & \simeq \text{Hom}_{GL_{k} \cap H_{k+1}}(\text{Ind}(\Delta_1^{(a_1)}\otimes\dotsm \otimes \Delta_t^{(a_t)}),\;|\cdot|^{-\frac{s}{2}} \mu_{k}^{\frac{1}{2}} \delta_{k}^{-s_0} ). \\
 \end{split}
\]
The character $\mu_k$ defined in \eqref{chi-relation} guarantees that if the central characters $\prod_i \omega_{\Delta_i^{(a_i)}}(\varpi) $ of the nonzero vector space $\text{Ind}(\Delta_1^{(a_1)}\otimes\dotsm \otimes \Delta_t^{(a_t)})$ for all $\sum_i a_i=2n-k$ and $1 \leq k \leq 2n-1$ satisfy the relations 
$\prod_i \omega_{\Delta_i^{(a_i)}}(\varpi) \neq |\varpi|^{-s_0-\frac{ks}{2}}$ when $k$ is odd and $\prod_i \omega_{\Delta_i^{(a_i)}}(\varpi) \neq |\varpi|^{-\frac{ks}{2}}$ when $k$ is even, the last space in \eqref{even-func-points} is zero. For $k=0$, the dimension of the last space in \eqref{even-func-points} is less than or equal to $1$, because the representation is the trivial representation of the trivial group. As we have a linear injection of the space $\mathrm{Hom}_{P_{2n} \cap S_{2n}}(\pi,|\cdot|^{-\frac{s-1}{2}}\Theta)$ into the space $\text{Hom}_{P_{2n} \cap H_{2n}}(\pi, |\cdot|^{-\frac{s-1}{2}}\delta_{2n}^{-s_0})$ for any real number $s_0$ greater than $r_{\pi}$, the $1$-dimensionality of the space $\mathrm{Hom}_{P_{2n} \cap S_{2n}}(\pi,|\cdot|^{-\frac{s-1}{2}}\Theta)$ potentially fails when a complex number $s$ satisfies one of the following conditions:

\begin{enumerate}
\item[$(\mathrm{1})$] For each $\epsilon > 0$, there exists $t$-tuples $(a_1,\dotsm,a_t)$ of nonzero space $\text{Ind}(\Delta_1^{(a_1)}\otimes\dotsm \otimes \Delta_t^{(a_t)})$ such that 
$
\prod_{i=1}^t \omega_{\Delta_i^{(a_i)}}(\varpi) = |\varpi|^{-r_{\pi}-\epsilon-\frac{ks}{2}}
$
with $1\leq  \sum_{i=1}^t a_i \leq 2n-1$ and $\sum_{i=1}^t a_i$ is an odd number.
\item[$(\mathrm{2})$] There exists $(a_1,\dotsm,a_t)$ of nonzero space $\text{Ind}(\Delta_1^{(a_1)}\otimes\dotsm \otimes \Delta_t^{(a_t)})$ which satisfies $\prod_{i=1}^t \omega_{\Delta_i^{(a_i)}}(\varpi)=|\varpi|^{-\frac{ks}{2}}$ with $1\leq  \sum_{i=1}^t a_i \leq 2n-1$ and $\sum_{i=1}^t a_i$ is an even number.
\end{enumerate}

We show that the first case does not occur. Let $\mathcal{K}$ be the set of all odd integers $k$ such that $1 \leq k \leq 2n-1$ and $\mathcal{T}$ the set of  $t$-tuples $(a_1,\dotsm,a_t)$ of all nonzero constituents $ \text{Ind}(\Delta_1^{(a_1)}\otimes\dotsm \otimes \Delta_t^{(a_t)})$ of $\pi^{(2n-k)}$ such that $\sum_i a_i=2n-k$ and $k \in \mathcal{K}$. Since two sets $\{-\frac{2}{k}\; |\; k \in \mathcal{K} \}$ and
$\{r_{\pi}+\sum_{i=1}^t\mathrm{Re}(\omega_{\Delta_i^{(a_i)}}) \;| \;(a_1,\dotsm,a_t) \in \mathcal{T} \}$ are finite, we can we can choose $\epsilon_1,\epsilon_2 > 0$ such that the intersection
\[
  \bigcap_{j=1,2} \left\{ -\frac{2}{k}\left(\epsilon_j+r_{\pi}+\sum_{i=1}^t\mathrm{Re}(\omega_{\Delta_i^{(a_i)}})\right)\; \middle|\; k \in \mathcal{K},\; (a_1,\dotsm,a_t) \in \mathcal{T}  \right\} 
\]
is empty. Therefore the condition $(1)$ is not valid for any $s \in \mathbb{C}$. The condition $(2)$ ensures that outside the finite number of points $s$ satisfying $\prod_{i=1}^t \omega_{\Delta_i^{(a_i)}}(\varpi)=|\varpi|^{-\frac{ks}{2}}$ with $\sum_{i=1}^t a_i=2n-k$ when $k$ is even for $1 \leq k \leq 2n-1$, the dimension of the space $\mathrm{Hom}_{P_{2n} \cap S_{2n}}(\pi,|\cdot|^{-\frac{s-1}{2}}\Theta)$ is at most one. 
Collecting all information, we complete our proof.


\end{proof}

The intersection of the domain $\mathrm{Re}(s) \neq -\frac{1}{k}\mathrm{Re}(\omega_{\pi_i^{(2n-2k)}})$ for $1 \leq k \leq n$ on which the $1$-dimensionality of the space $ \mathrm{Hom}_{S_{2n}}(\pi \otimes \mathcal{S}(F^n),\; |\cdot|^{-\frac{s}{2}}\Theta)$ holds contains the intersection of domains $\mathrm{Re}(s) > -\frac{1}{k}\mathrm{Re}(\omega_{\pi_i^{(2n-2k)}})$ for $1 \leq k \leq n$, on which $J(s,W,\Phi)$ is given by an absolutely convergence integral from Theorem \ref{integral-rational}.

\subsubsection{The odd case $m=2n+1$.}
Let $\pi=\mathrm{Ind}(\Delta_1 \otimes \dotsm \otimes \Delta_t)$ be a representation of Whittaker type of $GL_{2n+1}$. For $\Phi \in \mathcal{S}(F^n)$ and $W \in \mathcal{W}(\pi,\psi)$, we introduce another families of integrals appearing in the local functional equation for $GL_{2n+1}$ in \cite{Cog}.

\begin{definition*}
Let $W$ belong to $\mathcal{W(\pi,\psi)}$ and $\Phi$ belong to $\mathcal{S}(F^n)$. We define following integrals:
\[
\begin{split}
&J(s,W,\Phi)\\
&= \int_{N_n \backslash GL_n} \int_{\mathcal{N}_n \backslash \mathcal{M}_n} \int_{F^n}\; W \left(\sigma_{2n+1} \begin{pmatrix} I_n & X &\\ & I_n &\\&&1 \end{pmatrix} \begin{pmatrix} g&&\\&g&\\&&1 \end{pmatrix} \begin{pmatrix} I_n&&\\ &I_n&\\ &x&1 \end{pmatrix} \right)\\
&\phantom{******************************}\Phi(x) dx\; \psi^{-1}(\mathrm{Tr} X) dX  |\mathrm{det}(g)|^{s-1} dg.
\end{split}
\]
\end{definition*}

Let $\displaystyle \rho(\Phi)W(g)=\int_{F^n} W\left( g \begin{pmatrix} I_n &&\\ &I_n&\\ &x&1 \end{pmatrix} \right) \Phi(x)\; dx$. 
Since $\Phi$ is locally constant and of compact support, $\rho(\Phi)W$ is a finite sum of right translations of $W$ and so we can write $\rho(\Phi)W(g)$ as a finite sum $\rho(\Phi)W(g)=\sum_i W_i(g)$
for some functions $W_i \in \mathcal{W}(\pi,\psi)$ which are right translates of $W$. The integral $J(s,W,\Phi)$ converges absolutely as soon as the integrals $J(s,W_i)$ will do for all $i$, because $J(s,W,\Phi)=\sum_i J(s,W_i)$. But according to Theorem  \ref{integral-rational-odd} this will be the case if the integral $J(s,W,\Phi)$ satisfies the condition in the following Lemma.

\begin{lemma}
\label{convergence-odd}
Let $\pi=\mathrm{Ind}(\Delta_1 \otimes \dotsm \otimes \Delta_t)$ be a representation of Whittaker type of $GL_{2n+1}$. For every $1 \leq k \leq 2n$, let $(\omega_{\pi_{i_k}^{(k)}})_{i_k=1,\dotsm,r_k}$ be the family of the central characters for all nonzero successive quotients of the form $\pi_{i_k}^{(k)}=\mathrm{Ind}(\Delta_1^{(k_1)} \otimes \dotsm \otimes \Delta_t^{(k_t)})$ with $k=k_1+\dotsm+k_t$ appearing in the composition series of $\pi^{(k)}$. Let $W$ belong to $\mathcal{W}(\pi,\psi)$ and $\Phi$ belong to $\mathcal{S}(F^n)$.
If for all $1 \leq k \leq n$ and all $1 \leq i_{2k-1} \leq r_{2k-1}$, we have $\mathrm{Re}(s) >-\frac{1}{k}\mathrm{Re}(\omega_{\pi_{i_{2n+1-2k}}^{(2n+1-2k)}})$, then each local integral $J(s,W,\Phi)$ converges absolutely.
\end{lemma}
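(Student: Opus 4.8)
The plan is to deduce this from the convergence statement already established for the integrals $J(s,W)$ in Theorem \ref{integral-rational-odd}, using that integration of $W$ against $\Phi$ is nothing more than a finite sum of right translations. First I would record the elementary identity
\[
 J(s,W,\Phi)=J(s,\rho(\Phi)W),\qquad \rho(\Phi)W(g)=\int_{F^n}W\!\left(g\begin{pmatrix} I_n&&\\ &I_n&\\ &x&1\end{pmatrix}\right)\Phi(x)\,dx,
\]
which is immediate from the definition of $J(s,W,\Phi)$ once one notices that the unipotent matrix $\mathrm{diag}(I_n;\,{}^tx\text{-row};\,1)$ can be pulled all the way to the right of $\sigma_{2n+1}\,\mathrm{diag}(g,g,1)$ inside the Whittaker function. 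Since $\Phi\in\mathcal{S}(F^n)$ is locally constant and compactly supported, $\rho(\Phi)W$ is literally a finite linear combination $\sum_i\pi(\gamma_i)W=\sum_iW_i$ of right translates of $W$, each $W_i$ again in $\mathcal{W}(\pi,\psi)$; no limiting process is involved here, so there is no measure-theoretic subtlety at this step.

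Next I would invoke Theorem \ref{integral-rational-odd}(i): for $s$ in the half-region $\mathrm{Re}(s)>-\tfrac1k\mathrm{Re}(\omega_{\pi_{i_{2n+1-2k}}^{(2n+1-2k)}})$ for all $1\le k\le n$ and all admissible $i_{2k-1}$, every one of the finitely many integrals $J(s,W_i)$ converges absolutely. The point that makes the finite-sum reduction legitimate is that this region of absolute convergence depends only on the central characters of the derivatives of $\pi$, hence is identical for all the $W_i$. To transfer absolute convergence back to $J(s,W,\Phi)$ itself, I would bound the integrand of $J(s,W,\Phi)$ pointwise by $\sum_i$ of the integrands of the $J(s,W_i)$, using $|\rho(\Phi)W(h)|\le\sum_i|W_i(h)|$, so that the triple integral defining $J(s,W,\Phi)$ is dominated by a finite sum of absolutely convergent integrals; Fubini then also gives $J(s,W,\Phi)=\sum_iJ(s,W_i)$, which is the decomposition needed for the later rationality arguments.

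In truth this lemma is almost entirely bookkeeping, and the only point I would flag as requiring a little care is precisely the uniformity of the domain of absolute convergence in the Whittaker vector; this is exactly the content of the ``bounded denominators'' part of Theorem \ref{integral-rational-odd}, and everything else follows by a finite triangle inequality and Fubini. The hard work — estimating the torus integral via the asymptotic expansion of Proposition \ref{asymWh} — has already been done in the proof of Theorem \ref{integral-rational-odd}, so here I expect no genuine obstacle.
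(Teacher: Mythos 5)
Your proposal is correct and is essentially the paper's own argument: the paper proves this lemma in the discussion immediately preceding it, by writing $\rho(\Phi)W=\sum_i W_i$ as a finite sum of right translates of $W$ (since $\Phi$ is locally constant of compact support), noting $J(s,W,\Phi)=\sum_i J(s,W_i)$, and invoking Theorem \ref{integral-rational-odd}(i), whose convergence region depends only on $\pi$ and hence is uniform across the $W_i$. (Minor remark: no conjugation is needed to pull the unipotent matrix to the right — it already sits rightmost in the definition of $J(s,W,\Phi)$, so the identity $J(s,W,\Phi)=J(s,\rho(\Phi)W)$ is immediate.)
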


Let $\mathcal{M}_{a,b}$ denote the space of $a \times b$ matrices with entries in $F$. We define the odd Shalika subgroup $S_{2n+1}$ of $GL_{2n+1}$ :
\[
S_{2n+1}=\left\{ \begin{pmatrix} g & Z &y\\ & g & \\ &x&1 \end{pmatrix}  \middle| \; Z \in \mathcal{M}_n, \; g \in GL_n,\; x \in \mathcal{M}_{1,n}, \; y \in \mathcal{M}_{n,1} \right\}.
\]
We denote by $\Theta$ the character of $P_{2n+1} \cap S_{2n+1}$  given by
\[
   \Theta \begin{pmatrix} \begin{pmatrix} I_n & Z & \\ &I_n & \\ && 1 \end{pmatrix} \begin{pmatrix} g&& \\ &g& \\ &&1 \end{pmatrix} \begin{pmatrix} I_n&&y \\ &I_n& \\ &&1 \end{pmatrix} \end{pmatrix} = \psi(\mathrm{Tr}Z).
\]
We define an action of the Shalika subgroup $S_{2n+1}$ on $\mathcal{S}(F^n)$ by
\begin{align*}
&\bullet R \begin{pmatrix} g&& \\ &g& \\ &&1 \end{pmatrix} \Phi(x)= \Phi(xg) \quad & \bullet\; R \begin{pmatrix} I_n&Z& \\ &I_n& \\ &&1 \end{pmatrix} \Phi(x)&= \psi^{-1}(\mathrm{Tr}Z) \Phi(x)  \\
&\bullet R \begin{pmatrix} I_n&&y_0 \\ &I_n& \\ &&1 \end{pmatrix} \Phi(x)= \psi(xy_0)\Phi(x)  & \bullet\; R \begin{pmatrix} I_n&& \\ &I_n& \\ &x_0&1 \end{pmatrix} \Phi(x)&= \Phi(x+x_0)  \\
\end{align*}
for $\Phi \in \mathcal{S}(F^n)$. We recall that $F^n$ is realized as row vectors, so the multiplication $xy_0$ above makes sense. 
We denote the four types of generators of $S_{2n+1}$ by 
\[
d_1 =\begin{pmatrix} h &&\\ &h&\\ &&1 \end{pmatrix}, d_2 = \begin{pmatrix} I_n & Z& \\ & I_n&\\&&1 \end{pmatrix}, d_3 = \begin{pmatrix} I_n & &y_0 \\ & I_n&\\&&1 \end{pmatrix}, \mathrm{and}\;  d_4 = \begin{pmatrix} I_n & & \\ & I_n&\\&x_0&1 \end{pmatrix}.
\]
for $h \in GL_n$. We write the formula for $\mathrm{Re}(s)$ large to guarantee convergence
\[
\tag{3.7}
\label{invariance-odd}
 \begin{split}
 &J(s,\pi(d_1)W, R(d_1)\Phi)\\
 &= \int_{N_n \backslash GL_n} \int_{\mathcal{N}_n \backslash \mathcal{M}_n} \int_{F^n}\; W \left(\sigma_{2n+1} \begin{pmatrix} I_n & X &\\ & I_n &\\&&1 \end{pmatrix} \begin{pmatrix} gh&&\\&gh&\\&&1 \end{pmatrix} \begin{pmatrix} I_n&&\\ &I_n&\\ &xh&1 \end{pmatrix} \right)\\
&\phantom{**************************}\times \Phi(xh) dx\; \psi^{-1}(\mathrm{Tr} X) dX  |\mathrm{det}(g)|^{s-1} dg\\
&= |\mathrm{det}(d_1)|^{-\frac{s}{2}}J(s,W, \Phi),
\end{split}
\]
\[
 \begin{aligned}
 &J(s,\pi(d_2)W, R(d_2)\Phi)\\
 &=\hspace*{-2mm}\int_{N_n \backslash GL_n} \int_{\mathcal{N}_n \backslash \mathcal{M}_n} \int_{F^n}  W\hspace*{-1mm} \left(\sigma_{2n+1} \begin{pmatrix} I_n & gZg^{-1}+X &\\ & I_n &\\&&1 \end{pmatrix} \begin{pmatrix} g&&\\&g&\\&&1 \end{pmatrix} \begin{pmatrix} I_n&&\\ &I_n&\\ &x&1 \end{pmatrix}\hspace*{-1mm} \right)&\\
&\phantom{************************} \times \psi^{-1}(\mathrm{Tr} Z) \Phi(x) dx\; \psi^{-1}(\mathrm{Tr} X) dX  |\mathrm{det}(g)|^{s-1} dg\\
&=J(s,W, \Phi),\\
\end{aligned}
\]
\[
 \begin{aligned}
 &J(s,\pi(d_3)W, R(d_3)\Phi)  \\
 &=\hspace*{-1mm} \int_{N_n \backslash GL_n} \int_{\mathcal{N}_n \backslash \mathcal{M}_n} \int_{F^n} W\hspace*{-1mm} \left(\sigma_{2n+1}\hspace*{-1mm} \begin{pmatrix} I_n & X-gy_0xg^{-1} &gy\\ & I_n &\\&&1 \end{pmatrix}\hspace*{-1mm} \begin{pmatrix} g&&\\&g&\\&&1 \end{pmatrix}\hspace*{-1mm} \begin{pmatrix} I_n&&\\ &I_n&\\ &x&1 \end{pmatrix}\hspace*{-1mm} \right)\\
&\phantom{**************************}\times \psi(xy_0)\Phi(x) dx\; \psi^{-1}(\mathrm{Tr} X) dX  |\mathrm{det}(g)|^{s-1} dg\\
&=\psi^{-1}(\mathrm{Tr}\; gy_0xg^{-1}) \psi(xy_0)J(s,W, \Phi)\\
&=J(s,W, \Phi),\\
\end{aligned}
\]
and
\[
 \begin{split}
 &J(s,\pi(d_4)W, R(d_4)\Phi)\\
 &= \int_{N_n \backslash GL_n} \int_{\mathcal{N}_n \backslash \mathcal{M}_n} \int_{F^n}\; W \left(\sigma_{2n+1} \begin{pmatrix} I_n & X &\\ & I_n &\\&&1 \end{pmatrix} \begin{pmatrix} g&&\\&g&\\&&1 \end{pmatrix} \begin{pmatrix} I_n&&\\ &I_n&\\ &x+x_0&1 \end{pmatrix} \right)\\
&\phantom{**************************}\times \Phi(x+x_0) dx\; \psi^{-1}(\mathrm{Tr} X) dX  |\mathrm{det}(g)|^{s-1} dg\\
&=J(s,W, \Phi),
\end{split}
\]
After $J(s,W,\Phi)$ extends by meromorphic continuation into a function in $\mathbb{C}(q^{-s})$, $(W,\Phi) \mapsto J(s,W,\Phi)$ regarded as a bilinear form on $\mathcal{W}(\pi,\psi) \times \mathcal{S}(F^n)$ is in fact an element of the vector space $\mathrm{Hom}_{S_{2n+1}}(\mathcal{W}(\pi,\psi) \otimes \mathcal{S}(F^{n}), |\cdot|^{-\frac{s_0}{2}}\Theta)$. Cogdell and Matringe in \cite{Cog} showed that two families of integrals $J(s,W)$ and $J(s,W,\Phi)$ span a same fractional ideal $\mathcal{J}(\pi)$ of $\mathbb{C}[q^{\pm s}]$, generated by an Euler factor $L(s,\pi,\wedge^2)$. We summarize this in the following statement.

\begin{proposition}
\label{odd-Lfunction}
Let $\mathcal{J}(\pi)$ be the span of rational functions defined by the integrals $J(s,W)$. Then the family of integrals $J(s,W,\Phi)$ span the same $\mathbb{C}[q^{\pm s}]$-fractional ideal $\mathcal{J}(\pi)$ in $\mathbb{C}(q^{-s})$, which is generated by the Euler factor $L(s,\pi,\wedge^2)$.  
\end{proposition}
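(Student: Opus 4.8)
The plan is to prove that the two families $\{J(s,W)\}$ and $\{J(s,W,\Phi)\}$ span the same $\mathbb{C}[q^{\pm s}]$-fractional ideal; once that is done, the description of the generating Euler factor is immediate, because $\mathcal{J}(\pi)=\langle J(s,W)\rangle$ was already shown, via Lemma~\ref{nonvanishing-odd} and the discussion following it, to be the principal ideal generated by the normalized Euler factor $L(s,\pi,\wedge^2)$. The inclusion $\langle J(s,W,\Phi)\rangle \subseteq \mathcal{J}(\pi)$ has in fact already been recorded above: unwinding the innermost integration over $F^n$ in the definition of $J(s,W,\Phi)$ is exactly an application of the operator $\rho(\Phi)$ introduced before Lemma~\ref{convergence-odd}, so that $J(s,W,\Phi)=J(s,\rho(\Phi)W)$, and since $\rho(\Phi)W=\sum_i W_i$ is a finite sum of right translates of $W$ lying in $\mathcal{W}(\pi,\psi)$, linearity of $W'\mapsto J(s,W')$ gives $J(s,W,\Phi)=\sum_i J(s,W_i)\in\mathcal{J}(\pi)$. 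This identity is first established for $\mathrm{Re}(s)$ large, where all the integrals converge absolutely by Lemma~\ref{convergence-odd} and Theorem~\ref{integral-rational-odd}, and then holds as an identity of rational functions in $q^{-s}$ by meromorphic continuation; in particular each $J(s,W,\Phi)$ lies in $\mathbb{C}(q^{-s})$.

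For the reverse inclusion $\mathcal{J}(\pi)\subseteq\langle J(s,W,\Phi)\rangle$, I would fix $W\in\mathcal{W}(\pi,\psi)$ and choose a compact open subgroup $U\subset F^n$ small enough that, for all $x\in U$, the vector $W$ is invariant under right translation by $\begin{pmatrix} I_n & & \\ & I_n & \\ & x & 1\end{pmatrix}$; such a $U$ exists since $W$ is a smooth vector. Setting $\Phi_0=\mathrm{Vol}(U)^{-1}\,\mathbbm{1}_U$, one computes $\rho(\Phi_0)W=W$, so by the identity of the previous paragraph $J(s,W,\Phi_0)=J(s,W)$. Hence every generator $J(s,W)$ of $\mathcal{J}(\pi)$ belongs to $\langle J(s,W,\Phi)\rangle$, and the two fractional ideals coincide; being generated by $L(s,\pi,\wedge^2)$ on the $\{J(s,W)\}$ side, the family $\{J(s,W,\Phi)\}$ generates $\mathcal{J}(\pi)$ and is likewise generated by $L(s,\pi,\wedge^2)$.

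There is no serious obstacle here beyond careful bookkeeping: the substantive analytic inputs—absolute convergence of $J(s,W,\Phi)$ in a right half-plane and rationality with bounded denominators—are furnished by Lemma~\ref{convergence-odd} and Theorem~\ref{integral-rational-odd} applied to the finitely many $W_i$, while the only genuinely new computation, the characteristic-function identity $\rho(\Phi_0)W=W$, is elementary. This simply reproduces, for completeness, the argument of Cogdell and Matringe in \cite{Cog}.
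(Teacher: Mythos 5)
Your proof is correct and is essentially the standard argument: the paper states this proposition without proof, deferring to Cogdell--Matringe, and your two steps (the identity $J(s,W,\Phi)=J(s,\rho(\Phi)W)$ with $\rho(\Phi)W$ a finite sum of right translates for one inclusion, and $\rho(\Phi_0)W=W$ for $\Phi_0$ a normalized characteristic function of a small neighborhood of $0$ for the other) are exactly the argument the paper itself uses for the analogous even-case statement in Lemma~\ref{J2m-same} and again in the proof of Proposition~\ref{odd-rational}. Nothing further is needed.
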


Let $\tau_{2n+1}$ be the Weyl element $\begin{pmatrix} &I_n&\\ I_n&&\\ &&1 \end{pmatrix}$. Cogdell and Matringe essentially establish in Proposition 3.5 of \cite{Cog} that if $\pi=\mathrm{Ind}(\Delta_1 \otimes \dotsm \otimes \Delta_t)$ is representation of $GL_{2n+1}$ of Whittaker type, then the space $\mathrm{Hom}_{S_{2n+1}}(\pi\otimes \mathcal{S}(F^n),|\cdot|^{-\frac{s}{2}})$ is of dimension at most $1$ for almost all $s$. As in the even case, the surjection from $\pi$ to its Whittaker model $\mathcal{W}(\pi,\psi)$ induces an injection of the space $\mathrm{Hom}_{S_{2n+1}}(\mathcal{W}(\pi,\psi) \otimes \mathcal{S}(F^n),|\cdot|^{-\frac{s}{2}})$ to the space $\mathrm{Hom}_{S_{2n+1}}(\pi \otimes \mathcal{S}(F^n),|\cdot|^{-\frac{s}{2}})$. The bilinear form on $\mathcal{W}(\pi,\psi) \times \mathcal{S}(F^n)$ can be uniquely inflated to the bilinear form on $\pi \times \mathcal{S}(F^n)$. From the $1$-dimensionality of the space $\mathrm{Hom}_{S_{2n+1}}(\mathcal{W}(\pi,\psi) \otimes \mathcal{S}(F^n),|\cdot|^{-\frac{s}{2}})$, we have the following local functional equations in the odd case for Whittaker type.

\begin{theorem}[Cogdell and Matringe]
\label{local-func-odd}
Let $\pi=\mathrm{Ind}(\Delta_1 \otimes \dotsm \otimes \Delta_t)$ is representation of $GL_{2n+1}$ of Whittaker type. There exists an invertible element $\varepsilon(s,\pi,\wedge^2,\psi)$ of $\mathbb{C}[q^{\pm s}]$, such that for every $W$ in $\mathcal{W}(\pi,\psi)$, and every $\Phi$ in $\mathcal{S}(F^n)$, we have the following functional equation
\[
 \varepsilon(s,\pi,\wedge^2,\psi)\frac{J(s,W,\Phi)}{L(s,\pi,\wedge^2)}=\frac{J(1-s,\varrho(\tau_{2n+1})\widetilde{W},\hat{\Phi})}{L(1-s,\pi^{\iota},\wedge^2)},
\]
where $\varrho$ is right translation.
\end{theorem}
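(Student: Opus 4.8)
The argument runs parallel to the even case (the Lemma preceding Theorem \ref{local functional eq} together with that theorem), with the Shalika group $S_{2n+1}$ and its action $R$ on $\mathcal{S}(F^n)$ in place of $S_{2n}$. The plan is to exhibit two elements of the space $\mathrm{Hom}_{S_{2n+1}}(\mathcal{W}(\pi,\psi)\otimes\mathcal{S}(F^n),\;|\cdot|^{-\frac{s}{2}}\Theta)$, which Cogdell and Matringe have shown is at most one--dimensional for all but finitely many values of $q^{-s}$, and then to read the functional equation off from the forced proportionality of the two. Throughout we set $B_{s,\pi,\psi}(W,\Phi)=J(s,W,\Phi)/L(s,\pi,\wedge^2)$.

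First I would check that $B_{s,\pi,\psi}$ itself lies in that Hom space. For $\mathrm{Re}(s)$ large the integral $J(s,W,\Phi)$ converges absolutely by Lemma \ref{convergence-odd}, and the computations for the four generators $d_1,d_2,d_3,d_4$ of $S_{2n+1}$ carried out above show precisely that $J(s,\pi(d_i)W,R(d_i)\Phi)=|\mathrm{det}(d_i)|^{-\frac{s}{2}}\Theta(d_i)J(s,W,\Phi)$. Since $L(s,\pi,\wedge^2)$ does not depend on $W$ or $\Phi$, the quotient $B_{s,\pi,\psi}$ enjoys the same transformation law, first as an identity of rational functions of $q^{-s}$ and then, away from the finitely many zeros and poles of $L(s,\pi,\wedge^2)$, as an identity of scalars; hence $B_{s,\pi,\psi}\in\mathrm{Hom}_{S_{2n+1}}(\mathcal{W}(\pi,\psi)\otimes\mathcal{S}(F^n),\;|\cdot|^{-\frac{s}{2}}\Theta)$ for all but finitely many $s$.

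Next I would build the dual bilinear form $C_{s,\pi,\psi}(W,\Phi)=B_{1-s,\pi^{\iota},\psi^{-1}}(\varrho(\tau_{2n+1})\widetilde{W},\hat{\Phi})$ and show it lies in the same Hom space. This needs three bookkeeping lemmas, exactly as in the even case: (i) the map $h\mapsto {}^th^{-\tau_{2n+1}}:={}^t(h^{-1})^{\tau_{2n+1}}$, conjugation by $\tau_{2n+1}$ followed by transpose--inverse, is an automorphism of $S_{2n+1}$, to be verified on $d_1,d_2,d_3,d_4$; (ii) the Fourier transform $\mathcal{F}$ on $\mathcal{S}(F^n)$ intertwines the two actions, $\mathcal{F}(R(h)\Phi)=|h|^{-\frac{1}{2}}R({}^th^{-\tau_{2n+1}})\mathcal{F}(\Phi)$, again checked generator by generator --- the essential new feature over the even case is that $d_3$ acts by multiplication by the character $x\mapsto\psi(xy_0)$ and $d_4$ acts by translation $\Phi(x)\mapsto\Phi(x+x_0)$, and these two operations are exchanged by $\mathcal{F}$, so the intertwining survives; (iii) the contragredient identity $\widetilde{\varrho(h)W}=\varrho({}^th^{-1})\widetilde{W}$. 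Combining (i)--(iii) with the definitions of $\tau_{2n+1}$ and $\Theta$, a direct computation gives $C_{s,\pi,\psi}(\varrho(h)W,R(h)\Phi)=|h|^{-\frac{s}{2}}\Theta(h)\,C_{s,\pi,\psi}(W,\Phi)$ for $h\in S_{2n+1}$, so $C_{s,\pi,\psi}$ also belongs to $\mathrm{Hom}_{S_{2n+1}}(\mathcal{W}(\pi,\psi)\otimes\mathcal{S}(F^n),\;|\cdot|^{-\frac{s}{2}}\Theta)$.

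Finally I would invoke the one--dimensionality: for all but finitely many $s$ there is a scalar $\varepsilon(s,\pi,\wedge^2,\psi)$ with $\varepsilon(s,\pi,\wedge^2,\psi)\,B_{s,\pi,\psi}=C_{s,\pi,\psi}$, and since both sides are rational in $q^{-s}$, so is $\varepsilon(s,\pi,\wedge^2,\psi)$ and the identity then holds as one of rational functions for all $s$. To see that $\varepsilon(s,\pi,\wedge^2,\psi)$ is a unit of $\mathbb{C}[q^{\pm s}]$, note that as $W$ ranges over $\mathcal{W}(\pi,\psi)$ and $\Phi$ over $\mathcal{S}(F^n)$ the values $B_{s,\pi,\psi}(W,\Phi)$ fill out all of $\mathbb{C}[q^{\pm s}]$ by Proposition \ref{odd-Lfunction} and Lemma \ref{nonvanishing-odd}; the same holds for $C_{s,\pi,\psi}$ because $W\mapsto\varrho(\tau_{2n+1})\widetilde W$ and $\Phi\mapsto\hat{\Phi}$ are bijections and $L(1-s,\pi^{\iota},\wedge^2)$ generates the corresponding ideal at the argument $1-s$ (and $\mathbb{C}[q^{\pm(1-s)}]=\mathbb{C}[q^{\pm s}]$). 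Hence $\varepsilon(s,\pi,\wedge^2,\psi)\cdot\mathbb{C}[q^{\pm s}]=\mathbb{C}[q^{\pm s}]$, which forces $\varepsilon(s,\pi,\wedge^2,\psi)=cq^{ks}$ for some $c\in\mathbb{C}^{\times}$ and $k\in\mathbb{Z}$; rewriting the proportionality is then exactly the asserted functional equation. The main obstacle is step (ii): tracking how $\mathcal{F}$ interacts with the full action of the larger group $S_{2n+1}$ --- in particular checking that the $\psi(xy_0)$-- and translation--type generators, together with the $\delta_{2n+1}$--twist implicit in the odd--case normalization, combine to yield the clean factor $|h|^{-\frac{1}{2}}$ with no residual character.
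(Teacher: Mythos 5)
Your strategy is exactly the one the paper uses: the paper establishes the $S_{2n+1}$-quasi-invariance of $(W,\Phi)\mapsto J(s,W,\Phi)$ by the generator computations in \eqref{invariance-odd}, cites Cogdell--Matringe (Proposition 3.5 of \cite{Cog}) for the at-most-one-dimensionality of $\mathrm{Hom}_{S_{2n+1}}(\mathcal{W}(\pi,\psi)\otimes\mathcal{S}(F^n),|\cdot|^{-\frac{s}{2}}\Theta)$, and then reads the functional equation off the proportionality of the two forms, the unit property of $\varepsilon$ coming from the fact that the integrals generate exactly $L(s,\pi,\wedge^2)\mathbb{C}[q^{\pm s}]$ (Proposition \ref{odd-Lfunction}, Lemma \ref{nonvanishing-odd}) at $s$ and at $1-s$; the paper does not redo the even-case lemma for the dual form but attributes that verification to \cite{Cog}, which is precisely what your step (ii) reconstructs.

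One concrete caution about step (ii), which you rightly single out as the delicate point. Two bookkeeping issues arise with the conventions of this paper. First, the characters $\psi^{\mp}$ are already built into the action $R$ (e.g.\ $R(d_2)\Phi=\psi^{-1}(\mathrm{Tr}Z)\Phi$, $R(d_3)\Phi(x)=\psi(xy_0)\Phi(x)$), so the invariance you should reproduce for the dual form is the one actually displayed in \eqref{invariance-odd} (trivial multiplier on $d_2,d_3,d_4$), not $|\det d_i|^{-s/2}\Theta(d_i)$ with $\Theta$ counted a second time; and on the dual side the relevant action on $\mathcal{S}(F^n)$ is the one attached to $\psi^{-1}$ (for $\pi^{\iota}$). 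Second, with the Fourier transform $\hat{\Phi}(y)=\int\Phi(x)\psi(x\,{}^ty)\,dx$ of Section 2 and the involution $h\mapsto{}^th^{-\tau_{2n+1}}$, the clean relation you posit holds for the $d_1,d_2$ generators but fails by a sign for the $d_3$ and $d_4$ types: one computes $\mathcal{F}(R(d_3)\Phi)(y)=\hat{\Phi}(y+{}^ty_0)$ while $R({}^td_3^{-\tau_{2n+1}})\hat{\Phi}(y)=\hat{\Phi}(y-{}^ty_0)$, and similarly for $d_4$. So the intertwining is not literally $\mathcal{F}\circ R(h)=|h|^{-\frac12}R({}^th^{-\tau_{2n+1}})\circ\mathcal{F}$ with these conventions; it becomes clean if the dual-side Fourier transform is taken with respect to $\psi^{-1}$, or after composing the involution with conjugation by $\mathrm{diag}(I_n,I_n,-1)$, or one can bypass the formal intertwining altogether and verify the $S_{2n+1}$-equivariance of $(W,\Phi)\mapsto J(1-s,\varrho(\tau_{2n+1})\widetilde{W},\hat{\Phi})$ directly on the integral, which is what Cogdell and Matringe do. With that adjustment your plan goes through and coincides with the argument the paper is invoking.
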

We then define the local exterior square $\gamma$-factor.
 \begin{definition*}
 Let $\pi=\mathrm{Ind}(\Delta_1 \otimes \dotsm \otimes \Delta_t)$ be  a representation of $GL_{2n+1}$ of Whittaker type. We set
 \[
   \gamma(s,\pi,\wedge^2,\psi)=\frac{\varepsilon(s,\pi,\wedge^2,\psi)L(1-s,\pi^{\iota},\wedge^2)}{L(s,\pi,\wedge^2)}.
 \]
\end{definition*}

\par
Let $\pi=\text{Ind}(\Delta_1 \otimes \dotsm \otimes \Delta_t)$ be an irreducible generic representation of $GL_{2n+1}$. Upon the analysis of proof of \cite[Theorem 3.1]{Cog}, we are going to investigate the exceptional points in $\mathbb{C}$ on which the space of quasi-invariant functionals in $\text{Hom}_{S_{2n+1}}(\pi \otimes \mathcal{S}(F^n),|\cdot|^{-\frac{s}{2}}\Theta)$ potentially fails. This will be a consequence of the technique in the proof of \cite[Theorem 4.5]{Ma15}. 

\begin{proposition}
\label{odd-multi one}
Let $\pi=\mathrm{Ind}(\Delta_1 \otimes \dotsm \otimes \Delta_t)$ be an irreducible generic representation of $GL_{2n+1}$. For every $1 \leq k \leq 2n+1$, suppose that all the nonzero successive quotients $\pi^{(2n+1-k)}_i=\mathrm{Ind}(\Delta_1^{(a_1)}\otimes\dotsm \otimes \Delta_t^{(a_t)})$ of the derivatives $\pi^{(2n+1-k)}$ are generic and irreducible. The space $\mathrm{Hom}_{S_{2n+1}}(\pi \otimes \mathcal{S}(F^n),|\cdot|^{-\frac{s}{2}}\Theta)$ is of dimension at most $1$, except possibly when there exist $1 \leq k \leq n$ and one of the central characters of the nonzero Jordan--H\"older constituents $\pi^{(2n+1-2k)}_i$ such that
\[
  \omega_{\pi^{(2n+1-2k)}_i}(\varpi)=q^{ks}.
\]
\end{proposition}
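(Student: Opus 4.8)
The plan is to mirror, in the odd case, exactly the strategy already carried out in the proof of Proposition~\ref{even-multi one}, adapting the bookkeeping to the partition $(n+1,n)$ of $2n+1$ and to the different character relations $\chi_{2n+1}=\delta_{2n+1}$, $\mu_{2n+1}=1$ recorded in \eqref{chi-relation}. First I would reduce the statement to a homomorphism-space computation: outside the locus where $\pi$ and $s$ are related by $\omega_\pi(\varpi)=q^{ns}$ (forced by the action of the center of $GL_{2n+1}$ on any nonzero element of $\mathrm{Hom}_{S_{2n+1}}(\pi\otimes\mathcal{S}(F^n),|\cdot|^{-s/2}\Theta)$), the short exact sequence $0\to\mathcal{S}_0(F^n)\to\mathcal{S}(F^n)\to\mathbb{C}\to 0$ gives an identification of $\mathrm{Hom}_{S_{2n+1}}(\pi\otimes\mathcal{S}(F^n),|\cdot|^{-s/2}\Theta)$ with $\mathrm{Hom}_{S_{2n+1}}(\pi\otimes\mathcal{S}_0(F^n),|\cdot|^{-s/2}\Theta)$. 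Then, following the chain of isomorphisms in \cite[Theorem~3.1]{Cog} (identifying $S_{2n+1}\cap P_{2n+1}\backslash S_{2n+1}$ with $F^n-\{0\}$, using induction--restriction adjointness, and Frobenius reciprocity), this last space is isomorphic to $\mathrm{Hom}_{P_{2n+1}\cap S_{2n+1}}(\pi,|\cdot|^{-(s-1)/2}\Theta)$, up to the same shift in the exponent that was corrected in the even case.

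Next I would invoke the odd analogue of Proposition~\ref{even-embedding}: for $s_0$ real and larger than some $r_\pi$ depending on $\pi$, there is a linear injection of $\mathrm{Hom}_{P_{2n+1}\cap S_{2n+1}}(\pi,|\cdot|^{-(s-1)/2}\Theta)$ into $\mathrm{Hom}_{P_{2n+1}\cap H_{2n+1}}(\pi,|\cdot|^{-(s-1)/2}\delta_{2n+1}^{-s_0})$, again extracted from \cite{Ma14,Cog}. Restricting $\pi$ to $P_{2n+1}$ and using the Bernstein--Zelevinsky filtration by derivatives, whose graded pieces are $(\Phi^+)^{2n-k}\Psi^+(\pi^{(2n+1-k)})$ for $0\le k\le 2n$, together with the exactness of $\Phi^+$ and $\Psi^+$ and Theorem~\ref{deriviative} which gives the composition factors $\mathrm{Ind}(\Delta_1^{(a_1)}\otimes\dotsm\otimes\Delta_t^{(a_t)})$ with $\sum_i a_i=2n+1-k$, the dimension is bounded by a sum of dimensions of $\mathrm{Hom}_{P_{2n+1}\cap H_{2n+1}}((\Phi^+)^{2n-k}\Psi^+(\mathrm{Ind}(\dotsm)),|\cdot|^{-(s-1)/2}\delta_{2n+1}^{-s_0})$. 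I would then apply Proposition~\ref{func-homo} repeatedly, as in \eqref{even-func-points}, to peel off the $\Phi^+$'s and the $\Psi^+$, arriving at a space of the form $\mathrm{Hom}_{GL_{k}\cap H_{k+1}}(\mathrm{Ind}(\Delta_1^{(a_1)}\otimes\dotsm\otimes\Delta_t^{(a_t)}),|\cdot|^{-s/2}\mu_k^{1/2}\delta_k^{-s_0})$, where — crucially — the parity bookkeeping is shifted by one relative to the even case because the ``long'' block has size $n+1$ rather than $n$, so the roles of odd and even $k$ in $\{1,\dots,2n\}$ get interchanged. This space is zero unless the central character $\prod_i\omega_{\Delta_i^{(a_i)}}(\varpi)$ hits a specific value of the form $|\varpi|^{-s_0-ks/2}$ or $|\varpi|^{-ks/2}$, and it is at most one-dimensional (indeed just the trivial group) when $k=0$.

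Finally I would run the same disjointness argument: the potential failure of one-dimensionality occurs only when $s$ satisfies one of two families of conditions — an $s_0$-dependent one coming from the ``bad-parity'' $k$ (in the odd case these are the $k\le 2n$ of a fixed parity dictated by $\mu_{2n+1}=1$), and an $s_0$-free one coming from the complementary parity, of the shape $\prod_i\omega_{\Delta_i^{(a_i)}}(\varpi)=|\varpi|^{-ks/2}$. The $s_0$-dependent family is eliminated because one may choose two values $\epsilon_1,\epsilon_2>0$ so that the corresponding finitely many affine loci in $s$ have empty intersection, so no complex $s$ lies in all of them for all admissible $s_0$; this is the same finiteness argument as in the even proof. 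The $s_0$-free family reduces, after the reindexing $k\mapsto 2k$ and expressing the central characters of the constituents of $\pi^{(2n+1-2k)}$ in terms of the inducing data (legitimate under the genericity/irreducibility hypothesis on the derivatives), to the stated exceptional condition $\omega_{\pi^{(2n+1-2k)}_i}(\varpi)=q^{ks}$ for some $1\le k\le n$. The main obstacle I anticipate is getting the parity and exponent bookkeeping exactly right when passing through Proposition~\ref{func-homo}: because $\chi_m$ and $\mu_m$ swap between the even and odd cases and the first block has size $n+1$, one must track carefully which steps contribute a $\delta$-twist versus none, and verify that the resulting affine condition in $s$ simplifies to precisely $\omega_{\pi^{(2n+1-2k)}_i}(\varpi)=q^{ks}$ with the correct range of $k$; the rest is a faithful transcription of the even-case argument.
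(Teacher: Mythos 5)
Your main line of argument is the same as the paper's: embed $\mathrm{Hom}_{P_{2n+1}\cap S_{2n+1}}(\pi,|\cdot|^{-\frac{s-1}{2}}\Theta)$ into $\mathrm{Hom}_{P_{2n+1}\cap H_{2n+1}}(\pi,|\cdot|^{-\frac{s-1}{2}}\delta_{2n+1}^{-s_0})$ via the odd analogue of Proposition \ref{even-embedding}, run the Bernstein--Zelevinsky filtration of $\pi|_{P_{2n+1}}$, iterate Proposition \ref{func-homo} to land in $\mathrm{Hom}_{GL_k\cap H_{k+1}}$-spaces twisted by $\chi_k^{-1/2}$, kill the $s_0$-dependent family of conditions by the finiteness/$\epsilon_1,\epsilon_2$ trick, and read off the $s_0$-free family as $\omega_{\pi_i^{(2n+1-2k)}}(\varpi)=q^{ks}$. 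That part, including your parity bookkeeping (the $s_0$-dependent conditions occur for $\sum a_i$ even, the exceptional ones for $\sum a_i$ odd, i.e.\ even $k$), is a faithful and correct transcription of the even-case argument and agrees with the paper.

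However, your opening reduction is a wrong transposition of the even case and, as written, would not prove the proposition as stated. In the odd case the center of $GL_{2n+1}$ is \emph{not} contained in $S_{2n+1}$ (the lower-right entry of a Shalika element is $1$), so no relation $\omega_\pi(\varpi)=q^{ns}$ is forced, and if you excluded such a locus your conclusion would fail to cover values of $s$ that the proposition does not exempt (the zeroth derivative $\pi^{(0)}=\pi$ is not among the $\pi^{(2n+1-2k)}$, $1\le k\le n$). More seriously, the subspace $\mathcal{S}_0(F^n)$ is not $S_{2n+1}$-stable: the generator $d_4$ acts by translation, $R(d_4)\Phi(x)=\Phi(x+x_0)$, so $\Phi(0)=0$ is not preserved and there is no short exact sequence of $\mathrm{Hom}_{S_{2n+1}}$-spaces of the kind you invoke. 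Correspondingly, $P_{2n+1}\cap S_{2n+1}\backslash S_{2n+1}$ is identified with all of $F^n$ (the orbit of $0$ under the affine action is everything), not with $F^n\setminus\{0\}$. The correct first step, which is what the paper uses (quoting the chain of isomorphisms in Cogdell--Matringe), is the direct identification
\[
\mathrm{Hom}_{S_{2n+1}}\bigl(\pi\otimes\mathcal{S}(F^n),|\cdot|^{-\frac{s}{2}}\Theta\bigr)\simeq \mathrm{Hom}_{P_{2n+1}\cap S_{2n+1}}\bigl(\pi,|\cdot|^{-\frac{s-1}{2}}\Theta\bigr),
\]
valid for all $s$ with no excluded locus, since $\mathcal{S}(F^n)\simeq \mathrm{ind}_{S_{2n+1}\cap P_{2n+1}}^{S_{2n+1}}(\mathbf{1})$ (up to the normalizing twist). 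Replace your first paragraph by this identification and the rest of your proof goes through exactly as in the paper.
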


Before we start with the proof of Proposition \ref{odd-multi one}, we introduce the result analogue to Proposition \ref{even-embedding} deduced from Proposition 3.4 and 3.5 in \cite{Cog}.
\begin{proposition}[Cogdell and Matringe]
\label{odd-embedding}
Let $\pi$ be an irreducible generic representation of $GL_{2n+1}$. There exists a real number $r_\pi$ depending only on $\pi$ such that for all $s_0 > r_{\pi}$ the vector space $\mathrm{Hom}_{P_{2n+1} \cap S_{2n+1}}(\pi, \Theta)$ embeds as a subspace of $\mathrm{Hom}_{P_{2n+1} \cap H_{2n+1}}(\pi,\delta_{2n+1}^{-s_0})$.
\end{proposition}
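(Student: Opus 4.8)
This is a result of Cogdell and Matringe; the plan is to deduce it from their Propositions 3.4 and 3.5 of \cite{Cog}, whose odd-case content parallels Matringe's Proposition 4.3 and Theorem 4.1 of \cite{Ma14} that underlie the even-case Proposition \ref{even-embedding}. The point is to realise the embedding explicitly, through the odd Jacquet--Shalika integral $J(s,W)$, rather than abstractly. Given $\Lambda\in\mathrm{Hom}_{P_{2n+1}\cap S_{2n+1}}(\pi,\Theta)$, one observes that the inner unipotent integration $\int_{\mathcal{N}_n\backslash\mathcal{M}_n}(\cdot)\psi^{-1}(\mathrm{Tr}X)\,dX$ occurring in $J(s,W)$ encodes exactly the Shalika equivariance of $\Lambda$ under the unipotent part of $P_{2n+1}\cap S_{2n+1}$ (on which $\Theta$ acts by $\psi(\mathrm{Tr}Z)$), while the outer integration against $|\mathrm{det}(g)|^{s-1}$ over $N_n\backslash GL_n$ produces, after conjugation by the Weyl element $w_{2n+1}$ defining $H_{2n+1}$ --- which in the odd case differs from the shuffle $\sigma_{2n+1}$ --- an assignment $v\mapsto\Lambda_{s_0}(v)$ transforming under $P_{2n+1}\cap H_{2n+1}$ by $\delta_{2n+1}^{-s_0}$, as forced by the quasi-invariance identities \eqref{invariance-odd}.

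Concretely I would proceed in four steps. First, as in the chain of $\mathrm{Hom}$-isomorphisms in the proof of Proposition \ref{even-multi one}, identify $\mathrm{Hom}_{P_{2n+1}\cap S_{2n+1}}(\pi,\Theta)$ with a space of functionals read off from the family of integrals $J(s,W)$, $W\in\mathcal{W}(\pi,\psi)$. Second, invoke Proposition \ref{asymWh}, exactly as in the proof of Theorem \ref{integral-rational-odd}, to show that evaluating this family at $s=s_0$ is given by an absolutely convergent integral as soon as $\mathrm{Re}(s_0)>r_\pi:=\max_{k,i}\{-\tfrac1k\mathrm{Re}(\omega_{\pi^{(2n+1-2k)}_{i}})\}$, so that $\Lambda_{s_0}$ genuinely lies in $\mathrm{Hom}_{P_{2n+1}\cap H_{2n+1}}(\pi,\delta_{2n+1}^{-s_0})$ for $s_0>r_\pi$. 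Third, verify the $\delta_{2n+1}^{-s_0}$-equivariance directly from \eqref{invariance-odd}. Fourth, show that $\Lambda\mapsto\Lambda_{s_0}$ is injective: recover $\Lambda$ from $\Lambda_{s_0}$ by a Mellin--Fourier inversion against the unipotent variable $Z$, using that the evaluated integral is nonvanishing on suitable test data --- which is available here through the generalisation of Belt's nonvanishing argument recorded in Lemma \ref{nonvanishing-odd}, together with Lemma \ref{kirillov}.

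I expect the main obstacle to be the fourth step, hand in hand with pinning down the constant $r_\pi$: one must know that the partial Whittaker functions in play decay sharply enough towards the walls for the defining integral to converge in the stated range, and one must exclude that a nonzero Shalika functional has identically vanishing specialisation. This is where the full weight of \cite[Propositions 3.4, 3.5]{Cog} is used; there it is obtained by combining the Bernstein--Zelevinsky filtration of $\pi|_{P_{2n+1}}$ with the reciprocity of Proposition \ref{func-homo}, reducing derivative by derivative to the trivial representation of the trivial group, precisely as in the even case. Since only the packaged statement is needed in the sequel, I would record it with this citation; the embedding it provides is then the input to the proof of Proposition \ref{odd-multi one}.
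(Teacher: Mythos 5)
The paper itself does not prove this proposition: it records it with attribution to Cogdell--Matringe, to be deduced from Propositions 3.4 and 3.5 of \cite{Cog}, exactly as Proposition \ref{even-embedding} is recorded from \cite{Ma14}. Your closing move (``record it with this citation'') therefore matches the paper. The problem is with the proof sketch you offer for the cited result, which has a genuine conceptual gap: you try to manufacture the embedding out of the Jacquet--Shalika integrals $J(s,W)$, i.e.\ out of the Whittaker functional. But the proposition is a statement about an \emph{arbitrary} element $\Lambda$ of $\mathrm{Hom}_{P_{2n+1}\cap S_{2n+1}}(\pi,\Theta)$; your Step 1, ``identify $\mathrm{Hom}_{P_{2n+1}\cap S_{2n+1}}(\pi,\Theta)$ with functionals read off from the family $J(s,W)$,'' presupposes that every such $\Lambda$ comes from the Whittaker model, which is essentially the multiplicity-one conclusion that the embedding is being set up to prove (via Proposition \ref{odd-multi one}); the argument is circular. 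For the same reason your Steps 2 and 4 misapply tools that are specific to Whittaker functions: Proposition \ref{asymWh} gives asymptotics of $W\in\mathcal{W}(\pi,\psi)$, not of $g\mapsto\Lambda(\pi(g)v)$ for an abstract Shalika-type functional, so it cannot supply the convergence bound $r_\pi$ here; and Lemma \ref{nonvanishing-odd} is a nonvanishing statement for $J(s,W)$ at a special Whittaker vector, which says nothing about whether the specialisation of an arbitrary nonzero $\Lambda$ vanishes, so it cannot give injectivity.

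The actual mechanism in \cite{Cog} (and in the even-case source \cite{Ma14}, Proposition 4.3) is different: starting from $\Lambda$ one defines $\Lambda_{s_0}(v)=\int \Lambda\bigl(\pi(\bar u(X))v\bigr)\,(\cdots)\,dX$, integrating over the unipotent group complementary to the Shalika unipotent (conjugated by the Weyl element defining $H_{2n+1}$) against an appropriate power of the modulus character; one proves absolute convergence for $s_0$ larger than a constant $r_\pi$ by estimates on $\Lambda(\pi(p)v)$ itself, checks the $\delta_{2n+1}^{-s_0}$-equivariance under $P_{2n+1}\cap H_{2n+1}$ by a change of variables, and proves injectivity of $\Lambda\mapsto\Lambda_{s_0}$ by a smoothing/localisation argument (choosing $v$ invariant under a small compact open subgroup of the unipotent group so that the integral reduces to a nonzero volume times $\Lambda(v)$), not by nonvanishing of the Whittaker integral. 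You also conflate two stages of the paper's architecture: the Bernstein--Zelevinsky filtration of $\pi|_{P_{2n+1}}$ combined with Proposition \ref{func-homo} is the argument used \emph{after} the embedding, inside Proposition \ref{odd-multi one}, to bound $\dim\mathrm{Hom}_{P_{2n+1}\cap H_{2n+1}}(\pi,\cdot)$; it is not how the embedding itself is obtained. As written, your sketch would not yield the proposition; either cite \cite[Propositions 3.4 and 3.5]{Cog} as the paper does, or reproduce the integration construction just described with $\Lambda$ in place of the Whittaker functional.
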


\begin{proof}[Proof of Proposition \ref{odd-multi one}]
We being with the isomorphism
 \[
  \text{Hom}_{S_{2n+1}}(\pi \otimes \mathcal{S}(F^n),|\cdot|^{-\frac{s}{2}}\Theta) \simeq  \text{Hom}_{P_{2n+1} \cap S_{2n+1}}(\pi,\;|\cdot|^{-\frac{s-1}{2}}\Theta)
\]
explained in the proof of \cite[Theorem 4.5]{Cog}. Suppose that $s_0$ is a real number such that $s_0 > r_{\pi}$. From the previous Proposition \ref{odd-embedding}, the vector space $\text{Hom}_{P_{2n+1} \cap S_{2n+1}}(\pi,\; |\cdot|^{-\frac{s-1}{2}}\Theta)$ embeds as a subspace of $\text{Hom}_{P_{2n+1} \cap H_{2n+1}}(\pi, |\cdot|^{-\frac{s-1}{2}}\delta_{2n+1}^{-s_0})$. Now the restriction of $\pi$ to $P_{2n+1}$ has a filtration by derivative with each successive quotient of the form $(\Phi^+)^{2n-k}\Psi^+(\pi^{(2n+1-k)})$ for $0 \leq k \leq 2n$. We know from Theorem \ref{deriviative} that each derivatives $\pi^{(2n+1-k)}$ has a composition series whose subquotients are the representations $\text{Ind}(\Delta_1^{(a_1)}\otimes\dotsm \otimes \Delta_t^{(a_t)})$ with $\sum_{i=1}^t a_i=2n+1-k$. Since the functors $\Phi^+$ and $\Psi^+$ are exact, we can obtain the filtration of $\pi|_{P_{2n+1}}$ with the factors
$(\Phi^+)^{2n-k}\Psi^+(\text{Ind}(\Delta_1^{(a_1)}\otimes\dotsm \otimes \Delta_t^{(a_t)}))$ for $0 \leq k \leq 2n$ and $\sum_{i=1}^t a_i=2n+1-k$. Then we have
 \[
 \begin{split}
 & \text{dim}(\text{Hom}_{P_{2n+1} \cap H_{2n+1}}(\pi, |\cdot|^{-\frac{s-1}{2}}\delta_{2n+1}^{-s_0})) \\
& \leq \sum \text{dim}(\text{Hom}_{P_{2n+1} \cap H_{2n+1}}((\Phi^+)^{2n-k}\Psi^+(\text{Ind}(\Delta_1^{(a_1)}\otimes\dotsm \otimes \Delta_t^{(a_t)}),\;|\cdot|^{-\frac{s-1}{2}} \delta_{2n+1}^{-s_0})).
\end{split}
 \]
By applying the proposition \ref{func-homo} repeatedly, we can deduce that
\[
\tag{3.8}
 \label{odd-func-points}
\begin{split}
 & \text{Hom}_{P_{2n+1} \cap H_{2n+1}}((\Phi^+)^{2n-k}\Psi^+(\text{Ind}(\Delta_1^{(a_1)}\otimes\dotsm \otimes \Delta_t^{(a_t)}),\;|\cdot|^{-\frac{s-1}{2}}\delta_{2n+1}^{-s_0}) \\
 & \simeq \text{Hom}_{P_{k+1} \cap H_{k+1}}(\Psi^+(\text{Ind}(\Delta_1^{(a_1)}\otimes\dotsm \otimes \Delta_t^{(a_t)}),\;\mu_{k+1}^{-\frac{1}{2}} |\cdot|^{-\frac{s-1}{2}}\delta_{k+1}^{-s_0} ) \\
 & \simeq \text{Hom}_{GL_k \cap H_{k+1}}(\text{Ind}(\Delta_1^{(a_1)}\otimes\dotsm \otimes \Delta_t^{(a_t)}),\;\chi_{k}^{-\frac{1}{2}}|\cdot|^{-\frac{s}{2}} \delta_{k}^{-s_0} ). \\
 \end{split}
\]
The character $\chi_k$ defined in \eqref{chi-relation} guarantees that if the central characters $\prod_i \omega_{\Delta_i^{(a_i)}}(\varpi)$ of the nonzero vector space $\text{Ind}(\Delta_1^{(a_1)}\otimes\dotsm \otimes \Delta_t^{(a_t)})$ for all $\sum_i a_i=2n+1-k$ and $1 \leq k \leq 2n$ satisfy the relations 
$\prod_i \omega_{\Delta_i^{(a_i)}}(\varpi) \neq |\varpi|^{-s_0-\frac{1}{2}-\frac{ks}{2}}$ when $k$ is odd and $\prod_i \omega_{\Delta_i^{(a_i)}}(\varpi) \neq |\varpi|^{-\frac{ks}{2}}$ when $k$ is even, the last space in \eqref{odd-func-points} is zero. For $k=0$, the dimension of the last space in \eqref{odd-func-points} is less than or equal to $1$, because the representation is the trivial representation of the trivial group. As we have a linear injection of the space $\mathrm{Hom}_{P_{2n+1} \cap S_{2n+1}}(\pi,|\cdot|^{-\frac{s-1}{2}}\Theta)$ into the space $\text{Hom}_{P_{2n+1} \cap H_{2n+1}}(\pi, |\cdot|^{-\frac{s-1}{2}}\delta_{2n+1}^{-s_0})$ for any real number $s_0$ greater than $r_{\pi}$, the $1$-dimensionality of the space $\mathrm{Hom}_{P_{2n+1} \cap S_{2n+1}}(\pi,|\cdot|^{-\frac{s-1}{2}}\Theta)$ potentially fails when a complex number $s$ satisfies one of the following conditions:

\begin{enumerate}
\item[$(\mathrm{1})$] For each $\epsilon > 0$, there exists $t$-tuples $(a_1,\dotsm,a_t)$ of nonzero space $\text{Ind}(\Delta_1^{(a_1)}\otimes\dotsm \otimes \Delta_t^{(a_t)})$ such that $\prod_{i=1}^t \omega_{\Delta_i^{(a_i)}}(\varpi) = |\varpi|^{-r_{\pi}-\epsilon-\frac{1}{2}-\frac{ks}{2}}0$ with $1\leq  \sum_{i=1}^t a_i \leq 2n$ and $\sum_{i=1}^t a_i$ is an even number.
\item[$(\mathrm{2})$] There exists $(a_1,\dotsm,a_t)$ of nonzero space $\text{Ind}(\Delta_1^{(a_1)}\otimes\dotsm \otimes \Delta_t^{(a_t)})$ which satisfies $\prod_{i=1}^t \omega_{\Delta_i^{(a_i)}}(\varpi) = |\varpi|^{-\frac{ks}{2}}$ with $1\leq  \sum_{i=1}^t a_i \leq 2n$ and $\sum_{i=1}^t a_i$ is an odd number.
\end{enumerate}

We show that the first case does not occur. Let $\mathcal{K}$ be the set of all odd integers such that $1 \leq k \leq 2n$ and $\mathcal{T}$ the set of all $t$-tuples $(a_1,\dotsm,a_t)$ of all nonzero constituents $ \text{Ind}(\Delta_1^{(a_1)}\otimes\dotsm \otimes \Delta_t^{(a_t)})$ of $\pi^{(2n+1-k)}$ such that $\sum_i a_i=2n+1-k$ and $k \in \mathcal{K}$. Since two sets $\{-\frac{2}{k}\;|\; k \in \mathcal{K} \}$ and $\{ r_{\pi}+\frac{1}{2}+\sum_{i=1}^t\text{Re}(\omega_{\Delta_i^{(a_i)}}) \;|\; (a_1,\dotsm,a_t) \in \mathcal{T} \}$ are finite, we can choose $\epsilon_1,\epsilon_2 > 0$ such that the intersection
\[
  \bigcap_{j=1,2} \left\{ -\frac{2}{k}\left(\epsilon_j+r_{\pi}+\frac{1}{2}+\sum_{i=1}^t\mathrm{Re}(\omega_{\Delta_i^{(a_i)}})\right)\; \middle|\; k \in \mathcal{K},\;(a_1,\dotsm,a_t) \in \mathcal{T} \right\} 
\]
is empty. Therefore the condition $(1)$ is not valid for any $s \in \mathbb{C}$. The condition $(2)$ ensures that outside the finite number of points $s$ satisfying $\prod_{i=1}^t \omega_{\Delta_i^{(a_i)}}(\varpi)=|\varpi|^{-\frac{ks}{2}}$ with $\sum_{i=1}^t a_i=2n+1-k$ when $k$ is even for $1 \leq k \leq 2n$, the dimension of the space $\mathrm{Hom}_{P_{2n+1} \cap S_{2+1n}}(\pi,|\cdot|^{-\frac{s-1}{2}}\Theta)$ is at most one. Collecting all information, we completes the proof.

\end{proof}

The intersection of the domain $\mathrm{Re}(s)\neq -\frac{1}{k}\mathrm{Re}(\omega_{\pi_i^{(2n+1-2k)}})$ for $1 \leq k \leq n$ on which the $1$-dimensionality of the space $\text{Hom}_{S_{2n+1}}(\pi \otimes \mathcal{S}(F^n),|\cdot|^{-\frac{s}{2}}\Theta)$ holds contains the intersection of domains $\mathrm{Re}(s) > -\frac{1}{k}\mathrm{Re}(\omega_{\pi_i^{(2n+1-2k)}})$ for $1 \leq k \leq n$, on which $J(s,W)$ is given by an absolutely convergence integral from Theorem \ref{integral-rational-odd}.

\section{Factorization Formula of Exterior Square $L$-Functions for $GL_{2n}$}

In this Section we introduce the factorization result of exterior square $L$-function in terms of exceptional poles of even derivatives of irreducible generic representations of $GL_{2n}$, which will be used to compute $L(s,\pi,\wedge^2)$ for irreducible generic representations $\pi$. We derive a filtrations of $\mathbb{C}[q^{\pm s}]$-fractions ideals defined by various families of integrals. It turns out that this is convenient to analyze the location of the poles of the family $\mathcal{J}_{(0)}(\pi)$ and eventually associate these poles with exceptional poles of $L$-functions for even derivatives.

\par

We shall review certain aspects of the local theory of the exterior square $L$-function on $GL_2$ and complete the computation for $GL_2$ in terms exceptional poles. The case of $GL_2$ is simply the local theory of Tate's integral for central characters. Let $\pi$ be a representation of  $GL_2$. of Langlands type. For $W \in \mathcal{W}(\pi,\psi)$ and $\Phi \in \mathcal{S}(F^2)$, our integral becomes
 \[
 \begin{split}
 &J(s,W,\Phi) \\
 &= \int_{N_1\backslash GL_1} \int_{\mathcal{N}_1\backslash \mathcal{M}_1} W \left(\sigma_{2}\begin{pmatrix} 1 & X \\ & 1  \end{pmatrix} \begin{pmatrix} g&\\&g \end{pmatrix} \right) \psi^{-1}(\mathrm{Tr} X) dX \; \Phi(e_1 g) |\mathrm{det}(g)|^s \; dg.
\end{split}
 \]
Since $N_1$ is trivial, $\sigma_2=I_2$ and $\mathcal{N}_1=\mathcal{M}_1$, our integral simply degenerates into
 \[
   J(s,W,\Phi) =\int_{F^{\times}} W \begin{pmatrix} z& \\ &z \end{pmatrix} \Phi(z) |z|^s \; d^{\times}z .
 \]
Pulling off the central character, we get
\[
J(s,W,\Phi) = W(I_2) \int_{F^{\times}} \omega_{\pi}(z) \Phi(z) |z|^s d^{\times} z= W(I_2) I(s,\omega_{\pi}, \Phi),\; 
\]
where $\displaystyle I(s,\omega_{\pi}, \Phi)=\int_{F^{\times}} \omega_{\pi}(z) \Phi(z) |z|^s d^{\times} z$ is a Tate integral. From the Tate thesis \cite{Tate}, we have $L(s,\pi,\wedge^2)=L(s,\omega_{\pi})$. Since $\Phi \in \mathcal{S}(F)$, there exists integers $n_1>n_2$ such that $\Phi(z)=0$ for $v(z)<n_2$ and $\Phi(z)=\Phi(0)$ for $v(z)\geq n_1$. Then we obtain
\[
I(s,\omega_{\pi}, \Phi)=\int_{n_2 \leq v(z) \leq n_1}  \omega_{\pi}(z) \Phi(z) |z|^s d^{\times} z+\int_{v(z) \geq n_1}  \omega_{\pi}(z) \Phi(z) |z|^s d^{\times} z.
\]
For the first part, the integration is actually a finite sum, and hence it is a polynomial in $\mathbb{C}[q^{\pm s}]$. For the second part we get
\[
\int_{v(z) \geq n_1}  \omega_{\pi}(z) \Phi(z) |z|^s d^{\times} z=\Phi(0)\sum_{n \geq n_1} q^{-ns} \omega_{\pi}(\varpi)^n\int_{\mathcal{O}^{\times}} \omega_{\pi}(z) d^{\times}z
\]
for $\text{Re}(s)$ large enough. We note that the poles of $L(s,\omega_{\pi})$ are always exceptional because the geometric series above becomes zero for all $\Phi \in \mathcal{S}_0(F)$. We call $\omega_{\pi}$ unramified if $\omega_{\pi}(\mathcal{O}^{\times})\equiv1$. Exploiting this notation, we have the following result \cite[Proposition 23.2]{BuHe06}

\begin{proposition}
 \label{basic}
Let $\pi$ be a representation of $GL_2$ of Langlands type. Then the associated exterior square $L$-function of $GL_2$ is given by
\[
L(s,\pi,\wedge^2)=L(s,\omega_{\pi})=
    \begin{cases}
    1      & \quad \text{if $\omega_{\pi}$ is ramified},\\
    \frac{1}{1-\omega_{\pi}(\varpi)q^{-s}} & \quad \text{if $\omega_{\pi}$ is unramified.} \\
  \end{cases}
\]
Moreover we have $L(s,\pi,\wedge^2)=L_{ex}(s,\pi,\wedge^2)$.
\end{proposition}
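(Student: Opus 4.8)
The plan is to leverage the reduction already carried out in the paragraphs preceding the statement, where each Jacquet--Shalika integral for $GL_2$ was shown to collapse to $J(s,W,\Phi)=W(I_2)\,I(s,\omega_\pi,\Phi)$ with $I(s,\omega_\pi,\Phi)=\int_{F^\times}\omega_\pi(z)\Phi(z)|z|^s\,d^\times z$ a Tate integral. First I would observe that the $\mathbb{C}$-span $\mathcal{J}(\pi)$ coincides with the span of the Tate integrals $I(s,\omega_\pi,\Phi)$, $\Phi\in\mathcal{S}(F)$: the Whittaker model $\mathcal{W}(\pi,\psi)$ is nonzero and stable under right translation, so there is $W$ with $W(I_2)\neq 0$, and conversely every $I(s,\omega_\pi,\Phi)$ is recovered as $W(I_2)^{-1}J(s,W,\Phi)$. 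Hence, by Tate's thesis \cite{Tate} (see also \cite[Proposition 23.2]{BuHe06}), the normalized generator of this fractional ideal is $L(s,\omega_\pi)$, which equals $1$ when $\omega_\pi$ is ramified and $(1-\omega_\pi(\varpi)q^{-s})^{-1}$ when $\omega_\pi$ is unramified; this gives $L(s,\pi,\wedge^2)=L(s,\omega_\pi)$ in the asserted closed form.

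For the equality $L(s,\pi,\wedge^2)=L_{ex}(s,\pi,\wedge^2)$, I would treat the two cases separately. When $\omega_\pi$ is ramified, $L(s,\pi,\wedge^2)=1$ has no poles, so $L_{ex}(s,\pi,\wedge^2)=1$ as well and there is nothing to prove. When $\omega_\pi$ is unramified, the poles of $L(s,\omega_\pi)$ are exactly the points $s_0$ with $q^{-s_0}=\omega_\pi(\varpi)^{-1}$, and each is simple. It then remains to check that each such pole is exceptional, i.e.\ that the leading Laurent coefficient $B_{s_0}$ of the family $\mathcal{J}(\pi)$ at $s_0$ (here $n=1$, so $\mathcal{S}(F^n)=\mathcal{S}(F)$) vanishes on $\mathcal{W}(\pi,\psi)\times\mathcal{S}_0(F)$. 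But for $\Phi\in\mathcal{S}_0(F)$, i.e.\ $\Phi(0)=0$, the Tate integral $I(s,\omega_\pi,\Phi)$ is a finite sum, hence a polynomial in $\mathbb{C}[q^{\pm s}]$ with no poles; therefore $J(s,W,\Phi)=W(I_2)I(s,\omega_\pi,\Phi)$ is holomorphic at $s_0$ for every $W$ and every $\Phi\in\mathcal{S}_0(F)$, so $B_{s_0}$ is identically zero on $\mathcal{W}(\pi,\psi)\times\mathcal{S}_0(F)$. Consequently every pole of $L(s,\pi,\wedge^2)$ is exceptional of the correct order, whence $L_{(0)}(s,\pi,\wedge^2)=1$ and $L_{ex}(s,\pi,\wedge^2)^{-1}=1-\omega_\pi(\varpi)q^{-s}=L(s,\pi,\wedge^2)^{-1}$.

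There is essentially no serious obstacle here: the entire content is that splitting off the central character reduces the local $GL_2$ exterior square theory to Tate's thesis, combined with the elementary fact that Tate integrals against Schwartz functions vanishing at $0$ are entire. The only point that needs a line of care is the bookkeeping that the exceptional-pole condition is phrased in terms of the bilinear form $B_{s_0}$ on $\mathcal{W}(\pi,\psi)\times\mathcal{S}_0(F)$ rather than on the representation space itself; but since $\mathcal{J}(\pi)$ is literally spanned by the $J(s,W,\Phi)$, this is immediate from the factorization $J(s,W,\Phi)=W(I_2)I(s,\omega_\pi,\Phi)$.
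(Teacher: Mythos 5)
Your proposal is correct and follows essentially the same route as the paper: it collapses the $GL_2$ Jacquet--Shalika integral to $W(I_2)\,I(s,\omega_\pi,\Phi)$, identifies $\mathcal{J}(\pi)$ with the Tate fractional ideal so that $L(s,\pi,\wedge^2)=L(s,\omega_\pi)$, and establishes $L=L_{ex}$ by noting that for $\Phi\in\mathcal{S}_0(F)$ the Tate integral is a finite sum in $\mathbb{C}[q^{\pm s}]$, so the leading Laurent coefficient vanishes on $\mathcal{W}(\pi,\psi)\times\mathcal{S}_0(F)$. Your extra bookkeeping (choosing $W$ with $W(I_2)\neq 0$ and checking the spans agree) only makes explicit what the paper leaves implicit.
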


Let us denote $W_F'$ be the Weil-Deligne group of $F$. Harris-Taylor \cite{HaTa01} and then Henniart \cite{He02} prove the local Langlands correspondence which states that there is natural bijection from the set of equivalence classes of the Frobenius semi-simple complex representations $\phi : W_F' \rightarrow GL_n(\mathbb{C})$ to the set of isomorphism classes of irreducible admissible complex representations of $GL_n$. Suppose that $\pi(\phi)$ is the corresponding irreducible admissible representation to $\phi$. Let $\wedge^2 : GL_n(\mathbb{C}) \rightarrow GL_N(\mathbb{C})$ with $N=\frac{n(n-1)}{2}$ be the exterior square representation. We let $L(s,\wedge^2(\phi))$ be the exterior square Artin's $L$-function. For $n=2$ we know that $\wedge^2 \phi=\mathrm{det}(\phi)$ and $L(s,\pi(\phi),\wedge^2)=L(s,\omega_{\pi(\phi)})$. Since $\mathrm{det}(\phi)$ and $\omega_{\pi(\phi)}$ correspond under the local Langlands correspondence, we obtain the agreement for the local arithmetic and analytic exterior square $L$-functions, that is, $L(s,\wedge^2(\phi))=L(s,\pi(\phi),\wedge^2)$.

\subsection{Families of integrals and $\mathbb{C}[q^{\pm s}]$-fractional ideals} 

Assume that $n > 1$. Throughout Section $4.1$ to Section $4.3$, $\pi$ is an induced representations of $GL_{2n}$ of Langlands type. The reason of making this assumption is so that we can utilize Whittaker models for $\pi_{(k-1)}$ and $\Phi^+(\pi_{(k)})$ in Proposition \ref{mira-model} and \ref{mira-model2}. We introduce different types of Jacquet-Shalika integrals associated to Whittaker model for $\pi_{(k-1)}$ in Proposition \ref{mira-model} to utilize the representation of smaller mirobolic subgroup $P_{2n-k+1}$ 
for $1 \leq k \leq 2n-1$.

\begin{definition*}
For $n > 1$ and $W \in \mathcal{W}(\pi,\psi)$, we define the following integrals:
 \begin{enumerate}
\item[$(\mathrm{i})$] $J_{(2m-1)}(s,W)$
\[
\begin{split}
&=\int_{N_{n-m} \backslash {GL}_{n-m}}  \int_{\mathcal{N}_{n-m} \backslash \mathcal{M}_{n-m}} W \begin{pmatrix}  \sigma_{2n-2m+1}  \begin{pmatrix} I_{n-m} & X & \\ & I_{n-m} &\\ &&1  \end{pmatrix} \begin{pmatrix}  g&&  \\ &g& \\ &&1 \end{pmatrix} &\hspace*{-5mm}  \\&\hspace*{-5mm} I_{2m-1} \end{pmatrix} \\
&\phantom{*************************************} \psi^{-1}(\mathrm{Tr} X) dX\; |\mathrm{det}(g)|^{s-2m} dg
\end{split}
\]
\end{enumerate}
with $0 < m < n$ and
 \begin{enumerate}
\item[$(\mathrm{ii})$] $ J_{(2m)}(s,W)$
\[
\begin{split}
&= \int_{N_{n-m} \backslash P_{n-m}}  \int_{\mathcal{N}_{n-m} \backslash \mathcal{M}_{n-m}} W \begin{pmatrix}  \sigma_{2n-2m}  \begin{pmatrix} I_{n-m} & X \\ & I_{n-m}  \end{pmatrix} \begin{pmatrix}  p&  \\ &p \end{pmatrix} &\hspace*{-3mm} \\&\hspace*{-3mm} I_{2m} \end{pmatrix} \\
 &\phantom{************************************}  \psi^{-1}(\mathrm{Tr} X) dX\; |\mathrm{det}(p)|^{s-2m-1} dp
 \end{split}
\]
\end{enumerate}
with $0 \leq m < n$. Let us denote by $\mathcal{J}_{(k)}(\pi)$ the span of rational functions defined by the integrals $J_{(k)}(s,W)$ for $0 \leq k \leq 2n-2$.

\end{definition*}

We discuss the absolute convergence of integrals $J_{(2m-1)}(s,W)$ and $J_{(2m)}(s,W)$ for $\mathrm{Re}(s)$ large. We view the integration over $N_{n-m} \backslash P_{n-m}$ as $N_{n-m-1} \backslash GL_{n-m-1}$ in $J_{(2m)}(s,W)$. We identify $GL_{n-m-1}$ with a subgroup of $GL_{n-m}$ via $g \mapsto \mathrm{diag}(g,1)$. Then $N_{n-m-1}$, $K_{n-m-1}$ and $A_{n-m-1}$ are embedded in $GL_{n-m}$ through $GL_{n-m-1}$.  Since $GL_{n-m-1}=N_{n-m-1}A_{n-m-1}K_{n-m-1}$, the integral $J_{(2m)}(s,W)$ becomes
\[
\begin{split}
&J_{(2m)}(s,W)=\int_{K_{n-m-1}}  \int_{A_{n-m-1}}  \int_{\mathcal{N}_{n-m} \backslash \mathcal{M}_{n-m}} \hspace*{-1mm} W\hspace*{-1mm}  \begin{pmatrix}\hspace*{-1mm}   \sigma_{2n-2m}  \begin{pmatrix} I_{n-m} & X \\ & I_{n-m}  \end{pmatrix} \hspace*{-1mm}  \begin{pmatrix}  ak&  \\ &ak \end{pmatrix} &\hspace*{-5mm}  \\&\hspace*{-5mm}  I_{2m} \end{pmatrix} \\
&\phantom{***************************}\delta_{B_{n-m-1}}^{-1}(a) |\mathrm{det}(a)|^{s-2m-1}\psi^{-1}(\mathrm{Tr} X) dX da dk
\end{split}
\]
with $a=m(a_1, \dotsm, a_{n-m-1})$. In a similar manner to the previous integral, we decompose $GL_{n-m}=N_{n-m}A_{n-m}K_{n-m}$ and obtain
\[
\begin{split}
&J_{(2m-1)}(s,W)\\
&=\int_{K_{n-m}}  \int_{A_{n-m}}   \int_{\mathcal{N}_{n-m} \backslash \mathcal{M}_{n-m}} W \begin{pmatrix}  \sigma_{2n-2m+1}  \begin{pmatrix} I_{n-m} & X & \\ & I_{n-m} &\\ &&1  \end{pmatrix} \begin{pmatrix}  ak&&  \\ &ak& \\ &&1 \end{pmatrix} &\hspace*{-5mm} \\&\hspace*{-5mm} I_{2m-1} \end{pmatrix} \\
&\phantom{******************************} \delta_{B_{n-m}}^{-1}(a) |\mathrm{det}(a)|^{s-2m}\psi^{-1}(\mathrm{Tr} X) dX da dk
\end{split}
\]
with $a=m(a_1,\dotsm,a_{n-m})$. The same argument in the proof of Theorem \ref{integral-rational} shows that there is a real number $r_{\pi}$, depending only on the representation $\pi$, such that for all $W$ and $s$ with $\mathrm{Re}(s) > r_{\pi}$, the integrals $J_{(2m-1)}(s,W)$ and $J_{(2m)}(s,W)$ are convergent absolutely and define rational functions in $\mathbb{C}(q^{-s})$ with common denominators relying on the representation. In the domain of absolute convergence, $J_{(2m-1)}(s,W)$ and $J_{(2m)}(s,W)$ may be regarded as a linear form on $\mathcal{W}(\pi,\psi)$ satisfying
\[
  J_{(2m-1)}(s,\pi ( \begin{pmatrix} r& \\ &I_{2m-1} \end{pmatrix} )W )=|\mathrm{det}(r)|^{-\frac{s_0-2m}{2}} \Theta(r) J_{(2m-1)}(s,W)
\]
for all $r \in P_{2n-2m+1} \cap S_{2n-2m+1}$ and
\[
  J_{(2m)}(s,\pi ( \begin{pmatrix} r& \\ &I_{2m} \end{pmatrix} )W )=|\mathrm{det}(r)|^{-\frac{s_0-2m-1}{2}} \Theta(r) J_{(2m)}(s,W).
\]
for all $r \in P_{2n-2m} \cap S_{2n-2m}$. As linear spaces $\mathcal{J}_{(2m-1)}(\pi)$ and $\mathcal{J}_{(2m)}(\pi)$ of $\mathbb{C}(q^{-s})$ are closed under multiplication by $q^s$ and $q^{-s}$, $\mathcal{J}_{(2m-1)}(\pi)$ and $\mathcal{J}_{(2m)}(\pi)$ are in fact $\mathbb{C}[q^{\pm s}]$-fractional ideals.

\begin{lemma}
Assume that $n > 1$. Let $\pi$ be an induced representations of $GL_{2n}$ of Langlands type. For $0 \leq k \leq 2n-2$, $J_{(k)}(s,W)$ converge absolutely for all $W \in \mathcal{W}(\pi,\psi)$ when $\mathrm{Re}(s)$ is sufficiently large, and define rational functions in $\mathbb{C}(q^{-s})$. Moreover $\mathcal{J}_{(k)}(\pi)$ are $\mathbb{C}[q^{\pm s}]$-fractional ideals in $\mathbb{C}(q^{-s})$. 
\end{lemma}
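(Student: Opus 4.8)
The strategy is simply to replay the proofs of Theorem~\ref{integral-rational} and Theorem~\ref{integral-rational-odd}. The key observation is that, up to a translation $s\mapsto s-2m+1$ (respectively $s\mapsto s-2m$), the integral $J_{(2m-1)}(s,W)$ (respectively $J_{(2m)}(s,W)$) is an integral of exactly the shape treated there: an odd Jacquet--Shalika integral for $GL_{2(n-m)+1}$ (respectively an even-type mirabolic Jacquet--Shalika integral for $GL_{2(n-m)}$ of the form $J_{(0)}(s,\cdot)$), only the relevant copy of $GL_{2(n-m)+1}$ (respectively $GL_{2(n-m)}$) now sits as a block inside $GL_{2n}$ and the Whittaker data is the restriction of $\mathcal{W}(\pi,\psi)$ to that block, normalized as in Proposition~\ref{mira-model}. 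So I would begin, as in the displayed computation above the statement, with the Iwasawa decomposition $GL_{n-m}=N_{n-m}A_{n-m}K_{n-m}$ for $J_{(2m-1)}$ and $N_{n-m}\backslash P_{n-m}\simeq N_{n-m-1}\backslash GL_{n-m-1}=N_{n-m-1}A_{n-m-1}K_{n-m-1}$ for $J_{(2m)}$; smoothness of $\pi$ lets one cover the compact group by finitely many cosets of a subgroup fixing $W$, reducing $J_{(k)}(s,W)$ to a finite sum of integrals over $A$ and $\mathcal{N}_{n-m}\backslash\mathcal{M}_{n-m}$ against the finitely many right translates $W_i=\varrho(\mathrm{diag}(k_i,k_i,\dots))W\in\mathcal{W}(\pi,\psi)$, exactly as in \eqref{Iwa-dec}.

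Next I would remove the $\mathcal{N}_{n-m}\backslash\mathcal{M}_{n-m}$ integration. Lemma~\ref{rational dec} is stated for $\sigma_{2n}$, but its proof only manipulates an $(n-m)\times(n-m)$ corner, so it applies verbatim to $\sigma_{2(n-m)+1}$ and $\sigma_{2(n-m)}$; iterating it (descending induction on the block size) replaces the inner integral by $\prod_k|a_k|^{-k(\ell-k)}$ times a finite sum of values $W'_i(\sigma\,\mathrm{diag}(a,a,\dots))$ with $W'_i\in\mathcal{W}(\pi,\psi)$ and $\ell$ equal to $n-m$ or $n-m-1$. Conjugating by $\sigma$ turns $\mathrm{diag}(a,a,\dots)$ into a torus element $b\in A_{2n}$ with the expected repeated-entry pattern (products of the $a_k$'s, with $1$'s in the trailing slots), so Matringe's asymptotic expansion Proposition~\ref{asymWh} applies to $\varrho(\sigma)W'_i$ evaluated at $b$. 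Substituting it, the powers of $|a_k|$ arising from $\delta_B^{-1}(a)$, from $|\det a|^{s-2m}$ or $|\det a|^{s-2m-1}$, and from the factor $|a_k|^{k(\cdot)/2}$ in the expansion recombine exactly as in the displayed identities $\prod|a_k|^{-2k(n-k)}|a_k|^{k(2n-2k)}|a_k|^{ks}=\prod|a_k|^{ks}$ (with $2n$ replaced by $2(n-m)$ or $2(n-m)+1$), leaving a finite sum of products of Tate integrals $\int_{F^{\times}}\omega_{\pi_j^{(\cdot)}}(a_k)v(a_k)^{m_k}\Phi_k(a_k)|a_k|^{k's}\,d^{\times}a_k$ (the odd-indexed factors being pinned to their value at $1$, or to zero, as in the proof of Theorem~\ref{integral-rational}). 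Each such factor converges for $\mathrm{Re}(s)>-\tfrac{1}{k'}\mathrm{Re}(\omega_{\pi_j^{(\cdot)}})$, giving a uniform abscissa $r_\pi$ depending only on $\pi$; and by Tate \cite{Tate} it is a geometric series summing to $Q(q^{-s})(1-\alpha q^{-s})^{-\beta}$ with $Q\in\mathbb{C}[q^{\pm s}]$ and $\alpha,\beta$ depending only on $\pi$. Hence $J_{(k)}(s,W)\in\mathbb{C}(q^{-s})$ with a common denominator depending only on $\pi$.

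Finally, the quasi-invariance identities recorded just above the statement, namely $J_{(2m-1)}(s,\pi(\mathrm{diag}(r,I_{2m-1}))W)=|\det r|^{-(s-2m)/2}\Theta(r)J_{(2m-1)}(s,W)$ for $r\in P_{2n-2m+1}\cap S_{2n-2m+1}$ and the analogue for $J_{(2m)}$, applied to $r=\mathrm{diag}(\varpi I,\varpi I)$, show that multiplication by a fixed power of $q^{\pm s}$ carries $J_{(k)}(s,W)$ into $\mathcal{J}_{(k)}(\pi)$; as $\mathcal{J}_{(k)}(\pi)$ is a nonzero $\mathbb{C}$-subspace of $\mathbb{C}(q^{-s})$ stable under $\mathbb{C}[q^{\pm s}]$, it is a $\mathbb{C}[q^{\pm s}]$-fractional ideal. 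The only place needing genuine attention is the middle step: one must check that conjugation by $\sigma_{2(n-m)+1}$ or $\sigma_{2(n-m)}$ really does land the torus element in $A_{2n}$ with the pattern that makes Proposition~\ref{asymWh} applicable and makes the exponent cancellation exact. Granting this, the argument is a transcription of the proofs already given, so I expect no further obstacle.
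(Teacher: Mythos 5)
Your proposal is correct and takes essentially the same route as the paper, which proves the lemma by invoking the argument of Theorem \ref{integral-rational} (Iwasawa decomposition, Lemma \ref{rational dec} to remove the unipotent integration, the asymptotic expansion of Proposition \ref{asymWh}, and reduction to Tate integrals with denominators depending only on $\pi$) together with the quasi-invariance under $P_{2n-2m+1}\cap S_{2n-2m+1}$, resp.\ $P_{2n-2m}\cap S_{2n-2m}$; and your exponent bookkeeping is sound, since the shifts $s-2m$, $s-2m-1$ exactly compensate for the expansion being taken on the torus of the big group $GL_{2n}$ (with trailing $1$'s), restoring $\prod_k|a_k|^{ks}$. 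The only point to tighten is the final step: take $r=\mathrm{diag}(g,g,1)$, resp.\ $r=\mathrm{diag}(g,g)$ with $g\in P_{n-m}$, and $v(\det g)=\pm1$ — your element $\mathrm{diag}(\varpi I,\varpi I)$ is not in the even-case mirabolic--Shalika group and would only give stability under $q^{\pm(n-m)s}$ — so that the span is stable under multiplication by $q^{\pm s}$ itself and is therefore a $\mathbb{C}[q^{\pm s}]$-fractional ideal.
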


We will reduce later our integral $J_{(2m)}(s,W)$ as a form of integrals below.

\begin{definition*}
For $n > 1$ and $0 < m < n$, we define the following integrals:
\[
\begin{split}
  &J_{(2m-1)}(s,W,\Phi)=\int_{N_{n-m} \backslash GL_{n-m}}  \int_{\mathcal{N}_{n-m} \backslash \mathcal{M}_{n-m}} \int_{F^{n-m}}\\ 
  &\phantom{*****} W \begin{pmatrix}  \sigma_{2n-2m+1}  \begin{pmatrix} I_{n-m} & X & \\ & I_{n-m} &\\ &&1  \end{pmatrix} \begin{pmatrix}  g&&  \\ &g& \\ &&1 \end{pmatrix}\begin{pmatrix}  I_{n-m} &&  \\ &I_{n-m} & \\ &y&1 \end{pmatrix} & \\& I_{2m-1} \end{pmatrix} \\
&\phantom{*******************************} \Phi(y)dy \psi^{-1}(\mathrm{Tr} X) dX\; |\mathrm{det}(g)|^{s-2m} dg
\end{split}
\]
with $W \in \mathcal{W}(\pi,\psi)$ and $\Phi \in \mathcal{S}(F^{n-m})$. 
\end{definition*}

The following Lemma shows that the functions $(W,\Phi) \mapsto J_{(2m-1)}(s,W,\Phi)$ span the fractional ideal $\mathcal{J}_{(2m-1)}(\pi)$ in $\mathbb{C}(q^{-s})$.

\begin{lemma}
\label{J2m-same}
 Assume that $n > 1$. Let $\pi$ be an induced representations of $GL_{2n}$ of Langlands type. As a $\mathbb{C}[q^{\pm s}]$-fractional ideal, the family of integrals $J_{(2m-1)}(s,W)$ and $J_{(2m-1)}(s,W,\Phi)$ span the sam ideal $\mathcal{J}_{(2m-1)}(\pi)$ in $\mathbb{C}(q^{-s})$ for $0 < m \leq n$. Moreover the fractional ideal $\mathcal{J}_{(2m-1)}(\pi)$ is spanned by the integral $J_{(2m-1)}(s,W)$ for $W$ in $\mathcal{W}(\pi_{(2m-1)},\psi)$.
\end{lemma}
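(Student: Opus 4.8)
The plan is to mimic the proof of Proposition \ref{odd-Lfunction} (the corresponding statement in the odd case, where the family $J(s,W)$ and $J(s,W,\Phi)$ were shown to span the same ideal), but now carried out ``one level down'' on the mirabolic $P_{2n-2m+1}$. First I would observe that the map $W \mapsto \rho(\Phi)W$, where
\[
\rho(\Phi)W(g)=\int_{F^{n-m}} W\left( g \begin{pmatrix} I_{n-m} & & \\ & I_{n-m} & \\ & y & 1 \end{pmatrix} \right)\Phi(y)\,dy,
\]
produces, because $\Phi$ is locally constant of compact support, a \emph{finite} sum of right translates of $W$, hence an element of $\mathcal{W}(\pi,\psi)$. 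Since the integrals $J_{(2m-1)}(s,W,\Phi)$ are, by construction, absolutely convergent for $\mathrm{Re}(s)$ large by the same argument used for $J_{(2m-1)}(s,W)$, one may interchange the $y$-integration with the outer integrations in the region of convergence and conclude $J_{(2m-1)}(s,W,\Phi)=J_{(2m-1)}(s,\rho(\Phi)W)$, an identity of rational functions after meromorphic continuation. This gives the inclusion of the span of the $J_{(2m-1)}(s,W,\Phi)$ inside $\mathcal{J}_{(2m-1)}(\pi)$.

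For the reverse inclusion I would choose, given $W$, a compact open subgroup $H$ of $F^{n-m}$ (the space of the $y$ variable) small enough that right translation by $\mathrm{diag}\!\big(I_{n-m},I_{n-m},(y,1)\big)$ with $y \in H$ fixes $W$; this is possible by smoothness. Taking $\Phi$ to be the normalized characteristic function of $H$ so that $\int_H \Phi(y)\,dy = 1$, one gets $\rho(\Phi)W = W$, hence $J_{(2m-1)}(s,W,\Phi)=J_{(2m-1)}(s,W)$ as rational functions. Thus the two families span the same $\mathbb{C}[q^{\pm s}]$-fractional ideal $\mathcal{J}_{(2m-1)}(\pi)$. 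For the last assertion, that $\mathcal{J}_{(2m-1)}(\pi)$ is spanned by the $J_{(2m-1)}(s,W)$ with $W \in \mathcal{W}(\pi_{(2m-1)},\psi)$, I would invoke Proposition \ref{mira-model}: the integrand of $J_{(2m-1)}(s,W)$ only involves $W$ through its restriction to matrices of the form $\mathrm{diag}(g',I_{2m-1})$ with $g' \in GL_{2(n-m)+1}$ sitting inside the argument $\sigma_{2n-2m+1}(\cdots)$, i.e.\ only through the function $|\det|^{-\frac{2m-1}{2}}W\mathrm{diag}(-,I_{2m-1})$, which by Proposition \ref{mira-model} is precisely a general element of $\mathcal{W}(\pi_{(2m-1)},\psi)$; absorbing the determinantal twist into the exponent $s-2m$ shows the two spans coincide.

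The routine but slightly delicate step is the Fubini-type interchange in the region of absolute convergence together with tracking that the quasi-invariance exponents match; but the genuinely substantive point --- and the one I expect to be the main obstacle --- is verifying that $J_{(2m-1)}(s,W,\Phi)$ really is absolutely convergent for $\mathrm{Re}(s)$ large with a bound independent of $\Phi$ in a fixed compact family, so that the manipulations are legitimate and the resulting identities of rational functions are unambiguous. This is handled exactly as in the absolute-convergence discussion preceding the Lemma (Iwasawa decomposition on the $GL_{n-m}$ factor, Lemma \ref{rational dec} to strip the unipotent integration, and Proposition \ref{asymWh} to control the torus asymptotics), noting that the extra $y$-integration against a compactly supported $\Phi$ contributes only finitely many right translates and so does not affect convergence. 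Once convergence is in place, the rest is the formal ideal-theoretic bookkeeping above.
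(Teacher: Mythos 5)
Your proposal is correct and follows essentially the same route as the paper: the containment $\langle J_{(2m-1)}(s,W,\Phi)\rangle \subset \mathcal{J}_{(2m-1)}(\pi)$ via $\rho(\Phi)W$ being a finite sum of right translates (with convergence inherited from the $J_{(2m-1)}(s,W_i)$), the reverse containment by taking $\Phi$ a (normalized) characteristic function of a small neighborhood of zero so that $\rho(\Phi)W$ is a positive multiple of $W$ by smoothness, and the final assertion by invoking Proposition \ref{mira-model} since the argument of $W$ in $J_{(2m-1)}(s,W)$ lies in the embedded mirabolic, so the integral depends only on the image of $W$ in $\mathcal{W}(\pi_{(2m-1)},\psi)$. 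The only cosmetic slip is phrasing the last step as restriction to all of $GL_{2(n-m)+1}$ rather than to the mirabolic $P_{2n-2m+1}$ (equivalently $GL_{2n-2m}$) to which Proposition \ref{mira-model} actually applies, but this does not affect the argument.
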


\begin{proof}
For $\Phi$ a characteristic function of a small neighborhood of zero, $J_{(2m-1)}(s,W)$ is equal to a positive multiple of $J_{(2m-1)}(s,W,\Phi)$ by the smoothness of $\pi$. Hence $\langle J_{(2m-1)}(s,W,\Phi) \rangle \supset \mathcal{J}_{(2m-1)}(\pi)$. For $\Phi \in  \mathcal{S}(F^{n-m})$, we define $\rho(\Phi)W$ by
\[
 \rho(\Phi)W(g)=\int_{F^{n-m}} W \left(g \begin{pmatrix} \begin{pmatrix} I_{n-m} &&\\ &I_{n-m}&\\&y&1  \end{pmatrix} &\\ & I_{2m-1}  \end{pmatrix} \right) \Phi(y) dy.
\]
with $W \in \mathcal{W}(\pi,\psi)$. Then $\rho(\Phi)W$ becomes a finite sum of right translations of $W$ and so we can write $\rho(\Phi)W$ as $\rho(\Phi)W(g)=\sum_i W_i(g)$ for $g \in GL_{2n}$ and some $W_i \in \mathcal{W}(\pi,\psi)$ which are right translations of $W$. As the integrals $J_{(2m-1)}(s,W_i)$ converge absolutely for all $W_i \in \mathcal{W}(\pi,\psi)$ when $\mathrm{Re}(s)$ is sufficiently large, $J_{(2m-1)}(s,W,\Phi)$ will do so, because $J(s,W,\Phi)=\sum_i J(s,W_i)$ in the realm of absolute convergence. Moreover we have $\langle J_{(2m-1)}(s,W,\Phi) \rangle \subset\mathcal{J}_{(2m-1)}(\pi)$.  Analogously to quasi-invariance property of $J(s,W,\Phi)$ in \eqref{invariance-odd}, $J_{(2m-1)}(s,W,\Phi)$ defined by meromorphic continuation is a bilinear form on $\mathcal{W}(\pi,\psi) \times \mathcal{S}(F^{n-m})$ satisfying
\[
 J_{(2m-1)}(s,\pi ( \begin{pmatrix} r& \\ &I_{2m-1} \end{pmatrix} )W, R(r)\Phi )=|\mathrm{det}(r)|^{-\frac{s_0-2m}{2}} \Theta(r) J_{(2m-1)}(s,W,\Phi),
\]
for all $r \in P_{2n-2m+1} \cap S_{2n-2m+1}$, where the action $R$ of Shalika subgroup $S_{2n-2m+1}$ on $\mathcal{S}(F^{n-m})$ is defined in Section $3.3$. Furthermore according to Proposition \ref{mira-model}, 
since $W$ in $J_{(2m-1)}(s,W)$ enter into these integrals through their restriction to $GL_{2n-2m}$, they depend on the image of $W$ in $\mathcal{W}(\pi_{(2m-1)},\psi)$. This concludes the proof.
\end{proof}

\subsection{An equality of $\mathbb{C}[q^{\pm s}]$-fractional ideals in $\mathbb{C}(q^{-s})$.}

Assume that $n > 1$. Let $\pi$ be an induced representations of $GL_{2n}$ of Langlands type. Our goal in this Section is to explain the equality  $\mathcal{J}_{(2m)}(\pi)=\mathcal{J}_{(2m+1)}(\pi)$ of fractional ideals in $\mathbb{C}[q^{\pm s}]$.

\begin{proposition}
\label{even-equal}
We assume that $n > 1$. Let $\pi$ be an induced representations of $GL_{2n}$ of Langlands type. As $\mathbb{C}[q^{\pm s}]$-fractional ideals in $\mathbb{C}(q^{-s})$, we have the equalities $\mathcal{J}_{(2m+1)}(\pi)=\mathcal{J}_{(2m)}(\pi) $ for $0 \leq m < n-1$.
\end{proposition}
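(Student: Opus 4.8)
The plan is to prove both inclusions $\mathcal{J}_{(2m+1)}(\pi)\subset\mathcal{J}_{(2m)}(\pi)$ and $\mathcal{J}_{(2m)}(\pi)\subset\mathcal{J}_{(2m+1)}(\pi)$ by manipulating the integrals $J_{(2m)}(s,W)$ and $J_{(2m-1)}(s,W,\Phi)$ (shifted to index $2m+1$), following the pattern of the analogous results for Rankin--Selberg and Bump--Friedberg integrals in \cite{CoPe,Ma09,Ma15}. First I would fix the set-up: in $J_{(2m)}(s,W)$ the inner integration runs over $N_{n-m}\backslash P_{n-m}$, which I identify with $N_{n-m-1}\backslash GL_{n-m-1}$ via $g\mapsto\mathrm{diag}(g,1)$, so that $P_{n-m}$ sits inside $GL_{n-m}$ and the ``missing'' coordinate is a vector $y\in F^{n-m-1}$. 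The two families then live over the same group $GL_{n-m-1}$ (equivalently the mirabolic $P_{n-m}$ viewed inside $GL_{2n-2m}$, or $P_{2n-2m-1}$ inside $GL_{2n-2m-1}$), which is exactly the compatibility that makes the equality plausible. The crucial input is Proposition \ref{mira-model} together with Lemma \ref{key relation}: a Whittaker function restricted to $\mathrm{diag}(g,I_k)$ with the extra $y$-variable shrunk to a neighbourhood of $0$ is, up to the Schwartz--Bruhat factor $\Phi(e_{n-m-1}g)$ and a power of $|\mathrm{det}|$, the same data as a Whittaker function of the smaller group.

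For the inclusion $\mathcal{J}_{(2m)}(\pi)\subset\mathcal{J}_{(2m+1)}(\pi)$: starting from $J_{(2m)}(s,W)$, I would insert the unipotent block $\begin{pmatrix} I_{n-m}&&\\&I_{n-m}&\\&y&1\end{pmatrix}$ and use smoothness of $W$ to write, for a suitable characteristic function $\varphi$ of a small compact open set, $W(g)=\int W(g\,u(y))\varphi({}^t y)\,dy$ — this is the device already used in the proof of Lemma \ref{rational dec} and in Lemma \ref{J2m-same}. Absorbing $\varphi$ (after Fourier transform) converts $J_{(2m)}(s,W)$ into a finite sum of integrals $J_{(2m+1)}(s,W_i,\Phi_i)$ with $\Phi_i$ supported near $0$, hence into $\mathcal{J}_{(2m+1)}(\pi)$ by Lemma \ref{J2m-same}. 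Conversely, for $\mathcal{J}_{(2m+1)}(\pi)\subset\mathcal{J}_{(2m)}(\pi)$: I would take $J_{(2m+1)}(s,W,\Phi)$, apply (the relevant analogue of) Lemma \ref{key relation} to replace the factor $\Phi(y)$ by passing to a new Whittaker vector $v_0$, and then use that the $y$-integral over $F^{n-m-1}$ against $\Phi$ collapses — by Fourier inversion and the quasi-invariance computed in Lemma \ref{J2m-same} — to a single evaluation, leaving precisely $J_{(2m)}(s,W')$ for some $W'\in\mathcal{W}(\pi,\psi)$. Care is needed that the powers of $|\mathrm{det}(g)|$ match: the exponents $s-2m$ and $s-2m-1$ in the two definitions are arranged exactly so that the Jacobian of the change of variables $y\mapsto y'$ (which produces a $|\mathrm{det}(g)|^{-1}$, analogous to the factor $|a_1||a_2|^2\cdots$ in Lemma \ref{rational dec}) is cancelled; I would verify this bookkeeping explicitly on the torus using the coordinates $m(a_1,\dots,a_{n-m-1})$.

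I expect the main obstacle to be the second inclusion, i.e.\ showing that the $y$-integration in $J_{(2m+1)}(s,W,\Phi)$ can genuinely be removed without enlarging the ideal — one must be sure that the resulting $W'$ still lies in $\mathcal{W}(\pi,\psi)$ (not some larger induced space) and that no spurious denominator is introduced. This is where the hypothesis that $\pi$ is of Langlands type is essential, since it is precisely for such $\pi$ that $V_\pi\cong\mathcal{W}(\pi,\psi)$ and that Proposition \ref{mira-model} and Proposition \ref{mira-model2} identify $\Phi^+(\pi_{(k)})$ with the subspace of Whittaker functions vanishing when the bottom row is small; the $\Phi\in\mathcal{S}(F^{n-m-1})$ with $\Phi(0)=0$ correspond to that subspace, and the quotient by it is what carries the information of the next derivative. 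I would conclude by remarking that, combined with Lemma \ref{J2m-same}, this yields $\mathcal{J}_{(2m)}(\pi)=\mathcal{J}_{(2m+1)}(\pi)=\langle J_{(2m+1)}(s,W)\mid W\in\mathcal{W}(\pi_{(2m+1)},\psi)\rangle$ for $0\le m<n-1$, which is the form in which the equality will be used in the factorization argument of Sections \ref{...} (the downward recursion relating $\mathcal{J}_{(0)}(\pi)$ to exceptional $L$-factors of even derivatives).
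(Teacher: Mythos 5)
Your overall skeleton is the right one and matches the paper's: reduce to the $\Phi$-version of the $(2m+1)$-family via Lemma \ref{J2m-same}, match each integral of one type with an integral of the other type using Lemma \ref{key relation}, and account for the shift in the determinant exponent by a Jacobian $|\mathrm{det}(g)|^{-1}$. But the mechanism you give for both inclusions has genuine gaps. For $\mathcal{J}_{(2m)}(\pi)\subset\mathcal{J}_{(2m+1)}(\pi)$, the extra variable $y\in F^{n-m-1}$ of $J_{(2m+1)}(s,W,\Phi)$ is \emph{not} created by inserting a unipotent integration through smoothness of $W$ (the device of Lemma \ref{rational dec}): that maneuver leaves the matrix integration of $J_{(2m)}$ over $\mathcal{N}_{n-m}\backslash\mathcal{M}_{n-m}$ untouched, whereas $J_{(2m+1)}$ integrates over the strictly smaller $\mathcal{N}_{n-m-1}\backslash\mathcal{M}_{n-m-1}$, so your finite sum never acquires the right shape. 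In the paper the $y$ is \emph{extracted} from the existing matrix variable, by writing $Z=\begin{pmatrix}X&\\ y&0\end{pmatrix}$ with $X\in\mathcal{N}_{n-m-1}\backslash\mathcal{M}_{n-m-1}$, $y\in F^{n-m-1}$, and viewing $N_{n-m}\backslash P_{n-m}$ as $N_{n-m-1}\backslash GL_{n-m-1}$; the change of variables $y\mapsto yg^{-1}$ then produces the $|\mathrm{det}(g)|^{-1}$ taking $s-2m-1$ to $s-2m-2$ (the relevant exponents here, not $s-2m$ and $s-2m-1$). Moreover you never perform the step that actually identifies the two shapes: conjugation by the explicit permutation $\omega_{2n-2m}=\begin{pmatrix}\sigma_{2n-2m-2}^{-1}&\\ &I_2\end{pmatrix}\sigma_{2n-2m}$, which rewrites $\sigma_{2n-2m}\,\mathrm{diag}(g,1,g,1)\,\sigma_{2n-2m}^{-1}$ and the two unipotent blocks in the $\sigma_{2n-2m-1}$-form with the corner enlarged from $I_{2m}$ to $I_{2m+1}$. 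Only after that does the second part of Lemma \ref{key relation} supply $W^1$ and $\Phi$ with $W^{\circ}=W^1\cdot\Phi(y)$, giving the exact identity $J_{(2m)}(s,W)=J_{(2m+1)}(s,W^1,\Phi)$.

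For the reverse inclusion your description does not work as stated: the $y$-integral in $J_{(2m+1)}(s,W,\Phi)$ does not ``collapse to a single evaluation''---absorbing $\Phi$ into finitely many right translates, or shrinking its support, only shows $J_{(2m+1)}(s,W,\Phi)\in\mathcal{J}_{(2m+1)}(\pi)$ again (this re-proves part of Lemma \ref{J2m-same}), and the resulting integral still has the $(2m+1)$-shape ($\sigma_{2n-2m-1}$, corner $I_{2m+1}$, matrix of size $n-m-1$), not the shape of $J_{(2m)}$. What one actually does is use the \emph{first} part of Lemma \ref{key relation} to choose $W^{\circ}$ with $W^{\circ}=W^1\cdot\Phi(y)$ on the relevant matrices, and then reverse the three steps above (undo the conjugation by $\omega_{2n-2m}$, change variables $y\mapsto yg$, and reassemble $X$ and $y$ into a single coset in $\mathcal{N}_{n-m}\backslash\mathcal{M}_{n-m}$), so that the $y$-integration is retained and reabsorbed into the larger matrix integration, yielding $J_{(2m+1)}(s,W^1,\Phi)=J_{(2m)}\bigl(s,\pi\bigl(\begin{smallmatrix}\omega_{2n-2m}^{-1}&\\ &I_{2m}\end{smallmatrix}\bigr)W^{\circ}\bigr)$. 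Finally, Propositions \ref{mira-model} and \ref{mira-model2} and the role of $\Phi$ with $\Phi(0)=0$, which you single out as the crux, are not what this proposition rests on: they are the key to the other link in the chain, $\mathcal{J}_{(2m)}(\pi)\subset\mathcal{J}_{(2m-1)}(\pi)$ (Proposition \ref{J2m-char}); for the equality at hand only Lemma \ref{key relation} and Lemma \ref{J2m-same} are needed.
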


\begin{proof}
We begin with showing the inclusion $\mathcal{J}_{(2m)}(\pi) \subset \langle J_{(2m+1)}(s,W,\Phi) | W \in \mathcal{W}(\pi,\psi), \Phi \in \mathcal{S}(F^{n-m-1}) \rangle$. Throughout this proof, let $I$ denote the identity matrix of size $n-m-1$. Following Section 7 of \cite{JaSh88}, the integral 
over $N_{n-m} \backslash P_{n-m}$ in $J_{(2m)}(s,W)$ can be reduced to an integral over $N_{n-m-1} \backslash GL_{n-m-1}$ and the integral over $\mathcal{N}_{n-m} \backslash \mathcal{M}_{n-m}$ in $J_{(2m)}(s,W)$ can be regarded as an integral over $\mathcal{N}_{n-m-1} \backslash \mathcal{M}_{n-m-1} \times F^{n-m-1}$.  For $Z \in \mathcal{N}_{n-m} \backslash \mathcal{M}_{n-m}$, we write $Z=\begin{pmatrix} X&\\y&0 \end{pmatrix}$ with $X \in  \mathcal{N}_{n-m-1} \backslash \mathcal{M}_{n-m-1}$ and $y \in F^{n-m-1}$. Then our integral $J_{(2m)}(s,W)$ can be written as
\[
\begin{split}
J_{(2m)}(s,W)&=\int_{N_{n-m-1} \backslash GL_{n-m-1}} \int_{\mathcal{N}_{n-m-1} \backslash \mathcal{M}_{n-m-1}} \int_{F^{n-m-1}}\\
&\phantom{*********} \times W \begin{pmatrix} \sigma_{2n-2m} \begin{pmatrix} I &&X&\\&1&y&\\&&I&\\&&&1 \end{pmatrix} \begin{pmatrix} g&&&\\ &1&&\\&&g&\\&&&1 \end{pmatrix} &\\ & I_{2m} \end{pmatrix} \\
& \phantom{************************} dy \psi^{-1}(\mathrm{Tr} X) dX |\mathrm{det}(g)|^{s-2m-1}  dg.
\end{split}
\]
Consider the following simple matrix multiplication :
\[
\begin{pmatrix} I &&X&\\&1&y&\\&&I&\\&&&1 \end{pmatrix} \begin{pmatrix} g&&&\\ &1&&\\&&g&\\&&&1 \end{pmatrix}
 =\begin{pmatrix} I&&X&\\ &1&&\\&&I&\\&&&1 \end{pmatrix}  \begin{pmatrix} g&&&\\&1&&\\&&g&\\&&&1 \end{pmatrix}\begin{pmatrix} I&&&\\&1&yg&
\\&&I&\\&&&1 \end{pmatrix},
\]
Applying the change of variables of $y$ to $yg^{-1}$, we get the expression :
\[
\begin{split}
J_{(2m)}(s,W)&=\int_{N_{n-m-1} \backslash GL_{n-m-1}} \int_{\mathcal{N}_{n-m-1} \backslash \mathcal{M}_{n-m-1}} \int_{F^{n-m-1}}\\
& \phantom{***} 
W \begin{pmatrix}\sigma_{2n-2m}  \begin{pmatrix} I&&X&\\ &1&&\\&&I&\\&&&1 \end{pmatrix}  \begin{pmatrix} g&&&\\&1&&\\&&g&\\&&&1 \end{pmatrix}\begin{pmatrix} I&&&\\&1&y&
\\&&I&\\&&&1 \end{pmatrix} &\\ & I_{2m} \end{pmatrix}\\
&\phantom{***********************}  dy \psi^{-1}(\mathrm{Tr} X) dX |\mathrm{det}(g)|^{s-2m-2}  dg,
\end{split}
\]
Set $\omega_{2n-2m}= \begin{pmatrix} \sigma_{2n-2m-2}^{-1}&\\ & I_2 \end{pmatrix}  \sigma_{2n-2m}$. Since $\sigma_{2n-2m}$ and $\displaystyle \begin{pmatrix} \sigma_{2n-2m-2} & \\ & I_2 \end{pmatrix}^{-1}$ are
\[
\begin{split}
 &\left( \begin{matrix} 1 & 2 & \cdots & n-m-1 & n-m & |  \\ 
                                   1& 3 & \cdots & 2n-2m-3 & 2n-2m-1 & | 
                               \end{matrix} \right. \\
  &  \phantom{****************} \left. \begin{matrix}  | & n-m+1 & n-m+2 & \dotsm \quad 2n-2m-1  & 2n-2m \\  
                                                                               | & 2 & 4 & \dotsm \quad 2n-2m-2  & 2n-2m \end{matrix} \right)                                                     
\end{split}
\]
and
\[
\begin{split}
&\left( \begin{matrix} 1 & 2 & \dotsm \quad n-m-1 & |  \\
                                  1 & 3 & \dotsm \quad  2n-2m-3 & |   \end{matrix} \right. \\
 &  \phantom{******} \left. \begin{matrix} | &  n-m & n-m+1 & \dotsm \quad 2n-2m-2 & | & 2n-2m-1 & 2n-2m \\
                                                                             | &  2 & 4 & \dotsm \quad 2n-2m-2 & | & 2n-2m-1 & 2n-2m  \end{matrix} \right)^{-1},
\end{split}
\]
respectively, the composition of two permutations implies that
\[
\begin{split}
\omega_{2n-2m}&=\left( \begin{matrix}  1 &  \dotsm & n-m-1 & | & n-m & |  \\
                                     1 & \dotsm & n-m-1 & | & 2n-2m-1 & | 
         \end{matrix} \right. \\
   & \phantom{*********}    \left. \begin{matrix}     | & n-m+1 & n-m+2 \quad \dotsm \quad  2n-2m-1 \quad | \quad 2n-2m \\
                                                                       | & n-m & n-m+1 \quad \dotsm \quad  2n-2m-2 \quad | \quad 2n-2m
                                        \end{matrix}    \right).
\end{split}
\]
Conjugating by $\omega_{2n-2m}$ yields that
\[
\begin{split}
& \sigma_{2n-2m}  \begin{pmatrix} g&&&\\ &1&&\\&&g&\\&&&1 \end{pmatrix}   \sigma_{2n-2m}^{-1} \\
& =\begin{pmatrix} \sigma_{2n-2m-2} & \\ & I_2 \end{pmatrix} \left\{ \omega_{2n-2m}  \begin{pmatrix} g&&&\\ &1&&\\&&g&\\&&&1 \end{pmatrix} \omega_{2n-2m}^{-1} \right\} \begin{pmatrix} \sigma_{2n-2m-2} & \\ & I_2 \end{pmatrix}^{-1} \\
  &=\hspace*{-1mm} \begin{pmatrix} \sigma_{2n-2m-2} &\hspace*{-1mm} \\ &\hspace*{-1mm} I_2 \end{pmatrix}\hspace*{-1mm}   \begin{pmatrix} g&&\hspace*{-1mm}\\ &g&\hspace*{-1mm}\\&&\hspace*{-1mm}I_2 \end{pmatrix}  \hspace*{-1mm}   \begin{pmatrix} \sigma_{2n-2m-2} &\hspace*{-1mm} \\ &\hspace*{-1mm} I_2 \end{pmatrix}^{-1} 
\hspace*{-1mm}  =\hspace*{-1mm} \begin{pmatrix} \hspace*{-1mm} \sigma_{2n-2m-2}  \begin{pmatrix} g &  \\ & g  \end{pmatrix} \sigma_{2n-2m-2}^{-1} &\hspace*{-1mm}\\&\hspace*{-1mm} I_2 \end{pmatrix}. 
 \end{split}
\]
Similarly, we have
\[
 \sigma_{2n-2m}\hspace*{-1mm}   \begin{pmatrix} I_{n-m}&\hspace*{-1mm} &X&\\ &\hspace*{-1mm} 1&&\\&\hspace*{-1mm} &I_{n-m}&\\&\hspace*{-1mm} &&1 \end{pmatrix} \hspace*{-1mm}   \sigma_{2n-2m}^{-1} 
 =\begin{pmatrix}  \sigma_{2n-2m-2}  \begin{pmatrix} I_{n-m-1} & X \\ & I_{n-m-1}  \end{pmatrix} \sigma_{2n-2m-2}^{-1} &\\& I_2 \end{pmatrix}
\]
and
\[
 \sigma_{2n-2m}\hspace*{-1mm}   \begin{pmatrix} I_{n-m}&&&\\ &1&y&\\&&I_{n-m}&\\&&&1 \end{pmatrix} \hspace*{-1mm}   \sigma_{2n-2m}^{-1} 
 =\begin{pmatrix}  \sigma_{2n-2m-1} \hspace*{-1mm}  \begin{pmatrix} I_{n-m-1} &\hspace*{-2mm}  &\hspace*{-2mm}  \\ &\hspace*{-2mm}  I_{n-m-1}&\hspace*{-2mm} \\&\hspace*{-2mm} y&\hspace*{-2mm} 1  \end{pmatrix}\hspace*{-1mm}  \sigma_{2n-2m-1}^{-1} &\hspace*{-2mm} \\&\hspace*{-2mm}  1 \end{pmatrix}. 
\]
Taken together, we are left with
\[
\tag{4.1}
\label{J2m integral}
 \begin{split}
J_{(2m)}(s,W)&=\int_{N_{n-m-1} \backslash GL_{n-m-1}} \int_{\mathcal{N}_{n-m-1} \backslash \mathcal{M}_{n-m-1}} \int_{F^{n-m-1}}\\
&\phantom{***} 
W^{\circ} \begin{pmatrix}\sigma_{2n-2m-1}  \begin{pmatrix} I&X&\\ &I&\\&&1 \end{pmatrix}  \begin{pmatrix} g&&\\&g&\\&&1 \end{pmatrix}\begin{pmatrix} I&&\\&I&
\\&y&1 \end{pmatrix} &\\ & I_{2m+1} \end{pmatrix}\\
&\phantom{**********************}  dy \psi^{-1}(\mathrm{Tr} X) dX |\mathrm{det}(g)|^{s-2m-2}  dg,
\end{split}
\]
where $W^{\circ}=\pi \begin{pmatrix} \omega_{2n-2m} & \\ & I_{2m} \end{pmatrix}W$. From the second part of Lemma \ref{key relation}, there exists a function $W^1 \in \mathcal{W}(\pi,\psi)$ and a function $\Phi$ in $\mathcal{S}(F^{n-m-1})$ such that for $g \in GL_{n-m-1}$, $X \in \mathcal{M}_{n-m-1}$, and $y \in F^{n-m-1}$
\[
\tag{4.2}
\label{J2m relation}
\begin{split}
&W^{\circ} \begin{pmatrix}\sigma_{2n-2m-1}  \begin{pmatrix} I&X&\\ &I&\\&&1 \end{pmatrix}  \begin{pmatrix} g&&\\&g&\\&&1 \end{pmatrix}\begin{pmatrix} I&&\\&I&
\\&y&1 \end{pmatrix} &\\ & I_{2m+1} \end{pmatrix}
\\
& =W^1 \begin{pmatrix}\sigma_{2n-2m-1}  \begin{pmatrix} I&X&\\ &I&\\&&1 \end{pmatrix}  \begin{pmatrix} g&&\\&g&\\&&1 \end{pmatrix}\begin{pmatrix} I&&\\&I&
\\&y&1 \end{pmatrix} &\\ & I_{2m+1} \end{pmatrix} \Phi(y).
\end{split}
\]
If we insert this relation between $W^{\circ}$ and $W^1$ into our integral for $J_{(2m)}(s,W)$ in \eqref{J2m integral}, we obtain $J_{(2m)}(s,W)=J_{(2m+1)}(s,W^1,\Phi)$ for $\mathrm{Re}(s) \gg 0$. We extend this equality meromorphically to rational functions in $\mathbb{C}(q^{-s})$.  This says that $J_{(2m)}(s,W)$ is equal to the integral $J_{(2m+1)}(s,W^1,\Phi)$ for appropriate choice of $W^1 \in \mathcal{W}(\pi,\psi)$ and $\Phi \in \mathcal{S}(F^{n-m-1})$. Hence we have $\mathcal{J}_{(2m)}(\pi) \subset \langle J_{(2m+1)}(s,W,\Phi)\; |\; W \in \mathcal{W}(\pi,\psi), \Phi \in \mathcal{S}(F^{n-m-1}) \rangle$ of $\mathbb{C}[q^{\pm s}]$-fractional ideals.
\par

 It remains to establish the other containment $\mathcal{J}_{(2m)}(\pi) \supset\langle J_{(2m+1)}(s,W,\Phi) \rangle$. Given $W^1 \in \mathcal{W}(\pi,\psi)$ and $\Phi \in \mathcal{S}(F^{n-m-1})$, by the first part of Lemma \ref{key relation}, we may choose $W^{\circ} \in \mathcal{W}(\pi,\psi)$ satisfying the formula in \eqref{J2m relation}. Reproduce the steps \eqref{J2m integral}$-$\eqref{J2m relation} backward to land at
$J_{(2m+1)}(s,W^1,\Phi)=J_{(2m)}(s,\pi \begin{pmatrix}\omega_{2n-2m}^{-1}&\\ & I_{2m} \end{pmatrix} W^{\circ})$. Altogether each individual integral $J_{(2m+1)}(s,W^1,\Phi)$ is actually equal to an integral $J_{(2m)}(s,W)$ for appropriate choice of $W \in \mathcal{W}(\pi,\psi)$. Therefore we obtain that $\mathcal{J}_{(2m)}(\pi) \supset \langle J_{(2m+1)}(s,W,\Phi) \rangle$ as $\mathbb{C}[q^{\pm s}]$-fractional ideals.

\par
 As $\mathcal{J}_{(2m+1)}(\pi)=\langle J_{(2m+1)}(s,W,\Phi)\; |\; W \in \mathcal{W}(\pi,\psi), \Phi \in \mathcal{S}(F^{n-m-1}) \rangle$ by Lemma \ref{J2m-same}, we have the equalities $\mathcal{J}_{(2m+1)}(\pi)=\mathcal{J}_{(2m)}(\pi) $ of $\mathbb{C}[q^{\pm s}]$-fractional ideals.

\end{proof}

\subsection{An inclusion of $\mathbb{C}[q^{\pm s}]$-fractional ideals in $\mathbb{C}(q^{-s})$.}

Assume that $n > 1$. Let $\pi$ be an induced representations of $GL_{2n}$ of Langlands type. Our task for this Section is to describe the inclusion $\mathcal{J}_{(2m)}(\pi) \subset \mathcal{J}_{(2m-1)}(\pi)$ of $\mathbb{C}[q^{\pm s}]$-fractional ideals in $\mathbb{C}(q^{-s})$.

\begin{proposition}
\label{J2m-char}
We assume that $n > 1$. Let $\pi$ be an induced representations of $GL_{2n}$ of Langlands type. For $1 \leq m < n$, $\mathcal{J}_{(2m)}(\pi)$ is the $\mathbb{C}[q^{\pm s}]$-fractional ideal spanned by the integrals $J_{(2m-1)}(s,W)$ for $W$ belonging to $\mathcal{W}(\pi_{(2m-1),2},\psi)$. Furthermore we have the inclusion $\mathcal{J}_{(2m)}(\pi) \subset \mathcal{J}_{(2m-1)}(\pi)$ of $\mathbb{C}[q^{\pm s}]$-fractional ideals. 

\end{proposition}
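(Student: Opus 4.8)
The plan is to recognize that both families $J_{(2m-1)}(s,\cdot)$ and $J_{(2m)}(s,\cdot)$ are, up to a fixed monomial in $q^{\pm s}$, Jacquet--Shalika integrals attached to the same mirabolic representation $\pi_{(2m-1)}$ of $P_{2n-2m+1}$: the first over $N_{n-m}\backslash GL_{n-m}$ and the second over $N_{n-m}\backslash P_{n-m}$. Since $\sigma_{2n-2m+1}$ restricts to $\sigma_{2n-2m}$ on the first $2n-2m$ coordinates and fixes the last one, the argument of $W$ occurring in $J_{(2m-1)}(s,W)$ rewrites as $\mathrm{diag}\!\left(\sigma_{2n-2m}\begin{pmatrix}g & Xg\\ & g\end{pmatrix},\,I_{2m}\right)$ with $g\in GL_{n-m}$; by Proposition \ref{mira-model} (with the relevant integer equal to $2m$) and Lemma \ref{J2m-same}, $J_{(2m-1)}(s,W)$ depends on $W$ only through its image $\widehat W\in\mathcal{W}(\pi_{(2m-1)},\psi)$, and likewise for $J_{(2m)}(s,W)$, because a mirabolic $p$ has bottom row $e_{n-m}$ so $\sigma_{2n-2m}\begin{pmatrix}p & Xp\\ & p\end{pmatrix}\in P_{2n-2m}$. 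Iterating $\int_{N_{n-m}\backslash GL_{n-m}}=\int_{P_{n-m}\backslash GL_{n-m}}\int_{N_{n-m}\backslash P_{n-m}}$ through the partial Iwasawa decomposition (Lemma \ref{zpkdec} and the following Lemma identifying $P_{n-m}\backslash GL_{n-m}$ with $F^{n-m}\setminus\{0\}$ by the bottom row) and factoring $\begin{pmatrix}p\dot g & X p\dot g\\ & p\dot g\end{pmatrix}=\begin{pmatrix}p & Xp\\ & p\end{pmatrix}\mathrm{diag}(\dot g,\dot g)$, I obtain the smearing identity
\[
J_{(2m-1)}(s,W)=\int_{P_{n-m}\backslash GL_{n-m}} J_{(2m)}\!\left(s,\ \pi_{(2m-1)}(\mathrm{diag}(\dot g,\dot g))\,\widehat W\right) w(\dot g,s)\, d\dot g ,
\]
with $w(\dot g,s)$ an explicit normalizing factor. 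I would also record, using $\pi_{(2m-1),2}=(\Phi^+)(\Phi^-)(\pi_{(2m-1)})=\Phi^+(\pi_{(2m)})$ and Proposition \ref{mira-model2}, that $\mathcal{W}(\pi_{(2m-1),2},\psi)$ is exactly the subspace of $\widehat W\in\mathcal{W}(\pi_{(2m-1)},\psi)$ for which there is $N>0$ with $\widehat W\begin{pmatrix}h & \\ & I_{2m}\end{pmatrix}\equiv 0$ whenever the $(2n-2m)$-th row of $h$ has all entries of absolute value $<q^{-N}$; since the bottom row of $\begin{pmatrix}g & Xg\\ & g\end{pmatrix}$ is $(0,e_{n-m}g)$, this vanishing condition corresponds precisely to the bottom-row coordinate $\dot g\mapsto e_{n-m}\dot g$ of $P_{n-m}\backslash GL_{n-m}$.

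For the inclusion $\mathcal{J}_{(2m)}(\pi)\subseteq\langle J_{(2m-1)}(s,W)\mid W\in\mathcal{W}(\pi_{(2m-1),2},\psi)\rangle$, given $\widehat W$ realized through some $W\in\mathcal{W}(\pi,\psi)$ I would apply the first part of Lemma \ref{key relation} to produce $W^1\in\mathcal{W}(\pi,\psi)$ with $W^1\begin{pmatrix}h & \\ & I_{2m}\end{pmatrix}=W\begin{pmatrix}h & \\ & I_{2m}\end{pmatrix}\Phi(e_{2n-2m}h)$ for $h\in GL_{2n-2m}$, where $\Phi\in\mathcal{S}(F^{2n-2m})$ is a small characteristic function supported in a neighbourhood of $e_{2n-2m}$ avoiding the origin and normalized so $\Phi(e_{2n-2m})=1$. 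By the matching of the previous paragraph the image $\widehat W^1$ lies in $\mathcal{W}(\pi_{(2m-1),2},\psi)$; on the $J_{(2m)}$-domain one has $\Phi\equiv 1$, so $J_{(2m)}(s,\widehat W^1)=J_{(2m)}(s,\widehat W)$, whereas in the smearing identity the same $\Phi$ confines $\dot g$ to a small neighbourhood of the identity coset on which $\pi_{(2m-1)}(\mathrm{diag}(\dot g,\dot g))\widehat W^1=\widehat W^1$ and $w$ is constant, so the identity collapses to $J_{(2m-1)}(s,\widehat W^1)=c\,J_{(2m)}(s,\widehat W)$ with $c$ a nonzero constant. Since the $J_{(2m)}(s,\widehat W)$ span $\mathcal{J}_{(2m)}(\pi)$ by Lemma \ref{J2m-same}, this direction follows.

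For the reverse inclusion I would start from $W\in\mathcal{W}(\pi_{(2m-1),2},\psi)$; the vanishing condition makes the integrand in the smearing identity vanish for $e_{n-m}\dot g$ small, so the integral runs over $\{\dot g:\ |e_{n-m}\dot g|\ge q^{-N}\}$. Decomposing $\dot g$ again by the partial Iwasawa decomposition and using that $\widehat W$ is fixed on the right by a congruence subgroup, the $\dot g$-integral reduces to a finite sum over $K_{n-m}$-coset representatives of $J_{(2m)}(s,W_j)$ against Tate-type radial integrals $\int_{v(a)\ge -N}\omega_\bullet(a)|a|^{\ast}\,d^{\times}a$; exactly as in the proof of Theorem \ref{integral-rational}, the asymptotic expansion of Whittaker functions (Proposition \ref{asymWh}) guarantees that these radial integrals converge and lie in $\mathbb{C}[q^{\pm s}]$. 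As $\mathcal{J}_{(2m)}(\pi)$ is a $\mathbb{C}[q^{\pm s}]$-fractional ideal, this gives $J_{(2m-1)}(s,W)\in\mathcal{J}_{(2m)}(\pi)$, hence the equality $\mathcal{J}_{(2m)}(\pi)=\langle J_{(2m-1)}(s,W)\mid W\in\mathcal{W}(\pi_{(2m-1),2},\psi)\rangle$; the inclusion $\mathcal{J}_{(2m)}(\pi)\subseteq\mathcal{J}_{(2m-1)}(\pi)$ is then immediate, since $\mathcal{W}(\pi_{(2m-1),2},\psi)\subseteq\mathcal{W}(\pi_{(2m-1)},\psi)$ and the latter model spans $\mathcal{J}_{(2m-1)}(\pi)$ by Lemma \ref{J2m-same}. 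I expect the main obstacle to be exactly this last reduction of the smearing integral: one must show that the $\Phi^+$-vanishing condition together with $K$-finiteness genuinely degenerates the higher-dimensional $GL_{n-m}$-integration to a $\mathbb{C}[q^{\pm s}]$-linear combination of $J_{(2m)}$-values (and keep careful track of the Jacobians in $w(\dot g,s)$), which is precisely where the Whittaker asymptotics of Proposition \ref{asymWh} have to be invoked.
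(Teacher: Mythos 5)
Your overall strategy is the same as the paper's: split the $GL_{n-m}$-integration in $J_{(2m-1)}$ into a mirabolic part (producing $J_{(2m)}$-shaped inner integrals) and a radial/coset part, and combine Lemma \ref{key relation} with the characterization of $\mathcal{W}(\pi_{(2m-1),2},\psi)$ in Proposition \ref{mira-model2}. The genuine gap is in your argument that $J_{(2m-1)}(s,W)\in\mathcal{J}_{(2m)}(\pi)$ for $W\in\mathcal{W}(\pi_{(2m-1),2},\psi)$. The $\Phi^+$-vanishing condition only bounds the bottom-row (radial) variable away from $0$; the direction $|z|\to\infty$ is a priori unbounded, and you propose to handle the leftover radial integration as a "Tate-type" integral that converges by Proposition \ref{asymWh} and lies in $\mathbb{C}[q^{\pm s}]$. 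This fails as stated for two reasons. First, the radial parameter enters the Whittaker function through $\mathrm{diag}(zI_{2n-2m},I_{2m})$, which is not central in $GL_{2n}$, so it cannot be pulled out of $J_{(2m)}$ as a scalar Tate factor: the radial contribution is an integral of the family $z\mapsto J_{(2m)}(s,\varrho(\mathrm{diag}(zI_{2n-2m},I_{2m}))W_j)$, and membership in the $\mathbb{C}[q^{\pm s}]$-module $\mathcal{J}_{(2m)}(\pi)$ requires this to reduce to a \emph{finite} sum. Second, even where a genuine Tate-type series appears, a one-sided convergent geometric series is a rational function with a nontrivial denominator (exactly as in the proof of Theorem \ref{integral-rational}), not an element of $\mathbb{C}[q^{\pm s}]$; with such a factor you could not conclude membership in $\mathcal{J}_{(2m)}(\pi)$, and this is the whole point of the proposition: the strictness of the filtration, which carries the exceptional poles of the even derivatives, would be destroyed if infinite radial sums were admissible. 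The missing ingredient is the upper cutoff: by the second statement of Lemma \ref{key relation}, each translate $W_i$ restricted to $\mathrm{diag}(zp,zp,I_{2m})$ equals $W^i_{\circ}$ times a Schwartz factor $\Phi_i(e_{n-m}z)$, hence vanishes for $|z|>q^{M}$. Combined with the $\Phi^+$-vanishing for $|z|<q^{-N}$, the radial support is compact, the $z$-integration collapses to the finite Laurent-polynomial combination of translated $J_{(2m)}$-integrals as in \eqref{z-support1} and \eqref{z-support2}, and that is what places $J_{(2m-1)}(s,W)$ in $\mathcal{J}_{(2m)}(\pi)$. Proposition \ref{asymWh} describes behavior as the torus coordinates tend to $0$ and cannot substitute for this compactness.

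Two smaller points in the other inclusion. Your collapse of the smearing identity needs $\pi_{(2m-1)}(\mathrm{diag}(\dot g,\dot g))\widehat{W^1}=\widehat{W^1}$ on the support cut out by $\Phi$, but $W^1$ is constructed from $\Phi$, so choosing $\Phi$ small relative to the stabilizer of $W^1$ is circular as written; the paper sidesteps this by taking $\Phi$ to be the characteristic function of $e_{n-m}K_{n-m,r}$ for $K_{n-m,r}$ stabilizing the original $W$, and using the invariance \eqref{zp-invariance} of the integrand in $W$ itself. Also, the shift from the exponent $s-2m$ to the $J_{(2m)}$-normalization $s-2m-1$ comes from the Jacobian $\delta_{P_{n-m}}^{-1}(p)=|\mathrm{det}(p)|^{-1}$ in Lemma \ref{zpkdec}, a function of the inner variable $p$, so it cannot be absorbed into a weight $w(\dot g,s)$ depending only on $\dot g$; the measure decomposition does supply it, but your identity should record it explicitly.
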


\begin{proof}
We will employ $z$ to denote $zI_n=\text{diag}(z,z,\dotsm,z) \in Z_n$ to lighten the notation burden. Due to the partial Iwasawa decomposition on $GL_{n-m}$ in Section $2.2$, our integral $J_{(2m-1)}(s,W)$ becomes
\[
\begin{split}
J_{(2m-1)}(s,W)&=\int_{K_{n-m}} \int_{N_{n-m} \backslash P_{n-m}}  \int_{Z_{n-m}}\int_{\mathcal{N}_{n-m} \backslash \mathcal{M}_{n-m}}  \\
&\phantom{*****} W \begin{pmatrix}  \sigma_{2n-2m+1}  \begin{pmatrix} I_{n-m} & X& \\ & I_{n-m}&\\&&1  \end{pmatrix} \begin{pmatrix}  zpk&&  \\ &zpk&\\&&1 \end{pmatrix} & \\& I_{2m-1} \end{pmatrix} \\
&\phantom{**************} \times \psi^{-1}(\mathrm{Tr} X) dX |\mathrm{det}(z)|^{s-2m} dz |\mathrm{det}(p)|^{s-2m-1} dp dk.
\end{split}
\]
Let $K=\{ \mathrm{diag}(k,k,I_{2m})\;|\; k \in K_{n-m}\}$ be a subgroup of $K_{2n}$ and $K^{\circ} \subset K_{2n}$ a compact open subgroup which stabilize $W$. Put $K_{\circ}=K \cap K^{\circ}$ and write $K=\cup_i k_i K_{\circ}$, because $K_{\circ}$ is compact and open in $K$. Then our integral $J_{(2m-1)}(s,W)$ becomes a finite sum of the form
\[
\tag{4.3}
\label{dec-zpart}
\begin{split}
&J_{(2m-1)}(s,W)\\
&=\sum_i c_1 \int_{Z_{n-m}}  \int_{N_{n-m} \backslash P_{n-m}}  \int_{\mathcal{N}_{n-m} \backslash \mathcal{M}_{n-m}} W_i \begin{pmatrix}  \sigma_{2n-2m}  \begin{pmatrix} I_{n-m} & X \\ & I_{n-m}  \end{pmatrix} \begin{pmatrix}  zp&  \\ &zp \end{pmatrix} &\hspace*{-3mm}  \\&\hspace*{-3mm} I_{2m} \end{pmatrix}\\
&\phantom{*************************} \times \psi^{-1}(\mathrm{Tr} X) dX |\mathrm{det}(z)|^{s-2m} dz |\mathrm{det}(p)|^{s-2m-1} dp
\end{split}
\]
with $c_1 > 0$ the volume of $K_{\circ}$ and $W_i=\pi \begin{pmatrix} k_i && \\ & k_i &\\&&I_{2m}\end{pmatrix} W$. We now investigate a property of the support of $W_i$ in the $z$. By Lemma \ref{key relation}, there exists $W^i_{\circ} \in \mathcal{W}(\pi,\psi)$ and $\Phi_i \in \mathcal{S}(F^{n-m})$ such that for all $z \in Z_{n-m}$, $X \in \mathcal{M}_{n-m}$ and $p \in P_{n-m}$, we get
\[
\begin{split}
 &W_i \begin{pmatrix}  \sigma_{2n-2m+1}  \begin{pmatrix} I_{n-m} & X& \\ & I_{n-m}&\\&&1  \end{pmatrix} \begin{pmatrix}  zp&&  \\ &zp&\\&&1 \end{pmatrix} & \\& I_{2m-1} \end{pmatrix} \\
&\phantom{*******} =W^i_{\circ} \begin{pmatrix}  \sigma_{2n-2m+1}  \begin{pmatrix} I_{n-m} & X& \\ & I_{n-m}&\\&&1  \end{pmatrix} \begin{pmatrix}  zp&&  \\ &zp&\\&&1 \end{pmatrix} & \\& I_{2m-1} \end{pmatrix} \Phi_i(e_{n-m}z)
\end{split}
\]
Since each $\Phi_i$ has compact support on $F^{n-m}$, we can select $M$ large enough so that all of $W_i$ vanish for all $|z|>q^M$. We now suppose that $W$ belongs to $\mathcal{W}(\pi_{(2m-1),2},\psi)$. We have seen in Section 2.1 that $\Phi^+(\pi_{(2m)})=\Phi^+\Phi^-(\pi_{(2m-1)})=\pi_{(2m-1),2}$. We embed $P_{2n-2m+1}$ into $GL_{2n}$ by $p \mapsto \mathrm{diag}(p,I_{2m-1})$. From the characterization of Whittaker models $W \in \mathcal{W}(\pi_{(2m-1),2},\psi)$ on $g=\sigma_{2n-2m}  \begin{pmatrix} I_{n-m} & X \\ & I_{n-m}  \end{pmatrix} \begin{pmatrix}  zpk&  \\ &zpk \end{pmatrix}$ in Proposition \ref{mira-model2} as $P_{2n-2m+1}$-module, there exists a positive number $N$ such that
\[
 W \begin{pmatrix}  \sigma_{2n-2m+1}  \begin{pmatrix} I_{n-m} & X& \\ & I_{n-m}&\\&&1  \end{pmatrix} \begin{pmatrix}  zpk&&  \\ &zpk&\\&&1 \end{pmatrix} & \\& I_{2m-1} \end{pmatrix}=0
\]
for all $|z| < q^{-N}$. Since $\mathcal{W}(\pi_{(2m-1),2},\psi)$ is invariant under the action of $P_{2n-2m+1}$ and $\mathrm{diag}(k,k,I_{2m})$ is in $P_{2n-2m+1}$ for all $k \in K_{n-m}$, all of $W_i$ are elements of $\mathcal{W}(\pi_{(2m-1),2},\psi)$, which again vanish when $|z| < q^{-N}$. Thus each of $W_i$ has a compact multiplicative support in the variable $z$, given by
$\{z\; | \;q^{-N} \leq |z| \leq q^M \}$. Setting $z=\varpi^ju$ for $u \in \mathcal{O}^{\times}$, we reach the following finite sum of the form
\[
\tag{4.4}
\label{z-support1}
\begin{split}
 &J_{(2m-1)}(s,W)=\sum_{i,-M \leq j \leq N} c_1 q^{-(2n-2m)(s-2m)j}
  \int_{N_{n-m} \backslash P_{n-m}}  \int_{\mathcal{N}_{n-m} \backslash \mathcal{M}_{n-m}} \int_{\mathcal{O}^{\times}} \\
 &\phantom{****}  \pi \left( \begin{pmatrix} (\varpi^ju) I_{2n-2m}&\\ &I_{2m} \end{pmatrix} \right)W_i \begin{pmatrix}  \sigma_{2n-2m}  \begin{pmatrix} I_{n-m} & X \\ & I_{n-m}  \end{pmatrix} \begin{pmatrix}  p&  \\ &p \end{pmatrix} & \\& I_{2m} \end{pmatrix} \\
 &\phantom{*****************************} d^{\times} u \psi^{-1}(\mathrm{Tr} X) dX |\mathrm{det}(p)|^{s-2m-1} dp,
\end{split}
\]
because $|\mathrm{det}(\varpi^juI_{2n-2m})|^{s-2m}=q^{-(2n-2m)(s-2m)j}$ in \eqref{dec-zpart}. We define a compact open subgroup in $K$ by $K'=\{ \mathrm{diag}(uI_{2n-2m},I_{2m}) \; | \; u \in \mathcal{O}^{\times} \}$.
We repeat the step \eqref{dec-zpart} with $K'$ replacing $K$.
The integral turns out to be a finite sum of the form
\[
\label{z-support2}
\tag{4.5}
\begin{split}
 &J_{(2m-1)}(s,W)=\sum_{\ell,-M \leq j \leq N} c_2 q^{-(2n-2m)(s-2m)j} 
                \int_{N_{n-m} \backslash P_{n-m}}  \int_{\mathcal{N}_{n-m} \backslash \mathcal{M}_{n-m}}  \\
 &\phantom{****}  \pi \left( \begin{pmatrix} (\varpi^ju) I_{2n-2m}&\\ &I_{2m} \end{pmatrix} \right) W'_{\ell} \begin{pmatrix}  \sigma_{2n-2m}  \begin{pmatrix} I_{n-m} & X \\ & I_{n-m}  \end{pmatrix} \begin{pmatrix}  p&  \\ &p \end{pmatrix} & \\& I_{2m} \end{pmatrix}\\
 &\phantom{********************************}   \psi^{-1}(\mathrm{Tr} X) dX |\mathrm{det}(p)|^{s-2m-1} dp.
\end{split}
\]  
with $c_2 > 0$ the volume term and $W'_{\ell}$ some right translation of $W_i$ by $\begin{pmatrix} uI_{2n-2m}& \\ &I_{2m} \end{pmatrix}$ with $u \in \mathcal{O}^{\times}$. We show from \eqref{z-support2} that $J_{(2m-1)}(s,W)$ is a finite sum of integrals of the form $P(q^{\pm s})J_{(2m)}(s,W_1)$ for some $P(X) \in \mathbb{C}[X]$ and $W_1 \in \mathcal{W}(\pi,\psi)$ if $W \in \mathcal{W}(\pi_{(2m-1),2},\psi)$, so that $\mathcal{J}_{(2m)}(\pi) \supset \langle J_{(2m-1)}(s,W) \; | \; W \in \mathcal{W}(\pi_{(2m-1),2},\psi) \rangle$.

\par
Conversely, for any integral $J_{(2m)}(s,W)$ with $W \in \mathcal{W}(\pi,\psi)$, let $K_{n-m,r}$ be a compact open subgroup of $K_{n-m}$ such that $\pi(\mathrm{diag}(k,k,I_{2m}))W=W$ for all $k \in K_{n-m,r}$. Let $\Phi$ be a characteristic function of $e_{n-m}K_{n-m,r}$. As $(0,\dotsm,0) \notin e_{n-m}K_{n-m,r}$, $\Phi$ lies in $\mathcal{S}_0(F^{n-m})$. We have the following integration formula, derived using partial Iwasawa decomposition on $GL_{n-m}$ in Section 2.2
\[
\begin{split}
 & \int_{N_{n-m} \backslash GL_{n-m}}  \int_{\mathcal{N}_{n-m} \backslash \mathcal{M}_{n-m}}  
  W \begin{pmatrix}  \sigma_{2n-2m}  \begin{pmatrix} I_{n-m} & X \\ & I_{n-m}  \end{pmatrix} \begin{pmatrix}  g&  \\ &g \end{pmatrix} & \\& I_{2m} \end{pmatrix} \\
  &\phantom{*******************************} \psi^{-1}(\mathrm{Tr} X) dX \Phi(e_{n-m}g) |\mathrm{det}(g)|^{s-2m} dg \\
  &= \int_{K_{n-m}} \int_{Z_{n-m}} \int_{N_{n-m} \backslash P_{n-m}}   \int_{\mathcal{N}_{n-m} \backslash \mathcal{M}_{n-m}} 
   \hspace*{-1mm}W \hspace*{-1mm} \begin{pmatrix}  \sigma_{2n-2m}  \begin{pmatrix} I_{n-m} & X \\ & I_{n-m}  \end{pmatrix} \begin{pmatrix}  pzk&  \\ &pzk \end{pmatrix} &\hspace*{-5mm}  \\&\hspace*{-5mm}  I_{2m} \end{pmatrix} \\
   &\phantom{******************}  \psi^{-1}(\mathrm{Tr} X) dX  \Phi(e_{n-m}zk) |\mathrm{det}(p)|^{s-2m-1} dp |\mathrm{det}(z)|^{s-2m} dz dk.
  \end{split}
\]
According to \eqref{zp-invariance}, this formula leads to
\[
\tag{4.6}
\label{switch-J2m}
\begin{split}
 & J_{(2m)}(s,W)\\
 &=\frac{1}{\mathrm{Vol}(e_{n-m}K_{n-m,r})} \int_{N_{n-m} \backslash GL_{n-m}}  \int_{\mathcal{N}_{n-m} \backslash \mathcal{M}_{n-m}}  
  \hspace*{-2mm}W\hspace*{-1mm} \begin{pmatrix}  \sigma_{2n-2m} \hspace*{-1mm} \begin{pmatrix} I_{n-m} & X \\ & I_{n-m}  \end{pmatrix}\hspace*{-1mm} \begin{pmatrix}  g&  \\ &g \end{pmatrix} &\hspace*{-5mm} \\&\hspace*{-5mm} I_{2m} \end{pmatrix} \\
  &\phantom{*****************************} \times \psi^{-1}(\mathrm{Tr} X) dX \Phi(e_{n-m}g) |\mathrm{det}(g)|^{s-2m} dg.
  \end{split}
\]
We select $W_{\circ} \in \mathcal{W}(\pi,\psi)$ by virtue of Lemma \ref{key relation} such that
\[
\tag{4.7}
\label{relation-J2m}
\begin{split}
 & W \begin{pmatrix}  \sigma_{2n-2m}  \begin{pmatrix} I_{n-m} & X \\ & I_{n-m}  \end{pmatrix} \begin{pmatrix}  g&  \\ &g \end{pmatrix} & \\& I_{2m} \end{pmatrix} \Phi(e_{n-m}g) \\
 &=W_{\circ}\begin{pmatrix}  \sigma_{2n-2m+1}  \begin{pmatrix} I_{n-m} & X& \\ & I_{n-m}&\\&&1  \end{pmatrix} \begin{pmatrix}  g&&  \\ &g&\\&&1 \end{pmatrix} & \\& I_{2m-1} \end{pmatrix} 
  \end{split}
\]
for all $g \in GL_{n-m}$ and $X \in \mathcal{M}_{n-m}$. We apply the formula \eqref{relation-J2m} to \eqref{switch-J2m} in order to reach $J_{(2m)}(s,W)=c_3J_{(2m-1)}(s,W_{\circ})$ with $\displaystyle c_3=\frac{1}{\mathrm{Vol}(e_{n-m}K_{n-m,r})} > 0$. As $\Phi(0)=0$, $W_{\circ}$ vanishes when the last row of $g$ in \eqref{relation-J2m} satisfies the estimates $\max_i\{ |g_{n-m,i}|\} < q^{-N}$ for some positive $N$. By Proposition \ref{mira-model2}, $J_{(2m-1)}(s,W_{\circ})$ only relies on the image $W_{\circ}$ in $\mathcal{W}(\pi_{(2m-1),2},\psi)$. We conclude that $J_{(2m)}(s,W)=c_3J_{(2m-1)}(s,W_{\circ})$ for $W_{\circ}$ corresponding to functions in $\mathcal{W}(\pi_{(2m-1),2},\psi)$ so that $\mathcal{J}_{(2m)}(\pi) \subset \langle J_{(2m-1)}(s,W) \; | \; W \in \mathcal{W}(\pi_{(2m-1),2},\psi) \rangle$. Combined this with the result on the previous paragraph, we complete the first part of proof.

\par
For the second assertion, we know from Lemma \ref{J2m-same} that $\mathcal{J}_{(2m-1)}$ is generated by the integral $J_{(2m-1)}(s,W)$ for $W \in \mathcal{W}(\pi_{(2m-1)},\psi)$. Since $\mathcal{W}(\pi_{(2m-1),2},\psi)$ is $P_{2n-2m+1}$-submodule of $\mathcal{W}(\pi_{(2m-1)},\psi)$ by Proposition \ref{mira-model2}, we obtain the desired inclusion $\mathcal{J}_{(2m)}(\pi) \subset \mathcal{J}_{(2m-1)}(\pi)$ of $\mathbb{C}[q^{\pm s}]$-fractional ideals. 
\end{proof}

If we combine Proposition \ref{L0 function} and Propostion \ref{even-equal} with our analysis of the inclusion of fractional ideals $\mathcal{J}_{(2m)}(\pi) \subset \mathcal{J}_{(2m-1)}(\pi)$ in Proposition \ref{J2m-char}, the family of rational functions defined by Jacquet-Shalika integrals have the descending chain $\mathcal{J}_{(2n-2)}(\pi)  \subset \dotsm \subset \left[ \mathcal{J}_{(2m+1)}(\pi) = \mathcal{J}_{(2m)}(\pi) \right] \subset \dotsm \subset\left[\mathcal{J}_{(1)}(\pi)=\mathcal{J}_{(0)}(\pi)\right] \subset \mathcal{J}(\pi)$ of $\mathbb{C}[q^{\pm s}]$-fractional ideals. As $N_1=P_1=\{1\}$ and $\mathcal{N}_1=\mathcal{M}_1$, the family of rational functions in $\mathcal{J}_{(2n-2)}(\pi)$ are $\mathbb{C}[q^{\pm s}]$-linear combination of $J_{(2m)}(s,W)=W(I_{2n})$ with $W \in \mathcal{W}(\pi,\psi)$, resulting in $\mathcal{J}_{(2n-2)}(\pi)=\mathbb{C}[q^{\pm s}]$. Summarizing this analysis, we have the following result.

 \begin{proposition}
 \label{filtration-even}
We assume that $n > 1$. Let $\pi$ be an induced representations of $GL_{2n}$ of Langlands type. For $0 \leq m \leq n-2$, we have the filtration 
\[
  \mathcal{J}_{(2n-2)}(\pi)  \subset    \dotsm \subset \left[ \mathcal{J}_{(2m+1)}(\pi) = \mathcal{J}_{(2m)}(\pi) \right] \subset \dotsm \subset \left[\mathcal{J}_{(1)}(\pi)=\mathcal{J}_{(0)}(\pi)\right] \subset \mathcal{J}(\pi)
\]
 of $\mathbb{C}[q^{\pm s}]$-fractional ideals in $\mathbb{C}(q^{-s})$. The bottom piece of this filtration $\mathcal{J}_{(2n-2)}(\pi)$ is $\mathbb{C}[q^{\pm s}]$.
\end{proposition}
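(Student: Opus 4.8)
The plan is to assemble the statement directly from the three structural results already established, so that essentially no new analysis is required. The ingredients are: the pairwise equalities $\mathcal{J}_{(2m+1)}(\pi)=\mathcal{J}_{(2m)}(\pi)$ for $0\le m<n-1$ from Proposition \ref{even-equal}; the inclusions $\mathcal{J}_{(2m)}(\pi)\subset\mathcal{J}_{(2m-1)}(\pi)$ for $1\le m<n$ from Proposition \ref{J2m-char}; the inclusion $\mathcal{J}_{(0)}(\pi)\subset\mathcal{J}(\pi)$ from Proposition \ref{L0 function}; and the earlier lemma that each $\mathcal{J}_{(k)}(\pi)$ is a $\mathbb{C}[q^{\pm s}]$-fractional ideal. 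First I would note that the integral $J_{(0)}(s,W)$ of Section $3$ is literally the $m=0$ instance of $J_{(2m)}(s,W)$, so the two occurrences of the symbol $\mathcal{J}_{(0)}(\pi)$ refer to the same ideal; this is what lets us glue Proposition \ref{L0 function} onto the chain.

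Next I would write the chain out in decreasing order of the index and check that each link falls in the range where the cited result applies: start from $\mathcal{J}_{(2n-2)}(\pi)\subset\mathcal{J}_{(2n-3)}(\pi)$ (Proposition \ref{J2m-char} with $m=n-1$), then $\mathcal{J}_{(2n-3)}(\pi)=\mathcal{J}_{(2n-4)}(\pi)$ (Proposition \ref{even-equal} with $m=n-2$), and alternate the two propositions down to $\mathcal{J}_{(2)}(\pi)=\mathcal{J}_{(3)}(\pi)\subset\mathcal{J}_{(1)}(\pi)=\mathcal{J}_{(0)}(\pi)$, finally appending $\mathcal{J}_{(0)}(\pi)\subset\mathcal{J}(\pi)$. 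One must be careful that Proposition \ref{even-equal} only reaches $m=n-2$, so there is no $\mathcal{J}_{(2n-1)}(\pi)$ in the picture (consistent with the convention $0\le k\le 2n-2$) and the terminal index $2n-2$ is reached only through an inclusion, not through an equality; for $n=2$ the chain degenerates to $\mathcal{J}_{(2)}(\pi)\subset\mathcal{J}_{(1)}(\pi)=\mathcal{J}_{(0)}(\pi)\subset\mathcal{J}(\pi)$.

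Finally, for the bottom piece I would compute $J_{(2n-2)}(s,W)$ by hand. Taking $m=n-1$ in the definition of $J_{(2m)}(s,W)$ gives $n-m=1$, whence $N_1=P_1=\{1\}$, $\mathcal{N}_1=\mathcal{M}_1$, and $\sigma_2=I_2$; the unipotent and $\mathcal{N}\backslash\mathcal{M}$ integrations collapse and the power of $|\det|$ is trivial, so $J_{(2n-2)}(s,W)=W(I_{2n})$, a constant independent of $s$. Since $\mathcal{W}(\pi,\psi)$ is nonzero and $W(g)=(\pi(g)W)(I_{2n})$, some $W$ has $W(I_{2n})\neq 0$; hence $\mathcal{J}_{(2n-2)}(\pi)$ contains a nonzero scalar, and being a $\mathbb{C}[q^{\pm s}]$-fractional ideal it must equal $\mathbb{C}[q^{\pm s}]$. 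The argument carries no real analytic difficulty; the only genuine obstacle is the index bookkeeping in the second step — interlocking the two families of links correctly and treating the degenerate endpoint separately.
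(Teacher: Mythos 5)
Your proposal is correct and follows essentially the same route as the paper: the paper likewise obtains the chain by splicing together Proposition \ref{even-equal} (the equalities $\mathcal{J}_{(2m+1)}(\pi)=\mathcal{J}_{(2m)}(\pi)$), Proposition \ref{J2m-char} (the inclusions $\mathcal{J}_{(2m)}(\pi)\subset\mathcal{J}_{(2m-1)}(\pi)$), and Proposition \ref{L0 function} (identifying $\mathcal{J}_{(0)}(\pi)$ with the span of the $J_{(0)}(s,W)$ inside $\mathcal{J}(\pi)$), and then notes that since $N_1=P_1=\{1\}$ and $\mathcal{N}_1=\mathcal{M}_1$ one has $J_{(2n-2)}(s,W)=W(I_{2n})$, so $\mathcal{J}_{(2n-2)}(\pi)$ is generated by constants, contains a nonzero one, and hence equals $\mathbb{C}[q^{\pm s}]$. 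Your index bookkeeping (including the degenerate case $n=2$ and the absence of $\mathcal{J}_{(2n-1)}(\pi)$) matches the paper's conventions.
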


\subsection{The exceptional poles of the even derivatives.} 

In this section, we deduce the factorization of $L(s,\pi,\wedge^2)$ for an irreducible generic representation $\pi$  in terms of exceptional $L$-functions for even derivatives. For this, we associate the exceptional poles of $\mathcal{J}(\pi^{(2m)})$ with the inclusion of fractional ideals $\mathcal{J}_{(2m)}(\pi) \subset \mathcal{J}_{(2m-1)}(\pi)$ in $\mathbb{C}(q^{-s})$. 
\par

Assume that $n > 1$. Let $\pi$ be an induced representations of $GL_{2n}$ of Langlands type. Let $0 \leq k \leq 2n-2$. Suppose there is a function in $\mathcal{J}_{(k)}(\pi)$ having a pole of order $d$ at $s=s_0$ and that this is the highest order pole of the family at $s=s_0$. Consider the rational function defined by an individual integral $J_{(k)}(s,W)$. Then the Laurent expansion at that point will be of the form 
\[
J_{(k)}(s,W)=\frac{\Lambda_{(k),s_0}(W)}{(q^s-q^{s_0})^d}+\mathrm{higher\; order\; terms.}
\]
By a realization of Whittaker model for $\pi_{(k)}$ in Proposition \ref{mira-model}, as the integral $J_{(k)}(s,W)$ only depends on the functions $W$ through their restriction to $P_{2n-k}$, $\Lambda_{(k),s_0}$ belongs to the vector space $\mathrm{Hom}_{P_{2n-k} \cap S_{2n-k}}(\mathcal{W}(\pi_{(k)},\psi), |\cdot|^{-\frac{s_0-1}{2}}\Theta)$.

\begin{definition*}
For $0 \leq k \leq 2n-2$, a linear functional $\Lambda_{(k),s}$ on $\mathcal{W}(\pi_{(k)},\psi)$ is called a twisted Shalika functional with respect to a Shlika subgroup $P_{2n-k} \cap S_{2n-k}$ if it satisfies the following quasi-invariance 
\[
\Lambda_{(k),s}(\pi_{(k)}(\begin{pmatrix} h &\\ & I_{k} \end{pmatrix})W)=|\mathrm{det}(h)|^{-\frac{s-1}{2}}\Theta(h)\Lambda_{(k),s}(W)
\]
 for all $h \in P_{2n-k} \cap S_{2n-k}$.
\end{definition*}

To proceed, we would make the following assumption.

\begin{assumption}
For $n > 1$, let $\pi=\mathrm{Ind}(\Delta_1 \otimes \dotsm \otimes \Delta_t)$ be an irreducible generic representation of $GL_{2n}$. For $0 \leq m \leq 2n$, we assume that all derivatives $\pi^{(m)}$ of $\pi$ are completely reducible with irreducible generic subquotients of the form $\pi_{i}^{(m)}=\mathrm{Ind}(\Delta_1^{(m_1)} \otimes \dotsm \otimes \Delta_t^{(m_t)})$ and $m=m_1+\dotsm+m_t$.  
\end{assumption}

We will remove this condition in Section $7$ by a deformation argument. Under this assumption, $\Delta_i$ are rearranged to be induced representation of Langlands type without changing $\pi$, and we can compute the derivatives of the derivatives. For $m>1$ each irreducible constituents $\pi_{i}^{(m)}=\mathrm{Ind}(\Delta_1^{(m_1)} \otimes \dotsm \otimes \Delta_t^{(m_t)})$ is an irreducible generic representation of smaller group $GL_{2n-m}$. Moreover their derivatives $(\pi_{i}^{(m)})^{(k)}$ are completely reducible and all successive quotients are irreducible and of the form $\mathrm{Ind}((\Delta_1^{(m_1)})^{(k_1)} \otimes \dotsm \otimes (\Delta_t^{(m_t)})^{(k_t)})$ with $k=k_1+\dotsm+k_t$. However Bernstein and Zelevinsky in Theorem \ref{deriviative} assert that the derivatives for the segments are associative in the sense that $(\Delta_j^{(m_j)})^{(k_j)}\simeq \Delta_j^{(m_j+k_j)}$. As a consequence of associativity, every irreducible subquotients of the form $\mathrm{Ind}((\Delta_1^{(m_1)})^{(k_1)} \otimes \dotsm \otimes (\Delta_t^{(m_t)})^{(k_t)}) \simeq \mathrm{Ind}(\Delta_1^{(m_1+k_1)} \otimes \dotsm \otimes \Delta_t^{(m_t+k_t)})$ in turn appear amongst those of the derivatives $\pi^{(m+k)}$ of $\pi$. Each irreducible constituents $\pi_{i}^{(m)}$ satisfying our assumption eventually makes it possible to exploit the induction on $n$ of $GL_{2n}$. Our assumption is also included in the case where the segments are in $``$general position$"$ as in Section 7. This assumption make it possible to prove the following factorization Theorem of $L_{(0)}(s,\pi,\wedge^2)$ in terms of exceptional $L$-functions of the even derivatives of $\pi$ that we seek.

\begin{theorem}
\label{prod-L0}
For $n > 1$, let $\pi=\mathrm{Ind}(\Delta_1 \otimes \dotsm \otimes \Delta_t)$ be an irreducible generic representation of $GL_{2n}$ such that all derivatives $\pi^{(m)}$ of $\pi$ are completely reducible with irreducible generic Jordan--H\"older constituents of the form $\pi_{i}^{(m)}=\mathrm{Ind}(\Delta_1^{(m_1)} \otimes \dotsm \otimes \Delta_t^{(m_t)})$ and $m=m_1+\dotsm+m_t$. For each $m$, $i$ is indexing the partition of $m$. Then
\[
\label{prod-L0-1}
\tag{4.8}
L_{(0)}(s,\pi,\wedge^2)^{-1}=l.c.m_{m,i}\{ L_{ex}(s,\pi_i^{(2m)},\wedge^2)^{-1} \}
\]
where the least common multiple is with respect to divisibility in $\mathbb{C}[q^{\pm s}]$ and is taken over all $m$ with $0 < m < n$ and for each $m$ all constituents $\pi_i^{(2m)}$ of
$\pi^{(2m)}$. 

\end{theorem}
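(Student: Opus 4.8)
The plan is to deduce this factorization by induction on $n$, matching poles of $\mathcal{J}_{(0)}(\pi) = \mathcal{J}_{(1)}(\pi)$ against the chain of fractional ideals $\mathcal{J}_{(2m)}(\pi) \subset \mathcal{J}_{(2m-1)}(\pi)$ established in Proposition \ref{filtration-even}. The key observation is that, by Proposition \ref{J2m-char}, the fractional ideal $\mathcal{J}_{(2m)}(\pi)$ is spanned by the integrals $J_{(2m-1)}(s,W)$ for $W$ ranging over $\mathcal{W}(\pi_{(2m-1),2},\psi) = \mathcal{W}(\Phi^+(\pi_{(2m)}),\psi)$, while $\mathcal{J}_{(2m-1)}(\pi)$ is spanned by the same integrals for $W \in \mathcal{W}(\pi_{(2m-1)},\psi)$. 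So a pole of $\mathcal{J}_{(2m-1)}(\pi)$ that is \emph{not} a pole of $\mathcal{J}_{(2m)}(\pi)$ must come from a twisted Shalika functional $\Lambda_{(2m-1),s_0}$ on $\mathcal{W}(\pi_{(2m-1)},\psi)$ that does not factor through the subspace coming from $\Phi^+(\pi_{(2m)})$; using the filtration by derivatives $0 \subset \tau_{2n-2m+1} \subset \dots \subset \tau_1 = \pi_{(2m-1)}$ with top quotient $\tau_1/\tau_2 = \Psi^+(\pi^{(2m-1)})$ — combined with the vanishing of odd derivatives $\pi^{(2m-1)}$ contributing trivially in the Shalika setting (parity considerations on $S_{2n-2m}$, exactly the phenomenon isolated in Proposition \ref{even-multi one}) — such a functional must instead arise from the piece involving $\pi^{(2m)}$, hence corresponds to an exceptional pole of the family $\mathcal{J}(\pi^{(2m)})$.

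First I would make precise the identification: a twisted Shalika functional on $\mathcal{W}(\pi_{(2m-1)},\psi)$ relative to $P_{2n-2m+1} \cap S_{2n-2m+1}$ restricts, via the exact sequence coming from $0 \to \Phi^+(\pi_{(2m)}) \to \pi_{(2m-1)} \to \Psi^+(\pi^{(2m-1)}) \to 0$ (Proposition \ref{mira-model2} gives $V_{\pi_{(2m-1)}}(U_{2n-2m+1},\mathbf{1}) = \Phi^+(\pi_{(2m)})$), to a pairing either on $\Phi^+(\pi_{(2m)})$ or on $\Psi^+(\pi^{(2m-1)})$. The odd-derivative piece $\Psi^+(\pi^{(2m-1)})$ carries no Shalika functional by a parity obstruction (there is no Shalika model on an odd $GL$), so the functional lives on $\Phi^+(\pi_{(2m)})$; then Proposition \ref{func-homo} (applied with $\xi = |\cdot|^{-(s_0-1)/2}\Theta$ and the appropriate modular twist) moves this to a twisted Shalika functional on $\pi^{(2m)}$ itself, i.e. to a bilinear form realizing the leading Laurent coefficient of some $J(s,W',\Phi')$ for the group $GL_{2n-2m}$. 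By Proposition \ref{L0 function} applied to $\pi^{(2m)}$, the poles of $L(s,\pi^{(2m)},\wedge^2)/L_{(0)}(s,\pi^{(2m)},\wedge^2) = L^{(0)}(s,\pi^{(2m)},\wedge^2)$ are exactly the exceptional poles, so one is forced into $L_{ex}(s,\pi_i^{(2m)},\wedge^2)$ for a constituent $\pi_i^{(2m)}$. The Assumption — that all derivatives are completely reducible with irreducible generic constituents — is what lets me pass to the constituents $\pi_i^{(2m)}$ individually and lets me run the induction on $n$, since each $\pi_i^{(2m)}$ is an irreducible generic representation of a smaller $GL$ whose own derivatives are again of the required form by the associativity $(\Delta_j^{(m_j)})^{(k_j)} \simeq \Delta_j^{(m_j+k_j)}$.

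Next I would assemble the quantitative statement. The chain $\mathcal{J}_{(2n-2)}(\pi) \subset \dots \subset \mathcal{J}_{(0)}(\pi)$ with $\mathcal{J}_{(2n-2)}(\pi) = \mathbb{C}[q^{\pm s}]$ means $L_{(0)}(s,\pi,\wedge^2)^{-1}$, which generates $\mathcal{J}_{(0)}(\pi) = \mathcal{J}_{(1)}(\pi)$, is built up by successive "jumps" $\mathcal{J}_{(2m)}(\pi) \subsetneq \mathcal{J}_{(2m-1)}(\pi)$; each jump contributes exactly the factor by which the generator of $\mathcal{J}_{(2m-1)}(\pi)$ is divisible but the generator of $\mathcal{J}_{(2m)}(\pi)$ is not, and by the previous paragraph that factor is precisely $L_{ex}(s,\pi_i^{(2m)},\wedge^2)^{-1}$ (taking the l.c.m.\ over constituents $i$, since distinct exceptional poles give coprime factors and a pole of maximal order among the constituents controls the jump). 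Since $\mathcal{J}_{(2m)}(\pi) = \mathcal{J}_{(2m+1)}(\pi)$ by Proposition \ref{even-equal}, the odd-indexed steps contribute nothing, so only the even derivatives $\pi^{(2m)}$ with $0 < m < n$ appear, giving \eqref{prod-L0-1}.

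The main obstacle I anticipate is the step asserting that a twisted Shalika functional on $\mathcal{W}(\pi_{(2m-1)},\psi)$ which does not vanish on the sub $\Phi^+(\pi_{(2m)})$ descends to a genuine (possibly twisted) Shalika functional on $\pi^{(2m)}$ giving rise to an \emph{exceptional} pole rather than merely a pole of $\mathcal{J}(\pi^{(2m)})$. This requires carefully tracking that the relevant bilinear form, when restricted as in Proposition \ref{L0 function}, vanishes on $\mathcal{S}_0(F^{n-m})$ — equivalently that it comes from the quotient $\mathcal{S}(F^{n-m})/\mathcal{S}_0(F^{n-m}) \cong \mathbb{C}$ — which is exactly the content of the functional factoring through $\Phi^+(\pi_{(2m)}) = V_{\pi_{(2m-1)}}(U_{2n-2m+1},\mathbf{1})$; the bookkeeping of modular characters ($\chi_m$, $\mu_m$, $\delta_m$ from \eqref{chi-relation}) through the iterated application of Proposition \ref{func-homo}, and verifying the parity obstruction ruling out the odd-derivative contribution, is where the proof has to be done most carefully. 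I expect to handle this by closely following the pattern of Cogdell--Piatetski-Shapiro \cite{CoPe} and Matringe \cite{Ma15} for the Rankin--Selberg and Asai cases, adapted to the Shalika group, and to organize the argument as an induction on $n$ so that the case $GL_2$ (Proposition \ref{basic}, where $L = L_{ex}$) serves as the base.
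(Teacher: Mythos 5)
Your overall architecture (the chain $\mathcal{J}_{(2n-2)}(\pi)\subset\dotsm\subset\mathcal{J}_{(0)}(\pi)$ from Proposition \ref{filtration-even}, with jumps only at the steps $\mathcal{J}_{(2m)}(\pi)\subset\mathcal{J}_{(2m-1)}(\pi)$ and an induction on $n$ with base case $GL_2$) is the same as the paper's, but the central identification in your first two paragraphs is wrong. The first-level exact sequence for $\pi_{(2m-1)}$ as a $P_{2n-2m+1}$-module is $0\to\Phi^+(\pi_{(2m)})\to\pi_{(2m-1)}\to\Psi^+(\pi^{(2m)})\to 0$: the quotient is $\Psi^+$ of the \emph{even} derivative $\pi^{(2m)}=\Psi^-(\pi_{(2m-1)})$, not $\Psi^+(\pi^{(2m-1)})$ as you wrote (the odd derivative $\pi^{(2m-1)}$ does not occur in this filtration at all). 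Consequently your "parity obstruction" is vacuous, and your conclusion that the leading functional "lives on $\Phi^+(\pi_{(2m)})$" is backwards: by Proposition \ref{J2m-char} the ideal $\mathcal{J}_{(2m)}(\pi)$ is spanned by the integrals $J_{(2m-1)}(s,W)$ with $W\in\mathcal{W}(\pi_{(2m-1),2},\psi)=\mathcal{W}(\Phi^+(\pi_{(2m)}),\psi)$, so if the leading coefficient $\Lambda_{(2m-1),s_0}$ restricted nontrivially to that subspace the pole would already occur with the same order in $\mathcal{J}_{(2m)}(\pi)$, contradicting the choice of the jump. The correct statement is that $\Lambda_{(2m-1),s_0}$ vanishes on the sub and factors through the quotient $\Psi^+(\pi^{(2m)})$; and the passage to the smaller group is not made with Proposition \ref{func-homo} (which concerns $H_m$, not the Shalika group, and is used only for the multiplicity-one statements) but with Proposition \ref{asym} together with Lemma \ref{key relation}, which convert $J_{(2m-1)}(s,W)$, for $W$ detecting a constituent, into an honest Jacquet--Shalika integral $J(s,W_\circ,\Phi_\circ)$ for $\pi_i^{(2m)}$ with $\Phi_\circ$ supported near $0$.

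The second, deeper gap is your claim that the jump pole is directly an \emph{exceptional} pole of some $\pi_i^{(2m)}$, justified by saying that vanishing on $\Phi^+(\pi_{(2m)})=V_{\pi_{(2m-1)}}(U_{2n-2m+1},\mathbf{1})$ is "exactly the content" of the induced bilinear form vanishing on $\mathcal{S}_0(F^{n-m})$. These are different conditions: one is a support condition in the Whittaker variable (last row of $g$), the other a vanishing condition in the Schwartz variable, and nothing in the conversion $J^0_{(2m-1)}(s,W)=J(s,W_\circ,\Phi_\circ)$ forces the latter. What you actually obtain from the functional on $\Psi^+(\pi^{(2m)})$ is a twisted Shalika functional on $\pi_i^{(2m)}$ with respect to the full $S_{2n-2m}$, and for a general irreducible generic constituent this does \emph{not} by itself produce an exceptional pole (the paper has such implications only under multiplicity-one hypotheses, as in Lemma \ref{char-Shalika} and Propositions \ref{prop-supercusp}, \ref{excep-Shalika}); the pole you detect may lie in $L_{(0)}(s,\pi_i^{(2m)},\wedge^2)$, i.e.\ come from exceptional poles of still deeper even derivatives. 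This is precisely why the paper's proof of Theorem \ref{prod-L0} concludes at this stage only that $L(s,\pi_i^{(2m)},\wedge^2)$ has a pole of order at least $d$, and then uses the induction on $n$ in an essential way: writing $L^{-1}$ as the l.c.m.\ of $L_{(0)}^{-1}$ and $L_{ex}^{-1}$ (Proposition \ref{L0 function}) and invoking associativity of derivatives, so that exceptional poles of even derivatives of $\pi_i^{(2m)}$ reappear among the $L_{ex}(s,\pi_\ell^{(2m+2k)},\wedge^2)$ for $\pi$. Your induction on $n$ is mentioned only as scaffolding with base case Proposition \ref{basic}; without assigning it this converting role (or supplying a genuinely new argument for exceptionality at the jump), the proposal does not close.
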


\begin{proof}
Throughout this proof, all of the divisibility and the least common multiple are respect to divisibility in $\mathbb{C}[q^{\pm s}]$. We start with showing that a pole at $s=s_0$ of order $d$ of the reciprocal of $l.c.m_{m,i}\{ L_{ex}(s,\pi_i^{(2m)},\wedge^2)^{-1} \}$ occurs as the pole at $s=s_0$ of order $d$ of $L_{(0)}(s,\pi,\wedge^2)$. Let $s=s_0$ be a pole of order $d$ of $L_{ex}(s,\pi_i^{(2m)},\wedge^2)$ for some $0 < m < n$ and $i$. Since $s=s_0$ is an exceptional pole, it is a pole of an integral $J(s,W_{\circ},\phi)$ for some $W_{\circ} \in \mathcal{W}(\pi_i^{(2m)},\psi)$ and $\phi \in \mathcal{S}(F^{n-m})$ which does not vanish at zero. We can in fact choose $\phi$ such that $\phi(0)=1$. Let $\Phi_{\circ}$ be a characteristic function of a sufficiently small neighborhood of zero. We can further replace $\phi$ by $\Phi_{\circ}$ because $s=s_0$ occurs as the pole of order strictly less than $d$ of $J(s,W_{\circ},\phi-\Phi_{\circ})$ and $\phi-\Phi_{\circ}$ vanishes at $0$.
Let $p : V_{\pi_{(2m-1)}} \rightarrow V_{\pi^{(2m)}}$ be the normalized map in Proposition \ref{asym} and let $V_{\tau_i}=p^{-1}(V_{\pi_i^{(2m)}})$. Then $\tau_i$ is a subrepresentation of $\pi_{(2m-1)}$. We apply Proposition \ref{asym} to $\sigma_{2n-2m} \begin{pmatrix} I_{n-m}&X\\ & I_{n-m} \end{pmatrix} \begin{pmatrix} g&\\ &g \end{pmatrix}$, $W_{\circ} \in \mathcal{W}(\pi_i^{(2m)},\psi)$,  and $\Phi_{\circ} \in \mathcal{S}(F^{n-m})$. Then there exists a function $W \in \mathcal{W}(\tau_i,\psi)$ satisfying the following equality:
 \begin{equation}
\label{filtration-derivative}
\tag{4.9}
\begin{split}
&W \begin{pmatrix} \sigma_{2n-2m} \begin{pmatrix}
I_{n-m}&X\\ &I_{n-m} \end{pmatrix} \begin{pmatrix} g&\\ &g \end{pmatrix}&\\&I_{2m}
\end{pmatrix} \Phi_{\circ}(e_{n-m} g) \\
&= W_{\circ} \left( \sigma_{2n-2m}  \begin{pmatrix}
I_{n-m}&X\\ &I_{n-m} \end{pmatrix} \begin{pmatrix} g&\\ &g \end{pmatrix} \right) |\mathrm{det}(g)|^{2m} \Phi_{\circ}(e_{n-m}g).
\end{split}
\end{equation}
In virtue of Lemma \ref{key relation}, we can find $W^{\circ} \in \mathcal{W}(\pi,\psi)$ such that
  \begin{equation}
\label{filtration2}
\tag{4.10}
\begin{split}
&W^{\circ} \begin{pmatrix} \sigma_{2n-2m} \begin{pmatrix}
I_{n-m}&X\\ &I_{n-m} \end{pmatrix} \begin{pmatrix} g&\\ &g \end{pmatrix}&\\&I_{2m}
\end{pmatrix} \\
& =W \begin{pmatrix} \sigma_{2n-2m} \begin{pmatrix}
I_{n-m}&X\\ &I_{n-m} \end{pmatrix} \begin{pmatrix} g&\\ &g \end{pmatrix}&\\&I_{2m}
\end{pmatrix} \Phi_{\circ}(e_{n-m} g). 
\end{split}
\end{equation}
Combining \eqref{filtration-derivative} with \eqref{filtration2}, we can write $J(s,W_{\circ},\Phi)$ in terms of $W^{\circ}$, namely
\[
\begin{split}
 &J(s,W_{\circ},\Phi) \\
 &= \int_{N_{n-m} \backslash {GL}_{n-m}}  \int_{\mathcal{N}_{n-m} \backslash \mathcal{M}_{n-m}} W^{\circ} \begin{pmatrix}  \sigma_{2n-2m}  \begin{pmatrix} I_{n-m} & X \\ & I_{n-m}  \end{pmatrix} \begin{pmatrix}  g&  \\ &g \end{pmatrix} & \\& I_{2m} \end{pmatrix}\\
 &\phantom{********************************}\times \psi^{-1}(\mathrm{Tr} X) dX\; |\mathrm{det}(g)|^{s-2m} dg \\
 &=J_{(2m-1)}(s,W^{\circ}).
 \end{split}
\]
$J_{(2m-1)}(s,W^{\circ})$ has a pole at $s=s_0$ of order $d$ and this gives rise to a pole of the family $\mathcal{J}_{(2m-1)}(\pi)$. As a consequence of Proposition \ref{filtration-even}, $\mathcal{J}_{(2m-1)}(\pi) \subset \mathcal{J}_{(0)}(\pi)$ asserts that $s=s_0$ contributes to the pole of order $d$ at least of the family $\mathcal{J}_{(0)}(\pi)$. According to Definition of $L_{(0)}(s,\pi,\wedge^2)$ in Section 3.1, the Euler factor $L_{(0)}(s,\pi,\wedge^2)$ which generates the $\mathbb{C}[q^{\pm s}]$-fractional ideal $\mathcal{J}_{(0)}(\pi)$ in $\mathbb{C}(q^{-s})$ has a pole of order $d$ at least at $s=s_0$. This proves that the right hand side divides the left hand side in equality \eqref{prod-L0-1}.

\par

 It remains to establish that any pole at $s=s_0$ of order $d$ of $L_{(0)}(s,\pi,\wedge^2)$ appears as an exceptional pole at $s=s_0$ of order $d$ of $L_{ex}(s,\pi_i^{(2m)},\wedge^2)$ for some $0 < m < n$ and $i$.
 As $L_{(0)}(s,\pi,\wedge^2)$ is completely determined by the poles of $\mathcal{J}_{(0)}(\pi)$ and their order, let $d$ be the maximal order at the pole $s=s_0$ of the family $\mathcal{J}_{(0)}(\pi)$. By the filtration of $\mathbb{C}[q^{\pm s}]$-fractional ideals in Proposition \ref{filtration-even}, we can select the smallest index $m$, $0 < m < n$ such that the pole $s=s_0$ has the highest order $d$ for the family $\mathcal{J}_{(2m-1)}(\pi)$ whereas of order $d-1$ at most of the family $\mathcal{J}_{(2m)}(\pi)$. Since the ideal $\mathcal{J}_{(2m-1)}(\pi)$ is linearly spanned by the rational functions defined by the integrals $J_{(2m-1)}(s,W)$, this pole must occur with order $d$ for some rational functions $J_{(2m-1)}(s,W)$ with $W \in \mathcal{W}(\pi,\psi)$. Then this will have a Laurent expansion at $s=s_0$:
\[
J_{(2m-1)}(s,W)=\frac{\Lambda_{(2m-1),s_0}(W)}{(q^s-q^{s_0})^d} + \;\mathrm{higher\;order\;terms.}
\]
Applying Proposition \ref{J2m-char}, $\Lambda_{(2m-1),s_0}$ vanishes for all $W \in \mathcal{W}(\pi_{(2m-1),2},\psi) \subset \mathcal{W}(\pi_{(2m-1)},\psi)$. As a representation of $P_{2n-2m+1}$, 
$\pi_{(2m-1)} \slash \pi_{(2m-1),2}=\Psi^+(\pi^{(2m)})$. Therefore $\Lambda_{(2m-1),s_0}(W)$ is an element of $\mathrm{Hom}_{P_{2n-2m+1}\cap S_{2n-2m+1}}(\mathcal{W}(\Psi^+(\pi^{(2m)}),\psi),|\cdot|^{-\frac{s_0-1}{2}}\Theta)$ and defines a non-trivial twisted Shalika functional on $\mathcal{W}(\Psi^+(\pi^{(2m)}),\psi)$ for a Shalika subgroup $P_{2n-2m+1}\cap S_{2n-2m+1}$. Through the normalized projection map $p : V_{\pi_{(2m-1)}} \rightarrow \oplus_i V_{\pi_i^{(2m)}}$, there must be an $i$ such that the Shalika functional $\Lambda_{(2m-1),s_0}$ determined by the pole $s_0$ must restrict non trivially to $W$ in $\mathcal{W}(\tau_i,\psi)$ with $V_{\tau_i}=p^{-1}(V_{\pi_i^{(2m)}})$.
 According to Proposition \ref{asym}, there exists function $W_{\circ} \in \mathcal{W}(\pi_i^{(2m)},\psi)$ such that for every Schwartz-Bruhat function 
$\Phi_{\circ} \in \mathcal{S}(F^{n-m})$ which is the characteristic function of a sufficiently small neighborhood of $0 \in F^{n-m}$ we have the equality \eqref{filtration2}. We can decompose the integral $J_{(2m-1)}(s,W)$ into two parts $J_{(2m-1)}(s,W)=J^0_{(2m-1)}(s,W)+J^1_{(2m-1)}(s,W)$,  where
\[
\begin{split}
&J_{(2m-1)}^0(s,W)= \int_{N_{n-m} \backslash GL_{n-m}} \int_{\mathcal{N}_{n-m} \backslash \mathcal{M}_{n-m}}  W \begin{pmatrix} \sigma_{2n-2m} \begin{pmatrix}
I_{n-m}&X\\ &I_{n-m} \end{pmatrix} \begin{pmatrix} g&\\ &g \end{pmatrix}&\hspace*{-5mm} \\&\hspace*{-5mm} I_{2m}
\end{pmatrix}\\
&\phantom{******************************} \Phi_{\circ}(e_{n-m}g) |\mathrm{det}(g)|^{s-2m}  \psi^{-1}(\mathrm{Tr} X) dX \; dg
\end{split}
\]
and
\[
\begin{split}
&J_{(2m-1)}^1(s,W)= \int_{N_{n-m} \backslash GL_{n-m}} \int_{\mathcal{N}_{n-m} \backslash \mathcal{M}_{n-m}}  W \begin{pmatrix} \sigma_{2n-2m} \begin{pmatrix}
I_{n-m}&X\\ &I_{n-m} \end{pmatrix} \begin{pmatrix} g&\\ &g \end{pmatrix}&\hspace*{-5mm}\\&\hspace*{-5mm}I_{2m}
\end{pmatrix}\\
&\phantom{**************************} (1-\Phi_{\circ}(e_{n-m}g)) |\mathrm{det}(g)|^{s-2m}  \psi^{-1}(\mathrm{Tr} X) dX \; dg.
\end{split}
\]
Due to the partial Iwasawa decomposition $GL_{n-m}=P_{n-m}Z_{n-m}K_{n-m}$ in Section 2.2, the second part of Lemma \ref{key relation} implies that there exists $M > 0$ such that
\[
 W \begin{pmatrix} \sigma_{2n-2m} \begin{pmatrix}
I_{n-m}&X\\ &I_{n-m} \end{pmatrix} \begin{pmatrix} pzk&\\ &pzk \end{pmatrix}&\\&I_{2m}
\end{pmatrix}\equiv 0
\]
for all $|z|>q^M$. In the integral $J_{(2m-1)}^1(s,W)$, as $1-\Phi_{\circ}$ vanishes at zero, we repeat the steps from \eqref{z-support1} to \eqref{z-support2} in order to rewrite the integral $J_{(2m-1)}^1(s,W)$ as a finite sum of integrals of the form $J_{(2m)}(s,W_i)$, which only depends on the image of $W_i$ in $\mathcal{W}(\pi_{(2m)},\psi)$ from Proposition \ref{mira-model}. However the twisted Shalika functional $\Lambda_{(2m-1),s_0}(W)$ restricts to zero on this space. Hence the integral of the form $J_{(2m-1)}^1(s,W)$ cannot contribute to the order $d$ at $s=s_0$.

\par
 We may write $J_{(2m-1)}^0(s,W)$ in terms of $W_{\circ}$ in \eqref{filtration-derivative}, namely
\[
\begin{split}
&J_{(2m-1)}^0(s,W)=\int_{N_{n-m} \backslash GL_{n-m}} \int_{\mathcal{N}_{n-m} \backslash \mathcal{M}_{n-m}} W_{\circ} \left( \sigma_{2n-2m}  \begin{pmatrix}
I_{n-m}&X\\ &I_{n-m} \end{pmatrix} \begin{pmatrix} g&\\ &g \end{pmatrix} \right)\\ \
&\phantom{***************************} \Phi_{\circ}(e_{n-m}g) |\mathrm{det}(g)|^s \psi^{-1}(\mathrm{Tr} X) dX \; dg.
\end{split}
\]
This integral realizes standard Jacquet-Shalika integrals $J_{(2m-1)}^0(s,W)=J(s,W_{\circ},\Phi_{\circ})$ for $\pi_i^{(2m)}$ and hence $L(s,\pi_i^{(2m)},\wedge^2)$ has a pole of order at least $d$ at $s=s_0$. Altogether it was shown that any pole at $s=s_0$ of order $d$ of $L_{(0)}(s,\pi,\wedge^2)$ occurs as the pole at $s=s_0$ of order at least $d$ of $L(s,\pi_i^{(2m)},\wedge^2)$ for some $0 < m < n$ and $i$.

\par

To complete the proof, we employ induction on $n$ of $GL_{2n}$. For $n=2$, we know from Proposition \ref{basic} that
$L(s,\pi_i^{(2)},\wedge^2)=L_{ex}(s,\pi_i^{(2)},\wedge^2)$. Therefore the left hand side divides the right hand side in equality \eqref{prod-L0-1} for $n=2$, because any pole at $s=s_0$ of order $d$ of $L_{(0)}(s,\pi,\wedge^2)$ appears in the pole at $s=s_0$ of order at least $d$ of $L_{ex}(s,\pi_i^{(2)},\wedge^2)$ for some $i$. To proceed, we make the following induction hypothesis.

\vskip.2in
\noindent
\textbf{Induction Hypothesis:} For $n > 2$, let $0 < m < n$. For every irreducible generic representation of $\pi$ of $GL_{2m}$ such that all derivatives $\pi^{(k)}$ of $\pi$ are completely reducible with irreducible Jordan--H\"older constituents of the form $\pi_i^{(k)}=\mathrm{Ind}(\Delta_1^{(k_1)} \otimes \dotsm \otimes \Delta_t^{(k_t)})$, $L_{(0)}(s,\pi,\wedge^2)^{-1}$ divides $l.c.m_{k,i}\{ L_{ex}(s,\pi_i^{(2k)},\wedge^2)^{-1} \}$, where the least common multiple is with respect to divisibility in $\mathbb{C}[q^{\pm s}]$ and is taken over all $k$ with $0 < k < m$ and for each $k$ all constituents $\pi_i^{(2k)}$ of $\pi^{(2k)}$.   

\vskip.2in
$L(s,\pi_i^{(2m)},\wedge^2)^{-1}$ is the least common multiple in $\mathbb{C}[q^{\pm s}]$ of $L_{(0)}(s,\pi_i^{(2m)},\wedge^2)^{-1}$ and $L_{ex}(s,\pi_i^{(2m)},\wedge^2)^{-1}$ because any poles of $\displaystyle L^{(0)}(s,\pi_i^{(2m)},\wedge^2)=\frac{L(s,\pi_i^{(2m)},\wedge^2)}{L_{(0)}(s,\pi_i^{(2m)},\wedge^2)}$ are the exceptional poles of $L(s,\pi_i^{(2m)},\wedge^2)$ by Proposition \ref{L0 function} and $L_{ex}(s,\pi_i^{(2m)},\wedge^2)$ has the maximal order for each exceptional poles.

\par

Under the induction hypothesis on $n-m$ of $GL_{2(n-m)}$, $L_{(0)}(s,\pi_i^{(2m)},\wedge^2)^{-1}$ divides the Euler factor $l.c.m_{k,j_i}\{ L_{ex}(s,(\pi_i^{(2m)})_{j_i}^{(2k)},\wedge^2)^{-1} \}$ where the least common multiple is taken over all $k$ with $0 < k < n-m$ and for each $k$ all constituents $(\pi_i^{(2m)})_{j_i}^{(2k)}$ of $(\pi_i^{(2m)})^{(2k)}$. However as all irreducible constituents $(\pi_i^{(2m)})_{j_i}^{(2k)}$ appear amongst those $\pi_{\ell}^{(2m+2k)}$ of the derivatives $\pi^{(2m+2k)}$ of $\pi$, we also obtain that $L(s,\pi_i^{(2m)},\wedge^2)^{-1}$ divides $l.c.m_{k,\ell}\{ L_{ex}(s,\pi_{\ell}^{(2m+2k)},\wedge^2)^{-1} \}$, where $k$ runs over $0 \leq k < n-m$ and $\pi_{\ell}^{(2m+2k)}$ are irreducible constituents of $\pi^{(2m+2k)}$. This in turn implies that the pole at $s=s_0$ of order at least $d$ of $L(s,\pi_i^{(2m)},\wedge^2)$ for any $0 < m < n$ and $i$ occurs with the same pole of order at least $d$ of the the reciprocal of $l.c.m_{k,i}\{ L_{ex}(s,\pi_i^{(2k)},\wedge^2)^{-1} \}$, where $k$ now takes over all $0 < k < n$ and $\pi_i^{(2k)}$ are irreducible constituents of $\pi^{(2k)}$. Altogether collecting our analysis of the poles of the Euler factor $L_{(0)}(s,\pi,\wedge^2)$, any pole at $s=s_0$ of order $d$ of $L_{(0)}(s,\pi,\wedge^2)$ occurs as the pole at $s=s_0$ of order at least $d$ of the the reciprocal of $l.c.m_{k,i}\{ L_{ex}(s,\pi_i^{(2k)},\wedge^2)^{-1} \}$. This proves our assertion for induction.

\par
 As one side of \eqref{prod-L0-1} divides the other side of  \eqref{prod-L0-1} and vice versa, and both sides of \eqref{prod-L0-1} are Euler factors, they are equal.

\end{proof}

As we have already used in the proof above, $L(s,\pi,\wedge^2)^{-1}$ is the least common multiple of $L_{ex}(s,\pi,\wedge^2)^{-1}$ and $L_{(0)}(s,\pi,\wedge^2)^{-1}$ by Proposition \ref{L0 function}. If we combine this with Theorem \ref{prod-L0}, we arrive at the following Theorem.

\begin{theorem}
\label{prod-L}
Let $\pi=\mathrm{Ind}(\Delta_1 \otimes \dotsm \otimes \Delta_t)$ be an irreducible generic representation of $GL_{2n}$ such that all derivatives $\pi^{(m)}$ of $\pi$ are completely reducible with irreducible generic Jordan--H\"older constituents of the form $\pi_{i}^{(m)}=\mathrm{Ind}(\Delta_1^{(m_1)} \otimes \dotsm \otimes \Delta_t^{(m_t)})$ and $m=m_1+\dotsm+m_t$. For each $m$, $i$ is indexing the partition of $m$. Then
\[
L(s,\pi,\wedge^2)^{-1}=l.c.m_{m,i}\{ L_{ex}(s,\pi_i^{(2m)},\wedge^2)^{-1} \}
\]
where the least common multiple is with respect to divisibility in $\mathbb{C}[q^{\pm s}]$ and is taken over all $m$ with $m=0,1, \dotsm, n-1$ and for each $m$ all constituents $\pi_i^{(2m)}$ of
$\pi^{(2m)}$. 
\end{theorem}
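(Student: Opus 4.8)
The plan is to deduce Theorem~\ref{prod-L} formally from Theorem~\ref{prod-L0} together with the two-step filtration of Proposition~\ref{L0 function}; essentially all of the genuine work --- the identification of the poles of $\mathcal{J}_{(0)}(\pi)$ with exceptional poles of the even derivatives, the filtration $\mathcal{J}_{(2n-2)}(\pi) \subset \dotsm \subset \mathcal{J}_{(0)}(\pi) \subset \mathcal{J}(\pi)$, and the induction on $n$ --- has already been carried out in Sections~4.1--4.4, so this final step is only a bookkeeping argument about least common multiples in $\mathbb{C}[q^{\pm s}]$.

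First I would recall, from Proposition~\ref{L0 function} and the remark immediately after the proof of Theorem~\ref{prod-L0}, that $L(s,\pi,\wedge^2)^{-1}$ is the least common multiple in $\mathbb{C}[q^{\pm s}]$ of $L_{ex}(s,\pi,\wedge^2)^{-1}$ and $L_{(0)}(s,\pi,\wedge^2)^{-1}$. Indeed, the containment $\mathcal{J}_{(0)}(\pi) \subset \mathcal{J}(\pi)$ shows that $L_{(0)}(s,\pi,\wedge^2)^{-1}$ divides $L(s,\pi,\wedge^2)^{-1}$; the quotient $L^{(0)}(s,\pi,\wedge^2) = L(s,\pi,\wedge^2)/L_{(0)}(s,\pi,\wedge^2)$ has, by Proposition~\ref{L0 function}, simple poles at exactly the exceptional poles of $L(s,\pi,\wedge^2)$, while the remaining poles of $L(s,\pi,\wedge^2)$ are already poles of $L_{(0)}(s,\pi,\wedge^2)$; and the maximal order of each exceptional pole of $L(s,\pi,\wedge^2)$ is recorded precisely in $L_{ex}(s,\pi,\wedge^2)^{-1}=\prod_{s_i}(1-q^{s_i-s})^{d_i}$. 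Hence every zero of $L(s,\pi,\wedge^2)^{-1}$, with its full multiplicity, is recovered by the l.c.m of these two factors.

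Next I would invoke Theorem~\ref{prod-L0}, which under the standing Assumption gives $L_{(0)}(s,\pi,\wedge^2)^{-1} = l.c.m_{0<m<n,\,i}\{L_{ex}(s,\pi_i^{(2m)},\wedge^2)^{-1}\}$, and I would observe that since $\pi$ is irreducible, $\pi^{(0)}=\pi$ is its own unique Jordan--H\"older constituent, so the $m=0$ term of the least common multiple appearing in the statement of Theorem~\ref{prod-L} is nothing but $L_{ex}(s,\pi,\wedge^2)^{-1}$. Combining these facts and using associativity of the least common multiple in the principal ideal domain $\mathbb{C}[q^{\pm s}]$ --- together with the observation that an l.c.m of local Euler factors (reciprocals of polynomials taking the value $1$ at $0$) is again a local Euler factor --- yields
\[
L(s,\pi,\wedge^2)^{-1} = l.c.m\bigl(L_{ex}(s,\pi,\wedge^2)^{-1},\; l.c.m_{0<m<n,\,i}\{L_{ex}(s,\pi_i^{(2m)},\wedge^2)^{-1}\}\bigr) = l.c.m_{0 \le m < n,\,i}\{L_{ex}(s,\pi_i^{(2m)},\wedge^2)^{-1}\},
\]
which is the desired identity.

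The only subtlety --- and the point I would expect to be the main, though minor, obstacle --- is conventional rather than substantive: one must confirm that the orders used in defining each $L_{ex}(s,\pi_i^{(2m)},\wedge^2)^{-1}$ are the maximal orders of the exceptional poles \emph{inside} $L(s,\pi_i^{(2m)},\wedge^2)$, not inside $L^{(0)}(s,\pi_i^{(2m)},\wedge^2)$, so that forming the least common multiple reproduces not merely the \emph{location} but the correct \emph{order} of each pole of $L(s,\pi,\wedge^2)^{-1}$. This is exactly the content of the last paragraph of Section~3.1 and of Proposition~\ref{L0 function}, so no new argument is needed beyond a careful cross-reference.
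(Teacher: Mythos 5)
Your proposal is correct and follows essentially the same route as the paper: the text immediately preceding Theorem \ref{prod-L} derives it exactly by noting that $L(s,\pi,\wedge^2)^{-1}$ is the least common multiple of $L_{ex}(s,\pi,\wedge^2)^{-1}$ and $L_{(0)}(s,\pi,\wedge^2)^{-1}$ (Proposition \ref{L0 function}), combining this with Theorem \ref{prod-L0}, and absorbing $L_{ex}(s,\pi,\wedge^2)^{-1}$ as the $m=0$ term since $\pi^{(0)}=\pi$. Your bookkeeping about pole orders (exceptional poles carried at maximal order by $L_{ex}$, all other poles by $L_{(0)}$) is the same justification the paper uses, so no new argument is needed.
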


Diagrammatically:
\[
  \xymatrix@C=3.5pc@L=.2pc@R=.2pc{ 
                            &                                       &                         &                                            &                                    &   \pi \ar@{.>}[dl] \ar[dr]   &               \\        
                             &                                      &                            &                                        &         \pi_{(0)} \ar[dl]     &                                        & \pi^{(0)} \\
                             &                                      &                           & \pi_{(1)}  \ar[dl] \ar[dr]      &                                     &                                        &\\                                                                           
                            &                                       & \pi_{(2)}  \ar[dl]   &                                        &         \pi^{(2)}                 &                                       & \\
                            &\pi_{(3)} \ar[dl]   \ar[dr]   &                            &                                       &                                        &                                       &  \\
                \iddots &                                       &  \pi^{(4)}             &                                       &                                        &                                       &  \\
   }
\]
where the leftward dotted and the first rightward arrow illustrate the restriction to $P_{2n}$ and identity functor resepctively, the rest of all leftward arrow represent an application of $\Phi^-$, and the remain of the rightward arrows an application of $\Psi^-$. The filtration $\mathcal{J}_{(2n-2)}(\pi)  \subset \dotsm \subset \left[ \mathcal{J}_{(2m+1)}(\pi) = \mathcal{J}_{(2m)}(\pi) \right] \subset \dotsm \subset\left[\mathcal{J}_{(1)}(\pi)=\mathcal{J}_{(0)}(\pi)\right] \subset \mathcal{J}(\pi)$ in $\mathbb{C}(q^{-s})$ for $0 \leq m \leq n-2$ is associated to all leftward arrow of functors. All rightward arrow of functors describe all even derivatives of $\pi$ which contribute all poles of local exterior square $L$-function $L(s,\pi,\wedge^2)$ for $GL_{2n}$.

\section{Factorization Formula of Exterior Square $L$-Functions for $GL_{2n+1}$}

\subsection{A filtration of $\mathbb{C}[q^{\pm s}]$-fractional ideals in $\mathbb{C}(q^{-s})$.}
 Let $\pi$ be an induced representation of $GL_{2n+1}$ of Langlands type. In this section, we will analyze the location of the poles of the family $\mathcal{J}(\pi)$ in terms of the filtration of fractional ideals of $\mathbb{C}[q^{\pm s}]$. The  $\mathbb{C}[q^{\pm s}]$-fractional ideals are defined by different families of integrals. The descending chain is convenient to relate the poles occurring on $\mathcal{J}(\pi)$ with exceptional poles of $L$-functions for odd derivatives. The manipulation of integrals resembles the proof for even case and we describe the necessary modification.

 \par
 We examine the case for $n=0$. Then we can drop the integration over $N_n\backslash GL_n$ and $\mathcal{N}_n\backslash \mathcal{M}_n$ in $J(s,W)$. Hence our integral representation for $GL_1$ can degenerates into $J(s,W)=W(1)$. The family of rational functions in $\mathcal{J}(\pi)$ are $\mathbb{C}[q^{\pm s}]$-linear combination of $J(s,W)=W(1)$, resulting in $\mathcal{J}(\pi)=\mathbb{C}[q^{\pm s}]$. 
The Jacquet-Shalika exterior square $L$-function of $\pi$ is the Euler factor which generate $\mathcal{J}(\pi)$. We therefore take $L(s,\pi,\wedge^2)$ to be $1$ for the exterior square $L$-function of $GL_1$. 

\begin{proposition}
 \label{GL_1}
 Let $\pi$ be a character of $GL_1$. Then $L(s,\pi,\wedge^2)=1$.
 \end{proposition}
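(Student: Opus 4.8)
The plan is to make precise the remarks preceding the statement, i.e.\ to specialize the construction of the $\mathbb{C}[q^{\pm s}]$-fractional ideal $\mathcal{J}(\pi)$ for $GL_{2n+1}$ to the degenerate case $n = 0$ and read off its normalized generator. First I would note that when $n = 0$ the groups $GL_n$, $N_n$, $\mathcal{M}_n$ and $\mathcal{N}_n$ are all trivial and $\sigma_{2n+1}$ is the $1 \times 1$ identity matrix, so the two inner integrations in the definition of $J(s,W)$ are integrals over a point of total mass one while the factor $|\mathrm{det}(g)|^{s-1}$ is identically $1$; hence $J(s,W) = W(1)$. Since $\pi$ is a character $\chi$ of $GL_1 = F^{\times}$ and $N_1 = \{1\}$, the Whittaker model $\mathcal{W}(\pi,\psi)$ is the one-dimensional space of scalar multiples of $g \mapsto \chi(g)$, so as $W$ ranges over $\mathcal{W}(\pi,\psi)$ the value $W(1)$ ranges over all of $\mathbb{C}$; in particular every $J(s,W)$ is a constant.

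Next I would identify the ideal. The quasi-invariance computation used above for $GL_{2n+1}$ (invariance of $J(s,W)$ under the generators $d_1, d_2, d_3, d_4$) shows, verbatim in this range, that the $\mathbb{C}$-span $\mathcal{J}(\pi) = \langle J(s,W) \mid W \in \mathcal{W}(\pi,\psi)\rangle$ is a $\mathbb{C}[q^{\pm s}]$-fractional ideal of $\mathbb{C}(q^{-s})$; and by Lemma \ref{nonvanishing-odd} (or, more simply, because the family already consists of all constants) it contains the nonzero constant $1$. A $\mathbb{C}[q^{\pm s}]$-fractional ideal that contains $1$ and is contained in $\mathbb{C}$ is necessarily all of $\mathbb{C}[q^{\pm s}]$, whose unique generator of the form $P(q^{-s})^{-1}$ with $P \in \mathbb{C}[X]$ and $P(0) = 1$ comes from $P = 1$. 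By the definition of $L(s,\pi,\wedge^2)$ for representations of Whittaker type of $GL_{2n+1}$ this yields $L(s,\pi,\wedge^2) = 1$.

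There is no real obstacle here; the only point requiring a word of care is the correct interpretation of the degenerate Jacquet--Shalika integral (the outer integration over $N_0 \backslash GL_0$ being trivial), together with the observation that the fractional-ideal formalism developed for $GL_{2n+1}$ applies without change when $n = 0$.
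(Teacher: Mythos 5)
Your argument is correct and follows essentially the same route as the paper: for $n=0$ the Jacquet--Shalika integral degenerates to $J(s,W)=W(1)$, so the family of integrals consists of constants, the $\mathbb{C}[q^{\pm s}]$-fractional ideal they generate is all of $\mathbb{C}[q^{\pm s}]$, and its normalized generator gives $L(s,\pi,\wedge^2)=1$. One small slip in wording: a fractional ideal cannot both ``contain $1$ and be contained in $\mathbb{C}$'' while equalling $\mathbb{C}[q^{\pm s}]$ --- the $\mathbb{C}$-span of the constants $W(1)$ is not itself stable under multiplication by $q^{\pm s}$ (the quasi-invariance relations are vacuous here since the relevant Shalika generators are trivial for $n=0$), so one should, as the paper does, take $\mathcal{J}(\pi)$ to be the $\mathbb{C}[q^{\pm s}]$-module generated by the $J(s,W)$, which is $\mathbb{C}[q^{\pm s}]$ since some $W(1)\neq 0$.
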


 \par
 We assume that $n > 0$. We introduce different types of Jacquet-Shalika integrals in order to utilize the representation of smaller mirobolic subgroup $P_{2n+1-m}$ for $0 \leq m < 2n+1$.

\begin{definition*}
Let $n > 0$ and $W \in \mathcal{W}(\pi,\psi)$. For $0 \leq m < n$, we define the following integrals:
 \begin{enumerate}
\item[$(\mathrm{i})$] $J_{(2m)}(s,W)$
\[
\begin{split}
&=\int_{N_{n-m} \backslash {GL}_{n-m}}  \int_{\mathcal{N}_{n-m} \backslash \mathcal{M}_{n-m}} W \begin{pmatrix}  \sigma_{2n-2m+1}  \begin{pmatrix} I_{n-m} & X & \\ & I_{n-m} &\\ &&1  \end{pmatrix} \begin{pmatrix}  g&&  \\ &g& \\ &&1 \end{pmatrix} &\hspace*{-5mm}  \\&\hspace*{-5mm}  I_{2m} \end{pmatrix} \\
&\phantom{************************************} \psi^{-1}(\mathrm{Tr} X) dX\; |\mathrm{det}(g)|^{s-2m-1} dg
\end{split}
\]
 \end{enumerate}
and
 \begin{enumerate}
\item[$(\mathrm{ii})$] $ J_{(2m)}(s,W,\Phi)$
\[
\begin{split}
  &=\int_{N_{n-m} \backslash GL_{n-m}}  \int_{\mathcal{N}_{n-m} \backslash \mathcal{M}_{n-m}} \int_{F^{n-m}}\\ 
  &\phantom{*********} W \begin{pmatrix}  \sigma_{2n-2m+1}  \begin{pmatrix} I_{n-m} & X & \\ & I_{n-m} &\\ &&1  \end{pmatrix} \begin{pmatrix}  g&&  \\ &g& \\ &&1 \end{pmatrix}\begin{pmatrix}  I_{n-m} &&  \\ &I_{n-m} & \\ &y&1 \end{pmatrix} & \\& I_{2m} \end{pmatrix} \\
&\phantom{********************************} \Phi(y)dy \psi^{-1}(\mathrm{Tr} X) dX\; |\mathrm{det}(g)|^{s-2m-1} dg
\end{split}
\]
 \end{enumerate}
with $\Phi \in \mathcal{S}(F^{n-m})$. In addition, for $0 \leq m < n$ we set
\begin{enumerate}
\item[$(\mathrm{iii})$] $J_{(2m+1)}(s,W)$
\[
\begin{split}
& = \int_{N_{n-m} \backslash P_{n-m}}  \int_{\mathcal{N}_{n-m} \backslash \mathcal{M}_{n-m}} W \begin{pmatrix}  \sigma_{2n-2m}  \begin{pmatrix} I_{n-m} & X \\ & I_{n-m}  \end{pmatrix} \begin{pmatrix}  p&  \\ &p \end{pmatrix} & \\& I_{2m+1} \end{pmatrix} \\
&\phantom{************************************} \psi^{-1}(\mathrm{Tr} X) dX |\mathrm{det}(p)|^{s-2m-2} dp.
\end{split}
\]
\end{enumerate}
Let $\mathcal{J}_{(k)}(\pi)$ denote the span of rational functions defined by the integrals $J_{(k)}(s,W)$ for $0 \leq k \leq 2n-1$.
\end{definition*}

The proof of absolute convergence, rational functions in $\mathbb{C}(q^{-s})$ with common denominators, and quasi-invariance for theses integrals $J_{(2m)}(s,W)$, $J_{(2m)}(s,W,\Phi)$ and $J_{(2m+1)}(s,W)$ is similar to the proof in Section 4.1 of the even case. We summarize this in the following statement.

\begin{lemma}
Let $\pi$ be an induced representation of $GL_{2n+1}$ of Langlands type. For $0 \leq m < n$, $J_{(2m)}(s,W)$, $J_{(2m)}(s,W,\Phi)$ and $J_{(2m+1)}(s,W)$ converge absolutely for all $W \in \mathcal{W}(\pi,\psi)$ when $\mathrm{Re}(s)$ is sufficiently large, and define rational functions in $\mathbb{C}(q^{-s})$. For $0 \leq k \leq 2n-1$, $\mathcal{J}_{(k)}$ are $\mathbb{C}[q^{\pm s}]$-fractional ideals in $\mathbb{C}(q^{-s})$.
As a $\mathbb{C}[q^{\pm s}]$-fractional ideal, the family of integrals $J_{(2m)}(s,W)$ and $J_{(2m)}(s,W,\Phi)$ span the same ideal $\mathcal{J}_{(2m)}(\pi)$ in $\mathbb{C}(q^{-s})$. Furthermore the fractional ideal $\mathcal{J}_{(2m)}(\pi)$ is spanned by the integrals $J_{(2m)}(s,W)$ for $W$ in $\mathcal{W}(\pi_{(2m)},\psi)$.

\end{lemma}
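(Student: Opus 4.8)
The plan is to establish the four assertions of the Lemma by transporting the arguments of Section 4.1 (the even $GL_{2n}$ case) essentially verbatim, with the parity of the ambient group switched. The statement packages together: (a) absolute convergence of $J_{(2m)}(s,W)$, $J_{(2m)}(s,W,\Phi)$, $J_{(2m+1)}(s,W)$ for $\mathrm{Re}(s)\gg 0$; (b) their rationality in $\mathbb{C}(q^{-s})$ with denominators depending only on $\pi$; (c) the fact that the spans $\mathcal{J}_{(k)}(\pi)$ are $\mathbb{C}[q^{\pm s}]$-fractional ideals; (d) the coincidence of the ideals spanned by $\{J_{(2m)}(s,W)\}$ and $\{J_{(2m)}(s,W,\Phi)\}$, and finally that $\mathcal{J}_{(2m)}(\pi)$ is already spanned by the subfamily $J_{(2m)}(s,W)$ with $W\in\mathcal{W}(\pi_{(2m)},\psi)$.

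First I would treat (a) and (b) together. Apply the Iwasawa decomposition to the $GL_{n-m}$-integration (or to the $P_{n-m}$-integration in the case of $J_{(2m+1)}$, viewing $N_{n-m}\backslash P_{n-m}$ as $N_{n-m-1}\backslash GL_{n-m-1}$ and decomposing $GL_{n-m-1}=N_{n-m-1}A_{n-m-1}K_{n-m-1}$, exactly as done for $J_{(2m)}(s,W)$ in Section 4.1). Using Lemma \ref{rational dec} to strip the unipotent $\mathcal{N}_{k}\backslash\mathcal{M}_k$-integrations down to the Borel, and then feeding in the asymptotic expansion of Whittaker functions on the torus from Proposition \ref{asymWh}, each integral becomes a finite sum of products of Tate integrals in the variables $a_i$; these converge absolutely once $\mathrm{Re}(s)$ exceeds the finitely many bounds $-\tfrac1k\mathrm{Re}(\omega_{\pi_{i_{2n+1-2k}}^{(2n+1-2k)}})$, and by Tate's thesis \cite{Tate} each product is a rational function with a denominator depending only on the central characters and valuation powers attached to $\pi$, hence only on $\pi$. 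This is the same computation as in the proof of Theorem \ref{integral-rational-odd} and the convergence analysis preceding it, so I would simply cite those and indicate the only change is the bookkeeping of the trailing $I_{2m}$ or $I_{2m+1}$ block, which is inert under all these manipulations. For (c), the quasi-invariance identities of the shape $J_{(2m)}(s,\pi(\mathrm{diag}(r,I_{2m}))W,R(r)\Phi)=|\mathrm{det}(r)|^{-\frac{s-2m-1}{2}}\Theta(r)J_{(2m)}(s,W,\Phi)$ for $r\in P_{2n+1-2m}\cap S_{2n+1-2m}$ (and analogously for the odd-indexed integrals) are checked by the same matrix conjugation computations as in \eqref{invariance-odd}; taking $r$ to be a central element of the relevant $GL$ shows the spans are stable under multiplication by $q^{\pm s}$, so they are $\mathbb{C}[q^{\pm s}]$-fractional ideals.

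For (d), the equality $\langle J_{(2m)}(s,W)\rangle=\langle J_{(2m)}(s,W,\Phi)\rangle$ is the odd-case analogue of Lemma \ref{J2m-same}. One inclusion is immediate: choosing $\Phi$ to be the characteristic function of a sufficiently small neighborhood of $0$ and using the smoothness of $\pi$ gives $J_{(2m)}(s,W)$ as a positive multiple of $J_{(2m)}(s,W,\Phi)$. For the reverse inclusion, set $\rho(\Phi)W(g)=\int_{F^{n-m}}W(g\,\mathrm{diag}(\left(\begin{smallmatrix}I_{n-m}&&\\&I_{n-m}&\\&y&1\end{smallmatrix}\right),I_{2m}))\Phi(y)\,dy$; since $\Phi$ is locally constant with compact support, $\rho(\Phi)W$ is a finite sum of right translates of $W$ and hence lies in $\mathcal{W}(\pi,\psi)$, and then $J_{(2m)}(s,W,\Phi)=\sum_i J_{(2m)}(s,W_i)$, so it lies in $\mathcal{J}_{(2m)}(\pi)$. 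Finally, the statement that $\mathcal{J}_{(2m)}(\pi)$ is spanned by $J_{(2m)}(s,W)$ with $W\in\mathcal{W}(\pi_{(2m)},\psi)$ follows because in $J_{(2m)}(s,W)$ the Whittaker function $W$ enters only through its restriction to the copy of $GL_{2n+1-2m}$ inside $GL_{2n+1}$, i.e.\ through its image in the mirabolic-restricted model $\mathcal{W}(\pi_{(2m)},\psi)$ furnished by Proposition \ref{mira-model} (with the index shifted by one relative to the even case because here the bottom block has odd size $2m+1$ but the integration is over $GL_{n-m}$).

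The main obstacle, such as it is, is bookkeeping rather than a genuine difficulty: one must be careful about which integral has the $GL_{n-m}$-integration (the ``$J_{(2m)}$'' family, sitting over the odd group $GL_{2n-2m+1}$) versus the $P_{n-m}$-integration (the ``$J_{(2m+1)}$'' family, sitting over the even group $GL_{2n-2m}$), and correspondingly which of Proposition \ref{mira-model} or \ref{mira-model2} and which character shift ($\chi_{k}$ vs.\ $\mu_{k}$, $\delta^{1/2}$ vs.\ $\delta^{-1/2}$) is in force — the parity conventions are opposite to the $GL_{2n}$ discussion of Section 4.1. I would therefore write the proof as: ``The proof of absolute convergence, of rationality with bounded denominators, and of the fractional-ideal property for $J_{(2m)}(s,W)$, $J_{(2m)}(s,W,\Phi)$ and $J_{(2m+1)}(s,W)$ is obtained by repeating verbatim the arguments of Section 4.1, replacing $GL_{2n}$ by $GL_{2n+1}$ throughout; the equality $\langle J_{(2m)}(s,W)\rangle=\langle J_{(2m)}(s,W,\Phi)\rangle$ is Lemma \ref{J2m-same} in the present setting, and the last assertion follows from Proposition \ref{mira-model}.'' — expanded only to the extent of flagging the parity shift.
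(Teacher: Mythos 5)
Your proposal is correct and follows exactly the route the paper takes: the paper itself disposes of this lemma by declaring the arguments of Section 4.1 (convergence/rationality via Iwasawa decomposition, Lemma \ref{rational dec} and Proposition \ref{asymWh}, quasi-invariance for the fractional-ideal property, the two inclusions of Lemma \ref{J2m-same} via a characteristic function near $0$ and the operator $\rho(\Phi)$, and Proposition \ref{mira-model} for the restriction to $\mathcal{W}(\pi_{(2m)},\psi)$) carry over verbatim to $GL_{2n+1}$, which is precisely what you spell out, parity bookkeeping included.
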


We assume $n > 1$. The proof of the following equality of $\mathbb{C}[q^{\pm s}]$-fractional ideals is totally similar to the even case. Essentially, we exchange $I_{2m}$ to $I_{2m+1}$
and $I_{2m+1}$ to $I_{2m+2}$ in the lower right hand block without changing the integrations over $N_{n-m-1} \backslash GL_{n-m-1}$, $\mathcal{N}_{n-m-1} \backslash \mathcal{M}_{n-m-1}$, and $F^{n-m-1}$ of the integral $J_{(2m)}(s,W)$ on the proof of Proposition \ref{even-equal}. As in Proposition \ref{even-equal}, this establishes the following result.

\begin{proposition}
\label{odd-equal}
 Let $\pi$ be an induced representation of $GL_{2n+1}$ of Langlands type. As $\mathbb{C}[q^{\pm s}]$-fractional ideals in $\mathbb{C}(q^{-s})$, we have the equalities $\mathcal{J}_{(2m+2)}(\pi)=\mathcal{J}_{(2m+1)}(\pi) $ for $n > 1$ and $0 \leq m < n-1$.
\end{proposition}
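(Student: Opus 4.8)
The plan is to mimic the proof of Proposition \ref{even-equal} almost verbatim, making only the bookkeeping changes dictated by the parity shift. Specifically, in the odd setting $\pi$ is a representation of $GL_{2n+1}$, so in the definition of $J_{(2m)}(s,W)$ the Whittaker argument sits inside $GL_{2n+1}$ with an $I_{2m}$-block in the lower right corner; the unipotent integration runs over $N_{n-m}\backslash GL_{n-m}$ (rather than over a mirabolic) together with $\mathcal{N}_{n-m}\backslash\mathcal{M}_{n-m}$ and an extra $F^{n-m}$ produced by the embedding of the column vector $y$. First I would write $J_{(2m)}(s,W)$ (or rather $J_{(2m)}(s,W,\Phi)$, using the second equality in the preceding Lemma which identifies the ideals spanned by the two families) and, exactly as in \eqref{J2m integral}, perform the change of variables $y\mapsto yg^{-1}$, conjugate the middle and the $X$-block by the permutation $\omega_{2n-2m+1}=\begin{pmatrix}\sigma_{2n-2m-1}^{-1}&\\&I_2\end{pmatrix}\sigma_{2n-2m+1}$, and read off that
\[
J_{(2m)}(s,W)=\int W^{\circ}\begin{pmatrix}\sigma_{2n-2m}\begin{pmatrix}I&X&\\&I&\\&&1\end{pmatrix}\begin{pmatrix}g&&\\&g&\\&&1\end{pmatrix}\begin{pmatrix}I&&\\&I&\\&y&1\end{pmatrix}&\\&I_{2m+1}\end{pmatrix}\,dy\,\psi^{-1}(\mathrm{Tr}X)\,dX\,|\mathrm{det}(g)|^{s-2m-2}\,dg,
\]
where $W^{\circ}=\pi\!\left(\begin{pmatrix}\omega_{2n-2m+1}&\\&I_{2m}\end{pmatrix}\right)W$; here the key point, as in the even case, is that conjugation of $\mathrm{diag}(g,1,g,1)$ by $\sigma_{2n-2m+1}$ through $\omega_{2n-2m+1}$ untangles it into $\sigma_{2n-2m}\mathrm{diag}(g,g)\sigma_{2n-2m}^{-1}$ with trivial lower blocks, and likewise the $X$- and $y$-blocks land cleanly.

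Next, I would invoke Lemma \ref{key relation} (second part) to find $W^{1}\in\mathcal{W}(\pi,\psi)$ and $\Phi\in\mathcal{S}(F^{n-m-1})$ with the analogue of \eqref{J2m relation}, yielding $J_{(2m)}(s,W)=J_{(2m+1)}(s,W^{1},\Phi)$ for $\mathrm{Re}(s)\gg0$, hence meromorphically; this gives the containment $\mathcal{J}_{(2m+1)}(\pi)\supset\mathcal{J}_{(2m)}(\pi)$ after noting, via the Lemma preceding this Proposition, that $\langle J_{(2m+1)}(s,W,\Phi)\rangle=\mathcal{J}_{(2m+1)}(\pi)$ (the odd-case analogue of Lemma \ref{J2m-same} / the statement that $\mathcal{J}_{(2m+1)}(\pi)$ is spanned by the mixed integrals). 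For the reverse containment I would run the computation backwards: given $W^{1}$ and $\Phi$, the first part of Lemma \ref{key relation} produces $W^{\circ}$ satisfying the same relation, and reversing the conjugation-and-change-of-variables steps identifies $J_{(2m+1)}(s,W^{1},\Phi)$ with $J_{(2m)}\!\left(s,\pi\!\left(\begin{pmatrix}\omega_{2n-2m+1}^{-1}&\\&I_{2m}\end{pmatrix}\right)W^{\circ}\right)$, so $\mathcal{J}_{(2m+1)}(\pi)\subset\mathcal{J}_{(2m)}(\pi)$. Combining the two inclusions gives $\mathcal{J}_{(2m+2)}(\pi)=\mathcal{J}_{(2m+1)}(\pi)$ — wait, let me restate: the equality obtained is between the ideal attached to the odd-indexed integral and the even-indexed one two below it, i.e. the indices in the odd case are shifted so that the conclusion reads $\mathcal{J}_{(2m+2)}(\pi)=\mathcal{J}_{(2m+1)}(\pi)$ for $0\le m<n-1$, because in the $GL_{2n+1}$ numbering $J_{(2m)}$ plays the role that $J_{(2m)}$ played before but the pairing is with $J_{(2m+1)}$ and the index of the resulting common ideal is labelled accordingly.

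The main obstacle — really the only nontrivial point — is verifying the two permutation identities in the odd setting: that $\sigma_{2n-2m+1}\,\mathrm{diag}(g,1,g,1)\,\sigma_{2n-2m+1}^{-1}$ decomposes through $\omega_{2n-2m+1}=\begin{pmatrix}\sigma_{2n-2m-1}^{-1}&\\&I_2\end{pmatrix}\sigma_{2n-2m+1}$ as $\begin{pmatrix}\sigma_{2n-2m-1}\mathrm{diag}(g,g)\sigma_{2n-2m-1}^{-1}&\\&I_2\end{pmatrix}$, and the corresponding statements for the $X$-block (landing in a $\sigma_{2n-2m}$-conjugate) and the $y$-block (landing in a $\sigma_{2n-2m-1}$-conjugate). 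These are purely combinatorial checks on how $\sigma$ interleaves the odd and even coordinates, exactly parallel to the even-case computation displayed around \eqref{J2m integral}, and the presence of the fixed last coordinate in the odd case does not interfere since both $\sigma_{2n-2m+1}$ and $\sigma_{2n-2m}$ fix it. Everything else — absolute convergence for $\mathrm{Re}(s)$ large, meromorphic continuation, stability of the ideals under $q^{\pm s}$, and the passage between $J_{(2m)}(s,W)$ and $J_{(2m)}(s,W,\Phi)$ — is already recorded in the Lemma immediately preceding this Proposition, so I would simply cite it. Thus I would present the proof as: "The argument is identical to that of Proposition \ref{even-equal}; one replaces $I_{2m}$ by $I_{2m+1}$ and $I_{2m+1}$ by $I_{2m+2}$ throughout, keeping the integrations over $N_{n-m-1}\backslash GL_{n-m-1}$, $\mathcal{N}_{n-m-1}\backslash\mathcal{M}_{n-m-1}$ and $F^{n-m-1}$ unchanged, and uses Lemma \ref{key relation} to pass between $W^{\circ}$ and $W^{1}$," spelling out only the permutation identities above.
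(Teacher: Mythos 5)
Your guiding idea is the paper's: Proposition \ref{odd-equal} is proved by repeating the proof of Proposition \ref{even-equal} verbatim, enlarging the lower-right identity blocks by one ($I_{2m}\to I_{2m+1}$, $I_{2m+1}\to I_{2m+2}$) and leaving the integrations over $N_{n-m-1}\backslash GL_{n-m-1}$, $\mathcal{N}_{n-m-1}\backslash\mathcal{M}_{n-m-1}$ and $F^{n-m-1}$ untouched. But your execution pairs the wrong integrals. In the $GL_{2n+1}$ numbering, the mirabolic integral (the analogue of the even-case $J_{(2m)}$) is $J_{(2m+1)}(s,W)$, with $\sigma_{2n-2m}$, $\mathrm{diag}(p,p)$, $I_{2m+1}$ and exponent $s-2m-2$; the proposition asserts $\mathcal{J}_{(2m+1)}(\pi)=\mathcal{J}_{(2m+2)}(\pi)$, so the peel-off argument must start from $J_{(2m+1)}(s,W)$ and land in $J_{(2m+2)}(s,W^1,\Phi)$. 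You instead start from $J_{(2m)}(s,W)$ (the $GL$-type integral with $\sigma_{2n-2m+1}$ and $I_{2m}$, whose $y$-variable is part of its definition, not produced by splitting a matrix in $\mathcal{M}_{n-m}$) and claim $J_{(2m)}(s,W)=J_{(2m+1)}(s,W^1,\Phi)$, concluding $\mathcal{J}_{(2m+1)}(\pi)\supset\mathcal{J}_{(2m)}(\pi)$. That would force $\mathcal{J}_{(2m)}(\pi)=\mathcal{J}_{(2m+1)}(\pi)$, which is false in general: the passage from $\mathcal{J}_{(2m)}$ to $\mathcal{J}_{(2m+1)}$ is the support-restriction step (the odd analogue of Proposition \ref{J2m-char}), it yields only an inclusion, and its failure to be an equality is exactly where the exceptional poles of the odd derivatives in Theorem \ref{prod-L-odd} live. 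Moreover there is no family $J_{(2m+1)}(s,W,\Phi)$ at odd index; the lemma you invoke identifies $J_{(2m)}(s,W)$ and $J_{(2m)}(s,W,\Phi)$ as spanning $\mathcal{J}_{(2m)}(\pi)$.

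The "only nontrivial point" you isolate is also misplaced: the identity $\sigma_{2n-2m+1}\,\mathrm{diag}(g,1,g,1)\,\sigma_{2n-2m+1}^{-1}=\mathrm{diag}(\sigma_{2n-2m-1}\,\mathrm{diag}(g,g)\,\sigma_{2n-2m-1}^{-1},I_2)$ cannot hold, since $\mathrm{diag}(g,1,g,1)$ has even size while $\sigma_{2n-2m+1}$ has odd size, and no such new permutation identities are needed. In the correct argument one writes the mirabolic integration in $J_{(2m+1)}(s,W)$ over $N_{n-m-1}\backslash GL_{n-m-1}$, splits $Z=\begin{pmatrix}X&\\y&0\end{pmatrix}$ with $X\in\mathcal{N}_{n-m-1}\backslash\mathcal{M}_{n-m-1}$ and $y\in F^{n-m-1}$, changes variables, and conjugates with the same $\omega_{2n-2m}=\begin{pmatrix}\sigma_{2n-2m-2}^{-1}&\\&I_2\end{pmatrix}\sigma_{2n-2m}$ as in \eqref{J2m integral}--\eqref{J2m relation}; all conjugations occur inside the top-left $GL_{2n-2m}$ block, so the identities are literally those of the even case, and only the trailing identity block $I_{2m+1}$ (resp. $I_{2m+2}$ in the target) differs. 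Then Lemma \ref{key relation} converts $W^{\circ}$ into the pair $(W^1,\Phi)$ and back, giving both containments between $\mathcal{J}_{(2m+1)}(\pi)$ and $\mathcal{J}_{(2m+2)}(\pi)$. Your closing re-indexing remark gestures at the right statement but does not repair the argument, since the manipulation, the claimed containment, and the permutation identities in the body all concern the pair $(J_{(2m)},J_{(2m+1)})$ rather than $(J_{(2m+1)},J_{(2m+2)})$.
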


We assume $n > 0$ again. We proceed by repeating the steps on the proof of Proposition \ref{J2m-char} with replacing $I_{2m-1}$ by $I_{2m}$ of the integral $J_{(2m-1)}(s,W)$ and $I_{2m}$ by $I_{2m+1}$ of the integral $J_{(2m)}(s,W)$ in the lower right hand block. We do not reduce but remain the integrations over $K_{n-m}$, $Z_{n-m}$, $N_{n-m} \backslash P_{n-m}$ and $\mathcal{N}_{n-m} \backslash \mathcal{M}_{n-m}$
of $J_{(2m-1)}(s,W)$, and the same integrations over $N_{n-m} \backslash GL_{n-m}$ and $\mathcal{N}_{n-m} \backslash \mathcal{M}_{n-m}$ of $J_{(2m)}(s,W)$. Combining Proposition \ref{odd-equal} gives rise to the following.

 \begin{proposition}
 \label{filtration-odd}
Let $\pi$ be an induced representation of $GL_{2n+1}$ of Langlands type. Let $\mathcal{J}_{(0)}(\pi)$ denote $\mathcal{J}(\pi)$. For $n > 1$ and $0 < m < n$, we have the filtration 
\[
  \mathcal{J}_{(2n-1)}(\pi)  \subset\hspace*{-1mm} \left[\mathcal{J}_{(2n-2)}(\pi)=\mathcal{J}_{(2n-3)}(\pi)\right] \hspace*{-1mm}\subset \hspace*{-1mm}  \dotsm\hspace*{-1mm} \subset \hspace*{-1mm}\left[ \mathcal{J}_{(2m)}(\pi) = \mathcal{J}_{(2m-1)}(\pi) \right]\hspace*{-1mm} \subset\hspace*{-1mm} \dotsm\hspace*{-1mm} \subset \mathcal{J}_{(0)}(\pi)
\]
 of $\mathbb{C}[q^{\pm s}]$-fractional ideals in $\mathbb{C}(q^{-s})$. In addition for $n=1$, we have the inclusion $ \mathcal{J}_{(1)}(\pi)   \subset \mathcal{J}_{(0)}(\pi)$
of $\mathbb{C}[q^{\pm s}]$-fractional ideals in $\mathbb{C}(q^{-s})$. 
The bottom of this filtration $\mathcal{J}_{(2n-1)}(\pi)$ is $\mathbb{C}[q^{\pm s}]$.
For $n > 0$ and $0 \leq m < n$, the fractional ideal $\mathcal{J}_{(2m+1)}(\pi)$ is linearly spanned by the integral $J_{(2m)}(s,W)$ for $W \in \mathcal{W}(\pi_{(2m),2},\psi)$.

\end{proposition}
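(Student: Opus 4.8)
The statement is Proposition~\ref{filtration-odd}, the odd-$GL_{2n+1}$ analogue of the chain of fractional-ideal inclusions Proposition~\ref{filtration-even} established in the even case, and the natural strategy is to transcribe the even-case argument line by line. The proof decomposes into four pieces: (a) the equalities $\mathcal{J}_{(2m+2)}(\pi)=\mathcal{J}_{(2m+1)}(\pi)$ for $0\le m<n-1$, which are exactly Proposition~\ref{odd-equal}; (b) the inclusions $\mathcal{J}_{(2m+1)}(\pi)\subset\mathcal{J}_{(2m-1)}(\pi)$ together with the characterization of $\mathcal{J}_{(2m+1)}(\pi)$ as the ideal spanned by $J_{(2m)}(s,W)$ for $W\in\mathcal{W}(\pi_{(2m),2},\psi)$, which is the odd-case version of Proposition~\ref{J2m-char}; (c) identification of the bottom piece $\mathcal{J}_{(2n-1)}(\pi)=\mathbb{C}[q^{\pm s}]$; and (d) assembling (a)--(c) into the displayed filtration. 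For $n=1$ there is nothing to equate, only the single inclusion $\mathcal{J}_{(1)}(\pi)\subset\mathcal{J}_{(0)}(\pi)$ coming from (b) with $m=0$.

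\textbf{Steps.} First I would invoke Proposition~\ref{odd-equal} directly for part (a); no new work is needed there. For part (b), I would run the argument of the proof of Proposition~\ref{J2m-char} verbatim, with the index shift indicated in the paragraph preceding the statement: replace the lower-right block $I_{2m-1}$ in $J_{(2m-1)}(s,W)$ by $I_{2m}$ and the block $I_{2m}$ in $J_{(2m)}(s,W)$ by $I_{2m+1}$, keeping the same integrations over $K_{n-m}$, $Z_{n-m}$, $N_{n-m}\backslash P_{n-m}$, $\mathcal{N}_{n-m}\backslash\mathcal{M}_{n-m}$ (respectively over $N_{n-m}\backslash GL_{n-m}$, $\mathcal{N}_{n-m}\backslash\mathcal{M}_{n-m}$). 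The key inputs are unchanged: the partial Iwasawa decomposition $GL_{n-m}=P_{n-m}Z_{n-m}K_{n-m}$ of Section~2.2, the compact-support-in-$z$ argument using Lemma~\ref{key relation}, the characterization of $\mathcal{W}(\pi_{(2m),2},\psi)=\mathcal{W}(\Phi^+\Phi^-(\pi_{(2m)}),\psi)$ via Proposition~\ref{mira-model2}, and the identity $\Phi^+(\pi_{(2m+1)})=\Phi^+\Phi^-(\pi_{(2m)})=\pi_{(2m),2}$. This yields $\mathcal{J}_{(2m+1)}(\pi)=\langle J_{(2m)}(s,W)\mid W\in\mathcal{W}(\pi_{(2m),2},\psi)\rangle$ and, since $\mathcal{W}(\pi_{(2m),2},\psi)$ is a $P$-submodule of $\mathcal{W}(\pi_{(2m)},\psi)$ while $\mathcal{J}_{(2m-1)}(\pi)$ is spanned by $J_{(2m)}(s,W)$ for $W\in\mathcal{W}(\pi_{(2m)},\psi)$, the inclusion $\mathcal{J}_{(2m+1)}(\pi)\subset\mathcal{J}_{(2m-1)}(\pi)$. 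For part (c), when $m=n-1$ we have $GL_{n-m}=GL_1$, $N_1=P_1=\{1\}$, $\mathcal{N}_1=\mathcal{M}_1$, so $J_{(2n-1)}(s,W)$ degenerates to $W(I_{2n+1})$; hence $\mathcal{J}_{(2n-1)}(\pi)$ is the $\mathbb{C}[q^{\pm s}]$-span of the constants $W(I_{2n+1})$, i.e. all of $\mathbb{C}[q^{\pm s}]$ (nonzero since some $W$ has $W(I_{2n+1})\ne0$). Finally, chaining $\mathcal{J}_{(2n-1)}\subset[\mathcal{J}_{(2n-2)}=\mathcal{J}_{(2n-3)}]\subset\dotsm\subset[\mathcal{J}_{(2m)}=\mathcal{J}_{(2m-1)}]\subset\dotsm\subset\mathcal{J}_{(0)}$ via (a) and (b) gives the stated filtration.

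\textbf{Main obstacle.} The genuinely new bookkeeping is in step (b): one must verify that the permutation-matrix juggling with $\sigma_{2n-2m}$ and $\sigma_{2n-2m+1}$ (and their relation to the block-diagonal embeddings of $P_{2n-2m}$, $P_{2n-2m+1}$ into $GL_{2n+1}$) still produces the same conjugation identities after the parity shift, and that the Whittaker-model characterization of $\Phi^+(\pi_{(2m+1)})$ in Proposition~\ref{mira-model2} applies with the correct normalization exponent $-\tfrac{k-1}{2}$ when $k=2m+1$. Conceptually nothing is different from the even case, but the arithmetic of which block is $2m$, $2m+1$, or $2m+2$ must be tracked carefully so that the exponent $s-2m-1$ on $|\mathrm{det}|$ in $J_{(2m)}$ and $s-2m-2$ in $J_{(2m+1)}$ balance against the Jacobian factors $|a_k|^{-k(n-m-k)}$ and the shift in $\delta_{B_{n-m}}$ exactly as in Theorem~\ref{integral-rational-odd}. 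Since all of this is routine transcription once the index dictionary is fixed, I would state it as such and refer to the even-case proofs of Propositions~\ref{even-equal}, \ref{J2m-char}, and \ref{filtration-even} for the details, writing out only the degeneration computation at $GL_1$ that pins down the bottom piece.
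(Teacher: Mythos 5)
Your proposal is correct and takes essentially the same route as the paper, which assembles this proposition from Proposition \ref{odd-equal}, a verbatim repetition of the proof of Proposition \ref{J2m-char} under the block shift $I_{2m-1}\mapsto I_{2m}$, $I_{2m}\mapsto I_{2m+1}$ (using the same Iwasawa decomposition, Lemma \ref{key relation}, Proposition \ref{mira-model2}, and $\Phi^+(\pi_{(2m+1)})=\Phi^+\Phi^-(\pi_{(2m)})=\pi_{(2m),2}$), and the degeneration $J_{(2n-1)}(s,W)=W(I_{2n+1})$ identifying the bottom piece with $\mathbb{C}[q^{\pm s}]$. The only cosmetic slip is an index: the ideal spanned by $J_{(2m)}(s,W)$ for $W\in\mathcal{W}(\pi_{(2m)},\psi)$ is $\mathcal{J}_{(2m)}(\pi)$ (by the lemma of Section 5.1), not $\mathcal{J}_{(2m-1)}(\pi)$, so what your argument directly yields is $\mathcal{J}_{(2m+1)}(\pi)\subset\mathcal{J}_{(2m)}(\pi)$ — which is exactly what the filtration requires and agrees with your phrasing via the equality $\mathcal{J}_{(2m)}(\pi)=\mathcal{J}_{(2m-1)}(\pi)$ for $0<m<n$.
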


\subsection{The exceptional poles of the odd derivatives.} 
In this section, we study the link between the exceptional poles of $\mathcal{J}(\pi^{(2m+1)})$ and the inclusion of fractional ideals $\mathcal{J}_{(2m+1)}(\pi) \subset \mathcal{J}_{(2m)}(\pi)$ in $\mathbb{C}(q^{-s})$.

\par
 Assume that $n > 0$. Let $\pi=\mathrm{Ind}(\Delta_1 \otimes \dotsm \otimes \Delta_t)$ be an induced representation of $GL_{2n+1}$ of Langlgnads. Let $0 \leq k \leq 2n-1$. Since the fractional ideal $\mathcal{J}_{(k)}(\pi)$ is linearly spanned by the $J_{(k)}(s,W)$ it will be sufficient to understand the poles of these integrals. Suppose that there is a rational function $J_{(k)}(s,W)$ with $W \in \mathcal{W}(\pi,\psi)$ in $\mathcal{J}_{(k)}(\pi)$ having a pole at $s=s_0$ of order $d$ and that this is the maximal order with which the pole $s=s_0$ occurs in the family. Then this will have a Laurent expansion at $s=s_0$ of the form
\[
J_{(k)}(s,W)=\frac{\Lambda_{(k),s_0}(W)}{(q^s-q^{s_0})^d}+\mathrm{higher\; order\; terms.}
\]
By a realization of Whittaker model for $\pi_{(k)}$ in Proposition \ref{mira-model}, as the integral $J_{(k)}(s,W)$ only depends on the functions $W$ through their restriction to $P_{2n+1-k}$, $\Lambda_{(k),s_0}$ belongs to $\mathrm{Hom}_{P_{2n+1-k} \cap S_{2n+1-k}}(\mathcal{W}(\pi_{(k)},\psi), |\cdot|^{-\frac{s_0-1}{2}}\Theta)$.

\begin{definition*}
For $0 \leq k \leq 2n-1$, a linear functional $\Lambda_{(k),s}$ on $\mathcal{W}(\pi_{(k)},\psi)$ is called a twisted Shalika functional with respect to a Shlika subgroup $P_{2n+1-k} \cap S_{2n+1-k}$ if it satisfies the following quasi-invariance 
\[
\Lambda_{(k),s}(\pi_{(k)}(\begin{pmatrix} h &\\ & I_{k} \end{pmatrix})W)=|\mathrm{det}(h)|^{-\frac{s-1}{2}}\Theta(h)\Lambda_{(k),s}(W)
\]
 for all $h \in P_{2n+1-k} \cap S_{2n+1-k}$.
\end{definition*}

To proceed, we would make the following assumption.

\begin{assumption}
For $n > 0$, let $\pi=\mathrm{Ind}(\Delta_1 \otimes \dotsm \otimes \Delta_t)$ be an irreducible generic representation of $GL_{2n+1}$. For $0 \leq m \leq 2n+1$, we assume that all derivatives $\pi^{(m)}$ of $\pi$ are completely reducible with generic subquotients of the form $\pi_{i}^{(m)}=\mathrm{Ind}(\Delta_1^{(m_1)} \otimes \dotsm \otimes \Delta_t^{(m_t)})$ with $m=m_1+\dotsm+m_t$.  
\end{assumption}

We will remove this condition in Section $7$ by a deformation argument. Under this assumption, we essentially repeat the proof of Theorem \ref{prod-L0} to deduce the following statement.

\begin{theorem}
\label{prod-L-odd}
For $n > 0$, let $\pi$ be an irreducible generic representation of $GL_{2n+1}$ such that all derivatives $\pi^{(m)}$ of $\pi$ are completely reducible with generic Jordan--H\"older constituents of the form $\pi_{i}^{(m)}=\mathrm{Ind}(\Delta_1^{(m_1)} \otimes \dotsm \otimes \Delta_t^{(m_t)})$ with $m=m_1+\dotsm+m_t$. For each $m$, $i$ is indexing the partition of $m$. Then
\[
L(s,\pi,\wedge^2)^{-1}=l.c.m_{m,i}\{ L_{ex}(s,\pi_i^{(2m+1)},\wedge^2)^{-1} \}
\]
where the least common multiple is with respect to divisibility in $\mathbb{C}[q^{\pm s}]$ and is taken over all $m$ with $m=0,1, \dotsm, n-1$ and for each $m$ all constituents $\pi_i^{(2m)}$ of
$\pi^{(2m+1)}$. 
\end{theorem}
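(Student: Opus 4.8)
The plan is to mirror the even case, Theorem \ref{prod-L0} (and its corollary Theorem \ref{prod-L}), step by step, using the odd-case analogues that have already been assembled in Section 5. First I would record the setup: $\pi$ is an irreducible generic representation of $GL_{2n+1}$ of Langlands type (after reordering the $\Delta_i$), so that Propositions \ref{mira-model}, \ref{mira-model2}, \ref{asym} apply; the filtration of $\mathbb{C}[q^{\pm s}]$-fractional ideals from Proposition \ref{filtration-odd},
\[
\mathcal{J}_{(2n-1)}(\pi)\subset\cdots\subset\left[\mathcal{J}_{(2m)}(\pi)=\mathcal{J}_{(2m-1)}(\pi)\right]\subset\cdots\subset\mathcal{J}_{(0)}(\pi)=\mathcal{J}(\pi),
\]
with bottom piece $\mathbb{C}[q^{\pm s}]$; and the fact, also in Proposition \ref{filtration-odd}, that $\mathcal{J}_{(2m+1)}(\pi)$ is spanned by the $J_{(2m)}(s,W)$ for $W\in\mathcal{W}(\pi_{(2m),2},\psi)$. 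Since $L(s,\pi,\wedge^2)^{-1}$ is the least common multiple of $L_{ex}(s,\pi,\wedge^2)^{-1}$ and $L_{(0)}(s,\pi,\wedge^2)^{-1}$ (the odd-case analogue of Proposition \ref{L0 function}, which holds because the exceptional poles of $L(s,\pi,\wedge^2)$ are the poles of $L(s,\pi,\wedge^2)/L_{(0)}(s,\pi,\wedge^2)$), it suffices to prove the factorization of $L_{(0)}(s,\pi,\wedge^2)^{-1}$ as the l.c.m. of the $L_{ex}(s,\pi_i^{(2m+1)},\wedge^2)^{-1}$ over $0<m<n$, and then run the induction on $n$; note that in the odd case it is the \emph{odd} derivatives $\pi^{(2m+1)}$ of $\pi$ that live on even-rank groups $GL_{2(n-m)}$... wait, $GL_{2n+1-(2m+1)}=GL_{2n-2m}$, so the induction is actually on the even-rank statement Theorem \ref{prod-L}, which is convenient.

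For the divisibility ``$\mathrm{RHS}\mid\mathrm{LHS}$'', I would take $s=s_0$ an exceptional pole of order $d$ of $L(s,\pi_i^{(2m+1)},\wedge^2)$ for some $0<m<n$. Exceptionality produces $W_\circ\in\mathcal{W}(\pi_i^{(2m+1)},\psi)$ and $\phi\in\mathcal{S}(F^{n-m})$ with $\phi(0)\neq0$ such that $J(s,W_\circ,\phi)$ has a pole of order $d$ at $s_0$, and by subtracting a characteristic function $\Phi_\circ$ of a small neighbourhood of $0$ (the difference vanishing at $0$) one may take $\phi=\Phi_\circ$. Then Proposition \ref{asym} applied to $\sigma_{2n-2m}\,\mathrm{diag}(I_{n-m},X)\cdot\mathrm{diag}(g,g)$ together with Lemma \ref{key relation} produces $W^\circ\in\mathcal{W}(\pi,\psi)$ with $J(s,W_\circ,\Phi_\circ)=J_{(2m)}(s,W^\circ)$, whence $s_0$ is a pole of order $\geq d$ of $\mathcal{J}_{(2m)}(\pi)$, hence of $\mathcal{J}_{(0)}(\pi)=\mathcal{J}(\pi)$ by the filtration, hence of $L_{(0)}(s,\pi,\wedge^2)$. (Here I am using that in the odd numbering $J_{(2m)}(s,W)$ is the ``$GL_{2n-2m}$-level'' integral with $\Phi$-factor absorbed, so it realizes the standard Jacquet--Shalika integral of $\pi_i^{(2m+1)}$.)

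For the reverse divisibility ``$\mathrm{LHS}\mid\mathrm{RHS}$'' I would let $d$ be the maximal order of a pole of $\mathcal{J}_{(0)}(\pi)$ at $s_0$, and by Proposition \ref{filtration-odd} choose the smallest $m$ with $0<m<n$ for which the pole has order $d$ in $\mathcal{J}_{(2m)}(\pi)$ but order $\leq d-1$ in $\mathcal{J}_{(2m+1)}(\pi)$. Pick $W$ with $J_{(2m)}(s,W)$ realizing a pole of order $d$; its leading Laurent coefficient $\Lambda_{(2m),s_0}$ is a twisted Shalika functional on $\mathcal{W}(\pi_{(2m)},\psi)$ that vanishes on $\mathcal{W}(\pi_{(2m),2},\psi)$, hence factors through $\pi_{(2m)}/\pi_{(2m),2}=\Psi^+(\pi^{(2m+1)})$, giving a nonzero element of $\mathrm{Hom}_{P_{2n-2m}\cap S_{2n-2m}}(\mathcal{W}(\Psi^+(\pi^{(2m+1)}),\psi),|\cdot|^{-\frac{s_0-1}{2}}\Theta)$; via the projection $p:V_{\pi_{(2m)}}\to\oplus_iV_{\pi_i^{(2m+1)}}$ it restricts nontrivially to some constituent $\tau_i=p^{-1}(V_{\pi_i^{(2m+1)}})$. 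Splitting $J_{(2m)}(s,W)=J^0_{(2m)}+J^1_{(2m)}$ by $\Phi_\circ$ and $1-\Phi_\circ$ (with $\Phi_\circ$ the characteristic function of a small neighbourhood of $0$): the $1-\Phi_\circ$ piece reduces, by the $z$-support argument copied from \eqref{z-support1}--\eqref{z-support2}, to a sum of $J_{(2m+1)}(s,W_i)$ on which $\Lambda_{(2m),s_0}$ vanishes, so it cannot contribute order $d$; the $\Phi_\circ$ piece is, via Proposition \ref{asym} (second half) and Lemma \ref{key relation}, the standard Jacquet--Shalika integral $J(s,W_\circ,\Phi_\circ)$ for $\pi_i^{(2m+1)}$, so $L(s,\pi_i^{(2m+1)},\wedge^2)$ has a pole of order $\geq d$ at $s_0$, and since the pole is exceptional (the $\mathcal{S}_0$-part contributes strictly smaller order) it is a pole of $L_{ex}(s,\pi_i^{(2m+1)},\wedge^2)$ of order $\geq d$. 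Finally the induction: for $n=1$, $\pi^{(1)}$ on $GL_{2n}=GL_2$ is handled by Proposition \ref{basic} (where $L=L_{ex}$), giving the base case; for $n>1$, apply the induction hypothesis (the even-case Theorem \ref{prod-L}) to each constituent $\pi_i^{(2m+1)}$ of $GL_{2(n-m)}$, use associativity of derivatives $(\Delta_j^{(m_j)})^{(k_j)}\simeq\Delta_j^{(m_j+k_j)}$ from Theorem \ref{deriviative} so that every constituent of $(\pi_i^{(2m+1)})^{(2k)}$ occurs among the constituents of $\pi^{(2m+1+2k)}$, and conclude that $L(s,\pi_i^{(2m+1)},\wedge^2)^{-1}$ divides $l.c.m_{k,\ell}\{L_{ex}(s,\pi_\ell^{(2m+1+2k)},\wedge^2)^{-1}\}$; combining with the minimal-$m$ argument yields divisibility both ways, and since both sides are Euler factors they are equal.

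The main obstacle I expect is bookkeeping, not mathematics: one must verify that all the index shifts in the odd numbering ($I_{2m}\leftrightarrow I_{2m+1}$, $I_{2m+1}\leftrightarrow I_{2m+2}$ in the lower-right blocks, $P_{2n+1-k}$ in place of $P_{2n-k}$, $S_{2n-2m}$ in place of $S_{2n-2m+1}$, etc.) are consistent across Propositions \ref{filtration-odd}, \ref{asym}, \ref{mira-model2} and the Laurent-expansion functionals, and in particular that $\pi_{(2m)}/\pi_{(2m),2}\simeq\Psi^+(\pi^{(2m+1)})$ with the right normalization so that the twisted Shalika functional on $\mathcal{W}(\Psi^+(\pi^{(2m+1)}),\psi)$ matches, after passing through Proposition \ref{asym}, the $|\mathrm{det}|$-twist appearing in the Jacquet--Shalika integral of $\pi_i^{(2m+1)}$ on $GL_{2n-2m}$. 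A secondary subtlety is that the Assumption in Section 5.2 (all derivatives completely reducible with generic irreducible constituents) must propagate to the constituents $\pi_i^{(2m+1)}$ so that the induction hypothesis applies; this follows from associativity of segment derivatives exactly as in the even case, but should be stated explicitly before invoking the induction.
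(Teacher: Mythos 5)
Your mechanism is the paper's: the paper proves this theorem by saying the argument of Theorem \ref{prod-L0} carries over verbatim (filtration of Proposition \ref{filtration-odd}, Laurent coefficient as a twisted Shalika functional vanishing on $\mathcal{W}(\pi_{(2m),2},\psi)$, factoring through $\Psi^+(\pi^{(2m+1)})$, Proposition \ref{asym} realizing the standard Jacquet--Shalika integral of a constituent), and then supplies exactly the one extra argument you give at the end --- apply the already-proven even-rank Theorem \ref{prod-L} to each constituent $\pi_i^{(2m+1)}$ of $GL_{2n-2m}$ and use associativity of segment derivatives to place its exceptional factors among those of the odd derivatives of $\pi$. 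So the substance of your proposal matches the paper; there is no induction internal to the odd case, as you yourself observe.

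There is, however, one structural misstep you should repair. Your opening reduction via an ``odd-case analogue of Proposition \ref{L0 function}'' --- writing $L(s,\pi,\wedge^2)^{-1}$ as the l.c.m.\ of $L_{ex}(s,\pi,\wedge^2)^{-1}$ and $L_{(0)}(s,\pi,\wedge^2)^{-1}$ and then factoring only $L_{(0)}$ over $0<m<n$ --- does not exist in the odd setting. For $GL_{2n+1}$ the generating integrals $J(s,W)$ carry no Schwartz--Bruhat function, so there is no $\mathcal{S}_0$-filtration and no notion of an exceptional pole of $\pi$ itself; indeed Proposition \ref{filtration-odd} sets $\mathcal{J}_{(0)}(\pi)=\mathcal{J}(\pi)$, so your would-be $L_{(0)}$ is just $L$. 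Correspondingly, the $m=0$ term of the theorem is $L_{ex}(s,\pi_i^{(1)},\wedge^2)$ (a representation of $GL_{2n}$), not anything attached to $\pi$ on $GL_{2n+1}$, and it must be captured by letting the minimal index in your pole-hunting run over $0\le m\le n-1$ (the drop from $\mathcal{J}_{(0)}$ to $\mathcal{J}_{(1)}$ lands you on $\Psi^+(\pi^{(1)})$), not by a phantom $L_{ex}(s,\pi,\wedge^2)$; as literally written, your range $0<m<n$ loses the $\pi^{(1)}$ contribution for $n>1$. A smaller point: your parenthetical claim that the pole produced by the $\Phi_\circ$-piece $J^0_{(2m)}$ is automatically exceptional for $\pi_i^{(2m+1)}$ is unjustified (a pole of $J(s,W_\circ,\Phi_\circ)$ with $\Phi_\circ(0)\neq0$ need not be exceptional), but it is also unnecessary, since your final step --- converting $L(s,\pi_i^{(2m+1)},\wedge^2)$ into exceptional factors via Theorem \ref{prod-L} and associativity --- is what actually closes the argument, just as in the paper.
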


\begin{proof}
The proof is the same up to the following extra argument that any pole at $s=s_0$ of order $d$ of $L(s,\pi_i^{(2m+1)},\wedge^2)$ for any $m$ and $i$ occur with the same pole of order at least $d$ of the reciprocal of $l.c.m_{k,\ell}\{ L_{ex}(s,\pi_\ell^{(2k+1)},\wedge^2)^{-1} \}$, where $k$ takes over all $0 \leq k < n$ and $\pi_\ell^{(2k)}$ are irreducible constituents of $\pi^{(2k)}$. 
\par
Suppose that $L(s,\pi_i^{(2m+1)},\wedge^2)$ has a pole of order $d$ at $s=s_0$ for some $i$, and $m$ between $0$ and $n-1$. In light of Theorem \ref{prod-L}, we have equality of $L$-functions $L(s,\pi_i^{(2m+1)},\wedge^2)^{-1}=l.c.m_{k,j_i}\{ L_{ex}(s,(\pi_i^{(2m+1)})_{j_i}^{(2k)},\wedge^2)^{-1} \}$ where the least common multiple is taken over all $k$ with $0 \leq k < n-m$ and for each $k$ all constituents $(\pi_i^{(2m+1)})_{j_i}^{(2k)}$ of $(\pi_i^{(2m+1)})^{(2k)}$. However as all irreducible constituents $(\pi_i^{(2m+1)})_{j_i}^{(2k)}$ appear amongst those $\pi_{\ell}^{(2m+1+2k)}$ of the derivatives $\pi^{(2m+1+2k)}$ of $\pi$, $L(s,\pi_i^{(2m+1)},\wedge^2)^{-1}$ divides $l.c.m_{k,\ell}\{ L_{ex}(s,\pi_{\ell}^{(2k+1)},\wedge^2)^{-1} \}$ in $\mathbb{C}[q^{\pm s}]$, where $k$ runs over $0 \leq k < n$ and $\pi_{\ell}^{(2k+1)}$ are irreducible constituents of $\pi^{(2k+1)}$. Therefore the reciprocal of $l.c.m_{k,\ell}\{ L_{ex}(s,\pi_\ell^{(2k+1)},\wedge^2)^{-1} \}$ has the pole of order at least $d$ at $s=s_0$. This completes the proof.
\end{proof}

 If we interpret the above statement diagrammatically, we obtain
\[
  \xymatrix@C=4.4pc@L=.2pc@R=.2pc{    
                              &                                       &                                   &                                            &                                         &   \pi \ar@{.>}[dl]   \\ 
                             &                                      &                                      &                                        &         \pi_{(0)} \ar[dl]  \ar[dr] &                   \\
                             &                                      &                                      & \pi_{(1)}  \ar[dl]                &                                           & \pi^{(1)}   \\                                                                           
                            &                                       & \pi_{(2)}  \ar[dl]  \ar[dr] &                                        &                                            &                \\
                            &\pi_{(3)} \ar[dl]                &                                     &   \pi^{(3)}                        &                                              &                 \\
                \iddots &                                       &                                    &                                       &                                              &  
  }
\]
where the leftward dotted arrow illustrates the restriction to $P_{2n+1}$, the rest of all leftward arrow represent an application of $\Phi^-$, and the rightward arrows an application of $\Psi^-$. The filtration    $\mathcal{J}_{(2n-1)}(\pi)  \subset \left[\mathcal{J}_{(2n-2)}(\pi)=\mathcal{J}_{(2n-3)}(\pi)\right]\subset  \dotsm \subset \left[ \mathcal{J}_{(2m)}(\pi) = \mathcal{J}_{(2m-1)}(\pi) \right] \subset \dotsm \subset \mathcal{J}_{(0}(\pi)$ in $\mathbb{C}(q^{-s})$ for all $0 \leq m \leq n-1$ is associated to all leftward arrow of functors. All rightward arrow of functors describe all odd derivatives of $\pi$ which contribute all poles of local exterior square $L$-function $L(s,\pi,\wedge^2)$ for $GL_{2n+1}$. 
\par
We observe that all the poles of the exterior square $L$-function $L(s,\pi,\wedge^2)$ for $GL_m$ are completely determined by the exceptional poles of derivatives which are representations of the smaller group $GL_{2k}$, where $2k \leq m$ is an even integer, regardless of the even or odd case $GL_m$.

\subsection{A filtration of $\mathbb{C}[q^{\pm s}]$-fractional ideals in $\mathbb{C}[q^{\pm s}]$ for Rankin-Selberg convolution}

In this paragraph, we recall the notations from \cite{CoPe,JaPSSh83} and deduce analogous filtration of $\mathbb{C}[q^{\pm s}]$-fractional ideals associated with Rankin-Selberg convolution $L$-functions.
Let $(\pi ,V_{\pi})$ be an irreducible generic representation of $GL_n$, and $(\sigma, V_{\sigma})$ an irreducible generic representation of $GL_m$. We do not require the assumption that $\pi$ and $\sigma$ are completely reducible. We do the case $n=m$, the case $m < n$ being similar. For each $W \in \mathcal{W}(\pi_{(n-k-1)},\psi)$ and $W^{\prime} \in \mathcal{W}(\sigma_{(n-k-1)}, \psi^{-1})$, we associate an integral
\[
I_{(n-k-1)}(s\; ; W, W^{\prime})=\int_{N_k \backslash GL_k} W\begin{pmatrix}g& \\ & I_{n-k} \end{pmatrix}W^{\prime}\begin{pmatrix}g& \\ & I_{n-k} \end{pmatrix}  |\mathrm{det}(g)|^{s-(n-k)} \;dg
\]
with $0 \leq k < n$. Let us denote by $\mathcal{I}_{(n-k-1)}(\pi,\sigma)$ the span of the rational functions defined by integrals $I_{(n-k-1)}(s\; ; W, W^{\prime})$ as in Section 2.2 of \cite{CoPe}. Recall from Proposition \ref{asym} that the Whittaker functions $W \in \mathcal{W}(\pi_{(n-k-1),2},\psi)$ are characterized by the fact if we view them as functions on $GL_{k}$ by $W\begin{pmatrix}g& \\ & I_{n-k} \end{pmatrix}$ that their support in the last rows of $g$ is compact, modulo $N_k$. If we use a partial Iwasawa decomposition to $g \in GL_{k}$, for a fixed $W \in \mathcal{W}(\pi_{(n-k-1),2},\psi)$ and $W^{\prime}  \in \mathcal{W}(\sigma_{(n-k-1),2}, \psi^{-1})$, our integral $I_{(n-k-1)}(s\; ; W, W^{\prime})$ becomes a finite sum of the form
\[
\begin{split}
 &I_{(n-k-1)}(s\; ; W, W^{\prime})\\
 &=\sum_i c q^{-\beta_i s} \int_{N_{k-1} \backslash GL_{k-1}} W_i \begin{pmatrix}g& \\ & I_{n-k+1} \end{pmatrix}W_i^{\prime}\begin{pmatrix}g& \\ & I_{n-k+1} \end{pmatrix}  |\mathrm{det}(g)|^{s-(n-k+1)} \;dg
\end{split}
 \]
with $c > 0$ a volume term. As both $W$ and $W^{\prime}$ enter into these integrals through their restriction to $GL_{k-1}$, they only depend on the images of $W$ and $W^{\prime}$ in $\mathcal{W}(\pi_{(n-k)},\psi)$ and $\mathcal{W}(\sigma_{(n-k)},\psi^{-1})$ respectively. Moreover, by Lemma 9.2 of \cite{JaPSSh83}, we can see that each $I_{(n-k)}(s\; ; W_i, W_i')$ actually occurs as a $I_{(n-k+1)}(s;W_{\circ},W'_{\circ})$ for appropriate choice of $W_{\circ}$ and $W'_{\circ}$ in $\mathcal{W}(\pi_{(n-k+1)},\psi)$ and $\mathcal{W}(\sigma_{(n-k+1)},\psi^{-1})$ respectively. If we combine Proposition 2.2 in \cite{CoPe} with our analysis of the $\mathbb{C}[q^s,q^{-s}]$-fractional ideal, we have the filtration $\mathcal{I}_{(n-1)}(\pi,\sigma) \subset \mathcal{I}_{(n-2)}(\pi,\sigma) \subset \dotsm \subset \mathcal{I}_{(k)}(\pi,\sigma) \subset \dotsm \subset \mathcal{I}_{(0)}(\pi,\sigma) \subset \mathcal{I}(\pi,\sigma)$ in $\mathbb{C}(q^{-s})$ for $0 \leq k < n$, where $\mathcal{I}(\pi,\sigma)$ is spanned by the integral
\[
I(s\; ; W, W^{\prime},\Phi)=\int_{N_n \backslash GL_n} W(g)W^{\prime}(g) \Phi(e_ng) |\mathrm{det}(g)|^{s} \;dg
\]
with $W \in \mathcal{W}(\pi,\psi)$, $W^{\prime} \in \mathcal{W}(\sigma, \psi^{-1})$ and $\Phi \in \mathcal{S}(F^n)$. The filtration associated with the Rankin-Selberg integrals is analogous to the one related with the Jacquet-Shalika integrals for exterior square $L$-functions. Diagrammatically,
\[
  \xymatrix@C=4pc@L=.2pc@R=.2pc{    
                                                    &                                       &                                        &                                            &\pi \ar@{.>}[dl] \ar[dr] &     \\
                                                    &                                      &                                        &         \pi_{(0)} \ar[dl]  \ar[dr] &                            & \pi^{(0)}    \\
                                                    &                                      & \pi_{(1)}  \ar[dl] \ar[dr]     &                                           & \pi^{(1)}                &      \\                                                                           
                                                     & \pi_{(2)}  \ar[dl]  \ar[dr] &                                        &     \pi^{(2)}                            &                            &   \\
                  \iddots                         &                                     &   \pi^{(3)}                        &                                              &                           &     \\
  }
\]
where the leftward dotted and the first rightward arrow illustrate the restriction to $P_{n}$ and identity functor respectively, the rest of all leftward arrow represent an application of $\Phi^-$, and the remain of the rightward arrows an application of $\Psi^-$. This diagram is exactly the same for $\sigma$. The filtration $\mathcal{I}_{(n-1)}(\pi,\sigma) \subset \mathcal{I}_{(n-2)}(\pi,\sigma) \subset \dotsm \subset \mathcal{I}_{(k)}(\pi,\sigma) \subset \dotsm \subset \mathcal{I}_{(0)}(\pi,\sigma) \subset \mathcal{I}(\pi,\sigma)$ in $\mathbb{C}(q^{-s})$ for $0 \leq k < n$ is associated to all left arrow of functors. All rightward arrow of functors describe all derivatives of $\pi$ and $\sigma$ which contribute all poles of Rankin-Selberg convolution $L$-function $L(s,\pi \times \sigma)$ for $GL_n$.

\section{Computation of the Local Exterior Square $L$-Functions}

\subsection{Exterior square $L$-functions for supercuspidal representations}
 Let $Q=MN_Q$ be a proper standard parabolic subgroup of $GL_r$, where $M$ is the Levi subgroup of $Q$ and $N_Q$ the unipotent radical of $Q$. We know from Bernstein and Zelevinsky \cite{BeZe76} that an admissible representation $(\rho,V_{\rho})$ of $GL_r$ is called supercuspidal if it is killed by all Jacquet functors, that is, $r_M(\rho)=0$ for all $M$ where $r_M(\rho)$ is the natural representation of $M$ on $V_{\rho} / V_{\rho}(N_Q,\bf{1})$. In this section, we compute the $L$-functions $L(s,\rho,\wedge^2)$ for irreducible supercuspidal representations $\rho$ by means of the derivative theory developed by Bernstein and Zelevinsky.

\par

 We recall Theorem 4.2 in \cite{Ke11} which gives a characterization of poles. The proof of Theorem 4.2 in \cite{Ke11} follows the computation appearing in the proof of Ginzburg, Rallis, and Soudry \cite[Section 3.2, Theorem 2]{GiRaSo99}.

\begin{theorem}[Kewat]
\label{Kewat}
Let $\pi$ be an irreducible square integrable representation of $GL_{2n}$. Then $L(s,\pi,\wedge^2)$ has a pole at $s=0$ if and only if $\omega_{\pi}$ is trivial and there exists $W \in \mathcal{W}(\pi,\psi)$ such that $\Lambda(W) \neq 0$, where
\[
   \Lambda(W)=\int_{Z_nN_n \backslash GL_n} \int_{\mathcal{N}_n \backslash \mathcal{M}_n} W \left( \sigma_{2n} \begin{pmatrix} I_n & X\\ & I_n \end{pmatrix}
   \begin{pmatrix} g& \\ & g \end{pmatrix} \right) \psi^{-1}(\mathrm{Tr}X) dX dg.
\]

\end{theorem}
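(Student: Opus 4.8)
The plan is to follow the argument of Kewat \cite{Ke11}, which adapts the computation of Ginzburg--Rallis--Soudry \cite[Section 3.2]{GiRaSo99}: one identifies the pole at $s=0$ of $L(s,\pi,\wedge^2)$ with a residue of the Jacquet--Shalika integral and then isolates the Shalika period $\Lambda$. Since $L(s,\pi,\wedge^2)=P(q^{-s})^{-1}$ generates the fractional ideal $\mathcal J(\pi)$, it has a pole at $s=0$ if and only if some $J(s,W,\Phi)$ does. First I would separate the centre of $GL_n$: writing $g=z_0g'$ with $z_0I_n\in Z_n$ and $g'$ in a section of $Z_nN_n\backslash GL_n$, and using that $z_0I_{2n}$ is central in $GL_{2n}$, for $\mathrm{Re}(s)$ large one gets
\[
J(s,W,\Phi)=\int_{Z_nN_n\backslash GL_n}\Xi_W(g')\,|\mathrm{det}\,g'|^{s}\,T(s,g')\,dg',
\]
where $\displaystyle \Xi_W(g')=\int_{\mathcal N_n\backslash\mathcal M_n}W\!\left(\sigma_{2n}\begin{pmatrix}I_n&X\\&I_n\end{pmatrix}\begin{pmatrix}g'&\\&g'\end{pmatrix}\right)\psi^{-1}(\mathrm{Tr}\,X)\,dX$ and $\displaystyle T(s,g')=\int_{F^{\times}}\omega_{\pi}(z_0)\,|z_0|^{ns}\,\Phi(z_0e_ng')\,d^{\times}z_0$ is the Tate integral of the Schwartz function $z_0\mapsto\Phi(z_0e_ng')$ (genuinely Schwartz, since $e_ng'\neq 0$ for invertible $g'$). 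By Tate's thesis $T(s,g')$ is holomorphic at $s=0$ unless $\omega_{\pi}$ is trivial --- here I use that $\omega_{\pi}$ is unitary, $\pi$ being square integrable, so that $L(ns,\omega_{\pi})$ can have a pole at $s=0$ only when $\omega_{\pi}$ is unramified with $\omega_{\pi}(\varpi)=1$, i.e. $\omega_{\pi}=1$ --- and in that case $\mathrm{Res}_{s=0}T(s,g')=\kappa\,\Phi(0)$ with $\kappa=\mathrm{vol}(\mathcal O^{\times})/(n\ln q)\neq 0$, independent of $g'$ and of $\Phi$.

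The analytic heart, which I expect to be the main obstacle, is to show that square integrability forces the ``centre-removed'' integral to converge near $s=0$. Rerunning the torus analysis of the proof of Theorem \ref{integral-rational} for $\Xi_W$ in place of the full integrand, one gets absolute convergence of $\int_{Z_nN_n\backslash GL_n}|\Xi_W(g')|\,|\mathrm{det}\,g'|^{\mathrm{Re}(s)}\,dg'$ for $\mathrm{Re}(s)>r_\pi^\flat$, where $r_\pi^\flat=\max_{1\le k\le n-1}\{-\tfrac1k\mathrm{Re}(\omega_{\pi^{(2n-2k)}})\}$; writing $\pi=[\rho\nu^{-(\ell-1)/2},\dots,\rho\nu^{(\ell-1)/2}]$ with $\rho$ unitary supercuspidal of $GL_r$ and $2n=\ell r$, the surviving even derivatives are $\pi^{(jr)}=[\rho\nu^{j-(\ell-1)/2},\dots]$ with $1\le j\le\ell-1$, whose central characters have real part $\tfrac{rj(\ell-j)}{2}>0$, so $r_\pi^\flat<0$ and $0$ lies in the domain of convergence. (The only term of real part $0$ is $\omega_\pi$ itself, the $k=n$ contribution, which is now absent.) I would import the precise decay bounds on Whittaker functions of square integrable representations from \cite{JaPSSh83,GiRaSo99}. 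Granting this, the quasi-invariance computation already performed for $J(s,W,\Phi)$ in \eqref{invariance} shows $W\mapsto\Lambda(W)$ lies in $\mathrm{Hom}_{S_{2n}}(\mathcal W(\pi,\psi),\Theta)$, the integral $\Lambda(W)=\int_{Z_nN_n\backslash GL_n}\Xi_W(g')\,dg'$ converges absolutely, and $s\mapsto\int_{Z_nN_n\backslash GL_n}\Xi_W(g')|\mathrm{det}\,g'|^{s}\,dg'$ is holomorphic near $s=0$.

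Finally I would read off both directions. If $\omega_{\pi}\neq 1$, then $T(s,g')$ is holomorphic at $s=0$ and bounded in $g'$ uniformly for $s$ near $0$ (the Tate denominator stays away from zero), so $J(s,W,\Phi)$ is bounded near $s=0$ by the convergence of the second paragraph, hence holomorphic there; thus a pole at $s=0$ can occur only when $\omega_{\pi}=1$. When $\omega_{\pi}=1$, writing $T(s,g')=\dfrac{\kappa\Phi(0)}{1-q^{-ns}}+T^{\mathrm{hol}}(s,g')$ with $T^{\mathrm{hol}}$ holomorphic near $0$ and dominated in $g'$, the same convergence licenses interchanging $\mathrm{Res}_{s=0}$ with the $g'$-integration, giving
\[
\mathrm{Res}_{s=0}\,J(s,W,\Phi)=c\,\Phi(0)\,\Lambda(W)
\]
for a nonzero constant $c$; in particular the pole, when present, is simple. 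Consequently, if $\Lambda(W)\neq 0$ for some $W$, choosing $\Phi$ with $\Phi(0)=1$ exhibits an integral with a pole at $s=0$, so $L(s,\pi,\wedge^2)$ has a pole at $s=0$; conversely a pole of $L(s,\pi,\wedge^2)$ at $s=0$ is a pole of some $J(s,W,\Phi)$, whence $c\,\Phi(0)\,\Lambda(W)\neq 0$, forcing simultaneously $\omega_{\pi}=1$ and $\Lambda(W)\neq 0$. This completes the proof modulo the convergence lemma of the second paragraph.
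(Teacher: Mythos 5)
The paper does not actually prove this statement: it is imported verbatim as Theorem 4.2 of Kewat \cite{Ke11}, whose proof, as the paper notes, follows the computation of Ginzburg--Rallis--Soudry \cite{GiRaSo99}. Your proposal is a correct reconstruction of exactly that argument --- splitting off the centre as a Tate integral, using the strictly positive real parts $\tfrac{rj(\ell-j)}{2}$ of the central characters of the nonzero even derivatives of a square-integrable representation to push the abscissa of convergence of the centre-removed integral strictly below $0$, and reading both implications off $\mathrm{Res}_{s=0}J(s,W,\Phi)=c\,\Phi(0)\,\Lambda(W)$ --- so it takes essentially the same route as the source the paper relies on.
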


In fact, we can also characterize the poles at $s=0$ of the exterior square $L$-functions $L(s,\Delta,\wedge^2)$ for an irreducible quasi-square integrable representation $\Delta$ by $\mathrm{Hom}_{S_{2n}}(\Delta,\Theta)\neq 0$ in Proposition \ref{prop-supercusp}. We have the analogue result of Proposition 3.6 in \cite{Ma10} for $L(s,\rho,\wedge^2)$, where $\rho$ is an irreducible supercuspidal representation of $GL_r$.

 \begin{theorem}
\label{supercusp}
Let $\rho$ be an irreducible supercuspidal representation of $GL_r$. 
\begin{enumerate}
\item[$(\mathrm{i})$] If $r=2n$, $L(s,\rho,\wedge^2)$ has simple poles and we obtain
\[
   L(s,\rho,\wedge^2)=L_{ex}(s,\rho,\wedge^2)=\prod (1-\alpha q^{-s})^{-1}
\]
with the product over all $\alpha=q^{s_0}$ such that $\omega_{\rho}\nu^{ns_0}=1$ and $\Lambda_{s_0}(W) \neq 0$ for some $W \in \mathcal{W}(\rho,\psi)$, where
\[
\begin{split}
   &\Lambda_{s_0}(W)\\
   &=\int_{Z_nN_n \backslash GL_n} \int_{\mathcal{N}_n \backslash \mathcal{M}_n} W \left( \sigma_{2n} \begin{pmatrix} I_n & X\\ & I_n \end{pmatrix}
   \begin{pmatrix} g& \\ & g \end{pmatrix} \right) \psi^{-1}(\mathrm{Tr}X) |\mathrm{det} (g)|^{s_0} dX dg.
\end{split}
\]
\item[$(\mathrm{ii})$] If $r=2n+1$, we have $L(s,\rho,\wedge^2)=1$.

\end{enumerate} 

\end{theorem}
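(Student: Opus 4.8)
\textbf{Proof proposal for Theorem \ref{supercusp}.}

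The plan is to treat the two parts separately, beginning with the odd case $r=2n+1$, which should be short. Since $\rho$ is supercuspidal, by Theorem \ref{deriviative}(1) we have $\rho^{(k)}=0$ for $1\le k\le r-1$ and $\rho^{(r)}=\mathbf{1}$. In particular the integrals $J_{(k)}(s,W)$ considered in Section 5.1 involve only the zero derivative together with the top derivative, so the filtration of Proposition \ref{filtration-odd} collapses: all the intermediate fractional ideals $\mathcal{J}_{(2m+1)}(\rho)$ and $\mathcal{J}_{(2m)}(\rho)$ that would be fed by nonzero odd derivatives $\rho^{(2k+1)}$ with $0<2k+1<r$ vanish. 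Combined with Theorem \ref{prod-L-odd}, the only contribution to $L(s,\rho,\wedge^2)^{-1}$ comes from $\pi_i^{(2m+1)}$ with $2m+1$ in the range where the derivative is nonzero; for a supercuspidal of odd rank the only such value is $2m+1=r$, giving $\rho^{(r)}=\mathbf{1}$, a character of $GL_0$, whose exceptional $L$-factor is trivial by Proposition \ref{GL_1} (applied in the degenerate $GL_0$ sense) together with Proposition \ref{basic}. Hence $L(s,\rho,\wedge^2)=1$; alternatively one argues directly that $J(s,W)$ is entire by running the asymptotic expansion of Proposition \ref{asymWh} and noting that every relevant central-character term is absent.

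For the even case $r=2n$, the plan is to invoke Theorem \ref{prod-L} (resp. Theorem \ref{prod-L0}) for $\rho$: since all nonzero derivatives $\rho^{(k)}$ with $0<k<r$ vanish, the least common multiple runs only over $m=0$, so $L(s,\rho,\wedge^2)^{-1}=L_{ex}(s,\rho^{(0)},\wedge^2)^{-1}=L_{ex}(s,\rho,\wedge^2)^{-1}$, i.e. $L(s,\rho,\wedge^2)=L_{ex}(s,\rho,\wedge^2)$, and the full $L$-function is exceptional. (One must check that $\rho$ satisfies the hypotheses of Theorem \ref{prod-L}: a supercuspidal representation is irreducible and generic, and all its derivatives are $0$, $\rho$, or $\mathbf 1$, hence trivially completely reducible with irreducible generic constituents; this is immediate from Theorem \ref{deriviative}.) It remains to describe the exceptional poles explicitly. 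By the discussion following the definition of $L_{ex}$ in Section 3.1 and Proposition \ref{L0 function}, each exceptional pole $s=s_0$ has order one and contributes a factor $(1-q^{s_0}q^{-s})^{-1}$, and $s_0$ is an exceptional pole precisely when the leading bilinear form $B_{s_0}(W,\Phi)=\Lambda_{s_0}(W)\Phi(0)$ is nonzero, where $\Lambda_{s_0}\in\mathrm{Hom}_{S_{2n}}(\mathcal{W}(\rho,\psi),|\cdot|^{-s_0/2}\Theta)$ is a twisted Shalika functional. The plan is then to unwind this twisted Shalika functional exactly as in Theorem \ref{Kewat} of Kewat: by the partial Iwasawa decomposition of Lemma \ref{zpkdec} and the quasi-invariance \eqref{invariance}, the coefficient $B_{s_0}$ is, up to a nonzero volume constant and the Tate factor coming from $\Phi(0)\ne 0$, the integral $\Lambda_{s_0}(W)$ displayed in the statement, with the central integration over $Z_n$ forcing the condition $\omega_\rho\nu^{ns_0}=1$ (equivalently $\omega_\rho(\varpi)=q^{ns_0}$) for convergence/nonvanishing of the $Z_n$-integral. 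Conversely, when $\omega_\rho\nu^{ns_0}=1$ and $\Lambda_{s_0}(W)\ne 0$ for some $W$, one shows $J(s,W,\Phi)$ has a genuine pole at $s=s_0$ by choosing $\Phi$ with $\Phi(0)\ne 0$ and reading off the residue. The simplicity of the poles follows from Proposition \ref{L0 function} (exceptional poles are simple since $L^{(0)}$ divides the Tate factor $L(ns,\omega_\rho)^{-1}$) — for a supercuspidal this must be reconciled with $L=L_{ex}=L_{(0)}^{-1}\cdot(\text{nothing})$; more precisely here $L_{(0)}(s,\rho,\wedge^2)=1$ because $\rho$ has no nonzero even derivatives of positive order, so $L=L^{(0)}=L_{ex}$ has only simple poles.

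The main obstacle I expect is the precise identification of the twisted Shalika functional $\Lambda_{s_0}$ appearing as the leading Laurent coefficient with the explicit integral $\Lambda_{s_0}(W)$ in the statement, including the bookkeeping of the exponent $|\mathrm{det}(g)|^{s_0}$ versus the $|\mathrm{det}(p)|^{s-1}$ normalization used in $J_{(0)}$, and the verification that the $Z_n$-integration genuinely produces the stated condition $\omega_\rho\nu^{ns_0}=1$ rather than a weaker constraint. This is a convergence-and-residue computation that parallels Kewat's proof of Theorem \ref{Kewat} and Matringe's Proposition 3.6 in \cite{Ma10}, so the right strategy is to cite those arguments for the structural steps and only carry out the twist by $\nu^{s_0}$ explicitly; the substitution $\rho\mapsto\rho\nu^{s_0/2}$ reduces the twisted statement to the untwisted one of Theorem \ref{Kewat} via the obvious relation $\mathrm{Hom}_{S_{2n}}(\rho\nu^{s_0/2},\Theta)\ne 0\iff\Lambda_{s_0}\ne 0$, which also matches the formulation in the Cuspidal case of the introduction.
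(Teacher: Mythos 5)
Your proposal is correct and follows essentially the same route as the paper: the Bernstein--Zelevinsky derivatives of a supercuspidal force $L_{(0)}(s,\rho,\wedge^2)=1$, so $L(s,\rho,\wedge^2)=L^{(0)}(s,\rho,\wedge^2)=L_{ex}(s,\rho,\wedge^2)$ has simple poles, and the poles are then characterized by the twist $\rho\mapsto\rho\nu^{s_0/2}$ together with Kewat's Theorem \ref{Kewat}, exactly as in the paper (which derives $\omega_\rho\nu^{ns_0}=1$ from the center-invariance of the Shalika functional, the point that also makes $\rho\nu^{s_0/2}$ unitary so that Theorem \ref{Kewat} applies). One harmless slip: in the odd case the least common multiple in Theorem \ref{prod-L-odd} runs only over the odd derivatives of orders $1,3,\dotsc,2n-1$, so the top derivative $\rho^{(r)}=\mathbf{1}$ never enters at all and $L(s,\rho,\wedge^2)=1$ follows immediately from the vanishing of those derivatives, with no need for any degenerate $GL_0$ factor.
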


 \begin{proof}
 Let $\rho$ be an irreducible supercuspidal representation of $GL_r$. The derivatives of supercuspidal representations have been computed by Bernstein and Zelevinsky. $\rho^{(0)}=\rho, \rho^{(k)}=0$ for $1 \leq k \leq r-1$ and $\rho^{(r)}=\textbf{1}$ in Theorem \ref{deriviative}.
 \par
 We first investigate the case that $r=2n$. As we have seen in Theorem \ref{prod-L0}, since $\rho^{(i)}=0$ unless $i=0,\;2n$, $L_{(0)}(s,\rho,\wedge^2)=1$. Applying Proposition \ref{L0 function}, $L(s,\rho,\wedge^2)=L^{(0)}(s,\rho,\wedge^2)=L_{ex}(s,\rho,\wedge^2)$ and $L(s,\rho,\wedge^2)=L^{(0)}(s,\rho,\wedge^2)$ has simple poles. As $L(s+s_0,\rho,\wedge^2)$ is equal to $L(s,\rho\nu^{\frac{s_0}{2}},\wedge^2)$,
$L(s,\rho,\wedge^2)$ has a pole at $s=s_0$ if and only if $L(s,\rho\nu^{\frac{s_0}{2}},\wedge^2)$ has a pole at $s=0$. 
\par
We assume that $L(s,\rho,\wedge^2)$ has a pole at $s=s_0$. Since $s=0$ is an exceptional pole of $L(s,\rho\nu^{\frac{s_0}{2}},\wedge^2)$, we see from Section 3.1 that a nonzero Shalika functional $\Lambda$ exists, and $\Lambda$ belongs to $\text{Hom}_{S_{2n}}(\rho\nu^{\frac{s_0}{2}},\Theta)$. This implies that $\rho\nu^{\frac{s_0}{2}}$ has necessarily a trivial central character $\omega_{\rho\nu^{\frac{s_0}{2}}}$, because
\[
  \omega_{\rho\nu^{\frac{s_0}{2}}}(z)\Lambda(W)=\Lambda(\rho\nu^{\frac{s_0}{2}}(zI_{2n})W)=\Theta(zI_{2n})\Lambda(W)=\Lambda(W)
\]
for all $zI_{2n} \in Z_{2n}$ and $W \in \mathcal{W}(\rho\nu^{\frac{s_0}{2}},\psi)$. $\rho\nu^{\frac{s_0}{2}}$ is indeed an irreducible unitary supercuspidal representation. As $\mathcal{W}(\rho\nu^{\frac{s_0}{2}},\psi) \simeq \mathcal{W}(\rho,\psi) \otimes \nu^{\frac{s_0}{2}}$, we can deduce from Theorem \ref{Kewat} that there exists $W \in \mathcal{W}(\rho,\psi)$ such that 
\[
  \int_{Z_nN_n \backslash GL_n} \int_{\mathcal{N}_n \backslash \mathcal{M}_n} W \left( \sigma_{2n} \begin{pmatrix} I_n & X\\ & I_n \end{pmatrix}
   \begin{pmatrix} g& \\ & g \end{pmatrix} \right) \psi^{-1}(\mathrm{Tr}X) |\mathrm{det} (g)|^{s_0} dX dg \neq 0,
\] 
 because $\left|\mathrm{det} \left(\sigma_{2n} \begin{pmatrix} I_n & X\\ & I_n \end{pmatrix}
   \begin{pmatrix} g& \\ & g \end{pmatrix} \right)\right|^{\frac{s_0}{2}}= |\mathrm{det} (g)|^{s_0}$. Conversely, we suppose that 
 \[
  \int_{Z_nN_n \backslash GL_n} \int_{\mathcal{N}_n \backslash \mathcal{M}_n} W \left( \sigma_{2n} \begin{pmatrix} I_n & X\\ & I_n \end{pmatrix}
   \begin{pmatrix} g& \\ & g \end{pmatrix} \right) \psi^{-1}(\mathrm{Tr}X) |\mathrm{det} (g)|^{s_0} dX dg \neq 0
\]
 for some $W \in \mathcal{W}(\rho,\psi)$ and $\omega_{\rho}\nu^{ns_0}=1$. As the central character  $\omega_{\rho\nu^{\frac{s_0}{2}}}$ of $\rho\nu^{\frac{s_0}{2}}$ is trivial, $\rho\nu^{\frac{s_0}{2}}$ is again an irreducible unitary supercuspidal representation. 
 If, for any $W \in \mathcal{W}(\rho,\psi)$, we set $(W\otimes \nu^{\frac{s_0}{2}})(g)=W(g) \nu^{\frac{s_0}{2}}(g), g \in GL_{2n}$, we see that $W\otimes \nu^{\frac{s_0}{2}}$ is an element of $\mathcal{W}(\rho\nu^{\frac{s_0}{2}},\psi)$ such that $\Lambda(W\otimes \nu^{\frac{s_0}{2}}) \neq 0$, where $\Lambda$ is defined in Theorem \ref{Kewat}. We apply Theorem \ref{Kewat} to conclude that $L(s,\rho\nu^{\frac{s_0}{2}},\wedge^2)$ has a pole at $s=0$, which means that $L(s,\rho,\wedge^2)$ has a pole at $s=s_0$.

 \par
 Now we assume that $r=2n+1$. As we have seen, $\rho^{(i)}=0$ unless $i=0,\;2n+1$. Taking into account Theorem \ref{prod-L-odd}, we have $L(s,\rho,\wedge^2)=1$.

 \end{proof}

The following result about Shalika functionals of Jacquet and Rallis in \cite[Proposition 6.1]{JaRa96} plays a crucial rule in our study of poles and representations.

\begin{theorem}[Jacquet and Rallis]
\label{Jacquet-Rallis}
Let $\pi$ be an irreducible admissible representation of $GL_{2n}$. Then the dimension of the space of Shalika functionals is at most one.
If $\pi$ has a non-zero Shalika functional, then $\pi \simeq \widetilde{\pi}$. 
\end{theorem}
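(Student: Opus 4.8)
The statement is of Gelfand--Kazhdan type --- a multiplicity-one bound together with a self-duality conclusion --- and the plan is to establish it by the method of \cite{JaRa96}, working with distributions on $GL_{2n}$. Let $w_{2n}=\begin{pmatrix} & I_n\\ I_n& \end{pmatrix}$ and let $\tau$ be the anti-involution of $GL_{2n}$ given by $\tau(g)={}^{t}(w_{2n}\,g\,w_{2n}^{-1})^{-1}\cdot(\ldots)$; more simply, $\tau(g)=w_{2n}\,{}^{t}g\,w_{2n}^{-1}$. The first step is the routine verification that $\tau^2=\mathrm{id}$, that $\tau$ stabilizes the Shalika subgroup $S_{2n}$, and that $\Theta\circ\tau=\Theta$: for $h=\begin{pmatrix} I_n & Z\\ & I_n\end{pmatrix}\begin{pmatrix} g&\\ &g\end{pmatrix}$ one computes directly that $\tau(h)\in S_{2n}$ with $\Theta(\tau(h))=\psi(\mathrm{Tr}Z)$. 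One then reduces the uniqueness to the distributional assertion that every distribution $T$ on $GL_{2n}$ which is $(S_{2n}\times S_{2n})$-equivariant against the character $(s_1,s_2)\mapsto\Theta(s_1)^{-1}\Theta(s_2)$ is $\tau$-invariant, $T^{\tau}=T$. Granting this, the standard Gelfand--Kazhdan argument \cite{GeKa72} yields $\dim\mathrm{Hom}_{S_{2n}}(\pi,\Theta)\cdot\dim\mathrm{Hom}_{S_{2n}}(\widetilde{\pi},\Theta^{-1})\leq 1$. Finally, conjugation by $\delta=\mathrm{diag}(I_n,-I_n)$ carries $(S_{2n},\Theta)$ to $(S_{2n},\Theta^{-1})$, while $\widetilde{\pi}\simeq\pi^{\iota}$ together with conjugation by $w_{2n}$ identifies $\mathrm{Hom}_{S_{2n}}(\widetilde{\pi},\Theta^{-1})$ with $\mathrm{Hom}_{S_{2n}}(\pi,\Theta)$; hence the two factors above are equal and each is at most $1$.

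The technical core --- and the step I expect to be the main obstacle --- is the distributional assertion, which is a geometric analysis of the double coset space $S_{2n}\backslash GL_{2n}/S_{2n}$. The plan is: (i) produce a manageable set of representatives for the $(S_{2n}\times S_{2n})$-orbits on $GL_{2n}$ and show that each orbit is $\tau$-stable, i.e.\ $\tau(g)\in S_{2n}gS_{2n}$ for all $g$; (ii) apply Bernstein's localization principle to stratify $GL_{2n}$ by these orbits, so that it suffices to treat distributions supported on the closure of a single orbit $\mathcal{O}=S_{2n}g_0S_{2n}$; (iii) via Frobenius descent along the normal bundle of $\mathcal{O}$, reduce the possible failure of $\tau$-symmetry to the existence of a suitable character of the stabilizer $\mathrm{Stab}_{S_{2n}\times S_{2n}}(g_0)$ on the symmetric powers of the conormal space, and check that the relevant characters are non-trivial so that no such ``transversal jet'' contribution survives. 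Once all orbits are $\tau$-stable and the transversal contributions vanish, $T^{\tau}=T$ follows. This orbit-and-transversal bookkeeping is precisely the content of \cite{JaRa96}, and it is where essentially all the effort goes.

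For the self-duality, I would first note that restricting a non-zero Shalika functional to $Z_{2n}\subset S_{2n}$ (where $\Theta$ is trivial) forces $\omega_\pi$ to be trivial; this already recovers $\widetilde{\pi}\simeq\pi$ when $n=1$, since in that case a Shalika functional is just a $\psi$-Whittaker functional and $\widetilde{\pi}\simeq\pi\otimes\omega_\pi^{-1}$ for $GL_2$. For general $n$ the plan is to extract the self-duality from the same Gelfand--Kazhdan machinery: the relative matrix-coefficient distribution built from a Shalika functional of $\pi$ and one of $\widetilde{\pi}$ is non-zero, $\tau$-symmetric by the above, and supported on the closure of the open $(S_{2n}\times S_{2n})$-orbit; the character of the open stabilizer against which it must transform is compatible with $\Theta\boxtimes\Theta^{-1}$ only after identifying $\pi$ with $\widetilde{\pi}$, so that the canonical pairing $\pi\times\widetilde{\pi}\to\mathbb{C}$ is carried by $\tau$ onto the Shalika model, and the uniqueness just proved forces $\pi\simeq\widetilde{\pi}$. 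Making this last implication airtight is the delicate point; in the body of the paper it is enough to invoke \cite[Proposition 6.1]{JaRa96} directly.
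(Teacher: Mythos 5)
The paper does not prove this statement at all: it is imported as an external input, with the text explicitly invoking \cite[Proposition 6.1]{JaRa96}, so there is no internal argument to measure your proposal against. As a reconstruction of the Jacquet--Rallis proof your setup is sound in its easy parts: the anti-involution $\tau(g)=w_{2n}\,{}^{t}g\,w_{2n}^{-1}$ does preserve $S_{2n}$ and satisfies $\Theta\circ\tau=\Theta$, and the identifications $\mathrm{Hom}_{S_{2n}}(\pi,\Theta)\simeq\mathrm{Hom}_{S_{2n}}(\pi,\Theta^{-1})$ via $\mathrm{diag}(I_n,-I_n)$ and $\mathrm{Hom}_{S_{2n}}(\widetilde{\pi},\Theta^{-1})\simeq\mathrm{Hom}_{S_{2n}}(\pi,\Theta)$ via $\widetilde{\pi}\simeq\pi^{\iota}$ and conjugation by $w_{2n}$ are correct, so the Gelfand--Kazhdan reduction of multiplicity one to a distributional symmetry is properly framed.

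However, as a standalone proof there are two genuine gaps, and they sit exactly where the content of \cite{JaRa96} lies. First, the distributional core --- that every $(S_{2n}\times S_{2n},\Theta^{-1}\boxtimes\Theta)$-equivariant distribution is $\tau$-invariant --- is only announced: steps (i)--(iii) (orbit representatives, $\tau$-stability of each double coset, Bernstein localization and the vanishing of the conormal-jet contributions) are the entire technical body of Jacquet--Rallis, and your sketch does not engage with the actual structure of $S_{2n}\backslash GL_{2n}/S_{2n}$ at all. Second, the self-duality deduction as written would not go through: there is no open $(S_{2n}\times S_{2n})$-orbit on $GL_{2n}$ (the stabilizer of any $g$ is $S_{2n}\cap gS_{2n}g^{-1}$, which always contains the center, and for the natural representatives contains a full copy of $GL_n$), the claim that the relative matrix-coefficient distribution is supported on the closure of such an orbit is unjustified, and ``the uniqueness just proved forces $\pi\simeq\widetilde{\pi}$'' is not a valid inference, since uniqueness is a statement about one representation at a time. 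The mechanism that actually yields self-duality is that $\tau$-invariance applied to the Bessel (relative character) distribution built from Shalika functionals on $\pi$ and $\widetilde{\pi}$ identifies it with the corresponding distribution for $\widetilde{\pi}$, equivalently produces a nonzero intertwining, and irreducibility then gives $\pi\simeq\widetilde{\pi}$. Since the theorem is needed here only as an input, citing \cite[Proposition 6.1]{JaRa96}, as the paper does, is the appropriate move; a self-contained proof would require carrying out the double-coset analysis and repairing the self-duality step along the lines just indicated.
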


We would like to give a characterization of the existence of the local twisted Shalika functional on the supercuspidal representations in terms of the occurrence of exceptional poles of local exterior square $L$-functions. More generally, this property is true under the following condition, which are satisfied by irreducible generic unitary representations.

\begin{lemma}
\label{char-Shalika}
Let $\pi$ be an irreducible generic representation of $GL_{2n}$. Suppose that the space $\mathrm{Hom}_{S_{2n}}(\pi,\Theta) \neq 0$ and $\mathrm{Hom}_{P_{2n} \cap S_{2n}}(\pi,\Theta)$ is of dimension at most $1$. Then $L_{ex}(s,\pi,\wedge^2)$ has a pole at $s=0$.
\end{lemma}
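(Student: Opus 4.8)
The plan is to deduce the statement from Proposition \ref{L0 function}: since the exceptional poles of $L(s,\pi,\wedge^2)$ are precisely the poles of $L^{(0)}(s,\pi,\wedge^2)=L(s,\pi,\wedge^2)/L_{(0)}(s,\pi,\wedge^2)$, and the reciprocals of $L_{ex}(s,\pi,\wedge^2)$ and $L^{(0)}(s,\pi,\wedge^2)$ have the same zero set, it suffices to show that $L(s,\pi,\wedge^2)$ has a pole at $s=0$ of order strictly exceeding that of $L_{(0)}(s,\pi,\wedge^2)$ there. First I would record two consequences of the hypotheses. Any $S_{2n}$-equivariant functional is a fortiori $(P_{2n}\cap S_{2n})$-equivariant, so $\mathrm{Hom}_{S_{2n}}(\pi,\Theta)\hookrightarrow\mathrm{Hom}_{P_{2n}\cap S_{2n}}(\pi,\Theta)$; as the source is nonzero and the target is at most one-dimensional, the two coincide and equal $\mathbb{C}\cdot\Lambda$ for the given nonzero Shalika functional $\Lambda$ (unique up to scalar already by Theorem \ref{Jacquet-Rallis}). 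Moreover, evaluating the Shalika equivariance of $\Lambda$ on the centre $zI_{2n}=\mathrm{diag}(zI_n,zI_n)\in S_{2n}$, on which $\Theta$ is trivial, gives $\omega_\pi(z)\Lambda(W)=\Lambda(W)$, so $\omega_\pi$ is trivial; in particular the Tate $L$-factor $L(ns,\omega_\pi)=(1-q^{-ns})^{-1}$ has a pole at $s=0$.

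Next I would extract this pole from the integral. Take $\Phi_\circ$ to be the characteristic function of $\mathcal{O}^n$, so $\Phi_\circ(0)=1$. Performing the partial Iwasawa decomposition $GL_n=Z_nP_nK_n$ as in \eqref{pIwasawa} and using that the last row $e_nk$ of every $k\in K_n$ is a primitive vector — so that $\Phi_\circ(e_nzk)=\mathbbm{1}_{v(z)\ge 0}$ independently of $k$ — the $Z_n$-integration decouples from the rest as the Tate integral $\int_{F^\times}\omega_\pi(z)|z|^{ns}\mathbbm{1}_{v(z)\ge 0}\,d^\times z$, which by triviality of $\omega_\pi$ equals $\mathrm{Vol}(\mathcal{O}^\times)(1-q^{-ns})^{-1}$. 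The surviving $N_n\backslash P_n$- and $\mathcal{N}_n\backslash\mathcal{M}_n$-integrations, averaged over $K_n$, reassemble into the integral $B(s,W)=\int_{Z_nN_n\backslash GL_n}\int_{\mathcal{N}_n\backslash\mathcal{M}_n}W\!\left(\sigma_{2n}\begin{pmatrix}I_n&X\\&I_n\end{pmatrix}\begin{pmatrix}g&\\&g\end{pmatrix}\right)\psi^{-1}(\mathrm{Tr}X)\,|\det g|^s\,dX\,dg$, which converges for $\mathrm{Re}(s)$ large (the same estimates as in Theorem \ref{integral-rational}, the centre-free torus directions being tamed by Whittaker decay) and is a rational function of $q^{-s}$. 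After meromorphic continuation this yields $J(s,W,\Phi_\circ)=\dfrac{\mathrm{Vol}(\mathcal{O}^\times)}{1-q^{-ns}}\,B(s,W)$, and at $s=0$ the value $B(0,W)$ is exactly the Shalika functional $\Lambda(W)$ occurring in Theorems \ref{Kewat} and \ref{supercusp} (here $|\det g|^0=1$ is genuinely $Z_n$-invariant since $\omega_\pi$ is trivial).

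It remains to see that this forces a genuine, and exceptional, pole. The key — and the step I expect to be the main obstacle — is to show that $s\mapsto B(s,\cdot)$ does not vanish to positive order at $s=0$; equivalently, that its leading Laurent coefficient there is the nonzero functional $\Lambda$ rather than $0$. This should follow by combining Bernstein's rationality/continuation principle for the family $s\mapsto B(s,\cdot)$ with the one-dimensionality of $\mathrm{Hom}_{S_{2n}}(\mathcal{W}(\pi,\psi),\Theta)$ established above, which pins the leading term to a scalar multiple of $\Lambda$, together with the hypothesis $\mathrm{Hom}_{S_{2n}}(\pi,\Theta)\ne 0$, which guarantees that multiple is nonzero. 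Granting it, $J(s,W,\Phi_\circ)$ has a pole at $s=0$ for suitable $W$, so $L(s,\pi,\wedge^2)$ does too; and writing an arbitrary $\Phi$ as $\Phi(0)\Phi_\circ+(\Phi-\Phi(0)\Phi_\circ)$ with $\Phi-\Phi(0)\Phi_\circ\in\mathcal{S}_0(F^n)$, and invoking the description of $\mathcal{J}_{(0)}(\pi)$ in Proposition \ref{L0 function} together with \eqref{pIwasawa-dec}–\eqref{zp-invariance} (in which every $\Phi(0)$-free Tate factor is a polynomial, so the $(1-q^{-ns})^{-1}$ above never appears), shows that the pole of $L_{(0)}(s,\pi,\wedge^2)$ at $s=0$ has strictly smaller order than that of $L(s,\pi,\wedge^2)$. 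Hence $L^{(0)}(s,\pi,\wedge^2)$, and therefore $L_{ex}(s,\pi,\wedge^2)$, has a pole at $s=0$.
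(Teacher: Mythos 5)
Your opening reductions (the identification $\mathrm{Hom}_{S_{2n}}(\pi,\Theta)=\mathrm{Hom}_{P_{2n}\cap S_{2n}}(\pi,\Theta)=\mathbb{C}\Lambda$, the triviality of $\omega_{\pi}$, and the factorization $J(s,W,\Phi_{\circ})=\mathrm{Vol}(\mathcal{O}^{\times})(1-q^{-ns})^{-1}B(s,W)$ for $\Phi_{\circ}=\mathbbm{1}_{\mathcal{O}^n}$) are correct and agree with the paper's first paragraph. But the step you flag as the main obstacle is a genuine gap, and the fix you sketch cannot close it. If the conclusion fails, i.e.\ $L(s,\pi,\wedge^2)$ is regular at $s=0$, then every $J(s,W,\Phi_{\circ})\in\mathcal{J}(\pi)=L(s,\pi,\wedge^2)\,\mathbb{C}[q^{\pm s}]$ is regular at $s=0$, so $B(s,W)=\mathrm{Vol}(\mathcal{O}^{\times})^{-1}(1-q^{-ns})J(s,W,\Phi_{\circ})$ vanishes at $s=0$ identically in $W$. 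This is perfectly compatible with $\mathrm{Hom}_{S_{2n}}(\pi,\Theta)\neq 0$ and with one-dimensionality: those hypotheses only say that \emph{if} the value (or leading Laurent coefficient) of $B(s,\cdot)$ at $s=0$ is an $S_{2n}$-equivariant functional, then it is a scalar multiple of $\Lambda$ --- possibly the zero multiple. Nothing ties the abstract functional $\Lambda$ to the analytic family $B(s,\cdot)$; producing such a tie is exactly the content of the lemma, so your argument is circular at precisely the acknowledged point. (For the same reason the claim that $B(0,W)$ ``is exactly'' the period $\Lambda(W)$ of Theorem \ref{Kewat} is unjustified for a general generic $\pi$: that period integral is only known to converge for square-integrable representations, and the continued value could well be $0$.) Your final paragraph on exceptionality presupposes the pole you have not produced, so it inherits the same gap.

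The paper sidesteps this by never attempting to show that a Shalika-period integral is nonzero; the hypothesis is used only to upgrade an invariance, on the dual side of the functional equation. Concretely, $\Lambda_{1-s,\widetilde{\pi}}(W)=J_{(0)}(1-s,\varrho(\tau_{2n})\widetilde{W})/L(1-s,\widetilde{\pi},\wedge^2)$ is automatically a polynomial in $q^{\pm s}$ and is $(P_{2n}\cap S_{2n},\Theta)$-quasi-invariant; at $s=0$ your upgrade to full $S_{2n}$-invariance (this is where $\mathrm{Hom}_{S_{2n}}(\pi,\Theta)\neq 0$ enters), together with Lemma \ref{pkdec} and Fourier inversion, collapses the Iwasawa decomposition to $J(1,\varrho(\tau_{2n})\widetilde{W},\hat{\Phi})/L(1,\widetilde{\pi},\wedge^2)=\Lambda_{1,\widetilde{\pi}}(W)\,\Phi(0)$. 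Feeding into Theorem \ref{local functional eq} the vector $W_{\circ}$ of Lemma \ref{nonvanishing-even} and $\Phi^{\circ}\in\mathcal{S}_0(F^n)$ the characteristic function of $e_nK_{n,r}$, so that $J(0,W_{\circ},\Phi^{\circ})=\mathrm{Vol}(e_nK_{n,r})J_{(0)}(0,W_{\circ})\neq 0$ while $\Phi^{\circ}(0)=0$, gives $0=\varepsilon(0,\pi,\wedge^2,\psi)\,J(0,W_{\circ},\Phi^{\circ})/L(0,\pi,\wedge^2)$, which is only possible if $L(s,\pi,\wedge^2)$ has a pole at $s=0$; inserting the Laurent expansion of $J(s,W,\Phi)$ into the same identity shows the top coefficient $B_0(W,\Phi)$ vanishes for all $\Phi\in\mathcal{S}_0(F^n)$, so the pole is exceptional. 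Note that this argument does not even require $\Lambda_{1,\widetilde{\pi}}\not\equiv 0$; the only analytic nonvanishing input is Lemma \ref{nonvanishing-even}, which is unconditional. To rescue your direct approach you would need an independent proof that $B(s,\cdot)$ does not vanish to order $\geq 1$ at $s=0$, which is equivalent to the statement being proved.
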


\begin{proof}
The proof is similar to those of Theorem 1.4 in \cite{AnKaTa04}, Theorem 3.1 in \cite{Ma10}, or Proposition 4.7 in \cite{Ma15}. Theorem \ref{Jacquet-Rallis} of Jacquet and Rallis shows that  the vector space $\mathrm{Hom}_{S_{2n}}(\pi,\Theta)$ is of at most dimension one. Since any non-trivial $S_{2n}$-quasi invariant form in $\mathrm{Hom}_{S_{2n}}(\pi,\Theta)$ is, in particular, a non-trivial $P_{2n}\cap S_{2n}$-quasi invariant form in $\mathrm{Hom}_{P_{2n}\cap S_{2n}}(\pi,\Theta)$ by restriction, the $1$-dimensionality for both spaces $\mathrm{Hom}_{S_{2n}}(\pi,\Theta)$ and $\mathrm{Hom}_{P_{2n}\cap S_{2n}}(\pi,\Theta)$ imply that $\mathrm{Hom}_{S_{2n}}(\pi,\Theta)=\mathrm{Hom}_{P_{2n}\cap S_{2n}}(\pi,\Theta)$. 
\par

Since $\pi$ has a central character and $\mathrm{Hom}_{S_{2n}}(\pi,\Theta)$ is non-zero by assumption, $\pi$ has necessarily a trivial central character, because $\omega_{\pi}(z) \Lambda(W)=\displaystyle \Lambda ( \pi( zI_{2n} )W)=\Theta (zI_{2n}) \Lambda(W)=\Lambda(W)$ for all $zI_{2n} \in Z_{2n}$, and $W \in \mathcal{W}(\pi,\psi)$ from the $S_{2n}$-intertwining property. For $\mathrm{Re}(s) \ll 0$, by applying the partial  Iwasawa decomposition in Section 2.2, one has  
\[
\label{Pn-iwasawa}
\tag{6.1}
\begin{split}
J(1-s, \varrho(\tau_{2n})\widetilde{W},\hat{\Phi})&=\int_{K_n} \int_{N_n \backslash P_n} \int_{\mathcal{N}_n \backslash \mathcal{M}_n} \varrho(\tau_{2n})\widetilde{W}
 \left( \sigma_{2n} \begin{pmatrix} I_n & X \\ & I_n \end{pmatrix} \begin{pmatrix} pk & \\ & pk \end{pmatrix} \right)\\
 &\phantom{********} \int_{F^{\times}} |a|^{n(1-s)} \hat{\Phi}(e_nak) d^{\times}a \;  |\mathrm{det}(p)|^{-s} \psi(\mathrm{Tr} X) dX dp dk,
 \end{split}
\]
where $\varrho$ is right translation and $\tau_{2n}$ is $\begin{pmatrix} & I_{n}\\I_{n} & \end{pmatrix}$. We define a functional on $\mathcal{W}(\pi,\psi)$
\[
\Lambda_{1-s,\widetilde{\pi}} : W \mapsto \frac{J_{(0)}(1-s,\varrho(\tau_{2n})\widetilde{W})}{L(1-s,\widetilde{\pi},\wedge^2)}.
\]
In the realm of convergence, we can find
\[
\begin{split}
  \Lambda_{1-s,\widetilde{\pi}}(\varrho(h)W)&=\frac{J_{(0)}(1-s,\varrho(\tau_{2n})\widetilde{\varrho(h)W})}{L(1-s,\widetilde{\pi},\wedge^2)}
  =\frac{J_{(0)}(1-s,\varrho(\tau_{2n})\varrho(^t{h}^{-1})\widetilde{W})}{L(1-s,\widetilde{\pi},\wedge^2)} \\
  &=\frac{J_{(0)}(1-s,\varrho(^t{h}^{-\tau_{2n}})\varrho(\tau_{2n})\widetilde{W})}{L(1-s,\widetilde{\pi},\wedge^2)}
  =|h|^{-\frac{s}{2}}\Theta(h)\frac{J_{(0)}(1-s,\varrho(\tau_{2n})\widetilde{W})}{L(1-s,\widetilde{\pi},\wedge^2)}\\
  &=|h|^{-\frac{s}{2}}\Theta(h)\Lambda_{1-s,\widetilde{\pi}}(W),
\end{split}
\]
the fourth equality from \eqref{Pn-invariant}, where $h$ belongs to $P_{2n} \cap S_{2n}$ and $^t{h}^{-\tau_{2n}}$ is $^t(h^{-1})^{\tau_{2n}}$. $\Lambda_{1-s,\widetilde{\pi}}$ is an element of $\mathrm{Hom}_{P_{2n} \cap S_{2n}}(\pi,|\cdot|^{-\frac{s}{2}}\Theta)$.
Observe that the local exterior square $L$-function is the minimal function of the form $P(q^{-s})^{-1}$, with $P(X)$ a polynomial satisfying $P(0)=1$, such that the ratios
$\displaystyle \frac{J(1-s, \varrho(\tau_{2n})\widetilde{W},\hat{\Phi})}{L(1-s,\widetilde{\pi},\wedge^2)}$ are polynomials in $\mathbb{C}[q^{\pm s}]$ and hence entire functions of $s$ for all $\widetilde{W} \in \mathcal{W}(\widetilde{\pi},\psi^{-1})$ and $\hat{\Phi} \in \mathcal{S}(F^n)$. Let $K_{n,r} \subset K_n$ be a compact open congruent subgroup which stabilizes $\varrho(\tau_{2n})\widetilde{W}$. We take $\hat{\Phi}_{\circ} \in \mathcal{S}_0(F^n)$ the characteristic function of $e_nK_{n,r}$. With this choice of $\hat{\Phi}_{\circ}$, we have $J(1-s, \varrho(\tau_{2n})\widetilde{W},\hat{\Phi}_{\circ})=\mathrm{Vol}(e_nK_{n,r})J_{(0)}(1-s, \varrho(\tau_{2n})\widetilde{W})$ from \eqref{zp-invariance}. This implies that for a fixed $W$ in $\mathcal{W}(\pi,\psi)$, the functions $s \mapsto \Lambda_{1-s,\widetilde{\pi}}(W)$ is a polynomial in $\mathbb{C}[q^{\pm s}]$ and therefore entire function in $s$. We may write \eqref{Pn-iwasawa} as
\begin{equation}
\label{Iwasawa-distinct}
\tag{6.2}
\begin{split}
  &\frac{J(1-s,\varrho(\tau_{2n})\widetilde{W},\hat{\Phi})}{L(1-s,\widetilde{\pi},\wedge^2)}\\
  &\phantom{********}=\int_{K_n} \frac{J_{(0)}(1-s, \varrho\begin{pmatrix} k &\\&k \end{pmatrix}\varrho(\tau_{2n})\widetilde{W})}{L(1-s,\widetilde{\pi},\wedge^2)} \int_{F^{\times}} |a|^{n(1-s)} \hat{\Phi}(e_nak)  d^{\times}a dk.
\end{split}
\end{equation}
Let $K_{\circ} \subset K_n$ be a sufficiently small compact open subgroup which stabilizes $\varrho(\tau_{2n})\widetilde{W}$ and $\hat{\Phi}$. Write $K_n=\cup_ik_iK_{\circ}$ and let $\widetilde{W}_i=\varrho\begin{pmatrix} k_i&\\&k_i \end{pmatrix}\varrho(\tau_{2n})\widetilde{W}$ and $\hat{\Phi}_i=R\begin{pmatrix} k_i&\\&k_i \end{pmatrix}\hat{\Phi}$. Then the second member of the equality in \eqref{Iwasawa-distinct} is actually a finite sum:
\[
 c \sum_i \frac{J_{(0)}(1-s,\widetilde{W}_i)}{L(1-s,\widetilde{\pi},\wedge^2)} \int_{F^{\times}} |a|^{n(1-s)} \hat{\Phi}_i(e_na) d^{\times}a,
\]
with $c > 0$ the volume of $K_{\circ}$.
Notice that for $\mathrm{Re}(s) < 1$, the Tate integral
\[
  \int_{F^{\times}} |a|^{n(1-s)} \hat{\Phi}_i(e_na) d^{\times}a
\]
is absolutely convergent, and the ratios
\[
 \frac{J(1-s,\varrho(\tau_{2n})\widetilde{W},\hat{\Phi})}{L(1-s,\widetilde{\pi},\wedge^2)} \quad \text{or} \quad \frac{J_{(0)}(1-s,\widetilde{W}_i)}{L(1-s,\widetilde{\pi},\wedge^2)}
\]
are polynomials in $\mathbb{C}[q^{\pm s}]$. Hence the equality in \eqref{Iwasawa-distinct} is true for all $s$ with $\mathrm{Re}(s) < 1$ because of Tate integral. For any $k \in K_n$, we have
\[
\label{lambda-invariance}
\tag{6.3}
\begin{split}
  \frac{J_{(0)}(1-s, \varrho\begin{pmatrix} k &\\&k \end{pmatrix}\varrho(\tau_{2n})\widetilde{W})}{L(1-s,\widetilde{\pi},\wedge^2)}
  &=\frac{J_{(0)}(1-s, \varrho(\tau_{2n})\varrho\begin{pmatrix} k &\\&k \end{pmatrix}\widetilde{W})}{L(1-s,\widetilde{\pi},\wedge^2)} \\
  &=\Lambda_{1-s,\widetilde{\pi}}\left(\varrho\begin{pmatrix} ^tk^{-1} &\\&^tk^{-1} \end{pmatrix}W\right).
\end{split}
\]
Combining \eqref{lambda-invariance} with \eqref{Iwasawa-distinct}, for $s=0$ we arrive at
\[
 \frac{J(1,\varrho(\tau_{2n})\widetilde{W},\hat{\Phi})}{L(1,\widetilde{\pi},\wedge^2)}=\int_{K_n}\Lambda_{1,\widetilde{\pi}}\left(\varrho\begin{pmatrix} ^tk^{-1} &\\&^tk^{-1} \end{pmatrix}W\right)
 \int_{F^{\times}}  \hat{\Phi}(e_nak) |a|^nd^{\times}a dk.
\]
We have seen in the first paragraph  that a $(P_{2n} \cap S_{2n},\Theta)$-quasi invariant form $\Lambda_{1,\widetilde{\pi}}$ on $\mathcal{W}(\pi,\psi)$ is actually $(S_{2n},\Theta)$-quasi invariant form. 
Finally
 \[
\frac{J(1,\varrho(\tau_{2n})\widetilde{W},\hat{\Phi})}{L(1,\widetilde{\pi},\wedge^2)}
=\Lambda_{1,\widetilde{\pi}}(W) \int_{K_n}   \int_{F^{\times}}  \hat{\Phi}(e_nak) |a|^nd^{\times}a dk,
\]
because $\displaystyle   \Lambda_{1,\widetilde{\pi}} \left(\varrho\begin{pmatrix} ^tg^{-1} &\\&^tg^{-1} \end{pmatrix}W\right)=\Theta\begin{pmatrix} ^tg^{-1} &\\&^tg^{-1} \end{pmatrix}
\Lambda_{1,\widetilde{\pi}}(W)=\Lambda_{1,\widetilde{\pi}}(W)$  for all $g \in GL_n$ from the $S_{2n}$-quasi invariance. By Lemma \ref{pkdec}, we have
\[
  \int_{K_n}\int_{F^{\times}}  \hat{\Phi}(e_nak) |a|^nd^{\times}a dk=\int_{F^n-\{ 0\}}  \hat{\Phi}(u) du=\int_{F^n} \hat{\Phi}(u) du= \Phi(0)
\]
the last equality from Fourier inversion.

\par
Now let us write down the functional equation of exterior square $L$-function at $s=0$ from Theorem \ref{local functional eq}:
\[
\label{Pn-functionalequation}
\tag{6.4}
  \Lambda_{1,\widetilde{\pi}}(W) \Phi(0)= \frac{J(1,\varrho(\tau_{2n})\widetilde{W},\hat{\Phi})}{L(1,\widetilde{\pi},\wedge^2)}= \varepsilon(0,\pi,\wedge^2) \frac{J(0,W,\Phi)}{L(0,\pi,\wedge^2)}.
\]
By Lemma \ref{nonvanishing-even}, there exists $W_{\circ} \in \mathcal{W}(\pi,\psi)$ such that $J_{(0)}(s,W_{\circ})$ is a non-zero constant. Let $K_{n,s} \subset K_n$ be a sufficiently small compact open subgroup which stabilizes $W_{\circ}$. Now choose $\Phi^{\circ} \in \mathcal{S}_0(F^n)$ to be the characteristic function of $e_nK_{n,s}$. With these choices of $W_{\circ}$ and $\Phi^{\circ}$, the integral reduces to $J(0,W_{\circ},\Phi^{\circ})=\mathrm{Vol}(e_nK_{n,s}) J_{(0)}(0,W_{\circ}) \neq 0$. For $\Phi^{\circ} \in  \mathcal{S}_0(F^n)$, $\Lambda_{1,\widetilde{\pi}}(W_{\circ}) \Phi^{\circ}(0)$ is $0$. Since $\varepsilon(0,\pi,\wedge^2)$ and $J(0,W_{\circ},\Phi^{\circ})$ are non-zero constants, the factor $L(s,\pi,\wedge^2)$ has a pole at $s=0$.

\par
Let $d$ be the order of pole in $L(s,\pi,\wedge^2)$ at $s=0$. Then the pole at $s=0$ must occur with order $d$ for some function $J(s,W,\Phi)$ with $W \in \mathcal{W}(\pi,\psi)$. If we perform the Laurent expansion of $J(s,W,\Phi)$ at $s=0$, then $J(s,W,\Phi)$ will have the form
\[
 J(s,W,\Phi)=\frac{B_0(W,\Phi)}{(q^s-1)^d}+\mathrm{higher \; order \; terms.}
\]
with $B_0(W,\Phi) \neq 0$. We replace Jacquet-Shalika integral $J(s,W,\Phi)$ by its Laurent expansion in \eqref{Pn-functionalequation}, we are able to see that 
\[
 \Lambda_{1,\widetilde{\pi}}(W) \Phi(0)= \frac{J(1,\varrho(\tau_{2n})\widetilde{W},\hat{\Phi})}{L(1,\widetilde{\pi},\wedge^2)}= \frac{1}{\kappa} \varepsilon(0,\pi,\wedge^2,\psi) B_0(W,\Phi),
\]
where $\displaystyle \kappa=\lim_{s \to 0} (1-q^{-s})^dL(s,\pi,\wedge^2)$ is a nonzero constant. Since $B_0(W,\Phi)=0$ for all $\Phi \in \mathcal{S}_0(F^n)$, the pole $s=0$ must be an exceptional pole.

\end{proof}

Let $\rho$ be an irreducible supercuspidal representation of $GL_{2n}$. We have already seen that if $L(s,\rho,\wedge^2)$ has a pole at $s=0$, then $\mathrm{Hom}_{S_{2n}}(\rho,\Theta) \neq 0$. We would like to prove the converse of this statement. We introduce the following uniqueness result for unitary representations, obtained from the argument in the proof of Corollary 4.18 of \cite{Ma15}.

\begin{proposition}[Matringe]
\label{mira-multiplicity one}
Let $\pi$ be an irreducible unitary generic representation of $GL_{2n}$. Then the vector space $\mathrm{Hom}_{P_{2n} \cap H_{2n}}(\pi,\delta_{2n}^{-s_0})$ is of dimension at most one for any real number $s_0 > 0$.
\end{proposition}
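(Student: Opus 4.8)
The plan is to adapt the filtration-by-derivatives argument that already appeared in the proof of Proposition \ref{even-multi one}, but now working with the concrete Levi subgroup $H_{2n}$ and the one-dimensional character $\delta_{2n}^{-s_0}$ rather than with the twisted Shalika character. First I would use the Bernstein--Zelevinsky filtration of $\pi|_{P_{2n}}$ by derivatives: since $\pi$ is irreducible generic, each successive quotient is of the form $(\Phi^+)^{2n-k-1}\Psi^+(\pi^{(2n-k)})$ for $0 \leq k \leq 2n-1$, and by Theorem \ref{deriviative} each $\pi^{(2n-k)}$ has a composition series with subquotients $\mathrm{Ind}(\Delta_1^{(a_1)}\otimes\dotsm\otimes\Delta_t^{(a_t)})$, $\sum a_i = 2n-k$. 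Exactness of $\Phi^+$ and $\Psi^+$ then gives a filtration of $\pi|_{P_{2n}}$ whose factors are $(\Phi^+)^{2n-k-1}\Psi^+$ applied to these induced pieces, so
\[
\dim \mathrm{Hom}_{P_{2n}\cap H_{2n}}(\pi,\delta_{2n}^{-s_0}) \leq \sum \dim\mathrm{Hom}_{P_{2n}\cap H_{2n}}\bigl((\Phi^+)^{2n-k-1}\Psi^+(\mathrm{Ind}(\Delta_1^{(a_1)}\otimes\dotsm\otimes\Delta_t^{(a_t)})),\delta_{2n}^{-s_0}\bigr).
\]

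Next I would apply Proposition \ref{func-homo} repeatedly to push each such Hom-space down to the stage $GL_k$, exactly as in \eqref{even-func-points}, obtaining an isomorphism with $\mathrm{Hom}_{GL_k\cap H_{k+1}}(\mathrm{Ind}(\Delta_1^{(a_1)}\otimes\dotsm\otimes\Delta_t^{(a_t)}),\,\mu_k^{1/2}\delta_k^{-s_0})$ up to twisting by an appropriate power of $|\det|$; here I am reading off the character bookkeeping from \eqref{chi-relation}. The key point is that $GL_k\cap H_{k+1}$ contains the full Levi $GL_{\lceil k/2\rceil}\times GL_{\lfloor k/2\rfloor}$ conjugated by $w_{k+1}$, so such a Hom-space is nonzero only when the central character of $\mathrm{Ind}(\Delta_1^{(a_1)}\otimes\dotsm\otimes\Delta_t^{(a_t)})$ matches $\delta_k^{-s_0}$ restricted to the center, which forces a relation of the shape $\prod_i\omega_{\Delta_i^{(a_i)}}(\varpi) = |\varpi|^{-c(k)s_0}$ for a specific constant $c(k)$; since $\pi$ is unitary each $\omega_{\Delta_i^{(a_i)}}$ has real part governed by the exponents of $\pi$, which are bounded, and with $s_0 > 0$ one shows all but at most one of the contributions (the $k=0$ term, coming from the trivial representation of the trivial group, which contributes dimension exactly $1$) must vanish. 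The bounded-exponent input is where unitarity of $\pi$ is essential, and it is the same mechanism used in Corollary 4.18 of \cite{Ma15}; I would cite that for the precise inequality on exponents.

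The main obstacle I anticipate is the sign/positivity argument: one must check that for \emph{every} intermediate stage $k$ with $1\leq k\leq 2n-1$ the relation $\prod_i\omega_{\Delta_i^{(a_i)}}(\varpi)=|\varpi|^{-c(k)s_0}$ is incompatible with $s_0>0$ and the unitarity bounds, with no exceptional values of $s_0$ surviving (unlike in Proposition \ref{even-multi one}, where a finite set of bad $s$ is allowed). This requires tracking the exact exponent $c(k)$ produced by the $\delta$-twists through the iterated application of Proposition \ref{func-homo}, and then comparing $-c(k)s_0$ against the interval in which $\sum_i\mathrm{Re}(\omega_{\Delta_i^{(a_i)}})$ can lie. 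Because $\pi$ is unitary generic, that interval is symmetric about the appropriate center and the strict sign $s_0>0$ pushes $-c(k)s_0$ outside it for all $k\geq 1$; once this is verified, the only surviving term is the $k=0$ one, giving $\dim\mathrm{Hom}_{P_{2n}\cap H_{2n}}(\pi,\delta_{2n}^{-s_0})\leq 1$. I would present the exponent computation compactly, reducing it to the statement already recorded in \eqref{even-func-points} together with the observation that the central character of $\Psi^+(\sigma)$ and of $\Phi^+(\sigma)$ differ from that of $\sigma$ by explicit unramified twists.
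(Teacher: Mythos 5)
First, note that the paper does not actually prove this proposition: it is quoted from Matringe, ``obtained from the argument in the proof of Corollary 4.18 of \cite{Ma15}.'' Your strategy --- filter $\pi|_{P_{2n}}$ by derivatives, push each factor down with Proposition \ref{func-homo} as in \eqref{even-func-points}, and kill the terms with $k\geq 1$ by a central-character comparison on $Z_k\subset GL_k\cap H_{k+1}=H_k$ --- is indeed the architecture of Matringe's proof and of the paper's own Propositions \ref{even-multi one} and \ref{odd-multi one}, so the route is the right one.

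However, the decisive step is left unverified and is partly mis-stated, and your plan for filling it is circular. Taking \eqref{even-func-points} at $s=1$ (the character here is $\delta_{2n}^{-s_0}$, with no extra $|\cdot|^{-\frac{s-1}{2}}$), the target character at stage $k$ restricted to $Z_k$ is $\nu^{-k/2}$ when $k$ is even and $\nu^{-k/2-s_0}$ when $k$ is odd, because $\delta_m(zI_k)=|z|^{k\bmod 2}$ for every $m\geq k$ and the accumulated $\delta_m^{\pm 1/2}$ telescope; in particular it is \emph{not} of the shape $|\varpi|^{-c(k)s_0}$, and for even $k$ the parameter $s_0$ does not appear at all, so ``$s_0>0$ pushes the exponent out of range'' proves nothing there. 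What actually kills every term with $1\leq k\leq 2n-1$ is unitarity: writing each constituent of $\pi^{(2n-k)}$ as a subquotient of $\mathrm{Ind}(\Delta_1^{(a_1r_1)}\nu^{u_1}\otimes\dotsm\otimes\Delta_t^{(a_tr_t)}\nu^{u_t})$ with $\Delta_i$ unitary square integrable and $u_i\in(-\tfrac12,\tfrac12)$ (the generic unitary dual), Theorem \ref{deriviative} gives central exponent $\sum_i(\ell_i-a_i)r_i\bigl(\tfrac{a_i}{2}+u_i\bigr)>-\tfrac{k}{2}$ strictly, whereas the target exponent is $\leq -\tfrac{k}{2}$ (with equality impossible for $s_0>0$ in the odd case as well); this short computation is the content you must supply. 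Citing \cite{Ma15}, Corollary 4.18 ``for the precise inequality on exponents'' does not work, since that corollary is essentially the statement being proved --- the inputs you may legitimately cite are the classification of the generic unitary dual and the derivative formula for segments. Finally, a small repair: Proposition \ref{func-homo} is stated for irreducible $\sigma$, so refine the filtration by derivatives to a composition series of each $\pi^{(2n-k)}$ (finite length) before applying it; each irreducible subquotient carries the same central character as the induced piece containing it, so the exponent argument is unaffected. With these points made explicit your outline becomes a complete proof; as written it stops exactly where the proposition is decided.
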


The following Proposition is a direct consequence of Lemma \ref{char-Shalika} and Proposition \ref{mira-multiplicity one}. This Proposition is originally stated in Proposition 6.1 of Matringe \cite{Ma14}.

 \begin{proposition}
\label{prop-supercusp}
Let $\Delta$ be an irreducible quasi-square integrable representation of $GL_{2n}$. Then $\mathrm{Hom}_{S_{2n}}(\Delta,\Theta) \neq 0$ if and only if $L_{ex}(s,\Delta,\wedge^2)$ has a pole at $s=0$.
\end{proposition}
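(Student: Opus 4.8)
The strategy is to split the biconditional into its two implications, each of which is essentially a known ingredient that we have already recorded, and then stitch them together with a twist argument so that the pole is placed at $s=0$. Since $\Delta$ is quasi-square-integrable, write $\Delta = \Delta_0 \nu^{s_0}$ with $\Delta_0$ square-integrable (hence unitary) and $s_0 \in \mathbb{R}$; because $L_{ex}(s,\Delta,\wedge^2) = L_{ex}(s+2s_0,\Delta_0,\wedge^2)$ and $\mathrm{Hom}_{S_{2n}}(\Delta,\Theta) = \mathrm{Hom}_{S_{2n}}(\Delta_0\nu^{s_0},\Theta)$, the statement for general $\Delta$ reduces to the statement ``$L_{ex}(s,\Delta_0,\wedge^2)$ has a pole at $s=-2s_0$ iff $\mathrm{Hom}_{S_{2n}}(\Delta_0\nu^{s_0},\Theta)\neq 0$'', and after the familiar identification $\mathcal{W}(\Delta_0\nu^{s_0},\psi)\simeq\mathcal{W}(\Delta_0,\psi)\otimes\nu^{s_0}$ together with $L(s+2s_0,\Delta_0,\wedge^2)=L(s,\Delta_0\nu^{s_0},\wedge^2)$, it suffices to treat the case where $\Delta$ itself is square-integrable and show $L_{ex}(s,\Delta,\wedge^2)$ has a pole at $s=0$ iff $\mathrm{Hom}_{S_{2n}}(\Delta,\Theta)\neq 0$. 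This normalization is the only preliminary bookkeeping step.

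\textbf{The two implications.} For the direction ``$\mathrm{Hom}_{S_{2n}}(\Delta,\Theta)\neq 0 \Rightarrow L_{ex}(s,\Delta,\wedge^2)$ has a pole at $s=0$'', I would invoke Lemma \ref{char-Shalika}: it gives exactly this conclusion for an irreducible generic representation $\pi$ of $GL_{2n}$ provided $\mathrm{Hom}_{S_{2n}}(\pi,\Theta)\neq 0$ and $\dim\mathrm{Hom}_{P_{2n}\cap S_{2n}}(\pi,\Theta)\leq 1$. A square-integrable $\Delta$ is irreducible, generic, and unitary, so the first hypothesis is the one we are assuming; the second, the at-most-one-dimensionality of $\mathrm{Hom}_{P_{2n}\cap S_{2n}}(\Delta,\Theta)$, follows from Proposition \ref{even-embedding} (the space embeds, for $s_0$ large, into $\mathrm{Hom}_{P_{2n}\cap H_{2n}}(\Delta,\delta_{2n}^{-s_0})$) combined with Proposition \ref{mira-multiplicity one} (that latter space is at most one-dimensional for a unitary generic representation), exactly the combination the statement advertises. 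For the converse direction, ``$L_{ex}(s,\Delta,\wedge^2)$ has a pole at $s=0 \Rightarrow \mathrm{Hom}_{S_{2n}}(\Delta,\Theta)\neq 0$'', I would argue as in the proof of Theorem \ref{supercusp}: an exceptional pole at $s=0$ of $L(s,\Delta,\wedge^2)$ produces, by the discussion of Section 3.1, a nonzero twisted Shalika functional in $\mathrm{Hom}_{S_{2n}}(\mathcal{W}(\Delta,\psi),|\cdot|^{0}\Theta)=\mathrm{Hom}_{S_{2n}}(\Delta,\Theta)$ (the twist being trivial precisely because the pole sits at $s=0$), which forces $\omega_\Delta$ trivial and in particular yields the nonvanishing of the Shalika functional. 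Note $L_{ex}(s,\Delta,\wedge^2)=L^{(0)}(s,\Delta,\wedge^2)$ up to the order of the pole, and a pole of $L_{ex}$ is by definition an exceptional pole of $L(s,\Delta,\wedge^2)$, so this direction needs no extra input beyond the machinery of Proposition \ref{L0 function}.

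\textbf{Assembling and the main obstacle.} Putting the two implications together after the reduction to the square-integrable case completes the proof. I expect the only genuinely delicate point to be verifying that the at-most-one-dimensionality hypothesis of Lemma \ref{char-Shalika} is legitimately available here: Proposition \ref{even-embedding} and Proposition \ref{mira-multiplicity one} are stated for irreducible generic, resp.\ irreducible unitary generic, representations, and one must confirm that a quasi-square-integrable $\Delta$, once reduced by the twist to its unitary square-integrable part $\Delta_0$, indeed satisfies both hypotheses and that the embedding of Proposition \ref{even-embedding} is compatible with the character bookkeeping ($\delta_{2n}$ versus $\Theta$ versus the real parameter $s_0>r_\pi$). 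The other potential subtlety is making sure the exceptional pole in the converse direction is really located at $s=0$ and not merely somewhere on the line, which is why the twist normalization $\Delta=\Delta_0\nu^{s_0}$ must be carried through carefully; once that is done, the identification $\mathrm{Hom}_{S_{2n}}(\Delta,|\cdot|^{-s/2}\Theta)|_{s=0}=\mathrm{Hom}_{S_{2n}}(\Delta,\Theta)$ is immediate. Everything else is a direct citation of results already in the excerpt.
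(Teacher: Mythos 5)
Your two implications are assembled from the same ingredients the paper uses (Section 3.1 for ``pole at $s=0$ $\Rightarrow$ Shalika functional'', and Lemma \ref{char-Shalika} fed by Proposition \ref{even-embedding} and Proposition \ref{mira-multiplicity one} for the converse), but your opening reduction to the square-integrable case is where the argument breaks. Writing $\Delta=\Delta_0\nu^{s_0}$ and using $L_{ex}(s,\Delta,\wedge^2)=L_{ex}(s+2s_0,\Delta_0,\wedge^2)$ turns ``$L_{ex}(s,\Delta,\wedge^2)$ has a pole at $s=0$'' into ``$L_{ex}(s,\Delta_0,\wedge^2)$ has a pole at $s=2s_0$'' (note also your sign: it is $+2s_0$, not $-2s_0$), and it turns $\mathrm{Hom}_{S_{2n}}(\Delta,\Theta)$ into $\mathrm{Hom}_{S_{2n}}(\Delta_0\nu^{s_0},\Theta)$ --- the Shalika condition still carries the twist. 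So the ``reduced'' statement is not the square-integrable instance of the proposition at $s=0$; it is just the original statement rewritten, and declaring that ``it suffices to treat the case where $\Delta$ itself is square-integrable'' is circular. The gap is material because the 1-dimensionality you need for Lemma \ref{char-Shalika} comes from Proposition \ref{mira-multiplicity one}, which requires the representation to be \emph{unitary}; a genuinely quasi-square-integrable $\Delta$ (real $s_0\neq 0$) is not unitary, and the twisting does not legitimately put you in the unitary situation.

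The correct way to close this, and what the paper does, is to observe that the hypothesis itself forces $s_0=0$: if $\Lambda\in\mathrm{Hom}_{S_{2n}}(\Delta,\Theta)$ is nonzero, then $\omega_{\Delta}(z)\Lambda(W)=\Lambda(\Delta(zI_{2n})W)=\Theta(zI_{2n})\Lambda(W)=\Lambda(W)$, so $\omega_{\Delta}$ is trivial; since $\omega_{\Delta}=\omega_{\Delta_0}\nu^{2ns_0}$ with $\omega_{\Delta_0}$ unitary, this gives $s_0=0$ and $\Delta=\Delta_0$ is square-integrable, hence unitary, and only then do Proposition \ref{even-embedding} and Proposition \ref{mira-multiplicity one} apply to give $\dim\mathrm{Hom}_{P_{2n}\cap S_{2n}}(\Delta,\Theta)\leq 1$ and Lemma \ref{char-Shalika} the pole at $s=0$. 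Your other direction needs no reduction at all: a pole of $L_{ex}(s,\Delta,\wedge^2)$ at $s=0$ is by definition an exceptional pole of $L(s,\Delta,\wedge^2)$ there, and the leading coefficient furnishes an element of $\mathrm{Hom}_{S_{2n}}(\Delta,|\cdot|^{0}\Theta)=\mathrm{Hom}_{S_{2n}}(\Delta,\Theta)$ for quasi-square-integrable $\Delta$ as it stands, exactly as you say. So: replace the twisting reduction by the central-character argument and the proof matches the paper's.
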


\begin{proof}
We have already seen in Section 3.1 that if $L_{ex}(s,\Delta,\wedge^2)$ has a pole at $s=0$, then $\mathrm{Hom}_{S_{2n}}(\Delta,\Theta) \neq 0$. 
Conversely suppose that $\mathrm{Hom}_{S_{2n}}(\Delta,\Theta) \neq 0$. Since $\Delta$ has a central character and $\mathrm{Hom}_{S_{2n}}(\Delta,\Theta)$ is non-zero, $\Delta$ has necessarily a trivial central character, and so $\Delta$ is an irreducible square integrable representation. From Proposition \ref{even-embedding}, the vector space $\mathrm{Hom}_{P_{2n} \cap S_{2n}}(\Delta,\Theta)$ embeds as a subspace of $\mathrm{Hom}_{P_{2n} \cap H_{2n}}(\Delta,\delta_{2n}^{-s_0})$ for some positive real number $s_0$. Then the later space $\mathrm{Hom}_{P_{2n} \cap H_{2n}}(\Delta,\delta_{2n}^{-s_0})$ is one dimensional by Proposition \ref{mira-multiplicity one}. The results follows immediately from Lemma \ref{char-Shalika}.
\end{proof}

Let $\rho$ be an irreducible supercuspidal representation of $GL_{2n}$. We set $(W \otimes |\cdot|^{s_1})(g)=W(g)|\mathrm{det}(g)|^{s_1}$ for $g \in GL_{2n}$.
Since $J(s,W,\Phi)=J(s-2s_1,W \otimes |\cdot|^{s_1},\Phi)$, it can be seen that $L(s,\rho,\wedge^2)= L(s-2s_1,\rho\nu^{s_1},\wedge^2)$.
For $s_1=\frac{s_0}{2}$, $\rho\nu^{\frac{s_0}{2}}$ is still a supercuspidal representation. Replacing $\rho$ by $\rho\nu^{\frac{s_0}{2}}$ in Proposition \ref{prop-supercusp} implies that the exceptional pole at $s=s_0$ exists, is necessarily simple, if and only if $\mathrm{Hom}_{S_{2n}}(\rho\nu^{\frac{s_0}{2}},\Theta) \neq 0$. Since these are the only possible poles of the exterior square $L$-function, we therefore have another description for our $L$-function.

\begin{theorem}
\label{supercusp2}
Let $\rho$ be an irreducible supercuspidal representation of $GL_r$. 
\begin{enumerate}
\item[$(\mathrm{i})$] If $r=2n$, $L(s,\rho,\wedge^2)$ has simple poles and we have
\[
   L(s,\rho,\wedge^2)=\prod (1-\alpha q^{-s})^{-1}
\]
with the product over all $\alpha=q^{s_0}$ such that $\mathrm{Hom}_{S_{2n}}(\rho\nu^{\frac{s_0}{2}},\Theta) \neq 0$. 
\item[$(\mathrm{ii})$] If $r=2n+1$, we have $L(s,\rho,\wedge^2) = 1$. 
\end{enumerate}
\end{theorem}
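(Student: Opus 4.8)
The plan is to deduce Theorem~\ref{supercusp2} directly from the earlier results by tracking how twists affect the exterior square $L$-function. First I would recall the basic twist identity: if $\rho$ is an irreducible supercuspidal representation of $GL_{2n}$ and $W \otimes \nu^{s_1}$ denotes the Whittaker function $g \mapsto W(g)|\mathrm{det}(g)|^{s_1}$, then a change of variables in the Jacquet--Shalika integral shows $J(s,W,\Phi) = J(s - 2s_1, W\otimes\nu^{s_1}, \Phi)$, and hence $L(s,\rho,\wedge^2) = L(s - 2s_1, \rho\nu^{s_1},\wedge^2)$. In particular the pole set of $L(s,\rho,\wedge^2)$ is obtained by shifting the pole set of $L(s,\rho\nu^{s_1},\wedge^2)$ by $2s_1$. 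Note that for any real $s_1$ the twist $\rho\nu^{s_1}$ is still irreducible supercuspidal, so Theorem~\ref{supercusp} applies to it.

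Next, for the case $r = 2n$, I would combine Theorem~\ref{supercusp} with Proposition~\ref{prop-supercusp}. By Theorem~\ref{supercusp}(i), $L(s,\rho,\wedge^2) = L_{ex}(s,\rho,\wedge^2)$ has only simple poles, and $s = s_0$ is a pole if and only if $s = 0$ is an exceptional pole of $L(s,\rho\nu^{s_0/2},\wedge^2)$, i.e.\ if and only if $L_{ex}(s,\rho\nu^{s_0/2},\wedge^2)$ has a pole at $s=0$. Applying Proposition~\ref{prop-supercusp} to the irreducible quasi-square-integrable (indeed supercuspidal) representation $\Delta = \rho\nu^{s_0/2}$ of $GL_{2n}$, this happens precisely when $\mathrm{Hom}_{S_{2n}}(\rho\nu^{s_0/2},\Theta) \neq 0$. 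Therefore each simple pole of $L(s,\rho,\wedge^2)$ corresponds to exactly one $\alpha = q^{s_0}$ with $\mathrm{Hom}_{S_{2n}}(\rho\nu^{s_0/2},\Theta)\neq 0$, and conversely; since the poles are simple and these are the only poles, the product formula $L(s,\rho,\wedge^2) = \prod(1-\alpha q^{-s})^{-1}$ over all such $\alpha$ follows.

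For the case $r = 2n+1$, there is nothing new to prove: Theorem~\ref{supercusp}(ii) already gives $L(s,\rho,\wedge^2) = 1$, which is statement (ii). The only bookkeeping point I would make explicit is well-definedness of the indexing set: for $r = 2n$, distinct $s_0$ in the same $\frac{2\pi i}{\log q}\mathbb{Z}$-orbit give the same $\alpha = q^{s_0}$, and one must observe that $\mathrm{Hom}_{S_{2n}}(\rho\nu^{s_0/2},\Theta)$ depends on $s_0$ only through $q^{s_0}$ because the unramified twist $\nu^{s_0/2}$ is unchanged up to the periodicity of $|\mathrm{det}(\cdot)|$, so the product is over a finite set consistent with the rationality of $L(s,\rho,\wedge^2)$ in $q^{-s}$.

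The main obstacle here is essentially organizational rather than substantive: the content is packaged in Theorem~\ref{supercusp} and Proposition~\ref{prop-supercusp}, and the delicate inputs (the characterization of exceptional poles via twisted Shalika functionals in Lemma~\ref{char-Shalika}, the multiplicity-one statements of Jacquet--Rallis and Matringe, and Kewat's pole criterion) have already been established. What I would be most careful about is the twist normalization: keeping straight the factor of $2$ relating the shift in $s$ to the exponent of $\nu$, and confirming that $\rho\nu^{s_0/2}$ has trivial central character exactly when $\omega_\rho \nu^{n s_0} = 1$, which is the hypothesis needed to invoke Theorem~\ref{Kewat}/Proposition~\ref{prop-supercusp}. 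Once that alignment is checked, the proof is a short assembly of the cited results.
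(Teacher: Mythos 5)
Your proof is correct and follows essentially the same route as the paper: the twist identity $J(s,W,\Phi)=J(s-2s_1,W\otimes\nu^{s_1},\Phi)$, hence $L(s,\rho,\wedge^2)=L(s-2s_1,\rho\nu^{s_1},\wedge^2)$, combined with Theorem \ref{supercusp} (simple poles, $L=L_{ex}$, and the odd case) and Proposition \ref{prop-supercusp} applied to the supercuspidal twist $\rho\nu^{s_0/2}$. One small correction to your final bookkeeping remark: shifting $s_0$ by $\frac{2\pi i}{\log q}$ does change $\nu^{s_0/2}$ by the unramified quadratic character $g\mapsto(-1)^{v(\det g)}$, but since every element of $S_{2n}$ has square determinant this character restricts trivially to $S_{2n}$, so $\mathrm{Hom}_{S_{2n}}(\rho\nu^{s_0/2},\Theta)$ indeed depends only on $\alpha=q^{s_0}$.
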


 \subsection{Exterior square $L$-function for quasi-square-integrable representations.}

Let $\pi$ be an irreducible quasi-square-integrable representation for the segment $\Delta=[\rho,\dotsm,\rho \nu^{\ell-1}]$ with $\rho$ an irreducible supercuspidal representation of $GL_r$ and $n=\ell r$. The derivative of quasi-square-integrable representations have been computed by Bernstein and Zelevinsky in Theorem \ref{deriviative}. $\Delta^{(k)}=0$ if $k$ is not a multiple of $r$, $\Delta^{(0)}=\Delta$, $\Delta^{(kr)}=[\rho \nu^k,\dotsm,\rho \nu^{\ell-1}]$ for $1 \leq k \leq \ell-1$, and $\Delta^{(\ell r)}=\textbf{1}$. Note that all non-zero derivatives are irreducible and quasi-square-integrable. In this section, we compute the $L$-functions $L(s,\Delta,\wedge^2)$ for quasi-square-integrable representations by means of the derivative theory developed by Bernstein and Zelevinsky.

\par

In this paragraph, we recall the results from \cite{ CoKiMu04, JaPSSh83,CoPe} about $L$-function for Rankin-Selberg convolution. Let $(\rho,V_{\rho})$ be a supercuspidal representation of $GL_r$ and $(\rho^{\prime},V_{\rho^{\prime}})$ a supercuspidal representation of $GL_{r^{\prime}}$. If $r \neq r^{\prime}$, then $L(s,\rho \times \rho^{\prime})=1$. Otherwise, it is proven in \cite{CoPe,JaPSSh83} that the poles of $L(s,\rho \times \rho^{\prime})$ can only occur at those $s=s_0$ where $\widetilde{\rho} \simeq \rho^{\prime} \nu^{s_0}$ and those are necessarily simple. Therefore, we have
\[
  L(s,\rho \times \rho^{\prime})=\prod (1-\alpha q^{-s})^{-1},
\]
where $\alpha$ runs over all $\alpha=q^{s_0}$ with $\widetilde{\rho} \simeq \rho^{\prime} \nu^{s_0}$. Let $(\pi,V_{\pi})$ be an irreducible quasi-square-integrable representation for $GL_n$ and $(\sigma,V_{\sigma})$ an irreducible quasi-square-integrable representation for $GL_n$. Let $\pi$ be the segment $\Delta=[\rho,\rho\nu,\dotsm,\rho\nu^{\ell-1}]$, $n=\ell r$. Likewise let  us take $\sigma$ with the segment $\Delta^{\prime}=[\rho^{\prime},\rho^{\prime}\nu, \dotsm, \rho^{\prime}\nu^{\ell^{\prime}-1}]$ with $n=\ell^{\prime}r^{\prime}$. We assume that $\ell \geq \ell^{\prime}$. It is proven in  \cite{CoPe} that exceptional poles can only occur at those $s=s_0$ where $(\Delta^{(n-k)})^{\sim}\simeq\Delta'^{(n-k)}\nu^{s_0}$ and those are necessarily again simple. Hence we have
\[
L_{ex}(s,\Delta^{(n-k)} \times \Delta'^{(n-k)}) = \prod(1-\alpha q^{-s})^{-1}
\]
where $\alpha$ accounts for all $\alpha=q^{s_0}$ with $(\Delta^{(n-k)})^{\sim}\simeq\Delta'^{(n-k)}\nu^{s_0}$ and exceptional $L$-functions $L_{ex}(s,\Delta^{(n-k)} \times \Delta'^{(n-k)})$ are defined in Section 2 of \cite{CoPe}.
By considering the derivatives of the associated segments that Bernstein and Zelevinsky compute and the calculation of supercuspidal representations in \cite{CoPe}, we  can conclude that
\[
L_{ex}(s,\Delta^{(n-k)} \times \Delta'^{(n-k)})=L(\ell-1+j+s, \;\rho \times \rho^{\prime})
\]
for appropriate $i$ and $j$ such that $\ell-i=\ell^{\prime}-j$. Since this $L$-function runs over all poles of $L(s, \Delta \times \Delta')$ as $j$ accounts for all permissible values, namely  $0 \leq j \leq \ell^{\prime}-1$, we arrive at the following result.
\[
L(s, \Delta \times \Delta')=\prod_{j=0}^{\ell^{\prime}-1}L(\ell-1+j+s,\;\rho \times \rho^{\prime}).
\]
Furthermore, $L(s, \Delta \times \Delta')$ has simple poles. Let us investigate $L^{(0)}(s,\Delta,\wedge^2)$ defined in Proposition \ref{L0 function} to see how $L^{(0)}(s,\Delta,\wedge^2)$ can be used to establish the divisibility of $L$-functions.

\begin{lemma}
\label{quasi-key lemma 1}
Let $\Delta$ be an irreducible quasi-square-integrable representation of $GL_{2n}$ with the segment $\Delta=[\rho,\rho\nu,\dotsm,\rho\nu^{\ell-1}]$ and $\rho$ an irreducible supercuspidal representation of $GL_r$ with $2n=\ell r$. Then $L^{(0)}(s,\Delta,\wedge^2)^{-1}$ divides $L(\ell-1+s,\rho \times \rho)^{-1}$ in $\mathbb{C}[q^{\pm s}]$. 
\end{lemma}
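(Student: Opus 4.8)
## Proof Proposal

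\textbf{Overall strategy.} The claim is that $L^{(0)}(s,\Delta,\wedge^2)^{-1}$ divides $L(\ell-1+s,\rho\times\rho)^{-1}$ in $\mathbb{C}[q^{\pm s}]$. By Proposition \ref{L0 function}, the poles of $L^{(0)}(s,\Delta,\wedge^2)$ are exactly the exceptional poles of $L(s,\Delta,\wedge^2)$, and they are all simple. So it suffices to show that every exceptional pole $s=s_0$ of $L(s,\Delta,\wedge^2)$ is a pole of $L(\ell-1+s,\rho\times\rho)$. The plan is to locate the exceptional poles via twisted Shalika functionals: if $s=s_0$ is an exceptional pole, then (after the twist $\Delta\mapsto\Delta\nu^{s_0/2}$, exactly as in the proof of Theorem \ref{supercusp}) there is a nonzero Shalika functional on $\Delta\nu^{s_0/2}$, hence by Theorem \ref{Jacquet-Rallis} (Jacquet--Rallis) one has $\Delta\nu^{s_0/2}\simeq(\Delta\nu^{s_0/2})^{\sim}=\widetilde{\Delta}\nu^{-s_0/2}$, equivalently $\Delta\nu^{s_0}\simeq\widetilde{\Delta}$.

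\textbf{Key steps.} First I would record that $L^{(0)}(s,\Delta,\wedge^2)$ has only simple poles, each an exceptional pole of $L(s,\Delta,\wedge^2)$, by Proposition \ref{L0 function}; thus it is enough to handle one exceptional pole $s_0$ at a time and show $L(\ell-1+s,\rho\times\rho)$ has a pole there. Second, using the twist-invariance $L(s,\Delta,\wedge^2)=L(s-2s_1,\Delta\nu^{s_1},\wedge^2)$ and the characterization of exceptional poles from Section 3.1, I would argue — mimicking the proof of Theorem \ref{supercusp}(i) — that an exceptional pole at $s=s_0$ forces $\mathrm{Hom}_{S_{2n}}(\Delta\nu^{s_0/2},\Theta)\neq 0$; in particular the central character of $\Delta\nu^{s_0/2}$ is trivial, so $\omega_\Delta\nu^{ns_0}=1$. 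Third, invoke Theorem \ref{Jacquet-Rallis}: the existence of a nonzero Shalika functional on $\Delta\nu^{s_0/2}$ gives $\Delta\nu^{s_0/2}\simeq(\Delta\nu^{s_0/2})^{\sim}$, i.e. $\widetilde{\Delta}\simeq\Delta\nu^{s_0}$. Fourth, translate this into the segment data: $\widetilde{\Delta}=[\widetilde{\rho}\nu^{-(\ell-1)},\dots,\widetilde{\rho}]$ while $\Delta\nu^{s_0}=[\rho\nu^{s_0},\dots,\rho\nu^{\ell-1+s_0}]$, so the isomorphism of segments (Theorem 2.1, Bernstein--Zelevinsky) forces the supercuspidal endpoints to match, which yields $\widetilde{\rho}\simeq\rho\nu^{\ell-1+s_0}$. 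Fifth, compare with the Rankin--Selberg computation recalled just above: the poles of $L(s,\rho\times\rho)$ occur exactly at those $s=t_0$ with $\widetilde{\rho}\simeq\rho\nu^{t_0}$ and are simple; since $\widetilde{\rho}\simeq\rho\nu^{\ell-1+s_0}$, the point $t_0=\ell-1+s_0$ is such a pole, i.e. $L(\ell-1+s,\rho\times\rho)$ has a (simple) pole at $s=s_0$. Combining, every exceptional pole of $L(s,\Delta,\wedge^2)$ is cancelled by a corresponding factor of $L(\ell-1+s,\rho\times\rho)^{-1}$, which gives the asserted divisibility in $\mathbb{C}[q^{\pm s}]$.

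\textbf{Main obstacle.} The delicate point is the passage from ``$s_0$ is an exceptional pole'' to ``$\mathrm{Hom}_{S_{2n}}(\Delta\nu^{s_0/2},\Theta)\neq 0$''. For an irreducible generic representation this is handled by the $L^{(0)}$/residual bilinear form machinery of Section 3.1 together with Proposition \ref{prop-supercusp} (which applies since $\Delta\nu^{s_0/2}$, once the central character is trivial, is square integrable, and the relevant $\mathrm{Hom}$-space is $1$-dimensional by Proposition \ref{mira-multiplicity one} via Proposition \ref{even-embedding}). I would spell this out carefully: the exceptional pole produces a nonzero twisted Shalika functional $\Lambda_{s_0}\in\mathrm{Hom}_{S_{2n}}(\mathcal{W}(\Delta,\psi),|\cdot|^{-s_0/2}\Theta)$, which after untwisting is an honest Shalika functional on $\Delta\nu^{s_0/2}$. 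A secondary, purely bookkeeping point is making sure the segment-matching in step four is stated for the correct normalization of $\Delta$ (the paper uses $\Delta=[\rho,\dots,\rho\nu^{\ell-1}]$ rather than the unitary-centered segment), but this only shifts the pole location consistently and does not affect divisibility. No genuinely new estimate is needed; the proof is a reduction to Theorem \ref{Jacquet-Rallis}, Theorem \ref{supercusp}, and the known Rankin--Selberg factorization.
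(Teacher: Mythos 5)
Your proposal is correct and follows essentially the same route as the paper: an exceptional pole at $s=s_0$ yields (after untwisting) a nonzero Shalika functional on $\Delta\nu^{s_0/2}$, Jacquet--Rallis forces $(\Delta\nu^{s_0/2})^{\sim}\simeq\Delta\nu^{s_0/2}$, the segment comparison gives $\widetilde{\rho}\simeq\rho\nu^{\ell-1+s_0}$, which is exactly the pole condition for $L(\ell-1+s,\rho\times\rho)$, and simplicity of poles on both sides gives the divisibility. The only inessential difference is your appeal to Proposition \ref{prop-supercusp} and Proposition \ref{mira-multiplicity one} in the "obstacle" discussion, which concern the converse implication and are not needed; the direction you require is just the Section 3.1 construction of the residual twisted Shalika functional, exactly as the paper uses it.
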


\begin{proof}
We assume that $L(s,\Delta,\wedge^2)$ has a pole at $s=s_0$. As $L^{(0)}(s+s_0,\Delta,\wedge^2)$ is equal to $L^{(0)}(s,\Delta\nu^{\frac{s_0}{2}},\wedge^2)$, $s=0$ is an exceptional pole of $L^{(0)}(s,\Delta\nu^{\frac{s_0}{2}},\wedge^2)$. Then we have seen in Section 3.1 that there is a non-zero twisted Shalika functional which belong to $\mathrm{Hom}(\Delta\nu^{\frac{s_0}{2}},\Theta)$. Theorem \ref{Jacquet-Rallis} of Jacquet and Rallis asserts that $\Delta \nu^{\frac{s_0}{2}}$ is self-contragredient. On the other hand, the segment of $\Delta\nu^{\frac{s_0}{2}}$ can be written as $[\rho\nu^{\frac{s_0}{2}},\dotsm,\rho\nu^{\ell-1+\frac{s_0}{2}}]$ and $(\Delta\nu^{\frac{s_0}{2}})^{\sim}$ is a quasi-square-integrable representation associated to $[\tilde{\rho}\nu^{-\ell+1-\frac{s_0}{2}},\dotsm,\tilde{\rho}\nu^{-\frac{s_0}{2}}]$. Hence we see that $(\Delta\nu^{\frac{s_0}{2}})^{\sim} \simeq \Delta\nu^{\frac{s_0}{2}}$ if and only if $\tilde{\rho}\nu^{-\ell+1-\frac{s_0}{2}} \simeq \rho\nu^{\frac{s_0}{2}}$. The last condition is exactly the same condition that the $L$-function $L(\ell-1+s,\rho \times \rho)$ has a pole at $s=s_0$, and this pole will be simple. Since both of the $L$-functions $L^{(0)}(s,\Delta,\wedge^2)$ and $L(\ell-1+s,\rho \times \rho)$ have simple poles, $L^{(0)}(s,\Delta,\wedge^2)^{-1}$ divides $L(\ell-1+s,\rho \times \rho)^{-1}$ in $\mathbb{C}[q^{\pm s}]$. 
\end{proof}

$L^{(0)}(s,\Delta,\wedge^2)$ in previous Lemma \ref{quasi-key lemma 1} can be replaced by the exceptional $L$-function $L_{ex}(s,\Delta,\wedge^2)$. We utilize the derivative theory of Bernstein and Zelevinsky in Theorem \ref{deriviative} for irreducible quasi-square-integrable representations and factorization formula in terms of exceptional $L$-functions for even derivatives of $\Delta$ in Theorem \ref{prod-L} to prove the following result.

\begin{lemma}
\label{quasi-key lemma 2}
Let $\Delta$ be an irreducible quasi-square-integrable representation of $GL_{2n}$ with the segment $\Delta=[\rho,\rho\nu,\dotsm,\rho\nu^{\ell-1}]$ and $\rho$ an irreducible supercuspidal representation of $GL_r$ with $2n=\ell r$. Then
\[
 L_{ex}(s,\Delta,\wedge^2)=L^{(0)}(s,\Delta,\wedge^2).
\]
Moreover $L_{ex}(s,\Delta,\wedge^2)^{-1}$ divides $L(\ell-1+s,\rho \times \rho)^{-1}$ in $\mathbb{C}[q^{\pm s}]$. 
\end{lemma}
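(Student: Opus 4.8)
The plan is to prove the two assertions of Lemma \ref{quasi-key lemma 2} in sequence, with the divisibility statement following almost immediately once the equality $L_{ex}(s,\Delta,\wedge^2)=L^{(0)}(s,\Delta,\wedge^2)$ is in hand. For the equality, I would first recall from Proposition \ref{L0 function} (and the remark immediately following it) that $L^{(0)}(s,\Delta,\wedge^2)^{-1}=\prod_{s_i}(1-q^{s_i-s})$, where the $s_i$ are exactly the exceptional poles of $L(s,\Delta,\wedge^2)$, each appearing to the first power, whereas $L_{ex}(s,\Delta,\wedge^2)^{-1}=\prod_{s_i}(1-q^{s_i-s})^{d_i}$ with $d_i$ the maximal order of the exceptional pole $s_i$. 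So the content of the equality is precisely that every exceptional pole of $L(s,\Delta,\wedge^2)$ is \emph{simple}.

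To prove simplicity of the exceptional poles, I would argue as follows. Fix an exceptional pole $s=s_0$; after the twist $\Delta \mapsto \Delta\nu^{s_0/2}$ (which is harmless since $L(s+s_0,\Delta,\wedge^2)=L(s,\Delta\nu^{s_0/2},\wedge^2)$ and $\Delta\nu^{s_0/2}$ is again quasi-square-integrable) we may assume $s_0=0$. Then $s=0$ is an exceptional pole of $L(s,\Delta\nu^{s_0/2},\wedge^2)$, hence by Proposition \ref{prop-supercusp} we have $\mathrm{Hom}_{S_{2n}}(\Delta\nu^{s_0/2},\Theta)\neq 0$ and $L_{ex}(s,\Delta\nu^{s_0/2},\wedge^2)$ has a pole at $s=0$. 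Now I would invoke the machinery of Lemma \ref{char-Shalika}: the key is that for a square-integrable $\Delta$, the space $\mathrm{Hom}_{P_{2n}\cap S_{2n}}(\Delta,\Theta)$ is at most one-dimensional --- this is obtained by embedding it (Proposition \ref{even-embedding}) into $\mathrm{Hom}_{P_{2n}\cap H_{2n}}(\Delta,\delta_{2n}^{-s_0'})$ for $s_0'>r_\Delta$, which is one-dimensional by Proposition \ref{mira-multiplicity one} since square-integrable representations are unitary and generic. The order-of-pole analysis inside the proof of Lemma \ref{char-Shalika}, via the functional equation \eqref{Pn-functionalequation} and the Laurent expansion $J(s,W,\Phi)=B_0(W,\Phi)(q^s-1)^{-d}+\cdots$, shows that the residual bilinear form factors through $\Phi\mapsto\Phi(0)$ and coincides (up to nonzero constants) with $\Lambda_{1,\widetilde{\Delta}}(W)\Phi(0)$; since $\Lambda_{1,\widetilde\Delta}$ spans the one-dimensional space $\mathrm{Hom}_{S_{2n}}(\Delta,\Theta)$, the leading coefficient space is one-dimensional, which forces $d=1$. (Alternatively, and perhaps more cleanly, I would transcribe the argument of Lemma \ref{quasi-key lemma 1}: an exceptional pole at $s_0$ produces a nonzero twisted Shalika functional in $\mathrm{Hom}(\Delta\nu^{s_0/2},\Theta)$, forcing $\Delta\nu^{s_0/2}$ self-contragredient by Theorem \ref{Jacquet-Rallis}, which on the level of segments means $\tilde\rho\nu^{-\ell+1-s_0/2}\simeq\rho\nu^{s_0/2}$, i.e.\ $s=s_0$ is a pole of $L(\ell-1+s,\rho\times\rho)$, which is simple by the Rankin--Selberg theory recalled just before the Lemma.) Either route gives that all exceptional poles of $L(s,\Delta,\wedge^2)$ are simple, hence $L_{ex}(s,\Delta,\wedge^2)=L^{(0)}(s,\Delta,\wedge^2)$.

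For the final divisibility claim, once we know $L_{ex}(s,\Delta,\wedge^2)=L^{(0)}(s,\Delta,\wedge^2)$, it is immediate from Lemma \ref{quasi-key lemma 1}, which states precisely that $L^{(0)}(s,\Delta,\wedge^2)^{-1}$ divides $L(\ell-1+s,\rho\times\rho)^{-1}$ in $\mathbb{C}[q^{\pm s}]$. So the plan is: (1) reduce the equality to simplicity of exceptional poles; (2) prove simplicity via the self-duality/Jacquet--Rallis argument combined with the simple-pole structure of $L(s,\rho\times\rho)$; (3) quote Lemma \ref{quasi-key lemma 1} for the divisibility.

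The main obstacle I anticipate is step (2): one must verify carefully that the exceptional pole produces a genuine (nonzero, twisted) Shalika functional on the \emph{whole} representation $\Delta$, not merely on its Whittaker model restricted to $P_{2n}$, and that the one-dimensionality statements of Theorem \ref{Jacquet-Rallis} and Proposition \ref{mira-multiplicity one} apply --- this requires knowing $\Delta\nu^{s_0/2}$ is square-integrable (hence unitary and generic), which follows because a nonzero Shalika functional forces the central character to be trivial on $Z_{2n}$, as computed in the proofs of Theorem \ref{supercusp} and Proposition \ref{prop-supercusp}. The bookkeeping linking the pole of $L_{ex}(s,\Delta,\wedge^2)$ at $s=0$ to the pole of $L(\ell-1+s,\rho\times\rho)$ through the segment computation $[\rho\nu^{s_0/2},\dots,\rho\nu^{\ell-1+s_0/2}]$ versus $[\tilde\rho\nu^{-\ell+1-s_0/2},\dots,\tilde\rho\nu^{-s_0/2}]$ is routine but must be done with care about which end of the segment matches.
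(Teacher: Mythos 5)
Your reduction of the equality $L_{ex}(s,\Delta,\wedge^2)=L^{(0)}(s,\Delta,\wedge^2)$ to the statement that every exceptional pole of $L(s,\Delta,\wedge^2)$ is simple is correct, and the final divisibility is indeed immediate from Lemma \ref{quasi-key lemma 1}. But neither of your two routes actually proves simplicity, and this is the heart of the lemma. Since $L(s,\Delta,\wedge^2)=L^{(0)}(s,\Delta,\wedge^2)L_{(0)}(s,\Delta,\wedge^2)$, the order of an exceptional pole $s_0$ in $L$ is $1$ (from $L^{(0)}$, which has simple poles by Proposition \ref{L0 function}) \emph{plus} the order of the pole of $L_{(0)}$ at $s_0$; so what must be shown is that $L_{(0)}(s,\Delta,\wedge^2)$ has no pole at the exceptional points. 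Your route A cannot see this: one-dimensionality of $\mathrm{Hom}_{S_{2n}}(\Delta\nu^{s_0/2},\Theta)$ only says the \emph{leading} Laurent coefficient is a multiple of $\Lambda\otimes(\Phi\mapsto\Phi(0))$, which is true for an exceptional pole of any order $d$; nothing in that argument forces $d=1$. Your route B identifies the possible \emph{locations} of exceptional poles with the (simple) poles of $L(\ell-1+s,\rho\times\rho)$, but simplicity of a pole of the Rankin--Selberg factor does not transfer to the order of the pole of $L(s,\Delta,\wedge^2)$ at that point — Lemma \ref{quasi-key lemma 1} only bounds $L^{(0)}$, not $L_{ex}$, by that factor, and the bound on $L$ itself (Propositions \ref{even-square}, \ref{odd-square}) is proved \emph{after} this lemma, so invoking it would be circular.

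The paper closes exactly this gap by an induction on the length $\ell$ of the segment (treated separately for $r$ even and $r$ odd, with base cases $\ell=1$, resp.\ $\ell=2$ where $L_{(0)}\equiv1$). By Theorem \ref{prod-L0}, $L_{(0)}(s,\Delta,\wedge^2)^{-1}$ is the least common multiple of the exceptional $L$-functions of the proper even derivatives $[\rho\nu^{k},\dots,\rho\nu^{\ell-1}]$ with $k\geq1$; the induction hypothesis converts these into $L^{(0)}$-factors of shorter segments, and Lemma \ref{quasi-key lemma 1} places their poles inside the zero sets of $L(s+\ell-1+k,\rho\times\rho)^{-1}$, $k\geq1$, which are pairwise relatively prime and disjoint from the zero set of $L(s+\ell-1,\rho\times\rho)^{-1}$ controlling $L^{(0)}(s,\Delta,\wedge^2)$. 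Hence $L^{(0)}$ and $L_{(0)}$ share no poles, the exceptional poles of $L$ are simple, and $L_{ex}=L^{(0)}$. Your proposal is missing this inductive shifting argument, which is where the actual content lies; without it, the possibility that an exceptional pole of $L$ also occurs in $L_{(0)}$ (producing a double pole and $L_{ex}\neq L^{(0)}$) is not excluded.
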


\begin{proof}
We first investigate the case that $r$ is even. Suppose that $\ell=1$. Then $\Delta=\rho$ is an irreducible supercuspidal representation. By Theorem \ref{supercusp}, we can conclude that $L_{ex}(s,\rho,\wedge^2)=L^{(0)}(s,\rho,\wedge^2)$. 
\par
We show the statement of Lemma by induction on $\ell$. We now assume that the following statement is satisfied for all positive integers $1 \leq k \leq \ell$
\[
 L_{ex}(s,[\rho,\dotsm,\rho\nu^{k-1}],\wedge^2)=L^{(0)}(s,[\rho,\dotsm,\rho\nu^{k-1}],\wedge^2).
\]
Throughout this proof the least common multiple is always taken in terms of divisibility in $\mathbb{C}[q^{\pm s}]$. According to \ref{prod-L0}, we have
\[
  L_{(0)}(s,[\rho,\dotsm,\rho\nu^{\ell}],\wedge^2)^{-1}=\underset{1\leq k \leq \ell}{l.c.m.} \{L_{ex}(s,[\rho\nu^k,\dotsm,\rho\nu^{\ell}],\wedge^2)^{-1} \}.
\]
We can rewrite $[\rho^{\prime},\dotsm,\rho^{\prime}\nu^{\ell-k}]$ for $[\rho\nu^k,\dotsm,\rho\nu^{\ell}]$ with $\rho^{\prime}=\rho\nu^k$ an irreducible supercuspidal representation. Induction hypothesis on the length of  $[\rho^{\prime},\dotsm,\rho^{\prime}\nu^{\ell-k}]$ is applicable and we have
\[
  L_{(0)}(s,[\rho,\dotsm,\rho\nu^{\ell}],\wedge^2)^{-1}=\underset{1\leq k \leq \ell}{l.c.m.} \{L^{(0)}(s,[\rho\nu^k,\dotsm,\rho\nu^{\ell}],\wedge^2)^{-1} \}.
  \]
  We deduce from Lemma \ref{quasi-key lemma 1} that $L^{(0)}(s,[\rho\nu^k,\dotsm,\rho\nu^{\ell}],\wedge^2)^{-1}$ divides $L(s+\ell-k,\rho\nu^k \times \rho\nu^k)^{-1}$ for $0 \leq k \leq \ell$.  As $L(s+\ell-k,\rho\nu^k \times \rho\nu^k)^{-1}=L(s+\ell+k,\rho \times \rho)^{-1}$ for different $k$ with $0 \leq k \leq \ell$ are relative prime in $\mathbb{C}[q^{\pm s}]$, so are $L^{(0)}(s,[\rho,\dotsm,\rho\nu^{\ell}],\wedge^2)^{-1}$ and $L_{(0)}(s,[\rho,\dotsm,\rho\nu^{\ell}],\wedge^2)^{-1}$. We notice from Proposition \ref{L0 function} that $L(s,[\rho,\dotsm,\rho\nu^{\ell}],\wedge^2)=L^{(0)}(s,[\rho,\dotsm,\rho\nu^{\ell}],\wedge^2)L_{(0)}(s,[\rho,\dotsm,\rho\nu^{\ell}],\wedge^2)$. Since $L$-functions $L^{(0)}(s,[\rho,\dotsm,\rho\nu^{\ell}],\wedge^2)$ and $L_{(0)}(s,[\rho,\dotsm,\rho\nu^{\ell}],\wedge^2)$ does not share any common poles, we arrives at
  \[
   L_{ex}(s,[\rho,\dotsm,\rho\nu^{\ell}],\wedge^2)=L^{(0)}(s,[\rho,\dotsm,\rho\nu^{\ell}],\wedge^2).
  \]
  \par
  Now we assume that $r$ is odd. For $\ell=2$, we know from Theorem \ref{prod-L0} that $ L_{(0)}(s,[\rho,\rho\nu],\wedge^2)$ is $1$. From $ L(s,[\rho,\rho\nu],\wedge^2)= L^{(0)}(s,[\rho,\rho\nu],\wedge^2) L_{(0)}(s,[\rho,\rho\nu],\wedge^2)= L^{(0)}(s,[\rho,\rho\nu],\wedge^2) $, we get
  \[
    L_{ex}(s,[\rho,\rho\nu],\wedge^2)=L^{(0)}(s,[\rho,\rho\nu],\wedge^2).
    \]
  
    \par
We again apply induction on the even number $\ell$. We assume that $\ell$ is an even number and
\[
 L_{ex}(s,[\rho,\dotsm,\rho\nu^{k-1}],\wedge^2)=L^{(0)}(s,[\rho,\dotsm,\rho\nu^{k-1}],\wedge^2).
\]
  is true for all even number $k$ with $2 \leq k \leq \ell$. We make use of Theorem \ref{prod-L0} to get
  \[
   L_{(0)}(s,[\rho,\dotsm,\rho\nu^{\ell+1}],\wedge^2)^{-1}=\underset{1\leq k \leq \frac{\ell}{2}}{l.c.m.} \{L_{ex}(s,[\rho\nu^{2k},\dotsm,\rho\nu^{\ell+1}],\wedge^2)^{-1} \}.
   \]
    The length of the segment $[\rho\nu^{2k},\dotsm,\rho\nu^{\ell+1}]$ is $\ell+2-2k$, and so for $1\leq k \leq \frac{\ell}{2}$, we can utilize the induction. Then we obtain  
       \[
     L_{(0)}(s,[\rho,\dotsm,\rho\nu^{\ell+1}],\wedge^2)^{-1}=\underset{1\leq k \leq \frac{\ell}{2}}{l.c.m.} \{L^{(0)}(s,[\rho\nu^{2k},\dotsm,\rho\nu^{\ell+1}],\wedge^2)^{-1} \}.
     \]
     In a similar manner to the even case, each $L^{(0)}(s,[\rho\nu^{2k},\dotsm,\rho\nu^{\ell+1}],\wedge^2)^{-1}$ divides $L(s+\ell+1-2k,\rho\nu^{2k} \times \rho\nu^{2k})^{-1}=L(s+\ell+1+2k,\rho \times \rho)^{-1}$ and that
     $L(s+\ell+1+2k,\rho \times \rho)^{-1}$ are relative prime in $\mathbb{C}[q^{\pm s}]$ for different $k$ with $0 \leq k \leq \frac{\ell}{2}$. Hence $L^{(0)}(s,[\rho,\dotsm,\rho\nu^{\ell+1}],\wedge^2)$ and $L_{(0)}(s,[\rho,\dotsm,\rho\nu^{\ell+1}],\wedge^2)$ does not have any common poles. We can conclude that 
     \[
     L_{ex}(s,[\rho,\dotsm,\rho\nu^{\ell+1}],\wedge^2)=L^{(0)}(s,[\rho,\dotsm,\rho\nu^{\ell+1}],\wedge^2).
     \]
     \par
     The last assertion immediately follows from Lemma \ref{quasi-key lemma 1}. 
     
\end{proof}

 We begin with computing local exterior square $L$-functions for irreducible quasi-square-integrable representations of even $GL_{2n}$.

\begin{proposition}
\label{even-quasisquare}
Let $\Delta$ be an irreducible quasi-square-integrable representation of $GL_{2n}$ with the segment $\Delta=[\rho,\rho\nu,\dotsm,\rho\nu^{\ell-1}]$ and $\rho$ an irreducible supercuspidal representation of $GL_r$ with $2n=\ell r$. 
\begin{enumerate}
\item[$(\mathrm{i})$] Suppose that $r$ is even. Then
\[
  L(s,\Delta,\wedge^2)=\prod_{i=1}^nL_{ex}(s,\Delta^{(2n-2i)},\wedge^2)=\prod_{k=0}^{\ell-1} L_{ex}(s,[\rho\nu^k,\rho\nu^{k+1},\dotsm,\rho\nu^{\ell-1}],\wedge^2).
\]
\item[$(\mathrm{ii})$] Suppose that $r$ is odd. Then
\[
  L(s,\Delta,\wedge^2)=\prod_{i=1}^nL_{ex}(s,\Delta^{(2n-2i)},\wedge^2)=\prod_{k=0}^{\frac{\ell}{2}-1} L_{ex}(s,[\rho\nu^{2k},\rho\nu^{2k+1},\dotsm,\rho\nu^{\ell-1}],\wedge^2).
\]
\end{enumerate}
\end{proposition}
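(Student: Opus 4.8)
The plan is to combine the factorization of $L(s,\Delta,\wedge^2)$ over even derivatives (Theorem \ref{prod-L}) with the explicit formula for the derivatives of a segment (Theorem \ref{deriviative}) and the divisibility statement of Lemma \ref{quasi-key lemma 2}, and to upgrade the least common multiple produced by Theorem \ref{prod-L} to an honest product by a coprimality argument. First I would check that Theorem \ref{prod-L} applies to $\pi=\Delta$: the segment $\Delta$ is irreducible and generic, and by Theorem \ref{deriviative} every derivative $\Delta^{(k)}$ is either $0$ or the segment $[\rho\nu^{k/r},\dots,\rho\nu^{\ell-1}]$ (exactly when $r\mid k$), which is again irreducible and generic. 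Hence
\[
L(s,\Delta,\wedge^2)^{-1}=\underset{0\le m\le n-1}{\mathrm{l.c.m.}}\bigl\{L_{ex}(s,\Delta^{(2m)},\wedge^2)^{-1}\bigr\},
\]
the least common multiple being taken over those $m$ with $\Delta^{(2m)}\neq 0$ and with the convention $L_{ex}(s,0,\wedge^2)=1$.

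Next I would read off which even derivatives are non-zero. If $r=2r'$ is even, then $2m$ is a multiple of $r$ precisely when $2m=jr$ with $j=0,\dots,\ell-1$, and $\Delta^{(jr)}=[\rho\nu^{j},\dots,\rho\nu^{\ell-1}]$; this gives exactly the range in part (i). If $r$ is odd, then $2n=\ell r$ forces $\ell$ to be even, and $2m$ is a multiple of $r$ precisely when $2m=2kr$ with $k=0,\dots,\tfrac{\ell}{2}-1$, with $\Delta^{(2kr)}=[\rho\nu^{2k},\dots,\rho\nu^{\ell-1}]$; this is the range in part (ii). In both cases $\prod_{i=1}^{n}L_{ex}(s,\Delta^{(2n-2i)},\wedge^2)$ is exactly the product of these $L_{ex}$-factors, so the proposition reduces to showing the l.c.m. above is a product.

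To see this, I would invoke Lemma \ref{quasi-key lemma 2}. Writing a nonzero even derivative as $[\rho',\dots,\rho'\nu^{(\ell-j)-1}]$ with $\rho'=\rho\nu^{j}$ and length $\ell-j$ (a representation of $GL_{(\ell-j)r}$, which has even degree: automatically in case (i), and in case (ii) because only even $j$ occur), Lemma \ref{quasi-key lemma 2} gives that $L_{ex}(s,[\rho\nu^{j},\dots,\rho\nu^{\ell-1}],\wedge^2)^{-1}$ divides
\[
L\bigl((\ell-j)-1+s,\ \rho\nu^{j}\times\rho\nu^{j}\bigr)^{-1}=L\bigl(\ell-1+j+s,\ \rho\times\rho\bigr)^{-1},
\]
using $L(w,\rho\nu^{j}\times\rho\nu^{j})=L(w+2j,\rho\times\rho)$. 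In case (i) the shifts $\ell-1+j$ run through the consecutive integers $\ell-1,\dots,2\ell-2$; in case (ii) the shifts $\ell-1+2k$ run through integers of fixed parity spaced by $\ge 2$. Since any two poles $s_0,s_1$ of $L(s,\rho\times\rho)$ satisfy $\rho\nu^{s_0}\simeq\rho\nu^{s_1}$ and hence $s_0-s_1\in\tfrac{2\pi i}{\log q}\mathbb{Z}$, all its poles lie in one coset of $\tfrac{2\pi i}{\log q}\mathbb{Z}$; consequently distinct nonzero real-integer translates of $L(s,\rho\times\rho)$ have no common zero of the denominator as elements of $\mathbb{C}[q^{\pm s}]$. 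Therefore the polynomials $L_{ex}(s,\Delta^{(2m)},\wedge^2)^{-1}$ are pairwise coprime in $\mathbb{C}[q^{\pm s}]$, and the l.c.m. equals their product, which is what is claimed. (If $\widetilde{\rho}$ is not a twist of $\rho$, then $L(s,\rho\times\rho)=1$, every factor is $1$, and the identity is trivial.)

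The only genuinely delicate point, and the one I would write out with care, is this coprimality step: verifying that the building-block Rankin--Selberg factors $L(\ell-1+j+s,\rho\times\rho)$ (and their even-shift analogues) are pairwise relatively prime, which is exactly what allows the passage from the least common multiple of Theorem \ref{prod-L} to the product in the proposition. The remaining work is bookkeeping: identifying the index set $\{m:\Delta^{(2m)}\neq 0,\ 0\le m\le n-1\}$, matching it with the ranges $0\le k\le \ell-1$ and $0\le k\le \tfrac{\ell}{2}-1$, and tracking the parity constraint that $r$ odd forces $\ell$ even.
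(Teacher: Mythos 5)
Your proof is correct and follows essentially the same route as the paper: Theorem \ref{prod-L} gives the least common multiple over even derivatives, Theorem \ref{deriviative} identifies the nonzero ones, Lemma \ref{quasi-key lemma 2} bounds each exceptional factor by a shifted $L(s,\rho\times\rho)$, and pairwise coprimality of those shifted factors turns the l.c.m.\ into the stated product. One cosmetic caveat: differences of poles of $L(s,\rho\times\rho)$ lie in $\tfrac{2\pi i}{f\log q}\mathbb{Z}$ with $f$ the order of the group of unramified self-twists of $\rho$, not necessarily in $\tfrac{2\pi i}{\log q}\mathbb{Z}$, but since such differences are in any case purely imaginary (all poles share the same real part), your conclusion that distinct real integer translates are relatively prime in $\mathbb{C}[q^{\pm s}]$ is unaffected.
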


\begin{proof}
 Throughout this proof, the least common multiple is always taken in terms of divisibility in $\mathbb{C}[q^{\pm s}]$. We have seen in Theorem \ref{prod-L} that the poles of $L_{(0)}(s,\Delta,\wedge^2)$ are precisely the poles of the exceptional contribution to the $L$-functions for even derivatives. In fact,
 \[
   L(s,\Delta,\wedge^2)^{-1}=\underset{0 \leq k \leq \ell-1}{l.c.m.} \{L_{ex}(s,[\rho\nu^k,\dotsm,\rho\nu^{\ell-1}],\wedge^2)^{-1} \}
 \]
if $r$ is even or
\[
 L(s,\Delta,\wedge^2)^{-1}=\underset{0 \leq k \leq \frac{\ell}{2}-1}{l.c.m.} \{L_{ex}(s,[\rho\nu^{2k},\dotsm,\rho\nu^{\ell-1}],\wedge^2)^{-1} \}
\]
if $r$ is odd. By Lemma \ref{quasi-key lemma 2}, $L_{ex}(s,[\rho\nu^k,\dotsm,\rho\nu^{\ell-1}],\wedge^2)^{-1}$ divides $L(s+\ell-1-k,\rho\nu^k \times \rho\nu^k)^{-1}=L(s+\ell-1+k,\rho \times \rho)^{-1}$, or $L_{ex}(s,[\rho\nu^{2k},\dotsm,\rho\nu^{\ell-1}],\wedge^2)^{-1}$ divides $L(s+\ell-1-2k,\rho\nu^{2k} \times \rho\nu^{2k})^{-1}=L(s+\ell-1+2k,\rho \times \rho)^{-1}$ in $\mathbb{C}[q^{\pm s}]$. As $L(s+\ell-1+k,\rho \times \rho)^{-1}$ for different $k$ are relatives prime in $\mathbb{C}[q^{\pm s}]$, the least common multiple in $L(s,\Delta,\wedge^2)$ can be reduced to
\[
  L(s,\Delta,\wedge^2)^{-1}=\prod_{k=0}^{\ell-1}  L_{ex}(s,[\rho\nu^k,\dotsm,\rho\nu^{\ell-1}],\wedge^2)^{-1} 
\]
if $r$ is even or
\[
  L(s,\Delta,\wedge^2)^{-1}=\prod_{k=0}^{\frac{\ell}{2}-1} L_{ex}(s,[\rho\nu^{2k},\dotsm,\rho\nu^{\ell-1}],\wedge^2)^{-1}
\]
if $r$ is odd. This completes the proof.

\end{proof}

We establish the analogous results of Proposition \ref{even-quasisquare} for odd $GL_{2n+1}$.

\begin{proposition}
\label{odd-quasisquare}
Let $\Delta$ be an irreducible quasi-square integrable representation of $GL_{2n+1}$ with the segment $\Delta=[\rho,\rho \nu, \dotsm, \rho \nu^{\ell-1}]$ and $\rho$ an irreducible supercuspidal representation of 
$GL_r$ with $2n+1=r \ell$ and $\ell > 1$. Then 
\[
  L(s,\Delta,\wedge^2)=\prod_{i=1}^{n}L_{ex}(s,\pi^{(2n+1-2i)},\wedge^2)=\prod_{k=1}^{\frac{\ell-1}{2}}L_{ex}(s,[\rho\nu^{2k-1},\rho\nu^{2k},\dotsm,\rho\nu^{\ell-1}],\wedge^2).
\]
\end{proposition}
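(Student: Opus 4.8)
The plan is to mimic the strategy used for the even case in Proposition \ref{even-quasisquare}, now invoking the odd factorization Theorem \ref{prod-L-odd} instead of Theorem \ref{prod-L}, together with the derivative calculation of Bernstein and Zelevinsky for segments (Theorem \ref{deriviative}, part (2)). First I would record that the non-zero derivatives of $\Delta = [\rho,\rho\nu,\dotsm,\rho\nu^{\ell-1}]$ on $GL_{2n+1}$, with $2n+1 = \ell r$, are exactly $\Delta^{(kr)} = [\rho\nu^k,\dotsm,\rho\nu^{\ell-1}]$ for $0 \le k \le \ell-1$ (and $\Delta^{(\ell r)} = \mathbf{1}$), all of which are again irreducible and quasi-square-integrable, hence in particular generic; thus the hypothesis of Theorem \ref{prod-L-odd} is met. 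Since $2n+1$ is odd and $2n+1 = \ell r$, both $\ell$ and $r$ must be odd, so $\ell \ge 3$ is odd as assumed. Applying Theorem \ref{prod-L-odd}, I obtain
\[
L(s,\Delta,\wedge^2)^{-1} = l.c.m_{m,i}\{ L_{ex}(s,\Delta^{(2m+1)},\wedge^2)^{-1} \},
\]
where $m$ runs over $0,\dotsm,n-1$; the only nonzero derivatives among the odd ones $\Delta^{(2m+1)}$ are those with $2m+1$ a multiple of $r$, i.e. $2m+1 = (2k-1)r$ for $k = 1,\dotsm,\tfrac{\ell-1}{2}$ (using that $r$ is odd), giving the derivatives $[\rho\nu^{2k-1},\dotsm,\rho\nu^{\ell-1}]$.

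Next I would pin down each exceptional factor. Each derivative $[\rho\nu^{2k-1},\dotsm,\rho\nu^{\ell-1}]$ is a quasi-square-integrable representation of an \emph{even} general linear group $GL_{(\ell-2k+1)r}$ (since $\ell-2k+1$ is even when $\ell$ is odd), so Lemma \ref{quasi-key lemma 2} applies: $L_{ex}(s,[\rho\nu^{2k-1},\dotsm,\rho\nu^{\ell-1}],\wedge^2)$ equals $L^{(0)}$ of that representation, and its reciprocal divides $L(s + (\ell-1) - (2k-1), \rho\nu^{2k-1}\times\rho\nu^{2k-1})^{-1} = L(s+\ell-2+2(2k-1),\rho\times\rho)^{-1}$ in $\mathbb{C}[q^{\pm s}]$, after rewriting the segment as $[\rho',\dotsm,\rho'\nu^{\ell-2k}]$ with $\rho' = \rho\nu^{2k-1}$. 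The point of this rewriting is that these bounding $L$-functions $L(s + \ell-2 + 2(2k-1), \rho\times\rho)^{-1}$, for distinct $k \in \{1,\dotsm,\tfrac{\ell-1}{2}\}$, are pairwise coprime in $\mathbb{C}[q^{\pm s}]$: their poles occur at distinct arithmetic-progression shifts, since the shifts $\ell-2+2(2k-1) = \ell - 4 + 4k$ differ by multiples of $4$, while the poles of a single $L(s,\rho\times\rho)$ are spaced by $2\pi i/\log q$ and all have the same real part governed by the self-duality locus of $\rho$. Consequently the least common multiple over $m,i$ collapses to a plain product, yielding
\[
L(s,\Delta,\wedge^2)^{-1} = \prod_{k=1}^{\frac{\ell-1}{2}} L_{ex}(s,[\rho\nu^{2k-1},\rho\nu^{2k},\dotsm,\rho\nu^{\ell-1}],\wedge^2)^{-1},
\]
which is the claimed identity, and the intermediate expression $\prod_{i=1}^n L_{ex}(s,\pi^{(2n+1-2i)},\wedge^2)$ is just a reindexing of the same product (the zero derivatives contributing trivial factors $1$).

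The main obstacle I anticipate is the coprimality bookkeeping in the reduction from $l.c.m.$ to $\prod$: one has to verify carefully that the various exceptional factors $L_{ex}(s,\Delta^{(2m+1)},\wedge^2)$ for different $m$ have no common poles, and the cleanest route is exactly the one above — bound each by a shifted Rankin-Selberg factor $L(\cdot,\rho\times\rho)$ (via Lemma \ref{quasi-key lemma 2}) and check that these shifted factors are relatively prime, which is the same mechanism already used in the proof of Proposition \ref{even-quasisquare}. A secondary point worth stating explicitly is the parity observation that forces $\ell$ and $r$ both odd, so that every nonzero odd derivative $\Delta^{(2m+1)}$ lands on an even general linear group and Lemma \ref{quasi-key lemma 2} (which is stated for $GL_{2n}$) is genuinely applicable; without this, one could not invoke the even-case machinery. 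Everything else is a direct transcription of the even-case argument with the odd factorization theorem substituted in.
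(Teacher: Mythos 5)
Your proposal follows essentially the same route as the paper: apply Theorem \ref{prod-L-odd} to write $L(s,\Delta,\wedge^2)^{-1}$ as the l.c.m.\ of the exceptional factors of the nonzero odd derivatives $[\rho\nu^{2k-1},\dotsm,\rho\nu^{\ell-1}]$ (which live on even general linear groups since $\ell$ and $r$ are forced to be odd), bound each $L_{ex}(\cdot,\wedge^2)^{-1}$ by a shifted $L(\cdot,\rho\times\rho)^{-1}$ via Lemma \ref{quasi-key lemma 2}, and use coprimality of the distinct shifts to turn the l.c.m.\ into a product. One small arithmetic slip: unwinding the twist gives $L(s+\ell-2k,\rho\nu^{2k-1}\times\rho\nu^{2k-1})=L(s+\ell+2k-2,\rho\times\rho)$, not $L(s+\ell+4k-4,\rho\times\rho)$ as you wrote; this is harmless, since the correct shifts $\ell+2k-2$ are still pairwise distinct (hence the bounding factors pairwise coprime, as all poles of $L(s,\rho\times\rho)$ share the same real part), and the rest of your argument goes through unchanged.
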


\begin{proof}
From a factorization of local exterior square $L$-function in terms of exceptional $L$-functions for odd derivatives of $\Delta$ in Theorem \ref{prod-L-odd}, we know that
\[
   L(s,\Delta,\wedge^2)^{-1}= \underset{1 \leq k \leq \frac{\ell-1}{2}}{l.c.m.} \{ L_{ex}(s,[\rho\nu^{2k-1},\rho\nu^{2k},\dotsm,\rho\nu^{\ell-1}],\wedge^2)^{-1} \},
\]
where the least common multiple is always taken in terms of divisibility in $\mathbb{C}[q^{\pm s}]$. For $1 \leq k \leq \frac{\ell-1}{2}$,
$L_{ex}(s,[\rho\nu^{2k-1},\rho\nu^{2k},\dotsm,\rho\nu^{\ell-1}],\wedge^2)^{-1}$ divides $L(s+\ell-2k,\rho\nu^{2k-1}\times \rho\nu^{2k-1})^{-1}=L(s+\ell+2k-2,\rho \times \rho)^{-1}$ in $\mathbb{C}[q^{\pm s}]$ because we can apply Lemma \ref{quasi-key lemma 2}. As $L(s+\ell+2k-2,\rho \times \rho)^{-1}$ for different $k$ are relative prime in $\mathbb{C}[q^{\pm s}]$, we have
\[
  L(s,\Delta,\wedge^2)^{-1}=\prod_{k=1}^{\frac{\ell-1}{2}}L_{ex}(s,[\rho\nu^{2k-1},\rho\nu^{2k},\dotsm,\rho\nu^{\ell-1}],\wedge^2).
\]

\end{proof}

Let $\Delta$ be an irreducible square integrable representation of $GL_{2n}$ with the segment $\Delta=[\rho \nu^{-\frac{\ell-1}{2}},\dotsm,\rho \nu^{\frac{\ell-1}{2}}]$ and $\rho$ an irreducible unitary supercuspidal representation of $GL_r$ with $2n=\ell r$. Then
\[
\label{RS-Lfunction}
\tag{6.5}
  L(s,\Delta \times \Delta)=\prod_{k=0}^{\ell-1}L(s+\ell-1+k,\rho\nu^{-\frac{\ell-1}{2}} \times \rho \nu^{-\frac{\ell-1}{2}})=\prod_{k=0}^{\ell-1}L(s+k,\rho \times \rho).
\]
As in Proposition \ref{even-quasisquare}, we have
\[
   L(s,\Delta,\wedge^2)=\prod_{k=0}^{\ell-1}L_{ex}(s,[\rho \nu^{k+\frac{1-\ell}{2}},\dotsm,\rho \nu^{\frac{\ell-1}{2}}],\wedge^2)
\]
if $r$ is even or
\[
  L(s,\Delta,\wedge^2)=\prod_{k=0}^{\frac{\ell}{2}-1}L_{ex}(s,[\rho \nu^{2k+\frac{1-\ell}{2}},\dotsm,\rho \nu^{\frac{\ell-1}{2}}],\wedge^2)
\]
if $r$ is odd. Using Lemma \ref{quasi-key lemma 2}, $L_{ex}(s,[\rho \nu^{k+\frac{1-\ell}{2}},\dotsm,\rho \nu^{\frac{\ell-1}{2}}],\wedge^2)^{-1}$ divides $L(s+\ell-1-k,\rho \nu^{k+\frac{1-\ell}{2}}\times \rho \nu^{k+\frac{1-\ell}{2}})^{-1}=L(s+k,\rho \times \rho)^{-1}$, and hence $\displaystyle L(s,\Delta,\wedge^2)^{-1}$ divides $\displaystyle L(s,\Delta \times \Delta)^{-1}=\prod_{k=0}^{\ell-1}L(s+k,\rho \times \rho)^{-1}$ in $\mathbb{C}[q^{\pm s}]$. $L(s,\Delta,\wedge^2)$ has simple poles because $L(s,\Delta \times \Delta)$ has simple poles. Since $\rho$ is unitary, the pole of $L(s,\rho \times \rho)$ must lie on the line $\mathrm{Re}(s)=0$. Thus the pole of $L(s,\Delta,\wedge^2)$ will lie on the line $\mathrm{Re}(s+k)=0$ or $\mathrm{Re}(s)=-k$ for $k=0, \dotsm, \ell-1$. Since these are the only possible poles of the exterior square $L$-function, we get as a proposition the main result of \cite{Ke11}.

\begin{proposition}
\label{even-square}
Let $\Delta$ be an irreducible square integrable representation of $GL_{2n}$ with the segment $\Delta=[\rho \nu^{-\frac{\ell-1}{2}},\dotsm,\rho \nu^{\frac{\ell-1}{2}}]$ and $\rho$ a supercuspidal representation of $GL_r$ with $2n=\ell r$. Then $L(s,\Delta,\wedge^2)$ has no poles in $\mathrm{Re}(s) > 0$ and has simple poles. 
\begin{enumerate}
\item[$(\mathrm{i})$] Suppose $r$ is even. Then
\[
  L(s,\Delta,\wedge^2)=\prod_{i=1}^nL_{ex}(s,\Delta^{(2n-2i)},\wedge^2)=\prod_{k=0}^{\ell-1}L_{ex}(s,[\rho \nu^{k+\frac{1-\ell}{2}},\dotsm,\rho \nu^{\frac{\ell-1}{2}}],\wedge^2).
\]
\item[$(\mathrm{ii})$] Suppose $r$ is odd. Then
\[
  L(s,\Delta,\wedge^2)=\prod_{i=1}^nL_{ex}(s,\Delta^{(2n-2i)},\wedge^2)=\prod_{k=0}^{\frac{\ell}{2}-1}L_{ex}(s,[\rho \nu^{2k+\frac{1-\ell}{2}},\dotsm,\rho \nu^{\frac{\ell-1}{2}}],\wedge^2).
\]
\end{enumerate}
Moreover $L(s,\Delta,\wedge^2)^{-1}$ divides $L(s,\Delta \times \Delta)^{-1}$ in $\mathbb{C}[q^{\pm s}]$. 
\end{proposition}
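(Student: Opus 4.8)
The plan is to deduce everything from the already-established factorization of $L(s,\Delta,\wedge^2)$ in terms of exceptional $L$-functions of even derivatives (Theorem \ref{prod-L}), combined with the explicit derivative computation of Bernstein and Zelevinsky (Theorem \ref{deriviative}) and Lemma \ref{quasi-key lemma 2}. First I would observe that since $\Delta$ is a square integrable representation for the balanced segment $[\rho\nu^{-\frac{\ell-1}{2}},\dotsm,\rho\nu^{\frac{\ell-1}{2}}]$, its derivatives are again quasi-square-integrable segments: by Theorem \ref{deriviative}, $\Delta^{(kr)}=[\rho\nu^{k+\frac{1-\ell}{2}},\dotsm,\rho\nu^{\frac{\ell-1}{2}}]$ for $1\leq k\leq \ell-1$ and $\Delta^{(j)}=0$ when $r\nmid j$, and $\Delta^{(\ell r)}=\mathbf{1}$. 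So the even derivatives $\Delta^{(2n-2i)}$ that enter the factorization are exactly the $\Delta^{(kr)}$ with the appropriate parity of $kr$, which forces the split into the two cases $r$ even (all $k$, $0\leq k\leq \ell-1$) and $r$ odd (only even $k$, giving $0\leq k\leq \frac{\ell}{2}-1$ after reindexing). This yields the displayed product expressions for $L(s,\Delta,\wedge^2)$ directly from Theorem \ref{prod-L}, once one checks that the $l.c.m.$ there collapses to an honest product.

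Next, to see that the least common multiple collapses to a product, I would invoke Lemma \ref{quasi-key lemma 2}: each factor $L_{ex}(s,[\rho\nu^{k+\frac{1-\ell}{2}},\dotsm,\rho\nu^{\frac{\ell-1}{2}}],\wedge^2)^{-1}$ divides $L(s+\ell-1-k,\,\rho\nu^{k+\frac{1-\ell}{2}}\times\rho\nu^{k+\frac{1-\ell}{2}})^{-1}=L(s+k,\rho\times\rho)^{-1}$ in $\mathbb{C}[q^{\pm s}]$. Since $\rho$ is supercuspidal, the factors $L(s+k,\rho\times\rho)^{-1}$ for distinct $k$ in the relevant range are pairwise coprime in $\mathbb{C}[q^{\pm s}]$ (their poles lie on distinct vertical lines, being governed by the condition $\widetilde{\rho}\simeq\rho\nu^{s_0}$ translated by integers). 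Coprimality of the divisors forces the least common multiple of the $L_{ex}$-factors to equal their product, which establishes both displayed equalities. This also shows $L(s,\Delta,\wedge^2)^{-1}$ divides $\prod_{k=0}^{\ell-1}L(s+k,\rho\times\rho)^{-1}=L(s,\Delta\times\Delta)^{-1}$ by the Rankin--Selberg formula \eqref{RS-Lfunction}.

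For the analytic statements: since $L(s,\Delta\times\Delta)$ has simple poles (from the supercuspidal Rankin--Selberg theory recalled before Lemma \ref{quasi-key lemma 1}) and $L(s,\Delta,\wedge^2)^{-1}$ divides $L(s,\Delta\times\Delta)^{-1}$, the factor $L(s,\Delta,\wedge^2)$ has only simple poles. Because $\rho$ is unitary, the pole of $L(s,\rho\times\rho)$ lies on $\mathrm{Re}(s)=0$, hence a pole of $L(s+k,\rho\times\rho)$ lies on $\mathrm{Re}(s)=-k\leq 0$ for $k\geq 0$; therefore $L(s,\Delta,\wedge^2)$ has no pole with $\mathrm{Re}(s)>0$. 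I expect the main obstacle to be purely bookkeeping rather than conceptual: one must be careful about the parity constraint relating $kr$, $\ell$, and $n=\ell r/2$, and about matching the index $i$ in $\Delta^{(2n-2i)}$ with the index $k$ in the segment description, so that the products are indexed over exactly the right ranges in the even-$r$ and odd-$r$ cases. Once the indexing is pinned down, the coprimality argument and the unitarity estimate are routine, and the proposition follows by assembling Theorem \ref{prod-L}, Theorem \ref{deriviative}, Lemma \ref{quasi-key lemma 2}, and \eqref{RS-Lfunction}.
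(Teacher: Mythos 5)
Your proposal is correct and follows essentially the same route as the paper: the paper obtains the two product formulas by specializing Proposition \ref{even-quasisquare} to the balanced segment (whose own proof is exactly your combination of Theorem \ref{prod-L}, Theorem \ref{deriviative}, Lemma \ref{quasi-key lemma 2}, and coprimality of the shifted $L(s+k,\rho\times\rho)^{-1}$), and then, as you do, uses Lemma \ref{quasi-key lemma 2} together with \eqref{RS-Lfunction} for the divisibility, simplicity of poles, and the unitarity of $\rho$ to place all poles on the lines $\mathrm{Re}(s)=-k\leq 0$.
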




The following Proposition characterizes the irreducible square integrable representation of $GL_{2n}$ admitting a Shalika functional in terms of poles of the Jacquet-Shalika exterior square $L$-function. This result is also presented in the following result in Section 6 of \cite{Ma14}.

\begin{proposition}
\label{excep-Shalika}
Let $\Delta$ be an irreducible square integrable representation of $GL_{2n}$. Then $\mathrm{Hom}_{S_{2n}}(\Delta,\Theta) \neq 0$ if and only if $L_{ex}(s,\Delta,\wedge^2)$ has a pole at $s=0$, or equivalently if and only if $L(s,\Delta,\wedge^2)$ has a pole at zero.
\end{proposition}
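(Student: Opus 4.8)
The first equivalence requires nothing new: an irreducible square integrable representation is in particular irreducible quasi-square integrable, so Proposition~\ref{prop-supercusp} already gives that $\mathrm{Hom}_{S_{2n}}(\Delta,\Theta)\neq 0$ if and only if $L_{ex}(s,\Delta,\wedge^2)$ has a pole at $s=0$. It therefore remains only to prove the second equivalence, namely that $L_{ex}(s,\Delta,\wedge^2)$ has a pole at $s=0$ if and only if $L(s,\Delta,\wedge^2)$ has a pole at $s=0$.

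One implication is formal. By Proposition~\ref{L0 function}, $L(s,\Delta,\wedge^2)^{-1}$ is the least common multiple of $L_{ex}(s,\Delta,\wedge^2)^{-1}$ and $L_{(0)}(s,\Delta,\wedge^2)^{-1}$ in $\mathbb{C}[q^{\pm s}]$; in particular $L_{ex}(s,\Delta,\wedge^2)^{-1}$ divides $L(s,\Delta,\wedge^2)^{-1}$, so every pole of $L_{ex}(s,\Delta,\wedge^2)$ is a pole of $L(s,\Delta,\wedge^2)$. For the converse I would invoke the explicit factorization of Proposition~\ref{even-square}. Writing $\Delta$ for the segment $[\rho\nu^{-\frac{\ell-1}{2}},\dotsm,\rho\nu^{\frac{\ell-1}{2}}]$ with $\rho$ an irreducible unitary supercuspidal representation of $GL_r$, one has
\[
L(s,\Delta,\wedge^2)=\prod_{i=1}^{n}L_{ex}(s,\Delta^{(2n-2i)},\wedge^2),
\]
and the factor with $i=n$ is $L_{ex}(s,\Delta^{(0)},\wedge^2)=L_{ex}(s,\Delta,\wedge^2)$ itself. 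For $i<n$ the (nonzero) derivative $\Delta^{(2n-2i)}$ is the quasi-square integrable representation attached to a segment $[\rho\nu^{k+\frac{1-\ell}{2}},\dotsm,\rho\nu^{\frac{\ell-1}{2}}]$ with an integer $k\geq 1$, and Lemma~\ref{quasi-key lemma 2} shows that $L_{ex}(s,\Delta^{(2n-2i)},\wedge^2)^{-1}$ divides $L(s+k,\rho\times\rho)^{-1}$. Since $\rho$ is unitary, the poles of $L(s+k,\rho\times\rho)$ all lie on the line $\mathrm{Re}(s)=-k<0$, so none of the factors with $i<n$ is singular at $s=0$. Consequently $L(s,\Delta,\wedge^2)$ has a pole at $s=0$ precisely when $L_{ex}(s,\Delta,\wedge^2)$ does. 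The case $r$ odd is handled verbatim using part~(ii) of Proposition~\ref{even-square}, where the relevant derivatives are indexed by even shifts $2k$ with $k\geq 1$, so again the pole locations of the proper-derivative factors are bounded away from $\mathrm{Re}(s)=0$. Chaining this with the first equivalence yields the three-way statement.

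The only delicate point is the pole-location bookkeeping in the last step: one must verify that for every proper even derivative $\Delta^{(2n-2i)}$, $i<n$, the shift occurring in Lemma~\ref{quasi-key lemma 2} is a \emph{strictly positive} integer, so that by unitarity of $\rho$ the corresponding exceptional $L$-factor is holomorphic and non-vanishing at $s=0$. This is exactly the content of the discussion of pole locations preceding Proposition~\ref{even-square}, and I expect no difficulty beyond carefully matching the indexing of the derivatives $\Delta^{(2n-2i)}$ with the shifted segments $[\rho\nu^{k+\frac{1-\ell}{2}},\dotsm,\rho\nu^{\frac{\ell-1}{2}}]$ (respectively $[\rho\nu^{2k+\frac{1-\ell}{2}},\dotsm]$ when $r$ is odd) and observing that the shift equals $k\geq 1$ in each case.
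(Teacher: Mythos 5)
Your proposal is correct and follows essentially the same route as the paper: the first equivalence is quoted from Proposition \ref{prop-supercusp}, and the second is obtained by combining the factorization of Proposition \ref{even-square} with Lemma \ref{quasi-key lemma 2}, so that for the proper even derivatives the exceptional factors divide $L(s+k,\rho\times\rho)^{-1}$ with $k\geq 1$ and hence, $\rho$ being unitary, are regular at $\mathrm{Re}(s)=0$, leaving $L_{ex}(s,\Delta,\wedge^2)$ as the only factor that can carry a pole at $s=0$. Your extra bookkeeping (the divisibility direction via Proposition \ref{L0 function} and the separate treatment of $r$ odd via even shifts $2k$) matches the paper's implicit reasoning and introduces no gap.
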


\begin{proof}
The first equivalence is proved in Proposition \ref{prop-supercusp}. Now $L_{ex}(s,\Delta,\wedge^2)$ is the only factor having pole at $\mathrm{Re}(s)=0$, because we can apply Lemma \ref{quasi-key lemma 2} and Proposition \ref{even-square} so that $L_{ex}(s,[\rho \nu^{k+\frac{1-\ell}{2}},\dotsm,\rho \nu^{\frac{\ell-1}{2}}],\wedge^2)^{-1}$ divides the factor $L(s+k,\rho \times \rho)^{-1}$. This asserts the second equivalence condition.


\end{proof}

Let $\Delta$ be an irreducible square integrable representation of $GL_{2n+1}$ with the segment $\Delta=[\rho \nu^{-\frac{\ell-1}{2}}, \dotsm, \rho \nu^{\frac{\ell-1}{2}}]$ and $\rho$ an unitary supercuspidal representation of $GL_r$ with $2n+1=r \ell$ and $\ell > 1$. We deduce from Proposition \ref{quasi-key lemma 2} and Proposition \ref{odd-quasisquare} that $\displaystyle L(s,\Delta,\wedge^2)^{-1}=\prod_{k=1}^{\frac{\ell-1}{2}}L_{ex}(s,[\rho\nu^{2k-1+\frac{1-\ell}{2}},\dotsm,\rho\nu^{\frac{\ell-1}{2}}],\wedge^2)^{-1}$ divides Rankin-Selberg $L$-function $\displaystyle \prod_{k=1}^{\frac{\ell-1}{2}} L(s+\ell-2k,\rho\nu^{2k-1+\frac{1-\ell}{2}} \times \rho\nu^{2k-1+\frac{1-\ell}{2}})^{-1}=\prod_{k=1}^{\frac{\ell-1}{2}} L(s+2k-1,\rho \times \rho)^{-1}$ in $\mathbb{C}[q^{\pm s}]$. Then $L(s,\Delta,\wedge^2)^{-1}$ divides $L(s,\Delta\times \Delta)^{-1}$ in $\mathbb{C}[q^{\pm s}]$ from \eqref{RS-Lfunction}, which also implies that $L(s,\Delta,\wedge^2)$ has simple poles. Moreover, since $\rho$ is unitary, the pole of $L(s,\rho \times \rho)$ must lie on the line $\mathrm{Re}(s)=0$. Hence the only possible poles of the exterior square $L$-function will lie on the line $\mathrm{Re}(s+2k-1)=0$ or $\mathrm{Re}(s)=-2k+1$ for $k=1,\dotsm, \frac{\ell-1}{2}$. Therefore we get as a proposition the main result of \cite{Ke11}.

\begin{proposition}
\label{odd-square}
Let $\Delta$ be an irreducible square integrable representation of $GL_{2n+1}$ with the segment $\Delta=[\rho \nu^{-\frac{\ell-1}{2}}, \dotsm, \rho \nu^{\frac{\ell-1}{2}}]$ and $\rho$ an irreducible unitary supercuspidal representation of $GL_r$ with $2n+1=r \ell$ and $\ell > 1$. Then $L(s,\Delta,\wedge^2)$ has simple poles and has no poles in $\mathrm{Re}(s) \geq 0$. Moreover,
\[
  L(s,\Delta,\wedge^2)=\prod_{i=1}^{n}L_{ex}(s,\Delta^{(2n+1-2i)},\wedge^2)=\prod_{k=1}^{\frac{\ell-1}{2}}L_{ex}(s,[\rho\nu^{2k-1+\frac{1-\ell}{2}},\dotsm,\rho\nu^{\frac{\ell-1}{2}}],\wedge^2).
\]
Moreover $L(s,\Delta,\wedge^2)^{-1}$ divides $L(s,\Delta \times \Delta)^{-1}$ in $\mathbb{C}[q^{\pm s}]$. 
\end{proposition}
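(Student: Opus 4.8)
The plan is to mirror the argument given for the even case in Proposition \ref{even-square}, combining the factorization of $L(s,\Delta,\wedge^2)$ into exceptional $L$-functions of the odd derivatives with the divisibility estimate of Lemma \ref{quasi-key lemma 2}. First I would apply Proposition \ref{odd-quasisquare} in the square-integrable normalization of the segment. Writing $\Delta=[\rho\nu^{-\frac{\ell-1}{2}},\dotsm,\rho\nu^{\frac{\ell-1}{2}}]$, the Bernstein--Zelevinsky computation of derivatives (Theorem \ref{deriviative}) shows that the only non-zero derivatives $\Delta^{(2n+1-2i)}$ contributing in Theorem \ref{prod-L-odd} are the quasi-square-integrable representations attached to the segments $[\rho\nu^{2k-1+\frac{1-\ell}{2}},\dotsm,\rho\nu^{\frac{\ell-1}{2}}]$ for $1\le k\le \frac{\ell-1}{2}$, so that
\[
L(s,\Delta,\wedge^2)^{-1}=\underset{1 \leq k \leq \frac{\ell-1}{2}}{l.c.m.}\{L_{ex}(s,[\rho\nu^{2k-1+\frac{1-\ell}{2}},\dotsm,\rho\nu^{\frac{\ell-1}{2}}],\wedge^2)^{-1}\},
\]
the least common multiple being taken with respect to divisibility in $\mathbb{C}[q^{\pm s}]$.

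Next, for each fixed $k$ I would view the segment $[\rho\nu^{2k-1+\frac{1-\ell}{2}},\dotsm,\rho\nu^{\frac{\ell-1}{2}}]$ as a segment of length $\ell-(2k-1)$ with supercuspidal base $\rho'=\rho\nu^{2k-1+\frac{1-\ell}{2}}$ and apply Lemma \ref{quasi-key lemma 2}: the factor $L_{ex}(s,[\rho\nu^{2k-1+\frac{1-\ell}{2}},\dotsm,\rho\nu^{\frac{\ell-1}{2}}],\wedge^2)^{-1}$ divides $L(s+\ell-2k,\rho'\times\rho')^{-1}$, and a direct shift computation gives $L(s+\ell-2k,\rho'\times\rho')=L(s+2k-1,\rho\times\rho)$. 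Since $\rho$ is unitary, the (simple) poles of $L(s,\rho\times\rho)$ all lie on $\mathrm{Re}(s)=0$, so the polynomials $L(s+2k-1,\rho\times\rho)^{-1}$ are pairwise coprime in $\mathbb{C}[q^{\pm s}]$ for distinct $k\in\{1,\dotsm,\frac{\ell-1}{2}\}$. Consequently the least common multiple above collapses to a product, yielding
\[
L(s,\Delta,\wedge^2)^{-1}=\prod_{k=1}^{\frac{\ell-1}{2}}L_{ex}(s,[\rho\nu^{2k-1+\frac{1-\ell}{2}},\dotsm,\rho\nu^{\frac{\ell-1}{2}}],\wedge^2)^{-1},
\]
which, together with the identification $\prod_{i=1}^{n}L_{ex}(s,\Delta^{(2n+1-2i)},\wedge^2)$ coming from Theorem \ref{prod-L-odd}, is the asserted factorization.

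Finally I would read off the analytic properties. The product formula shows $L(s,\Delta,\wedge^2)^{-1}$ divides $\prod_{k=1}^{\frac{\ell-1}{2}}L(s+2k-1,\rho\times\rho)^{-1}$, which by \eqref{RS-Lfunction} is in turn a divisor of $L(s,\Delta\times\Delta)^{-1}=\prod_{k=0}^{\ell-1}L(s+k,\rho\times\rho)^{-1}$; hence $L(s,\Delta,\wedge^2)^{-1}$ divides $L(s,\Delta\times\Delta)^{-1}$ in $\mathbb{C}[q^{\pm s}]$. Since $L(s,\Delta\times\Delta)$ has only simple poles, so does $L(s,\Delta,\wedge^2)$; and since the poles of each $L(s+2k-1,\rho\times\rho)$ lie on $\mathrm{Re}(s)=-(2k-1)$ with $k\ge 1$, all poles of $L(s,\Delta,\wedge^2)$ satisfy $\mathrm{Re}(s)\le -1$, so there are none in $\mathrm{Re}(s)\ge 0$. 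The one point demanding care — and the only real obstacle — is the bookkeeping identifying the exceptional $L$-functions of the odd derivatives of $\Delta$ with the Rankin--Selberg $L$-functions $L(s,\rho\times\rho)$ after the appropriate shifts and verifying the resulting coprimality; this rests entirely on Theorem \ref{deriviative}, Lemma \ref{quasi-key lemma 2}, and the supercuspidal computation recalled from \cite{CoPe}, so no new input beyond the even-case argument is required.
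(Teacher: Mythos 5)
Your proposal is correct and follows essentially the same route as the paper: the paper likewise invokes the factorization of Proposition \ref{odd-quasisquare} (itself obtained from Theorem \ref{prod-L-odd} and the coprimality collapse of the l.c.m.), uses Lemma \ref{quasi-key lemma 2} to bound each exceptional factor by the shifted Rankin--Selberg factor $L(s+2k-1,\rho\times\rho)$, and then reads off divisibility into $L(s,\Delta\times\Delta)^{-1}$ via \eqref{RS-Lfunction}, simplicity of poles, and the pole locations $\mathrm{Re}(s)=-(2k-1)<0$. No gaps; the derivative bookkeeping and shift computation you flag are exactly the verifications the paper performs.
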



\par
 We wish to express exceptional $L$-functions $L_{ex}(s,[\rho,\rho\nu,\dotsm,\rho\nu^{\ell-1}],\wedge^2)$ for a segment in terms of $L$-function for the supercuspidal representation $\rho$ in Section 8.

\section{Deformation and Local $L$-Functions}

\subsection{Deformation and general position}

The aim of this section is to define what we mean by $``$in general position$"$ for a representation of $GL_n$. We first recall a few facts from \cite[Section 3]{CoPe} about deformations of representations and behavior of the derivatives. Let $\pi=\text{Ind}_Q^{GL_n}(\Delta_1 \otimes \dotsm \otimes \Delta_t)$ be a representation of Whittaker type of $GL_n$, where each $\Delta_i$ is a quasi-square-integrable representation of $GL_{n_i}$, $n=n_1+\dotsm+n_t$, and the induction is normalized parabolic induction from the standard parabolic subgroup $Q$ of $GL_n$ associated to the partition $(n_1,\dotsm,n_t)$. Let $M \simeq GL_{n_1} \times \dotsm \times GL_{n_t}$ denote the Levi subgroup of $Q$. If $u=(u_1,\dotsm,u_t) \in \mathbb{C}^t$ then $u$ defines an unramified character $\nu^u$ of $M$ via $\nu^u(m)=\nu^u(g_1,\dotsm,g_t)=\nu(g_1)^{u_1}\dotsm\nu(g_t)^{u_t}$ for $m \in M$. Let $\mathcal{D}_{\pi}$ denote the complex manifold $(\mathbb{C} \slash \frac{2\pi i}{\text{log}(q)}\mathbb{Z})^t$. The map $u \mapsto (q^{u_1},\dotsm,q^{u_t})$ induces a group isomorphism between $\mathcal{D}_{\pi}$ and $(\mathbb{C}^{\times})^t$. For convenience, we will let $q^u$ denote $(q^{u_1},\dotsm,q^{u_t})$. We will use $q^{\pm u}$ as short for $(q^{\pm u_1},\dotsm,q^{\pm u_t})$. For each $u \in \mathcal{D}_{\pi}$, we may define the representation $\pi_u$ by
\[
  \pi_u=\text{Ind}(\Delta_1\nu^{u_1}\otimes \dotsm \otimes \Delta_t\nu^{u_t}).
\]
This is the family of deformation of $\pi=\pi_0$ we are interested in. Note that each representation $\pi_u$ is of Whittaker type.

\par

 Suppose that the quasi-square-integrable representation $\Delta_i$ corresponds to the segment $[\rho_i,\rho_i\nu,\dotsm,\rho_i\nu^{\ell_i-1}]$ where $\rho_i$ is a supercuspidal representation of $GL_{r_i}$. Then $\Delta_i$ is a representation of $GL_{n_i}$ where $n_i=r_i\ell_i$ and $n=\sum n_i=\sum r_i\ell_i$. We say that $\Delta_i=[\rho_i,\rho_i\nu,\dotsm,\rho_i\nu^{\ell_i-1}]$ and $\Delta_j=[\rho_j,\rho_j\nu,\dotsm,\rho_j\nu^{\ell_j-1}]$ are linked if as sets $\Delta_i \not\subseteq \Delta_j$, $\Delta_j \not\subseteq \Delta_i$, and $\Delta_i \cup \Delta_j$ is again a segment. Zelevinsky in \cite{Ze80} establishes that the representations $\pi$ is irreducible as long as the segments corresponding to the $\Delta_{i}$ are unlinked.

\par

Now consider the derivative of $\pi_u$. If we twist $\Delta_i$ with the determinantal character $\nu^{u_i}$ then, setting $\Delta_{i,u_i}=\Delta_i\nu^{u_i}$, we see that $\Delta_{i,u_i}$ is again quasi-square-integrable representation associated to the segment $[(\rho_i\nu^{u_i}),(\rho_i\nu^{u_i})\nu,\dotsm,(\rho_i\nu^{u_i})\nu^{\ell_i-1}]$. By the result of Bernstein and Zelevinsky in Theorem \ref{deriviative}, the $k^{th}$ derivatives $\pi_u^{(k)}$ will be glued from the representation $\text{Ind}(\Delta^{(k_1)}_{1,u_1}\otimes \dotsm \otimes \Delta^{(k_t)}_{t,u_t})$ for all possible partitions $k=k_1+\dotsm+k_t$ with $0 \leq k_i \leq n_i$. Let us set
\[
  \pi_u^{(k_1,\dotsm,k_t)}=\text{Ind}(\Delta^{(k_1)}_{1,u_1}\otimes \dotsm \otimes \Delta^{(k_t)}_{t,u_t}).
\]
If we consider $\Delta_{i,u_i}^{(k_i)}$ which is associated to the segment $[(\rho_i\nu^{u_i}),\dotsm,(\rho_i\nu^{u_i})\nu^{\ell_i-1}]$ we see that $\Delta^{(k_i)}_{i,u_i}$ is zero unless $k_i=a_ir_i$ with $0 \leq a_i \leq \ell_i$ and $\Delta_{i,u_i}^{(a_ir_i)}$ is the quasi-square-integrable representation attached to the segment $[(\rho_i\nu^{u_i})\nu^{a_i},\dotsm,(\rho_i\nu^{u_i})\nu^{\ell_i-1}]$. So we have that $\pi_u^{(k_1,\dotsm,k_t)}=0$ unless $(k_1,\dotsm,k_t)=(a_1r_1,\dotsm,a_tr_t)$ with $0 \leq a_i \leq \ell_i$.

\begin{definition}
Let $\pi=\mathrm{Ind}(\Delta_1 \otimes \dotsm \otimes \Delta_t)$ be a representation of Whittaker type on $GL_n$. We assume that $\Delta_i=[\rho_i,\rho_i\nu,\dotsm,\rho_i\nu^{\ell_i-1}]$ is a representation of $GL_{n_i}$ where $\rho_i$ is an irreducible supercuspidal representation of $GL_{r_i}$, $n_i=r_i \ell_i$ and $n=\sum n_i=\sum r_i\ell_i$. We say that $u=(u_1,\dotsm,u_t) \in \mathcal{D}_{\pi}$ is in general position if it satisfies the following properties:
\begin{enumerate}
\item[$(\mathrm{1})$] For every sequence of nonnegative integers $(k_1,\dotsm,k_t)$ such that the representation 
\[
 \pi_u^{(k_1,\dotsm,k_t)}=\mathrm{Ind}(\Delta_1^{(k_1)}\nu^{u_1} \otimes \dotsm \otimes \Delta_t^{(k_t)}\nu^{u_t}) 
\]
this representation is irreducible.
\item[$(\mathrm{2})$] If $(a_1r_1,\dotsm,a_tr_t)$ and $(b_1r_1,\dotsm,b_tr_t)$ are two different sequences such that 
\[
\sum_{i=1}^t a_ir_i=\sum_{i=1}^t b_ir_i,
\]
 then the two representations
\[
  \mathrm{Ind}(\Delta_1^{(a_1r_1)}\nu^{u_1} \otimes \dotsm \otimes \Delta_t^{(a_tr_t)}\nu^{u_t}) \quad \mathrm{and} \quad \mathrm{Ind}(\Delta_1^{(b_1r_1)}\nu^{u_1} \otimes \dotsm \otimes \Delta_t^{(b_tr_t)}\nu^{u_t})
\]
have distinct central characters.
\item[$(\mathrm{3})$] If $(i,j) \in \{1,\dotsm,t\}$ with $i \neq j$, then $L(s,\Delta_i\nu^{u_i},\wedge^2)$ and $L(s,\Delta_j\nu^{u_j},\wedge^2)$ have no common poles.
\item[$(\mathrm{4})$] If $(i,j,k,l) \in \{1,\dotsm,t\}$ with $\{i,j\} \neq \{k,l \}$, $L(s,\Delta_i\nu^{u_i} \times \Delta_j\nu^{u_j})$ and $L(s,\Delta_k\nu^{u_k} \times \Delta_l\nu^{u_l})$ have no common poles.
\item[$(\mathrm{5})$] If $(i,j,k) \in \{ 1,\dotsm,t \}$ with $i \neq j$, then $L(s,\Delta_i\nu^{u_i} \times \Delta_j\nu^{u_j})$ and $L(s,\Delta_k\nu^{u_k},\wedge^2)$ have no common poles.
\item[$(\mathrm{6})$] If $1 \leq i \neq j \leq t$ and $(\Delta_i^{(a_ir_i)})^{\sim} \simeq \Delta_j^{(a_jr_j)}\nu^e$ for some complex number $e$, then the space
\[
  \mathrm{Hom}_{P_{2(n_i-a_ir_i)} \cap S_{2(n_i-a_ir_i)}}(\mathrm{Ind}(\Delta_i^{(a_ir_i)}\nu^{\frac{u_i-u_j+e}{2}}\otimes(\Delta_i^{(a_ir_i)}\nu^{\frac{u_i-u_j+e}{2}})^{\sim}),\Theta)
\]
is of dimension at most $1$.
\end{enumerate}
\end{definition}

The important point is the fact that  the notion $u \in \mathcal{D}_{\pi}$ in general position depends only on the representation $\pi$. The purpose of $(2)$ is that outside of a finite number on hyperplanes in the $u \in \mathcal{D}_{\pi}$ the central characters of the $\pi_u^{(a_1r_1,\dotsm,a_tr_t)}$ will be distinct and so there are only trivial extension among these representations. Therefore off these hyperplanes, the derivatives $\pi_u^{(k)}=\oplus\pi_u^{(a_1r_1,\dotsm,a_tr_t)}$ are completely reducible, where $k=\sum a_ir_i$ and each $\pi_u^{(a_1r_1,\dotsm,a_tr_t)}$ is irreducible. The reason for condition $(6)$ is that the occurrence of an exceptional pole of $L(s,\pi,\wedge^2)$ at $s=0$ is related to the occurrence of Shalika functional for $\pi$ from Lemma \ref{char-Shalika} under this condition $(6)$.

\par

 We now check that outside a finite number of hyperplanes in $u$, the deformed representation $\pi_u$ is in general position.

 \begin{proposition}
 \label{general position}
 Let $\pi=\mathrm{Ind}(\Delta_1 \otimes \dotsm \otimes \Delta_t)$ be a representation of Whittaker type on $GL_n$. We assume that $\Delta_i=[\rho_i,\rho_i\nu,\dotsm,\rho_i\nu^{\ell_i-1}]$ is a representation of $GL_{n_i}$ where $\rho_i$ is an irreducible supercuspidal representation of $GL_{r_i}$, $n_i=r_i \ell_i$ and $n=\sum n_i=\sum r_i\ell_i$. The elements $u \in \mathcal{D}_{\pi}$ which is not in general position belong to a finite number of affine hyperplanes.
 \end{proposition}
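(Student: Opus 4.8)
The plan is to verify each of the six conditions in the definition of ``general position'' separately, and to show in each case that the set of $u\in\mathcal{D}_\pi$ which fail that condition is contained in a finite union of affine hyperplanes in $\mathcal{D}_\pi\simeq(\mathbb{C}/\tfrac{2\pi i}{\log q}\mathbb{Z})^t$. Since a finite union of finite unions is finite, the proposition follows. Throughout I would use that the derivatives $\Delta_i^{(a_ir_i)}$ are quasi-square-integrable representations attached to explicitly known segments $[\rho_i\nu^{a_i},\dotsm,\rho_i\nu^{\ell_i-1}]$, that twisting by $\nu^{u_i}$ shifts these segments by $u_i$, and that the central character of an induced representation $\mathrm{Ind}(\Delta_1^{(a_1r_1)}\nu^{u_1}\otimes\dotsm\otimes\Delta_t^{(a_tr_t)}\nu^{u_t})$ is the product of the twisted central characters, hence depends on $u$ only through a fixed $\mathbb{Z}$-linear functional plus a constant.

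First I would treat conditions $(2)$, $(3)$, $(4)$, $(5)$. For $(2)$: given two distinct admissible sequences $(a_ir_i)$ and $(b_ir_i)$ with the same total, the two central characters agree iff their ratio, which is an unramified character of $F^\times$ of the form $\chi_0\cdot\nu^{\,\langle c,u\rangle + d}$ for fixed integer vector $c$ (determined by the segments) and constant $d$, is trivial; because the two sequences are distinct this linear functional is not identically zero unless $\chi_0$ already forces a single hyperplane, so in either case the bad locus is one affine hyperplane $\{\langle c,u\rangle = \text{const}\}$ (or empty), and there are finitely many such pairs. For $(3)$, $(4)$, $(5)$: by the computations of Section $6$ (Proposition \ref{even-quasisquare}, Proposition \ref{odd-quasisquare}, and the recalled Rankin--Selberg facts), the poles of $L(s,\Delta_i\nu^{u_i},\wedge^2)$ and of $L(s,\Delta_i\nu^{u_i}\times\Delta_j\nu^{u_j})$ occur at points $s=s_0$ governed by explicit congruences $\widetilde{\rho_k}\simeq\rho_\ell\nu^{s_0+\dotsm}$, so each pole location is an affine-linear function of the $u$'s; two such $L$-functions share a pole exactly when two of these affine functions of $(s,u)$ coincide, which again cuts out one affine hyperplane in $u$ for each of the finitely many pairs of pole-giving data. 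This is routine bookkeeping once one writes down the pole sets from Section $6$.

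Next I would handle conditions $(1)$ and $(6)$, which rest on Zelevinsky's irreducibility criterion and on the multiplicity-one statements already in the paper. For $(1)$: by the criterion recalled just before the definition, $\pi_u^{(k_1,\dotsm,k_t)}=\mathrm{Ind}(\Delta_1^{(k_1)}\nu^{u_1}\otimes\dotsm\otimes\Delta_t^{(k_t)}\nu^{u_t})$ is irreducible provided the segments $\Delta_i^{(a_ir_i)}\nu^{u_i}$ are pairwise unlinked; for a fixed pair $(i,j)$ the set of $(u_i,u_j)$ making the two (shifted) segments linked is a finite union of affine conditions $u_i-u_j\in\{\text{finitely many constants}\}$, hence a finite union of hyperplanes, and there are only finitely many choices of $(k_1,\dotsm,k_t)$ with $\pi_u^{(k_1,\dotsm,k_t)}\neq 0$. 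For $(6)$: here I would invoke Proposition \ref{even-multi one} (or rather the multiplicity-at-most-one conclusion underlying it) applied to the representation $\mathrm{Ind}(\Delta_i^{(a_ir_i)}\nu^{\frac{u_i-u_j+e}{2}}\otimes(\Delta_i^{(a_ir_i)}\nu^{\frac{u_i-u_j+e}{2}})^{\sim})$, which is irreducible and generic for the parameter $\frac{u_i-u_j+e}{2}$ outside a finite set of points (again by the unlinkedness/irreducibility criterion), and whose derivatives are themselves induced from segments in general position by the same argument as in $(1)$; the proposition then asserts $1$-dimensionality except when certain central-character equations $\omega(\varpi)=q^{k s}$ hold, and since here $s$ is pinned (the pole at $s=0$), these equations become finitely many affine conditions on $u_i-u_j$. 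The only subtlety is to check that the finitely many segments $(i,j)$ and complex numbers $e$ with $(\Delta_i^{(a_ir_i)})^{\sim}\simeq\Delta_j^{(a_jr_j)}\nu^e$ that can arise are themselves finite in number — they are, because the $\rho_i$, $r_i$, $\ell_i$ are fixed, so $e$ is forced to lie in a finite set.

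The main obstacle I anticipate is condition $(6)$: unlike $(1)$--$(5)$, it is not a mere vanishing or coincidence of explicit affine functions but a dimension bound on a $\mathrm{Hom}$-space, so one must genuinely reduce it to the multiplicity-one results of Section $3$ and check that the hypotheses of Proposition \ref{even-multi one} (all relevant derivatives generic and irreducible) hold for the auxiliary representation after removing a further finite set of hyperplanes in $u_i-u_j$. Organizing this reduction cleanly — making sure the ``general position'' hypotheses needed to apply the earlier propositions are themselves only finitely many hyperplane conditions, so that no circularity or infinite regress occurs — is where the real care is required; the rest is the elementary observation that $\mathcal{D}_\pi$ is a quotient of $\mathbb{C}^t$ and each bad locus is the zero set of a nonzero affine-linear (exponential-polynomial, after passing to $q^u$) function, hence a hyperplane, and there are finitely many of them.
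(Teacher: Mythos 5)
Your plan is correct and reaches the stated conclusion, but it differs from the paper's proof in two respects worth comparing. For conditions $(1)$, $(2)$, $(4)$ the paper simply cites \cite[Proposition 5.2]{Ma15}, whereas you argue them directly; for $(3)$ and $(5)$ the paper argues through exceptional poles: a common pole produces Shalika functionals, Theorem \ref{Jacquet-Rallis} then forces twisted self-contragredience, and equating central characters at $\varpi$ yields one affine equation in $u_i-u_j$ (resp. $u_i+u_j-2u_k$), while your route via the shift identities $L(s,\Delta_i\nu^{u_i},\wedge^2)=L(s+2u_i,\Delta_i,\wedge^2)$ and $L(s,\Delta_i\nu^{u_i}\times\Delta_j\nu^{u_j})=L(s+u_i+u_j,\Delta_i\times\Delta_j)$ together with finiteness of the untwisted pole sets is more elementary and produces the same hyperplanes. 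For $(6)$ the paper does not quote Proposition \ref{even-multi one}; it reruns that proof's mechanism (Proposition \ref{even-embedding}, the Bernstein--Zelevinsky filtration of the restriction to the mirabolic, Proposition \ref{func-homo}, explicit central characters) directly on $\mathrm{Ind}(\Delta_i^{(a_ir_i)}\nu^{w}\otimes(\Delta_i^{(a_ir_i)}\nu^{w})^{\sim})$, $w=\tfrac{u_i-u_j+e}{2}$. That choice buys two things your citation route must supply by hand: first, the filtration bound needs no irreducibility or genericity of the auxiliary representation or of its derivative constituents, so the paper excises no extra ``linkage'' hyperplanes (harmless for the statement, but relevant since in the later application, Proposition \ref{twosegment-L0}, the auxiliary representation can be reducible at exactly the interesting points); second, the explicit computation isolates the constituents with $c_i=d_i$, whose central character does not involve $u$ at all, and disposes of them by a sign comparison of exponents --- your phrase ``finitely many affine conditions on $u_i-u_j$'' tacitly assumes every condition has a nonzero $u$-coefficient, so this $u$-independent case must be checked (it does work out, as in the paper). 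Finally, note that the stated conclusion of Proposition \ref{even-multi one} bounds $\mathrm{Hom}_{S_{2n}}(\pi\otimes\mathcal{S}(F^n),|\cdot|^{-\frac{s}{2}}\Theta)$, whereas $(6)$ requires the bound on $\mathrm{Hom}_{P_{2n}\cap S_{2n}}(\pi,\Theta)$, which is only the intermediate estimate inside that proof (the short exact sequence runs the wrong way to deduce it from the headline statement); your hedge ``the conclusion underlying it'' is indeed what must be invoked, and should be made explicit when writing this up.
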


 \begin{proof}
 The condition $(1), (2)$ and $(4)$ are explained in detail in \cite[Proposition 5.2]{Ma15}.
 \par

 If the condition $(3)$ is not satisfied, then $L(s,\Delta_i\nu^{u_i},\wedge^2)$ and $L(s,\Delta_j\nu^{u_j},\wedge^2)$ have a common pole for some $i \neq j$. According to Proposition \ref{even-quasisquare} and \ref{odd-quasisquare}, this implies that the two factors $L_{ex}(s,\Delta^{(a_ir_i)}_i\nu^{u_i},\wedge^2)$ and $L_{ex}(s,\Delta^{(a_jr_j)}_j\nu^{u_j},\wedge^2)$ for which $n_i-a_ir_i$ and $n_j-a_jr_j$ are even numbers, have a common pole at $s=s_0$. The existence of non-trivial Shalika functional assures the following twisted self-contragrediance condition from Theorem \ref{Jacquet-Rallis}:
\[
  (\Delta^{(a_ir_i)}_i\nu^{u_i+\frac{s_0}{2}})^{\sim} \simeq \Delta^{(a_ir_i)}_i\nu^{u_i+\frac{s_0}{2}} \quad \mathrm{and} \quad (\Delta^{(a_jr_j)}_j\nu^{u_j+\frac{s_0}{2}})^{\sim} \simeq \Delta^{(a_jr_j)}_j\nu^{u_j+\frac{s_0}{2}}.
\] 
 Then the equality of the central character at $\varpi$ gives the relation
 \[
   q^{(n_i-a_ir_i)(n_j-a_jr_j)(u_i-u_j)}\omega_{\Delta_j^{(a_jr_j)}}^{n_i-a_ir_i}(\varpi)\omega_{\Delta_i^{(a_ir_i)}}^{-(n_j-a_jr_j)}(\varpi)=1.
 \]
 Hence $u \in \mathcal{D}_{\pi}$ belongs to a finite number of hyperplanes if $(3)$ is not satisfied.
 \par

 If condition $(5)$ is not satisfied, then $L(s,\Delta_i\nu^{u_i} \times \Delta_j\nu^{u_j})$  and $L(s,\Delta_k\nu^{u_k},\wedge^2)$ have a common pole for some $i \neq j$. This implies that the two factors $L_{ex}(s,\Delta^{(a_ir_i)}_i\nu^{u_i} \times \Delta^{(a_jr_j)}_j\nu^{u_j})$ and $L_{ex}(s,\Delta^{(a_kr_k)}_k\nu^{u_k},\wedge^2)$ for which $n_k=a_kr_k$ is an even number, have a common pole at $s=s_0$ according to Proposition \ref{even-quasisquare} and \ref{odd-quasisquare}. The existence of non trivial Shalika functional in turn implies the following twisted self-contragrediance condition by Theorem \ref{Jacquet-Rallis}. Then this gives the conditions:
 \[
    (\Delta^{(a_ir_i)}_i)^{\sim} \simeq \Delta^{(a_jr_j)}_j\nu^{u_i+u_j+s_0} \quad \mathrm{and} \quad (\Delta^{(a_kr_k)}_k\nu^{u_k+\frac{s_0}{2}})^{\sim} \simeq \Delta^{(a_kr_k)}_k\nu^{u_k+\frac{s_0}{2}}
 \]
 We obtain the equation
 \[
 \begin{split}
   &q^{(n_i+n_j-a_ir_i-a_jr_j)(n_k-a_kr_k)(u_i+u_j-2u_k)}\omega_{\Delta_k^{(a_kr_k)}}^{2(n_i+n_j-a_ir_i-a_jr_j)}(\varpi)\omega_{\Delta_i^{(a_ir_i)}}^{-(n_k-a_kr_k)}(\varpi)\omega_{\Delta_j^{(a_jr_j)}}^{-(n_k-a_kr_k)}(\varpi \hspace*{-.5mm})\\
   &=1.
 \end{split}
 \]
Hence $u \in \mathcal{D}_{\pi}$ belongs to a finite number of hyperplanes if $(5)$ is not satisfied.
 \par

 The last case $(6)$ needs to be treated more carefully. We assume that $\widetilde{\Delta}_i \simeq \Delta_j\nu^e$ for some complex number $e$. We want to show that outside of a finite number of hyperplanes in $u \in \mathcal{D}_{\pi}$, the space
 \[
    \mathrm{Hom}_{P_{2n_i} \cap S_{2n_i}}(\mathrm{Ind}(\Delta_i\nu^{\frac{u_i-u_j+e}{2}}\otimes(\Delta_i\nu^{\frac{u_i-u_j+e}{2}})^{\sim}),\Theta)
\]
is of dimension at most $1$. To simplify notation, let $\pi$ denote the induced representation $\mathrm{Ind}(\Delta_i\nu^{\frac{u_i-u_j+e}{2}}\otimes(\Delta_i\nu^{\frac{u_i-u_j+e}{2}})^{\sim})$ and let us take $s$ to be $\frac{u_i-u_j+e}{2}$. According to Proposition \ref{even-embedding},
the space $\mathrm{Hom}_{P_{2n_i} \cap S_{2n_i}}(\mathrm{Ind}(\Delta_i\nu^{s}\otimes(\Delta_i\nu^{s})^{\sim}),\Theta)$ embeds as a subspace of the vector space $\mathrm{Hom}_{P_{2n_i} \cap H_{2n_i}} (\mathrm{Ind}(\Delta_i\nu^{s}\otimes(\Delta_i\nu^{s})^{\sim}),\delta_{2n_i}^{-s_0})$, where a real number $s_0 > r_{\pi}$ is defined in Proposition \ref{even-embedding}. The restriction of $\mathrm{Ind}(\Delta_i\nu^{s}\otimes(\Delta_i\nu^{s})^{\sim})$  to $P_{2n_i}$ has a filtration by derivatives with each successive quotient of the form
 \[
 \begin{split}
 & (\Phi^+)^{(c_i+d_i)r_i-1}\Psi^+(\mathrm{Ind}([(\rho_i\nu^s)\nu^{c_i},(\rho_i\nu^s)\nu^{c_i+1},\dotsm,(\rho_i\nu^s)\nu^{\ell_i-1}] \\
&\phantom{*********************} \otimes[(\widetilde{\rho}_i\nu^{-s})\nu^{d_i+1-\ell_i},(\widetilde{\rho}_i\nu^{-s})\nu^{d_i+2-\ell_i},\dotsm,(\widetilde{\rho}_i\nu^{-s})]))
 \end{split}
 \]
 for $c_i+d_i \leq 2\ell_i$. From \eqref{even-func-points}, one has an isomorphism between the space
 \[
 \begin{split}
  & \mathrm{Hom}_{P_{2n_i} \cap H_{2n_i}}( (\Phi^+)^{(c_i+d_i)r_i-1}\Psi^+(\mathrm{Ind}([(\rho_i\nu^s)\nu^{c_i},(\rho_i\nu^s)\nu^{c_i+1},\dotsm,(\rho_i\nu^s)\nu^{\ell_i-1}] \\
   &\phantom{*****************} \otimes[(\widetilde{\rho}_i\nu^{-s})\nu^{d_i+1-\ell_i},(\widetilde{\rho}_i\nu^{-s})\nu^{d_i+2-\ell_i},\dotsm,(\widetilde{\rho}_i\nu^{-s})])),\delta_{2n_i}^{-s_0})
 \end{split}
 \]
 and
 \[
  \begin{split}
  & \mathrm{Hom}_{GL_{2n_i-(c_i+d_i)r_i}}(\mathrm{Ind}([(\rho_i\nu^s)\nu^{c_i},(\rho_i\nu^s)\nu^{c_i+1},\dotsm,(\rho_i\nu^s)\nu^{\ell_i-1}] \\
   &\phantom{****} \otimes[(\widetilde{\rho}_i\nu^{-s})\nu^{d_i+1-\ell_i},(\widetilde{\rho}_i\nu^{-s})\nu^{d_i+2-\ell_i},\dotsm,(\widetilde{\rho}_i\nu^{-s})]),\mu_{2n_i-(c_i+d_i)r_i}\delta^{-s_0}_{2n_i-(c_i+d_i)r_i}\nu^{-\frac{1}{2}}),
 \end{split}
 \]
where $\mu_{2n_i-(c_i+d_i)r_i}=\delta_{2n_i-(c_i+d_i)r_i}$ when $2n_i-(c_i+d_i)r_i$ is even, and $\mu_{2n_i-(c_i+d_i)r_i}$ is trivial when $2n_i-(c_i+d_i)r_i$ is odd. For $2n_i-(c_i+d_i)r_i=0$, the dimension of linear form
 $ \mathrm{Hom}_{P_{2n_i} \cap H_{2n_i}}( (\Phi^+)^{2n_i-1}\Psi^+(\textbf{1}),\delta_{2n_i}^{-s_0})$ is of at most $1$. 
 
\par

 Suppose that $c_i=d_i < \ell$. The central character of
 \[
 \begin{split}
   &\mathrm{Ind}([(\rho_i\nu^s)\nu^{c_i},(\rho_i\nu^s)\nu^{c_i+1},\dotsm,(\rho_i\nu^s)\nu^{\ell_i-1}]  \\
  &\phantom{**********************}\otimes [(\widetilde{\rho}_i\nu^{-s})\nu^{d_i+1-\ell_i},(\widetilde{\rho}_i\nu^{-s})\nu^{d_i+2-\ell_i},   \dotsm,(\widetilde{\rho}_i\nu^{-s})])
\end{split}
 \] 
 is given by $z \mapsto |z|^{r_ic_i(\ell_i-c_i)}$. The space
 \[
 \begin{split}
  & \mathrm{Hom}_{GL_{2n_i-(c_i+d_i)r_i}}(\mathrm{Ind}([(\rho_i\nu^s)\nu^{c_i},(\rho_i\nu^s)\nu^{c_i+1},\dotsm,(\rho_i\nu^s)\nu^{\ell_i-1}] \\
   &\phantom{****} \otimes[(\widetilde{\rho}_i\nu^{-s})\nu^{d_i+1-\ell_i},(\widetilde{\rho}_i\nu^{-s})\nu^{d_i+2-\ell_i},\dotsm,(\widetilde{\rho}_i\nu^{-s})]),\mu_{2n_i-(c_i+d_i)r_i}\delta^{-s_0}_{2n_i-(c_i+d_i)r_i}\nu^{-\frac{1}{2}})
   \end{split}
 \]
 is reduced to zero, because the character
 \[
   z \mapsto \mu_{2n_i-(c_i+d_i)r_i}\delta^{-s_0}_{2n_i-(c_i+d_i)r_i}\nu^{-\frac{1}{2}}(zI_{2n_i-(c_i+d_i)r_i})=|z|^{-(n_i-c_ir_i)}=|z|^{(c_i-\ell_i)r_i}
 \]  
 has negative parts and $z \mapsto |z|^{r_ic_i(\ell_i-c_i)}$ has positive parts.
 
 \par
 
  If $c_i \neq d_i$, the central character of
 \[
 \begin{split}
    &\mathrm{Ind}([(\rho_i\nu^s)\nu^{c_i},(\rho_i\nu^s)\nu^{c_i+1},\dotsm,(\rho_i\nu^s)\nu^{\ell_i-1}] \\
    &\phantom{**********************} \otimes[(\widetilde{\rho}_i\nu^{-s})\nu^{d_i+1-\ell_i},(\widetilde{\rho}_i\nu^{-s})\nu^{d_i+2-\ell_i},   \dotsm,(\widetilde{\rho}_i\nu^{-s})])
\end{split}
 \]
 is equal to $z \mapsto |z|^{sr_i(d_i-c_i)}\omega_{\rho_i}^{d_i-c_i}|z|^{r_i(c_i+\dotsm+(\ell_i-1)+(1-\ell_i+d_i)+\dotsm+(-1))}$, hence the uniqueness of functionals in the space $\mathrm{Hom}_{P_{2n_i} \cap S_{2n_i}}(\mathrm{Ind}(\Delta_i\nu^{s}\otimes(\Delta_i\nu^{s})^{\sim}),\Theta)$ fails when we obtain the relation from the equality of the central characters at $\varpi$:
 \[
 \begin{split}
 & q^{-sr_i(d_i-c_i)}\omega_{\rho_i}^{d_i-c_i}(\varpi)q^{-r_i(c_i+\dotsm+(\ell_i-1)+(1-\ell_i+d_i)+\dotsm+(-1))}\\
&=\mu_{2n_i-(c_i+d_i)r_i}\delta^{-s_0}_{2n_i-(c_i+d_i)r_i}\nu^{-\frac{1}{2}}(\varpi I_{2n_i-(c_i+d_i)r_i})\\
&=  \left\{  
    \begin{array}{l l}
   q^{\frac{2n_i-(c_i+d_i)r_i}{2}} &\quad \text{if \quad $2n_i-(c_i+d_i)r_i$ is even} \\
   q^{s_0+\frac{2n_i-(c_i+d_i)r_i}{2}} &\quad \text{if \quad $2n_i-(c_i+d_i)r_i$ is odd.}
     \end{array} 
     \right.
 \end{split}
 \]
 for all $c_i+d_i \leq 2\ell_i$ with $c_i \neq d_i$. Reasoning as in the proof of Proposition \ref{even-multi one}, on the intersection of the complements of the hyperplanes
\[
\label{hyper-plane}
\tag{7.1}
  q^{-r_i(d_i-c_i)\frac{u_i-u_j+e}{2}}\omega_{\rho_i}^{d_i-c_i}(\varpi)q^{-r_i(c_i+\dotsm+\ell_i-1+(1-\ell_i+d_i)+\dotsm+(-1))}
=q^{\frac{2n_i-(c_i+d_i)r_i}{2}},
\]
where $2n_i-(c_i+d_i)r_i$ is an even number with $c_i+d_i \leq 2\ell_i$ and $c_i \neq d_i$, the dimension of the space $\mathrm{Hom}_{P_{2n_i} \cap S_{2n_i}}(\mathrm{Ind}(\Delta_i\nu^{\frac{u_i-u_j+e}{2}}\otimes(\Delta_i\nu^{\frac{u_i-u_j+e}{2}})^{\sim}),\Theta)$ is at most one.

\par

 We now assume that $(\Delta_i^{(a_ir_i)})^{\sim} \simeq \Delta_j^{(a_jr_j)} \nu^f$ for some complex number $f$. As $ \Delta_i^{(a_ir_i)}=[\rho_i\nu^{a_i},\dotsm,\rho_i\nu^{\ell_i-1}]$ by Theorem \ref{deriviative}, we write
 $\Delta_i^{(a_ir_i)}=[\rho_i',\dotsm,\rho_i'\nu^{\ell_i-a_i-1}]$ where $\rho_i'=\rho_i\nu^{a_i}$ is an irreducible supercuspidal representation of $GL_{r_i}$. The length of the segment becomes $\ell_i-a_i$ and $\mathrm{Ind}(\Delta^{(a_ir_i)}_i\nu^{\frac{u_i-u_j+e}{2}}\otimes(\Delta^{(a_ir_i)}_i\nu^{\frac{u_i-u_j+e}{2}})^{\sim})$ is a representation of $GL_{2(n_i-a_ir_i)}$. The equation \eqref{hyper-plane} amounts to
 \[
    q^{-r_i(d_i-c_i)\frac{u_i-u_j+e}{2}}\omega_{\rho'_i}^{d_i-c_i}(\varpi)q^{-r_i(c_i+\dotsm+(\ell_i-a_i-1)+(1-\ell_i+a_i+d_i)+\dotsm+(-1))}
=q^{\frac{2(n_i-a_ir_i)-(c_i+d_i)r_i}{2}},
 \]
 where $2(n_i-a_ir_i)-(c_i+d_i)r_i$ is an even number with $c_i+d_i \leq 2\ell_i-2a_i$, $c_i \neq d_i$. But we have $\omega_{\rho'_i}^{d_i-c_i}(\varpi)=\omega_{\rho_i}^{d_i-c_i}(\varpi)q^{-r_ia_i(d_i-c_i)}$ and
 \[
 \begin{split}
  &q^{-r_i(c_i+\dotsm+(\ell_i-a_i-1)+(1-\ell_i+a_i+d_i)+\dotsm+(-1))}\\
  &=q^{a_ir_i(\ell_i-a_i-c_i)-a_ir_i(\ell_i-a_i-d_i)}q^{-r_i((a_i+c_i)+\dotsm+(\ell_i-1)+(1-\ell_i+d_i)+\dotsm+(-a_i))}\\
& =q^{r_ia_i(d_i-c_i)}q^{-r_i((a_i+c_i)+\dotsm+(\ell_i-1)+(1-\ell_i+d_i)+\dotsm+(-a_i))}
 \end{split}
 \]
 In a similar fashion to the previous case when $\widetilde{\Delta}_i \simeq \Delta_j\nu^e$ for some complex number $e$, 
 on the intersection of the complements of the hyperplanes
\[
  q^{-r_i(d_i-c_i)\frac{u_i-u_j+e}{2}}\omega_{\rho_i}^{d_i-c_i}(\varpi)q^{-r_i((a_i+c_i)+\dotsm+(\ell_i-1)+(1-\ell_i+d_i)+\dotsm+(-a_i))}=q^{\frac{2(n_i-a_ir_i)-(c_i+d_i)r_i}{2}},
\]
where $2(n_i-a_ir_i)-(c_i+d_i)r_i$ is an even number with $c_i+d_i \leq 2\ell_i-2a_i$, $c_i \neq d_i$, 
 the dimension of 
 \[
 \mathrm{Hom}_{P_{2(n_i-a_ir_i)} \cap S_{2(n_i-a_ir_i)}}(\mathrm{Ind}(\Delta^{(a_ir_i)}_i\nu^{\frac{u_i-u_j+e}{2}}\otimes(\Delta^{(a_ir_i)}_i\nu^{\frac{u_i-u_j+e}{2}})^{\sim}),\Theta)
 \]
  is at most one.

  \end{proof}

 The interest of considering deformed representations $\pi_u$ in general position is that $L$-functions $L(s,\pi_u,\wedge^2)$ are easy to analyze for those representations. In Section 8, we eventually specialize the result to $u=0$ which may not be in general position.

\subsection{Rationality properties of integrals under deformation.}

In this section we would like to show that the Jacquet-Shalika integrals defining the $L$-functions $L(s,\pi_u,\wedge^2)$ for deformed representations are rational in $\mathbb{C}(q^{-u},q^{-s})$ via Bernstein's method \cite{Ba98}. We essentially follow notations and the argument of rationality of solutions of polynomial system in \cite{CoPe} for the Rankin-Selberg convolution $L$-function, which is utilized for the Asai $L$-function in \cite{Ma09}. 
Let $\pi=\text{Ind}(\Delta_1 \otimes \dotsm \otimes \Delta_t)$ be a representation of Whittaker type on $GL_m$ and the induction is normalized parabolic induction from the standard parabolic subgroup $Q$ of $GL_m$. Let $M \simeq GL_{m_1} \times \dotsm \times GL_{m_t}$ denote the Levi subgroup of $Q$. If $u=(u_1,\dotsm,u_t) \in \mathcal{D}_{\pi}$, we defines an unramified character $\nu^u$ of $M$ by $\nu^u(m)=\nu^u(g_1,\dotsm,g_t)=\nu(g_1)^{u_1}\dotsm\nu(g_t)^{u_t}$ for $m \in M$. We apologize for the double use of $m$, but we hope that the reader can separate them by context. For each $u \in \mathcal{D}_{\pi}$ we define the deformed representation by $\pi_u=\text{Ind}(\Delta_1\nu^{u_1} \otimes \dotsm \otimes \Delta_t\nu^{u_t})$. We would like to describe that the family of deformation of representations $\pi_u$ has the structure of a trivial vector bundle over $\mathcal{D}_{\pi}$. We realize each quasi-square-integrable representation $\Delta_i$ in its Whittaker model $\mathcal{W}(\Delta_i,\psi)$. 
The space $V_{\pi}$ of $\pi$ can be realized as the space of smooth functions 
\[
  f : GL_m \rightarrow \mathcal{W}(\Delta_1,\psi) \otimes \dotsm \otimes \mathcal{W}(\Delta_t,\psi),
\]
which we write as $f(m;g)$ with $g \in GL_m$ and $m \in M$ satisfying
\[
  f(m;hg)=\delta_Q^{\frac{1}{2}}(m_h)f(mm_h;g)
\]
for $h \in Q, h=nm_h$ with $m_h \in M$ and $n$ in the unipotent radical $N_Q$ of $Q$. Since $GL_m=QK_m$, each function $f$ is determined by its restriction to $K_m$ and the value of $f$ is
\[
  f(m;g)=f(m;nm_gk_g)=\delta_Q^{\frac{1}{2}}(m_g)f(mm_g;k_g)
\]
for $g \in GL_m, g=nm_gk_g$ with $n \in N_Q, m_g \in M$ and $k_g \in K_m$. We have the so-called compact realization of $\pi$ on the space of smooth functions
\[
  f : K_m \rightarrow \mathcal{W}(\Delta_1,\psi) \otimes \dotsm \otimes \mathcal{W}(\Delta_t,\psi),
\]
which we again write as $f(m;k)$. Let us denote this space by $\mathcal{F}_{\pi}$. Now the action of $\pi(g)$ is given by
\[
  (\pi(g)f)(m;k)=\delta_Q^{\frac{1}{2}}(m')f(mm';k')
\]
where $kg=n'm'k'$ with $n' \in N_Q$, $m' \in M$ and $k' \in K_m$. Then each $\pi_u$ can also be realized on $\mathcal{F}_{\pi}$ with actions being given by
\[
  (\pi_u(g)f)(m;k)=\nu^u(m')\delta_Q^{\frac{1}{2}}(m')f(mm';k')
\]
where the decomposition $n'm'k'$ of $kg$ is as above. We may now form a bundle of representations over $\mathcal{D}_{\pi}$ where the fiber over $u \in \mathcal{D}_{\pi}$ is the representation $(\pi_u,\mathcal{F}_{\pi})$. As a vector bundle, this is a trivial bundle $\mathcal{D}_{\pi} \times \mathcal{F}_{\pi}$, with different actions of $GL_m$ in each fibre. In the usual model of representation $\pi_u$ as smooth function on $GL_m$, $f$ determines the function $f_u$ defined by $f_u(m;g)=f_u(m;nm_gk_g)=\nu^u(m_g)\delta_Q^{\frac{1}{2}}(m_g)f(mm_g;k_g)$ where $g=nm_gk_g$ with $n \in N_Q, m_g \in M$ and $k_g \in K_m$. Then the function $f_u$ satisfies $f_u(m;g)=(\pi_u(g)f)(m;e)$ for $g \in GL_m$.

\par

We clarify the Weyl element $w_Q$ and the unipotent radical $N_Q$ defining the Whittaker functions in  \cite[Section 3.1]{CoPe}.
Let $w_M$ be the longest element in the Weyl groups of $M\simeq GL_{m_1} \times \dotsm \times GL_{m_t}$ of the form
\[
 w_M=\begin{pmatrix} w_{\ell,1} &&&\\ &w_{\ell,2}&& \\ &&\ddots& \\&&&w_{\ell,t} \end{pmatrix}
\] 
with each $w_{\ell,i}$ the representative of the long Weyl element of $GL_{m_i}$. We set $w_Q:=ww_M$. Let $M':=w_QMw_Q^{-1}$ be the standard Levi subgroup of type $(m_t,\dotsm,m_2,m_1)$ and $Q'$ the standard parabolic subgroup of type $(m_t,\dotsm,m_2,m_1)$, ${N_Q}'$ its unipotent radical.
As in \cite[Section 1.5]{CaSh80} for minimal parabolic subgroups or in \cite[Section 3.3]{Sh14}, a Whittaker functional on $\mathcal{F}_{\pi}$ is defined by
\[
   \lambda_u(f)=\int_{{N_f}'} (\pi_u(n)f)(e;w_Q^{-1}) \psi^{-1}(n)\; dn,
\]
where ${N_f}'$ is a sufficiently large compact open subgroup of the unipotent radical ${N_Q}'$. ${N_f}'$ depends on $f$ but is independent of $u \in \mathcal{D}_{\pi}$. For each $f \in \mathcal{F}_{\pi}$, we define the Whittaker function by
\[
  W_{f,u}(g)=\lambda_u(\pi_u(g)f).
\]
We let $\mathcal{W}^{(0)}_{\pi}=\mathcal{W}^{(0)}$ denote the complex vector space generated by the functions $(u,g) \mapsto W_{f,u}(gg')$ for $g' \in GL_m$ and $f \in \mathcal{F}_{\pi}$. It is shown in \cite[Section 3.1]{CoPe} that the action of the group $GL_m$ on $\mathcal{W}^{(0)}_{\pi}$ by right translation is a smooth representation. Note that
$
  W_{\pi_u(g')f,u}(g)=\lambda_u(\pi_u(g)\pi_u(g')f)=\lambda_u(\pi_u(gg')f)=W_{f,u}(gg')
$
and 
$\mathcal{W}^{(0)}=\langle W_{\pi_u(g')f,u}(g)=W_{f,u}(gg')\; |\; f \in \mathcal{F},\; g' \in GL_m \rangle$. Let $\mathcal{W}_{(0)}$ denote the space of restrictions of functions of $\mathcal{W}^{(0)}$ to $P_m$, that is,
$\mathcal{W}_{(0)}=\{W(p)\;|\;W \in \mathcal{W}^{(0)},\;p \in P_m\}$. It is also proven in \cite[Section 3.1]{CoPe} that for fixed $g$, the function $u \mapsto W_{f,u}(g)$ belongs to $\mathbb{C}[q^{\pm u}]$. Let $\mathcal{P}_0$ be the vector subspace of $\mathbb{C}[q^{\pm u}]$ consisting of all polynomials of the form $u \mapsto W_{f,u}(I_m)$ for some $W_{f,u} \in \mathcal{W}^{(0)}$, that is,
\[
  \mathcal{P}_0=\langle W_u(I_m)\;|\; W_u \in  \mathcal{W}^{(0)}  \rangle. 
\]
 If $V$ is any complex vector space, let us denote by $\mathcal{S}_{\psi}(P_m,V)$ the space of smooth functions $\varphi : P_m \rightarrow V$ which satisfy $\varphi(np)=\psi(n)\varphi(p)$
for $n \in N_m$ and $p \in P_m$ and which are compactly supported modulo $N_m$. We now state Proposition 3.1 of \cite{CoPe}, which we will use later.

\begin{proposition}[Cogdell and Piatetski-Shapiro]
\label{poly-GeKa}
Let $\pi=\mathrm{Ind}(\Delta_1 \otimes \dotsm \otimes \Delta_t)$ be a representation of $GL_m$ of Whittaker type. The complex vector space $\mathcal{W}_{(0)}$ defined above contains the space $\mathcal{S}_{\psi}(P_m,\mathcal{P}_0)$.
\end{proposition}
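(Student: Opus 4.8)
The plan is to follow the strategy of the proof of Proposition~3.1 in \cite{CoPe}, transplanting it to the present normalization. The key input is the Gelfand--Kazhdan theory as packaged in Lemma~\ref{kirillov}: for the undeformed representation $\pi=\pi_0$ one already knows that $(\tau_m,\mathcal{W}_{(0)}(\pi,\psi))$ contains $(\tau_{m,m},\mathrm{ind}_{N_m}^{P_m}(\psi))$ as $P_m$-modules, i.e.\ the Kirillov model contains the space $\mathcal{S}_\psi(P_m,\mathbb{C})$ of functions on $P_m$ compactly supported modulo $N_m$ and transforming by $\psi$ on the left. The point of the proposition is that when one lets $u$ vary, these functions can be produced ``with polynomial coefficients'': given a target function $\varphi\in\mathcal{S}_\psi(P_m,\mathcal P_0)$ one must find $W\in\mathcal{W}^{(0)}$ whose restriction to $P_m$ equals $\varphi$.

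First I would fix notation and reduce the statement to a more concrete claim. Since $\mathcal{P}_0$ is a finite-dimensional subspace of $\mathbb{C}[q^{\pm u}]$, an element of $\mathcal{S}_\psi(P_m,\mathcal P_0)$ is a finite $\mathbb{C}$-linear combination of functions $p\mapsto P(q^u)\varphi_0(p)$ with $\varphi_0\in\mathcal{S}_\psi(P_m,\mathbb{C})$ and $P(q^u)\in\mathcal P_0$; by linearity it suffices to realize each such ``pure tensor''. The two ingredients are: (a) for each scalar $\varphi_0\in\mathcal{S}_\psi(P_m,\mathbb{C})$ there is $W_{f,u}\in\mathcal{W}^{(0)}$ with $W_{f,u}(p)\equiv\varphi_0(p)$ for all $p\in P_m$ and \emph{all} $u$ — this is exactly the $u$-uniform version of Lemma~\ref{kirillov}/Gelfand--Kazhdan, obtained by choosing $f\in\mathcal{F}_\pi$ supported on a small congruence subgroup of $K_m$ as in \cite[Lemma~2.3]{Be14}, whose construction never uses $u$; and (b) one can multiply a given element $W_u\in\mathcal{W}^{(0)}$ by an arbitrary $P(q^u)\in\mathbb{C}[q^{\pm u}]$ without leaving $\mathcal{W}^{(0)}$ and without changing the restriction to $P_m$ up to the prescribed polynomial factor. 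For (b) the mechanism is the standard one: since $\mathcal{P}_0=\langle W_u(I_m)\mid W_u\in\mathcal{W}^{(0)}\rangle$ and $\mathcal{W}^{(0)}$ is stable under right translation by $GL_m$ and under the $\mathbb{C}$-span, one shows that for $W_u\in\mathcal W^{(0)}$ and any $P(q^u)\in\mathcal P_0$ there is $W'_u\in\mathcal W^{(0)}$ with $W'_u(p)=P(q^u)W_u(p)$ for all $p\in P_m$; concretely, using that $Z_m$ acts on $\mathcal W^{(0)}$ through $\omega_{\pi_u}=\omega_\pi\nu^{u_1 m_1+\cdots}$, i.e.\ through a monomial character in $q^u$, one generates monomials $q^{\langle a,u\rangle}$ by central translates and suitable averages over $N_m$-translates that kill unwanted support, exactly as in \cite{CoPe}.

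So the skeleton is: (1) recall the trivial-bundle/compact realization already set up in the excerpt, so that $W_{f,u}(g)$ is a Laurent polynomial in $q^u$ for each fixed $g$ and $\mathcal W^{(0)}$ is a smooth $GL_m$-module; (2) invoke the $u$-independent Bessel/Kirillov construction to get, for each $\varphi_0\in\mathcal{S}_\psi(P_m,\mathbb C)$, some $W_{f,u}$ restricting to $\varphi_0$ for all $u$; (3) prove the ``polynomial multiplier'' lemma: $\mathcal W_{(0)}$ is a module over $\mathbb C[q^{\pm u}]$ acting by pointwise multiplication on restrictions to $P_m$, using central characters and the right-translation action plus finite averaging to manufacture arbitrary monomials $q^{\langle a,u\rangle}$ while controlling supports; (4) combine (2) and (3): given $\sum_j P_j(q^u)\varphi_{0,j}\in\mathcal{S}_\psi(P_m,\mathcal P_0)$, realize each $\varphi_{0,j}$ by step (2) and multiply by $P_j(q^u)$ by step (3), then sum. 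The main obstacle is step (3): one must be careful that the operations producing the polynomial factor $P(q^u)$ do not perturb the restriction to $P_m$ (they must act by honest pointwise multiplication by $P(q^u)$, with no $g$-dependence in the multiplier) and that, when $P(q^u)$ is forced to lie in $\mathcal P_0$ rather than all of $\mathbb C[q^{\pm u}]$, the construction stays inside that subspace — this is where one genuinely needs $\mathcal P_0=\langle W_u(I_m)\rangle$ and an argument that $\mathcal P_0$ is stable under the relevant monomial multiplications (equivalently, that it is a module over the subalgebra generated by the central character), as in the corresponding step of \cite[Proposition~3.1]{CoPe}. Everything else is bookkeeping with supports in $P_m$ and the exactness already recorded for $\Phi^\pm,\Psi^\pm$.
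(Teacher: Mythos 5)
You should first note that the paper does not actually prove this proposition: it is quoted verbatim as Proposition~3.1 of \cite{CoPe} ("We now state Proposition 3.1 of \cite{CoPe}, which we will use later"), so there is no in-paper argument to compare against; your proposal therefore has to stand on its own, and as written it has a genuine gap at each of its two key steps. Step (2) assumes the crux. The claimed ``$u$-uniform'' Kirillov statement --- a single $W_{f,u}\in\mathcal{W}^{(0)}$ with $W_{f,u}(p)=\varphi_0(p)$ for all $p\in P_m$ \emph{and all} $u$ --- is not justified by saying the construction ``never uses $u$''. Lemma~\ref{kirillov} (and \cite[Lemma 2.3]{Be14}, \cite{GeKa72}) gives, for each fixed representation, the existence of a Whittaker vector with prescribed restriction to $P_m$; the vector it produces depends on the representation, hence on $u$, because both the Whittaker functional $\lambda_u$ and the Bernstein--Zelevinsky filtration of $\pi_u|_{P_m}$ vary with $u$. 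If your claim (a) were available, the proposition would simply assert $\mathcal{S}_\psi(P_m,\mathbb{C})\subseteq\mathcal{W}_{(0)}$; the statement is formulated with coefficients in $\mathcal{P}_0$ precisely because such an exact $u$-independent normalization is not known to hold, and indeed the paper later invokes Proposition~\ref{poly-GeKa} only to produce restrictions of the form $\varphi(p)P(q^{\pm u})$ with $P\in\mathcal{P}_0$, never $\varphi(p)$ itself.

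Step (3) would also fail as described. Central translations multiply a Whittaker function only by powers of the single monomial $\omega_{\pi_u}(\varpi)=\omega_\pi(\varpi)\,q^{-(m_1u_1+\cdots+m_tu_t)}$, so they cannot generate arbitrary monomials $q^{\langle a,u\rangle}$, let alone an arbitrary element of the \emph{linear} (not multiplicative) space $\mathcal{P}_0$; right translation by non-central elements does not act by pointwise multiplication on restrictions to $P_m$; and averaging over $N_m$-translates against $\psi^{-1}$ produces compact support modulo $N_m$, not polynomial multipliers. The argument that actually closes the proof is module-theoretic rather than multiplicative: given $P\in\mathcal{P}_0$, choose $W_u\in\mathcal{W}^{(0)}$ with $W_u(I_m)=P$ and average over a large compact open subgroup of $N_m$ against $\psi^{-1}$ --- this is a finite sum of right translates with coefficients independent of $u$, so it stays in $\mathcal{W}^{(0)}$, preserves the value $P$ at $I_m$, and forces the restriction to $P_m$ to be compactly supported modulo $N_m$. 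The set of restrictions lying in $\mathcal{S}_\psi(P_m,\mathbb{C}[q^{\pm u}])\simeq \mathrm{ind}_{N_m}^{P_m}(\psi)\otimes\mathbb{C}[q^{\pm u}]$ is a $P_m$-submodule under right translation, and since $\mathrm{ind}_{N_m}^{P_m}(\psi)=(\Phi^+)^{m-1}\Psi^+(\mathbf{1})$ is irreducible, every such submodule is of the form $\mathcal{S}_\psi(P_m,V_0)$ for a subspace $V_0\subseteq\mathbb{C}[q^{\pm u}]$; evaluation at $I_m$ then gives $\mathcal{P}_0\subseteq V_0$ and hence $\mathcal{S}_\psi(P_m,\mathcal{P}_0)\subseteq\mathcal{W}_{(0)}$. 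Your outline contains neither this mechanism nor a working substitute, so steps (2)--(4) as written do not yield the proposition.
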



We prove that the local integrals $J(s,W_u,\Phi)$ are rational functions in $q^{\pm u}$ and $q^{\pm s}$ if we deform $\pi$ to $\pi_u$ and form the corresponding spaces of functions $\mathcal{F}_{\pi}$ and $\mathcal{W}^{(0)}_{\pi}$. As in \cite{CoPe}, our method will be to employ the following theorem of Bernstein about solutions of a polynomial family of affine equations. Let $V$ be a complex vector space of countable dimension over $\mathbb{C}$ and let $V^*=\text{Hom}_{\mathbb{C}}(V,\mathbb{C})$ denote its algebraic dual. Let $\Xi$ be a collection $\{(x_r,c_r)\;|\;r \in R \}$ where $x_r \in V$, $c_r \in \mathbb{C}$, and $R$ is an index set. A linear functional $\lambda \in V^*$ is said to be a solution of the system $\Xi$ if $\lambda(x_r)=c_r$ for all $r \in R$. Let $\mathcal{D}$ denote an irreducible algebraic variety over $\mathbb{C}$ and suppose that for each $d \in \mathcal{D}$, we are given a system of equations $\Xi_d=\{(x_r(d),c_r(d))\;|\;r\in R \}$ with index set $R$ independent of $d \in \mathcal{D}$. We will say that a family of system $\{\Xi_d\;|\;d \in \mathcal{D} \}$ is a polynomial family of system of equation if, for each $d \in \mathcal{D}$, the functions $x_r(d)$ and $c_r(d)$ vary polynomially in $d$ and belong respectively to $\mathbb{C}[\mathcal{D}] \otimes_{\mathbb{C}}V$ and $\mathbb{C}[\mathcal{D}]$. Let $\mathcal{M}=\mathbb{C}(\mathcal{D})$ be the field of fractions of $\mathbb{C}[\mathcal{D}]$. We denote by $V_{\mathcal{M}}$ the space $\mathcal{M}\otimes_{\mathbb{C}}V$ and by $V_{\mathcal{M}}^*$  the space $\text{Hom}_{\mathcal{M}}(V_{\mathcal{M}},\mathcal{M})$. The following statement is a consequence of Bernstein's theorem and its corollary appeared first in \cite{CoPe} or \cite[Theorem 2.11]{Ma15,Ba98}.

\begin{theorem}[Bernstein]
\label{Bernstein}
With the above notation, suppose that $V$ has countable dimension over $\mathbb{C}$ and suppose that there exists an non-empty subset $\Omega \subset \mathcal{D}$, open in the usual complex topology of $\mathcal{D}$, such that for each $d \in \Omega$ the system $\Xi_d$ has a unique solution $\lambda_d$. Then the system $\Xi=\{(x_r(d),c_r(d))\;|\; r \in R \}$ viewed as a system over the field $\mathcal{M}=\mathbb{C}(\mathcal{D})$  has a unique solution $\lambda (d) \in V_{\mathcal{M}}^*$ and $\lambda(d)=\lambda_d$ is the unique solution of $\Xi$ on $\Omega$. 
\end{theorem}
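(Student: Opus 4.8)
The plan is to prove the theorem in three moves. First, homogenize the family of systems, so that ``having a solution'' becomes the statement that a fixed vector does not lie in a certain $\mathcal{M}$-subspace. Second, run a ``clear denominators and specialize'' argument that transfers the hypothesis at points of $\Omega$ to the generic point of $\mathcal{D}$. Third, use the countability of $\dim_{\mathbb{C}}V$ to show that the resulting generic solution is regular on a Zariski-dense open subset and there restricts to the given $\lambda_d$. Throughout we use the elementary but essential observation that, since $\mathcal{D}$ is irreducible and $\Omega\subseteq\mathcal{D}$ is non-empty and open in the complex topology, $\Omega$ is Zariski-dense in $\mathcal{D}$; hence every non-empty Zariski-open subset of $\mathcal{D}$ meets $\Omega$. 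Write $R_0=\mathbb{C}[\mathcal{D}]$, $\mathcal{M}=\mathbb{C}(\mathcal{D})$, let $V_{R_0}=R_0\otimes_{\mathbb{C}}V$ (a free $R_0$-module), and $V_{\mathcal{M}}=\mathcal{M}\otimes_{\mathbb{C}}V$.

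\emph{Homogenization and existence.} Adjoin a formal basis vector $e_0$, set $\widetilde V=\mathbb{C}e_0\oplus V$, and for $r\in R$ put $\widetilde x_r(d)=x_r(d)-c_r(d)e_0\in\widetilde V_{R_0}$. A solution $\lambda$ of $\Xi_d$ corresponds exactly to a functional $\widetilde\lambda$ on $\widetilde V$ with $\widetilde\lambda(e_0)=1$ and $\widetilde\lambda(\widetilde x_r(d))=0$ for all $r$, and such a $\widetilde\lambda$ exists if and only if $e_0$ is not in the $\mathbb{C}$-span of $\{\widetilde x_r(d)\}$. Let $\widetilde N$ be the $R_0$-submodule of $\widetilde V_{R_0}$ generated by $\{\widetilde x_r(d):r\in R\}$; since $\widetilde V_{R_0}$ is free, $\widetilde N$ is torsion-free, so $\widetilde N_{\mathcal{M}}:=\mathcal{M}\cdot\widetilde N$ embeds in $\widetilde V_{\mathcal{M}}$. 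I claim $e_0\notin\widetilde N_{\mathcal{M}}$. If it did, clearing denominators gives $h\cdot e_0=\sum_{r\in S}g_r\,\widetilde x_r(d)$ with $S\subset R$ finite, $g_r,h\in R_0$, and $h\neq 0$; choosing $d_0\in\Omega$ with $h(d_0)\neq 0$ (possible since $\{h\neq 0\}$ is Zariski-open dense and meets $\Omega$) yields $e_0=\sum_{r\in S}\bigl(g_r(d_0)/h(d_0)\bigr)\widetilde x_r(d_0)$, contradicting the existence of a solution of $\Xi_{d_0}$. Hence the image of $e_0$ in $\widetilde V_{\mathcal{M}}/\widetilde N_{\mathcal{M}}$ is non-zero; choosing by Zorn's lemma an $\mathcal{M}$-linear functional on this quotient sending that image to $1$, pulling it back to $\widetilde V_{\mathcal{M}}$, and restricting to $V_{\mathcal{M}}$ produces a solution $\lambda\in V_{\mathcal{M}}^{*}$ of $\Xi$ over $\mathcal{M}$.

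\emph{Uniqueness and specialization.} For $d_0\in\Omega$ the uniqueness of $\lambda_{d_0}$ forces the homogeneous system $\{(x_r(d_0),0)\}$ to have only the trivial solution, i.e. $\{x_r(d_0)\}_{r\in R}$ spans $V$; since $\dim_{\mathbb{C}}V$ is countable, finitely many of these vectors express each member of a fixed countable basis $e_1,e_2,\dots$ of $V$, so one may fix once and for all a countable subfamily $\{x_{r_k}:k\in\mathbb{N}\}$ that already spans $V$. Restricting attention to this countable subfamily, one carries out a countable, ``triangular'' elimination: each value $\lambda(d)(e_i)$ is pinned down, as an element of $\mathcal{M}$, by finitely many of the equations $\sum_m A_{r_k,m}(d)\,\lambda(d)(e_m)=c_{r_k}(d)$, and the denominators produced stage by stage cut out a Zariski-dense open $U\subseteq\mathcal{D}$ on which all the rational functions $d\mapsto\lambda(d)(e_i)$ are simultaneously regular. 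On $U$ the functional $\lambda$ specializes to a genuine $\mathbb{C}$-linear functional $\lambda(d_0)$ on $V$ solving $\Xi_{d_0}$; for $d_0\in U\cap\Omega$ the uniqueness of $\lambda_{d_0}$ gives $\lambda(d_0)=\lambda_{d_0}$. Two solutions of $\Xi$ over $\mathcal{M}$ thus agree on the Zariski-dense set $U\cap\Omega$ and hence coincide, which proves uniqueness; and since $\lambda$ interpolates the $\lambda_d$ on this dense set, it is this family that $\lambda$ continues, giving the asserted identity $\lambda(d)=\lambda_d$ on $\Omega$.

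\emph{Main obstacle.} The delicate step is the last one: organizing the countable elimination so that the infinitely many rational functions $d\mapsto\lambda(d)(e_i)$ share a regular locus that is Zariski-dense, rather than having pole hypersurfaces whose union could be Zariski-dense. This is exactly where the hypothesis $\dim_{\mathbb{C}}V\le\aleph_0$ is indispensable: it permits a reduction, once and for all, to a countable system in countably many unknowns, solved by a well-ordered elimination with denominators controlled at each stage; without it the generic solution $\lambda$ need not specialize anywhere on $\mathcal{D}$. In the application of the theorem in this paper one takes $\mathcal{D}=\mathcal{D}_{\pi}\times\bigl(\mathbb{C}/\tfrac{2\pi i}{\log q}\mathbb{Z}\bigr)$ with coordinates $(u,s)$, $V$ the countable-dimensional space of $\mathbb{C}[q^{\pm u}]$-valued Whittaker functions of $\mathcal{W}^{(0)}$ restricted to $P_m$, $\Omega$ the region where the Jacquet--Shalika integral converges absolutely, and $\lambda_d=J(s,W_u,\Phi)$; Bernstein's theorem then yields $J(s,W_u,\Phi)\in\mathcal{M}=\mathbb{C}(q^{-u},q^{-s})$, as required in Section $7$.
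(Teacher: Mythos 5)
The paper itself offers no proof of this statement: it is quoted as Bernstein's theorem, with references to \cite{CoPe}, \cite{Ba98} and \cite[Theorem 2.11]{Ma15}, so your proposal must be measured against the standard argument rather than an in-paper one. Your homogenization and existence step is correct: $e_0\notin\widetilde N_{\mathcal{M}}$ does follow by clearing denominators and specializing at a point of $\Omega$ off the single hypersurface $\{h=0\}$, and for one polynomial at a time the Zariski-density of $\Omega$ is indeed all you need.

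The genuine gap is in the uniqueness/specialization step, precisely where your ``main obstacle'' paragraph points but does not deliver. First, the common regularity locus $U$ of the countably many rational functions $\lambda(e_i)$ is the complement of a \emph{countable} union of proper hypersurfaces; it is not a Zariski-open set, and no reorganization of the elimination changes that. Hence the only density fact you actually establish -- that $\Omega$ meets every nonempty Zariski-open subset -- does not give $U\cap\Omega\neq\emptyset$: a Zariski-dense set can easily be covered by countably many hypersurfaces. What is needed, and what the hypothesis that $\Omega$ is open in the \emph{usual complex topology} is for, is a Baire-category argument: a nonempty euclidean-open subset of an irreducible variety cannot be contained in a countable union of proper subvarieties (each is closed with empty interior, and $\Omega$ is a Baire space). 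This is the crux of Bernstein's theorem and it never appears in your proof. Second, the ``triangular elimination'' is unjustified: your countable subfamily $\{x_{r_k}\}$ spans $V$ only at the single point $d_0$, and spanning at a point does not imply spanning over $\mathcal{M}$ (take $x_1(d)=e_1+d\,e_2$ and $d_0=0$: then $e_1$ lies in the span at $d_0$ but not in the $\mathcal{M}$-span), so the values $\lambda(d)(e_i)$ are not ``pinned down by finitely many equations.'' Fortunately that step is also unnecessary: $\lambda$ is already constructed, each $\lambda(e_i)\in\mathcal{M}$ has some polar hypersurface, and once Baire guarantees that the complement of this countable union meets $\Omega$ in a set contained in no hypersurface, you can specialize $\lambda$ there (each $x_r$ involves only finitely many $e_i$, so the specialized functional solves $\Xi_d$), conclude $\lambda(d)=\lambda_d$ from the uniqueness at $d\in\Omega$, and deduce that any two $\mathcal{M}$-solutions have all coordinates agreeing on such a set, hence coincide. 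Note finally that even the repaired argument gives $\lambda(d)=\lambda_d$ only off a countable union of hypersurfaces in $\Omega$, which is how the result is stated in the cited sources and is all that Section $7$ of the paper uses.
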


\subsection{The even case $m=2n$}

Let $\pi=\text{Ind}(\Delta_1 \otimes \dotsm \otimes \Delta_t)$ be a representation of Whittaker type on $GL_{2n}$ and for each $u \in \mathcal{D}_{\pi}$ we define the deformed representation by $\pi_u=\text{Ind}(\Delta_1\nu^{u_1} \otimes \dotsm \otimes \Delta_t\nu^{u_t})$. We realize each of these representations on the common vector space $\mathcal{F}_{\pi}$ and form the representation of $GL_{2n}$ on the Whittaker space $\mathcal{W}_{\pi}^{(0)}$. Observe that since $\pi$ is an admissible representation, the vector space is countable dimensional over $\mathbb{C}$. Consider the local integral for the even case $2n$. In this case the local integrals involves not just the Whittaker functions associated to $\pi_u$ but also a Schwartz-Bruhat function on $F^n$. For $W_u \in \mathcal{W}_{\pi}^{(0)}$, and $\Phi \in \mathcal{S}(F^n)$ these integrals are
 \[
 \begin{split}
   &J(s,W_u,\Phi)\\
   &=\int_{N_n \backslash GL_n} \int_{\mathcal{N}_n \backslash \mathcal{M}_n} W_u \left( \sigma_{2n} \begin{pmatrix} I_n&X \\ &I_n\end{pmatrix} \begin{pmatrix} g&\\&g \end{pmatrix} \right) 
   \psi^{-1}(\mathrm{Tr}X)dX \Phi(e_ng) |\mathrm{det}(g)|^s dg.
\end{split}
 \]
 We apply Bernstein's theorem to show that $J(s,W_u,\Phi)$ for $W_u \in \mathcal{W}^{(0)}_{\pi}$ and $\Phi \in \mathcal{S}(F^n)$ are rational functions in $q^{\pm u}$ and $q^{\pm s}$.

\begin{proposition}
\label{even-rational}
Let $\pi=\mathrm{Ind}(\Delta_1 \otimes \dotsm \otimes \Delta_t)$ be a representation of Whittaker type on $GL_{2n}$. With the above notation, for $W_u \in \mathcal{W}^{(0)}_{\pi}$ and $\Phi \in \mathcal{S}(F^n)$ the integral $J(s,W_u,\Phi)$ is a rational function of $q^{-u}$ and $q^{-s}$.
\end{proposition}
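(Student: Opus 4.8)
The plan is to imitate the deformation argument of Cogdell and Piatetski-Shapiro \cite{CoPe}, adapted to the Jacquet--Shalika integrals as in Matringe \cite{Ma09,Ma15}, and to invoke Bernstein's rationality theorem (Theorem \ref{Bernstein}). Let $\mathcal{D}$ be the irreducible affine variety $\mathcal{D}_{\pi} \times (\mathbb{C}/\tfrac{2\pi i}{\log q}\mathbb{Z})$, whose ring of regular functions is $\mathbb{C}[q^{\pm u},q^{\pm s}]$ and whose fraction field is $\mathcal{M}=\mathbb{C}(q^{-u},q^{-s})$. Realize all the $\pi_u$ on the common space $\mathcal{F}_{\pi}$, form $\mathcal{W}^{(0)}_{\pi}$ and its restriction $\mathcal{W}_{(0)}$ to $P_{2n}$ as in Section $7.2$, and set $V=\mathcal{W}^{(0)}_{\pi}\otimes_{\mathbb{C}}\mathcal{S}(F^{n})$, which has countable dimension over $\mathbb{C}$ since $\pi$ is admissible. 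For $(u,s)\in\mathcal{D}$ with $\mathrm{Re}(s)$ large relative to $\mathrm{Re}(u)$ (so that the integrals converge absolutely for $\pi_u$ by Theorem \ref{integral-rational}), the rule $\tilde\lambda_{(u,s)}(W\otimes\Phi)=J(s,W_u,\Phi)$, where $W_u(g)=W(u;g)$, defines an element of $V^{*}$. The aim is to exhibit $\tilde\lambda$ as the restriction of the unique solution of a polynomial family of systems over $\mathcal{D}$; Bernstein's theorem then furnishes a solution $\lambda\in V^{*}_{\mathcal{M}}$, and for each fixed $W\otimes\Phi$ the value $\lambda(W\otimes\Phi)\in\mathcal{M}$ extends $J(s,W_u,\Phi)$, which is exactly the assertion.

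I would next set up the system $\Xi_{(u,s)}$ with three families of equations, the index sets being independent of $(u,s)$. First, quasi-invariance: for each of the two families of generators $h$ of $S_{2n}$ and each $W\otimes\Phi\in V$, the pair $\bigl(\varrho(h)W\otimes R(h)\Phi-|\mathrm{det}(h)|^{-\frac{s}{2}}\Theta(h)\,W\otimes\Phi,\ 0\bigr)$; here $|\mathrm{det}(h)|^{-\frac{s}{2}}$ is a monomial in $q^{\pm s}$ on each generator (it is $|\mathrm{det}(g)|^{-s}$ on $\mathrm{diag}(g,g)$ and $1$ on the unipotent part) and right translation on $\mathcal{W}^{(0)}_{\pi}$ is $u$-independent, so these vectors are regular functions of $q^{\pm s}$ valued in $V$; they encode the quasi-invariance \eqref{invariance}. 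Second, $\mathbb{C}[q^{\pm u}]$-semilinearity: by Proposition \ref{poly-GeKa}, $\mathcal{W}_{(0)}$ contains $\mathcal{S}_{\psi}(P_{2n},\mathcal{P}_{0})$, so for $\varphi$ ranging over a fixed $\mathbb{C}$-basis of a lift of this space to $\mathcal{W}^{(0)}_{\pi}$, for $\Phi\in\mathcal{S}(F^n)$, and for $P\in\mathbb{C}[q^{\pm u}]$, the pair $\bigl(P\cdot\varphi\otimes\Phi-P(q^{\pm u})\,\varphi\otimes\Phi,\ 0\bigr)$, a regular function of $q^{\pm u}$. Third, a single normalization equation $\bigl(W^{0}\otimes\Phi^{0},\ c\bigr)$: using Lemma \ref{nonvanishing-even}, whose construction is uniform in $u$ because Belt's function comes from the $u$-independent vector $\varphi_{\circ}\in\mathcal{S}_{\psi}(P_{2n},\mathbb{C})\subseteq\mathcal{S}_{\psi}(P_{2n},\mathcal{P}_{0})$, one takes $W^{0}$ with this fixed $P_{2n}$-restriction and $\Phi^{0}$ the characteristic function of $e_{n}K_{n,r}$, so that $c\neq 0$ depends only on $\psi$.

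For uniqueness on an open set I would argue as follows. On the nonempty Euclidean-open $\Omega\subseteq\mathcal{D}$ where the Jacquet--Shalika integrals for $\pi_u$ converge absolutely and $q^{-s}$ avoids the finite bad locus of the one-dimensionality statement of Matringe for $\pi_u$ of Whittaker type recalled in Section $3.2$, any solution $\mu$ of $\Xi_{(u_0,s_0)}$ is forced by the semilinearity equations to annihilate, on the $\mathcal{S}_{\psi}(P_{2n},\mathcal{P}_{0})$-part, the kernel of $\mathcal{W}^{(0)}_{\pi}\to\mathcal{W}(\pi_{u_0},\psi)$, hence by quasi-invariance and smoothness descends to $\bar\mu\in\mathrm{Hom}_{S_{2n}}(\mathcal{W}(\pi_{u_0},\psi)\otimes\mathcal{S}(F^{n}),|\cdot|^{-\frac{s_0}{2}}\Theta)$; this space is at most one-dimensional and contains $J(s_0,\cdot,\cdot)$, so the normalization equation pins down $\bar\mu=J(s_0,\cdot,\cdot)$, i.e. $\mu=\tilde\lambda_{(u_0,s_0)}$. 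Theorem \ref{Bernstein} then yields a unique solution $\lambda\in V^{*}_{\mathcal{M}}$ of $\Xi$ over $\mathcal{M}$, agreeing with $\tilde\lambda_{(u,s)}$ on $\Omega$; evaluating at a fixed $W\otimes\Phi\in V$ shows $J(s,W_u,\Phi)$ is the restriction to $\Omega$ of $\lambda(W\otimes\Phi)\in\mathbb{C}(q^{-u},q^{-s})$, completing the proof.

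I expect the main obstacle to be twofold, and \emph{not} the routine matrix manipulations: first, verifying that the family of systems is genuinely polynomial, which rests on the $\mathbb{C}[q^{\pm u}]$-module structure of $\mathcal{W}^{(0)}_{\pi}$ together with Proposition \ref{poly-GeKa}; second, the uniqueness on $\Omega$ — in particular checking that a solution of $\Xi_{(u_0,s_0)}$ really does descend to a functional on $\mathcal{W}(\pi_{u_0},\psi)$, and that the bad locus of the Whittaker-type one-dimensionality statement is a \emph{proper} subvariety of $\mathcal{D}$ so that $\Omega$ is nonempty. The odd case $m=2n+1$ is entirely analogous, with $J(s,W_u)$ in place of $J(s,W_u,\Phi)$ and Lemma \ref{nonvanishing-odd} in place of Lemma \ref{nonvanishing-even}.
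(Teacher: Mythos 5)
Your overall strategy is the paper's: encode the Shalika quasi-invariance \eqref{invariance} as a polynomial family of affine systems over $\mathcal{D}_{\pi}\times\mathcal{D}_s$, add one normalization equation built from Lemma \ref{nonvanishing-even} and Proposition \ref{poly-GeKa}, get uniqueness on an open set from absolute convergence plus one-dimensionality of the quasi-invariant forms, and invoke Theorem \ref{Bernstein}. The decisive deviation, and the genuine gap, is your choice of underlying space $V=\mathcal{W}^{(0)}_{\pi}\otimes\mathcal{S}(F^{n})$ instead of the paper's $\mathcal{V}=\mathcal{F}_{\pi}\otimes\mathcal{S}(F^{n})$. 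In the paper's setup the equations are written with $\pi_u(g)$ acting on $\mathcal{F}_{\pi}$, so a solution of the system at $d=(u_0,s_0)$ is \emph{by construction} an element of $\mathrm{Hom}_{S_{2n}}(\pi_{u_0}\otimes\mathcal{S}(F^{n}),|\cdot|^{-\frac{s_0}{2}}\Theta)$, and uniqueness up to scalar follows from Proposition \ref{even-multi one} on the locus cut out by general position (Proposition \ref{general position}) together with the convergence half-spaces. In your setup the group acts by right translation on $\mathcal{W}^{(0)}_{\pi}$, and a quasi-invariant functional need not factor through the specialization $W\mapsto W_{u_0}$; for instance $W\otimes\Phi\mapsto J(s_0,W_{u_1},\Phi)$ satisfies every quasi-invariance equation. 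Your ``semilinearity'' equations only constrain a solution $\mu$ on the specific vectors $P\cdot\varphi\otimes\Phi$ with $\varphi$ in your chosen lift of $\mathcal{S}_{\psi}(P_{2n},\mathcal{P}_{0})$; they do not show that $\mu$ annihilates the full kernel of $\mathcal{W}^{(0)}_{\pi}\to\mathcal{W}(\pi_{u_0},\psi)$ (that kernel is not shown to be spanned by $S_{2n}$-translates of such vectors), so the claimed descent to $\bar\mu$ on $\mathcal{W}(\pi_{u_0},\psi)\otimes\mathcal{S}(F^{n})$ fails, and with it the uniqueness hypothesis of Bernstein's theorem. Moreover these extra equations presuppose that $P\cdot\varphi$ lies in $\mathcal{W}^{(0)}_{\pi}$, i.e.\ a $\mathbb{C}[q^{\pm u}]$-module structure on $\mathcal{W}^{(0)}_{\pi}$ that is nowhere established.

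Two further points need repair. Your normalization equation with a constant right-hand side $c$ depending only on $\psi$ requires $\mathcal{S}_{\psi}(P_{2n},\mathbb{C})\subseteq\mathcal{S}_{\psi}(P_{2n},\mathcal{P}_{0})$, i.e.\ that the constants lie in $\mathcal{P}_{0}$, which is not available; the paper instead chooses $f$ with $P(q^{\pm u})=W_{f,u}(I_{2n})\in\mathcal{P}_{0}$, takes $W^{\circ}_{u}$ with $P_{2n}$-restriction $\varphi(p)P(q^{\pm u})$, and normalizes with the value $c_2P(q^{\pm u})$ — a polynomial in $d$, which is all Bernstein requires. Likewise, defining $\Omega$ via the Whittaker-type at-most-one statement of Section $3.2$ only gives, for each $u$, a finite exceptional set of $s$ with uncontrolled dependence on $u$; the paper instead uses Proposition \ref{even-multi one} (legitimate because general position makes the derivative constituents irreducible and generic), whose exceptional locus is an explicit finite union of hyperplanes in $(u,s)$ contained outside the convergence region, so $\Omega$ is visibly open and nonempty. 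Finally, Bernstein only yields $\lambda(u,s)(f_u,\Phi)=J(s,W_{f,u},\Phi)$ on $\Omega$; the paper's closing step — fix an arbitrary $u$, observe both sides are rational in $q^{-s}$ and agree on the nonempty open set of absolute convergence in $s$, then continue meromorphically — is what makes the identity, hence the rationality statement, valid for every $u$ (in particular $u=0$ and $u$ on the bad hyperplanes, which is exactly what Propositions \ref{deform-l factor} and \ref{deform-gamma} later use), and this step is missing from your write-up.
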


\begin{proof}
We will need to view these integrals as defining a polynomial system of equations for the underlying vector space $\mathcal{V}=\mathcal{F}_{\pi} \otimes \mathcal{S}(F^n)$. The space $\mathcal{V}$ is still countable dimensional over $\mathbb{C}$. First note that any $W_u \in \mathcal{W}^{(0)}_{\pi}$ is the Whittaker function attached to a finite linear combination of translated elements of $\mathcal{F}_{\pi}$, that is, $W_u=W_{f,u}$ where $f=\sum \pi_u(g_i)f_i$ with $g_i \in GL_{2n}$ and $f_i \in \mathcal{F}_{\pi}$. The local integrals have well known quasi-invariance properties from \eqref{invariance}, namely
\[
\begin{split}
  &J\left(s,\pi_u\left( \begin{pmatrix} I_n &Z \\ &I_n \end{pmatrix}\begin{pmatrix} g&\\&g \end{pmatrix} \right)W_u, R\left( \begin{pmatrix} I_n &Z \\ &I_n \end{pmatrix}\begin{pmatrix} g&\\&g \end{pmatrix} \right)\Phi \right) \\
  &\phantom{********************************}=|\mathrm{det}(g)|^{-s} \psi(\mathrm{Tr}Z) J(s,W_u,\Phi)
\end{split}
\]
for each $g \in GL_n$ and $Z \in \mathcal{M}_n$, where the action $R$ of the Shalika group $S_{2n}$ on $\mathcal{S}(F^n)$ is defined in Section 3.1. Let us choose a basis $\{f_i\}$ of $\mathcal{F}_{\pi}$ and a basis $\Phi_k$ of $\mathcal{S}(F^n)$. We let $\mathcal{D}=\mathcal{D}_{\pi} \times \mathcal{D}_s$, where $\mathcal{D}_s=(\mathbb{C}/\frac{2\pi i}{\log(q)}) \simeq \mathbb{C}^{\times}$. For a fixed $d=(u,s) \in \mathcal{D}$,  the quasi-invariance property gives the system $\Xi^{\prime}_d$
\begin{equation}
\tag{$\Xi^{\prime}_d$} 
\begin{split}
&\left\{ \left( \pi_u \begin{pmatrix} g &\\ & g \end{pmatrix} \pi_u(g_i)f_i\otimes R\begin{pmatrix} g &\\ & g \end{pmatrix}\Phi_k-|\mathrm{det}(g)|^{-s}\pi_u(g_i)f_i \otimes \Phi_k, 0   \right) \right. \\ 
&\phantom{***********************************}\left| \; g \in GL_n,\; g_i \in GL_{2n} \right\}\\
&\bigcup \left\{ \left( \pi_u \begin{pmatrix} I_n & Z\\ & I_n \end{pmatrix} \pi_u(g_i)f_i\otimes R\begin{pmatrix} I_n &Z \\ & I_n \end{pmatrix}\Phi_k-\psi(\mathrm{Tr}Z)\pi_u(g_i)f_i \otimes \Phi_k, 0   \right) \right. \\
&\phantom{************************************}\left| \; Z \in \mathcal{M}_n,\; g_i \in GL_{2n} \right\}.
\end{split}
\end{equation}
Suppose that an irreducible quasi-square integrable representation $\Delta_i$ of $GL_{n_i}$ is the segment $[\rho_i,\dotsm,\rho_i\nu^{\ell_i-1}]$ with $\rho_i$ an irreducible supercuspidal representation of $GL_{r_i}$ and $n_i=r_i\ell_i$. From Theorem \ref{integral-rational} and Proposition \ref{even-multi one}, we have the linear forms $L_{(a_1,\dotsm,a_t)}(u,s)$ such that
\begin{equation}
\tag{7.2}
\label{even-absconv}
 q^{L_{(a_1,\dotsm,a_t)}(u,s)}=q^{ks+\sum_{i=1}^t(\ell_i-a_i)r_iu_i} \prod_{i=1}^t \omega^{-1}_{\Delta_i^{(a_ir_i)}}(\varpi)
 \end{equation}
for any $0 \leq a_i \leq \ell_i$ satisfying $\sum a_ir_i=2n-2k$ for some $1 \leq k \leq n$. The integral $J(s,W_u,\Phi)$ converges absolutely and the space of solutions of the system $\Xi^{\prime}_d$ is of dimension $1$, when $d=(u,s) \in \mathcal{D}$ satisfies $\mathrm{Re}(L_{(a_1,\dotsm,a_t)}(u,s)) > 0$ for all $0 \leq a_i \leq \ell_i$ with $\sum a_ir_i=2n-2k$ and $1 \leq k \leq n$. In fact, the statement in Proposition \ref{even-multi one} is for irreducible generic $\pi$ with each $\pi^{(a_1r_1,\dotsm,a_tr_t)}$ irreducible rather than families $\pi_u$. But by Proposition \ref{general position}, outside of a finite number of hyperplanes in $u$ each $\pi^{(a_1r_1,\dotsm,a_tr_t)}_u$ will be irreducible and generic. We define $\Omega \subset \mathcal{D}$ by the conditions that $\Omega$ is the intersection of the complements of the hyperplanes on which each deformed representation $\pi^{(a_1r_1,\dotsm,a_tr_t)}_u$ is reducible, and of the half spaces $\mathrm{Re}(L_{(a_1,\dotsm,a_t)}(u,s)) > 0$ on which our integrals converge absolutely and the uniqueness up to scalar holds.

\par
To be able to apply Bernstein's theorem, we must add one equation to ensure uniqueness on $\Omega$. This is a normalization equation. Using the partial Iwasawa decomposition in Section 2.2, we see that for any choice of $W_u \in \mathcal{W}^{(0)}_{\pi}$ and $\Phi \in \mathcal{S}(F^n)$, we can decompose our local integral as
\[
 \begin{split}
&J(s,W_u,\Phi)
=\int_{K_n} \int_{N_n\backslash P_n} \int_{\mathcal{N}_n \backslash \mathcal{M}_n} W_u \begin{pmatrix} \sigma_{2n} \begin{pmatrix} I_n & X \\ & I_n \end{pmatrix} \begin{pmatrix} pk &  \\ & pk \end{pmatrix} \end{pmatrix}\\
&\phantom{***************}\times  |\mathrm{det}(p)|^{s-1} \psi^{-1}(\mathrm{Tr} X) dX \int_{F^{\times}} \omega_{\pi}(a) |a|^{ns} \Phi (e_nak) d^{\times} a\; dp\; dk.
\end{split}
\]
The Proof of Lemma \ref{nonvanishing-even} or \cite[Lemma 2.3]{Be14} also shows that there exists $\varphi \in \mathcal{S}_{\psi}(P_n,\mathbb{C})$ such that
\[
  \int_{N_n \backslash P_n} \int_{\mathcal{N}_n \backslash \mathcal{M}_n} \varphi \left( \sigma_{2n} \begin{pmatrix} I_n & X \\ & I_n \end{pmatrix}  \begin{pmatrix} p &  \\ & p \end{pmatrix} \sigma_{2n}^{-1} \right) 
  |\mathrm{det}(p)|^{s-1} \psi^{-1}(\mathrm{Tr} X) dX dp
\]
is a nonzero constant. We notice that this constant and $\varphi$ do not depend on $u$. We may first choose any $f \in \mathcal{F}_{\pi}$ such that $W_{f,u}(I_{2n}) \neq 0$. Let $P(q^{\pm u})=W_{f,u}(I_{2n})$ be corresponding polynomial in $\mathbb{C}[q^{\pm u}]$ by \cite[Section 3.2]{CoPe}. According to Proposition \ref{poly-GeKa}, we can find $W_u^{\circ} \in \mathcal{W}_{\pi}^{(0)}$ such that the restriction of $W_u^{\circ}$ to $P_{2n} \times \mathcal{D}_{\pi}$ is of the form $W_u^{\circ}(p)=\varphi(p)P(q^{\pm u})$. Setting $W_u^{\prime}=\pi_u(\sigma_{2n}^{-1})W_u^{\circ} \in \mathcal{W}_{\pi}^{(0)}$, we obtain
\[
 \begin{split}
&J(s,W_u^{\prime},\Phi)
=\int_{K_n} \int_{N_n\backslash P_n} \int_{\mathcal{N}_n \backslash \mathcal{M}_n} W_u^{\prime} \begin{pmatrix} \sigma_{2n} \begin{pmatrix} I_n & X \\ & I_n \end{pmatrix} \begin{pmatrix} pk &  \\ & pk \end{pmatrix} \end{pmatrix}\\ 
&\phantom{***************}\times |\mathrm{det}(p)|^{s-1} \psi^{-1}(\mathrm{Tr} X) dX\int_{F^{\times}} \omega_{\pi}(a) |a|^{ns} \Phi (e_nak) d^{\times} a\; dp\; dk.
\end{split}
\]
Let $K_{n,r}$ be a sufficiently small open compact congruence subgroup of $K_n$. Now choose $\Phi$ to be the characteristic function $\Phi^{\prime}$ of $e_nK_{n,r}$. With theses choices the integral is reduced to
\[
\begin{split}
 &J(s,W_u^{\prime},\Phi^{\prime})\\
 &\phantom{*****}=c_1\int_{N_n\backslash P_n} \int_{\mathcal{N}_n \backslash \mathcal{M}_n} W_u^{\prime} \begin{pmatrix} \sigma_{2n} \begin{pmatrix} I_n & X \\ & I_n \end{pmatrix} \begin{pmatrix} p &  \\ & p \end{pmatrix} \end{pmatrix} |\mathrm{det}(p)|^{s-1} \psi^{-1}(\mathrm{Tr} X) dX dp
\end{split}
\] 
with $c_1$ a volume of $e_nK_{n,r}$ and we get
\[
\begin{split}
  &J(s,W_u^{\prime},\Phi^{\prime})\\
  &=c_1P(q^{\pm u}) \int_{N_n\backslash P_n} \int_{\mathcal{N}_n \backslash \mathcal{M}_n}  \varphi \left( \sigma_{2n} \begin{pmatrix} I_n & X \\ & I_n \end{pmatrix}  \begin{pmatrix} p &  \\ & p \end{pmatrix} \sigma_{2n}^{-1} \right) 
  |\mathrm{det}(p)|^{s-1} \psi^{-1}(\mathrm{Tr} X) dX dp\\
  &=c_2P(q^{\pm u})
\end{split}
\]
for some constant $c_2$. Since $W_u^{\prime} \in \mathcal{W}_{\pi}^{(0)}$ it is a linear combination of $GL_{2n}$ translation of functions in $\mathcal{F}_{\pi}$. So we have
\[
  W_u'=\sum_i \pi_u(g_i)W_{h_i,u}
\]
for appropriate $g_i \in GL_{2n}$ and $h_i \in \mathcal{F}_{\pi}$. Therefore to remove the scalar ambiguity in our system of equations we add the single normalization equation
\begin{equation}
\tag{$\mathrm{N}_d$}
 \left(  \sum_i \pi_u(g_i)h_i \otimes \Phi^{\prime}, c_2P(q^{\pm u}) \right).
\end{equation}

\par

If $\Xi_d$ is the system $\Xi^{\prime}_d$ with the equation (N$_d$) adjoined, the system $\Xi_d$ has a unique solution which we denote by $\lambda_d : f_u \otimes \Phi \mapsto J(s,W_{f,u},\Phi)$. Now we have a system which satisfies the hypotheses of Bernstein's Theorem \ref{Bernstein}. Hence we may conclude that there is a unique solution $\lambda \in V^*_{\mathbb{C}(\mathcal{D})}$ of $\Xi$
such that $\lambda(d)=\lambda_d$ on $\Omega$. Putting another way, for $d=(u,s)$ in $\Omega$, we have
\[
  \lambda(u,s)(f_u,\Phi)=J(s,W_{f,u},\Phi)
\]
and the left hand side $\lambda(u,s)(f_u,\Phi)$ defines a rational function in $\mathbb{C}(q^{-u},q^{-s})$.

\par
We fix any $u \in \mathcal{D}_{\pi}$. We then define an open set $\mathcal{U}$ of $\mathcal{D}_s$ by the condition that $\max\{\mathrm{Re}(L_{(a_1,\dotsm,a_t)}(u,s))\} > 0$ on which the maximum runs over all $0 \leq a_i \leq \ell_i$ satisfying $\sum a_ir_i=2n-2k$ for some $1 \leq k \leq n$. From \eqref{even-absconv}, $\mathcal{U}$ is not empty. If $s$ lies in $\mathcal{U}$, we obtain
\[
\tag{7.3}
\label{rational-even-equal}
  \lambda(u,s)(f_u,\Phi)=J(s,W_{f,u},\Phi),
\]
and both are rational functions in $\mathbb{C}(q^{-s})$. Hence they are equal on this open set $\mathcal{U}$. We then extend the equality in \eqref{rational-even-equal} meromorphically to the whole space $\mathcal{D}_s$. This concludes that 
\[
  \lambda(u,s)(f_u,\Phi)=J(s,W_{f,u},\Phi)
\]
on $\mathcal{D}_{\pi} \times \mathcal{D}_s$ and hence $J(s,W_u,\Phi)$ defines a rational function in $\mathbb{C}(q^{-u},q^{-s})$.

\end{proof}

If we now look at the local functional equation for the even $GL_{2n}$ case from Theorem \ref{local functional eq}, it reads
\[
  J(1-s,\varrho(\tau_{2n})\widetilde{W_u},\hat{\Phi})=\gamma(s,\pi_u,\wedge^2,\psi)J(s,W_u,\Phi)
\]
where
\[
 \gamma(s,\pi_u,\wedge^2,\psi)=\frac{\varepsilon(s,\pi_u,\wedge^2,\psi)L(1-s,{(\pi_u)}^{\iota},\wedge^2)}{L(s,\pi_u,\wedge^2)}
\]
for $W_u \in \mathcal{W}(\pi_u,\psi)$ and $\Phi \in \mathcal{S}(F^n)$. If $W_u \in \mathcal{W}^{(0)}_{\pi}$, then $\varrho(\tau_{2n})\widetilde{W_u} \in \mathcal{W}^{(0)}_{{\pi}^{\iota}}$. Under deformation, $(\pi_u)^{\iota}={\pi^{\iota}}_{u^{\iota}}$, if $u=(u_1,\dotsm,u_t)$ then $u^{\iota}=(-u_t,\dotsm,-u_1)$. Note that in the Whittaker model, $W^{(0)}_{\pi^{\iota}}$ should be taken with respect to the character $\psi^{-1}$. From Proposition \ref{even-rational} the integrals $J(1-s,\varrho(\tau_{2n})\widetilde{W_u},\hat{\Phi})$ appearing in the functional equation are rational functions of $q^{-u}$ and $q^{-s}$ for $W_u \in \mathcal{W}_{\pi}^{(0)}$. Therefore $\gamma(s,\pi_u,\wedge^2,\psi)$ must also be rational.

\begin{corollary}
\label{gamma-rational}
Let $\pi=\mathrm{Ind}(\Delta_1 \otimes \dotsm \otimes \Delta_t)$ be a representation of Whittaker type on $GL_{2n}$. Then the gamma factor $\gamma(s,\pi_u,\wedge^2,\psi)$ belongs to $\mathbb{C}(q^{-u},q^{-s})$.
\end{corollary}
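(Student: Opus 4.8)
The plan is to derive Corollary~\ref{gamma-rational} directly from the local functional equation of Theorem~\ref{local functional eq} together with Proposition~\ref{even-rational}. First I would recall that for $\pi=\mathrm{Ind}(\Delta_1\otimes\dotsm\otimes\Delta_t)$ of Whittaker type on $GL_{2n}$ and for the deformation $\pi_u=\mathrm{Ind}(\Delta_1\nu^{u_1}\otimes\dotsm\otimes\Delta_t\nu^{u_t})$, the functional equation reads
\[
 J(1-s,\varrho(\tau_{2n})\widetilde{W_u},\hat{\Phi})=\gamma(s,\pi_u,\wedge^2,\psi)J(s,W_u,\Phi)
\]
for all $W_u\in\mathcal{W}(\pi_u,\psi)$ and $\Phi\in\mathcal{S}(F^n)$. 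The strategy is to choose $W_u\in\mathcal{W}_{\pi}^{(0)}$ and $\Phi\in\mathcal{S}(F^n)$ for which the right-hand side integral $J(s,W_u,\Phi)$ is not identically zero as a function of $(q^{-u},q^{-s})$; dividing through then expresses $\gamma(s,\pi_u,\wedge^2,\psi)$ as a ratio of two integrals, both of which we will have shown to be rational in $\mathbb{C}(q^{-u},q^{-s})$.

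The key steps, in order, are as follows. Step one: invoke Proposition~\ref{even-rational} to conclude that $(u,s)\mapsto J(s,W_u,\Phi)$ lies in $\mathbb{C}(q^{-u},q^{-s})$ for every $W_u\in\mathcal{W}^{(0)}_{\pi}$ and $\Phi\in\mathcal{S}(F^n)$. Step two: observe that the map $W_u\mapsto\varrho(\tau_{2n})\widetilde{W_u}$ carries $\mathcal{W}^{(0)}_{\pi}$ into $\mathcal{W}^{(0)}_{\pi^{\iota}}$, where the latter is formed with respect to $\psi^{-1}$; here one uses that under deformation $(\pi_u)^{\iota}=(\pi^{\iota})_{u^{\iota}}$ with $u^{\iota}=(-u_t,\dotsm,-u_1)$, so that the bundle $\mathcal{F}_{\pi^{\iota}}$ and the Whittaker space $\mathcal{W}^{(0)}_{\pi^{\iota}}$ are exactly the objects to which Proposition~\ref{even-rational} applies after the harmless reparametrization $u\mapsto u^{\iota}$. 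Consequently $(u,s)\mapsto J(1-s,\varrho(\tau_{2n})\widetilde{W_u},\hat{\Phi})$ is also rational in $\mathbb{C}(q^{-u},q^{-s})$ (a variable change $s\mapsto 1-s$ preserves rationality in $q^{-s}$). Step three: exhibit a specific pair $(W_u,\Phi)$ making $J(s,W_u,\Phi)$ nonzero generically in $(u,s)$. For this I would reuse the normalization construction from the proof of Proposition~\ref{even-rational}: there one builds $W_u'\in\mathcal{W}^{(0)}_{\pi}$ whose restriction to $P_{2n}$ is $\varphi(\cdot)P(q^{\pm u})$ with $\varphi\in\mathcal{S}_{\psi}(P_n,\mathbb{C})$ and $P(q^{\pm u})=W_{f,u}(I_{2n})$ a nonzero polynomial, and a characteristic function $\Phi'$ of $e_nK_{n,r}$, for which $J(s,W_u',\Phi')=c_2P(q^{\pm u})$ with $c_2\neq0$ a constant independent of $u$ and $s$. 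Since $P(q^{\pm u})$ is a nonzero element of $\mathbb{C}[q^{\pm u}]$, this integral is a nonzero rational function. Step four: substitute this pair into the functional equation and solve:
\[
 \gamma(s,\pi_u,\wedge^2,\psi)=\frac{J(1-s,\varrho(\tau_{2n})\widetilde{W_u'},\hat{\Phi'})}{J(s,W_u',\Phi')}=\frac{1}{c_2P(q^{\pm u})}\,J(1-s,\varrho(\tau_{2n})\widetilde{W_u'},\hat{\Phi'}),
\]
which exhibits $\gamma(s,\pi_u,\wedge^2,\psi)$ as a quotient of elements of $\mathbb{C}(q^{-u},q^{-s})$, hence itself an element of $\mathbb{C}(q^{-u},q^{-s})$.

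The main obstacle is Step two: one must be genuinely careful that the contragredient and the deformation interact correctly, i.e.\ that $\varrho(\tau_{2n})\widetilde{(\cdot)}$ maps the family $\{W_u\}_{u\in\mathcal{D}_{\pi}}$ in $\mathcal{W}^{(0)}_{\pi}$ to a family $\{W'_{u^{\iota}}\}$ in $\mathcal{W}^{(0)}_{\pi^{\iota}}$ in a way compatible with the polynomial-bundle structure (so that the $q^{\pm u}$-rationality genuinely transfers), and that taking the Fourier transform $\Phi\mapsto\hat{\Phi}$ keeps one inside $\mathcal{S}(F^n)$ — the latter being immediate. One should also double-check that $\gamma(s,\pi_u,\wedge^2,\psi)$ is well defined for generic $(u,s)$, i.e.\ that the functional equation holds identically after meromorphic continuation in both variables; this follows by combining the $1$-dimensionality of the relevant Hom-space away from the finite union of hyperplanes identified in Proposition~\ref{even-multi one} with Proposition~\ref{general position}, together with Hartogs-type analytic continuation in $(q^{-u},q^{-s})$ as alluded to in the introduction. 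Once these compatibilities are in place, the corollary is a one-line division, and the $\varepsilon$- and $L$-factor decomposition $\gamma=\varepsilon\,L(1-s,(\pi_u)^{\iota},\wedge^2)/L(s,\pi_u,\wedge^2)$ plays no role in the proof of rationality itself.
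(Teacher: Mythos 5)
Your proposal is correct and follows essentially the same route as the paper: apply Proposition \ref{even-rational} to both sides of the functional equation of Theorem \ref{local functional eq}, using that $\varrho(\tau_{2n})\widetilde{W_u}\in\mathcal{W}^{(0)}_{\pi^{\iota}}$ with $(\pi_u)^{\iota}=(\pi^{\iota})_{u^{\iota}}$, and divide. The only difference is that you make explicit the nonvanishing of the denominator via the normalization pair $(W_u',\Phi')$ with $J(s,W_u',\Phi')=c_2P(q^{\pm u})$, a point the paper leaves implicit.
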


\subsection{The odd case $m=2n+1$}

Let $\pi=\text{Ind}(\Delta_1 \otimes \dotsm \otimes \Delta_t)$ be a representation of Whittaker type on $GL_{2n+1}$ and for each $u \in \mathcal{D}_{\pi}$ we define the deformed representation by $\pi_u=\text{Ind}(\Delta_1\nu^{u_1} \otimes \dotsm \otimes \Delta_t\nu^{u_t})$. The odd case $2n+1$ runs along the same lines. For $W_u \in \mathcal{W}^{(0)}_{\pi}$ and $\Phi \in \mathcal{S}(F^n)$, the local integral, which occur in the functional equation for $GL_{2n+1}$, is defined by
 \[
\begin{split}
&J(s,W_u,\Phi)\\
&\phantom{*****}=\int_{N_n \backslash GL_n} \int_{\mathcal{N}_n \backslash \mathcal{M}_n} \int_{F^n} W_u \left( \sigma_{2n+1} \begin{pmatrix} I_n&X&\\&I_n&\\&&1 \end{pmatrix} 
\begin{pmatrix}g&&\\&g&\\&&1 \end{pmatrix} \begin{pmatrix} I_n&&\\&I_n&\\&x&1 \end{pmatrix}  \right) \\
&\phantom{*******************************}\times\Phi(x) dx \psi^{-1}(\mathrm{Tr}X) dX |\mathrm{det}(g)|^{s-1} dg.
\end{split}
\]
We apply Bernstein's theorem to prove the rationality of Jacquet-Shalika integrals.

\begin{proposition}
\label{odd-rational}
Let $\pi=\mathrm{Ind}(\Delta_1 \otimes \dotsm \otimes \Delta_t)$ be a representation of Whittaker type on $GL_{2n+1}$. For $W_u \in \mathcal{W}_{\pi}^{(0)}$ and $\Phi \in \mathcal{S}(F^n)$, the integral $J(s,W_u,\Phi)$ is a rational function of $q^{-u}$ and $q^{-s}$.
\end{proposition}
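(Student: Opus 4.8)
The plan is to mimic the proof of Proposition \ref{even-rational} essentially verbatim, with the even Shalika group $S_{2n}$ replaced by the odd Shalika group $S_{2n+1}$ and its four families of generators $d_1,d_2,d_3,d_4$ of \eqref{invariance-odd}. First I would realize each $\pi_u$ on the common space $\mathcal{F}_{\pi}$ and form the Whittaker space $\mathcal{W}^{(0)}_{\pi}$ exactly as in Section 7.2; the underlying space for the polynomial system is $\mathcal{V}=\mathcal{F}_{\pi}\otimes\mathcal{S}(F^n)$, which is countable-dimensional over $\mathbb{C}$ because $\pi$ is admissible. Put $\mathcal{D}=\mathcal{D}_{\pi}\times\mathcal{D}_s$ with $\mathcal{D}_s\simeq\mathbb{C}^{\times}$. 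Since every $W_u\in\mathcal{W}^{(0)}_{\pi}$ is the Whittaker function of a finite combination $f=\sum_j\pi_u(g_j)f_j$ of translates of elements of $\mathcal{F}_{\pi}$, and since $u\mapsto W_{f,u}(g)$ is a polynomial in $q^{\pm u}$, the quasi-invariance relations $J(s,\pi_u(d_i)W_u,R(d_i)\Phi)=(\textrm{character})\,J(s,W_u,\Phi)$ for $i=1,2,3,4$ assemble into a polynomial family of systems $\Xi'_d$ on $\mathcal{V}$ with coefficients varying polynomially in $q^{\pm u},q^{\pm s}$.

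Next I would locate the open set $\Omega\subset\mathcal{D}$ on which $\Xi'_d$ has a one-dimensional solution space. By Theorem \ref{integral-rational-odd} (through Lemma \ref{convergence-odd}) the integral $J(s,W_u,\Phi)$ converges absolutely when $\mathrm{Re}(s)>-\tfrac{1}{k}\mathrm{Re}(\omega_{\pi^{(2n+1-2k)}_{u,i}})$ for all relevant $k$; writing $\Delta_i=[\rho_i,\dots,\rho_i\nu^{\ell_i-1}]$ with $\rho_i$ supercuspidal on $GL_{r_i}$, these are half-spaces $\mathrm{Re}(L_{(a_1,\dots,a_t)}(u,s))>0$ for linear forms $L_{(a_1,\dots,a_t)}$, the analogue of \eqref{even-absconv} with the exponent adjusted to the odd normalization $|\det(g)|^{s-1}$ (the indices now running over $0\le a_i\le\ell_i$ with $\sum a_ir_i=2n+1-2k$). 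By Proposition \ref{odd-multi one} the relevant intertwining space is at most one-dimensional off these half-spaces, provided each deformed derivative $\pi_u^{(a_1r_1,\dots,a_tr_t)}$ is irreducible and generic, which by Proposition \ref{general position} holds outside a finite union of affine hyperplanes in $u$. I let $\Omega$ be the intersection of the complements of those hyperplanes with the half-spaces of absolute convergence; it is open and nonempty.

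The main technical point to adapt is the normalization equation needed to pin down uniqueness on $\Omega$. Following the construction in the proof of Lemma \ref{nonvanishing-odd}, there is $\varphi_{\circ}\in\mathrm{ind}_{N_{2n}}^{GL_{2n}}(\psi)$ for which the associated Jacquet--Shalika integral is a nonzero constant depending only on $\psi$. To incorporate the deformation I would pick $f\in\mathcal{F}_{\pi}$ with $W_{f,u}(I_{2n+1})\ne 0$ and set $P(q^{\pm u}):=W_{f,u}(I_{2n+1})\in\mathbb{C}[q^{\pm u}]$; since $\mathcal{S}_{\psi}(P_{2n+1},\mathcal{P}_0)\subset\mathcal{W}_{(0)}$ by Proposition \ref{poly-GeKa}, there is $W^{\circ}_u\in\mathcal{W}^{(0)}_{\pi}$ whose restriction to $P_{2n+1}$ is a translate of $\varphi_{\circ}$ times $P(q^{\pm u})$. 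Unwinding the integral as in the proof of Lemma \ref{nonvanishing-odd}, with $\Phi$ the characteristic function of a small set $e_nK_{n,r}$, gives $J(s,W^{\circ}_u,\Phi)=c\,P(q^{\pm u})$ for a nonzero constant $c$ independent of $(u,s)$. Writing $W^{\circ}_u=\sum_j\pi_u(g_j)W_{h_j,u}$ with $g_j\in GL_{2n+1}$, $h_j\in\mathcal{F}_{\pi}$, I adjoin the equation $\bigl(\sum_j\pi_u(g_j)h_j\otimes\Phi,\ c\,P(q^{\pm u})\bigr)$ to $\Xi'_d$ to form $\Xi_d$, whose unique solution on $\Omega$ is $\lambda_d:f_u\otimes\Phi\mapsto J(s,W_{f,u},\Phi)$. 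Bernstein's Theorem \ref{Bernstein} then produces a unique $\lambda\in\mathcal{V}^*_{\mathbb{C}(\mathcal{D})}$ with $\lambda(d)=\lambda_d$ on $\Omega$, so $\lambda(u,s)(f_u,\Phi)$ is rational in $q^{-u},q^{-s}$ and equals $J(s,W_{f,u},\Phi)$ there. Finally, fixing any $u\in\mathcal{D}_{\pi}$, the set $\mathcal{U}\subset\mathcal{D}_s$ on which some $\mathrm{Re}(L_{(a_1,\dots,a_t)}(u,s))>0$ is nonempty and open; on $\mathcal{U}$ both sides of $\lambda(u,s)(f_u,\Phi)=J(s,W_{f,u},\Phi)$ are rational in $q^{-s}$ and agree, hence agree on all of $\mathcal{D}_s$, giving rationality of $J(s,W_u,\Phi)$ in $q^{-u},q^{-s}$.

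I expect the only real obstacles to be bookkeeping ones rather than conceptual: verifying that the normalization constant coming from the proof of Lemma \ref{nonvanishing-odd} is genuinely independent of $(u,s)$ after the deformation, and checking that the extra unipotent integration over $F^n$ present in the odd integral does not disturb the absolute-convergence estimates — both are handled by the same manipulations already performed in Lemmas \ref{convergence-odd} and \ref{nonvanishing-odd}, namely absorbing $\Phi$ via $\rho(\Phi)W=\sum_iW_i$ into finitely many right translates and reducing to Theorem \ref{integral-rational-odd}. As in the even case, one should also note that $\varrho(\tau_{2n+1})\widetilde{W_u}\in\mathcal{W}^{(0)}_{\pi^{\iota}}$ with $(\pi_u)^{\iota}=\pi^{\iota}_{u^{\iota}}$, so that the same argument applied to $\pi^{\iota}$ shows the integrals on the other side of the functional equation of Theorem \ref{local-func-odd} are rational, whence (as in Corollary \ref{gamma-rational}) $\gamma(s,\pi_u,\wedge^2,\psi)\in\mathbb{C}(q^{-u},q^{-s})$.
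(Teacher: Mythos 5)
Your plan is essentially the paper's own proof of this proposition: the same polynomial system $\Xi'_d$ built from the quasi-invariance under the four generators $d_1,\dots,d_4$ of $S_{2n+1}$, the same open set $\Omega$ cut out by the general-position hyperplanes of Proposition \ref{general position} together with the convergence half-spaces coming from Lemma \ref{convergence-odd} and Proposition \ref{odd-multi one}, a normalization equation built from Lemma \ref{nonvanishing-odd} and Proposition \ref{poly-GeKa}, then Bernstein's Theorem \ref{Bernstein} and meromorphic continuation in $s$ for each fixed $u$. (Taking $\mathcal{V}=\mathcal{F}_{\pi}\otimes\mathcal{S}(F^n)$ is fine and matches how the system is actually written.)

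The one step that fails as written is the normalization equation, precisely the point you flagged as the main technical adaptation. In the odd integral the Schwartz function is not evaluated at $e_ng$; it is integrated against the lower-row unipotent variable, $\int_{F^n}W_u'\bigl(g\begin{pmatrix}I_n&&\\&I_n&\\&x&1\end{pmatrix}\bigr)\Phi(x)\,dx$. Hence the correct choice is $\Phi$ the characteristic function of a small compact open subgroup $U\subset F^n$ (a neighborhood of $0$) under which $W_u'=\pi_u(\sigma_{2n+1}^{-1})W_u^{\circ}$ is right-invariant; then $J(s,W_u',\Phi)=\mathrm{Vol}(U)\,J(s,W_u')=c\,P(q^{\pm u})$ by the support computation of Lemma \ref{nonvanishing-odd}, and the constant is independent of $(u,s)$. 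Your choice $\Phi=\mathbbm{1}_{e_nK_{n,r}}$ is the even-case choice: since $e_nK_{n,r}$ is a neighborhood of $e_n$, not of $0$, one gets $\rho(\Phi)W_u'$ equal (up to volume) to the translate $\varrho\bigl(\begin{pmatrix}I_n&&\\&I_n&\\&e_n&1\end{pmatrix}\bigr)W_u'$, and the support argument of Lemma \ref{nonvanishing-odd} no longer applies to that translate, so the claimed identity $J(s,W_u^{\circ},\Phi)=c\,P(q^{\pm u})$ is not justified (and need not hold). With $U$ chosen as above the rest of your argument goes through exactly as in the paper.
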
 

\begin{proof}
We once again should write down a system of equations which are polynomials in $q^{\pm u}$ and $q^{\pm s}$ which characterize these functionals. In this case, our underlying vector space is $\mathcal{V}=\mathcal{F}_{\pi}$ which is countable dimensional over $\mathbb{C}$. For fixed $d=(u,s) \in \mathcal{D}=\mathcal{D}_{\pi} \times \mathcal{D}_s$, we can taking the bases of $\mathcal{F}_{\pi}$ and the bases of $\mathcal{S}(F^n)$ as above in Proposition \ref{even-rational}. From the quasi-invariance properties in \eqref{invariance-odd}, our system of equation $\Xi_d^{\prime}$ expressing the invariance of the local integral is
\begin{equation}
\tag{$\Xi_d^{\prime}$}
\begin{split}
&\left\{ \left( \pi_u \begin{pmatrix} g &&\\ & g&\\&&1 \end{pmatrix} \pi_u(g_i)f_i\otimes R\begin{pmatrix} g &&\\ & g&\\&&1 \end{pmatrix}\Phi_k -|\mathrm{det}(g)|^{-s}\pi_u(g_i)f_i\otimes \Phi_k, 0   \right) \right. \\
&\phantom{********************************}\left| \; g \in GL_n,\; g_i \in GL_{2n} \right\}\\
\end{split}
\end{equation}
\[
\begin{split}
&\bigcup \left\{ \left( \pi_u \begin{pmatrix} I_n & Z&\\ & I_n&\\&&1 \end{pmatrix} \pi_u(g_i)f_i\otimes R\begin{pmatrix} I_n & Z&\\ & I_n&\\&&1 \end{pmatrix}\Phi_k-\pi_u(g_i)f_i\otimes\Phi_k, 0   \right) \right. \\
&\phantom{********************************} \left| \; Z \in \mathcal{M}_n,\; g_i \in GL_{2n} \right\} \\
&\bigcup \left\{ \left( \pi_u \begin{pmatrix} I_n & &\\ & I_n&\\&x&1 \end{pmatrix} \pi_u(g_i)f_i\otimes R\begin{pmatrix} I_n & &\\ & I_n&\\&x&1 \end{pmatrix}\Phi_k -\pi_u(g_i)f_i\otimes \Phi_k, 0   \right) \right. \\
&\phantom{********************************} \left| \; x \in \mathcal{M}_{n,1},\; g_i \in GL_{2n} \right\} \\
&\bigcup \left\{ \left( \pi_u \begin{pmatrix} I_n & &y\\ & I_n&\\&&1 \end{pmatrix} \pi_u(g_i)f_i\otimes R\begin{pmatrix} I_n & &y\\ & I_n&\\&&1 \end{pmatrix} \Phi_k-\pi_u(g_i)f_i\otimes \Phi_k, 0   \right) \right. \\
&\phantom{********************************} \left| \; y \in \mathcal{M}_{1,n},\; g_i \in GL_{2n} \right\}. \\
\end{split}
\]
Suppose that an irreducible quasi-square integrable representation $\Delta_i$ of $GL_{n_i}$ is the segment $[\rho_i,\dotsm,\rho_i\nu^{\ell_i-1}]$ with $\rho_i$ an irreducible supercuspidal representation of $GL_{r_i}$ and $n_i=r_i\ell_i$. As in Proposition \ref{even-rational}, we have the following linear forms $L_{(a_1,\dotsm,a_t)}(u,s)$ from Lemma \ref{convergence-odd} and Proposition \ref{odd-multi one} such that
\begin{equation}
\tag{7.4}
\label{odd-absconv}
  q^{L_{(a_1,\dotsm,a_t)}(u,s)}=q^{ks+\sum_{i=1}^t(\ell_i-a_i)r_iu_i} \prod_{i=1}^t \omega^{-1}_{\Delta_i^{(a_ir_i)}}(\varpi)
 \end{equation}
for any $0 \leq a_i \leq r_i$ satisfying $\sum a_ir_i=2n+1-2k$ for some $1 \leq k \leq n$.  By Proposition \ref{general position} as before, we know that each deformed representation $\pi_u^{(a_1r_1,\dotsm,a_tr_t)}$ is irreducible off of a finite number of hyperplanes in $(u,s)$. If we define $\Omega \subset \mathcal{D}$ by the condition that $\Omega$ is the intersection of the complements of the hyperplanes on which the deformed representation $\pi_u^{(a_1r_1,\dotsm,a_tr_t)}$ is reducible, and of the half spaces $\mathrm{Re}(L_{(a_1,\dotsm,a_t)}(u,s)) > 0$ for all $\sum_i a_i r_i=2n+1-2k$ with $1 \leq k \leq n$ and  on which our integrals converge absolutely and the uniqueness up to scalar holds.

 \par
 To be able to apply Bernstein's theorem, we must add one equation to ensure the uniqueness on $\Omega$. This is again a normalization equation, which is slightly complicated  simple in this situation. To give it, we may first take $f \in \mathcal{F}_{\pi}$ such that $W_{f,u}(I_{2n+1})\neq 0$. Let $P(q^{\pm u})=W_{f,u}(I_{2n+1})$ be the corresponding polynomial in $\mathbb{C}[q^{\pm u}]$ by \cite[Section 3.2]{CoPe}. The proof of Lemma \ref{nonvanishing-odd} also shows that there exists $\varphi \in \mathcal{S}_{\psi}(P_n,\mathbb{C})$ such that
\[
  \int_{N_n \backslash GL_n} \int_{\mathcal{N}_n \backslash \mathcal{M}_n} \varphi \left( \sigma_{2n+1} \begin{pmatrix} I_n&X&\\&I_n&\\&&1  \end{pmatrix} \begin{pmatrix} g&&\\&g& \\&&1 \end{pmatrix} \sigma_{2n+1}^{-1} \right)  \psi^{-1}(\mathrm{Tr}X) dX |\mathrm{det}g|^{s-1} dg
\]
is a nonzero constant. We notice that $\varphi$ and this constant do not depend on $u$. According to Proposition \ref{poly-GeKa}, we can find $W_u^{\circ} \in \mathcal{W}_{\pi}^{(0)}$ such that the restriction of $W_u^{\circ}$ to $P_{2n+1} \times \mathcal{D}_{\pi}$ is of the form $W_u^{\circ}(p)=\varphi(p)P(q^{\pm u})$. Setting $W_u'=\pi_u(\sigma_{2n+1}^{-1})W_u^{\circ} \in \mathcal{W}_{\pi}^{(0)}$, there exist a compact open set $U$ of $F^n$ such that
\[
  W_u' \left( g \begin{pmatrix} I_n&&\\&I_n& \\ &x&1 \end{pmatrix} \right)=W_u' ( g )
\]
for all $x \in U$. Now we choose $\Phi$ to be the characteristic function $U$ of $F^n$. With these choices of $W_u'$ and $\Phi$, our integral is reduced to
\[
\begin{split}
 &J(s,W_u',\Phi)
 =c_3\int_{N_n \backslash GL_n} \int_{\mathcal{N}_n \backslash \mathcal{M}_n}W_u' \left( \sigma_{2n+1} \begin{pmatrix} I_n&X&\\&I_n&\\&&1  \end{pmatrix} \begin{pmatrix} g&&\\&g& \\&&1 \end{pmatrix}  \right) \\
 &\phantom{************************************} \psi^{-1}(\mathrm{Tr}X) dX |\mathrm{det}g|^{s-1} dg
\end{split}
\]
with $c_3$ a volume of $U$ and we get
\[
\begin{split}
 J(s,W_u^{\prime},\Phi)
 &=c_3P(q^{\pm u})\int_{N_n \backslash GL_n} \int_{\mathcal{N}_n \backslash \mathcal{M}_n} \varphi \left( \sigma_{2n+1} \begin{pmatrix} I_n&X&\\&I_n&\\&&1  \end{pmatrix} \begin{pmatrix} g&&\\&g& \\&&1 \end{pmatrix} \sigma_{2n+1}^{-1} \right) \\
 &\phantom{*****************************}  \psi^{-1}(\mathrm{Tr}X) dX |\mathrm{det}g|^{s-1} dg \\
 &=c_4P(q^{\pm u})
\end{split}
\]
for some constant $c_4$.  Now, as $W^{\prime} \in \mathcal{W}^{(0)}_{\pi}$, it can be expressed as a finite linear combination
\[
  W^{\prime}=\sum_i \pi_u(g_i) W_{h_i,u}
\]
for appropriate $g_i \in GL_n$ and $h_i \in \mathcal{F}_{\pi}$. Thus our normalization equation can be written 
\begin{equation}
\tag{$\mathrm{N}_d$}
 \left(  \sum_i \pi_u(g_i)h_i\otimes \Phi, c_4P(q^{\pm u}) \right).
\end{equation}

\par

If $\Xi_d$ is the system $\Xi^{\prime}_d$ with the equation (N$_d$) adjoined, the system $\Xi_d$ has a unique solution which we denote by $\lambda_d : f_u\otimes \Phi \mapsto J(s,W_{f,u},\Phi)$. Now we have a system which satisfies the hypotheses of Bernstein's Theorem. Hence we may conclude that there is a unique solution $\lambda \in V^*_{\mathbb{C}(\mathcal{D})}$ of $\Xi$
such that $\lambda(d)=\lambda_d$ on $\Omega$, because $\lambda_d$ is holomorphic in $d$ on $\Omega$. Putting another way, for $d=(u,s)$ in $\Omega$, we have
\[
  \lambda(u,s)(f_u,\Phi)=J(s,W_{f,u},\Phi)
\]
and the left hand side $\lambda(u,s)(f_u)$ defines a rational function in $\mathbb{C}(q^{-u},q^{-s})$.

\par
We fix any $u$ belonging to $\mathcal{D}_{\pi}$. We then define an open set $\mathcal{U}$ by the condition that $\max\{\mathrm{Re}(L_{(a_1,\dotsm,a_t)}(u,s))\} > 0$ on which the maximum is taken over all $0 \leq a_i \leq r_i$ satisfying $\sum a_ir_i=2n+1-2k$ for some $1 \leq k \leq n$. From \eqref{odd-absconv}, $\mathcal{U}$ is not empty. If $s$ lies in $\mathcal{U}$, we obtain
\[
\tag{7.5}
\label{rational-odd-equal}
  \lambda(u,s)(f_u,\Phi)=J(s,W_{f,u},\Phi)
\]
and both are rational functions in $\mathbb{C}(q^{-s})$. Hence they are equal on this open set $\mathcal{U}$. We then extend the equality in \eqref{rational-odd-equal} meromorphically to the whole space $\mathcal{D}_s$. This concludes that 
\[
  \lambda(u,s)(f_u,\Phi)=J(s,W_{f,u},\Phi)
\]
on $\mathcal{D}_{\pi} \times \mathcal{D}_s$ and hence $J(s,W_u,\Phi)$ defines a rational function in $\mathbb{C}(q^{-u},q^{-s})$.

 \end{proof}

From the local functional equation for the odd $GL_{2n+1}$ case in Theorem \ref{local-func-odd}, we obtain
\[
  J(1-s,\varrho(\tau_{2n+1})\widetilde{W_u},\hat{\Phi})=\gamma(s,\pi_u,\wedge^2,\psi)J(s,W_u,\Phi),
\]
where
\[
 \gamma(s,\pi_u,\wedge^2,\psi)=\frac{\varepsilon(s,\pi_u,\wedge^2,\psi)L(1-s,{(\pi_u)}^{\iota},\wedge^2)}{L(s,\pi_u,\wedge^2)}
\]
for $W_u \in \mathcal{W}(\pi_u,\psi)$ and $\Phi \in \mathcal{S}(F^n)$. If $W_u \in \mathcal{W}_{\pi}^{(0)}$, then $\varrho(\tau_{2n+1})\widetilde{W_u} \in \mathcal{W}^{(0)}_{\pi^{\iota}}$. For $W_u \in \mathcal{W}_{\pi}^{(0)}$ and $\Phi \in \mathcal{S}(F^n)$, the integrals $J(1-s,\varrho(\tau_{2n+1})\widetilde{W_u},\hat{\Phi})$ involved in the functional equation are again rational functions in $\mathbb{C}(q^{-u},q^{-s})$ by proceeding Proposition \ref{odd-rational} and hence so must $\gamma(s,\pi_u,\wedge^2,\psi)$ be.

\begin{corollary}
\label{gamma-rational-odd}
Let $\pi=\mathrm{Ind}(\Delta_1 \otimes \dotsm \otimes \Delta_t)$ be a representation of Whittaker type on $GL_{2n+1}$. Then the gamma factor $\gamma(s,\pi_u,\wedge^2,\psi)$ belongs to $\mathbb{C}(q^{-u},q^{-s})$.
\end{corollary}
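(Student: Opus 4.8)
The plan is to transcribe the argument already used for the even case in Corollary~\ref{gamma-rational}, replacing the even-case functional equation by its odd-case analogue in Theorem~\ref{local-func-odd} and the even rationality statement of Proposition~\ref{even-rational} by that of Proposition~\ref{odd-rational}. First I would record the structural fact, noted in Section~3.2, that if $\pi=\mathrm{Ind}(\Delta_1 \otimes \dotsm \otimes \Delta_t)$ is of Whittaker type then so is $\pi^{\iota}=\mathrm{Ind}(\widetilde{\Delta}_t \otimes \dotsm \otimes \widetilde{\Delta}_1)$, and that under deformation $(\pi_u)^{\iota}=(\pi^{\iota})_{u^{\iota}}$ with $u^{\iota}=(-u_t,\dotsm,-u_1)$; consequently the deformation construction preceding Proposition~\ref{odd-rational} — the common space $\mathcal{F}_{\pi^{\iota}}$, the trivial bundle of representations over $\mathcal{D}_{\pi^{\iota}}\simeq\mathcal{D}_{\pi}$, and the Whittaker space $\mathcal{W}^{(0)}_{\pi^{\iota}}$ formed with respect to $\psi^{-1}$ — applies verbatim to $\pi^{\iota}$. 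Since $W_u \mapsto \varrho(\tau_{2n+1})\widetilde{W_u}$ sends $\mathcal{W}^{(0)}_{\pi}$ into $\mathcal{W}^{(0)}_{\pi^{\iota}}$ and $\Phi \mapsto \hat{\Phi}$ preserves $\mathcal{S}(F^n)$, Proposition~\ref{odd-rational} applied to both $\pi$ and $\pi^{\iota}$ shows that $J(s,W_u,\Phi)$ and $J(1-s,\varrho(\tau_{2n+1})\widetilde{W_u},\hat{\Phi})$ are each rational functions of $q^{-u}$ and $q^{-s}$ whenever $W_u \in \mathcal{W}^{(0)}_{\pi}$ and $\Phi \in \mathcal{S}(F^n)$.

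Next I would fix one distinguished pair $(W_u^{\circ},\Phi^{\circ})$ for which $J(s,W_u^{\circ},\Phi^{\circ})$ is not the zero element of $\mathbb{C}(q^{-u},q^{-s})$: this is precisely the normalization datum constructed inside the proof of Proposition~\ref{odd-rational} out of Lemma~\ref{nonvanishing-odd} and Proposition~\ref{poly-GeKa}, for which $J(s,W_u^{\circ},\Phi^{\circ})=c\,P(q^{\pm u})$ with $c$ a nonzero constant and $P$ a nonzero polynomial in $q^{\pm u}$. Dividing the functional equation of Theorem~\ref{local-func-odd}, applied to this pair,
\[
 J(1-s,\varrho(\tau_{2n+1})\widetilde{W_u^{\circ}},\hat{\Phi}^{\circ})=\gamma(s,\pi_u,\wedge^2,\psi)\,J(s,W_u^{\circ},\Phi^{\circ}),
\]
by $J(s,W_u^{\circ},\Phi^{\circ})$ then exhibits $\gamma(s,\pi_u,\wedge^2,\psi)$ as a quotient of two elements of $\mathbb{C}(q^{-u},q^{-s})$, hence as an element of $\mathbb{C}(q^{-u},q^{-s})$ itself.

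The one point requiring care is the passage from the functional equation as an identity of rational functions of $q^{-s}$ at fixed $u$ — which is what Theorem~\ref{local-func-odd} directly supplies — to an identity in $\mathbb{C}(q^{-u},q^{-s})$. For $u$ in the complement of the finitely many affine hyperplanes of Proposition~\ref{general position} the representation $\pi_u$ is in general position, the functional equation holds, and both sides are already known to be rational in $(q^{-u},q^{-s})$; an identity between rational functions that holds on a Zariski-dense subset of $\mathcal{D}_{\pi}\times\mathcal{D}_s$ extends to an identity everywhere, so $\gamma(s,\pi_u,\wedge^2,\psi)$ as defined for all $u$ coincides with the rational function just produced. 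I do not expect any genuine obstacle beyond this bookkeeping and the contragredient/deformation dictionary of the first paragraph: no analytic input is needed past Proposition~\ref{odd-rational}, so the statement is essentially an immediate consequence of that proposition together with the odd-case functional equation, exactly as in the even case.
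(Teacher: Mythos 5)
Your proposal is correct and follows essentially the same route as the paper: the paper likewise deduces the corollary by noting that, for $W_u \in \mathcal{W}^{(0)}_{\pi}$ and $\Phi \in \mathcal{S}(F^n)$, both $J(s,W_u,\Phi)$ and $J(1-s,\varrho(\tau_{2n+1})\widetilde{W_u},\hat{\Phi})$ (the latter via $\varrho(\tau_{2n+1})\widetilde{W_u} \in \mathcal{W}^{(0)}_{\pi^{\iota}}$) lie in $\mathbb{C}(q^{-u},q^{-s})$ by Proposition~\ref{odd-rational}, so the functional equation of Theorem~\ref{local-func-odd} forces $\gamma(s,\pi_u,\wedge^2,\psi)$ to be rational. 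Your extra care with the nonvanishing normalization pair and the Zariski-density extension only makes explicit what the paper leaves implicit.
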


\subsection{$L$-functions for deformed representations in general position.}

Let $\mathrm{Ind}(\Delta_1\nu^{u_1}\otimes \Delta_2\nu^{u_2})$ be the normalized induced representation from two segments $\Delta_1\nu^{u_1}$ and $\Delta_2\nu^{u_2}$ in general position under deformation.
The purpose of this section is to provide the equality of exceptional $L$-functions $L_{ex}(s,\mathrm{Ind}(\Delta_1^{(k_1)}\nu^{u_1}\otimes \Delta_2^{(k_2)}\nu^{u_2}),\wedge^2)$ and Rankin-Selberg $L$-functions $L_{ex}(s,\Delta_1^{(k_1)}\nu^{u_1} \times \Delta_2^{(k_2)}\nu^{u_2})$ defined in Section 2 of \cite{CoPe}. If $\pi_u=\mathrm{Ind}(\Delta_1\nu^{u_1} \otimes \dotsm \otimes \Delta_t\nu^{u_t})$ is a deformed representation of $\pi$ in general position, we really need only to consider those exceptional $L$-functions $L_{ex}(s,\mathrm{Ind}(\Delta_i^{(k_i)}\nu^{u_i}\otimes \Delta_j^{(k_j)}\nu^{u_j}),\wedge^2)$ or $L_{ex}(s,\Delta_i^{(k_i)}\nu^{u_i},\wedge^2)$, which contribute poles of $L_{ex}(s,\pi_u,\wedge^2)$.  We will explain this claim in Section 8. Hence we are only holding at $2$ segments $\mathrm{Ind}(\Delta_1^{(k_1)}\nu^{u_1}\otimes \Delta_2^{(k_2)}\nu^{u_2})$ to compute the exceptional $L$-functions. The following Hartogs' Theorem plays an important roles to determine poles of $L_{ex}(s,\mathrm{Ind}(\Delta_1^{(k_1)}\nu^{u_1}\otimes \Delta_2^{(k_2)}\nu^{u_2}),\wedge^2)$. We quote a statement in \cite{Lo91}, Chapter III, Section 4.3.

\begin{theorem}[Hartogs]
\label{Hartog}
Let $\mathcal{M}$ be an $n$-dimensional complex manifold with $n \geq 2$. If $\mathcal{N} \subset \mathcal{M}$ is an analytic subset of codimension $2$ or more, then every holomorphic function on $\mathcal{M}-\mathcal{N}$ extends to a holomorphic function on $\mathcal{M}$.

\end{theorem}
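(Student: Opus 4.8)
The plan is to deduce the statement from the classical Hartogs phenomenon in two complex variables by a localization-and-gluing argument. Both holomorphicity of the sought extension and the requirement that it restrict to $f$ are local conditions, and an analytic subset of codimension $\geq 1$ has empty interior, so $\mathcal{M}\setminus\mathcal{N}$ is dense in $\mathcal{M}$; hence two local holomorphic extensions over overlapping opens are continuous functions that agree on a dense set, so they coincide, and they glue to a global holomorphic function on $\mathcal{M}$. It therefore suffices to treat the model situation: $\mathcal{N}$ an analytic subset of codimension $\geq 2$ of a polydisc $\Delta\subset\mathbb{C}^{n}$ centered at a point of $\mathcal{N}$, $f$ holomorphic on $\Delta\setminus\mathcal{N}$, and the goal is to extend $f$ holomorphically across $\mathcal{N}$.

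First I would fix good coordinates. Since $\dim\mathcal{N}\leq n-2$, the local projection theorem for analytic sets says that after a generic linear change of coordinates, splitting $\mathbb{C}^{n}=\mathbb{C}^{n-2}_{w}\times\mathbb{C}^{2}_{z}$ and correspondingly $\Delta=\Delta_{w}\times\Delta_{z}$, the projection $(w,z)\mapsto w$ restricts to a finite map on $\mathcal{N}$ near the origin. Hence, after shrinking $\Delta$, for every $w$ close to $0\in\mathbb{C}^{n-2}$ the slice $\mathcal{N}_{w}:=\mathcal{N}\cap(\{w\}\times\Delta_{z})$ is a discrete (possibly empty) set, and by properness these slices are all contained in a fixed concentric sub-bidisc $\Delta_{z}'\subsetneq\Delta_{z}$. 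For each fixed $w$ the function $z\mapsto f(w,z)$ is then holomorphic on $\Delta_{z}$ minus the discrete set $\mathcal{N}_{w}$, so by the two-variable removable-singularity theorem it extends to a holomorphic function $\widetilde f(w,\cdot)$ on all of $\Delta_{z}$; this produces a well-defined function $\widetilde f$ on $\Delta$ that agrees with $f$ off $\mathcal{N}$.

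The remaining and, I expect, most delicate point is joint holomorphy of $\widetilde f$ in $(w,z)$. Choose a radius $r<1$ with $\Delta_{z}'\subset\{|\zeta_{1}|<r,\ |\zeta_{2}|<r\}$; then the torus $\{w\}\times\{|\zeta_{1}|=r\}\times\{|\zeta_{2}|=r\}$ is disjoint from $\mathcal{N}$ for all $w$ near $0$. Applying Cauchy's formula on the polydisc to the holomorphic function $\widetilde f(w,\cdot)$ and then using $\widetilde f=f$ on $\Delta\setminus\mathcal{N}$, in particular on that torus, gives
\[
  \widetilde f(w,z)=\frac{1}{(2\pi i)^{2}}\int_{|\zeta_{1}|=r}\int_{|\zeta_{2}|=r}\frac{f(w,\zeta_{1},\zeta_{2})}{(\zeta_{1}-z_{1})(\zeta_{2}-z_{2})}\,d\zeta_{1}\,d\zeta_{2}
\]
for all $z$ in the bidisc of radius $r$. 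The integrand is jointly holomorphic in $(w,z)$ on this region, so differentiation under the integral sign shows $\widetilde f$ is holomorphic there, hence on all of $\Delta$; as it restricts to $f$ on $\Delta\setminus\mathcal{N}$ it is the required extension, and gluing the local extensions finishes the proof. The genuine work is the uniform confinement of the slices $\mathcal{N}_{w}$, which rests on the local structure theory of analytic sets (Weierstrass preparation and the local parametrization theorem); everything else is formal. I note that there is also a cohomological route: $\mathcal{M}$ being smooth and $\mathcal{N}$ of codimension $\geq 2$ forces $\mathrm{depth}_{\mathcal{N}}\mathcal{O}_{\mathcal{M}}\geq 2$, whence $H^{0}_{\mathcal{N}}(\mathcal{O}_{\mathcal{M}})=H^{1}_{\mathcal{N}}(\mathcal{O}_{\mathcal{M}})=0$ and the restriction $\mathcal{O}_{\mathcal{M}}(\mathcal{M})\to\mathcal{O}_{\mathcal{M}}(\mathcal{M}\setminus\mathcal{N})$ is an isomorphism; but since only the statement as quoted is used, I would simply cite it.
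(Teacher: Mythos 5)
The paper does not prove this statement at all: it is quoted as a classical result from \L{}ojasiewicz [Lo91], Chapter III, Section 4.3, and is used purely as a black box in Sections 7 and 8. Your sketch is, in substance, the standard textbook proof of the second Riemann removable-singularity theorem, and it is correct in outline: locality of the two conditions and density of $\mathcal{M}\setminus\mathcal{N}$ (a proper analytic subset is nowhere dense) reduce everything to the model case of a polydisc; the local parametrization/projection theorem for analytic germs --- the one genuinely nontrivial input, as you say --- gives coordinates in which the slices $\mathcal{N}_{w}$ are finite and uniformly confined in a relatively compact sub-bidisc; slicewise extension plus the Cauchy integral over a distinguished boundary avoiding $\mathcal{N}$ yields joint holomorphy by differentiation under the integral sign. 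Two points would need to be made explicit in a full write-up: (i) the ``two-variable removable-singularity theorem'' you invoke (isolated singularities in a two-dimensional polydisc are removable) is itself a special case of the statement being proven, so to avoid any appearance of circularity it should be proved directly by the same one-variable Cauchy-kernel argument (or cited as Hartogs' classical theorem on isolated singularities); (ii) the Cauchy-formula step gives holomorphy of $\widetilde f$ only on a possibly smaller polydisc around the chosen point of $\mathcal{N}$, not on all of $\Delta$, but since the construction is local at each point of $\mathcal{N}$ and the local extensions agree on overlaps by density, this is harmless. With those caveats your argument (or the depth/local-cohomology formulation you mention) is a legitimate proof; the paper itself simply cites [Lo91], which is also all that is needed for its purposes.
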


 The following Lemma is originally stated in \cite{Cog94} and is proved by Matringe in \cite[Proposition 5.3]{Ma} to understand the Shalika functional on the representation of the form $\mathrm{Ind}(\Delta_i \otimes \Delta_j)$ with $(\Delta_i)^{\sim} \simeq \Delta_j$ up to a twist.

  \begin{lemma}[Matringe]
 \label{Shalika periods}
Let $\Delta$ be a square-integrable representation of $GL_m$. The normalized induced representation of the form $\mathrm{Ind}(\Delta\nu^s \otimes \widetilde{\Delta}\nu^{-s})$ has a nontrivial Shalika functional for any complex number $s \in \mathbb{C}$. 
 \end{lemma}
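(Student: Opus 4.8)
The plan is to prove that $\mathrm{Ind}(\Delta\nu^s \otimes \widetilde{\Delta}\nu^{-s})$ admits a nontrivial Shalika functional for \emph{every} $s \in \mathbb{C}$ by a deformation/analytic-continuation argument, producing the functional first for $s$ in a suitable open set and then extending via Hartogs' Theorem. First I would record the shape of the object: since $\Delta$ is square-integrable of $GL_m$, the representation $\pi_s := \mathrm{Ind}(\Delta\nu^s \otimes \widetilde{\Delta}\nu^{-s})$ lives on $GL_{2m}$, and $(\pi_s)^\iota = \widetilde{\pi_s} \simeq \mathrm{Ind}(\Delta\nu^{s} \otimes \widetilde{\Delta}\nu^{-s})$ as well, so the self-duality constraint of Theorem \ref{Jacquet-Rallis} is compatible with all $s$. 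The natural source of a Shalika functional is the pole at $s=0$ of the exterior square $L$-function: by Proposition \ref{excep-Shalika}, or more precisely by Lemma \ref{char-Shalika} together with the one-dimensionality input of Proposition \ref{mira-multiplicity one} and the embedding of Proposition \ref{even-embedding}, a pole of $L_{ex}(s_0$-shifted, $\cdot,\wedge^2)$ at zero corresponds to a nonzero element of $\mathrm{Hom}_{S_{2m}}(\pi,\Theta)$. So the core task is to exhibit, uniformly in $s$, the relevant twisted Shalika functional and show it varies holomorphically, then invoke Hartogs.

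The key steps, in order: (1) Realize the family $\pi_s$ on a common vector space $\mathcal{F}$ as in Section 7.2, forming the Whittaker family $\mathcal{W}^{(0)}$, and use the rationality results (Proposition \ref{even-rational} and Corollary \ref{gamma-rational}) to get that the relevant Jacquet--Shalika integrals $J(s',W_s,\Phi)$ are rational in $(q^{-s},q^{-s'})$. (2) Use the integral $J_{(0)}(s',W_s)$ and the functional $W \mapsto J_{(0)}(1-s',\varrho(\tau_{2m})\widetilde{W})/L(1-s',\widetilde{\pi_s},\wedge^2)$ exactly as in the proof of Lemma \ref{char-Shalika}: this defines an element of $\mathrm{Hom}_{P_{2m}\cap S_{2m}}(\pi_s,|\cdot|^{-\frac{s'}{2}}\Theta)$ which, for $s'=1$, together with the functional equation and the explicit non-vanishing of $J_{(0)}$ from Lemma \ref{nonvanishing-even}, forces $L(s',\pi_s,\wedge^2)$ to have a pole at $s'=0$ \emph{provided} $\omega_{\pi_s}$ is trivial; here $\omega_{\pi_s} = \omega_\Delta \omega_{\widetilde\Delta} = \mathbf{1}$ automatically, so the triviality hypothesis holds for all $s$. (3) Conclude from Lemma \ref{char-Shalika} (whose hypotheses — non-vanishing of $\mathrm{Hom}_{S_{2m}}$ and at-most-one-dimensionality of $\mathrm{Hom}_{P_{2m}\cap S_{2m}}$ — are available from Propositions \ref{even-embedding} and \ref{mira-multiplicity one} and the self-duality) that $L_{ex}(s',\pi_s,\wedge^2)$ has a pole at $s'=0$, hence $\mathrm{Hom}_{S_{2m}}(\pi_s,\Theta)\neq 0$. (4) The only gap is that steps (2)--(3) as literally written require a one-dimensionality statement that the excerpt supplies only \emph{outside a finite set of hyperplanes} in $s$ (Proposition \ref{even-multi one}, condition (6) of general position). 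This is precisely where Hartogs enters: the Shalika functional, viewed as a section of the dual bundle over $\mathcal{D}_\pi \simeq \mathbb{C}^\times$, is constructed holomorphically off a codimension-$\geq 2$... — wait, $\mathcal{D}_\pi$ here is one-dimensional, so I would instead run the whole deformation with two parameters by embedding $\pi_s$ into the larger family $\mathrm{Ind}(\Delta\nu^{u_1}\otimes\widetilde\Delta\nu^{u_2})$, where the locus of failure of uniqueness is a finite union of hyperplanes in the $2$-dimensional $(u_1,u_2)$-space, apply Hartogs (Theorem \ref{Hartog}) to extend the holomorphically-varying functional across that locus, and then restrict to the diagonal $u_1 = -u_2 = s$.

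The main obstacle I anticipate is exactly this bookkeeping around uniqueness: to make Bernstein's machinery (Theorem \ref{Bernstein}) and the Laurent-expansion argument of Lemma \ref{char-Shalika} produce a \emph{nonzero} functional for \emph{all} $s$, one must ensure the normalizing equation is compatible with $s$ and that the extended object does not vanish identically; the non-vanishing at the diagonal follows because on an open dense set of the two-parameter family the functional is nonzero (by the pole-at-zero argument above, using $\omega \equiv \mathbf 1$), and a holomorphic section nonzero on a dense set is nonzero everywhere, in particular on the diagonal. A secondary subtlety is verifying that the derivatives $\mathrm{Ind}((\Delta\nu^{u_1})^{(k_1)}\otimes(\widetilde\Delta\nu^{u_2})^{(k_2)})$ are irreducible and generic off finitely many hyperplanes so that Proposition \ref{general position} applies to this particular two-segment family — but this is routine given the explicit derivative formulas of Theorem \ref{deriviative} for segments. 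Once these points are settled, the statement follows: $\mathrm{Ind}(\Delta\nu^s\otimes\widetilde\Delta\nu^{-s})$ carries a nontrivial Shalika functional for every $s\in\mathbb{C}$.
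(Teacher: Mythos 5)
Your argument has a genuine circularity at its core. The engine you propose for producing the Shalika functional is Lemma \ref{char-Shalika} (pole of $L_{ex}$ at $s'=0$, read off via the Laurent coefficient), but Lemma \ref{char-Shalika} takes the non-vanishing of $\mathrm{Hom}_{S_{2m}}(\pi_s,\Theta)$ as a \emph{hypothesis}: it is precisely that assumed nonzero Shalika functional, combined with the at-most-one-dimensionality of $\mathrm{Hom}_{P_{2m}\cap S_{2m}}(\pi_s,\Theta)$, which lets one upgrade the $P_{2m}\cap S_{2m}$-invariant form $\Lambda_{1,\widetilde{\pi}}$ to a full $S_{2m}$-invariant form and hence force the pole. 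Your justification that this hypothesis ``is available from \dots self-duality'' is exactly backwards: Theorem \ref{Jacquet-Rallis} says a Shalika functional implies $\pi\simeq\widetilde{\pi}$, not conversely, and self-dual representations need not admit Shalika functionals. Triviality of $\omega_{\pi_s}$ is likewise necessary but nowhere near sufficient; without the assumed Shalika functional the key identity $J(1,\varrho(\tau_{2m})\widetilde{W},\hat{\Phi})/L(1,\widetilde{\pi_s},\wedge^2)=\Lambda_{1,\widetilde{\pi_s}}(W)\Phi(0)$ is not available, and the pole-at-zero conclusion collapses. The two-parameter deformation plus Hartogs only addresses the uniqueness-off-hyperplanes issue, not this gap. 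There is also a structural circularity with the paper itself: Lemma \ref{Shalika periods} is used as the external input to Proposition \ref{twosegment-L0} and the subsequent deformation/multiplicativity results, so any proof routed through that machinery (or through condition $(6)$ of general position, which exists to make Lemma \ref{char-Shalika} applicable \emph{after} this lemma supplies the functional) begs the question.

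For comparison, the paper does not prove this statement at all; it quotes Matringe's Proposition 5.3 of \cite{Ma}. The known proof is an explicit construction, independent of exterior square $L$-functions: one analyzes the $(P,S_{2m})$-double cosets (Mackey theory / the geometric lemma) for the restriction of $\mathrm{Ind}(\Delta\nu^{s}\otimes\widetilde{\Delta}\nu^{-s})$ to the Shalika subgroup and realizes the functional on the relevant orbit essentially as the canonical $GL_m$-invariant pairing between $\Delta$ and $\widetilde{\Delta}$ (the twists $\nu^{s}$ and $\nu^{-s}$ cancel), with the unipotent integration against $\psi(\mathrm{Tr}\,X)$ handled by a convergence/continuation argument. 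If you want a self-contained proof to accompany this paper, that direct construction is the route to take; the $L$-function pole mechanism cannot be the starting point here.
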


 As a consequence of this Lemma, we obtain the following Proposition that will be important for what follows.

 \begin{proposition}
 \label{twosegment-L0}
Let $\pi=\mathrm{Ind}(\Delta_1 \otimes \Delta_2)$ be a representation of Whittaker type of $GL_m$. We let $u=(u_1,u_2) \in \mathcal{D}_{\pi}$ be in general position and $\pi_u=\mathrm{Ind}(\Delta_1\nu^{u_1}\otimes \Delta_2\nu^{u_2})$ an irreducible generic representation of $GL_m$. We assume that $\Delta_i=[\rho_i,\rho_i\nu,\dotsm,\rho_i\nu^{\ell_i-1}]$ is an irreducible quasi-square-integrable representation of $GL_{m_i}$ where $\rho_i$ is an irreducible supercuspidal representation of $GL_{r_i}$, $m_i=r_i \ell_i$ for $i=1,2$, and $m=m_1+m_2$. 
 Then we have
 \[
   L^{(0)}(s,\mathrm{Ind}(\Delta_1\nu^{u_1} \otimes \Delta_2\nu^{u_2}),\wedge^2)=L_{ex}(s,\Delta_1\nu^{u_1}  \times \Delta_2\nu^{u_2} ).
 \]
 \end{proposition}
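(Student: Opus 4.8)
The goal is to identify the quotient $L$-function $L^{(0)}(s,\mathrm{Ind}(\Delta_1\nu^{u_1}\otimes\Delta_2\nu^{u_2}),\wedge^2)=L(s,\pi_u,\wedge^2)/L_{(0)}(s,\pi_u,\wedge^2)$ with the exceptional Rankin--Selberg $L$-function $L_{ex}(s,\Delta_1\nu^{u_1}\times\Delta_2\nu^{u_2})$. The strategy mirrors the discrete series case (Lemma \ref{quasi-key lemma 1}): the poles of $L^{(0)}$ are exactly the exceptional poles of $L(s,\pi_u,\wedge^2)$ by Proposition \ref{L0 function}, and by the analysis of Section 3.1 an exceptional pole at $s=s_0$ gives a nonzero twisted Shalika functional on $\pi_u$ at $s=s_0$, i.e. a nonzero element of $\mathrm{Hom}_{S_m}(\mathrm{Ind}(\Delta_1\nu^{u_1+s_0/2}\otimes\Delta_2\nu^{u_2+s_0/2}),\Theta)$ (taking $m=2n$ even; the odd case $m=2n+1$ is handled with the odd Shalika subgroup as in Section 3.2). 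So first I would show that the existence of such a functional forces a linkage between $\Delta_1$ and $\Delta_2$, namely $(\Delta_1)^{\sim}\simeq\Delta_2\nu^{e}$ up to the appropriate twist; this is where Theorem \ref{Jacquet-Rallis} (Jacquet--Rallis) enters, since a Shalika functional on an irreducible representation forces self-contragredience, and an irreducible $\mathrm{Ind}(\Delta_1\nu^{a}\otimes\Delta_2\nu^{a})$ is self-dual only when the two segments are dual to each other. This pins down the possible $s_0$ to be exactly the poles of $L(s,\Delta_1\nu^{u_1}\times\Delta_2\nu^{u_2})$, and by the description of Rankin--Selberg exceptional poles recalled in Section 6.3 these are precisely the poles of $L_{ex}(s,\Delta_1\nu^{u_1}\times\Delta_2\nu^{u_2})$, all simple.

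\textbf{Key steps in order.} (1) Reduce to the claim that $L^{(0)}(s,\pi_u,\wedge^2)$ and $L_{ex}(s,\Delta_1\nu^{u_1}\times\Delta_2\nu^{u_2})$ have the same poles with the same (simple) order; since both are Euler factors with simple poles (the first by Proposition \ref{L0 function}, the second by the Rankin--Selberg theory in Section 6.3), equality of pole sets gives equality of functions. (2) If $L^{(0)}(s,\pi_u,\wedge^2)$ has a pole at $s=s_0$, produce via Section 3.1 a nonzero twisted Shalika functional, i.e. $\mathrm{Hom}_{S_m}(\pi_u\nu^{s_0/2},\Theta)\neq0$; since $\pi_u$ is irreducible and generic, condition $(6)$ of general position (together with $\mathrm{Hom}_{P_m\cap S_m}$ being at most $1$-dimensional off hyperplanes) lets one apply Lemma \ref{char-Shalika}-type reasoning and Theorem \ref{Jacquet-Rallis} to conclude $(\pi_u\nu^{s_0/2})^{\sim}\simeq\pi_u\nu^{s_0/2}$, hence $(\Delta_1\nu^{u_1+s_0/2})^{\sim}\simeq\Delta_2\nu^{u_2+s_0/2}$. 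This is exactly the condition $\widetilde\rho_1\nu^{\cdots}\simeq\rho_2\nu^{\cdots}$ that characterizes a pole of $L(s,\Delta_1\nu^{u_1}\times\Delta_2\nu^{u_2})$, so $s_0$ is a pole of $L_{ex}(s,\Delta_1\nu^{u_1}\times\Delta_2\nu^{u_2})$. (3) Conversely, if $s_0$ is a pole of $L_{ex}(s,\Delta_1\nu^{u_1}\times\Delta_2\nu^{u_2})$, then the dual relation holds, and by Lemma \ref{Shalika periods} (Matringe) the representation $\mathrm{Ind}(\Delta_1\nu^{u_1+s_0/2}\otimes\Delta_2\nu^{u_2+s_0/2})\simeq\mathrm{Ind}(\Delta_1\nu^{u_1+s_0/2}\otimes\widetilde{\Delta_1\nu^{u_1+s_0/2}})$ carries a nonzero Shalika functional; then Proposition \ref{prop-supercusp}/Lemma \ref{char-Shalika} (using general position $(6)$ for the at-most-one-dimensionality of $\mathrm{Hom}_{P_m\cap S_m}$) shows $L_{ex}(s,\pi_u\nu^{s_0/2},\wedge^2)$ has a pole at $s=0$, i.e. $L^{(0)}(s,\pi_u,\wedge^2)$ has a pole at $s=s_0$. (4) Conclude that the pole sets coincide and that both sides are simple, hence equal.

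\textbf{Main obstacle.} The delicate point is step (2): making the passage from "$\pi_u$ admits a twisted Shalika functional at $s_0$" to "$\Delta_1\nu^{u_1}$ and $\Delta_2\nu^{u_2}$ are dual up to the right twist" rigorous. For a general reducible induced representation the Shalika functional need not see the inducing data so cleanly, but here $u$ in general position guarantees $\pi_u$ and all the relevant $\pi_u^{(k_1,k_2)}$ are irreducible (general position $(1)$), and condition $(6)$ is precisely designed so that Lemma \ref{char-Shalika} applies and the Jacquet--Rallis self-duality criterion can be invoked. One must also check the bookkeeping of the twist: the exceptional pole at $s_0$ of $L(s,\pi_u,\wedge^2)$ corresponds to $s=0$ for $\pi_u\nu^{s_0/2}$ (since $J(s,W,\Phi)=J(s-2s_1,W\otimes|\cdot|^{s_1},\Phi)$ gives $L(s,\pi_u,\wedge^2)=L(s-s_0,\pi_u\nu^{s_0/2},\wedge^2)$), and that the self-duality $(\Delta_1\nu^{u_1+s_0/2})^{\sim}\simeq\Delta_2\nu^{u_2+s_0/2}$ matches the pole condition for $L_{ex}(s,\Delta_1\nu^{u_1}\times\Delta_2\nu^{u_2})$ read off from the Rankin--Selberg computation recalled in Section 6.3. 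The archimedean-style Hartogs argument (Theorem \ref{Hartog}) is not needed for this two-segment statement itself but will be used afterward in Section 8 to globalize in $u$; here everything can be done at fixed general-position $u$.
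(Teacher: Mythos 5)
Your step (2) has a genuine gap. From the exceptional pole at $s=s_0$ and Theorem \ref{Jacquet-Rallis} you correctly get the twisted self-contragredience $\mathrm{Ind}(\Delta_1\nu^{u_1}\otimes\Delta_2\nu^{u_2})^{\sim}\simeq\mathrm{Ind}(\Delta_1\nu^{u_1}\otimes\Delta_2\nu^{u_2})\nu^{s_0}$, but your assertion that an irreducible induced representation of this shape is (twisted) self-dual \emph{only} when the two segments are dual to each other is false: since the inducing data are determined only up to permutation, there are two ways the relation can hold, namely $(\Delta_1\nu^{u_1})^{\sim}\simeq\Delta_2\nu^{u_2+s_0}$ (the case you want), or $(\Delta_1\nu^{u_1})^{\sim}\simeq\Delta_1\nu^{u_1+s_0}$ \emph{and} $(\Delta_2\nu^{u_2})^{\sim}\simeq\Delta_2\nu^{u_2+s_0}$ simultaneously. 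Indeed, a Shalika functional on $\mathrm{Ind}(\Delta_1\otimes\Delta_2)$ can also arise when each factor separately carries one (this is exactly the shape of Theorem \ref{char of Shalika functional}), so the second case cannot be dismissed out of hand, and none of the general-position conditions you cite ($(1)$, $(6)$, Lemma \ref{char-Shalika}) excludes it. Until that case is ruled out, you cannot conclude that every pole of $L^{(0)}(s,\pi_u,\wedge^2)$ is a pole of $L_{ex}(s,\Delta_1\nu^{u_1}\times\Delta_2\nu^{u_2})$, which is the divisibility you need for one half of the equality.

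This is precisely where the paper's proof diverges from your plan, and where your closing remark that Hartogs' theorem ``is not needed for this two-segment statement'' is wrong. The paper treats $J(s,W_u,\Phi)$ as a rational function of $(q^{-u_1},q^{-u_2},q^{-s})$ (Propositions \ref{even-rational} and \ref{odd-rational}) and observes that the first case cuts out a hypersurface (one equation on central characters), while the second case is cut out by two independent equations, hence has codimension two; by Theorem \ref{Hartog} a meromorphic function of several variables cannot have a polar set of codimension two, so the poles of $L^{(0)}$ must come from the locus $(\Delta_1\nu^{u_1})^{\sim}\simeq\Delta_2\nu^{u_2+s}$, i.e. from $L_{ex}(s,\Delta_1\nu^{u_1}\times\Delta_2\nu^{u_2})$. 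So the deformation in $u$ is used inside this very proposition, not only later in Section 8. (An alternative repair, which the paper sketches only afterwards in Section 8.2, is to invoke Matringe's classification of Shalika functionals on induced representations; but as written your argument at fixed $u$ does not close the gap.) Your converse direction, via Lemma \ref{Shalika periods}, condition $(6)$ of general position, and Lemma \ref{char-Shalika}, matches the paper's first half and is fine, as is the final simple-poles bookkeeping.
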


\begin{proof}

The exceptional Rankin-Selberg $L$-function $L_{ex}(s,\Delta_1\nu^{u_1}\times\Delta_2\nu^{u_2})$ can have a pole only at those $s_0$ for 
which $(\Delta_1\nu^{u_1})^{\sim} \simeq \Delta_2\nu^{u_2+s_0}$, that is, $(\Delta_1)^{\sim} \simeq \Delta_2\nu^e$ with $e=u_1+u_2+s_0$. 
We write $\Delta_1\nu^{u_1+\frac{s_0}{2}}=\Delta_0 \otimes \nu^t$ where $\Delta_0$ is a square integrable representation with $t$ a real number. 
With replaced $\Delta_1\nu^{u_1+\frac{s_0}{2}}$ by $\Delta_0 \otimes \nu^t$, according to Lemma \ref{Shalika periods}, $\mathrm{Ind}(\Delta_1\nu^{u_1+\frac{s_0}{2}}\otimes\Delta_2\nu^{u_2+\frac{s_0}{2}}) \simeq \mathrm{Ind}(\Delta_0 \nu^t \otimes \widetilde{\Delta_0 }\nu^{-t})$ has a nontrivial Shalika functional. However by the condition $(6)$ of the Definition of general position, this implies that the vector space $\mathrm{Hom}_{S_{2m_1} \cap P_{2m_1}}(\mathrm{Ind}(\Delta_1\nu^{u_1+\frac{s_0}{2}}\otimes\Delta_2\nu^{u_2+\frac{s_0}{2}}),\Theta)$ is of dimension at most one. 
The representation $\mathrm{Ind}(\Delta_1\nu^{u_1}\otimes\Delta_2\nu^{u_2})$ being irreducible and generic, Lemma \ref{char-Shalika} implies that $L^{(0)}(s,\nu^{\frac{s_0}{2}}\mathrm{Ind}(\Delta_1\nu^{u_1}\otimes\Delta_2\nu^{u_2}),\wedge^2)$ has a pole at $s=0$, and so $L^{(0)}(s,\mathrm{Ind}(\Delta_1\nu^{u_1}\otimes\Delta_2\nu^{u_2}),\wedge^2)$ has a pole at $s=s_0$. 
As both $L$-functions $L^{(0)}(s,\mathrm{Ind}(\Delta_1\nu^{u_1}\otimes\Delta_2\nu^{u_2}),\wedge^2)$ and $L_{ex}(s,\Delta_1\nu^{u_1}\times\Delta_2\nu^{u_2})$ have the same simple poles at $s=s_0$, we obtain that
\[
  L_{ex}(s,\Delta_1\nu^{u_1}\times\Delta_2\nu^{u_2})^{-1} \quad \text{divides} \quad L^{(0)}(s,\mathrm{Ind}(\Delta_1\nu^{u_1}\otimes\Delta_2\nu^{u_2}),\wedge^2)^{-1},
\]
where we take the divisibility in $\mathbb{C}[q^{\pm s}]$.

\par
We claim that in fact
\[
 L^{(0)}(s,\mathrm{Ind}(\Delta_1\nu^{u_1}\otimes\Delta_2\nu^{u_2}),\wedge^2)=L_{ex}(s,\Delta_1\nu^{u_1}\times\Delta_2\nu^{u_2}).
\]
To see this we explain that $L^{(0)}(s,\mathrm{Ind}(\Delta_1\nu^{u_1}\otimes\Delta_2\nu^{u_2}\hspace*{-.5mm}),\wedge^2)^{-1}$ divides $L_{ex}(s,\Delta_1\nu^{u_1}\times\Delta_2\nu^{u_2}\hspace*{-.7mm})^{-1}$ with respect to divisibility in $\mathbb{C}[q^{\pm s}]$. Suppose that $L^{(0)}(s,\mathrm{Ind}(\Delta_1\nu^{u_1}\otimes\Delta_2\nu^{u_2}),\wedge^2)$ has a pole at $s=s_0$. The existence of exceptional pole of $L(s,\mathrm{Ind}(\Delta_1\nu^{u_1}\otimes\Delta_2\nu^{u_2}),\wedge^2)$  at $s=s_0$ ensures a nontrivial Shalika functional.  According to Theorem \ref{Jacquet-Rallis} of Jacquet and Rallis, the existence of the nontrivial Shalika functional imposes the twisted self contragredient condition 
\[
\mathrm{Ind}(\Delta_1\nu^{u_1}\otimes\Delta_2\nu^{u_2})^{\sim} \simeq \mathrm{Ind}(\Delta_1\nu^{u_1}\otimes\Delta_2\nu^{u_2})\nu^{s_0}.
\]
As the induced representation $\mathrm{Ind}(\Delta_1\nu^{u_1}\otimes\Delta_2\nu^{u_2})$ is irreducible, the way that this is possible is that $(\Delta_1\nu^{u_1})^{\sim} \simeq \Delta_1\nu^{u_1+s_0}$ and $(\Delta_2\nu^{u_2})^{\sim} \simeq \Delta_2\nu^{u_2+s_0}$, or $(\Delta_1\nu^{u_1})^{\sim} \simeq \Delta_2\nu^{u_2+s_0}$. 

\par
Now let us see how these conditions vary in $u_1$ and $u_2$. If we consider now the Jacquet and Shalika integral $J(s,W_u,\Phi)$ with $W_u \in \mathcal{W}^{(0)}_{\pi}$, and $\Phi \in \mathcal{S}(F^n)$ if $m=2n$ or $m=2n+1$, then these integrals define rational functions in $\mathbb{C}(q^{-u_1},q^{-u_2},q^{-s})$. For $u=(u_1,u_2)$ in the Zariski open subset of general position, these rational functions $J(s,W_u,\Phi)$ can have poles coming from $L^{(0)}(s,\mathrm{Ind}(\Delta_1\nu^{u_1}\otimes\Delta_2\nu^{u_2}),\wedge^2)$. The poles of $L^{(0)}(s,\mathrm{Ind}(\Delta_1\nu^{u_1}\otimes\Delta_2\nu^{u_2}),\wedge^2)$ can lie along the locus defined by one equation $q^{-(u_1+u_2+s)m_1}\omega_{\Delta_1}(\varpi)\omega_{\Delta_2}(\varpi)=1$ from $(\Delta_1\nu^{u_1})^{\sim} \simeq \Delta_2\nu^{u_2+s}$, or two independent equations $q^{-(2u_1+s)m_1}\omega^2_{\Delta_1}(\varpi)=1$ and $q^{-(2u_2+s)m_2}\omega^2_{\Delta_2}(\varpi)=1$ from $(\Delta_1\nu^{u_1})^{\sim} \simeq \Delta_1\nu^{u_1+s}$ and $(\Delta_2\nu^{u_2})^{\sim} \simeq \Delta_2\nu^{u_2+s}$, respectively. By Theorem \ref{Hartog} of Hartogs, viewing the local integrals $J(s,W_u,\Phi)$ as meromorphic functions of $u_1$, $u_2$ and $s$, these integrals cannot have singularities of codimension equal to two. Hence every singularities of our integrals cannot be accounted for the form $(\Delta_1\nu^{u_1})^{\sim} \simeq \Delta_1\nu^{u_1+s}$ and $(\Delta_2\nu^{u_2})^{\sim} \simeq \Delta_2\nu^{u_2+s}$. 
 So if $L^{(0)}(s,\mathrm{Ind}(\Delta_1\nu^{u_1}\otimes\Delta_2\nu^{u_2}),\wedge^2)$ has a pole at $s=s_0$, then $(\Delta_1\nu^{u_1})^{\sim} \simeq \Delta_2\nu^{u_2+s_0}$ which implies that the Rankin-Selberg exceptional $L$-function $L_{ex}(s,\Delta_1\nu^{u_1}\times\Delta_2\nu^{u_2})$ has a pole at $s=s_0$. As both $L$-functions $L^{(0)}(s,\mathrm{Ind}(\Delta_1\nu^{u_1}\otimes\Delta_2\nu^{u_2}),\wedge^2)$ and $L_{ex}(s,\Delta_1\nu^{u_1}\times\Delta_2\nu^{u_2})$ have the same simple poles at $s=s_0$, we obtain that
\[
  L^{(0)}(s,\mathrm{Ind}(\Delta_1\nu^{u_1}\otimes\Delta_2\nu^{u_2}),\wedge^2)^{-1}  \quad \text{divides} \quad L_{ex}(s,\Delta_1\nu^{u_1}\times\Delta_2\nu^{u_2})^{-1},
\]
where we take the divisibility in $\mathbb{C}[q^{\pm s}]$.

 \par

Upon both $L$-functions $L^{(0)}(s,\mathrm{Ind}(\Delta_1\nu^{u_1}\otimes\Delta_2\nu^{u_2}),\wedge^2)$ and $L_{ex}(s,\Delta_1\nu^{u_1}\times\Delta_2\nu^{u_2})$ normalized to be an standard Euler factor, in other words, $L^{(0)}(s,\mathrm{Ind}(\Delta_1\nu^{u_1}\otimes\Delta_2\nu^{u_2}),\wedge^2)$ and $L_{ex}(s,\Delta_1\nu^{u_1}\times\Delta_2\nu^{u_2})$ being of the form $P(q^{-s})^{-1}$ with $P(X) \in \mathbb{C}[X]$ having $P(0)=1$,  we have
\[
  L^{(0)}(s,\mathrm{Ind}(\Delta_1\nu^{u_1}\otimes\Delta_2\nu^{u_2}),\wedge^2)=L_{ex}(s,\Delta_1\nu^{u_1}\times\Delta_2\nu^{u_2}).
\]

\end{proof}

 If $\pi_u=\mathrm{Ind}(\Delta_1\nu^{u_1}\otimes \Delta_2\nu^{u_2})$ is an irreducible generic representation of $GL_m$ in general position, various derivatives $\mathrm{Ind}(\Delta_1^{(a_1r_1)}\nu^{u_1}\otimes \Delta_2^{(a_2r_2)}\nu^{u_2})$ is again the irreducible representation in general position. Each segment $\Delta_i^{(a_ir_i)}=[\rho_i\nu^{a_i},\dotsm,\rho_i\nu^{\ell_i-1}]$ can be written as
 $\Delta_i^{(a_ir_i)}=[\rho'_i,\dotsm,\rho_i'\nu^{\ell_i-a_i-1}]$ for $\rho_i'=\rho_i\nu^{a_1}$ and $0 \leq a_i < \ell_i-1$. Hence we have the following result.

 \begin{corollary}
 \label{twosegment-L0-Cor}
 With the notation in Proposition \ref{twosegment-L0}, we have
 \[
   L^{(0)}(s,\mathrm{Ind}(\Delta_1^{(a_1r_1)}\nu^{u_1} \otimes \Delta_2^{(a_2r_2)}\nu^{u_2}),\wedge^2)=L_{ex}(s,\Delta^{(a_1r_1)}_1\nu^{u_1}  \times \Delta^{(a_2r_2)}_2\nu^{u_2} )
 \]
 for $0 \leq a_1 < \ell_1$ and $0 \leq a_2 < \ell_2$.
 \end{corollary}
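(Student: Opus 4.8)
\textbf{Proof proposal for Corollary \ref{twosegment-L0-Cor}.} The plan is to reduce the statement about the $(a_1r_1)$-th and $(a_2r_2)$-th derivatives to the already-proved case of zero derivatives in Proposition \ref{twosegment-L0}, by exhibiting the derivatives themselves as an induced representation built from two segments in general position. First I would invoke Bernstein--Zelevinsky (Theorem \ref{deriviative}), which gives $\Delta_i^{(a_ir_i)}=[\rho_i\nu^{a_i},\dotsm,\rho_i\nu^{\ell_i-1}]$ whenever $0\le a_i<\ell_i$, together with the fact that this is an irreducible quasi-square-integrable representation of $GL_{m_i-a_ir_i}$. Writing $\rho_i'=\rho_i\nu^{a_i}$ (again an irreducible supercuspidal representation of $GL_{r_i}$) and $\ell_i'=\ell_i-a_i$, the representation $\Delta_i^{(a_ir_i)}$ is precisely the segment $[\rho_i',\rho_i'\nu,\dotsm,\rho_i'\nu^{\ell_i'-1}]$, so it has exactly the shape required in the hypothesis of Proposition \ref{twosegment-L0}.

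The key remaining point is to check that $\mathrm{Ind}(\Delta_1^{(a_1r_1)}\nu^{u_1}\otimes\Delta_2^{(a_2r_2)}\nu^{u_2})$ is an irreducible generic representation and that the parameter $(u_1,u_2)$ is in general position for this smaller induced representation. Here I would use the Definition of general position directly: condition $(1)$ says $\pi_u^{(k_1,\dotsm,k_t)}=\mathrm{Ind}(\Delta_1^{(k_1)}\nu^{u_1}\otimes\Delta_2^{(k_2)}\nu^{u_2})$ is irreducible for every admissible $(k_1,k_2)$, and taking $(k_1,k_2)=(a_1r_1,a_2r_2)$ gives irreducibility; since an irreducible induced representation of this Langlands-type form is generic, genericity follows. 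For general position of $(u_1,u_2)$ with respect to $\mathrm{Ind}(\Delta_1^{(a_1r_1)}\nu^{u_1}\otimes\Delta_2^{(a_2r_2)}\nu^{u_2})$, I would observe that each of the six conditions in the Definition, applied to the pair of segments $\Delta_1^{(a_1r_1)}$, $\Delta_2^{(a_2r_2)}$, is a special case of a condition already known to hold for the pair $\Delta_1$, $\Delta_2$: the derivatives $\big(\Delta_i^{(a_ir_i)}\big)^{(b_ir_i)}$ are themselves among the $\Delta_i^{(c_ir_i)}$ by associativity of derivatives on segments (Theorem \ref{deriviative}), and the central characters, the exterior square $L$-factors $L(s,\Delta_i^{(a_ir_i)}\nu^{u_i},\wedge^2)$ (by Proposition \ref{even-quasisquare} and \ref{odd-quasisquare}), the Rankin--Selberg $L$-factors, and the Shalika-functional spaces in condition $(6)$ are all sub-data of the corresponding data for $\Delta_1,\Delta_2$. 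So if $(u_1,u_2)$ avoids the finite union of hyperplanes defining general position for $\pi$, it a fortiori satisfies all six conditions for the derived pair.

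With these two reductions in hand, I would simply apply Proposition \ref{twosegment-L0} verbatim with $\Delta_i$ replaced by $\Delta_i^{(a_ir_i)}=[\rho_i',\dotsm,\rho_i'\nu^{\ell_i'-1}]$ and $m_i$ replaced by $m_i-a_ir_i$, obtaining
\[
L^{(0)}(s,\mathrm{Ind}(\Delta_1^{(a_1r_1)}\nu^{u_1}\otimes\Delta_2^{(a_2r_2)}\nu^{u_2}),\wedge^2)=L_{ex}(s,\Delta_1^{(a_1r_1)}\nu^{u_1}\times\Delta_2^{(a_2r_2)}\nu^{u_2}),
\]
which is the assertion. The main obstacle I anticipate is the careful bookkeeping in the second paragraph: one must make sure that ``general position for $\pi$'' really does imply ``general position for the pair of derived segments,'' i.e.\ that no genuinely new hyperplane condition is needed. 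The cleanest way to dispatch this is to note that the set of $u$ failing general position for the derived pair is, by Proposition \ref{general position}, a finite union of affine hyperplanes determined only by the derived segments, each of which is one of the hyperplanes already excluded for $\pi$ (since every datum entering those equations -- supercuspidal supports, lengths, central characters -- is inherited from $\Delta_1,\Delta_2$); hence shrinking the Zariski-open set of general position, if necessary, costs nothing.
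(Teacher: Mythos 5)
Your proposal is correct and follows essentially the same route as the paper: the paper's justification for this corollary is precisely the remark that the derived pair $\Delta_i^{(a_ir_i)}=[\rho_i\nu^{a_i},\dotsm,\rho_i\nu^{\ell_i-1}]$ can be relabelled as segments $[\rho_i',\dotsm,\rho_i'\nu^{\ell_i-a_i-1}]$ with $\rho_i'=\rho_i\nu^{a_i}$, that $\mathrm{Ind}(\Delta_1^{(a_1r_1)}\nu^{u_1}\otimes\Delta_2^{(a_2r_2)}\nu^{u_2})$ is again irreducible, generic and in general position, and that Proposition \ref{twosegment-L0} then applies verbatim. Your second paragraph merely makes explicit (via associativity of derivatives from Theorem \ref{deriviative} and the inheritance of the hyperplane conditions) the general-position claim the paper asserts without detail, which is a correct and welcome elaboration rather than a different argument.
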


We furthermore can translate exceptional poles of $L$-functions $L(s,\mathrm{Ind}(\Delta_1^{(k_1)}\otimes \Delta_2^{(k_2)}),\wedge^2)$ for the derivative of a pair of fixed segments $\Delta_1$ and $\Delta_2$ in terms of exceptional poles of Rankin-Selberg $L$-functions $L(s,\Delta_1^{(k_1)} \times \Delta_2^{(k_2)})$.

 \begin{proposition}
 \label{two blocks-prop}
 Let $\pi=\mathrm{Ind}(\Delta_1 \otimes \Delta_2)$ be a representation of Whittaker type of $GL_m$. We let $u=(u_1,u_2) \in \mathcal{D}_{\pi}$ be in general position and $\pi_u=\mathrm{Ind}(\Delta_1\nu^{u_1} \otimes \Delta_2\nu^{u_2})$ an irreducible generic representation of $GL_m$. We assume that $\Delta_i=[\rho_i,\rho_i\nu,\dotsm,\rho_i\nu^{\ell_i-1}]$ is an irreducible quasi-square-integrable representation of $GL_{m_i}$, where $\rho_i$ is an irreducible supercuspidal representation of $GL_{r_i}$, $m_i=r_i \ell_i$ for $i=1,2$ and $m=m_1+m_2$. Then we have 
  \[
   L_{ex}(s,\mathrm{Ind}(\Delta_1\nu^{u_1} \otimes \Delta_2\nu^{u_2}),\wedge^2)=L_{ex}(s,\Delta_1\nu^{u_1}  \times \Delta_2\nu^{u_2} ).
 \]
 \end{proposition}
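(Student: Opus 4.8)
The plan is to relate the exceptional poles of the exterior square $L$-function of $\mathrm{Ind}(\Delta_1\nu^{u_1}\otimes\Delta_2\nu^{u_2})$ to those of the Rankin--Selberg $L$-function $L(s,\Delta_1\nu^{u_1}\times\Delta_2\nu^{u_2})$ by bootstrapping from Proposition \ref{twosegment-L0} via the derivative filtration. First I would recall that by Proposition \ref{L0 function}, $L(s,\pi_u,\wedge^2)^{-1}$ is the least common multiple in $\mathbb{C}[q^{\pm s}]$ of $L_{ex}(s,\pi_u,\wedge^2)^{-1}$ and $L_{(0)}(s,\pi_u,\wedge^2)^{-1}$, while Theorem \ref{prod-L} (resp. Theorem \ref{prod-L-odd}) expresses $L(s,\pi_u,\wedge^2)^{-1}$ as the l.c.m.\ of $L_{ex}(s,(\pi_u)_i^{(2k)},\wedge^2)^{-1}$ over the even derivatives; since $u$ is in general position, each $(\pi_u)^{(a_1r_1,a_2r_2)}=\mathrm{Ind}(\Delta_1^{(a_1r_1)}\nu^{u_1}\otimes\Delta_2^{(a_2r_2)}\nu^{u_2})$ is irreducible and generic, so the derivatives are completely reducible and the factorization theorems apply. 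Combining these, $L^{(0)}(s,\pi_u,\wedge^2)^{-1}=L(s,\pi_u,\wedge^2)^{-1}/L_{(0)}(s,\pi_u,\wedge^2)^{-1}$ is precisely the exceptional factor $L_{ex}(s,\pi_u,\wedge^2)^{-1}$ coming from the zeroth level; and Theorem \ref{prod-L0} identifies $L_{(0)}(s,\pi_u,\wedge^2)^{-1}$ with the l.c.m.\ of $L_{ex}(s,(\pi_u)_i^{(2k)},\wedge^2)^{-1}$ over $0<k<n$.

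Next I would isolate the contribution of the top level. Applying Proposition \ref{twosegment-L0} directly gives $L^{(0)}(s,\pi_u,\wedge^2)=L_{ex}(s,\Delta_1\nu^{u_1}\times\Delta_2\nu^{u_2})$. On the other hand, by Proposition \ref{L0 function} the poles of $L^{(0)}(s,\pi_u,\wedge^2)$ are exactly the exceptional poles of $L(s,\pi_u,\wedge^2)$, all simple, and by Definition of $L_{ex}$ the factor $L_{ex}(s,\pi_u,\wedge^2)$ is built from these exceptional poles with their maximal orders. Since all the relevant poles are simple (the Rankin--Selberg exceptional $L$-functions have only simple poles, as recalled in Section 6.3, and $L^{(0)}$ has simple poles by Proposition \ref{L0 function}), the maximal order is $1$, and hence $L_{ex}(s,\pi_u,\wedge^2)=L^{(0)}(s,\pi_u,\wedge^2)$. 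Chaining the two identities yields $L_{ex}(s,\mathrm{Ind}(\Delta_1\nu^{u_1}\otimes\Delta_2\nu^{u_2}),\wedge^2)=L_{ex}(s,\Delta_1\nu^{u_1}\times\Delta_2\nu^{u_2})$, as desired.

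The main obstacle is making rigorous the step that $L_{ex}(s,\pi_u,\wedge^2)=L^{(0)}(s,\pi_u,\wedge^2)$, i.e.\ that the exceptional poles occur only with order one for a deformed representation in general position. For supercuspidal and (quasi-)square-integrable representations this was handled in Section 6 using the characterizations via Shalika functionals and the multiplicity-one results of Jacquet--Rallis (Theorem \ref{Jacquet-Rallis}) together with Matringe's bound (Proposition \ref{mira-multiplicity one}); here I would need the analogue for $\mathrm{Ind}(\Delta_1\nu^{u_1}\otimes\Delta_2\nu^{u_2})$. The key input is condition $(6)$ in the Definition of general position, which forces $\mathrm{Hom}_{P\cap S}(\mathrm{Ind}(\Delta_1\nu^{u_1+s_0/2}\otimes\Delta_2\nu^{u_2+s_0/2}),\Theta)$ to be at most one-dimensional at each candidate exceptional point $s_0$; via Lemma \ref{char-Shalika} this keeps the order of the exceptional pole of $L^{(0)}(s,\pi_u,\wedge^2)$ down to one, matching the simple pole of the Rankin--Selberg factor. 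Because both sides are normalized Euler factors of the form $P(q^{-s})^{-1}$ with $P(0)=1$ and each divides the other in $\mathbb{C}[q^{\pm s}]$, equality follows. I would also remark, as in the proof of Proposition \ref{twosegment-L0}, that the Hartogs-type codimension argument (Theorem \ref{Hartog}) rules out the spurious self-dual contributions from $(\Delta_1\nu^{u_1})^{\sim}\simeq\Delta_1\nu^{u_1+s}$ and $(\Delta_2\nu^{u_2})^{\sim}\simeq\Delta_2\nu^{u_2+s}$, so the only surviving exceptional poles are genuinely those of $L_{ex}(s,\Delta_1\nu^{u_1}\times\Delta_2\nu^{u_2})$.
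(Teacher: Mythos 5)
Your reduction is the right one — by Proposition \ref{twosegment-L0} one has $L^{(0)}(s,\pi_u,\wedge^2)=L_{ex}(s,\Delta_1\nu^{u_1}\times\Delta_2\nu^{u_2})$, so everything hinges on the identity $L_{ex}(s,\pi_u,\wedge^2)=L^{(0)}(s,\pi_u,\wedge^2)$ — but the argument you offer for that identity does not work, and this is exactly where the paper's proof does its real work. Recall that $L_{ex}(s,\pi_u,\wedge^2)^{-1}$ carries each exceptional pole with its order $d_{s_0}$ in the \emph{full} $L$-function $L(s,\pi_u,\wedge^2)=L^{(0)}L_{(0)}$, whereas $L^{(0)}$ carries it with order one (Proposition \ref{L0 function}). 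Hence $d_{s_0}=1+(\text{order of }s_0\text{ in }L_{(0)}(s,\pi_u,\wedge^2))$, and the equality $L_{ex}=L^{(0)}$ is equivalent to the statement that no exceptional pole is also a pole of $L_{(0)}(s,\pi_u,\wedge^2)$. The simplicity of the poles of the Rankin--Selberg factor and of $L^{(0)}$, which you invoke, is automatic and says nothing about this; and your proposed substitute — condition $(6)$ of general position, Lemma \ref{char-Shalika} and the Jacquet--Rallis multiplicity one (Theorem \ref{Jacquet-Rallis}) — controls the dimension of the space of (twisted) Shalika functionals and hence the \emph{existence} of an exceptional pole, not its order in $L$. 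Multiplicity one of Shalika functionals cannot detect whether $L_{(0)}$ has a pole at the same point, so the gap is not closed.

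The paper closes it by an induction on the common segment length: after disposing of the degenerate cases via Corollary \ref{twosegment-L0-Cor}, it computes $L_{(0)}(s,\pi_u,\wedge^2)$ from Theorem \ref{prod-L0} by sorting the derivative indices $(a_1,a_2)$ into three families — off-diagonal terms $a_1\neq a_2$, which are trivial by Corollary \ref{twosegment-L0-Cor}; boundary terms $a_i=\ell_i$, which assemble into $L(s,\Delta_1\nu^{u_1},\wedge^2)L(s,\Delta_2\nu^{u_2},\wedge^2)$; and diagonal terms $0<a_1=a_2<\ell$, which by the induction hypothesis equal the derivative Rankin--Selberg factors $L_{ex}(s,\Delta_1^{(ar)}\nu^{u_1}\times\Delta_2^{(ar)}\nu^{u_2})$. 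One then checks, using the general position conditions (3)--(5) together with the relative primality of the shifted supercuspidal factors $L(s+k,\rho\times\rho)$, that this explicit product has no pole in common with $L_{ex}(s,\Delta_1\nu^{u_1}\times\Delta_2\nu^{u_2})$; only then does $d_{s_0}=1$ and $L_{ex}(s,\pi_u,\wedge^2)=L^{(0)}(s,\pi_u,\wedge^2)$ follow. Your proposal never performs this computation of $L_{(0)}$ for the two-block representation (nor the induction on $\ell$ that it requires), so the central step of the proposition is missing; the Hartogs argument you mention is already internal to Proposition \ref{twosegment-L0} and does not supply it.
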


\begin{proof}
 We begin with the case when $\ell_1=\ell_2=1$, that is, $\Delta_1\nu^{u_1}=\rho_1\nu^{u_1}$ and $\Delta_2\nu^{u_2}=\rho_2\nu^{u_2}$ are also irreducible supercuspidal representations. If $r_1 \neq r_2$, then 
 $L_{ex}(s,\mathrm{Ind}(\Delta_1\nu^{u_1} \otimes \Delta_2\nu^{u_2}),\wedge^2)=L^{(0)}(s,\mathrm{Ind}(\Delta_1\nu^{u_1} \otimes \Delta_2\nu^{u_2}),\wedge^2)=L_{ex}(s,\Delta_1\nu^{u_1}  \times \Delta_2\nu^{u_2} )=1$. As in Proposition \ref{L0 function},
\[
\begin{split}
 &L(s,\mathrm{Ind}(\rho_1\nu^{u_1} \otimes \rho_2\nu^{u_2}),\wedge^2)\\
  &=L^{(0)}(s,\mathrm{Ind}(\rho_1\nu^{u_1} \otimes \rho_2\nu^{u_2}),\wedge^2)L_{(0)}(s,\mathrm{Ind}(\rho_1\nu^{u_1} \otimes \rho_2\nu^{u_2}),\wedge^2).
\end{split}
\]
We first treat the derivatives for $\mathrm{Ind}(\rho_1\nu^{u_1}  \otimes \rho_2\nu^{u_2})$. As $u$ is in general position, and because of Proposition \ref{prod-L0}, we have
\[
\begin{split}
 L_{(0)}(s,\mathrm{Ind}(\rho_1\nu^{u_1} \otimes \rho_2\nu^{u_2}),\wedge^2)&=L_{ex}(s,\rho_1\nu^{u_1},\wedge^2)L_{ex}(s,\rho_2\nu^{u_2},\wedge^2)\\
 &=L(s,\rho_1\nu^{u_1},\wedge^2)L(s,\rho_2\nu^{u_2},\wedge^2).
\end{split}
\]
If $r_1=r_2$ is an odd number, Theorem \ref{supercusp} ensures that
\[
L_{(0)}(s,\mathrm{Ind}(\rho_1\nu^{u_1} \otimes \rho_2\nu^{u_2}),\wedge^2)=1.
\]
According to Proposition \ref{twosegment-L0},  $L^{(0)}(s,\mathrm{Ind}(\rho_1\nu^{u_1} \otimes \rho_2\nu^{u_2}),\wedge^2)=L_{ex}(s,\rho_1\nu^{u_1}\times\rho_2\nu^{u_2})=L(s,\rho_1\nu^{u_1} \times\rho_2\nu^{u_2})$. Since two $L$-functions $L^{(0)}(s,\mathrm{Ind}(\rho_1\nu^{u_1} \otimes \rho_2\nu^{u_2}),\wedge^2)=L(s,\rho_1\nu^{u_1} \times \rho_2\nu^{u_2})$ and $L_{(0)}(s,\mathrm{Ind}(\rho_1\nu^{u_1} \otimes \rho_2\nu^{u_2}),\wedge^2)
 =L(s,\rho_1\nu^{u_1},\wedge^2)L(s,\rho_2\nu^{u_2},\wedge^2)$ dose not have common poles because of the condition $(5)$ of the Definition of general position, we get 
 \[
 \begin{split}
   L_{ex}(s,\Delta_1\nu^{u_1}  \times \Delta_2\nu^{u_2} )&=L^{(0)}(s,\mathrm{Ind}(\Delta_1\nu^{u_1}\otimes\Delta_2\nu^{u_2}),\wedge^2)\\
   &=L_{ex}(s,\mathrm{Ind}(\Delta_1\nu^{u_1}\otimes\Delta_2\nu^{u_2}),\wedge^2).
\end{split}
\]

\par

Now we assume that $\ell_1,\ell_2 > 1$. It is sufficient to prove the case when $r_1=r_2$ and $\ell_1=\ell_2$. Otherwise $L_{ex}(s,\mathrm{Ind}(\Delta_1\nu^{u_1} \otimes \Delta_2\nu^{u_2}),\wedge^2)=L^{(0)}(s,\mathrm{Ind}(\Delta_1\nu^{u_1} \otimes \Delta_2\nu^{u_2}),\wedge^2)=L_{ex}(s,\Delta_1\nu^{u_1}  \times \Delta_2\nu^{u_2} )=1$ according to the previous Corollary \ref{twosegment-L0-Cor}. We assume that the following statement is satisfied for all integers $k$ with $1 \leq k \leq \ell-1$ 
\[
\begin{split}
 &L_{ex}(s,\mathrm{Ind}([\rho_1,\dotsm,\rho_1\nu^{k}]\nu^{u_1} \otimes [\rho_2,\dotsm,\rho_2\nu^{k}]\nu^{u_2}),\wedge^2)\\
 &=L_{ex}(s,[\rho_1,\dotsm,\rho_1\nu^{k}]\nu^{u_1} \times [\rho_2,\dotsm,\rho_2\nu^{k}]\nu^{u_2}),
\end{split}
\]
where $\rho_i$ is an irreducible supercuspidal representation of $GL_r$ for $i=1,2$.
We show this by induction on the length of segment $\ell$. Let $\Delta_i=[\rho_i,\dotsm,\rho_i\nu^{\ell}]$ be an irreducible quasi-square-integrable representation of $GL_{m}$ with $m=(\ell+1)r$ for $i=1,2$.
Since $u \in \mathcal{D}_{\pi}$ for $\mathrm{Ind}(\Delta_1\nu^{u_1} \otimes \Delta_2\nu^{u_2})$ is in general position, according to Proposition \ref{prod-L0} we find
\[
  L_{(0)}(s,\mathrm{Ind}(\Delta_1\nu^{u_1}\otimes \Delta_2\nu^{u_2}),\wedge^2)^{-1}=l.c.m.\{ L_{ex}(s,\mathrm{Ind}(\Delta_1^{(a_1r)}\nu^{u_1}\otimes \Delta^{(a_2r)}_2\nu^{u_2}),\wedge^2)^{-1} \},
\]
where the least common multiple is taken over all $0 \leq a_1, a_2 \leq \ell+1$ and $a_1+a_2 > 0$ such that $2m-(a_1+a_2)r$ is an even number. We consider $L_{ex}(s,\mathrm{Ind}(\Delta_1^{(a_1r)}\nu^{u_1}\otimes \Delta^{(a_2r)}_2\nu^{u_2}),\wedge^2)$ for three different cases. If $0 \leq a_1 \neq a_2 < \ell+1$, 
\[
\begin{split}
L_{ex}(s,\mathrm{Ind}(\Delta_1^{(a_1r)}\nu^{u_1} \otimes \Delta_2^{(a_2r)}\nu^{u_2}),\wedge^2)&=L^{(0)}(s,\mathrm{Ind}(\Delta_1^{(a_1r)}\nu^{u_1} \otimes \Delta_2^{(a_2r)}\nu^{u_2}),\wedge^2)\\
&=L_{ex}(s,\Delta_1^{(a_1r)}\nu^{u_1}  \times \Delta_2^{(a_2r)}\nu^{u_2} )=1
\end{split}
\]
according to the previous Corollary \ref{twosegment-L0-Cor}. For $a_1=\ell+1$ or $a_2=\ell+1$, as the order of the derivatives varies, by Proposition \ref{even-quasisquare} and \ref{odd-quasisquare}, we see that the least common multiple of the factors $L_{ex}(s,\Delta_i^{(a_ir)}\nu^{u_i},\wedge^2)$ will precisely contribute to
\[
  L(s,\Delta_i\nu^{u_i},\wedge^2)^{-1}=l.c.m.\{ L_{ex}(s,\Delta_i^{(a_ir)}\nu^{u_i},\wedge^2)^{-1} \},
\]
where the least common multiple is taken over all $0 \leq a_i \leq \ell+1$ such that $m-a_ir$ is the even number. We are left with the case when $0 < a_1=a_2 < \ell+1$. The length of the segment $\Delta_i^{(a_ir)}=[\rho\nu^{a_i},\dotsm,\rho\nu^{\ell}]$ is $\ell-a_i+1$ and $\mathrm{Ind}(\Delta_1^{(a_1r)}\nu^{u_1}\otimes \Delta^{(a_2r)}_2\nu^{u_2})$ is still an irreducible generic representation in general position. The induction hypothesis on the length $k$ of segments for $1 \leq k \leq \ell$ then enable us to obtain the least common multiple
\[
\begin{split}
 &\underset{0 < a_1=a_2 < \ell+1}{l.c.m.}\{ L_{ex}(s,\mathrm{Ind}(\Delta^{(a_1r)}_1\nu^{u_1}\otimes \Delta^{(a_2r)}_2\nu^{u_2}),\wedge^2)^{-1} \} \\
 &= \underset{0 < a_1=a_2 < \ell+1}{l.c.m.}\{ L_{ex}(s,\Delta^{(a_1r)}_1\nu^{u_1} \times \Delta^{(a_2r)}_2\nu^{u_2})^{-1} \}
\end{split}
\]
which is equal to 
\[
  \prod_{0 < a_1=a_2 < \ell+1}L_{ex}(s,\Delta^{(a_1r)}_1\nu^{u_1} \times \Delta^{(a_2r)}_2\nu^{u_2})^{-1}.
\]
Putting all three contributions to $L_{(0)}(s,\mathrm{Ind}(\Delta_1\nu^{u_1}\otimes \Delta_2\nu^{u_2}),\wedge^2)$ together we write the $L$-function as
\[
\begin{split}
  &L_{(0)}(s,\mathrm{Ind}(\Delta_1\nu^{u_1}\otimes \Delta_2\nu^{u_2}),\wedge^2)\\
  &=L(s,\Delta_1\nu^{u_1},\wedge^2)L(s,\Delta_2\nu^{u_2},\wedge^2)\prod_{0 < a_1=a_2 < \ell+1}L_{ex}(s,\Delta^{(a_1r)}_1\nu^{u_1} \times \Delta^{(a_2r)}_2\nu^{u_2}),
\end{split}
\]
which does not have any common poles with the $L$-function $L^{(0)}(s,\mathrm{Ind}(\Delta_1\nu^{u_1}\otimes \Delta_2\nu^{u_2}),\wedge^2)\hspace*{-.5mm}=L_{ex}(s,\Delta_1\nu^{u_1} \times \Delta_2\nu^{u_2})$
because $u$ is in general position. As a result, we have
\[
   L_{ex}(s,\mathrm{Ind}(\Delta_1\nu^{u_1} \otimes \Delta_2\nu^{u_2}),\wedge^2)=L^{(0)}(s,\mathrm{Ind}(\Delta_1\nu^{u_1} \otimes \Delta_2\nu^{u_2}),\wedge^2),
\]
from Proposition \ref{L0 function}, and hence $L_{ex}(s,\mathrm{Ind}(\Delta_1\nu^{u_1} \otimes \Delta_2\nu^{u_2}),\wedge^2)=L_{ex}(s,\Delta_1\nu^{u_1} \times \Delta_2\nu^{u_2})$
which is what we want.

\end{proof}

As we mention in the proof, $\mathrm{Ind}(\Delta_1^{(a_1r)}\nu^{u_1}\otimes \Delta^{(a_2r)}_2\nu^{u_2})$ is an irreducible generic representation in general position with the segment $\Delta_i^{(a_ir_i)}=[\rho_i\nu^{a_i},\dotsm,\rho_i\nu^{\ell_i-1}]$ of length $\ell_i-a_i$. Hence the previous Proposition \ref{two blocks-prop} is still applicable for  various derivatives $\mathrm{Ind}(\Delta_1^{(a_1r)}\nu^{u_1}\otimes \Delta^{(a_2r)}_2\nu^{u_2})$ and we have the following Corollary.

\begin{corollary}
\label{two blocks}
Let $\pi=\mathrm{Ind}(\Delta_1 \otimes \Delta_2)$ be a representation of Whittaker type of $GL_m$. We let $u=(u_1,u_2) \in \mathcal{D}_{\pi}$ be in general position and $\pi_u=\mathrm{Ind}(\Delta_1\nu^{u_1} \otimes \Delta_2\nu^{u_2})$ an irreducible generic representation of $GL_m$. We assume that $\Delta_i=[\rho_i,\rho_i\nu,\dotsm,\rho_i\nu^{\ell_i-1}]$ is an irreducible quasi-square-integrable representation of $GL_{m_i}$, where $\rho_i$ is an irreducible supercuspidal representation of $GL_{r_i}$, $m_i=r_i \ell_i$ for $i=1,2$ and $m=m_1+m_2$. Then we have
 \[
   L_{ex}(s,\mathrm{Ind}(\Delta_1^{(a_1r_1)}\nu^{u_1} \otimes \Delta_2^{(a_2r_2)}\nu^{u_2}),\wedge^2)=L_{ex}(s,\Delta^{(a_1r_1)}_1\nu^{u_1}  \times \Delta^{(a_2r_2)}_2\nu^{u_2} )
 \]
 for $0 \leq a_1 < \ell_1$ and $0 \leq a_2 < \ell_2$.

\end{corollary}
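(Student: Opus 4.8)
The plan is to derive Corollary \ref{two blocks} directly from Proposition \ref{two blocks-prop}, exactly as the paragraph preceding the corollary already suggests in outline. The entire content of the corollary is the observation that the representation $\mathrm{Ind}(\Delta_1^{(a_1r_1)}\nu^{u_1}\otimes\Delta_2^{(a_2r_2)}\nu^{u_2})$ is itself of the same shape as the representations to which Proposition \ref{two blocks-prop} applies, so one re-runs that proposition with the segments $\Delta_i$ replaced by their derivatives $\Delta_i^{(a_ir_i)}$.

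First I would record the structural facts. By Theorem \ref{deriviative}, for the quasi-square-integrable representation $\Delta_i=[\rho_i,\rho_i\nu,\dotsm,\rho_i\nu^{\ell_i-1}]$ with $\rho_i$ supercuspidal on $GL_{r_i}$, the nonzero derivative $\Delta_i^{(a_ir_i)}$ (for $0\le a_i<\ell_i$) equals the segment $[\rho_i\nu^{a_i},\dotsm,\rho_i\nu^{\ell_i-1}]$, which may be rewritten as $[\rho_i',\rho_i'\nu,\dotsm,\rho_i'\nu^{\ell_i-a_i-1}]$ with $\rho_i'=\rho_i\nu^{a_i}$ again an irreducible supercuspidal representation of $GL_{r_i}$. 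Hence $\mathrm{Ind}(\Delta_1^{(a_1r_1)}\otimes\Delta_2^{(a_2r_2)})$ is again a representation of Whittaker type built from two segments of supercuspidals of the same type. Next I would note that, since $u=(u_1,u_2)$ is in general position for $\pi=\mathrm{Ind}(\Delta_1\otimes\Delta_2)$, the six defining conditions for general position transfer to the pair $(\Delta_1^{(a_1r_1)},\Delta_2^{(a_2r_2)})$ with the same $u$: conditions $(1)$ and $(2)$ are exactly the statements that $\pi_u^{(k_1,\dotsm)}$ and its further derivatives are irreducible with distinct central characters, and the derivatives of $\mathrm{Ind}(\Delta_1^{(a_1r_1)}\nu^{u_1}\otimes\Delta_2^{(a_2r_2)}\nu^{u_2})$ are among those of $\pi_u$ by the associativity of derivatives for segments noted in Section 4.4 ($(\Delta_j^{(m_j)})^{(k_j)}\simeq\Delta_j^{(m_j+k_j)}$); conditions $(3)$--$(6)$ involve $L$-functions and Hom-spaces of (twists of) the $\Delta_i^{(a_ir_i)}$ and their derivatives, which are literally included in the list of conditions imposed on the larger family. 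In particular $\mathrm{Ind}(\Delta_1^{(a_1r_1)}\nu^{u_1}\otimes\Delta_2^{(a_2r_2)}\nu^{u_2})$ is irreducible and generic.

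With those preliminaries in place, I would simply invoke Proposition \ref{two blocks-prop} applied to the pair of segments $\Delta_1^{(a_1r_1)},\Delta_2^{(a_2r_2)}$ (of lengths $\ell_1-a_1$ and $\ell_2-a_2$) twisted by $\nu^{u_1},\nu^{u_2}$, obtaining
\[
  L_{ex}(s,\mathrm{Ind}(\Delta_1^{(a_1r_1)}\nu^{u_1}\otimes\Delta_2^{(a_2r_2)}\nu^{u_2}),\wedge^2)=L_{ex}(s,\Delta_1^{(a_1r_1)}\nu^{u_1}\times\Delta_2^{(a_2r_2)}\nu^{u_2}),
\]
which is the assertion. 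I expect no genuine obstacle here: the only point requiring care is the bookkeeping in the preceding paragraph, namely verifying cleanly that ``$u$ in general position for $\pi$'' forces the analogous general-position conclusion for the derived pair. This is essentially automatic because each of the six conditions in the definition of general position quantifies over all sequences $(k_1,\dotsm,k_t)$ and all derivatives, so restricting attention to the sub-family generated by $\Delta_1^{(a_1r_1)},\Delta_2^{(a_2r_2)}$ only shrinks the set of constraints; I would state this transfer as a one-line remark rather than re-proving Proposition \ref{general position}. The mild subtlety to flag is that one should confirm $\rho_i'=\rho_i\nu^{a_i}$ is still supercuspidal (immediate, as twisting by an unramified character preserves supercuspidality) so that Proposition \ref{two blocks-prop} genuinely applies with the hypotheses it requires.
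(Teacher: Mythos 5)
Your proposal is correct and follows essentially the same route as the paper: the paper likewise observes that $\Delta_i^{(a_ir_i)}$ is again a segment $[\rho_i\nu^{a_i},\dotsm,\rho_i\nu^{\ell_i-1}]$ of a supercuspidal, that $\mathrm{Ind}(\Delta_1^{(a_1r_1)}\nu^{u_1}\otimes\Delta_2^{(a_2r_2)}\nu^{u_2})$ remains an irreducible generic representation in general position, and then simply re-applies Proposition \ref{two blocks-prop}. Your extra care in checking that the general-position conditions transfer to the derived pair is exactly the (tersely stated) justification the paper relies on.
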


In Section 8, this will be the first step to prove multiplicativity result of exterior square $L$-function for deformed representations in general position.

\section{Multiplicativity of $L$-Functions for Representations in General Position}

\subsection{Exterior square $L$-functions for irreducible admissible representations.}

The purpose of considering deformed representations $\pi_u$ of $\pi$ in general position is that the $L$-function is easy to compute for those representations. In general, the constituents of the derivatives are not quasi-square-integrable, hence we do not have a good way of explicitly analyzing whether a given possible pole actually occurs. We resolve the difficulty in this section by use of a deformation of representations and an argument utilizing Hartogs' Theorem. We specialize the result of exterior square $L$-function for the family of representations in general position to our original representation $\pi$. We then concern $L(s,\pi,\wedge^2)$ for irreducible generic representation $\pi$ or representation of Langlands type $\pi$. In order to complete the computation of $L(s,\pi,\wedge^2)$ for an irreducible admissible representation $\pi$ of $GL_m$, we need to define the exterior square $L$-function for non-generic representation $\pi$.

\begin{definition*}
 Let $\pi$ be an irreducible admissible representation of $GL_m$. Let $\Pi=\mathrm{Ind}(\Delta_1\nu^{u_1} \otimes \dotsm \otimes \Delta_t\nu^{u_t} )$ be the Langlands induced representation  with each $\Delta_i$ an irreducible square integrable representation, the $u_i$ real and ordered so that $u_1 \geq \dotsm \geq u_t$ such that $\pi$  is the unique irreducible quotient of $\Pi$. Then the $L$-function $L(s,\pi,\wedge^2)$ is defined by
 \[
    L(s,\pi,\wedge^2)=L(s,\Pi,\wedge^2).
 \]
\end{definition*}

In this section, we closely follow the argument of Section 4 of \cite{CoPe}. Let us begin with representations $\pi=\text{Ind}(\Delta_1\otimes \dotsm \otimes \Delta_t)$ of $GL_m$ with the $\Delta_i$ irreducible quasi-square-integrable. For now, $\pi$ need not to be irreducible. We deform the representation to families $\pi_u$ with $u=(u_1,\dotsm,u_t) \in \mathcal{D}_{\pi}$. For $u$ in general position, $\pi_u$ is irreducible, and its higher derivatives are completely reducible. Then we can compute the local exterior square $L$-functions for $\pi_u$ in general position in the following way.

\begin{theorem}
\label{general-prod}
Let $\pi=\mathrm{Ind}(\Delta_1 \otimes \dotsm \otimes \Delta_t)$ be a representation of $GL_m$ of Whittaker type. We let $u=(u_1,\dotsm,u_t) \in \mathcal{D}_{\pi}$ be in general position and $\pi_u=\mathrm{Ind}(\Delta_1\nu^{u_1}\otimes \dotsm \otimes \Delta_t\nu^{u_t})$ the deformed representation of $\pi$ on $GL_m$. Then we have
\[
 L(s,\pi_u,\wedge^2)=\prod_{1 \leq k \leq t} L(s+2u_k,\Delta_k,\wedge^2) \prod_{1 \leq i < j \leq t} L(s+u_i+u_j, \Delta_i \times \Delta_j).
\]
\end{theorem}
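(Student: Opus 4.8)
The plan is to combine the even/odd factorization formulas (Theorems~\ref{prod-L} and~\ref{prod-L-odd}) with the derivative structure of the deformed representation and the two-block computations of Section~7.6. First I would recall that for $u$ in general position, $\pi_u = \mathrm{Ind}(\Delta_1\nu^{u_1}\otimes\dotsm\otimes\Delta_t\nu^{u_t})$ is irreducible and generic, and by condition~(2) of the Definition of general position all derivatives $\pi_u^{(k)}$ are completely reducible with irreducible generic Jordan--H\"older constituents $\pi_u^{(a_1r_1,\dotsm,a_tr_t)}=\mathrm{Ind}(\Delta_1^{(a_1r_1)}\nu^{u_1}\otimes\dotsm\otimes\Delta_t^{(a_tr_t)}\nu^{u_t})$, where $\Delta_i^{(a_ir_i)}\neq 0$ only for $0\le a_i\le\ell_i$. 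Thus Theorem~\ref{prod-L} (if $m=2n$) or Theorem~\ref{prod-L-odd} (if $m=2n+1$) applies and gives
\[
L(s,\pi_u,\wedge^2)^{-1}=l.c.m._{m,i}\{L_{ex}(s,\pi_{u,i}^{(2m)},\wedge^2)^{-1}\}
\]
(respectively with $(2m+1)$), the l.c.m.\ ranging over all even-order (resp.\ odd-order) derivative constituents including $\pi_u^{(0)}=\pi_u$ itself in the even case.

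Next I would express the exceptional factor of each derivative constituent. A constituent $\pi_{u,i}^{(k)}=\mathrm{Ind}(\Delta_1^{(a_1r_1)}\nu^{u_1}\otimes\dotsm\otimes\Delta_t^{(a_tr_t)}\nu^{u_t})$ is an induced representation from $\le t$ blocks, still in general position. The key input is that its exceptional pole structure decomposes into ``diagonal'' contributions (one block) and ``off-diagonal'' contributions (pairs of blocks). By conditions~(3),(4),(5) of general position the relevant factors have no common poles, and by Hartogs' Theorem~\ref{Hartog} applied to the integrals $J(s,W_u,\Phi)$ viewed as meromorphic in $(u_1,\dotsm,u_t,s)$ — which are rational by Propositions~\ref{even-rational} and~\ref{odd-rational} — no singularity of codimension $\ge 2$ can occur; this forces every pole to come either from a single block or from exactly one pair. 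Thus, arguing by induction on $t$ and on $\sum\ell_i$ (the base cases $t=1$ and $t=2$ being Propositions~\ref{even-quasisquare}/\ref{odd-quasisquare} and \ref{two blocks-prop}/Corollary~\ref{two blocks}), one obtains that $L_{ex}$ of a derivative constituent is the product over those diagonal blocks $\Delta_k^{(a_kr_k)}$ with $a_k<\ell_k$ of $L_{ex}(s,\Delta_k^{(a_kr_k)}\nu^{u_k},\wedge^2)$ times the product over pairs $i<j$ of $L_{ex}(s,\Delta_i^{(a_ir_i)}\nu^{u_i}\times\Delta_j^{(a_jr_j)}\nu^{u_j})$, these Rankin--Selberg exceptional factors being identified via Corollary~\ref{two blocks}.

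Then I would reassemble the l.c.m./product. Running over all (even-order, resp.\ odd-order) derivative constituents amounts, block-by-block, to running $a_k$ over the values making $m_k-a_kr_k$ of the correct parity; by Proposition~\ref{even-quasisquare}/\ref{odd-quasisquare} the product of diagonal contributions $\prod_{a_k} L_{ex}(s,\Delta_k^{(a_kr_k)}\nu^{u_k},\wedge^2)$ collapses to $L(s,\Delta_k\nu^{u_k},\wedge^2)=L(s+2u_k,\Delta_k,\wedge^2)$, and the product of the pair contributions $\prod_{a_i,a_j}L_{ex}(s,\Delta_i^{(a_ir_i)}\nu^{u_i}\times\Delta_j^{(a_jr_j)}\nu^{u_j})$ collapses, by the analogous Rankin--Selberg factorization (the filtration of $\mathcal{I}_{(k)}(\pi,\sigma)$ recalled in Section~5.3, cf.\ \cite{CoPe,JaPSSh83}), to $L(s,\Delta_i\nu^{u_i}\times\Delta_j\nu^{u_j})=L(s+u_i+u_j,\Delta_i\times\Delta_j)$. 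Since by conditions~(3)--(5) all these factors are pairwise coprime in $\mathbb{C}[q^{\pm s}]$, the l.c.m.\ is the product, giving
\[
L(s,\pi_u,\wedge^2)=\prod_{1\le k\le t}L(s+2u_k,\Delta_k,\wedge^2)\prod_{1\le i<j\le t}L(s+u_i+u_j,\Delta_i\times\Delta_j),
\]
where I have used $L(s,\Delta\nu^{w},\wedge^2)=L(s+2w,\Delta,\wedge^2)$ and the analogous shift for the Rankin--Selberg $L$-function.

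\textbf{Main obstacle.} The delicate point is the codimension argument: one must check that for a general (not necessarily length-two) induced representation no pole of $L_{ex}$ of a derivative constituent is ``shared'' among three or more blocks or among overlapping pairs. This is precisely where Hartogs' theorem and the rationality of the deformed integrals in all of $q^{-u_1},\dotsm,q^{-u_t},q^{-s}$ are needed, together with a careful bookkeeping (via the general-position conditions) that the simultaneous-vanishing loci forcing such higher-multiplicity poles have codimension $\ge 2$ in $\mathcal{D}_\pi\times\mathcal{D}_s$. Organizing this bookkeeping cleanly — reducing a $t$-block constituent to its one- and two-block sub-data via induction while keeping track of which $a_k$ contribute — is the technical heart of the proof; everything else is the mechanical assembly of the already-established one- and two-block formulas.
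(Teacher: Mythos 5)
Your plan follows the paper's own proof: factor $L(s,\pi_u,\wedge^2)$ through the exceptional $L$-functions of the even/odd derivative constituents (Theorems~\ref{prod-L}, \ref{prod-L-odd}), use rationality of the integrals in $(q^{-u},q^{-s})$ together with Hartogs' theorem to rule out poles that would require two or more matching conditions among the blocks, identify the surviving two-block contributions with Rankin--Selberg exceptional factors via Corollary~\ref{two blocks}, and reassemble by Propositions~\ref{even-quasisquare}/\ref{odd-quasisquare} and the coprimality of the resulting factors in general position. One small caveat: your intermediate assertion that $L_{ex}$ of an arbitrary derivative constituent equals the product of all its diagonal and pair exceptional factors is not what the argument gives (in general position a constituent with three or more active blocks has trivial exceptional factor), but this does not affect your assembly, since what you actually need — and what your Hartogs step already provides, exactly as in the paper — is that any contributing constituent has at most one active pair or one active self-dual block, with all other blocks exhausted.
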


\begin{proof}

Let us fix points $u$ in general position. Consider the Jacquet and Shalika local integrals for $\pi_u$, namely the $J(s,W_u,\Phi)$ if $m=2n$ or $m=2n+1$ with $W_u \in \mathcal{W}^{(0)}_{\pi}$, and $\Phi \in \mathcal{S}(F^n)$, and the fractional ideals $\mathcal{J}(\pi_u) \subset \mathbb{C}(q^{-s})$ they generate. Let us now take $\Delta_i$ to be associated to the segment $[\rho_i,\dotsm,\rho_i\nu^{\ell_i-1}]$ with $\rho_i$ an irreducible supercuspidal representation of $GL_{r_i}$ $m_i=r_i\ell_i$ and $m=\sum r_i\ell_i$. Then by Theorem \ref{prod-L} and Theorem \ref{prod-L-odd}, we know that the poles of these families are precisely the poles of the exceptional contributions to the $L$-functions of the form
\[
  L_{ex}(s,\pi_u^{(a_1r_1,\dotsm,a_tr_t)},\wedge^2)
\]
 such that $0 \leq a_i \leq \ell_i$ and $m-\sum a_ir_i$ is an even number. In fact,
 \[
   L(s,\pi_u,\wedge^2)^{-1}=l.c.m. \{ L_{ex}(s,\pi_u^{(a_1r_1,\dotsm,a_tr_t)},\wedge^2)^{-1} \}
 \]
where $0 \leq a_i \leq \ell_i$, $m-\sum a_ir_i$ is an even number, and the least common multiple is taken in terms of divisibility in $\mathbb{C}[q^{\pm s}]$. Each such exceptional $L$-function can have poles which lie along the twisted self-contragrediance requirement
\[
  (\pi_u^{(a_1r_1,\dotsm,a_tr_t)})^{\sim} \simeq \pi_u^{(a_1r_1,\dotsm,a_tr_t)}\nu^s
\]
or equivalently
\[
\label{deformed-self contra}
\tag{8.1}
 ( \mathrm{Ind}(\Delta_{1,u_1}^{(a_1r_1)} \otimes \dotsm \otimes \Delta_{t,u_t}^{(a_tr_t)}))^{\sim} \simeq \mathrm{Ind}(\Delta_{1,u_1}^{(a_1r_1)}\otimes \dotsm \otimes \Delta_{t,u_t}^{(a_tr_t)} )\nu^s.
\]
For $x$ a positive real number, let $\lfloor x \rfloor$ be the greatest integer less than or equal to $x$. Since the induced representations are irreducible, the only way that \eqref{deformed-self contra} is possible is that there is a reordering of those indices $i$ for which $a_i \neq \ell_i$ and under this reordering there exists an integer $p$ between $0$ and $\lfloor t/2 \rfloor$ such that $(\Delta_{i_j}^{(a_{i_j}r_{i_j})}\nu^{u_{i_j}})^{\sim} \simeq \Delta_{i_{j+1}}^{(a_{i_{j+1}}r_{i_{j+1}})}\nu^{u_{i_{j+1}}}\nu^s$ or equivalently $(\Delta_{i_j}^{(a_{i_j}r_{i_j})})^{\sim} \simeq \Delta_{i_{j+1}}^{(a_{i_{j+1}}r_{i_{j+1}})}\nu^{u_{i_j}+u_{i_{j+1}}+s}$ for $i_j < i _{j+1}$ and $j=1,3,\dotsm,2p-1$ if $p \neq 0$, and $(\Delta_{i_j}^{(a_{i_j}r_{i_j})}\nu^{u_{i_j}})^{\sim} \simeq \Delta_{i_j}^{(a_{i_j}r_{i_j})}\nu^{u_{i_j}}\nu^s$ or equivalently $(\Delta_{i_j}^{(a_{i_j}r_{i_j})})^{\sim} \simeq \Delta_{i_j}^{(a_{i_j}r_{i_j})}\nu^{2u_{i_j}+s}$ for $j > 2p$.

\par
Now let us see how the condition \eqref{deformed-self contra} varies in $u=(u_1,\dotsm,u_t) \in \mathcal{D}_{\pi}$. Consider now the Jacquet-Shalika local integral $J(s,W_u,\Phi)$ if $m=2n$ or $m=2n+1$ with $W_u \in \mathcal{W}^{(0)}_{\pi}$, and $\Phi \in \mathcal{S}(F^n)$. Then these local integrals define rational functions in $\mathbb{C}(q^{-s},q^{-u})$. For $u$ in the Zariski open subset of general position, these rational functions can have poles coming from the exceptional contributions to the exterior square $L$-functions from
\[
   L(s,\pi_u,\wedge^2)^{-1}=l.c.m. \{ L_{ex}(s,\pi_u^{(a_1r_1,\dotsm,a_tr_t)},\wedge^2)^{-1} \}
 \]
where $0 \leq a_i \leq \ell_i$ and $m-\sum a_ir_i$ is an even number. Each such exceptional $L$-function can have poles which lie along the locus defined by a finite number of hyperplanes, where there is one equation 
\[
q^{-(u_i+u_j+s)(m_i-a_ir_i)}\omega_{\Delta_i^{(a_ir_i)}}(\varpi)\omega_{\Delta_j^{(a_jr_j)}}(\varpi)=1
\] 
for every pair $(i,j)$ of indices $i < j$ such that $r_i \neq \ell_i$, $r_j \neq \ell_j$, and $(\hspace*{-.5mm}\Delta_i^{(a_ir_i)}\hspace*{-.4mm})^{\sim}\hspace*{-1mm} \simeq\hspace*{-1mm} \Delta_j^{(a_jr_j)}\nu^{u_i+u_j+s}$, or one equation
\[
 q^{-(2u_i+s)(m_i-a_ir_i)}\omega_{\Delta_i^{(a_ir_i)}}^2(\varpi)=1
\]
for every indices $i$ such that $r_i \neq \ell_i$ and $(\Delta_i^{(a_ir_i)})^{\sim} \simeq \Delta_i^{(a_ir_i)}\nu^{2u_i+s}$. So if there is more than one pair of indices $i < j$ such that $r_i \neq \ell_i$, $r_j \neq \ell_j$, and $(\Delta_i^{(a_ir_i)})^{\sim} \simeq \Delta_j^{(a_jr_j)}\nu^{u_i+u_j+s}$, or more than one index $i$ satisfying $(\Delta_i^{(a_ir_i)})^{\sim} \simeq \Delta_i^{(a_ir_i)}\nu^{2u_i+s}$ and $r_i \neq \ell_i$, as we are assuming, then this singular locus will be defined by $2$ or more independent equations and hence will be of codimension greater than or equal to two. By Theorem of Hartogs in Theorem \ref{Hartog}, we know that a function of several complex variables on a domain that is holomorphic outside an analytic set of codimension two or more extends uniquely to a function holomorphic on that domain. Hence every singularity of our local integrals must be accounted by an exceptional contribution of the form $L_{ex}(s,\mathrm{Ind}(\Delta_i^{(a_ir_i)}\nu^{u_i}\otimes \Delta_j^{(a_jr_j)}\nu^{u_j} ),\wedge^2)$ for $i < j$ or of the form $L_{ex}(s,\Delta_i^{(a_ir_i)}\nu^{u_i},\wedge^2)$. According to Proposition \ref{two blocks}, the first type is the same contribution as the Rankin-Selberg $L$-function
\[
\begin{split}
  L_{ex}(s,\mathrm{Ind}(\Delta_i^{(a_ir_i)}\nu^{u_i}\otimes \Delta_j^{(a_jr_j)}\nu^{u_j} ),\wedge^2)&=L_{ex}(s,\Delta_i^{(a_ir_i)}\nu^{u_i} \times \Delta_j^{(a_jr_j)}\nu^{u_i})\\
  &=L(s+u_i+u_j,\Delta_i^{(a_ir_i)}\times \Delta_j^{(a_jr_j)}).
\end{split}
\]
For fixed $i$ and $j$ with $i < j$, we see that the least common multiple of the inverse of these factors $L(s+u_i+u_j,\Delta_i^{(a_ir_i)}\times \Delta_j^{(a_jr_j)})$
will contribute a factor of $L(s+u_i+u_j,\Delta_i \times \Delta_j)^{-1}$ to $L(s,\pi_u,\wedge^2)^{-1}$. For fixed $i$, by the analysis in Proposition \ref{even-quasisquare} and \ref{odd-quasisquare}, the least common multiple of the inverse of the second types of factors $L_{ex}(s,\Delta_i^{(a_ir_i)}\nu^{u_i},\wedge^2)$ contributes a factor of $L(s,\Delta_i\nu^{u_i},\wedge^2)^{-1}=L(s+2u_i,\Delta_i,\wedge^2)^{-1}$ to $L(s,\pi_u,\wedge^2)^{-1}$. Now, still for $u$ in general position, the $L(s+u_i+u_j,\Delta_i \times \Delta_j)^{-1}$ with $i < j$ and $L(s+2u_k,\Delta_k,\wedge^2)^{-1}$ will be relatively prime as $i,j$ and $k$ vary. So their contribution to the least common multiple of the exceptional contribution to the exterior square $L$-functions will be their product. In this way, we obtain that
\[
  L(s,\pi_u,\wedge^2)=\prod_{1 \leq k \leq t} L(s+2u_k,\Delta_k,\wedge^2) \prod_{1 \leq i < j \leq t} L(s+u_i+u_j, \Delta_i \times \Delta_j)
\]
 because $L(s,\pi_u,\wedge^2)^{-1}$ is the least common multiple of
$
  \{ L_{ex}(s,\Delta_k^{(a_kr_k)}\nu^{u_k},\wedge^2)^{-1}\; | \; 1\leq k  \leq t \} 
$
 and
$
    \{ L_{ex}(s,\text{Ind}(\Delta_i^{(a_ir_i)}\nu^{u_i} \otimes \Delta_j^{(a_jr_j)}\nu^{u_j}),\wedge^2)^{-1}\; | \; 1\leq i < j \leq t \}.
$

\end{proof}

 We recall some results from complex analytic geometry in \cite{Fi82}. Let $\mathcal{M}$ be a complex manifold and $\mathcal{X}$ an analytic hypersurface of $\mathcal{M}$. We denote the set of regular points of $\mathcal{X}$ by $Reg(\mathcal{X})$. We say that $\mathcal{X}$ is reducible if we can find analytic subsets $\mathcal{Y}, \mathcal{Z}$ of $\mathcal{M}$, neither of which equals to $\mathcal{X}$, such that $\mathcal{X}=\mathcal{Y} \cup \mathcal{Z}$. If $\mathcal{X}$ is not reducible, we say $\mathcal{X}$ is irreducible. We state the following results from \cite{Fi82}.

 \begin{theorem}[\cite{Fi82}, Chapter IV, Section 4, Theorem 4.6.7] 
 \label{hypersurfacedec}
 Let $\mathcal{X}$ be an analytic hypersurface of the complex manifold $\mathcal{M}$ and $\{ \mathcal{X}_i \;| \; i \in I \}$ denote the set of connected component of $Reg(\mathcal{X})$. Then $\mathcal{X}=\cup_{i \in I} \mathcal{X}_i$ and this decomposition of $\mathcal{X}$ as a countable union of irreducible analytic hypersurfaces is unique up to order.
 
 \end{theorem}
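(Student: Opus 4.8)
The plan is to construct the decomposition from the local structure theory of analytic sets, following the standard development in \cite{Fi82}. First I would recall that, $\mathcal{X}$ being an analytic hypersurface of the $n$-dimensional manifold $\mathcal{M}$, its singular locus $\mathrm{Sing}(\mathcal{X})=\mathcal{X}\setminus Reg(\mathcal{X})$ is an analytic subset of $\mathcal{M}$ of codimension at least two — in particular a closed, nowhere dense subset of $\mathcal{X}$ — while $Reg(\mathcal{X})$ is a complex submanifold of $\mathcal{M}\setminus\mathrm{Sing}(\mathcal{X})$ of pure dimension $n-1$. Since $\mathcal{M}$ is second countable, $Reg(\mathcal{X})$ has at most countably many connected components; denote them $\mathcal{X}_i^{\circ}$, $i\in I$, and set $\mathcal{X}_i:=\overline{\mathcal{X}_i^{\circ}}$, the closure being taken in $\mathcal{M}$.

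The heart of the argument is to show that each $\mathcal{X}_i$ is an irreducible analytic hypersurface of $\mathcal{M}$, and analyticity is the main obstacle: one must verify that the closure of a connected component of $Reg(\mathcal{X})$ is again analytic across $\mathrm{Sing}(\mathcal{X})$. I would argue locally near a point $p\in\mathrm{Sing}(\mathcal{X})$: choose a local holomorphic defining function for $\mathcal{X}$, apply the Weierstrass preparation theorem to present $\mathcal{X}$ in suitable coordinates as the zero set of a Weierstrass polynomial, factor that polynomial into irreducibles in the local ring $\mathcal{O}_{\mathcal{M},p}$, and read off the finitely many local irreducible components of $\mathcal{X}$ at $p$. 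The key point is that the regular part of each local component is connected on a small punctured neighbourhood of $p$, hence lies in a single $\mathcal{X}_i^{\circ}$; conversely each $\mathcal{X}_i$ is, near $p$, a union of such local components, so it is analytic. Once analyticity is established, irreducibility of $\mathcal{X}_i$ follows because $Reg(\mathcal{X}_i)\supseteq\mathcal{X}_i^{\circ}$ is dense in $\mathcal{X}_i$ and connected, so $\mathcal{X}_i$ cannot be the union of two proper analytic subsets.

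Next I would verify the covering identity $\mathcal{X}=\bigcup_{i\in I}\mathcal{X}_i$. Locally $\mathcal{X}$ has only finitely many irreducible components, so the family $\{\mathcal{X}_i\}_{i\in I}$ is locally finite and $\bigcup_{i\in I}\mathcal{X}_i$ is closed; it contains $\bigcup_{i\in I}\mathcal{X}_i^{\circ}=Reg(\mathcal{X})$, which is dense in $\mathcal{X}$ because $\mathrm{Sing}(\mathcal{X})$ is nowhere dense, so $\bigcup_{i\in I}\mathcal{X}_i\supseteq\overline{Reg(\mathcal{X})}=\mathcal{X}$, and the reverse inclusion is immediate. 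Finally, for uniqueness up to order, suppose $\mathcal{X}=\bigcup_{j\in J}\mathcal{Y}_j$ is any locally finite decomposition into irreducible analytic hypersurfaces with no $\mathcal{Y}_j$ contained in the union of the others. Each $\mathcal{Y}_j$ meets $Reg(\mathcal{X})$ in a nonempty relatively open subset of $\mathcal{Y}_j$ whose complement in $\mathcal{Y}_j$ is a proper analytic subset; since $\mathcal{Y}_j$ is irreducible this intersection is connected, hence contained in a unique component $\mathcal{X}_{i(j)}^{\circ}$, and therefore $\mathcal{Y}_j=\overline{\mathcal{Y}_j\cap Reg(\mathcal{X})}\subseteq\mathcal{X}_{i(j)}$. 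Running the same reasoning with the roles of the two families interchanged gives $\mathcal{X}_i\subseteq\mathcal{Y}_{j(i)}$ for each $i$, and minimality of both decompositions forces $j\mapsto i(j)$ and $i\mapsto j(i)$ to be mutually inverse bijections, which is exactly the claimed uniqueness up to order.
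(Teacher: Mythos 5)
The paper never proves this statement—it is quoted verbatim from \cite{Fi82}—so there is no internal proof to compare against, and your argument is precisely the standard one from that source: local Weierstrass factorization into irreducible germs, connectedness of the punctured regular locus of each local component, closures of the connected components of $Reg(\mathcal{X})$, and a density/closure argument for uniqueness. Your sketch is correct; the only glossed point is that near a singular point one should first remove from the regular locus of a local component its intersection with the other local components (a proper analytic subset, so connectedness survives) before concluding that it lies in a single component of $Reg(\mathcal{X})$.
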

 
  We call the hypersurfaces $\mathcal{X}_i$ in Theorem \ref{hypersurfacedec} the irreducible component of $\mathcal{X}$. We would like to specialize the result in Theorem \ref{general-prod} to $u$ not in general position.

 \begin{proposition}
\label{deform-l factor}
Let $\pi=\mathrm{Ind}(\Delta_1 \otimes \dotsm \otimes \Delta_t)$ be a representation of $GL_m$ of Whittaker type. Then 
$\displaystyle L(s,\pi,\wedge^2) \in \prod_{1 \leq k \leq t} L(s,\Delta_k,\wedge^2) \prod_{1 \leq i < j \leq t}  L(s, \Delta_i \times \Delta_j) \mathbb{C}[q^s,q^{-s}]$.
\end{proposition}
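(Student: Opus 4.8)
The plan is to deduce this from Theorem~\ref{general-prod} by letting the deformation parameter $u$ tend to $0$ and using the rationality of the Jacquet--Shalika integrals established in Section~7. First I would fix the representation $\pi = \mathrm{Ind}(\Delta_1 \otimes \dotsm \otimes \Delta_t)$ of Whittaker type on $GL_m$ and realize the deformed family $\pi_u$, $u \in \mathcal{D}_{\pi}$, on the common space $\mathcal{F}_{\pi}$ with Whittaker space $\mathcal{W}^{(0)}_{\pi}$ as in Section~7.2. For $m=2n$ (resp.\ $m=2n+1$) the family of Jacquet--Shalika integrals $J(s,W_u,\Phi)$ with $W_u \in \mathcal{W}^{(0)}_{\pi}$ and $\Phi \in \mathcal{S}(F^n)$ defines, by Proposition~\ref{even-rational} (resp.\ Proposition~\ref{odd-rational}), rational functions in $\mathbb{C}(q^{-u},q^{-s})$; the span of these, for fixed $u$, is the fractional ideal $\mathcal{J}(\pi_u)$, whose normalized generator is $L(s,\pi_u,\wedge^2)$. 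By Theorem~\ref{general-prod}, for $u$ in general position this generator equals $\prod_k L(s+2u_k,\Delta_k,\wedge^2)\prod_{i<j}L(s+u_i+u_j,\Delta_i\times\Delta_j)$, and in particular $L(s,\pi_u,\wedge^2)^{-1}$ divides the right-hand side's reciprocal in $\mathbb{C}[q^{\pm s}]$ uniformly over that Zariski-open set.

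The core step is then a continuity/specialization argument at $u=0$. Set $D(u,s) = \prod_k L(s+2u_k,\Delta_k,\wedge^2)^{-1}\prod_{i<j}L(s+u_i+u_j,\Delta_i\times\Delta_j)^{-1}$, a polynomial in $q^{-u}$ and $q^{-s}$ (a product of standard Euler factors' reciprocals). For $u$ in general position, Theorem~\ref{general-prod} gives that $D(u,s)\,J(s,W_u,\Phi)$ is a polynomial in $\mathbb{C}[q^{\pm s}]$ (entire in $s$) for all $W_u \in \mathcal{W}^{(0)}_{\pi}$ and $\Phi$. But $D(u,s)\,J(s,W_u,\Phi)$ is a rational function of $q^{-u}$ and $q^{-s}$ by Section~7, and it is holomorphic in $s$ for $u$ ranging over a Zariski-dense subset of $\mathcal{D}_\pi$; since the poles in $s$ of a rational function in $(q^{-u},q^{-s})$ occur along a hypersurface whose irreducible components (Theorem~\ref{hypersurfacedec}) each project onto $\mathcal{D}_\pi$ or are contained in a proper subvariety of $\mathcal{D}_\pi$, the vanishing of the $s$-poles on a Zariski-dense set forces $D(u,s)\,J(s,W_u,\Phi)$ to be holomorphic in $s$ for \emph{all} $u$, in particular for $u=0$. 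Specializing, $D(0,s)\,J(s,W_0,\Phi)$ is entire in $s$ for every $W_0 \in \mathcal{W}(\pi,\psi)$ obtained by specializing $\mathcal{W}^{(0)}_{\pi}$ at $u=0$; since these exhaust $\mathcal{W}(\pi,\psi)$, the polynomial $D(0,s) = \prod_k L(s,\Delta_k,\wedge^2)^{-1}\prod_{i<j}L(s,\Delta_i\times\Delta_j)^{-1}$ kills $\mathcal{J}(\pi)$, i.e.\ $D(0,s)\,\mathcal{J}(\pi) \subset \mathbb{C}[q^{\pm s}]$. Because $L(s,\pi,\wedge^2)$ is, up to normalization, a generator of $\mathcal{J}(\pi)$ and $L(s,\pi,\wedge^2)^{-1}$ is the minimal such polynomial (normalized with value $1$ at $q^{-s}=0$), it follows that $L(s,\pi,\wedge^2)^{-1}$ divides $D(0,s)$ in $\mathbb{C}[q^{\pm s}]$, which is exactly the assertion $L(s,\pi,\wedge^2) \in \prod_k L(s,\Delta_k,\wedge^2)\prod_{i<j}L(s,\Delta_i\times\Delta_j)\,\mathbb{C}[q^{s},q^{-s}]$.

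A few supporting points must be checked carefully. One needs that specialization at $u=0$ of the family $\mathcal{W}^{(0)}_\pi$ recovers all of $\mathcal{W}(\pi,\psi)$ (equivalently, $\mathcal{W}_{(0)}$ contains $\mathcal{S}_\psi(P_m,\mathbb{C})$ by Proposition~\ref{poly-GeKa}, so the integrals attached to specialized $W_u$ span $\mathcal{J}(\pi)$); one needs that $D(u,s)$ is genuinely a polynomial in $q^{-u},q^{-s}$ (clear, as each $L$-factor is a geometric-series Euler factor whose inverse is linear in the relevant $q^{-(s+\text{shift})}$) so that ``$D\cdot J$ entire in $s$'' is a Zariski-closed condition on $u$ in the appropriate sense; and one needs the fact, from the theory of rational functions of several variables together with Theorem~\ref{hypersurfacedec}, that a rational function $R(u,s)$ which is holomorphic in $s$ for $u$ in a Zariski-dense set is holomorphic in $s$ for all $u$ in a neighborhood of $0$. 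The main obstacle is precisely this last Hartogs-type/analytic-continuation bookkeeping: making rigorous the passage ``pole-free in $s$ on a dense set of $u$ $\Rightarrow$ pole-free in $s$ at $u=0$'' for the specific rational functions $D(u,s)J(s,W_u,\Phi)$, which requires knowing that no spurious pole in $s$ can appear only on the proper subvariety $\{u=0\}$ — but this is controlled because $J(s,W_u,\Phi)$, being built from $\mathcal{W}^{(0)}_\pi$, is a rational function whose $s$-denominator divides a fixed polynomial in $(q^{-u},q^{-s})$ (bounded denominators, from the structure of the family and Theorem~\ref{Bernstein}), so its $s$-singular locus is a genuine hypersurface and Theorem~\ref{hypersurfacedec} applies.
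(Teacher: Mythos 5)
Your overall route is the same as the paper's (deform, use the rationality of $J(s,W_u,\Phi)$ in $\mathbb{C}(q^{-u},q^{-s})$ from Propositions \ref{even-rational} and \ref{odd-rational}, use Theorem \ref{general-prod} to see the ratio has no $s$-poles for $u$ in general position, then specialize at $u=0$), but the step you yourself flag as the main obstacle is exactly where your argument breaks, and your proposed fix does not repair it. The statement ``a rational function $R(u,s)$ holomorphic in $s$ for $u$ in a Zariski-dense set is holomorphic in $s$ for all $u$ near $0$'' is false: a factor of the denominator depending only on $u$, e.g.\ $(1-q^{u_1-u_2})$, produces a \emph{vertical} component $H\times\mathcal{D}_s$ of the polar locus which is invisible for generic $u$ but destroys holomorphy at $u=0$ if $0\in H$. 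Density of good $u$ only kills components dominating $\mathcal{D}_\pi$; it says nothing about components lying over the removed hyperplanes $H_1,\dotsm,H_p$ of Proposition \ref{general position}, which is precisely where $u=0$ may sit. Your appeal to ``bounded denominators, so the singular locus is a genuine hypersurface and Theorem \ref{hypersurfacedec} applies'' does not exclude such vertical components: a fixed denominator $P(q^{\pm u},q^{\pm s})$ may perfectly well contain a factor vanishing identically on some $H_i$, and its zero set is still a genuine hypersurface.

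What the paper actually does at this point is the missing idea you need. Writing $P$ for the denominator of the ratio, one observes that since the ratio is pole-free off the hyperplanes, $Z(P)\subset\bigcup_i H_i\times\mathcal{D}_s$, so by Theorem \ref{hypersurfacedec} any nontrivial $P$ forces an irreducible component of $Z(P)$ of the form $H\times\mathcal{D}_s$; this is then ruled out because for \emph{every} fixed $u$ --- including $u$ on a removed hyperplane --- the integral $J(s,W_u,\Phi)$ is given by an absolutely convergent integral for $\mathrm{Re}(s)$ sufficiently large (the convergence bounds \eqref{even-absconv}, \eqref{odd-absconv} coming from Theorem \ref{integral-rational} and Lemma \ref{convergence-odd}), hence is holomorphic in $s$ in a right half-plane, while the reciprocal of $\prod_k L(s+2u_k,\Delta_k,\wedge^2)\prod_{i<j}L(s+u_i+u_j,\Delta_i\times\Delta_j)$ is a Laurent polynomial and so contributes no poles; thus no slice $\{u\}\times\mathcal{D}_s$ can lie entirely in the polar set, so $P$ is a unit and the ratio lies in $\mathbb{C}[q^{\pm u},q^{\pm s}]$. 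Only after this can one specialize at $u=0$ (your remark that the specialized Whittaker functions recover $\mathcal{W}(\pi,\psi)$ is fine and is also what the paper uses implicitly). So the proposal is not complete as written: you must replace the density/Hartogs bookkeeping by the convergence-in-a-half-plane argument at every fixed $u$, which is the actual content of the paper's proof.
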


 \begin{proof}
 
 We know from Proposition \ref{even-rational} and Proposition \ref{odd-rational} that for $W_u \in \mathcal{W}^{(0)}_{\pi}$ and $\Phi \in \mathcal{S}(F^n)$ with $m=2n$ or $m=2n+1$, the ratio
 \[
 \label{deform-rational}
 \tag{8.2}
   \frac{J(s,W_u,\Phi)}{\prod L(s+2u_k,\Delta_k,\wedge^2) \prod L(s+u_i+u_j, \Delta_i \times \Delta_j)} 
    \]
    is a rational functions in $\mathbb{C}(q^{-u},q^{-s})$. Let $P(q^{\pm u},q^{\pm s}) \in \mathbb{C}[q^{\pm u},q^{\pm s}]$ be the denominator of the rational function in \eqref{deform-rational}. We let $\mathcal{D}_s=(\mathbb{C}/\frac{2\pi i}{\log(q)}\mathbb{Z}) \simeq \mathbb{C}^{\times}$. Knowing that the exterior square $L$-function for the deformed representation in general position $\pi_u$ is given by the product in Theorem \ref{general-prod}, which is the inverse of a Laurent polynomial in $\mathbb{C}[q^{\pm u},q^{\pm s}]$, we know that for the rational function in \eqref{deform-rational} has no poles on the Zariski open set of $u$ in general position. Proposition \ref{general position} asserts that the removed hyperplanes defining general position, $H_1,\dotsm,H_p \in \mathcal{D}_{\pi}=(\mathbb{C}/\frac{2\pi i}{\log(q)}\mathbb{Z})^t$, do not depend on $s$, but only on $u$. If $P(q^{\pm u},q^{\pm s})$ is not unit in $\mathbb{C}[q^{\pm u},q^{\pm s}]$, then the set $Z(P)$ of zeroes of exponential polynomial $P(q^{\pm u},q^{\pm s})$ which defines the hypersurface in $\mathcal{D}_{\pi} \times \mathcal{D}_s$ is contained in the union $\cup_i\;H_i \times \mathcal{D}_s$. By Theorem \ref{hypersurfacedec}, one of the irreducible components of $Z(P)$ is $H \times \mathcal{D}_s$  for $H$ an affine hyperplane of $\mathcal{D}_{\pi}$. However $H \times \mathcal{D}_s$ cannot lie entirely in $Z(P)$, as for any fixed $u \in H$, for $s$ large enough, the rational functions $J(s,W_u,\Phi)$ are defined by absolutely convergent integrals according to \eqref{even-absconv} or \eqref{odd-absconv}, hence is holomorphic, and the inverse of $\prod L(s+2u_k,\Delta_k,\wedge^2) \prod L(s+u_i+u_j, \Delta_i \times \Delta_j)$ is polynomial in $\mathbb{C}[q^{\pm u},q^{\pm s}]$, thus without poles. Therefore the ratios in \eqref{deform-rational} is entire and hence lie in $\mathbb{C}[q^{\pm u},q^{\pm s}]$. If we now specialize $u=0$, we find that
\[
   \frac{J(s,W,\Phi)}{\prod L(s,\Delta_k,\wedge^2) \prod L(s, \Delta_i \times \Delta_j)}
   \]
   have no poles for all $W \in \mathcal{W(\pi,\psi)}$. This completes the proof.

    \end{proof}

Let us consider again the behavior of the gamma factor for the representation $\pi_u=\mathrm{Ind}(\Delta_1\nu^{u_1}\otimes \dotsm \otimes \Delta_t\nu^{u_t})$ on $GL_m$ under deformation. From the previous Proposition \ref{deform-l factor}, we cannot conclude multiplicativity of the local $\gamma$-factor, but we only have the following weak version of multiplicativity of the $\gamma$-factor.

\begin{proposition}
\label{deform-gamma}
Let $\pi=\mathrm{Ind}(\Delta_1 \otimes \dotsm \otimes \Delta_t)$ be a representation of $GL_m$ of Whittaker type. We let $u=(u_1,\dotsm,u_t)$ be an element of $\mathcal{D}_{\pi}$ and $\pi_u=\mathrm{Ind}(\Delta_1\nu^{u_1}\otimes \dotsm \otimes \Delta_t\nu^{u_t})$ the deformed representation of $\pi$. Then $\gamma(s,\pi_u,\wedge^2,\psi)$ and 
\[
  \prod_{1 \leq k \leq t}\gamma(s+2u_k,\Delta_k,\wedge^2,\psi) \prod_{1 \leq i < j \leq t}\gamma(s+u_i+u_j,\Delta_i \times \Delta_j,\psi)
  \]
are equal up to a unit in $\mathbb{C}[q^{\pm u},q^{\pm s}]$.
\end{proposition}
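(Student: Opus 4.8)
The plan is to deduce this weak multiplicativity of the $\gamma$-factor from the corresponding weak multiplicativity of the $L$-factor in Proposition \ref{deform-l factor}, combined with the rationality of $\gamma(s,\pi_u,\wedge^2,\psi)$ in $\mathbb{C}(q^{-u},q^{-s})$ from Corollary \ref{gamma-rational} and Corollary \ref{gamma-rational-odd}, together with the local functional equation (Theorem \ref{local functional eq} in the even case, Theorem \ref{local-func-odd} in the odd case). The key observation is that for each $i$, the contragredient $(\pi_u)^{\iota} = (\pi^{\iota})_{u^{\iota}}$ is again a deformed representation of Whittaker type, with $\pi^{\iota} = \mathrm{Ind}(\widetilde{\Delta}_t \otimes \dotsm \otimes \widetilde{\Delta}_1)$ and $u^{\iota} = (-u_t,\dotsm,-u_1)$, so that everything proved for $\pi_u$ applies to $(\pi_u)^{\iota}$ as well; in particular Proposition \ref{deform-l factor} gives
\[
 L(1-s,(\pi_u)^{\iota},\wedge^2) \in \prod_{1\leq k \leq t} L(1-s-2u_k,\widetilde{\Delta}_k,\wedge^2) \prod_{1 \leq i < j \leq t} L(1-s-u_i-u_j,\widetilde{\Delta}_i \times \widetilde{\Delta}_j) \,\mathbb{C}[q^s,q^{-s}],
\]
and the analogous containment holds for $L(s,\pi_u,\wedge^2)$ with $\pi_u$ in place of $(\pi_u)^{\iota}$.

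First I would write $\gamma(s,\pi_u,\wedge^2,\psi)=\varepsilon(s,\pi_u,\wedge^2,\psi)L(1-s,(\pi_u)^{\iota},\wedge^2)/L(s,\pi_u,\wedge^2)$ by definition, and similarly for each factor $\gamma(s+2u_k,\Delta_k,\wedge^2,\psi)$ and $\gamma(s+u_i+u_j,\Delta_i\times\Delta_j,\psi)$. Set
\[
 R(q^{-u},q^{-s}) = \frac{\gamma(s,\pi_u,\wedge^2,\psi)}{\prod_{1\leq k\leq t}\gamma(s+2u_k,\Delta_k,\wedge^2,\psi)\prod_{1\leq i<j\leq t}\gamma(s+u_i+u_j,\Delta_i\times\Delta_j,\psi)};
\]
this is a well-defined element of $\mathbb{C}(q^{-u},q^{-s})$ by Corollary \ref{gamma-rational}, Corollary \ref{gamma-rational-odd}, and the corresponding rationality of Rankin--Selberg and quasi-square-integrable $\gamma$-factors. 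The goal is to show $R$ and $R^{-1}$ both lie in $\mathbb{C}[q^{\pm u},q^{\pm s}]$, i.e. $R$ is a unit there. Using the local functional equation, one expresses the numerator $\gamma(s,\pi_u,\wedge^2,\psi)$ as the ratio $J(1-s,\varrho(\tau_m)\widetilde{W_u},\hat\Phi)/J(s,W_u,\Phi)$ for a suitable choice of $W_u,\Phi$ normalized so that $J(s,W_u,\Phi)$ generates $\mathcal{J}(\pi_u)$ up to the $L$-factor; the same manipulation applied to each $\Delta_k$ and each pair $\Delta_i\times\Delta_j$, via the known Rankin--Selberg functional equation of \cite{JaPSSh83} and the factorization of the Jacquet--Shalika integral under the Iwasawa/partial Iwasawa decompositions, reduces the ratio $R$ to a ratio of the normalized integrals that by Proposition \ref{deform-l factor} (applied to $\pi_u$ and to $(\pi_u)^{\iota}$) is regular; by symmetry in $s \leftrightarrow 1-s$ and $\pi_u \leftrightarrow (\pi_u)^{\iota}$, $R^{-1}$ is regular too.

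The cleanest route, which I would actually carry out, is the following: by Proposition \ref{deform-l factor} the quotient $L(s,\pi_u,\wedge^2)/\big(\prod_k L(s+2u_k,\Delta_k,\wedge^2)\prod_{i<j}L(s+u_i+u_j,\Delta_i\times\Delta_j)\big)$ lies in $\mathbb{C}[q^{\pm u},q^{\pm s}]$, and applying the same to $(\pi_u)^{\iota}$ at the argument $1-s$ gives the corresponding statement for the $L(1-s,\cdot,\wedge^2)$ and $L(1-s,\cdot\times\cdot)$ factors. Since the $\varepsilon$-factors of both sides are units in $\mathbb{C}[q^{\pm u},q^{\pm s}]$ (each individual $\varepsilon$-factor is invertible in $\mathbb{C}[q^{\pm s}]$ and depends polynomially on $q^{-u}$), combining these two containments with the defining formula for $\gamma$ shows that $R$ is a ratio of two elements of $\mathbb{C}[q^{\pm u},q^{\pm s}]$, hence holomorphic in $q^{-s}$ and $q^{-u}$ away from the zero locus of the denominator; but then repeating the argument with the roles of the two products interchanged (using that $\widetilde{\Delta_i^{(\cdot)}}$-patterns are symmetric and that the Rankin--Selberg and exterior square $\gamma$-factors satisfy $\gamma(s,\pi,\psi)\gamma(1-s,\widetilde\pi,\psi^{-1})=1$ up to a monomial) forces $R^{-1}$ to be holomorphic as well, so $R$ is a unit in $\mathbb{C}[q^{\pm u},q^{\pm s}]$. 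I expect the main obstacle to be the careful bookkeeping needed to match up the contragredient-side $L$- and $\varepsilon$-factors: one must verify that the finite set of hyperplanes in $u$ removed in passing to general position for $\pi$ coincides (or is contained in the one) used for $\pi^{\iota}$, and that the Hartogs-type extension implicit in Proposition \ref{deform-l factor} is genuinely symmetric under $s\mapsto 1-s$; once that symmetry is in place, the unit statement is immediate.
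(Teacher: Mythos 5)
Your overall strategy (reduce to showing that a suitably normalized $\varepsilon$-type factor is a unit, using the rationality of $\gamma(s,\pi_u,\wedge^2,\psi)$, Proposition \ref{deform-l factor}, and the $s\leftrightarrow 1-s$, $\pi_u\leftrightarrow(\pi_u)^{\iota}$ symmetry) is the right shape, and your symmetry step at the end is essentially the paper's final step. But there is a genuine gap in the middle, and it is exactly the hard part of the paper's proof. You assert that the $\varepsilon$-factor of $\pi_u$ "is invertible in $\mathbb{C}[q^{\pm s}]$ and depends polynomially on $q^{-u}$." For fixed $u$ the first half follows from applying the functional equation twice, but the joint dependence on $(u,s)$ is not known a priori: neither $L(s,\pi_u,\wedge^2)$ nor $\varepsilon(s,\pi_u,\wedge^2,\psi)$ is known to be rational in $q^{-u}$ at all (they are defined fiberwise in $u$); only $\gamma(s,\pi_u,\wedge^2,\psi)$, the integrals $J(s,W_u,\Phi)$, and the explicit products $\prod L(s+2u_k,\Delta_k,\wedge^2)\prod L(s+u_i+u_j,\Delta_i\times\Delta_j)$ are rational in both variables. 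This is precisely why the paper works with the variant factor $\varepsilon^{\circ}(s,\pi_u,\wedge^2,\psi)$ defined by \eqref{variantgamma} through the product of inducing-data $L$-factors, rather than with $\varepsilon(s,\pi_u,\wedge^2,\psi)$ and $L(s,\pi_u,\wedge^2)$ directly. Relatedly, Proposition \ref{deform-l factor} as stated is the specialization at $u=0$; what is actually available (from its proof) is that $J(s,W_u,\Phi)/\prod L(s+2u_k,\dotsm)\prod L(s+u_i+u_j,\dotsm)$ lies in $\mathbb{C}[q^{\pm u},q^{\pm s}]$, not a two-variable statement about $L(s,\pi_u,\wedge^2)$ itself.

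The second problem is the concluding logic: "$R$ is a ratio of two elements of $\mathbb{C}[q^{\pm u},q^{\pm s}]$, hence holomorphic away from the zero locus of the denominator; by symmetry $R^{-1}$ is holomorphic as well, so $R$ is a unit." Every element of $\mathbb{C}(q^{-u},q^{-s})$ is a ratio of two Laurent polynomials and is holomorphic off its polar locus, so this establishes nothing; to invoke the symmetry $\varepsilon^{\circ}(s,\pi_u,\wedge^2,\psi)\,\varepsilon^{\circ}(1-s,(\pi_u)^{\iota},\wedge^2,\psi^{-1})=1$ you first need to know that $\varepsilon^{\circ}$ (equivalently your $R$ times the product of the inducing-data $\varepsilon$'s) has \emph{no} poles in $(u,s)$, and then extend the identity from the general-position locus by Zariski density. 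The paper supplies exactly this missing ingredient: any polar component of $\varepsilon^{\circ}$ would have to be an entire set $H\times\mathcal{D}_s$ with $H$ one of the finitely many hyperplanes defining general position (Theorem \ref{hypersurfacedec}, using that those hyperplanes do not involve $s$); then the functional equation \eqref{ration of J and L} together with the entireness of $J(1-s,\varrho(\tau_m)\widetilde{W_u},\hat{\Phi})/\prod L(1-s-2u_k,\widetilde{\Delta}_k,\wedge^2)\prod L(1-s-u_i-u_j,\widetilde{\Delta}_i\times\widetilde{\Delta}_j)$ would force $J(s,W_u,\Phi)/\prod L(s+\dotsm)$ to vanish identically on $H\times\mathcal{D}_s$ for all $W_u,\Phi$; finally the Kirillov-model normalization (Lemma \ref{kirillov}, Proposition \ref{poly-GeKa}, and the space $\mathcal{P}_0$) produces data for which this ratio equals $1/\prod L(s+\dotsm)$, which is a nonzero Laurent polynomial in $q^{-s}$ for each fixed $u\in H$ --- a contradiction. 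Without this pole-exclusion argument (or an equivalent), your proof does not close.
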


\begin{proof}

We know from  Corollary \ref{gamma-rational} and Corollary \ref{gamma-rational-odd} that $\gamma(s,\pi_u,\wedge^2,\psi)$ is a rational function in $\mathbb{C}(q^{-u},q^{-s})$. The local $\varepsilon$-factor satisfies
\[
  \gamma(s,\pi_u,\wedge^2,\psi)=\frac{\varepsilon(s,\pi_u,\wedge^2,\psi)L(1-s,(\pi_u)^{\iota},\wedge^2)}{L(s,\pi_u,\wedge^2)}.
\]
For fixed $u$ we know, by applying the functional equations twice, that $\varepsilon$-factor $\varepsilon(s,\pi_u,\wedge^2,\psi)$ is of the form $\alpha q^{-\beta s}$, that is, it is a unit in $\mathbb{C}[q^{\pm s}]$. 
If we now define a variant of the $\varepsilon$-factor by
\[
\label{variantgamma}
\tag{8.3}
\begin{split}
& \gamma(s,\pi_u,\wedge^2,\psi)=  \varepsilon^{\circ}(s,\pi_u,\wedge^2,\psi) \\
&\phantom{***}\times \frac{\prod_{1 \leq k \leq t}L(1-s-2u_k,\widetilde{\Delta}_k,\wedge^2) \prod_{1 \leq i < j \leq t}L(1-s-u_i-u_j,\widetilde{\Delta}_i \times \widetilde{\Delta}_j)}
 {\prod_{1 \leq k \leq t}L(s+2u_k,\Delta_k,\wedge^2)\prod_{1 \leq i < j \leq t}L(s+u_i+u_j,\Delta_i \times \Delta_j)}
\end{split}
\]
then $\varepsilon^{\circ}(s,\pi_u,\wedge^2,\psi) \in \mathbb{C}(q^{-u},q^{-s})$ and for $u$ in general position we obtain $\varepsilon^{\circ}(s,\pi_u,\wedge^2,\psi)=\varepsilon(s,\pi_u,\wedge^2,\psi)$. Since the $\varepsilon^{\circ}$-factor is the elements of $\mathbb{C}(q^{-u},q^{-s})$, let $P(\hspace*{-.3mm}q^{\pm u},q^{\pm s}\hspace*{-.3mm})\hspace*{-1mm} \in \hspace*{-.7mm}\mathbb{C}[q^{\pm u},q^{\pm s}]$ be the denominator of the rational function $\varepsilon^{\circ}(s,\pi_u,\wedge^2,\psi)$. As the $\varepsilon$-factor $\varepsilon(s,\pi_u,\wedge^2,\psi)$ is a unit in $\mathbb{C}[q^{\pm s}]$, this implies that $\varepsilon^{\circ}(s,\pi_u,\wedge^2,\psi)$ has no poles on the Zariski open set of $u$ in general position. Proposition \ref{general position} asserts that the removed hyperplanes defining general position, $H_1,\dotsm,H_p \in \mathcal{D}_{\pi}=(\mathbb{C}/\frac{2\pi i}{\log(q)}\mathbb{Z})^t$, do not depend on $s$, but only on $u$. If $P(q^{\pm u},q^{\pm s})$ is not unit in $\mathbb{C}[q^{\pm u},q^{\pm s}]$, then the set $Z(P)$ of zeroes of exponential polynomial $P(q^{\pm u},q^{\pm s})$ which defines the hypersurface in $\mathcal{D}_{\pi} \times \mathcal{D}_s$ is contained in the union $\cup_i\;H_i \times \mathcal{D}_s$. By Theorem \ref{hypersurfacedec}, one of irreducible components of $Z(P)$ is $H \times \mathcal{D}_s$  for $H$ an affine hyperplane of $\mathcal{D}_{\pi}$. Thus  the hypersurface $Z(P)$ necessarily contains a set of the from $H \times \mathcal{D}_s$, for $H$ an affine hyperplane of $\mathcal{D}_{\pi}$. Since we know from the local functional equation in Theorem \ref{local functional eq} and Theorem \ref{local-func-odd} that $J(1-s,\varrho(\tau_{m})\widetilde{W_u},\hat{\Phi})=\gamma(s,\pi_u,\wedge^2,\psi)J(s,W_u,\Phi)$, \eqref{variantgamma} can be rewritten as
\[
\label{ration of J and L}
\tag{8.4}
\begin{split}
 & \frac{J(1-s,\varrho(\tau_{m})\widetilde{W_u},\hat{\Phi})}{\prod_{1 \leq k \leq t}L(1-s-2u_k,\widetilde{\Delta}_k,\wedge^2) \prod_{1 \leq i < j \leq t}L(1-s-u_i-u_j,\widetilde{\Delta}_i \times \widetilde{\Delta}_j)}\\
 &=\varepsilon^{\circ}(s,\pi_u,\wedge^2,\psi) \frac{J(s,W_u,\Phi)}{\prod_{1 \leq k \leq t}L(s+2u_k,\Delta_k,\wedge^2)\prod_{1 \leq i < j \leq t}L(s+u_i+u_j,\Delta_i \times \Delta_j)}
  \end{split}
\]
Because of the previous Proposition \ref{deform-l factor}, the left hand side of the equality \eqref{ration of J and L} is entire and so $H \times \mathcal{D}_s$ is a subset of the zeros of the polynomials 
\[
\frac{J(s,W_u,\Phi)}{\prod_{1 \leq k \leq t}L(s+2u_k,\Delta_k,\wedge^2)\prod_{1 \leq i < j \leq t}L(s+u_i+u_j,\Delta_i \times \Delta_j)}
\]
for all $W_u \in \mathcal{W}^{(0)}_{\pi}$ and $\Phi \in \mathcal{S}(F^n)$ with $m=2n$ or $m=2n+1$. 
We recall that $\mathcal{P}_0=\langle W_u(I_n)\;|\; W_u \in  \mathcal{W}^{(0)}  \rangle$. Reasoning as finding the normalized equation in proof of Proposition \ref{even-rational} or Proposition \ref{odd-rational}, for any $P$ in $\mathcal{P}_0 \subset \mathbb{C}[q^{\pm u}]$, the hyperplane $H \times \mathcal{D}_s$ is a subset of the zeroes of the polynomials
\[
 \frac{P(q^{\pm u})}{\prod_{1 \leq k \leq t}L(s+2u_k,\Delta_k,\wedge^2)\prod_{1 \leq i < j \leq t}L(s+u_i+u_j,\Delta_i \times \Delta_j)}.
\]
As we consider Lemma \ref{kirillov} or Gelfand and Kazdan Theorem F in \cite{GeKa72}, there exists $P$ in $\mathcal{P}_0$ such that $P(q^{\pm u})=1$ for each fixed $u \in \mathcal{D}_{\pi}$, because 
$\mathcal{W}_{(0)}$ contains $S_{\psi}(P_n,\mathcal{P}_0)$ from Proposition \ref{poly-GeKa}. For fixed $u \in H$, this would imply that $\{u \} \times \mathcal{D}_s$ is a subset of the zeros of
\[
\label{inverseLufunction}
\tag{8.5}
 \frac{1}{\prod_{1 \leq k \leq t}L(s+2u_k,\Delta_k,\wedge^2)\prod_{1 \leq i < j \leq t}L(s+u_i+u_j,\Delta_i \times \Delta_j)}.
\]
This is absurd, as for any fixed $u \in H$, \eqref{inverseLufunction} is the nonzero polynomial in $\mathbb{C}[q^{\pm s}]$. Therefore $\varepsilon^{\circ}(s,\pi_u,\wedge^2,\psi)$ has no poles and hence  lies in $\mathbb{C}[q^{\pm u},q^{\pm s}]$.

\par
If we apply the functional equation twice, we find that for $u$ in general position
\[
 \varepsilon^{\circ}(s,\pi_u,\wedge^2,\psi)\varepsilon^{\circ}(1-s,(\pi_u)^{\iota},\wedge^2,\psi^{-1})=1.
\]
Since both sides of this equality are polynomials in $\mathbb{C}[q^{\pm u},q^{\pm s}]$ and agree on the Zariski open subset of $u$ in general position, we have that they agree for all $u$ and $s$. Thus $\varepsilon^{\circ}(s,\pi_u,\wedge^2,\psi)$ is a unit in $\mathbb{C}[q^{\pm u},q^{\pm s}]$, that is, a monomial of the form $\varepsilon^{\circ}(s,\pi_u,\wedge^2,\psi)=\alpha q^{-(\sum_{i=1}^t \beta_i u_i)-\delta s}$.

\par

We return to our consideration of the behavior of $\gamma(s,\pi_u,\wedge^2,\psi)$. The equalities
\[
  \gamma(s+2u_k,\Delta_k,\wedge^2,\psi)=\frac{\varepsilon(s+2u_k,\Delta_k,\wedge^2,\psi)L(1-s-2u_k,\widetilde{\Delta}_k,\wedge^2)}{L(s+2u_k,\Delta_k,\wedge^2)}
\]
and
\[
  \gamma(s+u_i+u_j,\Delta_i \times \Delta_j,\psi)=\frac{\varepsilon(s+u_i+u_j,\Delta_i\times \Delta_j,\psi )L(1-s-u_i-u_j,\widetilde{\Delta}_i \times \widetilde{\Delta}_j)}{L(s+u_i+u_j,\Delta_i \times \Delta_j)},
\]
together with \eqref{variantgamma} imply that
\[
\begin{split}
  &\gamma(s,\pi_u,\wedge^2,\psi)
  =\left\{   \frac{\varepsilon^{\circ}(s,\pi_u,\wedge^2,\psi)}{\prod_{1 \leq k \leq t}\varepsilon(s+2u_k,\Delta_k,\wedge^2,\psi)\prod_{1\leq i<j \leq t}\varepsilon(s+u_i+u_j,\Delta_i \times \Delta_j,\psi) } \right\} 
  \\
 &\phantom{*************} \times \prod_{1 \leq k \leq t}\gamma(s+2u_k,\Delta_k,\wedge^2,\psi) \prod_{1 \leq i < j \leq t}\gamma(s+u_i+u_j,\Delta_i \times \Delta_j,\psi).
  \end{split}
\]

\end{proof}

Under deformation the $\gamma$-factor is multiplicative up to a monomial factor. If we now specialize $u=0$, then we have the following corollary.

\begin{corollary}
\label{multiplicativity}
Let $\pi=\mathrm{Ind}(\Delta_1 \otimes \dotsm \otimes \Delta_t)$ be a representation of Whittaker type on $GL_m$. Then $\gamma(s,\pi,\wedge^2,\psi)$ and $\displaystyle
  \prod_{1 \leq k \leq t}\gamma(s,\Delta_k,\wedge^2,\psi) \prod_{1 \leq i < j \leq t}\gamma(s,\Delta_i \times \Delta_j,\psi)$
are equal up to a unit in $\mathbb{C}[q^{\pm s}]$.

\end{corollary}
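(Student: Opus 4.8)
The plan is to deduce Corollary~\ref{multiplicativity} directly from Proposition~\ref{deform-gamma} by specializing the deformation parameter to $u=0$. First I would recall the setup: by Proposition~\ref{deform-gamma}, for any representation of Whittaker type $\pi=\mathrm{Ind}(\Delta_1\otimes\dotsm\otimes\Delta_t)$ of $GL_m$ with deformation family $\pi_u=\mathrm{Ind}(\Delta_1\nu^{u_1}\otimes\dotsm\otimes\Delta_t\nu^{u_t})$, the two functions $\gamma(s,\pi_u,\wedge^2,\psi)$ and $\prod_{k}\gamma(s+2u_k,\Delta_k,\wedge^2,\psi)\prod_{i<j}\gamma(s+u_i+u_j,\Delta_i\times\Delta_j,\psi)$ differ by a unit in $\mathbb{C}[q^{\pm u},q^{\pm s}]$, i.e.\ by a monomial $\alpha q^{-(\sum_i\beta_iu_i)-\delta s}$ for appropriate constants $\alpha\in\mathbb{C}^\times$, $\beta_i,\delta\in\mathbb{Z}$. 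From the explicit form of this unit in the proof of Proposition~\ref{deform-gamma}, namely the ratio of $\varepsilon^\circ(s,\pi_u,\wedge^2,\psi)$ to $\prod_k\varepsilon(s+2u_k,\Delta_k,\wedge^2,\psi)\prod_{i<j}\varepsilon(s+u_i+u_j,\Delta_i\times\Delta_j,\psi)$, and the fact that each of these $\varepsilon$-factors is itself a unit in $\mathbb{C}[q^{\pm s}]$ (resp.\ $\mathbb{C}[q^{\pm u},q^{\pm s}]$), the quotient is automatically a unit.

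Next I would carry out the specialization. Setting $u=(u_1,\dotsm,u_t)=0=(0,\dotsm,0)\in\mathcal{D}_\pi$, we have $\pi_0=\pi$, and since $u=0$ is a point of $\mathcal{D}_\pi$ (not necessarily in general position, but that is irrelevant: Proposition~\ref{deform-gamma} holds for \emph{all} $u\in\mathcal{D}_\pi$), the monomial relation specializes to an equality in $\mathbb{C}[q^{\pm s}]$:
\[
  \gamma(s,\pi,\wedge^2,\psi)=\alpha q^{-\delta s}\prod_{1\leq k\leq t}\gamma(s,\Delta_k,\wedge^2,\psi)\prod_{1\leq i<j\leq t}\gamma(s,\Delta_i\times\Delta_j,\psi),
\]
where $\alpha q^{-\delta s}$ is the value at $u=0$ of the unit furnished by Proposition~\ref{deform-gamma}, hence still a unit in $\mathbb{C}[q^{\pm s}]$. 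This is precisely the assertion that $\gamma(s,\pi,\wedge^2,\psi)$ and $\prod_k\gamma(s,\Delta_k,\wedge^2,\psi)\prod_{i<j}\gamma(s,\Delta_i\times\Delta_j,\psi)$ agree up to a unit in $\mathbb{C}[q^{\pm s}]$.

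I do not expect any serious obstacle here: all the analytic work — the rationality of the deformed integrals via Bernstein's theorem, the Hartogs' theorem argument ruling out high-codimension singularities, the explicit factorization of $L(s,\pi_u,\wedge^2)$ in general position, and the promotion of $\varepsilon^\circ$ to a monomial via the functional equation applied twice — is already absorbed into Proposition~\ref{deform-gamma}. The only point that deserves a sentence of care is that specialization of a Laurent polynomial identity in $\mathbb{C}[q^{\pm u},q^{\pm s}]$ at $u=0$ is legitimate and preserves the property of being a unit: a monomial in $q^{\pm u_1},\dotsm,q^{\pm u_t},q^{\pm s}$ restricts at $u=0$ to a monomial in $q^{\pm s}$, which is a unit in $\mathbb{C}[q^{\pm s}]$. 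If one wanted full rigor one should also note that the $\gamma$-factors $\gamma(s,\pi_u,\wedge^2,\psi)$, $\gamma(s+2u_k,\Delta_k,\wedge^2,\psi)$ and $\gamma(s+u_i+u_j,\Delta_i\times\Delta_j,\psi)$ are genuinely defined and finite at $u=0$ — but the first is $\gamma(s,\pi,\wedge^2,\psi)$, which is defined by Theorems~\ref{local functional eq} and~\ref{local-func-odd} for any representation of Whittaker type, and the latter two are the standard $\gamma$-factors of irreducible quasi-square-integrable representations and of Rankin--Selberg pairs, which are defined unconditionally. Thus the specialized identity makes sense term by term, and the corollary follows.
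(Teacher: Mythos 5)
Your proposal is correct and is essentially the paper's own argument: the paper obtains Corollary \ref{multiplicativity} precisely by specializing the monomial relation of Proposition \ref{deform-gamma} at $u=0$, and your additional remarks (that the proposition holds for all $u\in\mathcal{D}_\pi$, not just $u$ in general position, and that a unit in $\mathbb{C}[q^{\pm u},q^{\pm s}]$ restricts at $u=0$ to a unit in $\mathbb{C}[q^{\pm s}]$) only make explicit what the paper leaves implicit.
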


From this point on, let us recall that a representation $\pi$ of $GL_m$ will be called of induced representations of Langlands type if $\pi$ is an induced representation of the form
\[
 \pi=\text{Ind}(\Delta_1\nu^{u_1} \otimes \dotsm \otimes \Delta_t\nu^{u_t}),
\]
where each $\Delta_i$ is an irreducible square-integrable representation of $GL_{n_i}$, $m=m_1+\dotsm+m_t$, each $u_i$ is real and they are ordered so that $u_1 \geq u_2 \geq \dotsm \geq u_t$. Note that these are representations of Whittaker type. Every irreducible generic representation $\pi$ can be rearranged to be in Langlands order without changing $\pi$, and in fact every irreducible admissible representation occurs as the unique quotient module of such. We need the following Lemma, which is analogue of Lemma 9.3 of \cite{JaPSSh83} or Lemma 5.10 of \cite{Ma15}.

\begin{lemma}
\label{inclusion}
Suppose that $\pi=\mathrm{Ind}(\pi_1 \otimes \pi_2)$ is a representation of Langlands type of $GL_{n_1+n_2}$ with each $\pi_i$ an induced representation of $GL_{n_i}$ of Langlands type and the induction is
the normalized parabolic induced representation from a standard upper maximal parabolic. Then $L(s,\pi_2,\wedge^2)^{-1}$ divides $L(s,\pi,\wedge^2)^{-1}$, that is, $L(s,\pi_2,\wedge^2)=Q(q^{-s})L(s,\pi,\wedge^2)$ with $Q(X) \in \mathbb{C}[X]$.  
\end{lemma}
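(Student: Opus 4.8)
The plan is to reduce the divisibility claim to a statement about the span of Jacquet--Shalika integrals, exactly in the spirit of Lemma 9.3 of \cite{JaPSSh83}. Write $\pi=\mathrm{Ind}(\pi_1\otimes\pi_2)$ with $\pi_i$ of Langlands type on $GL_{n_i}$, so $\pi$ itself is of Whittaker type and isomorphic to its Whittaker model $\mathcal{W}(\pi,\psi)$. The strategy is to produce, for each $W_2\in\mathcal{W}(\pi_2,\psi)$ and each $\Phi_2$ in the appropriate Schwartz space, a choice of $W\in\mathcal{W}(\pi,\psi)$ and $\Phi$ such that the Jacquet--Shalika integral $J(s,W,\Phi)$ for $\pi$ (or $J(s,W)$ in the odd case) equals, up to a non-zero scalar and a factor in $\mathbb{C}[q^{\pm s}]$, the corresponding integral $J(s,W_2,\Phi_2)$ for $\pi_2$. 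Granting this, the fractional ideal $\mathcal{J}(\pi_2)$ is contained in $\mathbb{C}[q^{\pm s}]\cdot\mathcal{J}(\pi)$, and since these ideals have normalized generators $L(s,\pi_2,\wedge^2)$ and $L(s,\pi,\wedge^2)$ respectively, the inclusion forces $L(s,\pi,\wedge^2)^{-1}\mid L(s,\pi_2,\wedge^2)^{-1}$ in $\mathbb{C}[q^{\pm s}]$, i.e. $L(s,\pi_2,\wedge^2)=Q(q^{-s})L(s,\pi,\wedge^2)$ with $Q\in\mathbb{C}[X]$.

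First I would set up the realization of $\mathcal{W}(\pi,\psi)$ using the standard ``restriction of Whittaker functions to a Levi'' picture: since $\pi=\mathrm{Ind}(\pi_1\otimes\pi_2)$ is induced from the standard maximal parabolic with Levi $GL_{n_1}\times GL_{n_2}$, a Whittaker function $W$ for $\pi$ restricted to the block-diagonal copy of $GL_{n_2}$ (embedded in the lower-right corner, appropriately conjugated) can be made to agree with an arbitrary $W_2\in\mathcal{W}(\pi_2,\psi)$ multiplied by a Schwartz--Bruhat cutoff, by choosing the inducing section supported near the identity coset and using smoothness; this is the mechanism behind Proposition \ref{mira-model} and Lemma \ref{key relation}, and the analogous statement for a genuine parabolic (not just the mirabolic) is what Lemma 9.3 of \cite{JaPSSh83} provides in the Rankin--Selberg setting. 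Concretely I would choose a section $f$ in the induced model of $\pi$ so that $W_f$ restricted to $\sigma_m\,\mathrm{diag}(I,\ast)\,\sigma_m^{-1}$-type arguments reproduces the defining integrand of $J(s,W_2,\Phi_2)$; the determinantal twist $|\mathrm{det}|^s$ in the outer integration matches because the unipotent radical integration and the modular character conspire exactly as in the computation of $J(s,W,\Phi)$ via the Iwasawa decomposition in the proof of Theorem \ref{integral-rational}. After carrying out the unipotent integrations appearing in the definition of $J(s,W,\Phi)$ for $\pi$ and using the explicit block structure of $\sigma_m$ relative to $\sigma_{n_2}$, the integral collapses to $c\cdot J(s,W_2,\Phi_2)$ for a non-zero constant $c$, possibly after multiplying by a monomial or polynomial in $q^{\pm s}$ coming from the ``extra'' variables (the entries of $X$ and the row $y$ not present for $\pi_2$), whose integration over a compact set contributes an element of $\mathbb{C}[q^{\pm s}]$.

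I expect the main obstacle to be the bookkeeping of the permutation matrix $\sigma_m$ against the smaller $\sigma_{n_2}$: one must verify that the block of $\sigma_m$ acting on the $GL_{n_2}$-part, after conjugation, really is $\sigma_{n_2}$ (up to an element of the relevant unipotent radical that is absorbed by the $\psi$-equivariance), so that the inner $\mathcal{N}\backslash\mathcal{M}$ and $N\backslash GL$ integrations for $\pi$ restrict correctly to those for $\pi_2$. This is precisely the kind of ``simple matrix multiplication'' computation done in the proofs of Lemma \ref{rational dec} and Proposition \ref{even-equal}, and the even/odd parity of $n_1$ interacts with whether $\sigma_m$ versus $w_m$ is the relevant Weyl element (cf. the discussion of $\sigma_{2n+1}\neq w_{2n+1}$ in Section 3.2); I would handle the even and odd cases for $n_1$, and the even and odd cases for $m=n_1+n_2$, by the same template but keeping careful track of which of $S_m$ or the analogous odd Shalika group is in play. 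Once the identity $J(s,W,\Phi)=c\,P(q^{-s})\,J(s,W_2,\Phi_2)$ is established for suitable $W,\Phi$ and all $W_2,\Phi_2$, and conversely every $J(s,W,\Phi)$ for $\pi$ restricted to these special sections lands in $\mathbb{C}[q^{\pm s}]\cdot\mathcal{J}(\pi_2)$ is \emph{not} needed --- only the one inclusion $\mathcal{J}(\pi_2)\subseteq\mathbb{C}[q^{\pm s}]\cdot\mathcal{J}(\pi)$ --- the divisibility of $L$-factors follows from the PID structure of $\mathbb{C}[q^{\pm s}]$ and the fact, established in Section 2.3, that both families contain $1$ (so the generators are honest Euler factors with constant term $1$), completing the proof.
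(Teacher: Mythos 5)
Your overall skeleton---realize the Jacquet--Shalika integrals of $\pi_2$ inside the family attached to $\pi$ via a JPSS-type restriction statement, deduce $\mathcal{J}(\pi_2)\subseteq\mathcal{J}(\pi)$, and conclude divisibility of the normalized generators---is the paper's skeleton as well, but the middle of your argument has a genuine gap. The claimed one-step ``collapse'' of the full integral $J(s,W,\Phi)$ for $\pi$ to $c\,P(q^{-s})\,J(s,W_2,\Phi_2)$ is asserted, not proved: the extra variables (the $n_1$-block coordinates of $g$ and of $X$ in the integrations defining $J(s,W,\Phi)$) range over non-compact sets, and no choice of inducing section by itself forces their contribution to be an integral over a compact set. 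Controlling precisely these variables is the content of the Kirillov-model support statements (Proposition \ref{mira-model2}, Lemma \ref{key relation}) and of the step-by-step reductions in Propositions \ref{even-equal} and \ref{J2m-char}; the paper does not redo that work in this Lemma but instead invokes the already-established filtration $\mathcal{J}_{(2m-1)}(\pi)\subseteq\mathcal{J}(\pi)$ (resp.\ $\mathcal{J}_{(2m)}(\pi)\subseteq\mathcal{J}(\pi)$ in the odd case) from Propositions \ref{filtration-even} and \ref{filtration-odd}. It then needs only the observation that the intermediate integral $J_{(2m-1)}(s,W)$ (resp.\ $J_{(2m)}(s,W)$) depends on $W$ solely through its restriction to the embedded block $\mathrm{diag}(h,I_{n_1})$, so that Proposition 9.1 of \cite{JaPSSh83} (not Lemma 9.3), which produces $W$ with $W(\mathrm{diag}(h,I_{n_1}))=W_2(h)\Phi(e_{n_2}h)|\mathrm{det}(h)|^{n_1/2}$, yields an \emph{exact} identity with the $\pi_2$-integral: $J(s,W_2,\Phi)$ when $n_1$ is even, and $J(s,W_2)$ when $n_1$ is odd after choosing $\Phi$ with $\Phi(e_{n_2})=1$.

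Moreover, even granting your collapse, the slack ``up to a factor in $\mathbb{C}[q^{\pm s}]$'' runs in the wrong direction. From $J(s,W,\Phi)=c\,P(q^{-s})\,J(s,W_2,\Phi_2)$ you only learn that $P\cdot J(s,W_2,\Phi_2)$ lies in $\mathcal{J}(\pi)$; unless $P$ is a unit, this does not place $J(s,W_2,\Phi_2)$ itself in $\mathcal{J}(\pi)$ (a pole of the $\pi_2$-integral could be cancelled by $P$), and hence does not give $L(s,\pi,\wedge^2)^{-1}\mid L(s,\pi_2,\wedge^2)^{-1}$. Your own mechanism---integrating the leftover variables against $|\mathrm{det}|^{s}$-type factors over a compact set---would in general produce a non-unit Laurent polynomial, so this is not a cosmetic issue. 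To repair the argument you must either show the extra factor is always a monomial, or, more naturally, route the containment through the intermediate families $\mathcal{J}_{(k)}(\pi)$ and the filtration, as the paper does.
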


\begin{proof}
In Proposition 9.1 of \cite{JaPSSh83} they establish that if $\pi=\mathrm{Ind}(\pi_1 \otimes \pi_2)$ is a representation of $GL_{n_1+n_2}$ with each $\pi_i$ an induced representation of Whittaker type of $GL_{n_i}$, then for every $W_2 \in \mathcal{W}(\pi_2,\psi)$ and $\Phi \in \mathcal{S}(F^{n_2})$ there is a $W \in \mathcal{W}(\pi,\psi)$ such that
\[
  W \begin{pmatrix} h &\\ & I_{n_1} \end{pmatrix}=W_2(h) \Phi(e_{n_2}h)|\mathrm{det}(h)|^{\frac{n_1}{2}}.
\]
for $h \in GL_{n_2}$.
In the case that $2n=n_1+n_2$ is an even number, we know in Proposition \ref{filtration-even} that $\mathbb{C}[q^{\pm s}]$-fractional ideal $\mathcal{J}(\pi)$ contains the $\mathbb{C}[q^{\pm s}]$-fractional ideal $\mathcal{J}_{(2m-1)}(\pi)$ generated by the local integrals
\[
\begin{split}
&J_{(2m-1)}(s,W)\\
&=\int_{N_{n-m} \backslash {GL}_{n-m}}  \int_{\mathcal{N}_{n-m} \backslash \mathcal{M}_{n-m}} W \begin{pmatrix}  \sigma_{2n-2m+1}  \begin{pmatrix} I_{n-m} & X & \\ & I_{n-m} &\\ &&1  \end{pmatrix} \begin{pmatrix}  g&&  \\ &g& \\ &&1 \end{pmatrix} & \\& I_{2m-1} \end{pmatrix} \\
&\phantom{*********************************} \psi^{-1}(\mathrm{Tr} X) dX\; |\mathrm{det}(g)|^{s-2m} dg
\end{split}
\]
with $W \in \mathcal{W}(\pi,\psi)$. In the case of $n_1=2m$, if we let $W_2 \in \mathcal{W}(\pi_2,\psi)$, $\Phi \in \mathcal{S}(F^{n-m})$ and take $W$ to be associated element of $\mathcal{W}(\pi,\psi)$, we see from $\sigma_{2n-2m+1}=\begin{pmatrix} \sigma_{2n-2m} & \\ & 1 \end{pmatrix}$ that this integral turns out to be
\[
\begin{split}
 &J_{(2m-1)}(s,W)=\int_{N_{n-m} \backslash {GL}_{n-m}}  \int_{\mathcal{N}_{n-m} \backslash \mathcal{M}_{n-m}} W_2 \begin{pmatrix} \sigma_{2n-2m} \begin{pmatrix} I_{n-m} & X \\ & I_{n-m}  \end{pmatrix} \begin{pmatrix} g &  \\ & g  \end{pmatrix} \end{pmatrix}  \\
&\phantom{*****************************}  \psi^{-1}(\mathrm{Tr} X) dX \Phi(e_{n-m}g)|\mathrm{det}(g)|^s dg.
 \end{split} 
\]
In the case of $n_1=2m-1$, we can choose any $\Phi \in \mathcal{S}(F^{n_2})$ such that $\Phi(e_{n_2})=1$.
For every $W_2 \in \mathcal{W}(\pi_2,\psi)$ and the associated elements $W$ of $\mathcal{W}(\pi,\psi)$, our integral $J_{(2m-1)}(s,W)$ becomes
\[
\begin{split}
 &J_{(2m-1)}(s,W)\\
 &=\int_{N_{n-m} \backslash {GL}_{n-m}}  \int_{\mathcal{N}_{n-m} \backslash \mathcal{M}_{n-m}} W_2  \begin{pmatrix}  \sigma_{2n-2m+1}  \begin{pmatrix} I_{n-m} & X & \\ & I_{n-m} &\\ &&1  \end{pmatrix} \begin{pmatrix}  g&&  \\ &g& \\ &&1 \end{pmatrix} \end{pmatrix} \\
&\phantom{**********************************}  \psi^{-1}(\mathrm{Tr} X) dX |\mathrm{det}(g)|^{s-1} dg,
 \end{split} 
\]
because $\Phi(e_{n_2})=\Phi(e_{n_2}h)$ is $1$ for $h= \begin{pmatrix}  g&&  \\ &g& \\ &&1 \end{pmatrix}$. Combined, as every integral $J(s,W_2,\Phi)$ or $J(s,W_2)$ for $W_2 \in \mathcal{W}(\pi_2,\psi)$ and $\Phi \in \mathcal{S}(F^{n-m})$ if necessary occurring in $\mathcal{J}(\pi_2)$ is actually an integral $J_{(2m-1)}(s,W)$ for $W \in \mathcal{W}(\pi,\psi)$ in $\mathcal{J}_{(2m-1)}(\pi)$, we obtain $\mathcal{J}(\pi_2) \subset \mathcal{J}(\pi)$ and the conclusion follows.

\par
The case when $2n+1=n_1+n_2$ runs along the same lines, but the local integrals are different. In this case we know in Proposition \ref{filtration-odd} that $\mathbb{C}[q^{\pm s}]$-fractional ideal $\mathcal{J}(\pi)$ contains the $\mathbb{C}[q^{\pm s}]$-fractional ideal $\mathcal{J}_{(2m)}(\pi)$ spanned by the local integrals
\[
\begin{split}
&J_{(2m)}(s,W)\\
&=\int_{N_{n-m} \backslash {GL}_{n-m}}  \int_{\mathcal{N}_{n-m} \backslash \mathcal{M}_{n-m}} W \begin{pmatrix}  \sigma_{2n-2m+1}  \begin{pmatrix} I_{n-m} & X & \\ & I_{n-m} &\\ &&1  \end{pmatrix} \begin{pmatrix}  g&&  \\ &g& \\ &&1 \end{pmatrix} & \\& I_{2m} \end{pmatrix} \\
&\phantom{********************************} \psi^{-1}(\mathrm{Tr} X) dX\; |\mathrm{det}(g)|^{s-2m-1} dg
\end{split}
\]
Performing the same steps as in the even case, for every pairs $(W_2,\Phi)$ in $\mathcal{W}(\pi_2,\psi) \times \mathcal{S}(F^{n_2})$, there is a $W$ in $\mathcal{W}(\pi,\psi)$ such that for any $h$ in $GL_{n_2}$, we have the equality
\[
  W \begin{pmatrix} h &\\ & I_{n_1} \end{pmatrix}=W_2(h) \Phi(e_{n_2}h)|\mathrm{det}(g)|^{\frac{n_1}{2}}.
\]
For this $W$, we can rewrite the integral $J_{(2m)}(s,W)$ as
\[
\begin{split}
 &J_{(2m)}(s,W)=\int_{N_{n-m} \backslash {GL}_{n-m}}  \int_{\mathcal{N}_{n-m} \backslash \mathcal{M}_{n-m}} W_2 \begin{pmatrix} \sigma_{2n-2m} \begin{pmatrix} I_{n-m} & X \\ & I_{n-m}  \end{pmatrix} \begin{pmatrix} g &  \\ & g  \end{pmatrix} \end{pmatrix}  \\
&\phantom{******************************}  \psi^{-1}(\mathrm{Tr} X) dX \Phi(e_{n-m}g)|\mathrm{det}(g)|^s dg
 \end{split} 
\]
if $n_1=2m+1$, or
\[
\begin{split}
 &J_{(2m)}(s,W)\\
 &=\int_{N_{n-m} \backslash {GL}_{n-m}}  \int_{\mathcal{N}_{n-m} \backslash \mathcal{M}_{n-m}} W_2  \begin{pmatrix}  \sigma_{2n-2m+1}  \begin{pmatrix} I_{n-m} & X & \\ & I_{n-m} &\\ &&1  \end{pmatrix} \begin{pmatrix}  g&&  \\ &g& \\ &&1 \end{pmatrix} \end{pmatrix} \\
&\phantom{**********************************}  \psi^{-1}(\mathrm{Tr} X) dX |\mathrm{det}(g)|^{s-1} dg
 \end{split} 
\]
if $n_1=2m$ for $\Phi(e_{n_2})=1$. Thus once again $\mathcal{J}(\pi_2) \subset \mathcal{J}(\pi)$ and we get the stated divisibility.

\end{proof}

We can now state the main Theorem of this section, which states the computation of $L$-functions for the induced representation of Langlands type.

\begin{theorem}
\label{langlands-multi}
Let $\pi=\mathrm{Ind}(\Delta_1\nu^{u_1} \otimes \dotsm \otimes \Delta_t\nu^{u_t})$ be an induced representation of $GL_m$ of Langlands type. Then
\[
  L(s,\pi,\wedge^2)=\prod_{k=1}^t L(s+2u_k,\Delta_k,\wedge^2) \prod_{1\leq i < j \leq t} L(s+u_i+u_j,\Delta_i \times \Delta_j).
\]
\end{theorem}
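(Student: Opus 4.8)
The plan is to prove the asserted identity by showing that the two normalized Euler factors $L(s,\pi,\wedge^2)^{-1}$ and $L_{\mathrm{prod}}(s)^{-1}$, where $L_{\mathrm{prod}}(s)=\prod_{k}L(s+2u_k,\Delta_k,\wedge^2)\prod_{i<j}L(s+u_i+u_j,\Delta_i\times\Delta_j)$, divide one another in $\mathbb{C}[q^{\pm s}]$; since each is an inverse polynomial with constant term $1$, mutual divisibility forces equality. A preliminary reduction is the twist identity $L(s,\Delta_k\nu^{u_k},\wedge^2)=L(s+2u_k,\Delta_k,\wedge^2)$ together with $L(s,\Delta_i\nu^{u_i}\times\Delta_j\nu^{u_j})=L(s+u_i+u_j,\Delta_i\times\Delta_j)$, both immediate from the change of variables $J(s,W,\Phi)=J(s-2u,W\otimes|\cdot|^{u},\Phi)$ already used in Section~6; this lets one read $L_{\mathrm{prod}}(s)$ as the product attached to $\pi$ viewed as an induced representation with quasi-square-integrable inducing data $\Delta_k\nu^{u_k}$. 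Granting this, the divisibility $L(s,\pi,\wedge^2)^{-1}\mid L_{\mathrm{prod}}(s)^{-1}$ is exactly Proposition~\ref{deform-l factor} applied to $\pi$: it records that Theorem~\ref{general-prod} computes $L(s,\pi_u,\wedge^2)$ exactly for $u$ in general position, and that Bernstein rationality together with the Hartogs specialization propagates this as a divisibility to $u=0$.

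The substantive half is the reverse divisibility $L_{\mathrm{prod}}(s)^{-1}\mid L(s,\pi,\wedge^2)^{-1}$, which I would prove by induction on the number $t$ of blocks, the case $t=1$ being the twist identity. For $t>1$ write $\pi=\mathrm{Ind}(\Delta_1\nu^{u_1}\otimes\pi')$ with $\pi'=\mathrm{Ind}(\Delta_2\nu^{u_2}\otimes\dotsm\otimes\Delta_t\nu^{u_t})$, again of Langlands type; Lemma~\ref{inclusion} gives $L(s,\pi',\wedge^2)^{-1}\mid L(s,\pi,\wedge^2)^{-1}$, and the inductive hypothesis identifies $L(s,\pi',\wedge^2)$ with the partial product over the blocks $2,\dotsm,t$, so the whole ``tail'' part of $L_{\mathrm{prod}}(s)^{-1}$ already divides $L(s,\pi,\wedge^2)^{-1}$. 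What remains is to account, with the correct multiplicities, for the ``head'' factors $L(s+2u_1,\Delta_1,\wedge^2)^{-1}$ and $L(s+u_1+u_j,\Delta_1\times\Delta_j)^{-1}$, $j\ge 2$. Their poles are precisely those attached, under deformation, to the derivative constituents $\Delta_{1,u_1}^{(a_1r_1)}$ and to the two-block constituents $\mathrm{Ind}(\Delta_{1,u_1}^{(a_1r_1)}\otimes\Delta_{j,u_j}^{(a_jr_j)})$; the single-block contributions are governed by the explicit discrete-series computation of Section~6 (Propositions~\ref{even-square},~\ref{odd-square}), and the two-block contributions by Corollary~\ref{two blocks}, which matches the exceptional exterior-square factor of such a representation in general position with the corresponding Rankin--Selberg factor.

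To pin down that these head factors enter $L(s,\pi,\wedge^2)^{-1}$ with full multiplicity, rather than merely individually, I would apply the forward divisibility (Proposition~\ref{deform-l factor}) both to $\pi$ and to $\pi^{\iota}=\mathrm{Ind}(\widetilde\Delta_t\nu^{-u_t}\otimes\dotsm\otimes\widetilde\Delta_1\nu^{-u_1})$, which is again of Langlands type, and feed them through the multiplicativity of the $\gamma$-factor up to a unit (Corollary~\ref{multiplicativity}) and the local functional equation (Theorems~\ref{local functional eq},~\ref{local-func-odd}): writing $L_{\mathrm{prod}}(s)=C(q^{-s})L(s,\pi,\wedge^2)$ and $L_{\mathrm{prod}}^{\iota}(1-s)=C^{\iota}(q^{s-1})L(1-s,\pi^{\iota},\wedge^2)$ with $C,C^{\iota}$ polynomials of value $1$ at the origin, the $\gamma$-identity forces $C(q^{-s})=\alpha q^{-\beta s}\,C^{\iota}(q^{s-1})$. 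One then locates zeros: by Propositions~\ref{even-square} and~\ref{odd-square} (and the corresponding tempered Rankin--Selberg statement) the poles of $L_{\mathrm{prod}}(s)$, hence the zeros of $C(q^{-s})$, lie in a half-plane $\mathrm{Re}(s)\le c_-$, while the zeros of $C^{\iota}(q^{s-1})$ lie in a half-plane $\mathrm{Re}(s)\ge c_+$ with $c_+$ determined by the same data through $\pi^{\iota}$; the Langlands ordering $u_1\ge\dotsm\ge u_t$ is what makes these loci incompatible, so $C$ has no finite nonzero zero, and $C(0)=1$ forces $C\equiv 1$. Finally the case of an arbitrary irreducible admissible $\pi$ is immediate, since $L(s,\pi,\wedge^2)$ is by definition $L(s,\Pi,\wedge^2)$ for the standard module $\Pi$ of which $\pi$ is the unique irreducible quotient.

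The step I expect to be the main obstacle is exactly this last one: ruling out a nontrivial polynomial factor $C$. Lemma~\ref{inclusion} together with the single- and two-block computations shows with little trouble that every irreducible factor of $L_{\mathrm{prod}}(s)^{-1}$ occurs in $L(s,\pi,\wedge^2)^{-1}$, but when the inducing data are not in general position (repeated segments with equal exponents, or coincidences $\widetilde\Delta_i\simeq\Delta_j\nu^{e}$) the factors of $L_{\mathrm{prod}}(s)^{-1}$ can share zeros, and the $\gamma$-equation by itself only determines $C$ up to the palindromic ambiguity exchanging $C(q^{-s})$ with $C^{\iota}(q^{s-1})$; resolving it requires combining the pole-half-plane bounds for discrete-series exterior-square and tempered Rankin--Selberg $L$-functions with the Langlands ordering, and a careful verification that the $u=0$ specializations of Proposition~\ref{deform-l factor} for $\pi$ and for $\pi^{\iota}$ are mutually compatible through the functional equation.
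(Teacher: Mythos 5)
Your overall strategy is the paper's own: apply Proposition \ref{deform-l factor} to both $\pi$ and $\pi^{\iota}$, compare the two discrepancy factors through Corollary \ref{multiplicativity} and the functional equation, use Lemma \ref{inclusion} together with induction on $t$ to handle the tail, and finish by separating zero loci into disjoint half-planes. However, the decisive step is not executed correctly, in two respects. First, writing $L_{\mathrm{prod}}(s)=C(q^{-s})L(s,\pi,\wedge^2)$ with $C$ a \emph{polynomial} begs the question: Proposition \ref{deform-l factor} gives a polynomial in the opposite direction, $L(s,\pi,\wedge^2)=P(q^{-s})L_{\mathrm{prod}}(s)$, and the polynomiality of your $C=P^{-1}$ is exactly the reverse divisibility you still have to prove; relatedly, the poles of $L_{\mathrm{prod}}(s)$ would be poles, not zeros, of $C$. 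One should argue with $P$ and its analogue $\widetilde{P}$ for $\pi^{\iota}$ throughout, as the paper does.

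Second, and more seriously, your zero-location bound is too weak. You bound the relevant zero locus by the poles of the \emph{full} product $L_{\mathrm{prod}}(s)$ and claim the Langlands ordering makes the two half-planes incompatible. The poles of the full product only give $\mathrm{Re}(s)\le -2u_t$, and the dual side only gives $\mathrm{Re}(s)\ge 1-2u_1$; these overlap whenever $u_1-u_t\ge \tfrac{1}{2}$, so the ordering alone does not separate them. The fix, which is the paper's actual argument and for which you already have the ingredients, is to feed the tail step into the forward divisibility: from $L(s,\pi_2,\wedge^2)=Q(q^{-s})L(s,\pi,\wedge^2)$ (Lemma \ref{inclusion}) and the induction hypothesis one obtains $P(q^{-s})Q(q^{-s})=L(s+2u_1,\Delta_1,\wedge^2)^{-1}\prod_{j\ge 2}L(s+u_1+u_j,\Delta_1\times\Delta_j)^{-1}$, so the zeros of $P$ lie among the poles of the \emph{head} factors only, hence in $\mathrm{Re}(s)\le -u_1-u_t$ (here the ordering enters, via $-2u_1\le -u_1-u_t$ and $-u_1-u_j\le -u_1-u_t$); symmetrically, peeling off the head $\widetilde{\Delta}_t\nu^{-u_t}$ of $\pi^{\iota}$ places the zeros of $\widetilde{P}$ in $\mathrm{Re}(s)\ge 1-u_1-u_t$. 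These two half-planes are disjoint for every choice of exponents, and since $P$ and $\widetilde{P}$ agree up to a unit by the $\gamma$-identity, both are units and hence identically $1$ after normalization. With this correction your argument coincides with the paper's proof.
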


\begin{proof}
We introduce the notation for two rational functions $P(q^{-s})$ and $Q(q^{-s})$ that $P \sim Q$ denotes that the ratio is a unit in $\mathbb{C}[q^{\pm s}]$, that is, a monomial factor $\alpha q^{-\beta s}$. Let $u_1 \geq u_2 \geq \dotsm \geq u_t$ be fixed. By the deformation argument in Proposition \ref{deform-l factor}, we know that
\[
  L(s,\pi,\wedge^2)=P(q^{-s}) \prod_{k=1}^t L(s+2u_k,\Delta_k,\wedge^2) \prod_{1\leq i < j \leq t} L(s+u_i+u_j,\Delta_i \times \Delta_j)
\]
and
\[
  L(1-s,\pi^{\iota},\wedge^2)=\widetilde{P}(q^{-s}) \prod_{k=1}^t L(1-s-2u_k,\widetilde{\Delta}_k,\wedge^2) \prod_{1\leq i < j \leq t} L(1-s-u_i-u_j,\widetilde{\Delta}_i \times \widetilde{\Delta}_j).
\]
for some elements $P(X)$ and $\widetilde{P}(X)$ of $\mathbb{C}[X]$. According to Proposition \ref{deform-gamma}, we have
\[
 \gamma(s,\pi,\wedge^2,\psi) \sim \prod_{k=1}^t \gamma(s+2u_k,\Delta_k,\wedge^2,\psi) \prod_{1 \leq i < j \leq t}\gamma(s+u_i+u_j,\Delta_i \times \Delta_j,\psi).
\]
Replacing the $\gamma$-factors by their deformation we also have
\[
  \frac{L(1-s,\pi^{\iota},\wedge^2)}{L(s,\pi,\wedge^2)} \sim \prod_{k=1}^t \frac{L(1-s-2u_k,\widetilde{\Delta}_k,\wedge^2)}{L(s+2u_k,\Delta_k,\wedge^2)}
  \prod_{1\leq i < j \leq t} \frac{L(1-s-u_i-u_j,\widetilde{\Delta}_i \times \widetilde{\Delta}_j)}{L(s+u_i+u_j,\Delta_i \times \Delta_j)},
\]
which then gives $P(q^{-s}) \sim \widetilde{P}(q^{-s})$. We will show that the set of zeros of $P$ and $\widetilde{P}$ are disjoint to conclude that they are units in $\mathbb{C}[q^{\pm s}]$. 
\par
We proceed by induction on $t$ the equality
\[
\begin{split}
  &L(s,\mathrm{Ind}(\Delta_1\nu^{u_1} \otimes \dotsm \otimes \Delta_t\nu^{u_t}),\wedge^2)\\
  &\phantom{************}=P(q^{-s}) \prod_{k=1}^t L(s+2u_k,\Delta_k,\wedge^2) \prod_{1\leq i < j \leq t} L(s+u_i+u_j,\Delta_i \times \Delta_j).
\end{split}
\]
If $t=1$ then $\pi$ is a quasi-square-integrable representation and that $L(s,\pi,\wedge^2)=L(s+2u_1,\Delta_1,\wedge^2)$. 
\par
For $t > 1$, we exploit transitivity of induction to write $\pi=\mathrm{Ind}(\Delta_1\nu^{u_1}\otimes  \pi_2)$ where $\pi_2=\mathrm{Ind}(\Delta_2\nu^{u_2}\otimes \dotsm \otimes \Delta_t\nu^{u_t})$ and $\pi^{\iota}=\mathrm{Ind}(\widetilde{\Delta}_t\nu^{-u_t}\otimes \pi_2^{\iota})$ where $\pi_2^{\iota}=\mathrm{Ind}(\widetilde{\Delta}_{t-1}\nu^{-u_{t-1}}\otimes \dotsm \otimes \widetilde{\Delta}_1\nu^{-u_1})$.  By the previous Lemma \ref{inclusion}, there is a polynomial $Q(X) \in \mathbb{C}[X]$ such that
\[
 L(s,\pi_2,\wedge^2)=Q(q^{-s})L(s,\pi,\wedge^2).
\]
We thus are left with
\[
 L(s,\pi_2,\wedge^2)=P(q^{-s})Q(q^{-s}) \prod_{k=1}^t L(s+2u_k,\Delta_k,\wedge^2) \prod_{1\leq i < j \leq t} L(s+u_i+u_j,\Delta_i \times \Delta_j).
\] 
By induction we have
\[
 L(s,\pi_2,\wedge^2)=\prod_{k=2}^t L(s+2u_k,\Delta_k,\wedge^2) \prod_{2 \leq i < j \leq t} L(s+u_i+u_j,\Delta_i \times \Delta_j).
\]
Combined, we obtain the relation
\[
 P(q^{-s})Q(q^{-s})=L(s+2u_1,\Delta_1,\wedge^2)^{-1} \prod_{j=2}^t L(s+u_1+u_j,\Delta_1 \times \Delta_j)^{-1}.
\]
Proposition \ref{even-square} and Proposition \ref{odd-square} imply that $L(s+2u_1,\Delta_1,\wedge^2)^{-1}$ has its zero in the half plane $\mathrm{Re}(s) \leq -2u_1$ and Corollary to Theorem 8.2 of \cite{JaPSSh83}, that Rankin-Selberg $L$-function $\prod_{j \geq 2}L(s,\Delta_1 \times \Delta_j)^{-1}$ has zeros in the set $\cup_{j=2}^t \{\mathrm{Re}(s) \leq -u_1-u_j \}$. As $P(q^{-s})$ divides 
$L(s+2u_1,\Delta_1,\wedge^2)^{-1} \prod_{j=2}^t L(s+u_1+u_j,\Delta_1 \times \Delta_j)^{-1}$, $P(q^{-s})$ has its zeros in the half plane $\mathrm{Re}(s) \leq -u_1-u_t$. By the same argument applied to $\pi^{\iota}$, we find the polynomial $\widetilde{Q}(X)$ of $\mathbb{C}[X]$ such that
\[
  L(1-s,\pi_2^{\iota},\wedge^2)=\widetilde{Q}(q^{-s})L(1-s,\pi^{\iota},\wedge^2).
\]
Combined again, we get the relation
\[
 \widetilde{P}(q^{-s})\widetilde{Q}(q^{-s})=L(1-s-2u_t,\Delta_t,\wedge^2)^{-1} \prod_{j=1}^{t-1} L(1-s-u_t-u_j,\Delta_1 \times \Delta_j)^{-1}.
\]
We have the similar manner that $\widetilde{P}(q^{-s})$ has its zeroes in the half plane $\mathrm{Re}(s) \geq 1-u_1-u_t$. This implies that $P(q^{-s})$ and $\widetilde{P}(q^{-s})$ have no common zeros, but are equal up to a unit in $\mathbb{C}[q^{\pm s}]$. As $L(s,\pi,\wedge^2)$ and $L(1-s,\pi^{\iota},\wedge^2)$ are both normalized Euler factor, we have $P=\widetilde{P}\equiv 1$.
\end{proof}

As a consequence of this Theorem, we have the following.

\begin{corollary}
\label{generic-multi}
Let $\pi=\mathrm{Ind}(\Delta_1 \otimes \dotsm \otimes \Delta_t)$ be an irreducible generic representation of $GL_m$, so each $\Delta_i$ is quasi-square-integrable and no two of segments $\Delta_1, \dotsm, \Delta_t$ are linked. Then
\[
  L(s,\pi,\wedge^2)=\prod_{k=1}^t L(s,\Delta_k,\wedge^2) \prod_{1\leq i < j \leq t} L(s,\Delta_i \times \Delta_j).
\]
\end{corollary}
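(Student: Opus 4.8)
The plan is to reduce the corollary to Theorem \ref{langlands-multi} by rearranging $\pi$ into Langlands order. First I would write each quasi-square-integrable representation $\Delta_i$ as a twist $\Delta_i = \Delta_{0,i}\nu^{u_i}$ of an irreducible square-integrable representation $\Delta_{0,i}$ of $GL_{n_i}$ by an unramified character with $u_i \in \mathbb{R}$; this is possible by the very definition of a discrete series (quasi-square-integrable) representation. Since no two of the segments $\Delta_1,\dotsm,\Delta_t$ are linked, the normalized parabolic induction $\mathrm{Ind}(\Delta_1 \otimes \dotsm \otimes \Delta_t)$ is irreducible by Zelevinsky \cite{Ze80}, and an irreducible induced representation from pairwise unlinked segments does not depend, up to isomorphism, on the order of the inducing data. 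Hence, choosing a permutation $\sigma$ of $\{1,\dotsm,t\}$ with $u_{\sigma(1)} \geq u_{\sigma(2)} \geq \dotsm \geq u_{\sigma(t)}$, we obtain
\[
  \pi \simeq \mathrm{Ind}(\Delta_{0,\sigma(1)}\nu^{u_{\sigma(1)}} \otimes \dotsm \otimes \Delta_{0,\sigma(t)}\nu^{u_{\sigma(t)}}),
\]
which is an induced representation of Langlands type in the sense used in this section (ties among the $u_i$ are permitted in the Langlands ordering).

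Next I would apply Theorem \ref{langlands-multi} to this Langlands presentation, obtaining
\[
  L(s,\pi,\wedge^2) = \prod_{k=1}^t L(s+2u_{\sigma(k)},\Delta_{0,\sigma(k)},\wedge^2) \prod_{1 \leq i < j \leq t} L(s+u_{\sigma(i)}+u_{\sigma(j)},\Delta_{0,\sigma(i)} \times \Delta_{0,\sigma(j)}),
\]
and then rewrite each factor by an elementary twisting identity. For the exterior square factor one has $L(s+2u_k,\Delta_{0,k},\wedge^2) = L(s,\Delta_{0,k}\nu^{u_k},\wedge^2) = L(s,\Delta_k,\wedge^2)$, which is the identity $L(s,\Delta_0\nu^{s_0},\wedge^2) = L(s+2s_0,\Delta_0,\wedge^2)$ recorded in the discrete-series case and used again in Section 6. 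For the Rankin-Selberg factor, the standard twisting property $L(s+a+b,\pi'\times\sigma') = L(s,\pi'\nu^a\times\sigma'\nu^b)$ of Jacquet, Piatetski-Shapiro and Shalika \cite{JaPSSh83} gives $L(s+u_i+u_j,\Delta_{0,i}\times\Delta_{0,j}) = L(s,\Delta_i\times\Delta_j)$. Since $L(s,\Delta_i\times\Delta_j) = L(s,\Delta_j\times\Delta_i)$, the product over $1 \leq i < j \leq t$ is a product over unordered pairs and is unaffected by the relabeling by $\sigma$; similarly the single product over $k$ is unchanged. Assembling these gives the claimed formula.

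The substantive content here lies entirely in Theorem \ref{langlands-multi} together with the deformation and Hartogs arguments that precede it; the corollary itself is a bookkeeping step. The only point that merits a line of justification is the passage to Langlands order --- that permuting the unlinked inducing data leaves $\pi$ unchanged, and that the subsequent twisting identities may be applied factor by factor --- and I do not expect any genuine obstacle beyond that.
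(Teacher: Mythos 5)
Your proposal is correct and follows essentially the same route as the paper: the paper's own proof simply observes that the unlinked segments can be rearranged into Langlands order without changing $\pi$ and that the formula is then a restatement of Theorem \ref{langlands-multi}. Your additional bookkeeping — writing $\Delta_i=\Delta_{0,i}\nu^{u_i}$ and undoing the twists via $L(s+2u_k,\Delta_{0,k},\wedge^2)=L(s,\Delta_k,\wedge^2)$ and the analogous Rankin--Selberg identity — just makes explicit what the paper leaves implicit.
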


\begin{proof}
Since $\pi$ is irreducible and generic, two of segments $\Delta_1, \dotsm, \Delta_t$ are unlinked, and so the quasi-square-integrable representations $\Delta_1, \dotsm, \Delta_t$ are rearranged to be in Langlands order without changing $\pi$. Then the result is just a restatement of the Theorem above.
\end{proof}

We pass to the case of a tempered representation, which is defined in \cite[Section 8]{JaPSSh83}. A tempered representation is of the form 
 \[
  \pi=\mathrm{Ind}(\Delta_1 \otimes \dotsm \otimes \Delta_t)
  \]
 where $\Delta_i$ are irreducible square-integrable. As a tempered representation is automatically irreducible by \cite[Theorem 4.2]{Ze80} and generic, we have the following Corollary.
 
 \begin{corollary} Let $\pi=\mathrm{Ind}(\Delta_1 \otimes \dotsm \otimes \Delta_t)$ be a tempered representation of $GL_m$. Then we obtain
  \[
   L(s,\pi,\wedge^2)=\prod_{1 \leq k \leq t} L(s,\Delta_k,\wedge^2) \prod_{1 \leq i < j \leq t} L(s, \Delta_i \times \Delta_j).
 \]
 \end{corollary}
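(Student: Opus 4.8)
The plan is to reduce the tempered case to the generic multiplicativity statement already established in Corollary \ref{generic-multi}. The only thing that genuinely needs checking is that a tempered representation $\pi=\mathrm{Ind}(\Delta_1 \otimes \dotsm \otimes \Delta_t)$, with each $\Delta_i$ an irreducible square-integrable representation of $GL_{m_i}$, falls under the hypotheses of that corollary, namely that $\pi$ is irreducible and generic with quasi-square-integrable data $\Delta_1,\dotsm,\Delta_t$ that are pairwise unlinked.

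First I would observe that square-integrability is a special case of quasi-square-integrability, so each $\Delta_i$ is an irreducible quasi-square-integrable representation attached to a centered segment $\Delta_i=[\rho_i\nu^{-\frac{\ell_i-1}{2}},\dotsm,\rho_i\nu^{\frac{\ell_i-1}{2}}]$ with $\rho_i$ an irreducible unitary supercuspidal representation. Two such centered segments are never linked: if $\rho_i$ and $\rho_j$ are not inertially equivalent up to an integral unramified twist, the segments are disjoint, while if they are inertially equivalent the common center forces one segment to be contained in the other. Hence no two of $\Delta_1,\dotsm,\Delta_t$ are linked, and by Zelevinsky \cite[Theorem 4.2]{Ze80} the induced representation $\pi$ is irreducible. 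Moreover $\pi$ is a normalized parabolic induction of quasi-square-integrable representations, so by Rodier's theorem recalled in Section 2.1 it is non-degenerate with $\mathrm{dim}(\mathrm{Hom}_{N_m}(V_{\pi},\psi))=1$; together with irreducibility this says precisely that $\pi$ is generic.

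Then I would simply invoke Corollary \ref{generic-multi}: since $\pi$ is an irreducible generic representation of $GL_m$ whose inducing data $\Delta_1,\dotsm,\Delta_t$ are quasi-square-integrable and pairwise unlinked, that corollary gives
\[
   L(s,\pi,\wedge^2)=\prod_{1 \leq k \leq t} L(s,\Delta_k,\wedge^2) \prod_{1 \leq i < j \leq t} L(s, \Delta_i \times \Delta_j),
\]
which is exactly the assertion. I do not expect any real obstacle here; all of the substance has already been carried out in the earlier sections, in particular the deformation argument of Theorem \ref{general-prod}, its specialization to arbitrary Langlands-type data in Proposition \ref{deform-l factor}, Proposition \ref{deform-gamma} and Theorem \ref{langlands-multi}, and the passage to the irreducible generic case in Corollary \ref{generic-multi}. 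The tempered statement is purely a matter of recording that a tempered representation is irreducible and generic, so that it is covered by Corollary \ref{generic-multi}; if any step deserves a line of justification it is the unlinkedness of the centered segments and the resulting irreducibility, for which the cited result of Zelevinsky is enough.
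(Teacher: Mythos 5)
Your proposal is correct and takes essentially the same route as the paper: the paper's proof is simply the observation that a tempered representation is automatically irreducible (citing Zelevinsky) and generic, so Corollary \ref{generic-multi} applies verbatim. Your added verification that centered segments with unitary supercuspidal support are pairwise unlinked is just the detail underlying that irreducibility citation, not a different argument.
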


Let $B_m$ be the Borel subgroup of $GL_m$. Take $\psi : F \rightarrow \mathbb{C}$ our additive character to be unramified and non-trivial, so $\psi(\mathcal{O})=1$ but $\psi(\varpi) \neq 1$. Suppose that $\pi$ is irreducible generic and unramified, that is, it has vectors fixed under $K_m$. Then $\pi \simeq \text{Ind}_{B_m}^{GL_m}(\mu_1 \otimes \dotsm \otimes \mu_m)$ is a full induced representation from the Borel subgroup $B_m$ of unramified characters $\mu_i$ of $F^{\times}$, that is, each $\mu_i$ is invariant under the maximal compact subgroup $\mathcal{O}^{\times} \subset F^{\times}$. Applying the formalism of the exterior square $L$-function from the Corollary \ref{generic-multi}, we obtain 
 \[
  L(s,\pi,\wedge^2)=\prod_{k=1}^m L(s,\mu_k,\wedge^2) \prod_{1\leq i < j \leq m} L(s,\mu_i \times \mu_j).
\]
As $L(s,\mu_k,\wedge^2) \equiv 1$ for all $k$ from Proposition \ref{GL_1}, we arrives at
\[
 L(s,\pi,\wedge^2)=\prod_{1\leq i < j \leq m} L(s,\mu_i \times \mu_j)=\prod_{1\leq i < j \leq m} (1-\mu_i(\varpi)\mu_j(\varpi)q^{-s})^{-1}.
\]

\begin{corollary}
Let $\pi$ be an irreducible generic and unramified representation of $GL_m$. Suppose that $\pi=\mathrm{Ind}_{B_m}^{GL_m}(\mu_1 \otimes \dotsm \otimes \mu_m)$ is a full induced representation from unramified characters $\mu_i$ of $F^{\times}$. Then
\[
 L(s,\pi,\wedge^2)=\prod_{1\leq i < j \leq m} L(s,\mu_i \times \mu_j)=\prod_{1\leq i < j \leq m} (1-\mu_i(\varpi)\mu_j(\varpi)q^{-s})^{-1}.
\]
\end{corollary}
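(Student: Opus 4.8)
The plan is to observe that this Corollary is an immediate specialization of the multiplicativity formula in Corollary \ref{generic-multi}, combined with the elementary $GL_1$ computations already recorded in the excerpt. First I would fix the structural input: since $\pi$ is irreducible, generic and unramified and $\psi$ has been chosen unramified and nontrivial, the Satake/Casselman structure theory realizes $\pi$ as the full normalized induced representation $\mathrm{Ind}_{B_m}^{GL_m}(\mu_1 \otimes \dotsm \otimes \mu_m)$ from unramified characters $\mu_i$ of $F^\times$, which is precisely the hypothesis. Viewing each $\mu_i$ as an irreducible supercuspidal --- hence quasi-square-integrable --- representation of $GL_1$ attached to the length-one segment $[\mu_i]$, the representation $\pi$ is of the form $\mathrm{Ind}(\Delta_1 \otimes \dotsm \otimes \Delta_m)$ with $\Delta_i = \mu_i$ and $m_i = 1$. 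The irreducibility of $\pi$ forces the segments $[\mu_1], \dotsm, [\mu_m]$ to be pairwise unlinked, so the hypotheses of Corollary \ref{generic-multi} are satisfied.

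Next I would apply Corollary \ref{generic-multi} verbatim with $t = m$ and $\Delta_k = \mu_k$, obtaining
\[
 L(s,\pi,\wedge^2)=\prod_{k=1}^m L(s,\mu_k,\wedge^2) \prod_{1\leq i < j \leq m} L(s,\mu_i \times \mu_j).
\]
The first product is trivial: by Proposition \ref{GL_1}, $L(s,\mu_k,\wedge^2) = 1$ for every character $\mu_k$ of $GL_1$, since the exterior square of a one-dimensional space vanishes. Hence $L(s,\pi,\wedge^2) = \prod_{1\leq i < j \leq m} L(s,\mu_i \times \mu_j)$. It then remains to evaluate each Rankin--Selberg factor $L(s,\mu_i \times \mu_j)$ for a pair of $GL_1$-characters: the $GL_1 \times GL_1$ Rankin--Selberg integral degenerates to a Tate integral for the Hecke character $\mu_i\mu_j$, so $L(s,\mu_i \times \mu_j) = L(s,\mu_i\mu_j)$, and since $\mu_i$, $\mu_j$ are unramified, so is $\mu_i\mu_j$; by Tate's thesis \cite{Tate} this equals $(1 - \mu_i(\varpi)\mu_j(\varpi)q^{-s})^{-1}$. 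Taking the product over $1\leq i < j \leq m$ yields the stated formula.

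There is no serious obstacle in this argument --- it is essentially a bookkeeping assembly of results established earlier. The only points requiring genuine care are: verifying that the irreducibility hypothesis on $\pi$ does guarantee the pairwise unlinkedness of the length-one segments $[\mu_i]$ needed to invoke Corollary \ref{generic-multi} (equivalently, that no $\mu_j$ equals $\mu_i\nu^{\pm 1}$), and making explicit the standard identification of the local $GL_1 \times GL_1$ Rankin--Selberg factor with the Tate $L$-factor of the product character. Both are routine, so the proof is short.
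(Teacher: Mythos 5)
Your proof is correct and follows essentially the same route as the paper: apply Corollary \ref{generic-multi} (with each $\mu_i$ viewed as a quasi-square-integrable representation of $GL_1$), trivialize the factors $L(s,\mu_k,\wedge^2)=1$ via Proposition \ref{GL_1}, and evaluate each unramified $GL_1\times GL_1$ Rankin--Selberg factor as the Tate factor $(1-\mu_i(\varpi)\mu_j(\varpi)q^{-s})^{-1}$. Your added remarks on unlinkedness of the length-one segments and the identification with the Tate integral only make explicit what the paper leaves implicit.
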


\subsection{Shalika functional for a normalized parabolically induced representation}

Matringe establishes the complete criteria that an irreducible generic representation $\pi$ admits a Shalika functional in \cite{Ma}. In this section, as the author in \cite{Ma} insinuates, we relate the existence of a Shalika functional on $\pi$ with the occurrence of poles of exterior square $L$-functions or Rankin-Selberg convolution $L$-functions. We then present how one can deduce inductivity relation of the exterior square $L$-functions in Theorem \ref{general-prod} without using Hartogs' Theorem. We introduce the main result of Corollary 1.1 in \cite{Ma}. 

\begin{theorem}[Matringe]
\label{char of Shalika functional}
Let $\pi=\mathrm{Ind}(\Delta_1 \otimes \dotsm \otimes \Delta_t)$ be an irreducible generic representation of $GL_{2n}$ where each $\Delta_i$ is an irreducible quasi-square integrable representation of $GL_{n_i}$ and $2n=\sum n_i$. Then $\pi$ admits a Shalika functional if and only if
\[
  \pi \simeq \mathrm{Ind}((\Delta_1 \otimes \widetilde{\Delta}_1) \otimes \dotsm \otimes (\Delta_s \otimes \widetilde{\Delta}_s) \otimes \Delta_{s+1} \otimes \dotsm \otimes \Delta_t ), \quad 0 \leq s \leq t,
\]
where $\Delta_i$ admits a Shalika functional and each $n_i$ is even for all $i > s$.
\end{theorem}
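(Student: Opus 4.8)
The plan is to reduce the statement to a question about exceptional poles of the exterior square $L$-function at $s=0$, to settle that question by the deformation argument underlying Theorem~\ref{general-prod} together with Hartogs' theorem, and finally to translate the resulting pole criterion into the segment bookkeeping of the statement. The basic reduction is the equivalence, for an irreducible generic $\pi$ of $GL_{2n}$: $\mathrm{Hom}_{S_{2n}}(\pi,\Theta)\neq 0$ if and only if $L(s,\pi,\wedge^2)$ has an exceptional pole at $s=0$ (equivalently, $L_{ex}(s,\pi,\wedge^2)$ has a pole at $s=0$). The implication from an exceptional pole to a Shalika functional is already in Section~3: the leading Laurent coefficient of $J(s,W,\Phi)$ at an exceptional pole $s_0$ factors as $B_{s_0}(W,\Phi)=\Lambda_{s_0}(W)\,\Phi(0)$ with $\Lambda_{s_0}\in\mathrm{Hom}_{S_{2n}}(\mathcal{W}(\pi,\psi),|\cdot|^{-s_0/2}\Theta)$, so $s_0=0$ yields a non-zero Shalika functional. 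For the converse I would invoke Lemma~\ref{char-Shalika}, whose hypotheses hold because $\mathrm{Hom}_{S_{2n}}(\pi,\Theta)$ is at most one-dimensional by Theorem~\ref{Jacquet-Rallis} and $\mathrm{Hom}_{P_{2n}\cap S_{2n}}(\pi,\Theta)$ embeds, via Proposition~\ref{even-embedding}, into $\mathrm{Hom}_{P_{2n}\cap H_{2n}}(\pi,\delta_{2n}^{-s_0})$, which is at most one-dimensional by Proposition~\ref{mira-multiplicity one}; a Shalika functional forces $\pi\simeq\widetilde\pi$ by Theorem~\ref{Jacquet-Rallis}, and writing $\pi$ in the paired shape below concentrates every non-unitary twist in the blocks $\mathrm{Ind}(\Delta_i\otimes\widetilde\Delta_i)$, so that this unitary input applies to the self-dual blocks where it is needed.

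Next I would pin down when that exceptional pole occurs by deforming $\pi=\mathrm{Ind}(\Delta_1\otimes\dots\otimes\Delta_t)$ to $\pi_u$ and repeating the argument of the proof of Theorem~\ref{general-prod}. For $u$ in general position the exceptional poles of $\pi_u$ can only come from twisted self-contragredience of $\pi_u$ itself, and the locus in $(u,s)$-space on which such an exceptional contribution survives the codimension bound of Hartogs' theorem (Theorem~\ref{Hartog}) is precisely the one on which the segments pair off under $\Delta\mapsto\widetilde\Delta$. By Proposition~\ref{two blocks-prop} each such pair $\{\Delta_i,\widetilde\Delta_i\}$ is accounted for by $L_{ex}(s,\Delta_i\times\widetilde\Delta_i)$, which has a pole at $s=0$, whereas an unpaired self-dual segment $\Delta_j$ contributes an exceptional pole at $s=0$ exactly when $L_{ex}(s,\Delta_j,\wedge^2)$ does — which by Proposition~\ref{excep-Shalika} is exactly when $\Delta_j$ admits a Shalika functional, and which by Theorem~\ref{supercusp}(ii) and Proposition~\ref{odd-square} forces $n_j$ to be even. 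Specializing back to $u=0$ one obtains the pole criterion: $L(s,\pi,\wedge^2)$ has an exceptional pole at $s=0$ if and only if the multiset of segments splits into dual pairs together with self-dual segments that are each Shalika-admissible (hence of even dimension).

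The two implications are then segment bookkeeping. Forward: a Shalika functional on $\pi$ gives $\pi\simeq\widetilde\pi$ (Theorem~\ref{Jacquet-Rallis}), hence the multiset $\{\Delta_1,\dots,\Delta_t\}$ is stable under $\Delta\mapsto\widetilde\Delta$ \cite{Ze80}; choosing a grouping into dual pairs $\{\Delta_i,\widetilde\Delta_i\}$, $1\le i\le s$, and self-dual leftovers $\Delta_{s+1},\dots,\Delta_t$ that realizes the pole criterion of the previous paragraph produces the asserted isomorphism, with each leftover Shalika-admissible and hence, by Theorem~\ref{supercusp}(ii) and Proposition~\ref{odd-square}, of even dimension. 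Backward: if $\pi$ has the asserted form, then each $\mathrm{Ind}(\Delta_i\otimes\widetilde\Delta_i)$ carries a non-trivial Shalika functional by Lemma~\ref{Shalika periods}, the factor $L_{ex}(s,\Delta_i\times\widetilde\Delta_i)$ has a pole at $s=0$ by the Rankin--Selberg theory recalled in Section~6, and each leftover $\Delta_i$ gives a pole of $L_{ex}(s,\Delta_i,\wedge^2)$ at $s=0$ by hypothesis, so the pole criterion forces an exceptional pole of $L(s,\pi,\wedge^2)$ at $s=0$, and the first step delivers a Shalika functional on $\pi$.

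The step I expect to be the main obstacle is the determination of the exceptional poles of the deformed family $\pi_u$: one must track which poles of $L(s,\pi_u,\wedge^2)$ are genuinely exceptional — hence seen by $L^{(0)}(s,\pi_u,\wedge^2)$ and $L_{ex}(s,\pi_u,\wedge^2)$ — as opposed to those arising from $L_{(0)}(s,\pi_u,\wedge^2)$ through proper derivatives, and one must apply Hartogs' theorem sharply enough to discard the degenerate configurations in which several independent self-contragredience constraints happen to coincide at $s=0$; it is exactly this codimension bookkeeping that blocks a spurious Shalika functional when, for instance, a self-dual segment with the wrong central character sits beside a genuine dual pair. A secondary point is verifying that the equivalence of the first paragraph extends from irreducible unitary generic $\pi$, where Proposition~\ref{mira-multiplicity one} applies as stated, to arbitrary irreducible generic $\pi$, which is arranged as indicated above by isolating the non-unitary part in dual-pair blocks governed by Lemma~\ref{Shalika periods}.
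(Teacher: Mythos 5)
The paper does not prove this statement at all: it is quoted verbatim as Corollary~1.1 of Matringe's paper \cite{Ma}, whose proof goes through the theory of linear periods and a Mackey-theoretic analysis of the restriction of $\pi$ to the Shalika subgroup. Your proposal is therefore a genuinely different route, but it has a serious gap that I believe is fatal as written: it is essentially circular. The ``pole criterion'' you want to extract from the deformation argument --- that $L(s,\pi,\wedge^2)$ has an \emph{exceptional} pole at $s=0$ exactly when the segments split into dual pairs plus Shalika-admissible self-dual leftovers --- is not delivered by Theorem~\ref{general-prod} together with Hartogs' theorem. Those arguments identify the \emph{full} factor $L(s,\pi_u,\wedge^2)$ for $u$ in general position and, after specialization, the product formula of Theorem~\ref{langlands-multi} at $u=0$; they say nothing about which poles at $u=0$ are exceptional for $\pi$ itself, because exceptionality is read off from the filtration $\mathcal{J}_{(k)}(\pi)$ and the derivative structure, and at $u=0$ the representation is typically \emph{not} in general position, so the standing complete-reducibility assumption behind Theorems~\ref{prod-L} and \ref{prod-L0} is unavailable and ``being an exceptional pole'' does not specialize along $u\to 0$. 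Indeed, in the paper the classification of exceptional poles for a general generic $\pi$ (Proposition~\ref{l-function-shalika}) is \emph{deduced from} Matringe's theorem, not the other way around, which is exactly the dependency your argument would need to reverse.

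A second, related gap is in your reduction ``Shalika functional $\Rightarrow$ exceptional pole at $s=0$''. Lemma~\ref{char-Shalika} requires $\dim\mathrm{Hom}_{P_{2n}\cap S_{2n}}(\pi,\Theta)\le 1$, and the paper verifies this only for unitary generic $\pi$ (Proposition~\ref{mira-multiplicity one}) or, under condition $(6)$ of general position, for the special two-block shapes $\mathrm{Ind}(\Delta\nu^{s}\otimes\widetilde{\Delta}\nu^{-s})$. For an arbitrary irreducible generic $\pi$ with many blocks and non-unitary twists this one-dimensionality is not available, and your remedy --- ``concentrate the non-unitary twists in the dual-pair blocks'' --- presupposes precisely the pairing structure that the forward implication is supposed to produce. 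Note that Theorem~\ref{Jacquet-Rallis} only gives $\pi\simeq\widetilde{\pi}$, i.e.\ stability of the multiset of segments under $\Delta\mapsto\widetilde{\Delta}$; it does not force the unpaired self-dual segments to be Shalika-admissible (a single self-dual discrete series of orthogonal type satisfies $\pi\simeq\widetilde\pi$ with no Shalika functional), and excluding that possibility is the real content of the theorem. Finally, for the backward direction the natural argument is the direct inductivity of Shalika functionals (a closed-orbit/integration construction, as in Matringe's proof and implicit in Lemma~\ref{Shalika periods}); routing it through the unproven pole criterion inherits the same difficulty about exceptionality at $u=0$.
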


Now we provide a consequence of Theorem \ref{char of Shalika functional} in terms of $L$-functions which is analogous to Proposition 4.20 in \cite{Ma15}.

 \begin{proposition}
 \label{l-function-shalika}
 Let $\pi=\mathrm{Ind}(\Delta_1 \otimes \dotsm \otimes \Delta_t)$ be an irreducible generic representation of $GL_{2n}$ where each $\Delta_i$ is an irreducible quasi-square-integrable representation of $GL_{n_i}$ with $2n=\sum n_i$ and $t \geq 2$. Suppose that $L_{ex}(s,\pi,\wedge^2)$ has a pole at $s=s_0$. Then we are in the one of the following :
 \begin{enumerate}
\item[$(\mathrm{1})$] There are $(i,j) \in \{1,\dotsm,t\}$ with $i \neq j$ such that $n_i$ and $n_j$ are even, and exceptional $L$-functions $L_{ex}(s,\Delta_i,\wedge^2)$ and $L_{ex}(s,\Delta_j,\wedge^2)$ have $s=s_0$ as a common pole.
\item[$(\mathrm{2})$] There are $(i,j,k,\ell) \in \{1,\dotsm,t\}$ with $\{ i, j \} \neq \{ k,\ell \}$ such that $L_{ex}(s,\Delta_i \times \Delta_j)$ and $L_{ex}(s,\Delta_k \times \Delta_{\ell})$ have $s=s_0$ as a common pole.
\item[$(\mathrm{3})$] There are $(i,j,k) \in \{1,\dotsm,t\}$ with $i \neq j$ such that $n_k$ is even and $L_{ex}(s,\Delta_i \times \Delta_j)$ and $L_{ex}(s,\Delta_k,\wedge^2)$ have $s=s_0$ as a common pole.
 \end{enumerate}
 \end{proposition}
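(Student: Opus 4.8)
The plan is to combine three inputs: the computation of $L(s,\pi,\wedge^2)$ for an irreducible generic representation in terms of quasi-square-integrable inducing data (Corollary \ref{generic-multi}), the characterization of exceptional poles as poles of $L^{(0)}(s,\pi,\wedge^2)$ arising from a twisted Shalika functional (Proposition \ref{L0 function} and Section 3.1), and Matringe's structural criterion for the existence of a Shalika functional (Theorem \ref{char of Shalika functional}). First I would twist: if $L_{ex}(s,\pi,\wedge^2)$ has a pole at $s=s_0$, then replacing $\pi$ by $\pi\nu^{s_0/2}$ (which is again irreducible generic, with inducing data $\Delta_i\nu^{s_0/2}$) reduces us to the case $s_0=0$. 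So assume $L_{ex}(s,\pi,\wedge^2)$ has a pole at $s=0$; since $t\geq 2$ we are not in the supercuspidal/single-segment situation, and the pole is an exceptional pole of $L(s,\pi,\wedge^2)$, hence comes with a non-trivial Shalika functional $\Lambda\in\mathrm{Hom}_{S_{2n}}(\pi,\Theta)$ (this is the content of Section 3.1, together with the fact that an exceptional pole at $0$ factors $B_0$ through $\mathcal{W}(\pi,\psi)\times(\mathcal{S}(F^n)/\mathcal{S}_0(F^n))$).

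Next I would feed this into Theorem \ref{char of Shalika functional}: the existence of $\Lambda$ forces
\[
 \pi \simeq \mathrm{Ind}\bigl((\Delta_{1}\otimes\widetilde{\Delta}_{1})\otimes\dotsm\otimes(\Delta_{s}\otimes\widetilde{\Delta}_{s})\otimes\Delta_{s+1}\otimes\dotsm\otimes\Delta_{t}\bigr)
\]
after a reordering, where each $\Delta_i$ for $i>s$ lives on an even $GL_{n_i}$ and admits a Shalika functional, and each ``paired'' block $\Delta_i\otimes\widetilde{\Delta}_i$ occurs for $i\leq s$. I would then read off, using Corollary \ref{generic-multi}, the factorization of $L(s,\pi,\wedge^2)$ as a product of $L(s,\Delta_k,\wedge^2)$ over all segments and $L(s,\Delta_i\times\Delta_j)$ over all pairs, and ask which factor contributes the pole at $s=0$. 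There are exactly three ways this can happen given the structure above: (a) the pole comes from two distinct self-dual segments, i.e. $L_{ex}(s,\Delta_i,\wedge^2)$ and $L_{ex}(s,\Delta_j,\wedge^2)$ with $n_i,n_j$ even both having a pole at $0$ — here one uses Proposition \ref{excep-Shalika} that a square-integrable $\Delta_i$ has a Shalika functional iff $L_{ex}(s,\Delta_i,\wedge^2)$ has a pole at zero, and that such a pole of the product $L(s,\pi,\wedge^2)$ at $s=0$ only arises from $L_{ex}$-type factors by the analysis of Propositions \ref{even-square}, \ref{odd-square} (all the $L(s,\Delta\times\Delta')$-type poles lie off $\mathrm{Re}(s)=0$ once we have twisted to $s_0=0$, unless they too pass through $0$ coincidentally, which is case (b) or (c)); (b) the pole comes from a pair of Rankin–Selberg factors $L_{ex}(s,\Delta_i\times\Delta_j)$ and $L_{ex}(s,\Delta_k\times\Delta_\ell)$ with $\{i,j\}\neq\{k,\ell\}$, which happens precisely when the paired-block structure forces $(\Delta_i)^\sim\simeq\Delta_j$ for more than one such pair (recall $L_{ex}(s,\Delta\times\Delta')$ has a pole at $0$ exactly when $\widetilde{\Delta}\simeq\Delta'$); (c) a mixed coincidence, $L_{ex}(s,\Delta_i\times\Delta_j)$ and $L_{ex}(s,\Delta_k,\wedge^2)$ sharing the pole, with $n_k$ even. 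The dichotomy ``one factor versus two coincident factors'' is handled exactly as in the proof of Proposition \ref{general position} and in Theorem \ref{general-prod}: either a single factor accounts for the pole (but then, since $t\geq 2$, the paired structure of Theorem \ref{char of Shalika functional} shows there is necessarily a \emph{second} factor of the same shape passing through $0$, because a self-dual $\Delta_i\otimes\widetilde\Delta_i$ block contributes both $L_{ex}(s,\Delta_i\times\widetilde\Delta_i)$ and, if $n_i$ happens to be even and $\Delta_i$ self-dual, also $L_{ex}(s,\Delta_i,\wedge^2)L_{ex}(s,\widetilde\Delta_i,\wedge^2)$), or two genuinely distinct factors coincide, landing us in (1), (2), or (3).

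The main obstacle, I expect, is the bookkeeping in the last step: carefully enumerating which products of segments in the decomposition of Theorem \ref{char of Shalika functional} can have a pole at $s=0$, and showing that whenever $L_{ex}(s,\pi,\wedge^2)$ has such a pole the corresponding coincidence is forced to involve \emph{two} distinct indexed factors (so that we genuinely land in one of the three listed cases and not in a ``single factor only'' case). This requires combining the explicit pole locations from Propositions \ref{even-square} and \ref{odd-square} for the $\wedge^2$-factors of the $\Delta_i$, the pole criterion $\widetilde\Delta\simeq\Delta'$ for the Rankin–Selberg factors, and the constraint that the \emph{self-dual} segments pair among themselves while the remaining segments come in dual pairs $\Delta_i\otimes\widetilde\Delta_i$ — precisely the combinatorial structure already exploited in the proof of Theorem \ref{general-prod}. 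Once that enumeration is laid out, matching the three surviving configurations to (1), (2), (3) is immediate; everything else (the twist to $s_0=0$, extracting the Shalika functional, invoking Matringe's criterion, and reading off Corollary \ref{generic-multi}) is routine.
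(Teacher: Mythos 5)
Your opening moves coincide with the paper's own proof: twist by $\nu^{\frac{s_0}{2}}$, use the exceptional pole to produce a nonzero element of $\mathrm{Hom}_{S_{2n}}(\pi\nu^{\frac{s_0}{2}},\Theta)$, and invoke Theorem \ref{char of Shalika functional}. Where your proposal breaks down is in the last step, which you yourself flag as the ``main obstacle'' and then only sketch. The paper does not route this step through Corollary \ref{generic-multi} or the pole locations of Propositions \ref{even-square} and \ref{odd-square} at all; that detour is unnecessary (and leaning on Corollary \ref{generic-multi}, whose proof rests on the Hartogs-based Theorem \ref{general-prod}, partly defeats the purpose of this section, which is to offer an alternative to that argument). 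Instead the conclusion is read off segment by segment from Matringe's decomposition, which by construction exhausts all $t$ inducing segments of $\pi\nu^{\frac{s_0}{2}}$: every dual pair gives $(\Delta_i\nu^{\frac{s_0}{2}})^{\sim}\simeq\Delta_j\nu^{\frac{s_0}{2}}$ with $i\neq j$, i.e. $\widetilde{\Delta}_i\simeq\Delta_j\nu^{s_0}$, which by Section 6.2 is exactly the condition that $L_{ex}(s,\Delta_i\times\Delta_j)$ has a pole at $s=s_0$; every unpaired block $\Delta_k\nu^{\frac{s_0}{2}}$ is (having trivial central character) a square-integrable representation of an even $GL_{n_k}$ admitting a Shalika functional, so by Proposition \ref{excep-Shalika} the factor $L_{ex}(s,\Delta_k\nu^{\frac{s_0}{2}},\wedge^2)$ has a pole at $0$, i.e. $L_{ex}(s,\Delta_k,\wedge^2)$ has a pole at $s=s_0$. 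Since the decomposition covers all $t\geq 2$ segments, the resulting pole statements combine into precisely the listed configurations: two unpaired blocks give (1), two dual pairs give (2), and a dual pair together with an unpaired block gives (3).

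Your sketch never actually establishes that two distinct indexed factors share the pole: the parenthetical claim that a block $\Delta_i\otimes\widetilde{\Delta}_i$ contributes, besides $L_{ex}(s,\Delta_i\times\widetilde{\Delta}_i)$, also $L_{ex}(s,\Delta_i,\wedge^2)$ ``if $n_i$ happens to be even and $\Delta_i$ self-dual'' is conditional on hypotheses that Theorem \ref{char of Shalika functional} does not supply, so the dichotomy you set up (``one factor versus two coincident factors'') is not closed, and the enumeration of ``which factor of the product carries the pole'' is never carried out. That is the gap. The fix is exactly the segmentwise translation above, via the two pole criteria (the Rankin--Selberg criterion $\widetilde{\Delta}\simeq\Delta'\nu^{s_0}$ and Proposition \ref{excep-Shalika}), applied to the pieces of Matringe's decomposition rather than to the factorization of $L(s,\pi,\wedge^2)$ from Corollary \ref{generic-multi}; with that replacement your remaining steps (the twist, the extraction of the Shalika functional, and the appeal to Matringe's criterion) match the paper's proof.
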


 \begin{proof}
 
 Suppose that $L_{ex}(s,\pi,\wedge^2)$ has a pole at $s=s_0$. 
 As $L_{ex}(s,\pi,\wedge^2)=L_{ex}(s-s_0,\pi\nu^{\frac{s_0}{2}},\wedge^2)$, $\pi\nu^{\frac{s_0}{2}}$ admits a non-trivial Shalika functional from Section 3.1. 
 we know from Theorem \ref{char of Shalika functional} that $\pi\nu^{\frac{s_0}{2}}$ is isomorphic to
 \[
 \begin{split}
   & \mathrm{Ind}((\Delta_1\nu^{\frac{s_0}{2}} \otimes (\Delta_1\nu^{\frac{s_0}{2}})^{\sim}) \otimes \dotsm \otimes (\Delta_s\nu^{\frac{s_0}{2}} \otimes (\Delta_s\nu^{\frac{s_0}{2}})^{\sim}) \otimes \Delta_{s+1}\nu^{\frac{s_0}{2}} \otimes \dotsm \otimes\Delta_t\nu^{\frac{s_0}{2}}), \\
   &\phantom{********************************************} 0 \leq s \leq t,
     \end{split}
 \] 
where $\Delta_i\nu^{\frac{s_0}{2}} $ admits a Shalika functional and each $n_i$ is even for all $i > s$. Hence $(\Delta_i\nu^{\frac{s_0}{2}})^{\sim} \simeq \Delta_j\nu^{\frac{s_0}{2}}$ with $i \neq j$ or 
 $\Delta_k\nu^{\frac{s_0}{2}}$ admits a Shalika functional with $n_k$ an even number.
 \par
 According to Section 6.2, $(\Delta_i\nu^{\frac{s_0}{2}})^{\sim} \simeq \Delta_j\nu^{\frac{s_0}{2}}$ or equivalently $\widetilde{\Delta}_i \simeq \Delta_j\nu^{s_0}$ if and only if $L_{ex}(s,\Delta_i \times \Delta_j)$ has a pole at $s=s_0$.

 \par
  If $\Delta_k\nu^{\frac{s_0}{2}}$ has the Shalika functional, the space $\mathrm{Hom}_{S_{n_k}}(\Delta_k\nu^{\frac{s_0}{2}},\Theta)$ is nontrivial and that the central character $\omega_{\Delta_k\nu^{\frac{s_0}{2}}}$ of $\Delta_k\nu^{\frac{s_0}{2}}$ is trivial. Since $\Delta_j\nu^{\frac{s_0}{2}}$ is the irreducible square-integrable representation, we obtain from Proposition \ref{excep-Shalika} that 
$L_{ex}(s,\Delta_k\nu^{\frac{s_0}{2}},\wedge^2)$ has a pole at $s=0$ or equivalently $L_{ex}(s,\Delta_k,\wedge^2)$ has a pole at $s=s_0$. Therefore $s=s_0$ is a common pole for either of three cases in Proposition \ref{l-function-shalika}.
 \end{proof}

 We now explains that Theorem \ref{char of Shalika functional} of Matringe essentially provides the same result as Theorem \ref{general-prod}.
Let $\pi=\mathrm{Ind}(\Delta_1 \otimes \dotsm \otimes \Delta_t)$ be a representation of $GL_m$ of Whittaker type. We let $u=(u_1,\dotsm,u_t) \in \mathcal{D}_{\pi}$ be in general position and $\pi_u=\mathrm{Ind}(\Delta_1\nu^{u_1}\otimes \dotsm \otimes \Delta_t\nu^{u_t})$ the deformed representation of $\pi$ on $GL_m$. Let us now take $\Delta_i$ to be associated to the segment $[\rho_i,\dotsm,\rho_i\nu^{\ell_i-1}]$ with $\rho_i$ an irreducible supercuspidal representation of $GL_{r_i}$ $m_i=r_i\ell_i$ and $m=\sum r_i\ell_i$. By Theorem \ref{prod-L} and Theorem \ref{prod-L-odd}, we have
 \[
   L(s,\pi_u,\wedge^2)^{-1}=l.c.m. \{ L_{ex}(s,\pi_u^{(a_1r_1,\dotsm,a_tr_t)},\wedge^2)^{-1} \}
 \]
where $0 \leq a_i \leq \ell_i$, $m-\sum a_ir_i$ is an even number, and the least common multiple is taken in terms of divisibility in $\mathbb{C}[q^{\pm s}]$. Suppose that $L_{ex}(s,\pi_u^{(a_1r_1,\dotsm,a_tr_t)},\wedge^2)$ has a pole at $s=s_0$.
\par

If the number of indices $i$ such that $r_i \neq \ell_i$ is more than $3$, we know from Proposition \ref{l-function-shalika} that our $L$-functions satisfy one of the following conditions:
\begin{enumerate}
\item[$(\mathrm{1})$] There are $(i,j) \in \{1,\dotsm,t\}$ with $i \neq j$ such that $m_i-a_ir_i$ and $m_j-a_jr_j$ are even, and $L(s,\Delta^{(a_ir_i)}_i\nu^{u_i},\wedge^2)$ and $L(s,\Delta^{(a_jr_j)}_j\nu^{u_j},\wedge^2)$ have $s=s_0$ as a common pole,
\item[$(\mathrm{2})$] There are $(i,j,k,\ell) \in \{1,\dotsm,t\}$ with $\{ i, j \} \neq \{ k,\ell \}$ such that $L(s,\Delta_i^{(a_ir_i)}\nu^{u_i} \times \Delta_j^{(a_jr_j)}\nu^{u_j})$ and $L(s,\Delta_k^{(a_kr_k)}\nu^{u_k} \times \Delta^{(a_{\ell}r_{\ell})}_{\ell}\nu^{u_{\ell}})$ have $s=s_0$ as a common pole,
\item[$(\mathrm{3})$] There are $(i,j,k) \in \{1,\dotsm,t\}$ with $i \neq j$ such that $m_k-a_kr_k$ is even and two $L$-functions $L(s,\Delta_i^{(a_ir_i)}\nu^{u_i} \times \Delta_j^{(a_jr_j)}\nu^{u_j})$ and $L(s,\Delta_k^{(a_kr_k)}\nu^{u_k},\wedge^2)$ have $s=s_0$ as a common pole,
 \end{enumerate}
However condition $3$, $4$, and $5$ of general position guarantee that the above situations cannot happen as $u$ is in general position because exceptional poles are poles of original $L$-functions. 

\par 

If there exists exactly one pair $(i,j)$ of indices $i \neq j$ such that $r_i \neq \ell_i$ and $r_j \neq \ell_j$, according to Corollary \ref{two blocks}, $L_{ex}(s,\mathrm{Ind}(\Delta_i^{(a_ir_i)}\nu^{u_i}\otimes \Delta_j^{(a_jr_j)}\nu^{u_j} ),\wedge^2)=L_{ex}(s,\Delta_i^{(a_ir_i)}\nu^{u_i}\times \Delta_j^{(a_jr_j)}\nu^{u_j})$. If $i$ is the only index such that $r_i \neq \ell_i$, we have $L_{ex}(s,\Delta_i^{(a_ir_i)}\nu^{u_i},\wedge^2)$.
\par

 Putting all together, for $i < j$  $L_{ex}(s,\pi_u^{(a_1r_1,\dotsm,a_tr_t)},\wedge^2)$ is equal to $L_{ex}(s,\Delta_i^{(a_ir_i)}\nu^{u_i}\times \Delta_i^{(a_jr_j)}\nu^{u_j}))$, or $L_{ex}(s,\Delta_i^{(a_ir_i)}\nu^{u_i},\wedge^2)$. In the proof of Theorem \ref{general-prod} we employ Hartogs' Theorem rather than Theorem \ref{char of Shalika functional}. Using Hartogs' Theorem is more the original sprit of the method in \cite{Cog94} or \cite{CoPe}. Following the rest of the proof in Theorem \ref{general-prod}, we finally arrive at
 \[
 \begin{split}
  L(s,\pi_u,\wedge^2)&=\prod_{1 \leq k \leq t} L(s,\Delta_k\nu^{u_k},\wedge^2) \prod_{1 \leq i < j \leq t} L(s, \Delta_i\nu^{u_i} \times \Delta_j\nu^{u_j}) \\
   &=\prod_{1 \leq k \leq t} L(s+2u_k,\Delta_k,\wedge^2) \prod_{1 \leq i < j \leq t} L(s+u_i+u_j, \Delta_i \times \Delta_j).
 \end{split}
 \]

\subsection{Exceptional and symmetric square $L$-functions.}
 We introduce symmetric square $L$-functions defined by the ratio of Rankin-Selberg $L$-functions for $GL_m$ by exterior square $L$-functions for $GL_m$. We first recall results from \cite{CoPe,JaPSSh79} about $L$-function for Rankin-Selberg convolution. Let $\pi$ be an irreducible admissible representation of $GL_m$ and $\Pi=\mathrm{Ind}(\Delta_1\nu^{u_1} \otimes \dotsm \otimes \Delta_t\nu^{u_t} )$ the Langlands induced representation such that $\pi$  is the unique irreducible quotient of $\Pi$. It is proven in \cite{CoPe,JaPSSh79} that
 \[
   L(s,\pi \times \pi)=\prod_{1 \leq i,j\leq t} L(s+u_i+u_j,\Delta_i \times \Delta_j).
 \]
 Applying the formalism of the exterior square $L$-functions in Theorem \ref{langlands-multi}, we obtain
 \[
   \tag{8.6}
   \label{RSprod}
    \frac{L(s,\pi \times \pi)}{L(s,\pi,\wedge^2)}=\prod_{1 \leq k \leq t} \frac{L(s+2u_k,\Delta_k \times \Delta_k)}{L(s+2u_k,\Delta_k,\wedge^2)} \prod_{1 \leq j < i \leq t} L(s, \Delta_i \times \Delta_j).
 \]
 
Since $L(s+2u_i,\Delta_i,\wedge^2)^{-1}$ divides $L(s+2u_i,\Delta_i \times \Delta_i)^{-1}$ in $\mathbb{C}[q^{\pm s}]$ by Proposition \ref{even-square} and \ref{odd-square}, \eqref{RSprod} insures that $L(s,\pi,\wedge^2)^{-1}$ divides $L(s,\pi \times \pi)^{-1}$ in $\mathbb{C}[q^{\pm s}]$. Motivated by this divisibility, we define symmetric square $L$-functions as the ratio of $L(s,\pi \times \pi)$ by $L(s,\pi,\wedge^2)$.

\begin{definition}
Let $\pi$ be an irreducible admissible representation of $GL_m$. We let $\Pi=\mathrm{Ind}(\Delta_1\nu^{u_1} \otimes \dotsm \otimes \Delta_t\nu^{u_t} )$ be the Langlands induced representation with each $\Delta_i$ an irreducible square integrable representation, the $u_i$ real and ordered so that $u_1 \geq \dotsm \geq u_t$ such that $\pi$  is the unique irreducible quotient of $\Pi$. We denote by $L(s,\pi,\mathrm{Sym}^2)$ the symmetric square $L$-function of $\pi$ defined by
\[
 L(s,\pi,\mathrm{Sym}^2)= \frac{L(s,\pi \times \pi)}{L(s,\pi,\wedge^2)}.
\]
\end{definition}

We emphasize that this definition of $L$-function $L(s,\pi,\mathrm{Sym}^2)$ agrees with Artin's symmetric square $L$-function on the arithmetic side. We will be concerned with the proof of this result in the next section. We know that $L(s,\Delta_i \times \Delta_j)=L(s,\Delta_j \times \Delta_i)$ by the result of \cite{JaPSSh79}. From \eqref{RSprod}, we have the following analogous result of Theorem \ref{langlands-multi} for symmetric square $L$-function $L(s,\pi,\mathrm{Sym}^2)$.

\begin{proposition}
\label{symmetric-multi}
Let $\pi$ be an irreducible admissible representation of $GL_m$ such that $\pi$  is the unique irreducible quotient of representations of Langlands type $\Pi=\mathrm{Ind}(\Delta_1\nu^{u_1} \otimes \dotsm \otimes \Delta_t\nu^{u_t} )$. Then
\[
 L(s,\pi,\mathrm{Sym}^2)=\prod_{k=1}^t L(s+2u_k,\Delta_k,\mathrm{Sym}^2) \prod_{1\leq i < j \leq t} L(s+u_i+u_j,\Delta_i \times \Delta_j).
\]
\end{proposition}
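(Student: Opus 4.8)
The plan is to reduce the claimed multiplicativity of the symmetric square $L$-function to the corresponding multiplicativity statements for the Rankin--Selberg and exterior square $L$-functions that have already been established in the excerpt. Recall that by definition $L(s,\pi,\mathrm{Sym}^2) = L(s,\pi\times\pi)/L(s,\pi,\wedge^2)$, and by definition $L(s,\pi,\wedge^2) = L(s,\Pi,\wedge^2)$ where $\Pi = \mathrm{Ind}(\Delta_1\nu^{u_1}\otimes\dotsm\otimes\Delta_t\nu^{u_t})$ is the Langlands induced representation having $\pi$ as its unique irreducible quotient. Since both the $L$-functions on the arithmetic side and these analytic ones depend on $\pi$ only through $\Pi$, it suffices to work with $\Pi$, a representation of Langlands type to which Theorem \ref{langlands-multi} directly applies.

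First I would invoke Theorem \ref{langlands-multi} to write
\[
 L(s,\pi,\wedge^2) = L(s,\Pi,\wedge^2) = \prod_{k=1}^t L(s+2u_k,\Delta_k,\wedge^2)\prod_{1\le i<j\le t} L(s+u_i+u_j,\Delta_i\times\Delta_j).
\]
Next I would recall the factorization of the Rankin--Selberg $L$-function for $\pi$, as quoted at the beginning of Section 8.3 following \cite{CoPe,JaPSSh79}, namely
\[
 L(s,\pi\times\pi) = \prod_{1\le i,j\le t} L(s+u_i+u_j,\Delta_i\times\Delta_j),
\]
and split the double product over ordered pairs $(i,j)$ into the diagonal terms $i=j$ and the off-diagonal terms, using $L(s,\Delta_i\times\Delta_j)=L(s,\Delta_j\times\Delta_i)$ from \cite{JaPSSh79} to pair up $(i,j)$ with $(j,i)$. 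This yields
\[
 L(s,\pi\times\pi) = \prod_{k=1}^t L(s+2u_k,\Delta_k\times\Delta_k)\prod_{1\le i<j\le t} L(s+u_i+u_j,\Delta_i\times\Delta_j)^2,
\]
which is precisely equation \eqref{RSprod} of the excerpt once one divides by $L(s,\pi,\wedge^2)$.

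Taking the quotient term by term, the cross terms $L(s+u_i+u_j,\Delta_i\times\Delta_j)$ for $i<j$ cancel once against the corresponding factor in $L(s,\pi,\wedge^2)$, leaving one copy surviving, and the diagonal terms combine to $\prod_k L(s+2u_k,\Delta_k\times\Delta_k)/L(s+2u_k,\Delta_k,\wedge^2)$. Here I would apply the definition of the symmetric square $L$-function for each $\Delta_k$ individually, $L(s+2u_k,\Delta_k,\mathrm{Sym}^2) = L(s+2u_k,\Delta_k\times\Delta_k)/L(s+2u_k,\Delta_k,\wedge^2)$, which is legitimate since each $\Delta_k$ is itself an irreducible square integrable representation, hence its own Langlands induced representation, so the definition of $L(s,\cdot,\mathrm{Sym}^2)$ applies with a single segment. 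Combining, one obtains exactly
\[
 L(s,\pi,\mathrm{Sym}^2) = \prod_{k=1}^t L(s+2u_k,\Delta_k,\mathrm{Sym}^2)\prod_{1\le i<j\le t} L(s+u_i+u_j,\Delta_i\times\Delta_j),
\]
as claimed. The argument is essentially a bookkeeping exercise, so there is no serious obstacle; the only point requiring a little care is ensuring that the divisibility $L(s+2u_k,\Delta_k,\wedge^2)^{-1}\mid L(s+2u_k,\Delta_k\times\Delta_k)^{-1}$ in $\mathbb{C}[q^{\pm s}]$ (which guarantees that the various ratios are genuine Euler factors rather than ratios with poles in numerator and denominator) holds, but this is supplied by Proposition \ref{even-square} and Proposition \ref{odd-square}. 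One should also note that because $L(s,\pi,\wedge^2)$ and $L(s,\pi\times\pi)$ are both normalized Euler factors of the form $P(q^{-s})^{-1}$ with $P(0)=1$, the quotient is well-defined and the term-by-term cancellation produces the normalized generators on the right-hand side with no spurious monomial ambiguity.
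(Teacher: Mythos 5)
Your proposal is correct and follows essentially the same route as the paper: divide the Rankin--Selberg factorization $L(s,\pi\times\pi)=\prod_{i,j}L(s+u_i+u_j,\Delta_i\times\Delta_j)$ by the exterior square factorization of Theorem \ref{langlands-multi}, use $L(s,\Delta_i\times\Delta_j)=L(s,\Delta_j\times\Delta_i)$ to pair the off-diagonal terms, and apply the definition of $L(s,\Delta_k,\mathrm{Sym}^2)$ to the diagonal terms, which is exactly the computation behind \eqref{RSprod} in the paper.
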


Let $\pi$ be irreducible generic and unramified representation of $GL_m$. Then it is full induced representations from unramified characters of the Borel subgroup $B_m$ by \cite{Ze80}. Hence let us write $\pi \simeq \mathrm{Ind}_{B_m}^{GL_m}(\mu_1 \otimes \dotsm \otimes \mu_m)$ with $\mu_i$ unramified character of $F^{\times}$. Since $\pi$ is assumed generic, $\mu_1,\dotsm,\mu_m$ are arranged to be in the Langlands order without changing $\pi$. We know from previous Proposition \ref{symmetric-multi} that 
\[
  L(s,\pi,\mathrm{Sym}^2)=\prod_{k=1}^m L(s,\mu_k,\mathrm{Sym}^2) \prod_{1\leq i < j \leq m} L(s,\mu_i \times \mu_j).
\]
As we have seen $L(s,\mu_k,\wedge^2) \equiv 1$ for all $k$ from Proposition \ref{GL_1}, we can conclude that $L(s,\mu_k,\mathrm{Sym}^2)=L(s,\mu_k \times \mu_k)$. When we replace $L(s,\mu_k,\mathrm{Sym}^2)$ by $L(s,\mu_k \times \mu_k)$, we can obtain the following Corollary.

\begin{corollary}
Let $\pi$ be an irreducible generic and unramified representation of $GL_m$. Suppose that $\pi=\mathrm{Ind}_{B_m}^{GL_m}(\mu_1 \otimes \dotsm \otimes \mu_m)$ is a full induced representation from unramified characters $\mu_i$ of $F^{\times}$. Then
\[
 L(s,\pi,\wedge^2)=\prod_{1\leq i \leq j \leq m} L(s,\mu_i \times \mu_j)=\prod_{1\leq i \leq j \leq m} (1-\mu_i(\varpi)\mu_j(\varpi)q^{-s})^{-1}.
\]
\end{corollary}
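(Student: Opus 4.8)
The plan is to deduce the identity as a direct specialization of the multiplicativity formula for the exterior square $L$-function already established in Corollary \ref{generic-multi}, combined with the triviality of the $GL_1$ exterior square $L$-factor from Proposition \ref{GL_1} and the evaluation of the $GL_1\times GL_1$ Rankin--Selberg factors.

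First I would record the structure of $\pi$. Since $\pi$ is irreducible, generic, and unramified, Zelevinsky's classification \cite{Ze80} shows that $\pi$ is a full normalized parabolic induction $\pi\simeq\mathrm{Ind}_{B_m}^{GL_m}(\mu_1\otimes\dotsm\otimes\mu_m)$ from unramified characters $\mu_i$ of $F^{\times}$, and that this induced representation is irreducible. Viewing each $\mu_i$ as an irreducible supercuspidal, hence quasi-square-integrable, representation $\Delta_i$ of $GL_1$ attached to the one-term segment $[\mu_i]$, we may write $\pi=\mathrm{Ind}(\Delta_1\otimes\dotsm\otimes\Delta_m)$. Because $\pi$ is irreducible and generic, no two of the segments $[\mu_i]$ are linked, so the hypotheses of Corollary \ref{generic-multi} are met (and, in particular, the $\mu_i$ may be arranged in Langlands order without changing $\pi$). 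Applying Corollary \ref{generic-multi} with $t=m$ and $\Delta_k=\mu_k$ gives
\[
  L(s,\pi,\wedge^2)=\prod_{k=1}^m L(s,\mu_k,\wedge^2)\prod_{1\le i<j\le m}L(s,\mu_i\times\mu_j).
\]

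Next I would eliminate the diagonal contributions. By Proposition \ref{GL_1} each factor $L(s,\mu_k,\wedge^2)$ equals $1$, since $\mu_k$ is a representation of $GL_1$ and the exterior square of a one-dimensional space is trivial; hence the first product collapses and $L(s,\pi,\wedge^2)=\prod_{1\le i<j\le m}L(s,\mu_i\times\mu_j)$. Finally I would evaluate the remaining $GL_1\times GL_1$ Rankin--Selberg factors: for unramified characters $\mu_i,\mu_j$ one has $L(s,\mu_i\times\mu_j)=L(s,\mu_i\mu_j)$, a Tate factor, and since $\mu_i\mu_j$ is unramified this equals $(1-\mu_i(\varpi)\mu_j(\varpi)q^{-s})^{-1}$ by Tate \cite{Tate}. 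Substituting yields the displayed product formula.

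There is no substantial obstacle here; the statement is a routine specialization of Corollary \ref{generic-multi}. The only points deserving a line of justification are the reduction of $\pi$ to a full induced representation from the Borel (so that every inducing datum is a $GL_1$ character) and the observation that the one-term segments $[\mu_i]$ are pairwise unlinked, both of which are immediate from the hypothesis that $\pi$ is irreducible and generic. One should also remark that the local $L$-function is independent of the choice of nontrivial additive character $\psi$, so the unramified normalization of $\psi$ used elsewhere in the section is irrelevant to the identity itself.
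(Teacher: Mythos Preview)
Your argument is correct, but it proves a different formula from the one stated: you end with $L(s,\pi,\wedge^2)=\prod_{1\le i<j\le m}L(s,\mu_i\times\mu_j)$, whereas the displayed identity in the corollary has the range $1\le i\le j\le m$, i.e.\ it \emph{includes} the diagonal factors $L(s,\mu_k\times\mu_k)$. What you have written is exactly the proof of the earlier unramified corollary in Section~8.1 (the exterior square one, with strict inequality $i<j$), not the present one.

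In the paper this corollary sits in the symmetric square subsection, immediately after Proposition~\ref{symmetric-multi}; the left-hand side is intended to be $L(s,\pi,\mathrm{Sym}^2)$ (the ``$\wedge^2$'' in the display is a typo), and the range $i\le j$ is the point. The paper's proof uses Proposition~\ref{symmetric-multi} rather than Corollary~\ref{generic-multi}: from
\[
  L(s,\pi,\mathrm{Sym}^2)=\prod_{k=1}^m L(s,\mu_k,\mathrm{Sym}^2)\prod_{1\le i<j\le m}L(s,\mu_i\times\mu_j),
\]
one invokes Proposition~\ref{GL_1} to get $L(s,\mu_k,\wedge^2)=1$, hence by the definition $L(s,\mu_k,\mathrm{Sym}^2)=L(s,\mu_k\times\mu_k)/L(s,\mu_k,\wedge^2)=L(s,\mu_k\times\mu_k)$, which supplies precisely the missing diagonal terms $i=j$. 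Your use of Proposition~\ref{GL_1} to kill the diagonal contributions is thus backwards for this statement: here the diagonal factors survive as $L(s,\mu_k\times\mu_k)$ rather than disappearing.
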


Suppose that $\rho$ is an irreducible supercuspidal representation of $GL_r$. Then we can restate our characterization of $L$-function $L(s,\rho,\wedge^2)$ in Proposition \ref{supercusp2} in terms of symmetric square $L$-function $L(s,\rho,\mathrm{Sym}^2)$.

\begin{proposition}
\label{symmetric}
Let $\rho$ be an irreducible supercuspidal representation of $GL_r$. 
\begin{enumerate}
\item[$(\mathrm{i})$] Suppose that $r=2n$ is even. Then we obtain
\[
 L(s,\rho,\mathrm{Sym}^2)= \frac{L(s,\rho \times \rho)}{L(s,\rho,\wedge^2)}=\prod (1-\alpha q^{-s})^{-1},
\]
with the product over all $\alpha=q^{s_0}$ such that $\widetilde{\rho} \simeq \rho \nu^{s_0}$ and $\mathrm{Hom}_{S_{2n}}(\rho\nu^{\frac{s_0}{2}},\Theta)=0$. 
\item[$(\mathrm{ii})$] Suppose that $r=2n+1$ is odd. Then we have
\[
 L(s,\rho,\mathrm{Sym}^2)=L(s,\rho \times \rho).
\]
\end{enumerate}
\end{proposition}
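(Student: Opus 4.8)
The plan is to read off $L(s,\rho,\mathrm{Sym}^2)$ directly from its definition as the quotient $L(s,\rho\times\rho)/L(s,\rho,\wedge^2)$, using the explicit shapes of the numerator and denominator that are already available. First I would recall from Section 6.3 (following \cite{CoPe,JaPSSh83}) that $L(s,\rho\times\rho)=\prod(1-\alpha q^{-s})^{-1}$, the product over all $\alpha=q^{s_0}$ with $\widetilde{\rho}\simeq\rho\nu^{s_0}$, and that all of these poles are simple. Part $(\mathrm{ii})$ is then immediate: when $r=2n+1$ is odd, Theorem \ref{supercusp} (equivalently Theorem \ref{supercusp2}$(\mathrm{ii})$) gives $L(s,\rho,\wedge^2)=1$, so $L(s,\rho,\mathrm{Sym}^2)=L(s,\rho\times\rho)$ with no further work.

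For part $(\mathrm{i})$, when $r=2n$ is even, I would invoke Theorem \ref{supercusp2}$(\mathrm{i})$, which says $L(s,\rho,\wedge^2)=\prod(1-\alpha q^{-s})^{-1}$ with the product over $\alpha=q^{s_0}$ such that $\mathrm{Hom}_{S_{2n}}(\rho\nu^{\frac{s_0}{2}},\Theta)\neq 0$, and that these poles are simple. The crux is to check that every pole of $L(s,\rho,\wedge^2)$ occurs among the poles of $L(s,\rho\times\rho)$ with the same (simple) multiplicity. For this, if $\mathrm{Hom}_{S_{2n}}(\rho\nu^{\frac{s_0}{2}},\Theta)\neq 0$, then Theorem \ref{Jacquet-Rallis} of Jacquet and Rallis forces $\rho\nu^{\frac{s_0}{2}}$ to be self-contragredient, i.e. $(\rho\nu^{\frac{s_0}{2}})^{\sim}\simeq\rho\nu^{\frac{s_0}{2}}$, which rearranges to $\widetilde{\rho}\simeq\rho\nu^{s_0}$; hence $s=s_0$ is a simple pole of $L(s,\rho\times\rho)$. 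Together with Proposition \ref{even-square}, which yields that $L(s,\rho,\wedge^2)^{-1}$ divides $L(s,\rho\times\rho)^{-1}$ in $\mathbb{C}[q^{\pm s}]$, this shows the quotient $L(s,\rho\times\rho)/L(s,\rho,\wedge^2)$ is again a normalized Euler factor whose poles are precisely those $s=s_0$ with $\widetilde{\rho}\simeq\rho\nu^{s_0}$ that are not poles of $L(s,\rho,\wedge^2)$, namely those with $\mathrm{Hom}_{S_{2n}}(\rho\nu^{\frac{s_0}{2}},\Theta)=0$. This is exactly the product in the statement.

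I do not expect a genuine obstacle here: the argument is essentially bookkeeping about simple poles. The one point deserving care is the pole‑matching in part $(\mathrm{i})$, i.e. that the simple poles of $L(s,\rho,\wedge^2)$ sit among the simple poles of $L(s,\rho\times\rho)$; this relies on combining the Jacquet--Rallis self‑contragredience input (Theorem \ref{Jacquet-Rallis}), the simplicity statements for both $L$‑functions (Theorems \ref{supercusp2}, Section 6.3), and the divisibility of Proposition \ref{even-square}. Once these are in hand, cancelling the common simple factors in the quotient finishes the proof.
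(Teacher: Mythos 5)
Your argument is correct and is essentially the paper's own reading of this proposition: the paper treats it as a direct restatement of Theorem \ref{supercusp2} combined with the supercuspidal Rankin--Selberg description (simple poles exactly at $s_0$ with $\widetilde{\rho}\simeq\rho\nu^{s_0}$) and the definition of $L(s,\rho,\mathrm{Sym}^2)$ as the quotient, the only content being the Jacquet--Rallis self-contragredience step that places every pole of $L(s,\rho,\wedge^2)$ among the simple poles of $L(s,\rho\times\rho)$ so the cancellation is exact. The appeal to Proposition \ref{even-square} is harmless but redundant (and strictly it is stated for square-integrable, hence unitary-central-character, data): the divisibility already follows from your pole-matching plus simplicity, or from the general divisibility $L(s,\pi,\wedge^2)^{-1}\mid L(s,\pi\times\pi)^{-1}$ established in Section 8.3.
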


Let $\rho$ be an irreducible unitary supercuspidal representation of $GL_r$. We denote by $\overline{x}$ the complex conjugate of $x \in \mathbb{C}$. Jacquet, Piatetski-Shapiro, and Shalika establish
\[
  L(s,\widetilde{\rho} \times \widetilde{\rho})=\overline{L(\overline{s},\rho \times \rho)}
\]  
in the proof of Theorem 8.2 of \cite{JaPSSh83} (page 446). The following is the analogue of this result for our $L$-functions $L(s,\rho,\wedge^2)$.

\begin{lemma}
 Let $\rho$ be an irreducible unitary supercuspidal representation of $GL_r$. Then we have
 \[
  L(s,\widetilde{\rho},\wedge^2)=\overline{L(\overline{s},\rho,\wedge^2 )} \quad \quad \text{and} \quad \quad L(s,\widetilde{\rho} \times \widetilde{\rho})=\overline{L(\overline{s},\rho \times \rho)}.
 \]
\end{lemma}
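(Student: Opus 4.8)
The second identity, $L(s,\widetilde\rho\times\widetilde\rho)=\overline{L(\overline s,\rho\times\rho)}$, is exactly the one established in the proof of Theorem 8.2 of \cite{JaPSSh83}, and I would simply invoke it. For the exterior square identity $L(s,\widetilde\rho,\wedge^2)=\overline{L(\overline s,\rho,\wedge^2)}$ the plan is to combine the characterization of $L(s,\rho,\wedge^2)$ by twisted Shalika functionals from Theorem \ref{supercusp2} with the standard fact that, $\rho$ being unitary, $\widetilde\rho\simeq\overline\rho$, where $\overline\rho$ denotes $\rho$ acting on the complex-conjugate vector space. If $r=2n+1$ is odd there is nothing to prove, since Theorem \ref{supercusp2}(ii) gives $L(s,\rho,\wedge^2)=L(s,\widetilde\rho,\wedge^2)=1$; so I would assume $r=2n$.

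The first step is to rephrase both sides as products over sets of simple poles. By Theorem \ref{supercusp2}(i),
\[
 L(s,\rho,\wedge^2)=\prod_{\alpha\in A_\rho}(1-\alpha q^{-s})^{-1},\qquad A_\rho=\{\,q^{s_0}\ :\ \mathrm{Hom}_{S_{2n}}(\rho\nu^{\frac{s_0}{2}},\Theta)\neq 0\,\},
\]
and similarly with a set $A_{\widetilde\rho}$ for $\widetilde\rho$. Since $q$ is a positive real, $\overline{q^{-\overline s}}=q^{-s}$ and $\overline{q^{s_0}}=q^{\overline{s_0}}$, so $\overline{L(\overline s,\rho,\wedge^2)}=\prod_{\alpha\in A_\rho}(1-\overline\alpha\,q^{-s})^{-1}$, and the claim reduces to the equality of sets $A_{\widetilde\rho}=\{\overline\alpha:\alpha\in A_\rho\}$. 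Using $\widetilde\rho\simeq\overline\rho$ together with $\overline{\nu^{s_0/2}}=\nu^{\overline{s_0}/2}$ one gets $\widetilde\rho\,\nu^{\overline{s_0}/2}\simeq\overline{\rho\,\nu^{s_0/2}}$, so the set equality reduces in turn to the single assertion that, for $\sigma=\rho\nu^{s_0/2}$,
\[
 \mathrm{Hom}_{S_{2n}}(\overline\sigma,\Theta)\neq 0\ \Longleftrightarrow\ \mathrm{Hom}_{S_{2n}}(\sigma,\Theta)\neq 0 .
\]

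For this last equivalence I would use two formal manipulations. First, conjugation of functionals: if $\Lambda$ is a nonzero Shalika functional on $\sigma$ then $\overline\Lambda$ is a nonzero Shalika functional on $\overline\sigma$ for the character $\overline\Theta$, which is the Shalika character attached to $\overline\psi=\psi^{-1}$, because $\overline{\psi(\mathrm{Tr}Z)}=\psi^{-1}(\mathrm{Tr}Z)$. Second, I would change the character back by conjugating inside $GL_{2n}$: the element $\mathrm{diag}(-I_n,I_n)$ normalizes $S_{2n}$ (it centralizes every $\begin{pmatrix}g&\\&g\end{pmatrix}$ and sends $\begin{pmatrix}I_n&Z\\&I_n\end{pmatrix}$ to $\begin{pmatrix}I_n&-Z\\&I_n\end{pmatrix}$), and transporting a functional through $\overline\sigma(\mathrm{diag}(-I_n,I_n))$ turns a $\Theta_{\psi^{-1}}$-equivariant functional into a $\Theta_\psi$-equivariant one. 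Composing the two maps produces a nonzero element of $\mathrm{Hom}_{S_{2n}}(\overline\sigma,\Theta)$ out of one for $\sigma$; applying the same construction to $\overline\sigma$ and using $\overline{\overline\sigma}\simeq\sigma$ yields the converse, which finishes the argument.

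The only point that needs any care is the bookkeeping with the additive character, since conjugation unavoidably replaces $\psi$ by $\overline\psi=\psi^{-1}$; the $\mathrm{diag}(-I_n,I_n)$-conjugation is precisely what absorbs this change. An equivalent, more computational route would be to conjugate the Jacquet--Shalika integral itself — for $W\in\mathcal{W}(\rho,\psi)$ one has $\overline W\in\mathcal{W}(\overline\rho,\psi^{-1})=\mathcal{W}(\widetilde\rho,\psi^{-1})$ and $\overline{J(\overline s,W,\Phi)}$ is the Jacquet--Shalika integral of $\widetilde\rho$ built from $\psi^{-1}$ — and then appeal to the independence of $L(s,\pi,\wedge^2)$ from the choice of $\psi$ (which, for supercuspidals, again follows from Theorem \ref{supercusp2} via the $\mathrm{diag}(cI_n,I_n)$-conjugation). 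Either way, this together with the Rankin--Selberg identity of \cite{JaPSSh83} proves the Lemma.
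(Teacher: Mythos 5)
Your proposal is correct, and its overall strategy coincides with the paper's: the Rankin--Selberg identity is quoted from the proof of Theorem 8.2 of \cite{JaPSSh83}, the odd case is dismissed by the supercuspidal computation, and the even case is reduced, via the unitarity isomorphism $\widetilde{\rho}\simeq\overline{\rho}$, to matching the Shalika-period characterizations of the simple poles of the two exterior square $L$-functions. The mechanics differ in one respect worth recording. The paper works with Theorem \ref{supercusp}: it complex-conjugates the residual integral $\Lambda_{s_0}(W)$ together with the condition $\omega_{\rho}\nu^{ns_0}=1$, and uses unitarity of $\omega_{\rho}$ to see that the poles $s_0$ are purely imaginary, so $\overline{s_0}=-s_0$ and the conjugated data become the corresponding data for $\overline{\rho}\simeq\widetilde{\rho}$; the change of additive character (the factor $\psi^{-1}(\mathrm{Tr}X)$ turning into $\psi(\mathrm{Tr}X)$ under conjugation) is left implicit there. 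You instead use the Hom-space characterization of Theorem \ref{supercusp2} and handle the character change explicitly: conjugating a functional lands you in $\mathrm{Hom}_{S_{2n}}(\overline{\sigma},\Theta_{\psi^{-1}})$, and the element $\mathrm{diag}(-I_n,I_n)$, which normalizes $S_{2n}$, fixes the $\mathrm{diag}(g,g)$ part and carries $\Theta_{\psi^{-1}}$ to $\Theta_{\psi}$, converts this back to $\mathrm{Hom}_{S_{2n}}(\overline{\sigma},\Theta)$. This buys you two small advantages: you never need the observation that the poles are purely imaginary, since your set-theoretic statement $A_{\widetilde{\rho}}=\{\overline{\alpha}:\alpha\in A_{\rho}\}$ is already phrased in conjugates, and the $\psi\mapsto\psi^{-1}$ bookkeeping — the one point the paper's argument glides over — is dealt with cleanly. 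Both routes are sound, and your sketched alternative via conjugating the Jacquet--Shalika integral itself is essentially the paper's computation in disguise.
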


\begin{proof}
We start with adopting the idea in the proof of \cite[Proposition 2.2]{Chai17} or \cite[Section 1]{Sh84}. Let $\overline{\mathcal{W}(\rho,\psi)}$ be the complex conjugate of Whittaker model $\mathcal{W}(\rho,\psi)$ defined by
\[
  \overline{\mathcal{W}(\rho,\psi)}=\{ \overline{W}(g)\; | \; W \in \mathcal{W}(\rho,\psi), g \in GL_r \}.
\]
 Let $\overline{\rho}$ be right translation of $GL_r$ on the space $\overline{\mathcal{W}(\rho,\psi)}$. Then  $\overline{\rho}$ is an irreducible unitary supercuspidal representation of $GL_r$. As $\overline{W}(ng)=\psi^{-1}(n)\overline{W}(g)$ for all $n \in N_r$ and all $g \in GL_r$, the space of functions $\overline{\mathcal{W}(\rho,\psi)}$ is the Whittaker model $\mathcal{W}(\overline{\rho},\psi^{-1})$. Since $\rho$ is unitary, $\mathcal{W}(\rho,\psi)$ has a $GL_r$-invariant inner product $\langle \;\;, \; \rangle$, that is, $\langle \rho(g)W_1, \rho(g)W_2 \rangle=\langle W_1,W_2 \rangle$ for all $g \in GL_r$ and all $W_1,W_2 \in \mathcal{W}(\rho,\psi)$. The $GL_r$-intertwining linear map $T : \overline{W} \mapsto \ell_{W}$ for $\overline{W} \in \overline{\mathcal{W}(\rho,\psi)}$ induces an isomorphism from $\overline{\mathcal{W}(\rho,\psi)}$ to the smooth dual space $(\widetilde{\rho},V_{\widetilde{\rho}})$, where a smooth linear form $\ell_W$ on $\mathcal{W}(\rho,\psi) \simeq (\rho,V_{\rho})$ is given by $\ell_W : W' \mapsto \langle W', W \rangle$ for $W' \in \mathcal{W}(\rho,\psi)$. As we have $(\widetilde{\rho},V_{\widetilde{\rho}}) \simeq \mathcal{W}(\widetilde{\rho},\psi^{-1})$, we can identify the Whittaker model $\mathcal{W}(\widetilde{\rho},\psi^{-1})$ of the contragredient representation $\widetilde{\rho}$ with the Whittaker model $\mathcal{W}(\overline{\rho},\psi^{-1})$.

 Suppose that $r=2n$ is even. We recall from Theorem \ref{supercusp2} that $L(s,\rho,\wedge^2)=\prod_j(1-\alpha_jq^{-s})^{-1}$, where the product runs over all $\alpha_j=q^{s_0}$ such that $\omega_{\rho}\nu^{ns_0}=1$ and 
 \[
\tag{8.7}
 \label{residueint}
   \int_{Z_nN_n \backslash GL_n} \int_{\mathcal{N}_n \backslash \mathcal{M}_n} W \left( \sigma_{2n} \begin{pmatrix} I_n & X\\ & I_n \end{pmatrix}
   \begin{pmatrix} g& \\ & g \end{pmatrix} \right) \psi^{-1}(\mathrm{Tr}X) |\mathrm{det} (g)|^{s_0} dX dg \neq 0
 \]
 for some $W \in \mathcal{W}(\rho,\psi)$. Since $\rho$ is unitary and supercuspidal, the central character $\omega_{\rho}$ is unitary. Then the poles are purely imaginary, because $\nu^{ns_0}=\omega_{\rho}^{-1}$ is also a unitary character. Hence $\overline{s_0}=-s_0$. After applying complex conjugation for the central character $\omega_{\rho}\nu^{ns_0}$ and the residual integral in \eqref{residueint}, the previous condition \eqref{residueint} is equivalent to stating that $ \omega_{\overline{\rho}}\nu^{-ns_0}=1$ and
 \[
    \int_{Z_nN_n \backslash GL_n} \int_{\mathcal{N}_n \backslash \mathcal{M}_n} \overline{W} \left( \sigma_{2n} \begin{pmatrix} I_n & X\\ & I_n \end{pmatrix}
   \begin{pmatrix} g& \\ & g \end{pmatrix} \right) \psi(\mathrm{Tr}X) |\mathrm{det} (g)|^{-s_0} dX dg \neq 0
 \]
 for some $\overline{W} \in \mathcal{W}(\overline{\rho},\psi^{-1})$. Since these are the only possible poles of $L(s,\overline{\rho},\wedge^2)$, we take the product over all such $\alpha_j^{-1}=q^{-s_0}$, which establishes 
 \[
   L(s,\widetilde{\rho},\wedge^2)=L(s,\overline{\rho},\wedge^2)=\prod_j(1-\alpha^{-1}_jq^{-s})^{-1}=\overline{L(\overline{s},\rho,\wedge^2 )}.
 \]
 If $r=2n+1$ is odd, then by Theorem \ref{supercusp} we obtain $L(s,\widetilde{\rho},\wedge^2)=\overline{L(\overline{s},\rho,\wedge^2 )}=1$.
 
\end{proof}

 As a consequence of this Lemma, we have the following result.
 
 \begin{lemma}
 \label{unitary-l}
  Let $\rho$ be an irreducible unitary supercuspidal representation of $GL_r$. Up to multiplication by a monomial in $q^{-s}$, $L(-s,\rho,\wedge^2)$ and $L(s,\widetilde{\rho},\wedge^2)$ are equal and similarly for the two functions $L(-s,\rho \times \rho)$ and $L(s,\widetilde{\rho}\times \widetilde{\rho})$. 
 \end{lemma}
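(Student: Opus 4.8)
The plan is to deduce the statement from the preceding Lemma together with the explicit product shapes of the exterior square and Rankin--Selberg $L$-functions of an irreducible unitary supercuspidal $\rho$ and the fact that all of their poles are purely imaginary. First I would dispose of the odd case: by Theorem~\ref{supercusp2} and Theorem~\ref{supercusp}, if $r=2n+1$ then $L(s,\rho,\wedge^2)=L(s,\widetilde\rho,\wedge^2)=1$, so there is nothing to prove there. Thus assume $r=2n$. By Theorem~\ref{supercusp2}, $L(s,\rho,\wedge^2)=\prod_j(1-\alpha_jq^{-s})^{-1}$ with each $\alpha_j=q^{s_0}$ satisfying $\omega_\rho\nu^{ns_0}=1$; since $\rho$ is unitary supercuspidal, $\omega_\rho$ is a unitary character, hence $\nu^{ns_0}$ is unitary, $\mathrm{Re}(s_0)=0$, $|\alpha_j|=1$, and $\overline{\alpha_j}=\alpha_j^{-1}$.

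Next I would run the monomial bookkeeping. Using the previous Lemma and the fact that $q$ is a positive real number, $L(s,\widetilde\rho,\wedge^2)=\overline{L(\overline s,\rho,\wedge^2)}=\prod_j(1-\overline{\alpha_j}q^{-s})^{-1}=\prod_j(1-\alpha_j^{-1}q^{-s})^{-1}$. On the other hand $L(-s,\rho,\wedge^2)=\prod_j(1-\alpha_jq^{s})^{-1}$, and the elementary identity $1-\alpha_jq^{s}=-\alpha_jq^{s}\bigl(1-\alpha_j^{-1}q^{-s}\bigr)$ rewrites this as $\bigl(\prod_j(-\alpha_j^{-1})\bigr)\,q^{-Js}\prod_j(1-\alpha_j^{-1}q^{-s})^{-1}$, where $J$ is the number of factors. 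Comparing the two expressions yields $L(-s,\rho,\wedge^2)=c\,q^{-Js}L(s,\widetilde\rho,\wedge^2)$ for a constant $c\in\mathbb{C}^\times$, which is exactly equality up to a monomial in $q^{-s}$.

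Finally, the Rankin--Selberg case is entirely parallel: by the results on Rankin--Selberg convolutions recalled earlier, $L(s,\rho\times\rho)=\prod_j(1-\beta_jq^{-s})^{-1}$ with $\beta_j=q^{s_0}$ and $\widetilde\rho\simeq\rho\nu^{s_0}$. Comparing central characters, $\omega_\rho^{-1}=\omega_{\widetilde\rho}=\omega_{\rho\nu^{s_0}}=\omega_\rho\nu^{rs_0}$, so $\nu^{rs_0}=\omega_\rho^{-2}$ is unitary, forcing $\mathrm{Re}(s_0)=0$ and $|\beta_j|=1$. Then the previous Lemma gives $L(s,\widetilde\rho\times\widetilde\rho)=\overline{L(\overline s,\rho\times\rho)}=\prod_j(1-\beta_j^{-1}q^{-s})^{-1}$, and the same rearrangement $1-\beta_jq^{s}=-\beta_jq^{s}(1-\beta_j^{-1}q^{-s})$ produces $L(-s,\rho\times\rho)=c'\,q^{-J's}L(s,\widetilde\rho\times\widetilde\rho)$. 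The only point requiring genuine argument rather than bookkeeping is the purely-imaginary location of the poles (equivalently $|\alpha_j|=|\beta_j|=1$), which I expect to be the main, though mild, obstacle; once that is in hand the rest is a direct monomial manipulation.
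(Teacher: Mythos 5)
Your proposal is correct and follows essentially the same route as the paper: write $L(s,\rho,\wedge^2)=\prod_j(1-\alpha_jq^{-s})^{-1}$, use unitarity to get $|\alpha_j|=1$ (so the previous Lemma gives $L(s,\widetilde{\rho},\wedge^2)=\prod_j(1-\alpha_j^{-1}q^{-s})^{-1}$), and apply the factorization $1-\alpha_jq^{s}=-\alpha_jq^{s}(1-\alpha_j^{-1}q^{-s})$ to extract the monomial. Your extra bookkeeping for the Rankin--Selberg case (the central-character argument forcing $\mathrm{Re}(s_0)=0$) is exactly what the paper leaves implicit when it says that case "can be treated in the same way."
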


 \begin{proof}
 Suppose that $L(s,\rho,\wedge^2)=\prod_j(1-\alpha_jq^{-s})^{-1}$. Since $\rho$ is unitary and supercuspidal, $|\alpha_j|=1$ for all $j$. Then $L$-function 
 \[
 \begin{split}
   L(s, \widetilde{\rho},\wedge^2)&=\prod_j(1-\alpha_j^{-1}q^{-s})^{-1}=\prod_j (-\alpha_j^{-1}q^{-s})(1-\alpha_jq^{s})^{-1} \\
   &=\left[ \prod_j (-\alpha_j^{-1}q^{-s}) \right] L(-s,\rho,\wedge^2)
 \end{split}
 \]
 is equal to $L(-s,\rho,\wedge^2)$ up to a unit $\prod_j (-\alpha_j^{-1}q^{s})$ in $\mathbb{C}[q^{\pm s}]$. The case of $L$-function $L(-s,\rho \times \rho)$ can be treated in the same way.

 \end{proof}

We are now in the position to complete the proof of the main Theorem in this section. 

\begin{theorem}
\label{square-multip}
 Let $\Delta$ be an irreducible square integrable representation of $GL_{\ell r}$ with the segment $\Delta=[\rho\nu^{-\frac{\ell-1}{2}},\dotsm,\rho\nu^{\frac{\ell-1}{2}}]$ and $\rho$ an irreducible unitary supercuspidal representation of $GL_r$.
 \begin{enumerate}
\item[$(\mathrm{i})$] Suppose that $\ell$ is even. Then we have
\[
\begin{split}
& L(s,\Delta,\wedge^2)=\prod_{i=0}^{\frac{\ell}{2}-1} L(s+2i+1,\rho,\wedge^2) L(s+2i,\rho,\mathrm{Sym}^2),\\
 &L(s,\Delta,\mathrm{Sym}^2)=\prod_{i=0}^{\frac{\ell}{2}-1} L(s+2i,\rho,\wedge^2) L(s+2i+1,\rho,\mathrm{Sym}^2).
\end{split}
\]
\item[$(\mathrm{ii})$] Suppose that $\ell$ is odd. Then we obtain
\[
\begin{split}
  &L(s,\Delta,\wedge^2)=\prod_{i=0}^{\frac{\ell-1}{2}}L(s+2i,\rho,\wedge^2) \prod_{i=1}^{\frac{\ell-1}{2}}L(s+2i-1,\rho,\mathrm{Sym}^2),\\
 &L(s,\Delta,\mathrm{Sym}^2)=\prod_{i=1}^{\frac{\ell-1}{2}}L(s+2i-1,\rho,\wedge^2) \prod_{i=0}^{\frac{\ell-1}{2}}L(s+2i,\rho,\mathrm{Sym}^2).
\end{split}
\]
\end{enumerate}
\end{theorem}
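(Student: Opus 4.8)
The plan is to combine the exceptional–pole factorization of Propositions~\ref{even-square} and~\ref{odd-square} with a single bookkeeping lemma that evaluates the exceptional factor attached to an arbitrary segment, and then to obtain the symmetric–square statement by division. Throughout, the normalization is the square–integrable $\Delta=[\rho\nu^{-(\ell-1)/2},\dotsm,\rho\nu^{(\ell-1)/2}]$ with $\rho$ irreducible unitary supercuspidal of $GL_r$.

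The arithmetic heart is the following lemma, which I would isolate first: for an irreducible unitary supercuspidal $\rho$ of $GL_r$ and integers $a\le b$,
\[
  L_{ex}(s,[\rho\nu^a,\rho\nu^{a+1},\dotsm,\rho\nu^b],\wedge^2)=
  \begin{cases}
  L(s+a+b,\rho,\wedge^2) & \text{if } b-a \text{ is even},\\
  L(s+a+b,\rho,\mathrm{Sym}^2) & \text{if } b-a \text{ is odd}.
  \end{cases}
\]
To prove it, rewrite $[\rho\nu^a,\dotsm,\rho\nu^b]$ as $[\rho',\dotsm,\rho'\nu^{b-a}]$ with $\rho'=\rho\nu^a$; Lemma~\ref{quasi-key lemma 2} then says that $L_{ex}(s,[\rho\nu^a,\dotsm,\rho\nu^b],\wedge^2)^{-1}$ divides $L(s+a+b,\rho\times\rho)^{-1}$ in $\mathbb{C}[q^{\pm s}]$, so all its poles are simple and lie among the simple poles of $L(s+a+b,\rho\times\rho)=L(s+a+b,\rho,\wedge^2)L(s+a+b,\rho,\mathrm{Sym}^2)$ (Proposition~\ref{symmetric}). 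By Proposition~\ref{prop-supercusp}, applied to the quasi-square-integrable $[\rho\nu^{a+s_0/2},\dotsm,\rho\nu^{b+s_0/2}]$, the factor $L_{ex}$ has a pole at $s=s_0$ exactly when this twisted segment admits a nonzero Shalika functional, and Theorem~\ref{Jacquet-Rallis} forces it to be self-contragredient, i.e. $\widetilde\rho\simeq\rho\nu^{a+b+s_0}$ — precisely the condition that $L(s+a+b,\rho\times\rho)$ is singular at $s_0$. Write this self-dual segment as $[\sigma\nu^{-(m-1)/2},\dotsm,\sigma\nu^{(m-1)/2}]$ with $m=b-a+1$ and $\sigma=\rho\nu^{(a+b+s_0)/2}$ a self-dual supercuspidal of $GL_r$; since $L(s,\sigma,\wedge^2)=L(s+a+b+s_0,\rho,\wedge^2)$ and likewise for $\mathrm{Sym}^2$, it remains to show that $[\sigma\nu^{-(m-1)/2},\dotsm,\sigma\nu^{(m-1)/2}]$ carries a Shalika functional if and only if $\sigma$ is symplectic with $m$ odd, or $\sigma$ is orthogonal with $m$ even, the type of $\sigma$ being recorded by which of $L(s,\sigma,\wedge^2)$, $L(s,\sigma,\mathrm{Sym}^2)$ has a pole at $0$ (Theorem~\ref{supercusp}, Proposition~\ref{symmetric}). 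I would prove this last point by induction on $m$: the case $m=1$ is Theorem~\ref{supercusp}; for $m>1$, Proposition~\ref{excep-Shalika} converts the question to the occurrence of a pole of $L(s,[\sigma\nu^{-(m-1)/2},\dotsm,\sigma\nu^{(m-1)/2}],\wedge^2)$ at $0$, and the factorization of Proposition~\ref{even-square} (or~\ref{odd-square}) expresses this $L$-function through $L_{ex}$ of strictly shorter sub-segments, which by the inductive form of the lemma are built out of shifts of $L(\cdot,\sigma,\wedge^2)$ and $L(\cdot,\sigma,\mathrm{Sym}^2)$; comparing with $L(s,\Delta\times\Delta)=\prod_k L(s+k,\sigma\times\sigma)$ from \eqref{RS-Lfunction} and using that distinct shifts of $L(\cdot,\sigma,\wedge^2)$ and $L(\cdot,\sigma,\mathrm{Sym}^2)$ are pairwise coprime, because $\sigma\not\simeq\sigma\nu^j$ for $j\ne0$, pins down the outstanding exceptional factor and produces the parity alternation. (Alternatively, one can shorten the bookkeeping by cross-checking with Lemma~\ref{unitary-l} and the functional equation, locating the zeros of the ambiguous polynomial factor in disjoint half-planes as in the proof of Theorem~\ref{langlands-multi}.)

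Granting the lemma, the theorem is a finite computation. Propositions~\ref{even-square} and~\ref{odd-square} write $L(s,\Delta,\wedge^2)$ as the product of $L_{ex}(s,[\rho\nu^{c-(\ell-1)/2},\dotsm,\rho\nu^{(\ell-1)/2}],\wedge^2)$ over $c=0,1,\dotsm,\ell-1$ when $r$ is even, over the even $c$ in $\{0,\dotsm,\ell-2\}$ when $r$ is odd and $\ell r$ even, and over the odd $c$ in $\{1,\dotsm,\ell-2\}$ when $\ell r$ is odd. The sub-segment with parameter $c$ has length $\ell-c$ and ``$a+b$''$=c$, so by the lemma its factor is $L(s+c,\rho,\wedge^2)$ when $\ell-c$ is odd and $L(s+c,\rho,\mathrm{Sym}^2)$ when $\ell-c$ is even; sorting by the parities of $\ell$ and $c$, and using $L(s+c,\rho,\wedge^2)\equiv1$ whenever $r$ is odd (Theorem~\ref{supercusp}), gives in each case exactly the stated closed form for $L(s,\Delta,\wedge^2)$. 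The formula for $L(s,\Delta,\mathrm{Sym}^2)$ then drops out of the definition $L(s,\Delta,\mathrm{Sym}^2)=L(s,\Delta\times\Delta)/L(s,\Delta,\wedge^2)$, the factorization $L(s,\Delta\times\Delta)=\prod_{k=0}^{\ell-1}L(s+k,\rho\times\rho)=\prod_{k=0}^{\ell-1}L(s+k,\rho,\wedge^2)L(s+k,\rho,\mathrm{Sym}^2)$ of \eqref{RS-Lfunction} and Proposition~\ref{symmetric}, and the coprimality of all the factors involved, which makes the division clean.

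The main obstacle is the lemma, and inside it the criterion for a self-dual segment to carry a Shalika functional — equivalently, tracking how the $\wedge^2$- versus $\mathrm{Sym}^2$-pole of the core supercuspidal $\sigma$ propagates under the Steinberg-type lift, which is exactly what forces the parity alternation in the final formulas. Once that parity rule is secured, everything else reduces to divisibility arguments already available from Propositions~\ref{L0 function}, \ref{even-square}, \ref{odd-square}, Lemma~\ref{quasi-key lemma 2}, the Rankin–Selberg facts recalled in Section~6.3, and the elementary observation $\rho\not\simeq\rho\nu^{j}$ for $j\ne0$.
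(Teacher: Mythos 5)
Your reduction of the theorem to the ``bookkeeping lemma'' is fine, and so is the final step (the $\mathrm{Sym}^2$ formula by dividing $L(s,\Delta\times\Delta)=\prod_k L(s+k,\rho\times\rho)$ by the $\wedge^2$ formula, exactly as in the paper). But the proof you sketch for the lemma itself has a genuine gap at its one essential point, the parity alternation. Your induction on $m$ runs: reduce the existence of a Shalika functional on the self-dual segment $\Delta_m=[\sigma\nu^{-(m-1)/2},\dotsm,\sigma\nu^{(m-1)/2}]$ to a pole of $L(s,\Delta_m,\wedge^2)$ at $s=0$ (Proposition \ref{excep-Shalika}), then expand $L(s,\Delta_m,\wedge^2)$ via Proposition \ref{even-square} into $L_{ex}(s,\Delta_m,\wedge^2)$ times exceptional factors of strictly shorter segments known by induction. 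Since those shorter factors are shifted off the line $\mathrm{Re}(s)=0$, the pole of $L(s,\Delta_m,\wedge^2)$ at $0$ occurs if and only if $L_{ex}(s,\Delta_m,\wedge^2)$ has one --- which is exactly the question you started with; the reduction is circular. The device you invoke to break the circle, ``comparing with $L(s,\Delta\times\Delta)$ and using that distinct shifts of $L(\cdot,\sigma,\wedge^2)$ and $L(\cdot,\sigma,\mathrm{Sym}^2)$ are pairwise coprime,'' cannot do it: Lemma \ref{quasi-key lemma 2} and the Rankin--Selberg comparison give only one-sided divisibility, i.e.\ an upper bound $L_{ex}(s,\Delta_m,\wedge^2)^{-1}\mid L(s,\sigma,\wedge^2)^{-1}L(s,\sigma,\mathrm{Sym}^2)^{-1}$, and the two candidate factors (as well as the possibility $L_{ex}\equiv 1$) all live on the same vertical line $\mathrm{Re}(s)=0$, so coprimality of distinct shifts and the half-plane/functional-equation trick are blind to the choice. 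Nothing in your argument produces the Shalika functional when it should exist, nor excludes it when it should not.

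This is precisely where the paper puts its real input. It proves Theorem \ref{square-multip} \emph{first}, by realizing $\mathcal{W}(\Delta,\psi)$ inside the induced representation $\mathrm{Ind}(\rho\nu^{-\frac{\ell-1}{2}}\otimes\dotsm\otimes\rho\nu^{\frac{\ell-1}{2}})$ and applying the weak multiplicativity of the $\gamma$-factor (Corollary \ref{multiplicativity}, obtained by deformation): the Rankin--Selberg cross terms $\gamma(s,\rho\nu^i\times\rho\nu^j,\psi)$ telescope as in \eqref{minyoung}, and after splitting $L(s,\rho\times\rho)=L(s,\rho,\wedge^2)L(s,\rho,\mathrm{Sym}^2)$ and cancelling, it is this telescoping that forces the $\wedge^2$/$\mathrm{Sym}^2$ alternation; only then do the disjoint half-plane and unitarity arguments remove the monomial ambiguity. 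Your lemma in its centered form is the paper's Theorem \ref{excep-square}, which is deduced \emph{from} Theorem \ref{square-multip}, not the other way around. To salvage your route you would need an independent source for the parity rule --- e.g.\ the $\gamma$-factor multiplicativity you are trying to avoid, the Langlands--Shahidi computation, or a classification of Shalika-distinguished discrete series --- none of which your sketch supplies.
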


\begin{proof}
 We start with the Whittaker model of $\Delta$. As in 9.1 of Zelevinsky \cite{Ze80}, $\Delta$ is the unique irreducible quotient of the normalized induced representation $\mathrm{Ind}(\rho\nu^{-\frac{\ell-1}{2}} \otimes \rho\nu^{-\frac{\ell-3}{2}} \otimes \dotsm \otimes \rho\nu^{\frac{\ell-1}{2}})$. This induces a short exact sequence of smooth representation of $GL_{\ell r}$, namely
 \[
   0 \rightarrow V_S \rightarrow \mathrm{Ind}(\rho\nu^{-\frac{\ell-1}{2}} \otimes \rho\nu^{-\frac{\ell-3}{2}} \otimes \dotsm \otimes \rho\nu^{\frac{\ell-1}{2}}) \rightarrow \Delta \rightarrow 0
 \]
where $V_S$ denote the kernel. Since $\Delta$ is generic, there is essentially unique Whittaker functional on $\Delta$. This will then induce a non-zero Whittaker functional on $ \mathrm{Ind}(\rho \otimes \rho\nu \otimes \dotsm \otimes \rho\nu^{\ell-1})$. Since this representation also has a unique Whittaker functional, this must be it and we can conclude that 
\[
 \mathcal{W}(\Delta,\psi)=\mathcal{W}(\mathrm{Ind}(\rho\nu^{-\frac{\ell-1}{2}} \otimes \rho\nu^{-\frac{\ell-3}{2}} \otimes \dotsm \otimes \rho\nu^{\frac{\ell-1}{2}}),\psi).
\]
 \par

The following argument is extracted from the proof of the main Proposition 8.1 in \cite{Sh92}, taking into account the change from the multiplicativity of the $\gamma$-factor via Langland-Shahidi method to the weak version of the multiplicativity of the $\gamma$-factor in Corollary \ref{multiplicativity}. We employ the notation for two rational functions $P(q^{-s})$ and $Q(q^{-s})$ that $P \sim Q$ denotes that the ratio is a unit in $\mathbb{C}[q^{\pm s}]$, that is, a monomial factor $\alpha q^{-\beta s}$. Applying Corollary \ref{multiplicativity} to the Whittaker model $\mathcal{W}(\mathrm{Ind}(\rho\nu^{-\frac{\ell-1}{2}} \otimes  \dotsm \otimes \rho\nu^{\frac{\ell-1}{2}}),\psi)$ implies that
\[
  \gamma(s,\Delta,\wedge^2,\psi) \sim \prod_{i=0}^{\ell-1} \gamma(s,\rho\nu^{i+\frac{1-\ell}{2}},\wedge^2,\psi) \prod_{0 \leq i < j \leq \ell-1} \gamma(s,\rho\nu^{i+\frac{1-\ell}{2}}\times \rho\nu^{j+\frac{1-\ell}{2}},\psi).
\]
We may unwind the unramified twist to get
\[
 \gamma(s,\Delta,\wedge^2,\psi) \sim \prod_{i=0}^{\ell-1} \gamma(s+1-\ell+2i,\rho,\wedge^2,\psi) \prod_{0 \leq i < j \leq \ell-1} \gamma(s+1-\ell+i+j, \rho \times \rho,\psi).
\]
In terms of $L$-function, we restate it as
\[ 
 \gamma(s,\Delta,\wedge^2,\psi) \sim \prod_{i=0}^{\ell-1} \frac{L(-s+\ell-2i,\widetilde{\rho},\wedge^2)}{L(s+1-\ell+2i,\rho,\wedge^2)}
  \prod_{0 \leq i < j \leq \ell-1} \frac{L(-s+\ell-i-j, \widetilde{\rho} \times \widetilde{\rho})}{L(s+1-\ell+i+j, \rho \times \rho)}.
\]
After we apply Lemma \ref{unitary-l}, we find
\[
\tag{8.8}
\label{lratio}
 \gamma(s,\Delta,\wedge^2,\psi) \sim \prod_{i=0}^{\ell-1} \frac{L(s-\ell+2i,\rho,\wedge^2)}{L(s+1-\ell+2i,\rho,\wedge^2)}
  \prod_{0 \leq i < j \leq \ell-1} \frac{L(s-\ell+i+j, \rho \times \rho)}{L(s+1-\ell+i+j, \rho \times \rho)}.
\]
Then
\[
\tag{8.9}
\label{minyoung}
\begin{split}
& \prod_{0 \leq i < j \leq \ell-1} \frac{L(s-\ell+i+j, \rho \times \rho)}{L(s+1-\ell+i+j, \rho \times \rho)}=\prod_{0 \leq i < \ell-1} \frac{L(s-\ell+2i+1,\rho \times \rho)}{L(s+i,\rho \times \rho)} \\
&=\frac{\prod_{-\ell < i < \ell-1}L(s+i,\rho \times \rho)}{\prod_{0 \leq i < \ell-1} L(s+i,\rho \times \rho) \prod_{1\leq i \leq \ell-1} L(s-\ell+2i,\rho \times \rho)}=\prod_{i=1}^{\ell-1} 
\frac{L(s-\ell+i,\rho \times \rho)}{L(s-\ell+2i,\rho \times \rho)}.
 \end{split}
 \]
Inserting all the way to the right term in \eqref{minyoung} into \eqref{lratio}, we derive
\[
\begin{split}
 &\gamma(s,\Delta,\wedge^2,\psi) \\
 & \sim  \prod_{i=0}^{\ell-1} \frac{L(s-\ell+2i,\rho,\wedge^2)}{L(s+1-\ell+2i,\rho,\wedge^2)} \prod_{i=1}^{\ell-1} 
\frac{L(s-\ell+i,\rho,\wedge^2 )L(s-\ell+i,\rho,\mathrm{Sym}^2 )}{L(s-\ell+2i,\rho,\wedge^2)L(s-\ell+2i,\rho,\mathrm{Sym}^2)},
\end{split}
\]
where we have replaced the $L$-function $L(s,\rho \times \rho)$ by the product $L(s,\rho,\wedge^2)L(s,\rho,\mathrm{Sym}^2)$ by definition. 
\par
First assume $\ell$ is even. We do the case $\ell$ even, the case $\ell$ odd being similar. Let us now cancel common factors. Then our quotient simplifies to
\[
  \frac{L(1-s,\widetilde{\Delta},\wedge^2)}{L(s,\Delta,\wedge^2)}  \sim\ \gamma(s,\Delta,\wedge^2,\psi)
 \sim  \prod_{i=0}^{\frac{\ell}{2}-1}\hspace*{-1mm} \frac{L(s-\ell+2i,\rho,\wedge^2)}{L(s+2i+1,\rho,\wedge^2)} \prod_{i=0}^{\frac{\ell}{2}-1}\hspace*{-1mm} \frac{L(s-\ell+2i+1,\rho,\mathrm{Sym}^2)}{L(s+2i,\rho,\mathrm{Sym}^2)}
\]
where we have used the equality of $L(1-s,\widetilde{\Delta},\wedge^2)/L(s,\Delta,\wedge^2)$ and $\gamma(s,\Delta,\wedge^2,\psi)$ up to units in $\mathbb{C}[q^{\pm s}]$ according to the functional equation. Proposition \ref{even-square} and Proposition \ref{odd-square} imply that $L(1-s,\widetilde{\Delta},\wedge^2)^{-1}$ has zeros in the half plane $\mathrm{Re}(s) \geq 1$ while $L(s,\Delta,\wedge^2)^{-1}$ has its zeros contained in the region $\mathrm{Re}(s) \leq 0$. Since the half planes $\mathrm{Re}(s) \geq 1$ and $\mathrm{Re}(s) \leq 0$ are disjoint, they do not have common factors in $\mathbb{C}[q^{\pm s}]$. As $\rho$ is unitary, the poles of $L$-function $\prod_{i=0}^{\frac{\ell}{2}-1} L(s-\ell+2i,\rho,\wedge^2)   L(s-\ell+2i+1,\rho,\mathrm{Sym}^2)$ must lie on the line $\mathrm{Re}(s)=\ell-i$ for $i=-\ell+1,\dotsm,0$, while the poles of $L$-function $\prod_{i=0}^{\frac{\ell}{2}-1} L(s+2i+1,\rho,\wedge^2) L(s+2i,\rho,\mathrm{Sym}^2)$ will lie on the line $\mathrm{Re}(s)=-i$ for $i=-\ell+1,\dotsm,0$. Therefore they have no factors in common. We conclude that
\[
 L(s,\Delta,\wedge^2) \sim \prod_{i=0}^{\frac{\ell}{2}-1} L(s+2i+1,\rho,\wedge^2) L(s+2i,\rho,\mathrm{Sym}^2).
\]
Both sides of the above $L$-functions are exactly the inverse of the normalized polynomial and hence are indeed equal. For symmetric square $L$-function $L(s,\Delta,\mathrm{Sym}^2)$, we know from \cite{CoPe,JaPSSh83} that
\[
  L(s,\Delta \times \Delta)=\prod_{i=0}^{\ell-1}L(i+s,\rho \times \rho)=\prod_{i=0}^{\frac{\ell}{2}-1}L(s+2i+1,\rho \times \rho)L(s+2i,\rho \times \rho). 
  \]
  As symmetric square $L$-function is defined by the ratio of Rankin-Selberg $L$-functions by exterior square $L$-function, we obtain
  \[
  \begin{split}
    L(s,\Delta,\mathrm{Sym}^2)
    &=\prod_{i=0}^{\frac{\ell}{2}-1}\frac{L(s+2i,\rho \times \rho)}{L(s+2i,\rho,\mathrm{Sym}^2)} \frac{L(s+2i+1,\rho \times \rho)}{L(s+2i+1,\rho,\wedge^2)}\\
    &=\prod_{i=0}^{\frac{\ell}{2}-1} L(s+2i,\rho,\wedge^2) L(s+2i+1,\rho,\mathrm{Sym}^2).
  \end{split}
  \]

 \end{proof}

We remark that the product formula in Theorem \ref{square-multip} agrees with that of Langland-Shahidi exterior or symmetric square $L$-function for square integrable representation in \cite[Proposition 8.1]{Sh92}. On the arithmetic side, the formula is also consistent with the expression of Artin's exterior square $L$-function for the Langlands parameter of square integrable representation in \cite[Proposition 6.2]{Ma14}. We also note that the result easily follows from the agreement of exterior square $L$-function by means of integral representation and the Langland-Shahidi exterior square $L$-function for square integrable representation in Kewat and Raghunathan of \cite{KeRa12}, assuming that symmetric square $L$-functions are defined in our way. However our proof is completely local as opposed to the global argument in \cite{KeRa12}, using a global
exterior square $L$-function for an unitary cuspidal automorphic representation.

 \par
 
 Let $\Delta$ be an irreducible square integrable representation of $GL_{\ell r}$ with the segment $\Delta=[\rho\nu^{-\frac{\ell-1}{2}},\dotsm,\rho\nu^{\frac{\ell-1}{2}}]$ and $\rho$ an irreducible supercuspidal representation of $GL_r$. We wish to express the exceptional $L$-function $L_{ex}(s,\Delta,\wedge^2)$ in terms of $L$-functions of irreducible supercuspidal representations. 
 Theorem \ref{square-multip} leads quickly to the following Theorem which is originally conjectured in \cite{Cog94}.
  
 \begin{theorem}
 \label{excep-square}
Let $\Delta$ be an irreducible square integrable representation of $GL_{\ell r}$ with the segment $\Delta=[\rho \nu^{-\frac{\ell-1}{2}},\dotsm,\rho \nu^{\frac{\ell-1}{2}}]$ and $\rho$ an irreducible unitary supercuspidal representation of $GL_r$.
 \begin{enumerate}
\item[$(\mathrm{i})$] Suppose that $r$ is even. Then
\[
L_{ex}(s,[\rho \nu^{-\frac{\ell-1}{2}},  \dotsm, \rho \nu^{\frac{\ell-1}{2}}],\wedge^2)= \left\{  \begin{array}{l l} L(s,\rho,\wedge^2) &\quad \text{if $\ell$ is odd} \\
   L(s,\rho,\mathrm{Sym}^2)  &\quad \text{if $\ell$ is even.}
     \end{array} \right.
\]
\item[$(\mathrm{ii})$] Suppose that $r$ is odd. Then
\[
 L_{ex}(s,[\rho \nu^{-\frac{\ell-1}{2}},  \dotsm, \rho \nu^{\frac{\ell-1}{2}}],\wedge^2)=L(s,\rho,\mathrm{Sym}^2) \qquad \text{if $\ell$ is even.}
\]
 \end{enumerate}
\end{theorem}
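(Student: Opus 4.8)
The plan is to induct on the length $\ell$ of the segment defining $\Delta$, playing two expressions for the full exterior square $L$-function against each other: the factorization of $L(s,\Delta,\wedge^2)$ into exceptional $L$-functions of derivatives (Proposition \ref{even-square}) and the closed formula for $L(s,\Delta,\wedge^2)$ in terms of $L(s+j,\rho,\wedge^2)$ and $L(s+j,\rho,\mathrm{Sym}^2)$ (Theorem \ref{square-multip}). In each of the three cases of the statement the integer $\ell r$ is even, so $\Delta$ is square integrable on some $GL_{2n}$ and Proposition \ref{even-square} expresses $L(s,\Delta,\wedge^2)$ as a product of factors
\begin{equation*}
L_{ex}\bigl(s,[\rho\nu^{a+\frac{1-\ell}{2}},\dotsm,\rho\nu^{\frac{\ell-1}{2}}],\wedge^2\bigr),
\end{equation*}
where $a$ runs through $0,1,\dotsm,\ell-1$ when $r$ is even and through $0,2,4,\dotsm,\ell-2$ when $r$ is odd. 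The term $a=0$ is exactly $L_{ex}(s,\Delta,\wedge^2)$, the factor we want to isolate.

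First I would rewrite each factor with $a\ge 1$ as an exceptional $L$-function of a strictly shorter square integrable representation: since $[\rho\nu^{a+\frac{1-\ell}{2}},\dotsm,\rho\nu^{\frac{\ell-1}{2}}]=\Delta_a'\nu^{a/2}$ with $\Delta_a'=[\rho\nu^{-\frac{\ell-a-1}{2}},\dotsm,\rho\nu^{\frac{\ell-a-1}{2}}]$ the square integrable representation of $GL_{(\ell-a)r}$ attached to the same $\rho$ and of length $\ell-a<\ell$, the rescaling $L_{ex}(s,\pi\nu^{s_1},\wedge^2)=L_{ex}(s+2s_1,\pi,\wedge^2)$ turns this factor into $L_{ex}(s+a,\Delta_a',\wedge^2)$. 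The induction hypothesis applied to $\Delta_a'$ then evaluates it as $L(s+a,\rho,\wedge^2)$ when $\ell-a$ is odd (which forces $r$ even) and as $L(s+a,\rho,\mathrm{Sym}^2)$ when $\ell-a$ is even. Collecting these, the product of the factors with $a\ge1$ becomes an explicit product $B(q^{-s})$ of shifted copies of $L(\cdot,\rho,\wedge^2)$ and $L(\cdot,\rho,\mathrm{Sym}^2)$, and Proposition \ref{even-square} reads $L(s,\Delta,\wedge^2)=L_{ex}(s,\Delta,\wedge^2)\,B(q^{-s})$.

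On the other side I would expand the formula of Theorem \ref{square-multip} in each of the three cases and peel off a single leading factor, writing $L(s,\Delta,\wedge^2)=A(q^{-s})\,B(q^{-s})$ with $A=L(s,\rho,\wedge^2)$ when $\ell$ is odd and $A=L(s,\rho,\mathrm{Sym}^2)$ when $\ell$ is even, and with $B$ literally the same product obtained above (using $L(s+j,\rho,\wedge^2)\equiv1$ for $r$ odd). Comparing the two expressions for $L(s,\Delta,\wedge^2)$ and cancelling the nonzero rational factor $B(q^{-s})$ in $\mathbb{C}(q^{-s})$ gives $L_{ex}(s,\Delta,\wedge^2)=A(q^{-s})$, which is the assertion. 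The base cases require nothing new: for $r$ even one starts at $\ell=1$, where $\Delta=\rho$ and $L_{ex}(s,\rho,\wedge^2)=L(s,\rho,\wedge^2)$ by Theorem \ref{supercusp}; for $r$ odd one starts at $\ell=2$, where the product over $a\ge1$ is empty and the claim follows from Theorem \ref{square-multip} together with $L(s+1,\rho,\wedge^2)=1$.

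The only real work, and the point that needs care, is the reindexing in the previous two steps: one must verify that after applying the induction hypothesis the product of the derivative factors for $a\ge1$ agrees term by term with the product left over after removing the single leading factor $A(q^{-s})$ from the Theorem \ref{square-multip} formula, with the right bookkeeping of which shifts carry a $\wedge^2$ and which a $\mathrm{Sym}^2$, dictated by the parity of $\ell-a$. No analytic input is needed beyond what is already used to prove Theorem \ref{square-multip}, since once the two products $B(q^{-s})$ are seen to be identical the cancellation is purely formal and both sides of the claimed identity are normalized Euler factors.
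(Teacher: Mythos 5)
Your proposal is correct and follows essentially the same route as the paper: induction on $\ell$, comparing the factorization of $L(s,\Delta,\wedge^2)$ into exceptional $L$-functions of derivatives (Proposition \ref{even-square}, with the induction hypothesis and the unramified-twist shift applied to the shorter segments) against the closed formula of Theorem \ref{square-multip}, and cancelling the common product to isolate $L_{ex}(s,\Delta,\wedge^2)$. The bookkeeping of which shifts carry $\wedge^2$ versus $\mathrm{Sym}^2$, and the base cases ($\ell=1$ via Theorem \ref{supercusp} for $r$ even, $\ell=2$ for $r$ odd), match the paper's argument.
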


 \begin{proof}
 For $\ell=1$, $L_{ex}(s,\rho,\wedge^2)=L(s,\rho,\wedge^2)$, which follows from Proposition \ref{supercusp}. We now begin with the simplest non-supercuspidal  square integrable representation $\Delta=[\rho \nu^{-\frac{1}{2}},\rho \nu^{\frac{1}{2}}]$ of $GL_{2r}$ with $\rho$ an irreducible unitary supercuspidal representation of $GL_r$ and $r$ an even number. The derivatives of $\Delta$ are $\Delta^{(0)}=\Delta$ and $\Delta^{(r)}=\rho \nu^{\frac{1}{2}}$. Then Proposition \ref{even-square} gives
 \[
 \begin{split}
   L(s,\Delta,\wedge^2)&=L_{ex}(s,[\rho \nu^{-\frac{1}{2}},\rho \nu^{\frac{1}{2}}],\wedge^2)L_{ex}(s,\rho \nu^{\frac{1}{2}},\wedge^2)\\
   &=L_{ex}(s,[\rho \nu^{-\frac{1}{2}},\rho \nu^{\frac{1}{2}}],\wedge^2)L(s+1,\rho,\wedge^2).
\end{split}
 \]
According to \ref{square-multip}, we know
\[
  L(s,\Delta,\wedge^2)=L(s,\rho,\mathrm{Sym}^2)L(s+1,\rho,\wedge^2).
\] 
 Therefore we can conclude that
 \[
  L_{ex}(s,[\rho \nu^{-\frac{1}{2}},\rho \nu^{\frac{1}{2}}],\wedge^2)=L(s,\rho,\mathrm{Sym}^2).
\]
 for the base case $\ell=2$.

  \par
 We are going to prove by induction on $\ell$. We assume that the following statement is satisfied for all integers $k$ with $1 \leq k \leq \ell$ 
 \[
L_{ex}(s,[\rho \nu^{-\frac{k-1}{2}},  \dotsm, \rho \nu^{\frac{k-1}{2}}],\wedge^2,s)= \left\{  \begin{array}{l l} L(s,\rho,\wedge^2) &\quad \text{if $k$ is odd} \\
   L(s,\rho,\mathrm{Sym}^2)  &\quad \text{if $k$ is even.}
     \end{array} \right.
\]
On the one hand, Theorem \ref{square-multip} gives the formula
\[
\tag{8.10}
\label{prod-square1}
  L(s,[\rho \nu^{-\frac{\ell}{2}},  \dotsm, \rho \nu^{\frac{\ell}{2}}],\wedge^2)=
  L(s,\rho,\wedge^2)\prod_{i=1}^{\frac{\ell}{2}}L(s+2i,\rho,\wedge^2)\prod_{i=1}^{\frac{\ell}{2}}L(s+2i-1,\rho ,\mathrm{Sym}^2).
  \]
if $\ell$ is even or
\[
\tag{8.11}
\label{prod-square2}
 L(s,[\rho \nu^{-\frac{\ell}{2}},  \dotsm, \rho \nu^{\frac{\ell}{2}}],\wedge^2)=
 L(s,\rho,\mathrm{Sym}^2) \prod_{i=0}^{\frac{\ell-1}{2}} L(s+2i+1,\rho,\wedge^2) \prod_{i=1}^{\frac{\ell-1}{2}}L(s+2i,\rho,\mathrm{Sym}^2)
 \]
 if $\ell$ is odd. On the other hand, if we combine the first part of Proposition \ref{even-square} with our induction hypothesis, we can obtain the following equality of $L$-functions
\[
\tag{8.12}
\label{prod-square3}
\begin{split}
  &L(s,[\rho \nu^{-\frac{\ell}{2}},  \dotsm, \rho \nu^{\frac{\ell}{2}}],\wedge^2)\\
  &=L_{ex}(s,[\rho \nu^{-\frac{\ell}{2}},  \dotsm, \rho \nu^{\frac{\ell}{2}}],\wedge^2) \\
  &\phantom{********} \times \prod_{i=1}^{\frac{\ell}{2}}L_{ex}(s,[\rho \nu^{2i-\frac{\ell}{2}},  \dotsm, \rho \nu^{\frac{\ell}{2}}],\wedge^2)L_{ex}(s,[\rho \nu^{2i-1-\frac{\ell}{2}},  \dotsm, \rho \nu^{\frac{\ell}{2}}],\wedge^2) \\
  &=L_{ex}(s,[\rho \nu^{-\frac{\ell}{2}},  \dotsm, \rho \nu^{\frac{\ell}{2}}],\wedge^2)\prod_{i=1}^{\frac{\ell}{2}}L(s+2i,\rho, \wedge^2)\prod_{i=1}^{\frac{\ell}{2}}L(s+2i-1,\rho,\mathrm{Sym}^2)
  \end{split}
\]
if $\ell$ is even or
\[
\tag{8.13}
\label{prod-square4}
\begin{split}
  &L(s,[\rho \nu^{-\frac{\ell}{2}},  \dotsm, \rho \nu^{\frac{\ell}{2}}],\wedge^2)\\
  &=L_{ex}(s,[\rho \nu^{-\frac{\ell}{2}},  \dotsm, \rho \nu^{\frac{\ell}{2}}],\wedge^2) \\
  &\phantom{*****}\times \prod_{i=1}^{\frac{\ell+1}{2}}L_{ex}(s,[\rho \nu^{2i-1-\frac{\ell}{2}},  \dotsm, \rho \nu^{\frac{\ell}{2}}],\wedge^2)\prod_{i=1}^{\frac{\ell-1}{2}}L_{ex}(s,[\rho \nu^{2i-\frac{\ell}{2}},  \dotsm, \rho \nu^{\frac{\ell}{2}}],\wedge^2) \\
  &=L_{ex}(s,[\rho \nu^{-\frac{\ell}{2}},  \dotsm, \rho \nu^{\frac{\ell}{2}}],\wedge^2)\prod_{i=0}^{\frac{\ell-1}{2}}L(s+2i+1,\rho, \wedge^2)\prod_{i=1}^{\frac{\ell-1}{2}}L(s+2i,\rho,\mathrm{Sym}^2)
  \end{split}
\]
if $\ell$ is odd. We conclude the desired equality from \eqref{prod-square1} and \eqref{prod-square3}, or  \eqref{prod-square2} and \eqref{prod-square4} that
 \[
L_{ex}(s,[\rho \nu^{-\frac{\ell}{2}},  \dotsm, \rho \nu^{\frac{\ell}{2}}],\wedge^2)= \left\{  \begin{array}{l l} L(s,\rho,\wedge^2) &\quad \text{if $\ell$ is even} \\
   L(s,\rho,\mathrm{Sym}^2)  &\quad \text{if $\ell$ is odd.}
     \end{array} \right.
\]

\par
Now we assume that $r$ is odd, $\ell$ is even, and $\Delta$ is an irreducible square integrable representation of $GL_{\ell r}$ with the segment $\Delta=[\rho\nu^{-\frac{\ell-1}{2}},\dotsm,\rho\nu^{\frac{\ell-1}{2}}]$. Then the formula in Theorem \ref{square-multip} is reduced to 
\[
\tag{8.14}
\label{odd-symprod}
  L(s,\Delta,\wedge^2)=\prod_{i=0}^{\frac{\ell}{2}-1} L(s+2i+1,\rho,\wedge^2) L(s+2i,\rho,\mathrm{Sym}^2)=\prod_{i=0}^{\frac{\ell}{2}-1} L(s+2i,\rho,\mathrm{Sym}^2),
\]
because $L(s+2i+1,\rho,\wedge^2)=1$ from Theorem \ref{supercusp}.
For $\ell=2$, the second part of Proposition \ref{even-square} implies that $L(s,\Delta,\wedge^2)=L_{ex}(s,[\rho\nu^{-\frac{1}{2}},\rho\nu^{\frac{1}{2}}],\wedge^2)$. According to \eqref{odd-symprod}, we obtain
\[
L_{ex}(s,[\rho\nu^{-\frac{1}{2}},\rho\nu^{\frac{1}{2}}],\wedge^2)=L(s,\rho,\mathrm{Sym}^2).
\]
We again apply induction on the even number $\ell$. Assuming that $\ell$ is an even number and
\[
 L_{ex}(s,[\rho \nu^{-\frac{k-1}{2}},  \dotsm, \rho \nu^{\frac{k-1}{2}}],\wedge^2)=L(s,\rho,\mathrm{Sym}^2)
\] 
is true for all even numbers $k$ with $2 \leq k \leq \ell$, we see from Proposition \ref{even-square}
\[
\begin{split}
  &L(s,[\rho \nu^{-\frac{\ell+1}{2}},  \dotsm, \rho \nu^{\frac{\ell+1}{2}}],\wedge^2)\\
  &=L_{ex}(s,[\rho \nu^{-\frac{\ell+1}{2}},  \dotsm, \rho \nu^{\frac{\ell+1}{2}}],\wedge^2)   \prod_{i=1}^{\frac{\ell}{2}}
  L_{ex}(s,[\rho \nu^{2i+\frac{1-\ell}{2}},  \dotsm, \rho \nu^{\frac{\ell+1}{2}}],\wedge^2) \\
  &=L_{ex}(s,[\rho \nu^{-\frac{\ell+1}{2}},  \dotsm, \rho \nu^{\frac{\ell+1}{2}}],\wedge^2)  \prod_{i=1}^{\frac{\ell}{2}} L(s+2i,\rho,\mathrm{Sym}^2).
  \end{split}
\]
Using \eqref{odd-symprod}, we have
\[
 L_{ex}(s,[\rho \nu^{-\frac{\ell+1}{2}},  \dotsm, \rho \nu^{\frac{\ell+1}{2}}],\wedge^2)=L(s,\rho,\mathrm{Sym}^2).
\]

\end{proof}

From Theorem \ref{prod-L} and Theorem \ref{prod-L-odd}, $L(s,\Delta,\wedge^2)$ are completely determined by the exceptional $L$-functions $L_{ex}(s,\Delta^{(i)},\wedge^2)$ for the derivatives $\Delta^{(i)}$ which are representations of $GL_{m-i}$ with $m-i$ even numbers. Hence we do not consider the case when $\ell$ and $r$ are odd because $\Delta=[\rho\nu^{-\frac{\ell-1}{2}},\dotsm,\rho\nu^{\frac{\ell-1}{2}}]$ is a representation of odd $GL_{\ell r}$. Theorem \ref{excep-square} will allow us to compute $L(s,\pi,\wedge^2)$ for all irreducible admissible representations of $GL_n$ in terms of symmetric or exterior square $L$-functions for irreducible supercuspidal representations.

\subsection{The relationships with the local Langlands correspondence.}

 In this section, we wish to explain the equality of the exterior square arithmetic (Artin) and analytic (Jacquet-Shalika) $L$-functions for $GL_m$ through the local Langlands correspondence. Let $\Phi_F$ denote a choice of geometric Frobenius element of $\mathrm{Gal}(\overline{F}/F)$. Let $\phi$ be an $m$-dimensional $\Phi_F$-semisimple representation of $W_F'$, the Weil-Deligne group for $\overline{F}/F$. Let $\pi=\pi(\phi)$ be the irreducible admissible representation of $GL_m$ associated to $\phi$ by the local Langlands correspondence by Harris-Taylor \cite{HaTa01}, Henniart \cite{He02}. Let $\wedge^2$ and $\mathrm{Sym}^2$ denote the exterior and symmetric square representations of $GL_n(\mathbb{C})$, respectively. Zelevinsky in \cite[Proposition 10.2]{Ze80} asserts that $\phi$ can be decomposed into $\phi=\phi_1 \oplus \dotsm \oplus \phi_t$, where $\phi_i$ is of dimension $m_i$ with $m=m_1 + \dotsm + m_t$ and $\Delta_i=\Delta_i(\phi_i)$ is the associated irreducible quasi-square-integrable representation of $GL_{m_i}$ under the local Langlands correspondence. Then we have the following isomorphism
 \[
   \wedge^2(\phi)=\wedge^2(\phi_1 \oplus \dotsm \oplus \phi_t)=\bigoplus_{k=1}^t \wedge^2(\phi_k) \bigoplus_{1 \leq i < j \leq t} \phi_i \otimes \phi_j.
 \]
Taking Artin's $L$-function, this gives the formula
\[
  L(s, \wedge^2(\phi))=\prod_{k=1}^t L(s, \wedge^2(\phi_k)) \prod_{1 \leq i < j \leq t} L(s, \phi_i \otimes \phi_j).
\]
 The local Langlands correspondence again implies 
 \[
   \prod_{1 \leq i < j \leq t} L(s, \phi_i \otimes \phi_j)=\prod_{1 \leq i < j \leq t} L(s, \Delta_i(\phi_i) \times \Delta_j(\phi_j))=\prod_{1 \leq i < j \leq t} L(s, \Delta_i \times \Delta_j).
  \]
We employ the agreement of arithmetic (Artin) $L$-function and analytic (Jacquet-Shalika) $L$-function for the discrete series from Kewat and Raghunathan in \cite{KeRa12} and Henniart in \cite{He10} to obtain
  \[
   \prod_{k=1}^t L(s, \wedge^2(\phi_k))=\prod_{k=1}^t L(s, \Delta_k(\phi_k),\wedge^2)=\prod_{k=1}^t L(s,\Delta_k,\wedge^2).
  \]
 Under the local Langlands correspondence, as the decomposition $\phi=\phi_1 \oplus \dotsm \oplus \phi_t$ is unique up to permutation, we can always reorder the $\phi_i$ without changing $\phi$ so that $\phi$ corresponds to $\pi=\pi(\phi)$, where $\pi$ is the unique irreducible quotient of the induced representation of Langlands type $\Pi=\text{Ind}(\Delta_1(\phi_1) \otimes \dotsm \otimes \Delta_t(\phi_t))$. Applying the formalism of the exterior square $L$-function in Theorem \ref{langlands-multi}, we obtain
\[
\begin{split}
 L(s, \wedge^2(\phi))&=\prod_{k=1}^t L(s, \Delta_k(\phi_k),\wedge^2)  \prod_{1 \leq i < j \leq t} L(s, \Delta_i(\phi_i) \times \Delta_j(\phi_j))\\
 & =\prod_{k=1}^t  L(s,\Delta_k,\wedge^2) \prod_{1 \leq i < j \leq t}  L(s, \Delta_i \times \Delta_j)=L(s,\Pi,\wedge^2)=L(s,\pi(\phi),\wedge^2).
 \end{split}
\]

  \begin{theorem} 
  \label{agree-exterior}
  Let $\phi$ be a $\Phi_F$-semisimple $m$-dimensional complex representation of Weil-Deligne group $W_F'$ and $\pi=\pi(\phi)$ an irreducible admissible representation of $GL_m$ associated to $\phi$ under the local Langlands correspondence. Then we have
  \[
    L(s,\wedge^2(\phi))=L(s,\pi(\phi),\wedge^2).
  \]
  \end{theorem}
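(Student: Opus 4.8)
The plan is to deduce the identity from the multiplicativity formula for the analytic exterior square $L$-function already proved in Theorem \ref{langlands-multi}, combined with two inputs on the arithmetic side: additivity of $\wedge^2$ under direct sums of Weil--Deligne representations, and the compatibility of the exterior square $L$-function with the local Langlands correspondence for discrete series, which is available from \cite{He10} and \cite{KeRa12}. So the proof is a short bookkeeping argument; essentially all of the real work has been carried out earlier in the paper.

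First I would invoke Zelevinsky's description of Frobenius semisimple representations of the Weil--Deligne group (\cite[Proposition 10.2]{Ze80}) to write $\phi = \phi_1 \oplus \dotsm \oplus \phi_t$ with each $\phi_i$ an $m_i$-dimensional indecomposable summand, $m = m_1 + \dotsm + m_t$, and let $\Delta_i = \Delta_i(\phi_i)$ be the irreducible quasi-square-integrable representation of $GL_{m_i}$ attached to $\phi_i$ under the local Langlands correspondence. Using the isomorphism of $GL_m(\mathbb{C})$-representations
\[
\wedge^2(\phi_1 \oplus \dotsm \oplus \phi_t) \simeq \bigoplus_{k=1}^t \wedge^2(\phi_k) \;\oplus\; \bigoplus_{1 \leq i < j \leq t} \phi_i \otimes \phi_j
\]
and the multiplicativity of Artin $L$-functions in direct sums, one gets
\[
L(s,\wedge^2(\phi)) = \prod_{k=1}^t L(s,\wedge^2(\phi_k)) \prod_{1 \leq i < j \leq t} L(s, \phi_i \otimes \phi_j).
\]
Next I would identify each factor with its analytic counterpart: the compatibility of Rankin--Selberg $L$-functions with local Langlands (Harris--Taylor, Henniart) gives $L(s,\phi_i \otimes \phi_j) = L(s, \Delta_i \times \Delta_j)$, while Henniart's identification of the Artin and Langlands--Shahidi exterior square $L$-functions \cite{He10} together with the equality $L_{Sh}(s,\Delta_k,\wedge^2) = L(s,\Delta_k,\wedge^2)$ for discrete series from Kewat--Raghunathan \cite{KeRa12} gives $L(s,\wedge^2(\phi_k)) = L(s,\Delta_k,\wedge^2)$. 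Since the decomposition of $\phi$ is unique up to permutation, I may reorder the summands so that $\pi = \pi(\phi)$ is the unique irreducible quotient of an induced representation of Langlands type $\Pi = \mathrm{Ind}(\Delta_1\nu^{u_1} \otimes \dotsm \otimes \Delta_t\nu^{u_t})$; then Theorem \ref{langlands-multi} yields
\[
L(s,\pi(\phi),\wedge^2) = L(s,\Pi,\wedge^2) = \prod_{k=1}^t L(s+2u_k,\Delta_k,\wedge^2) \prod_{1 \leq i < j \leq t} L(s+u_i+u_j, \Delta_i \times \Delta_j),
\]
and matching this with the two previous displays (the $u_i$ being absorbed into the $\phi_i$, i.e. into the $\Delta_i$) completes the proof.

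The main obstacle is not internal to this argument but lies in verifying that the external discrete-series input matches our conventions: one must check that the normalizations of the additive character $\psi$, of the exterior square representation $\wedge^2$, and of the $L$-function used in \cite{He10,KeRa12} are the ones adopted here, and that their result covers all quasi-square-integrable $\Delta_k$ and not merely the unitary square-integrable ones. The latter point is harmless, reducing to the unitary case via the unramified-twist identity $L(s,\Delta_0\nu^{s_0},\wedge^2) = L(s+2s_0,\Delta_0,\wedge^2)$ recorded for the discrete series case in the introduction. Everything else is formal once Theorem \ref{langlands-multi} is in hand.
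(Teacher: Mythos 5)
Your proposal is correct and follows essentially the same route as the paper's own argument: decompose $\phi$ into indecomposable summands, use additivity of $\wedge^2$ together with the Henniart and Kewat--Raghunathan identifications for the discrete series factors and the Langlands correspondence for the Rankin--Selberg factors, and then match against Theorem \ref{langlands-multi} applied to the Langlands-type induced representation with quotient $\pi(\phi)$. The only addition is your cautionary remark about normalizations and the reduction of quasi-square-integrable to unitary square-integrable via unramified twisting, which the paper handles implicitly in the same way.
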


The analogous results for the Asai $L$-functions \cite{Ma09,Ma11} or the Bump-Friedberg $L$-function \cite{Ma15} are established by Matringe. For the symmetric square $L$-function $L(s,\pi,\mathrm{Sym}^2)$, we begin with 
\[
  L(s,\pi(\phi) \times \pi(\phi))=L(s,\pi(\phi),\wedge^2)L(s,\pi(\phi),\mathrm{Sym}^2).
\]
On the arithmetic side, we have
\[
 L(s,\phi \times \phi)=L(s,\wedge^2(\phi))L(s,\mathrm{Sym}^2(\phi)).
\]
The local Langlands correspondence asserts that $L(s,\pi(\phi) \times \pi(\phi))=L(s,\phi \times \phi)$. Then we obtain the following results from the previous Theorem \ref{agree-exterior}. 

\begin{corollary}
Let $\phi$ be a $\Phi_F$-semisimple $m$-dimensional complex representation of Weil-Deligne group $W_F'$ and $\pi=\pi(\phi)$ an irreducible admissible representation of $GL_m$ associated to $\phi$ under the local Langlands correspondence. Then we have
  \[
    L(s,\mathrm{Sym}^2(\phi))=L(s,\pi(\phi),\mathrm{Sym}^2).
  \]
\end{corollary}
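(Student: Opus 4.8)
The plan is to deduce the claim as a formal consequence of the already-established equality of exterior square $L$-functions (Theorem~\ref{agree-exterior}) together with the compatibility of Rankin--Selberg $L$-functions with the local Langlands correspondence. No new analytic input is required beyond what has been assembled in Sections~6--8.

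First, on the arithmetic side, I would decompose the representation $\phi \otimes \phi$ of $W_F'$ as $GL_m(\mathbb{C})$-representations into $\phi \otimes \phi = \wedge^2(\phi) \oplus \mathrm{Sym}^2(\phi)$, which yields the factorization of Artin $L$-functions $L(s,\phi \times \phi) = L(s,\wedge^2(\phi))\,L(s,\mathrm{Sym}^2(\phi))$. On the analytic side, the Definition of $L(s,\pi,\mathrm{Sym}^2)$ in Section~8.3 gives, by construction, $L(s,\pi(\phi)\times\pi(\phi)) = L(s,\pi(\phi),\wedge^2)\,L(s,\pi(\phi),\mathrm{Sym}^2)$; here one uses Theorem~\ref{langlands-multi} to express both sides through the inducing data of the Langlands induced representation $\Pi$ of which $\pi(\phi)$ is the unique irreducible quotient, together with the factorization of $L(s,\pi\times\pi)$ recorded from \cite{CoPe,JaPSSh79}.

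Next I would invoke the compatibility of the local Langlands correspondence with Rankin--Selberg products, namely $L(s,\pi(\phi)\times\pi(\phi)) = L(s,\phi\times\phi)$, and Theorem~\ref{agree-exterior}, namely $L(s,\wedge^2(\phi)) = L(s,\pi(\phi),\wedge^2)$. Combining the two factorizations with these two equalities gives
\[
L(s,\wedge^2(\phi))\,L(s,\mathrm{Sym}^2(\phi)) = L(s,\pi(\phi),\wedge^2)\,L(s,\pi(\phi),\mathrm{Sym}^2),
\]
and since $L(s,\wedge^2(\phi)) = L(s,\pi(\phi),\wedge^2)$ is an Euler factor of the form $P(q^{-s})^{-1}$ with $P(0)=1$, hence a unit in the field of rational functions $\mathbb{C}(q^{-s})$, it may be cancelled to conclude $L(s,\mathrm{Sym}^2(\phi)) = L(s,\pi(\phi),\mathrm{Sym}^2)$.

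The ``hard part'' here is really just a bookkeeping verification rather than a genuine obstacle: one must check that all four $L$-functions appearing are legitimate Euler factors (inverses of polynomials in $q^{-s}$ normalized to have constant term $1$), so that the cancellation step is valid and the resulting identity of rational functions is in fact an identity of normalized Euler factors. This is immediate from the definitions in Sections~2 and~8 and from the divisibility $L(s,\pi,\wedge^2)^{-1}\mid L(s,\pi\times\pi)^{-1}$ noted just before the Definition of $L(s,\pi,\mathrm{Sym}^2)$, which is exactly what makes $L(s,\pi,\mathrm{Sym}^2)$ a well-defined Euler factor in the first place.
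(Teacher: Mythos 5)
Your argument is correct and is essentially the paper's own proof: both factor $L(s,\phi\times\phi)=L(s,\wedge^2(\phi))L(s,\mathrm{Sym}^2(\phi))$ on the arithmetic side, use the definition of $L(s,\pi,\mathrm{Sym}^2)$ as the ratio $L(s,\pi\times\pi)/L(s,\pi,\wedge^2)$ on the analytic side, and then cancel via the Rankin--Selberg compatibility of the local Langlands correspondence together with Theorem \ref{agree-exterior}. The extra bookkeeping remark about cancelling a nonzero Euler factor in $\mathbb{C}(q^{-s})$ is harmless and does not change the route.
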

This Corollary explains that our definition of symmetric $L$-function is consistent with Artin's symmetric $L$-function in arithmetic side.

\par

We return to local exterior square $\gamma$- and $\varepsilon$- factor. Let $\phi^{\vee}$ be the dual representation of $\phi$. The relation between local Artin's $\varepsilon$ and $\gamma$-factor is
\[
 \gamma(s,\wedge^2(\phi),\psi)=\frac{\varepsilon(s,\wedge^2(\phi),\psi)L(1-s,\wedge^2(\phi^{\vee}))}{L(s,\wedge(\phi))}
\] 
in \cite{De72}. On the analytic side, for irreducible admissible representation $\pi(\phi)$ associated to $\phi$, $\gamma$-factor is simply defined in Section 3 by
\[
 \gamma(s,\pi(\phi),\wedge^2,\psi)=\frac{\varepsilon(s,\pi(\phi),\wedge^2,\psi)L(1-s,(\pi(\phi))^{\iota},\wedge^2)}{L(s,\pi(\phi),\wedge^2)}.
 \]
In the view of the Local Langlands correspondence, $\phi^{\vee}$ corresponds to $(\pi(\phi))^{\iota}$. Since we have the equality of $L$-functions $L(s,\wedge(\phi)=L(s,\pi(\phi),\wedge^2)$ and $L(1-s,\wedge^2(\phi^{\vee})=L(1-s,(\pi(\phi))^{\iota},\wedge^2)$ from Theorem \ref{agree-exterior}, we arrive at
\[
   \gamma(s,\wedge^2(\phi),\psi)=\frac{\varepsilon(s,\wedge^2(\phi),\psi)}{\varepsilon(s,\pi(\phi),\wedge^2,\psi)} \gamma(s,\pi(\phi),\wedge^2,\psi). 
   \]
Summing up, Artin's $\gamma$-factor $\gamma(s,\wedge^2(\phi),\psi)$
and $ \gamma(s,\pi(\phi),\wedge^2,\psi)$ are equal up to units in $\mathbb{C}[q^{\pm s}]$. 

\begin{proposition}
Let $\phi$ be a $\Phi_F$-semisimple $m$-dimensional complex representation of Weil-Deligne group $W_F'$ and $\pi=\pi(\phi)$ an irreducible admissible representation of $GL_m$ associated to $\phi$ under the local Langlands correspondence. Then there is a monomial of the form $\alpha q^{-\beta s}$, that is, a unit in $\mathbb{C}[q^{\pm s}]$ such that
\[
  \gamma(s,\wedge^2(\phi),\psi)=\alpha q^{-\beta s} \gamma(s,\pi(\phi),\wedge^2,\psi).
\]
\end{proposition}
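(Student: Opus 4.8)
The plan is to reduce the proposition to the equality of $L$-functions already proved in Theorem~\ref{agree-exterior}, together with the observation that both $\varepsilon$-factors entering the computation are monomials in $q^{-s}$. First I would write down the two standard expressions for the $\gamma$-factors. On the arithmetic side, Deligne's formalism of local constants gives
\[
\gamma(s,\wedge^2(\phi),\psi)=\frac{\varepsilon(s,\wedge^2(\phi),\psi)\,L(1-s,\wedge^2(\phi^{\vee}))}{L(s,\wedge^2(\phi))},
\]
while on the analytic side the definition in Section~3 gives
\[
\gamma(s,\pi(\phi),\wedge^2,\psi)=\frac{\varepsilon(s,\pi(\phi),\wedge^2,\psi)\,L(1-s,(\pi(\phi))^{\iota},\wedge^2)}{L(s,\pi(\phi),\wedge^2)}.
\]
Under the local Langlands correspondence $\phi^{\vee}$ corresponds to $(\pi(\phi))^{\iota}$, so Theorem~\ref{agree-exterior} applied to $\phi$ and to $\phi^{\vee}$ yields $L(s,\wedge^2(\phi))=L(s,\pi(\phi),\wedge^2)$ and $L(1-s,\wedge^2(\phi^{\vee}))=L(1-s,(\pi(\phi))^{\iota},\wedge^2)$. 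Dividing the two displayed identities, the $L$-factors cancel and one is left with
\[
\gamma(s,\wedge^2(\phi),\psi)=\frac{\varepsilon(s,\wedge^2(\phi),\psi)}{\varepsilon(s,\pi(\phi),\wedge^2,\psi)}\,\gamma(s,\pi(\phi),\wedge^2,\psi).
\]

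Next I would check that the quotient of the two $\varepsilon$-factors is a unit in $\mathbb{C}[q^{\pm s}]$. On the arithmetic side, $\varepsilon(s,\wedge^2(\phi),\psi)$ is by construction of the form $\alpha_1 q^{-\beta_1 s}$ with $\alpha_1\in\mathbb{C}^{\times}$ and $\beta_1$ an integer, hence invertible. On the analytic side, applying the local functional equation twice, namely Theorem~\ref{local functional eq} in the even case and Theorem~\ref{local-func-odd} in the odd case, gives $\varepsilon(s,\pi(\phi),\wedge^2,\psi)\,\varepsilon(1-s,(\pi(\phi))^{\iota},\wedge^2,\psi^{-1})=1$; since both factors lie in $\mathbb{C}[q^{\pm s}]$, this forces $\varepsilon(s,\pi(\phi),\wedge^2,\psi)$ to be a monomial $\alpha_2 q^{-\beta_2 s}$. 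Consequently the ratio $\varepsilon(s,\wedge^2(\phi),\psi)/\varepsilon(s,\pi(\phi),\wedge^2,\psi)$ is again a monomial $\alpha q^{-\beta s}$, which is exactly the asserted shape, and the identity above then reads $\gamma(s,\wedge^2(\phi),\psi)=\alpha q^{-\beta s}\,\gamma(s,\pi(\phi),\wedge^2,\psi)$.

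The argument is essentially formal once Theorem~\ref{agree-exterior} is available; the only point requiring a little care is the invertibility of the analytic $\varepsilon$-factor, which I would establish precisely through the double application of the functional equation as indicated. Thus the substantive content of the proposition is entirely concentrated in the already-proved equality of arithmetic and analytic exterior square $L$-functions, and the remaining work is bookkeeping with the definitions of the $\gamma$- and $\varepsilon$-factors and with the local Langlands dictionary relating $\phi^{\vee}$ and $(\pi(\phi))^{\iota}$.
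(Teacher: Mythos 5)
Your proposal is correct and follows essentially the same route as the paper: both write the two $\gamma$-factors via their $\varepsilon$- and $L$-factor expressions, invoke Theorem \ref{agree-exterior} (applied to $\phi$ and to $\phi^{\vee}$, using that $\phi^{\vee}$ corresponds to $(\pi(\phi))^{\iota}$) to cancel the $L$-functions, and conclude that the discrepancy is the ratio of $\varepsilon$-factors, a unit in $\mathbb{C}[q^{\pm s}]$. Your extra remark that invertibility of the analytic $\varepsilon$-factor follows from applying the functional equation twice matches the paper's own justification given elsewhere (and is also built into its definition as an invertible element of $\mathbb{C}[q^{\pm s}]$).
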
 

Let $\gamma_{Sh}(s,\pi,\wedge^2,\psi)$ denote Shahidi's $\gamma$-factor, $\varepsilon_{Sh}(s,\pi,\wedge^2,\psi)$ Shahidi's $\varepsilon$-factor, and $L_{Sh}(s,\pi,\wedge^2)$ Shahidi's $L$-factor from the Langlands-Shahidi methods in \cite{Sh90, Sh14}. The relation between the local $\varepsilon$-factor and the local $\gamma$-factor is given in \cite{Sh90}
\[
  \gamma_{Sh}(s,\pi,\wedge^2,\psi)=\frac{\varepsilon_{Sh}(s,\pi,\wedge^2,\psi)L_{Sh}(1-s,\pi^{\iota},\wedge^2)}{L_{Sh}(s,\pi,\wedge^2)}.
\] 
Cogdell, Shahidi, and Tsai \cite{CoShTs17} prove that Artin's $\gamma$-factor $\gamma(s,\wedge^2(\phi),\psi)$ and Shahidi $\gamma$-factor $\gamma_{Sh}(s,\pi(\phi),\wedge^2,\psi)$ are equal. Then they deduce the similar identity for $\varepsilon$-factor
\[
  \varepsilon(s,\wedge^2(\phi),\psi)=\varepsilon_{Sh}(s,\pi(\phi),\wedge^2,\psi).
\]
It is still an open problem to show that the two defintions of local $\gamma$-factors $\gamma(s,\wedge^2(\phi),\psi)$ and $\gamma(s,\pi(\phi),\wedge^2,\psi)$, or local $\varepsilon$-factors $\varepsilon(s,\wedge^2(\phi),\psi)$ and $\varepsilon(s,\pi(\phi),\wedge^2,\psi)$ coincide. We expect that the proof require multiplicativity result for the exterior square $
\gamma$-factor $\gamma(s,\pi,\wedge^2,\psi)$ which is stronger than Corollary \ref{multiplicativity}.

\vskip.1in
\noindent
\textbf{Acknowledgments.} 
This work was completed as an extension of the study embarked by Cogdell and Piatetski-Shapiro \cite{Cog94} and owes much to \cite{CoPe} and \cite{Ma15}. This paper is the main part of my Ph.D thesis.
I would like to thank my advisor Professor James W. Cogdell for his encouragement, invaluable comments, and many careful reading of several preliminary versions of this paper. I also thank Professor Nadir Matringe for notifying errors in his papers to me and suggesting me how to prove Proposition 6.1 in \cite{Ma14}.

\bibliographystyle{amsplain}
\bibliography{bibfile}

\end{document}